\numberwithin{equation}{section}
\newtheorem{theorem}{Theorem}[section]
\newtheorem{proposition}[theorem]{Proposition}
\newtheorem{conjecture}[theorem]{Conjecture}
\newtheorem{corollary}[theorem]{Corollary}
\newtheorem{lemma}[theorem]{Lemma}
\newtheorem{problem}[theorem]{Problem}
\newtheorem{maintheorem}[theorem]{Main Theorem}
\theoremstyle{definition}
\newtheorem{remark}[theorem]{Remark}
\newtheorem{example}[theorem]{Example}
\newtheorem{definition}[theorem]{Definition}
\newcommand{\act}{\mathop{\triangleright}}
\def\endproof{\hfill$\square$\medskip}
\def\ZZ{\mathbb{Z}}
\def\CC{\mathbb{C}}
\def\kk{\Bbbk}
\begin{document}
\author{Arkady Berenstein}
\address{Department of Mathematics, University of Oregon,
Eugene, OR 97403, USA} \email{arkadiy@math.uoregon.edu}

\author{David Kazhdan}
\address{\noindent Department of Mathematics, Heberew University, Jerusalem, Israel}
\email{kazhdan@math.huji.ac.il}


\thanks{The authors were partially supported
by the BSF grant no.~2012365,  
the NSF grant~DMS-1403527 (A.~B.), the ERC grant no.~247049 (D.K.).
}

\makeatletter
\renewcommand{\@evenhead}{\tiny \thepage \hfill  A.~BERENSTEIN and  D.~KAZHDAN \hfill}

\renewcommand{\@oddhead}{\tiny \hfill Hecke-Hopf algebras
 \hfill \thepage}
\makeatother

\title{Hecke-Hopf algebras}


\begin{abstract} Let $W$ be a Coxeter group. The goal of the paper is to construct new Hopf algebras that contain Hecke algebras $H_{\bf q}(W)$ as (left) coideal subalgebras. Our {\it Hecke-Hopf algebras}\footnotemark[1] ${\bf H}(W)$ have a number of applications. In particular they provide new solutions of quantum Yang-Baxter equation and lead to a construction of a new family of endo-functors of the category of $H_{\bf q}(W)$-modules. Hecke-Hopf algebras for the symmetric group are related to Fomin-Kirillov algebras; for an arbitrary Coxeter group $W$ the ``Demazure" part of ${\bf H}(W)$ is being acted upon by generalized braided derivatives which generate the corresponding (generalized) Nichols algebra.   
\end{abstract}

\maketitle

\tableofcontents

\section{Introduction and main results}
\label{sect:intro}

It\footnotetext[1]{In a recent preprint \texttt{arXiv:1608.07509} the term {\it Hopf-Hecke algebras} was used in different context} is well-known that Hecke algebras $H_q(W)$ of Coxeter groups $W$ do not have interesting Hopf algebra structures since the only available one emerges via a complicated isomorphism with the group algebra of $W$ and, moreover  this would  make $H_q(W)$ into  yet another cocommutative Hopf algebra. The goal of this paper is to show how to extend a Hecke algebra $H_q(W)$ to a (non-cocommutative) 
Hopf algebra ${\bf H}(W)$ that contains $H_q(W)$ as a left coideal subalgebra.

We start with the simplest case when $W$ is  the symmetric group $S_n$ generated by $s_i$, $i=1,\ldots,n-1$ subject to the usual Coxeter relations.

\begin{definition}
\label{def:hat H}
For $n\ge 2$ denote by $ {\bf H}(S_n)$ the $\ZZ$-algebra  generated by $s_i$ and $D_i$, $i=1,\ldots,n-1$ subject to relations:

$\bullet$  
$s_i^2=1$,  
$s_iD_i+D_is_i=s_i-1$, $D_i^2=D_i$, $i=1,\ldots,n-1$.

$\bullet$  
$s_js_i=s_is_j$, 
$D_js_i=s_iD_j$, $D_jD_i=D_iD_j$ if $|i-j|>1$.

$\bullet$  
$s_js_is_j=s_is_js_i$, 
$D_is_js_i=s_js_iD_j$, $D_js_iD_j=s_iD_jD_i+D_iD_js_i+s_iD_js_i$ if $|i-j|=1$.

\end{definition}

\begin{remark} We will leave as an exercise to the reader to show that the braid relations $D_iD_{i+1}D_i=D_{i+1}D_iD_{i+1}$ and Yang-Baxter relations $D_is_iD_{i+1}s_iD_{i+1}=
D_{i+1}s_iD_{i+1}s_iD_i$ hold in ${\bf H}(S_n)$.
\end{remark}


\begin{theorem} 
\label{th:hopf hat intro}
For any $n\ge 2$, ${\bf H}(S_n)$ is a Hopf algebra over $\ZZ$ with the
coproduct $\Delta$,  the counit $\varepsilon$, and antipode anti-automorphism $S$ given respectively by (for $i=1,\ldots,n-1$):
$$\Delta(s_i)=s_i\otimes s_i,~\Delta(D_i)=D_i\otimes 1+s_i\otimes D_i,~\varepsilon(s_i)=1,~\varepsilon(D_i)=0,~S(s_i)=s_i,~S(D_i)=-s_iD_i\ .$$

%
%
%
%
%

\end{theorem}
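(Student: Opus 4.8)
The plan is to treat the three structure maps separately, establishing first that each is well defined on $\mathbf{H}(S_n)$ and only then verifying the bialgebra and antipode axioms. I would begin by defining $\Delta$ and $\varepsilon$ as algebra homomorphisms, and $S$ as an algebra anti-homomorphism, out of the free $\mathbb{Z}$-algebra on the symbols $s_i, D_i$, specified by the formulas in the statement. The entire content of well-definedness is then the assertion that each of these maps sends every defining relation of Definition \ref{def:hat H} to a valid identity in its target; once that is checked, multiplicativity (respectively anti-multiplicativity) takes care of everything else.

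For $\Delta$, the key observation is that each $s_i$ is grouplike and each $D_i$ is $(s_i,1)$-skew-primitive, so the single-index relations and the far-commutation relations are quick. For instance $\Delta(s_i)^2 = s_i^2\otimes s_i^2 = 1\otimes 1$, and a short computation gives
$$\Delta(D_i)^2 = D_i\otimes 1 + (s_iD_i+D_is_i)\otimes D_i + 1\otimes D_i = D_i\otimes 1 + s_i\otimes D_i = \Delta(D_i),$$
where the mixed relation $s_iD_i+D_is_i = s_i-1$ is used precisely to absorb the cross terms; the relation $s_iD_i+D_is_i=s_i-1$ itself is checked the same way. The commutation and braid relations for $|i-j|>1$ present no difficulty, since all tensor factors involved commute.

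The main obstacle, and the real computational heart of the argument, is to verify that $\Delta$ respects the cubic relation $D_js_iD_j=s_iD_jD_i+D_iD_js_i+s_iD_js_i$ for $|i-j|=1$ (and likewise $D_is_js_i=s_js_iD_j$). I would expand $\Delta$ of each side using $\Delta(D_j)=D_j\otimes 1+s_j\otimes D_j$, $\Delta(s_i)=s_i\otimes s_i$ and multiplicativity, sort the resulting terms in $\mathbf{H}(S_n)^{\otimes 2}$ by their bidegree in the number of $D$'s on each side of the tensor, and reduce each homogeneous component using the single-index and length-one relations until the two sides coincide. This is where the specific coefficients in the cubic relation are forced and where I expect all the bookkeeping to concentrate. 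The verification that $S$ is an anti-homomorphism reduces to the analogous check with the relations read in reverse order and $S(s_i)=s_i$, $S(D_i)=-s_iD_i$ substituted: for example the mixed relation becomes $-s_iD_is_i-D_i=s_i-1$, which follows from $s_iD_is_i=1-s_i-D_i$, and again the cubic relation is the delicate case. That $\varepsilon$ is well defined is immediate, every relation mapping to a triviality in $\mathbb{Z}$.

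With all three maps well defined, the remaining axioms are routine and checkable on generators. Coassociativity holds because $(\Delta\otimes\mathrm{id})\Delta$ and $(\mathrm{id}\otimes\Delta)\Delta$ are both algebra maps agreeing on the $s_i$ (trivially, being grouplike) and on the $D_i$, where both give $D_i\otimes 1\otimes 1+s_i\otimes D_i\otimes 1+s_i\otimes s_i\otimes D_i$; the counit axioms are verified identically. For the antipode I would use that, since $S$ is an anti-homomorphism and $\Delta$ a homomorphism, the set of elements $x$ satisfying $m(S\otimes\mathrm{id})\Delta(x)=\varepsilon(x)1$ is closed under multiplication (and contains $1$), so it suffices to check the identity on generators: $S(s_i)s_i=s_i^2=1=\varepsilon(s_i)1$ and $S(D_i)\cdot 1+S(s_i)D_i=-s_iD_i+s_iD_i=0=\varepsilon(D_i)1$, with the right-handed axiom entirely analogous. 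This completes the verification of the Hopf algebra structure.
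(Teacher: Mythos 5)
Your plan is sound and, when carried out, does prove the theorem; but it is a genuinely different route from the paper's. The paper never verifies $\Delta$ on the cubic relation directly. Instead it first builds the larger Hopf algebra $\hat{\bf H}(W)$ (generated by the rank-$1$ and linear braid relations only), where the Hopf structure comes from general machinery for free products $H*T(V)$ with a Yetter--Drinfeld-type comodule structure (Proposition \ref{pr:deformed nichols}); it then shows that the additional quadratic/cubic relations span a left coideal ${\bf K}_{ij}(W)$ (an intersection of conjugates of the coideal subalgebra $\hat{\bf D}(W)$, cut off by the filtration), and invokes Proposition \ref{pr:from coideal to hopf ideal}: an ideal generated by a coideal intersected with $\ker\varepsilon$ is automatically a Hopf ideal. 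Finally Proposition \ref{pr:K for S3} identifies ${\bf K}_{ij}$ explicitly, recovering the presentation of Definition \ref{def:hat H}. What the paper's route buys is uniformity (it works for all Coxeter groups, where no explicit presentation of ${\bf K}_{ij}$ is available) and a conceptual explanation of \emph{why} the relations are compatible with $\Delta$; what your route buys is a short, self-contained, elementary proof for the specific presentation of ${\bf H}(S_n)$, with no need for the coideal formalism.

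One caution on the step you correctly identify as the computational heart. When you expand $\Delta$ of the cubic relation, the term $\Delta(s_iD_js_i)$ collapses to $s_iD_js_i\otimes 1+s_is_js_i\otimes s_iD_js_i$ because $s_i^2=1$ in the second tensor factor; if you instead record it as contributing to the $\otimes\, s_i$ block, you will be left with a spurious residue $s_iD_js_i\otimes(s_i-1)$ and the identity will appear to fail. With the collapse taken into account, the $\otimes\,1$ and $\otimes\, s_i$ contributions of $s_iD_js_i$ combine with the residue $-s_iD_js_i\otimes(s_iD_i+D_is_i)=-s_iD_js_i\otimes(s_i-1)$ coming from the mixed relation, and everything cancels after one application of the derived identities $s_is_jD_i=D_js_is_j$ and $D_is_js_i=s_js_iD_j$. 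So the verification does close up, but the cancellation is delicate enough that it should be written out rather than asserted.
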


We prove Theorem \ref{th:hopf hat intro} along with its generalization,  Theorem \ref{th:hopf hat intro W}, 
in Section \ref{subsec:Simply-laced}.

\medskip

\begin{remark} In fact, the Hopf agebras ${\bf H}(S_n)$, $n=3,4,5$ were studied in \cite[Section 3.3]{AG} (equations (14)-(18) with $\lambda_1=\lambda_2=0$, $\lambda_3=\frac{1}{4}$) in the context of classification of finite-dimensional pointed Hopf algebras, with the presentation similar to that in Remark \ref{rem:nilpotent presentation} below. It would be interesting to see how would the Hopf algebras ${\bf H}(S_n)$, $n\ge 6$ (as well as ${\bf H}(W)$, where $W$ is any Coxeter group, see below) fit the classification program of pointed Hopf algebras started in \cite{AS} and, conversely, how would a rich theory of pointed Hopf algebras enhance the study of ${\bf H}(S_n)$ and their representations.

\end{remark}

The algebra ${\bf H}(S_n)$ has some additional symmetries.

\begin{theorem} 
\label{th:symmetries Sn}
(a) The assignments $s_i\mapsto -s_i$, $D_i\mapsto 1-D_i$ define an automorphism of ${\bf H}(S_n)$.

(b) The assignments $s_i\mapsto -s_i$, $D_i\mapsto s_i+D_i$ define an automorphism of ${\bf H}(S_n)$.

(c) The assignments $s_i\mapsto s_i$, $D_i\mapsto D_i$ define an anti-automorphism of ${\bf H}(S_n)$.
 
\end{theorem}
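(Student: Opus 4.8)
The plan is to invoke the universal property of the presentation in Definition~\ref{def:hat H}: an assignment on the generators $s_i,D_i$ extends to an algebra endomorphism of ${\bf H}(S_n)$ exactly when the proposed images satisfy all the defining relations, and to an algebra anti-endomorphism exactly when they satisfy those relations with the order of every product reversed. For parts (a) and (b) I would first record that each assignment is involutive on generators: for (a), $-s_i\mapsto s_i$ and $1-D_i\mapsto 1-(1-D_i)=D_i$; for (b), $-s_i\mapsto s_i$ and $s_i+D_i\mapsto -s_i+(s_i+D_i)=D_i$. Hence, once each is shown to be a homomorphism, it is automatically its own inverse, so it is an automorphism. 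For part (c) the same involutivity, now as an anti-homomorphism, gives $\psi^2=\mathrm{id}$ and thus bijectivity, so it suffices to check the relation set is stable under reversing products.

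Next I would dispatch the routine relations. The relations $s_i^2=1$ and $D_i^2=D_i$ are immediate in all three cases; for instance in (a) one has $(1-D_i)^2=1-2D_i+D_i=1-D_i$, and in (b) one has $(s_i+D_i)^2=s_i^2+(s_iD_i+D_is_i)+D_i^2=1+(s_i-1)+D_i=s_i+D_i$, using $s_i^2=1$ and $s_iD_i+D_is_i=s_i-1$. The quadratic relation $s_iD_i+D_is_i=s_i-1$ unwinds in a single line in each case. All the $|i-j|>1$ commutations, together with $s_js_is_j=s_is_js_i$ and $D_is_js_i=s_js_iD_j$ for $|i-j|=1$, are preserved by short direct computations in which the extra lower-order terms match using $s_js_is_j=s_is_js_i$ and $D_is_js_i=s_js_iD_j$ themselves. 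For the anti-automorphism (c) these are even easier: reversing $D_is_js_i=s_js_iD_j$ produces $s_is_jD_i=D_js_is_j$, which is precisely the $i\leftrightarrow j$ image of the same relation, while the symmetric relations reverse to themselves.

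The one genuinely laborious step, which I expect to be the main obstacle, is the cubic relation $D_js_iD_j=s_iD_jD_i+D_iD_js_i+s_iD_js_i$ for $|i-j|=1$. For (a) and (b) I would substitute the images and expand both sides into words in $s_i,s_j,D_i,D_j$, then systematically collapse using $s_i^2=1$, $D_i^2=D_i$, and repeatedly $s_iD_i+D_is_i=s_i-1$ together with its $j$-analogue; the bookkeeping is long but every reduction is forced, and after collecting terms both images agree once the original cubic relation is invoked a single time. For the anti-automorphism (c) this step is free: the left side $D_js_iD_j$ is a palindrome, and reversing the three monomials on the right permutes them among themselves ($s_iD_jD_i\leftrightarrow D_iD_js_i$, with $s_iD_js_i$ fixed), so the reversed relation coincides with the original. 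Combining these verifications shows that (a) and (b) define automorphisms and that (c) defines an anti-automorphism of ${\bf H}(S_n)$.
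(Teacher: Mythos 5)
Your proof is correct, and it takes a genuinely different route from the paper. You verify the three assignments directly against the explicit presentation of ${\bf H}(S_n)$ in Definition \ref{def:hat H}, using the universal property of a presentation plus involutivity on generators to get bijectivity; I checked the cubic relation in cases (a) and (b) and both computations close up exactly as you describe (in (b) one also needs the linear braid relation $D_is_js_i=s_js_iD_j$ and its $i\leftrightarrow j$ twin to absorb the cross terms $s_is_jD_i$ and $D_is_js_i$, which you correctly flag). The paper instead proves the statement as a special case of Theorem \ref{th:symmetris D(W)}: it first constructs the anti-involution and the involution $\theta$ on the ``free'' algebra $\hat{\bf H}(W)=\hat{\bf H}_{\chi,\sigma}(W)$ via the cocycle formalism (Proposition \ref{pr:symmetris hat D(W) gen}), and then shows that the ideal $\underline{\bf J}(W)$ generated by the subspaces ${\bf K}_{ij}(W)$ is stable under these symmetries --- for $\theta$ this requires Proposition \ref{pr:eps theta} and is the step the authors call highly nontrivial in general. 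Your approach buys a short, self-contained argument, but only because Definition \ref{def:hat H} (equivalently, Theorem \ref{th:hopf hat intro W}) hands you an explicit finite presentation of ${\bf H}(S_n)$; it does not generalize to Coxeter groups where no such presentation is known, which is exactly what the paper's ideal-invariance argument is designed to handle. One small point of care: your argument is legitimate here precisely because Theorem \ref{th:symmetries Sn} is stated for ${\bf H}(S_n)$ as presented in Definition \ref{def:hat H}; if one took the quotient construction $\hat{\bf H}(S_n)/\underline{\bf J}(S_n)$ as the definition, the presentation result would have to be invoked first.
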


We prove Theorem \ref{th:symmetries Sn} along with its generalization, Theorem \ref{th:symmetris D(W)} 
in Section \ref{subsec:proof of Theorem symmetris D(W)}.

Define a family of elements $D_{ij}\in {\bf H}(S_n)$,  $1\le i<j\le n$ by $D_{i,i+1}=D_i$ and $wD_{ij}w^{-1}=D_{w(i),w(j)}$ for any permutation $w\in S_n$ such that $w(i)<w(j)$ (it follows from Definition \ref{def:hat H} that the elements $D_{ij}$ are well-defined). 
Denote by ${\bf D}(S_n)$ the subalgebra of ${\bf H}(S_n)$ generated by all $D_{ij}$. 

\begin{proposition} 
\label{pr:HS_n factorization} For all $n\ge 2$, ${\bf H}(S_n)$ factors as
 ${\bf H}(S_n)={\bf D}(S_n)\cdot \ZZ S_n$ over $\ZZ$, i.e., the multiplication map defines an isomorphism of $\ZZ$-modules
${\bf D}(S_n)\otimes\ZZ S_n\widetilde \longrightarrow {\bf H}(S_n)$.
\end{proposition}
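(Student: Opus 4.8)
The plan is to prove the two halves of the statement separately: that the multiplication map is surjective (so that the factorization ${\bf H}(S_n)={\bf D}(S_n)\cdot\ZZ S_n$ holds as a spanning statement) and that it is injective (so that no $\ZZ$-linear relation collapses the tensor product). Surjectivity is routine; injectivity is where the real work lies.

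For surjectivity I would show that $M:={\bf D}(S_n)\cdot\ZZ S_n$, the $\ZZ$-span of the products $d\,w$ with $d\in{\bf D}(S_n)$ and $w\in S_n$, is a left ideal of ${\bf H}(S_n)$ containing $1$; since ${\bf H}(S_n)$ is generated by the $s_i$ and $D_i$, this forces $M={\bf H}(S_n)$. Closure under left multiplication by $D_k$ is immediate as ${\bf D}(S_n)$ is a subalgebra. Closure under left multiplication by $s_k$ reduces, after absorbing the trailing group element, to showing $s_k\,{\bf D}(S_n)\subseteq M$, and here the key commutation rules are
$$s_k D_{ij}=D_{s_k(i),s_k(j)}\,s_k\quad\text{if } s_k(i)<s_k(j),\qquad s_k D_k=-D_k s_k+s_k-1,$$
the first being the defining conjugation relation $wD_{ij}w^{-1}=D_{w(i),w(j)}$ applied with $w=s_k=s_k^{-1}$, the second being the relation $s_kD_k+D_ks_k=s_k-1$ of Definition \ref{def:hat H} (the exceptional case occurs precisely when $s_k$ reverses the pair, i.e. when $\{i,j\}=\{k,k+1\}$). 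Writing an element of ${\bf D}(S_n)$ as a $\ZZ$-combination of monomials $D_{i_1j_1}\cdots D_{i_mj_m}$ and inducting on $m$, one peels off the first factor using these rules: in the generic case one is left with $D_{s_k(i_1),s_k(j_1)}\cdot(s_k D_{i_2j_2}\cdots D_{i_mj_m})$ and applies the inductive hypothesis to the shorter product, while in the exceptional case the extra summand $s_k-1$ merely lowers the number of $D$-factors.

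Injectivity is the main obstacle, and I would attack it by passing to an associated graded algebra. Introduce the filtration $F_\bullet$ on ${\bf H}(S_n)$ in which $F_p$ is spanned by the monomials in the $s_i,D_i$ using at most $p$ of the $D$-generators. The relations of Definition \ref{def:hat H} respect this filtration: each cross-relation either preserves the number of $D$'s or lowers it, the only inhomogeneous one being $s_iD_i+D_is_i=s_i-1$, whose right-hand side lies in $F_0$. Hence $\mathrm{gr}\,{\bf H}(S_n)$ is a well-defined graded algebra, and the degenerate relations become homogeneous: $\bar s_i\bar D_i+\bar D_i\bar s_i=0$, $\bar D_i^2=0$ (since $D_i^2=D_i\in F_1$), and the cross-relations become the signed conjugation $\bar s_k\bar D_{ij}\bar s_k=\pm\,\bar D_{s_k(i),s_k(j)}$. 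Thus $\ZZ S_n$ acts on $\mathrm{gr}\,{\bf D}(S_n)$ by signed permutation of the $\bar D_{ij}$, and there is a surjection of the smash product $\mathrm{gr}\,{\bf D}(S_n)\rtimes\ZZ S_n\twoheadrightarrow\mathrm{gr}\,{\bf H}(S_n)$ under which the analogous multiplication map is the tautological isomorphism of the smash product. Granting that this surjection is an isomorphism, $\mathrm{gr}$ of the multiplication map is injective, and a filtered map whose associated graded is injective is itself injective; lifting through the filtration then yields injectivity of ${\bf D}(S_n)\otimes\ZZ S_n\to{\bf H}(S_n)$.

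The delicate point — and where I expect the genuine difficulty to be — is justifying that $\mathrm{gr}\,{\bf H}(S_n)$ is the full smash product rather than a proper quotient, equivalently that the rewriting system given by the commutation rules above is confluent. Concretely one must (i) verify that the copy of $\ZZ S_n$ does not collapse inside ${\bf H}(S_n)$ — here the Hopf structure of Theorem \ref{th:hopf hat intro} helps, since the $s_i$ are group-like and distinct group-like elements are $\ZZ$-linearly independent, though one still needs the images of distinct $w\in S_n$ to be distinct, best secured by exhibiting a faithful module on which the $s_i$ act through the regular representation of $S_n$; and (ii) resolve the Diamond-Lemma overlap ambiguities between the cross-relations and the Coxeter relations. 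The essential compatibility is between the cross-relations and the braid relation $s_is_js_i=s_js_is_j$ for $|i-j|=1$, whose resolution reproduces exactly the braid and Yang–Baxter relations for the $D_i$ recorded in the Remark following Definition \ref{def:hat H}; confirming this confluence is what ultimately rules out any hidden collapse.
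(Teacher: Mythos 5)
Your surjectivity argument is fine, but the injectivity half is a plan rather than a proof, and the step you defer is the entire content of the statement. You write ``Granting that this surjection is an isomorphism'' about the map $\mathrm{gr}\,{\bf D}(S_n)\rtimes\ZZ S_n\to\mathrm{gr}\,{\bf H}(S_n)$, and you explicitly flag the confluence of the rewriting system as ``where I expect the genuine difficulty to be'' without resolving it. That is precisely the point at which a hidden collapse could occur: a priori the relations of Definition \ref{def:hat H} could force unexpected $\ZZ$-linear relations among products $d\cdot w$, and nothing in your argument excludes this. Note also that passing to the associated graded does not genuinely reduce the difficulty: to show $\mathrm{gr}\,m$ is injective you would need to identify $\mathrm{gr}\,{\bf H}(S_n)$ with the smash product, which presupposes control over $\mathrm{gr}\,{\bf D}(S_n)$ — and the paper pointedly leaves open even whether ${\bf D}(W)$ is free as a $\ZZ$-module, proving only a one-way homomorphism ${\bf D}_0(W)\to\mathrm{gr}\,{\bf D}(W)$ in \eqref{eq:simply-laced projection}, never an isomorphism. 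A full Diamond-Lemma verification of all overlap ambiguities (cross relations against Coxeter relations, rank-one relations, and the cubic relation $D_js_iD_j=s_iD_jD_i+D_iD_js_i+s_iD_js_i$) would be needed, and none of it is carried out.

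For contrast, the paper avoids any confluence check on ${\bf H}(S_n)$ itself. It first proves the factorization for the larger algebra $\hat{\bf H}(W)=\hat{\bf D}(W)\cdot\ZZ W$ (Theorem \ref{th:ZWcross and free D}), where $\hat{\bf D}(W)$ is the free product of the algebras $\ZZ[D_s]/(D_s^2-D_s)$ and hence has an explicit $\ZZ$-basis; the factorization there follows from the associativity criterion for deformed cross products (Proposition \ref{pr:mu nu gamma H}, Corollary \ref{cor:mu nu gamma}) applied to the tensor algebra, together with the cocycle identities \eqref{eq:2 cocycle} for the explicit $\chi,\sigma$ of \eqref{eq:chi sigma defined}. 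The passage to the quotient ${\bf H}(S_n)$ is then Lemma \ref{le:general factored quotient H}: the extra relations are generated by $\underline{\bf K}(W)\subset\hat{\bf D}(W)$, which is stable under conjugation by $W$, so the ideal they generate has the form ${\bf I}\cdot\ZZ W$ and the factorization descends automatically (Theorem \ref{th:HW factorization}). If you want to salvage your approach, the honest options are either to carry out the full overlap analysis, or to switch to the paper's strategy of proving the factorization upstairs where a basis is available and descending through a conjugation-stable ideal.
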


We prove Proposition \ref{pr:HS_n factorization} in Section \ref{subsec:Simply-laced}. 
The algebra ${\bf D}(S_n)$ is can be viewed as a deformed Fomin-Kirillov algebra because of the following result (see also Remark \ref{rem:FK} for more details).

\begin{proposition} 
\label{pr:D presentation Sn}
For $n\ge 2$ the algebra ${\bf D}(S_n)$ is generated by $D_{ij}$, $1\le i<j\le n$ subject to:

$\bullet$ $D_{ij}^2=D_{ij}$ for all $1\le i<j\le n$.

$\bullet$ $D_{ij}D_{k\ell}=D_{k\ell}D_{ij}$ whenever $\{i,j\}\cap \{k,\ell\}=\emptyset$.

$\bullet$ $D_{ij}D_{jk}=D_{ik}D_{ij}+D_{jk}D_{ik}-D_{ik}$, $D_{jk}D_{ij}=D_{ij}D_{ik}+D_{ik}D_{jk}-D_{ik}$
for $1\le i<j<k\le n$.

\end{proposition}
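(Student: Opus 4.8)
The proposition asserts that the displayed relations give a \emph{presentation} of ${\bf D}(S_n)$, so there are two things to prove: that the relations hold in ${\bf D}(S_n)$, and that they are complete. Let $A$ be the abstract $\ZZ$-algebra generated by symbols $D_{ij}$ ($1\le i<j\le n$) modulo the three families of relations. Verifying the first claim produces a surjective algebra homomorphism $\pi\colon A\to{\bf D}(S_n)$, $D_{ij}\mapsto D_{ij}$, and the content of the proposition is that $\pi$ is an isomorphism.

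For the first claim I would exploit the conjugation description $wD_{ij}w^{-1}=D_{w(i),w(j)}$ and reduce each relation to a defining relation of Definition \ref{def:hat H}. The idempotent relations $D_{ij}^2=D_{ij}$ are immediate, since every $D_{ij}$ is a conjugate $wD_kw^{-1}$ of a generator and conjugation preserves $D_k^2=D_k$. The commutation relations for disjoint pairs follow by conjugating $D_aD_b=D_bD_a$ ($|a-b|>1$) by a permutation carrying $\{a,a+1\},\{b,b+1\}$ to $\{i,j\},\{k,\ell\}$. For the triangle relations it suffices, by conjugating with an order-preserving permutation and using well-definedness of the $D_{ij}$, to treat a consecutive triple $(i,i+1,i+2)$; writing $D_{i,i+2}=s_iD_{i+1}s_i$ and substituting the cubic relation $D_{i+1}s_iD_{i+1}=s_iD_{i+1}D_i+D_iD_{i+1}s_i+s_iD_{i+1}s_i$ together with $s_iD_i+D_is_i=s_i-1$, the first triangle relation collapses to an identity (the bracketed factor $s_iD_i+D_is_i+1-s_i$ vanishes). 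The second triangle relation is then obtained by applying the anti-automorphism of Theorem \ref{th:symmetries Sn}(c), which fixes every $D_{ij}$ and reverses products.

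For completeness I would leverage the factorization ${\bf H}(S_n)\cong{\bf D}(S_n)\otimes\ZZ S_n$ of Proposition \ref{pr:HS_n factorization}. The plan is to build, on the $\ZZ$-module $A\otimes\ZZ S_n$, an associative algebra $\tilde{\bf H}$ in which $A\otimes 1$ and $1\otimes\ZZ S_n$ are subalgebras and in which the elements $\hat s_i=1\otimes s_i$, $\hat D_i=D_{i,i+1}\otimes 1$ satisfy all the relations of Definition \ref{def:hat H}; the multiplication is forced by the straightening rules that move an $s_i$ past a $D_{jk}$ (conjugation when $s_i$ preserves the order of $\{j,k\}$, and the Hecke-type rule coming from $s_iD_i+D_is_i=s_i-1$ otherwise). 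Granting this, the presentation of ${\bf H}(S_n)$ yields an algebra map $\Psi\colon{\bf H}(S_n)\to\tilde{\bf H}$ with $s_i\mapsto\hat s_i$, $D_i\mapsto\hat D_i$, while $\pi$ together with Proposition \ref{pr:HS_n factorization} yields $\rho\colon\tilde{\bf H}\to{\bf H}(S_n)$, $a\otimes w\mapsto\pi(a)\,w$. Evaluating on the algebra generators $s_i,D_i$ and $D_{ij}\otimes 1, 1\otimes w$ (where the conjugation property of $\tilde{\bf H}$ gives $\Psi(D_{ij})=D_{ij}\otimes 1$) shows that $\Psi$ and $\rho$ are mutually inverse. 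Hence $\rho=\mathrm{mult}\circ(\pi\otimes\mathrm{id})$ is an isomorphism; since the multiplication map of Proposition \ref{pr:HS_n factorization} is an isomorphism and $\ZZ S_n$ is free, $\pi\otimes\mathrm{id}$ and therefore $\pi$ is an isomorphism.

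The main obstacle is the construction of $\tilde{\bf H}$, i.e. verifying that the straightening rules assemble into an \emph{associative} multiplication on $A\otimes\ZZ S_n$ (this does not follow formally from associativity of ${\bf H}(S_n)$, since it is precisely the sufficiency of the relations of $A$ that is at stake). I would organize this as a confluence check in the spirit of Bergman's Diamond Lemma: order monomials so that the idempotent, commutation and triangle relations become rewriting rules, and resolve the overlap ambiguities. The genuinely nontrivial ambiguities come from the triangle relations among three generators sharing indices — for instance reducing $D_{ij}D_{jk}D_{k\ell}$, or a configuration overlapping two triangle patterns, in two different ways — and from pushing $s_i$ past a product of two $D$'s; confirming that these resolve consistently is where the real work lies, and it is exactly what Proposition \ref{pr:HS_n factorization} is designed to control.
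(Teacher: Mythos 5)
Your first half (the relations hold in ${\bf D}(S_n)$) is correct and matches what must be checked: the consecutive-triple computation with $D_{i,i+2}=s_iD_{i+1}s_i$ does collapse via $s_iD_i+D_is_i=s_i-1$, and the second triangle relation does follow from the anti-involution of Theorem \ref{th:symmetries Sn}(c), which indeed fixes every $D_{ij}$.

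For completeness, however, your argument has a genuine gap, and it sits exactly where you place it: the associativity of the straightening rules on $A\otimes\ZZ S_n$. That verification is not a technicality that can be deferred -- it \emph{is} the completeness statement. Concretely, you must check for every defining relation of $A$ and every simple transposition $s_i$ that straightening $s_i\cdot(\mathrm{LHS})$ and $s_i\cdot(\mathrm{RHS})$ agree in $A\otimes\ZZ S_n$ (together with the Coxeter-relation overlaps $(s_is_js_i)D_{k\ell}$ versus $(s_js_is_j)D_{k\ell}$), and these reductions reuse the relations of $A$ in degrees up to three. None of this is carried out. Moreover, your remark that this confluence is ``exactly what Proposition \ref{pr:HS_n factorization} is designed to control'' is not right: that proposition concerns the honest subalgebra ${\bf D}(S_n)\subset{\bf H}(S_n)$ and would remain true verbatim if the displayed relations were incomplete -- in that case $\pi$ would simply have a kernel, and the failure would surface only inside your unperformed confluence check. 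So the factorization cannot be used to discharge it.

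The paper avoids this entirely by arguing top-down. It first shows (Theorem \ref{th:ZWcross and free D}, via the deformed-cross-product criterion of Proposition \ref{pr:mu nu gamma H}) that $\hat{\bf D}(W)$ is subject \emph{only} to the idempotent relations $D_s^2=D_s$ and that $\hat{\bf H}(W)=\hat{\bf D}(W)\cdot\ZZ W$; the ``Diamond Lemma'' work is thus done once, at the level of a free product, where it reduces to the cocycle conditions \eqref{eq:2 cocycle}. It then identifies ${\bf D}(W)$ as $\hat{\bf D}(W)/\langle\underline{\bf K}(W)\rangle$ (Lemma \ref{le:general factored quotient H} and the proof of Theorem \ref{th:HW factorization}) and computes the rank-two coideals ${\bf K}_{ij}(W)$ explicitly for $m_{ij}\in\{2,3\}$ by a linear-algebra computation with the derivations $d_s$ on the known free basis of $\hat{\bf D}(W_{\{i,j\}})_{\le 3}$ (Proposition \ref{pr:K for S3}); this yields Proposition \ref{pr:presentation simply-laced D}, and the $S_n$ case follows by classifying compatible pairs of transpositions. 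In other words, the paper computes the kernel of $\hat{\bf D}(S_n)\twoheadrightarrow{\bf D}(S_n)$ directly rather than proving confluence for the presented algebra $A$. Your route could in principle be completed, but as written the decisive step is missing, and the crutch you lean on for it does not bear the weight.
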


We prove Proposition \ref{pr:D presentation Sn} along with its generalization, Proposition \ref{pr:presentation simply-laced D},  in Section \ref{subsec:Simply-laced}.

\begin{remark} In Section \ref{sec:bosonized} we construct a (Hopf) algebra of symmetries of ${\bf D}(S_n)$ and of its generalizations to arbitrary groups.
These Hopf algebras can be viewed as generalizations of Nichols algebras.
\end{remark}

\noindent Recall that Hecke algebra $H_q(S_n)$ is generated over $\ZZ[q,q^{-1}]$ by $T_1,\ldots,T_{n-1}$ subject to relations:

$\bullet$ Braid relations $T_iT_jT_i =T_jT_iT_j$ if $|i-j|=1$ and $T_iT_j=T_jT_i$ if $|i-j|>1$.

$\bullet$ Quadratic relations $T_i^2=(1-q)T_i+q$.

\begin{theorem} 
\label{th:hecke hopf intro}
For any $n\ge 2$ the assignment $T_i\mapsto s_i+(1-q)D_i$,  
$i=1,\ldots,n-1$ defines an injective homomorphism of $\ZZ[q,q^{-1}]$-algebras $\varphi:H_q(S_n)\hookrightarrow  
{\bf H}(S_n)\otimes \ZZ[q,q^{-1}]$. 

\end{theorem}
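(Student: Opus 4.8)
The plan is to prove the statement in two steps: first that the assignment $T_i\mapsto t_i:=s_i+(1-q)D_i$ respects the defining relations of $H_q(S_n)$, so that $\varphi$ is a well-defined algebra homomorphism and $\varphi(T_w)=t_{i_1}\cdots t_{i_\ell}$ for any reduced word $w=s_{i_1}\cdots s_{i_\ell}$; and second that $\varphi$ is injective. Writing $h:=1-q$, the quadratic relation is a one-line check: using $s_i^2=1$, $s_iD_i+D_is_i=s_i-1$ and $D_i^2=D_i$ from Definition~\ref{def:hat H},
$$t_i^2=s_i^2+h(s_iD_i+D_is_i)+h^2D_i^2=1+h(s_i-1)+h^2D_i=q+h\,t_i,$$
which is exactly $T_i^2=(1-q)T_i+q$. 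The relation $t_it_j=t_jt_i$ for $|i-j|>1$ is immediate from the second bullet of Definition~\ref{def:hat H}, since then $s_i,D_i$ commute with $s_j,D_j$.

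The essential point, and the step I expect to be the main obstacle, is the braid relation $t_it_jt_i=t_jt_it_j$ for $|i-j|=1$. Since only $s_i,s_j,D_i,D_j$ occur, I would abbreviate $a=s_i$, $b=s_j$, $x=D_i$, $y=D_j$ and expand both sides as polynomials in $h$, matching the coefficients of $h^k$ in $(a+hx)(b+hy)(a+hx)$ and $(b+hy)(a+hx)(b+hy)$. The coefficient of $h^0$ is the Coxeter braid relation $aba=bab$, and the coefficient of $h^3$ is $xyx=yxy$, the $D$-braid relation noted in the Remark after Definition~\ref{def:hat H}. The coefficient of $h^1$ gives $abx+aya+xba=yab+bxb+bay$; using $D_is_js_i=s_js_iD_j$ (i.e. $xba=bay$) and its $i\leftrightarrow j$ partner $D_js_is_j=s_is_jD_i$ (i.e. $yab=abx$), this collapses to $aya=bxb$, i.e. $s_iD_js_i=s_jD_is_j$. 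The coefficient of $h^2$ gives $ayx+xbx+xya=bxy+yay+yxb$; substituting the two cubic relations $D_is_jD_i=s_jD_iD_j+D_jD_is_j+s_jD_is_j$ and $D_js_iD_j=s_iD_jD_i+D_iD_js_i+s_iD_js_i$ and cancelling the matched monomials, this too collapses to $aya=bxb$. Thus the entire braid relation reduces to the single auxiliary identity $s_iD_js_i=s_jD_is_j$, which follows from $D_is_js_i=s_js_iD_j$ upon multiplying on the left by $s_j$ and on the right by $s_i$ and using $s_i^2=s_j^2=1$. The care here is purely in the bookkeeping of the $h^1$- and $h^2$-coefficients.

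For injectivity I would specialize $q=1$. Since $t_i|_{q=1}=s_i$, the homomorphism $\varphi$ specializes to $T_i\mapsto s_i$ and hence $\varphi(T_w)|_{q=1}=w$ for every $w\in S_n$. Suppose $\sum_{w}f_w\,\varphi(T_w)=0$ with $f_w\in\ZZ[q,q^{-1}]$ not all zero. On the graded $\ZZ$-module ${\bf H}(S_n)\otimes\ZZ[q,q^{-1}]=\bigoplus_{j}{\bf H}(S_n)\,q^j$ multiplication by $q-1$ is a non-zero-divisor, so I may divide the relation by the highest power of $q-1$ dividing all the $f_w$ and thereby assume $f_w(1)\neq 0$ for some $w$. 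Evaluating at $q=1$ then gives $\sum_w f_w(1)\,w=0$ in ${\bf H}(S_n)$; but by the factorization ${\bf H}(S_n)={\bf D}(S_n)\cdot\ZZ S_n$ of Proposition~\ref{pr:HS_n factorization} the group elements $\{w\}_{w\in S_n}$ are $\ZZ$-linearly independent in ${\bf H}(S_n)$, forcing every $f_w(1)=0$ — a contradiction. Hence $\varphi$ is injective, completing the proof.
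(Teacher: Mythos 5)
Your reduction of the quadratic and commuting relations is correct, and your coefficient-by-coefficient analysis of the braid relation is sound as far as it goes: the $h^0$ term is the Coxeter relation, and your bookkeeping for the $h^1$ and $h^2$ coefficients is right — both collapse, via the linear braid relations and the two cubic relations, to the single identity $s_iD_js_i=s_jD_is_j$, which you correctly derive from $D_is_js_i=s_js_iD_j$. The one genuine gap is the $h^3$ coefficient $D_iD_jD_i=D_jD_iD_j$. You cite the Remark after Definition~\ref{def:hat H}, but that Remark explicitly \emph{leaves this as an exercise}; it is asserted, not proved, anywhere in the paper, so your argument is incomplete precisely at the one point where the full content of the cubic relations (beyond what you extracted for $h^1,h^2$) is needed. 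The gap is fillable: setting $z:=s_iD_js_i=s_jD_is_j$ and multiplying the two cubic relations on the right by $s_i$ (resp.\ $s_j$) yields, after simplification with the rank-$1$ relations, the identities $D_iD_j=zD_i+D_jz-z$ and $D_jD_i=D_iz+zD_j-z$ (these are the relations of Proposition~\ref{pr:D presentation Sn} with $z=D_{ik}$); using $D_i^2=D_i$, $D_j^2=D_j$ one then computes $D_iD_jD_i=D_izD_j=D_jD_iD_j$. You should supply this (or an equivalent) derivation rather than point to the exercise.

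Your route is otherwise genuinely different from the paper's, in both halves. For the braid relation the paper does not match coefficients; it shows that a single element $\Delta_{ij}^{c_i,c_j}$, whose antipode image encodes the braid relation for $s_i+c_iD_i$, lies in ${\bf K}_{ij}(W)$ (Proposition~\ref{pr:Delta in K}) and hence dies in ${\bf H}(W)$ — a structural argument valid for all Coxeter groups and all parameters, at the price of the whole ${\bf K}_{ij}$ machinery. For injectivity the paper proves the Bruhat triangularity $\underline\varphi_W(T_w)\in w+\sum_{w'\prec w}\kk\cdot{\bf D}(W)\cdot w'$ and invokes the factorization ${\bf H}(W)={\bf D}(W)\cdot\ZZ W$; your specialization at $q=1$, combined with dividing out the highest power of $q-1$ (legitimate, since $q-1$ is prime in $\ZZ[q,q^{-1}]$ and multiplication by it is injective on ${\bf H}(S_n)\otimes\ZZ[q,q^{-1}]$), is shorter and correct, though it exploits the specific base ring $\ZZ[q,q^{-1}]$ and the specialization $T_i\mapsto s_i$, whereas the paper's triangularity argument works for $H_{\bf q}(W)$ over an arbitrary commutative ring with arbitrary parameters. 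One small point you should make explicit: linear independence of $\{w\}_{w\in S_n}$ in ${\bf H}(S_n)$ follows from Proposition~\ref{pr:HS_n factorization} together with the observation that $\ZZ\cdot 1$ is a direct summand of ${\bf D}(S_n)$ (split by the counit), since the factorization alone identifies $\sum_w c_ww$ with $\sum_w c_w(1\otimes w)$ in ${\bf D}(S_n)\otimes\ZZ S_n$.
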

We prove Theorem \ref{th:hecke hopf intro} 
in Section \ref{subsec:Simply-laced}.

Thus, it is natural to call ${\bf H}(S_n)$ the {\it Hecke-Hopf algebra} of $S_n$. 

Theorem \ref{th:hecke hopf intro} implies that any ${\bf H}(S_n)\otimes \ZZ[q,q^{-1}]$-module is automatically an $H_q(S_n)$-module. That is, 
the tensor category ${\bf H}(S_n)\otimes \ZZ[q,q^{-1}]-Mod$ of  ${\bf H}(S_n)\otimes \ZZ[q,q^{-1}]$-modules
is equivalent to a sub-category of the (non-tensor) category $H_q(S_n)$-Mod.  We can strengthen this  by noting that the relations $\Delta(\varphi(T_i))=s_i\otimes \varphi(T_i) + D_i\otimes (1-q)$ 
for $i=1,\ldots,n-1$  imply the following result.

\begin{corollary} 
\label{cor:coaction}
In the notation of Theorem \ref{th:hopf hat intro}, the image $\varphi(H_q(S_n))\cong H_q(S_n)$ is a left coideal subalgebra in $ {\bf H}(S_n)$, in particular, the assignment
$T_i\mapsto s_i\otimes T_i+ D_i\otimes (1-q)$, $i=1,\ldots,n-1$,
is a (coassociative and counital) homomorphism of $\ZZ[q,q^{-1}]$-algebras:
\begin{equation}
\label{eq:Hecke-Hopf coaction intro} 
H_q(S_n)\to  {\bf H}(S_n)\otimes  H_q(S_n)\ .
\end{equation}

\end{corollary}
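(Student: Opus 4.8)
The plan is to deduce everything from two facts already in hand: that $\varphi$ is an algebra homomorphism (Theorem~\ref{th:hecke hopf intro}), so that its image $\varphi(H_q(S_n))$ is a subalgebra of ${\bf H}(S_n)\otimes\ZZ[q,q^{-1}]$; and that $({\bf H}(S_n),\Delta,\varepsilon,S)$ is a Hopf algebra (Theorem~\ref{th:hopf hat intro}), whose structure maps are $\ZZ$-linear and, since the coproduct involves no $q$, extend $\ZZ[q,q^{-1}]$-linearly to a Hopf algebra structure over $\ZZ[q,q^{-1}]$ on ${\bf H}_q:={\bf H}(S_n)\otimes\ZZ[q,q^{-1}]$. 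Recall that a subalgebra $A\subseteq {\bf H}_q$ is a left coideal subalgebra precisely when $\Delta(A)\subseteq {\bf H}_q\otimes A$, so the goal is to verify this inclusion for $A=\varphi(H_q(S_n))$.

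First I would observe that the inclusion need only be checked on algebra generators. Since $\varphi(H_q(S_n))$ is a subalgebra, ${\bf H}_q\otimes\varphi(H_q(S_n))$ is a subalgebra of ${\bf H}_q\otimes {\bf H}_q$ (the product $(a\otimes b)(c\otimes d)=ac\otimes bd$ keeps the right factor inside $\varphi(H_q(S_n))$), and $\Delta$ is an algebra homomorphism; hence $\{x\in {\bf H}_q:\Delta(x)\in {\bf H}_q\otimes\varphi(H_q(S_n))\}$ is a $\ZZ[q,q^{-1}]$-subalgebra containing $1$. As $\varphi(H_q(S_n))$ is generated by the $\varphi(T_i)$ together with scalars, it therefore suffices to show $\Delta(\varphi(T_i))\in {\bf H}_q\otimes\varphi(H_q(S_n))$ for each $i$.

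Next comes the one computation. Using $\varphi(T_i)=s_i+(1-q)D_i$ and the coproduct formulas of Theorem~\ref{th:hopf hat intro}, I would expand and regroup as the excerpt indicates:
\[
\Delta(\varphi(T_i))=s_i\otimes s_i+(1-q)\bigl(D_i\otimes 1+s_i\otimes D_i\bigr)=s_i\otimes\varphi(T_i)+D_i\otimes(1-q).
\]
In the first summand the right tensor factor is $\varphi(T_i)\in\varphi(H_q(S_n))$, and in the second it is $(1-q)\cdot 1=\varphi\bigl((1-q)\cdot 1\bigr)\in\varphi(H_q(S_n))$; hence both terms lie in ${\bf H}_q\otimes\varphi(H_q(S_n))$. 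This proves that $\varphi(H_q(S_n))$ is a left coideal subalgebra.

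Finally, for the ``in particular'' clause I would transport the corestricted coproduct across the isomorphism $\varphi$ onto its image (injectivity from Theorem~\ref{th:hecke hopf intro}), setting $\rho:=(\mathrm{id}\otimes\varphi^{-1})\circ\Delta\circ\varphi$ and using the canonical identification ${\bf H}_q\otimes_{\ZZ[q,q^{-1}]} H_q(S_n)\cong {\bf H}(S_n)\otimes H_q(S_n)$. Being a composite of algebra homomorphisms, $\rho$ is an algebra homomorphism $H_q(S_n)\to {\bf H}(S_n)\otimes H_q(S_n)$, and the displayed computation gives exactly $\rho(T_i)=s_i\otimes T_i+D_i\otimes(1-q)$. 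Coassociativity and counitality of $\rho$ are then automatic, being the restriction to the coideal subalgebra of the identities $(\Delta\otimes\mathrm{id})\Delta=(\mathrm{id}\otimes\Delta)\Delta$ and $(\varepsilon\otimes\mathrm{id})\Delta=\mathrm{id}$ for ${\bf H}_q$. I do not expect a genuine obstacle here: the substance is already contained in Theorems~\ref{th:hopf hat intro} and~\ref{th:hecke hopf intro}, and the only points demanding care—the bookkeeping of the ground ring $\ZZ[q,q^{-1}]$ and the reduction to generators—are routine.
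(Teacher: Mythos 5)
Your argument is correct and is essentially the paper's own: the authors justify the corollary by the single observation that $\Delta(\varphi(T_i))=s_i\otimes\varphi(T_i)+D_i\otimes(1-q)$, which is exactly the computation at the heart of your proposal. The additional bookkeeping you supply (reduction to generators via the subalgebra $\{x:\Delta(x)\in{\bf H}_q\otimes\varphi(H_q(S_n))\}$, and transporting the corestricted coproduct through $\varphi^{-1}$ using injectivity from Theorem~\ref{th:hecke hopf intro}) is a correct expansion of what the paper leaves implicit.
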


In turn, the coaction \eqref{eq:Hecke-Hopf coaction intro} defines a large family of conservative endo-functors of the category $H_q(S_n)-Mod$.

\begin{corollary} \label{cor:F_U}
For any ${\bf H}(S_n)$-module $M$ the assignments $V\mapsto F_M(V):=M\otimes  V$ 
define a family of endo-functors  on $H_q(S_n)-Mod$ so that $F_{M\otimes N}=F_M\circ F_N$ for all $M,N\in {\bf H}(S_n)-Mod$.
%

\end{corollary}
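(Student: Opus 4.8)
The plan is to derive everything formally from the coaction
\begin{equation}
\label{eq:coaction-plan}
\rho\colon H_q(S_n)\to {\bf H}(S_n)\otimes H_q(S_n),\qquad T_i\mapsto s_i\otimes T_i+D_i\otimes(1-q),
\end{equation}
which by Corollary~\ref{cor:coaction} is a coassociative and counital homomorphism of $\ZZ[q,q^{-1}]$-algebras (here and below all tensor products are taken over $\ZZ[q,q^{-1}]$, regarding each ${\bf H}(S_n)$-module as a module over ${\bf H}(S_n)\otimes\ZZ[q,q^{-1}]$). Writing $\rho(T)=\sum T_{(1)}\otimes T_{(2)}$ with $T_{(1)}\in {\bf H}(S_n)$ and $T_{(2)}\in H_q(S_n)$, I define, for an ${\bf H}(S_n)$-module $M$ and an $H_q(S_n)$-module $V$, the action of $T\in H_q(S_n)$ on $F_M(V):=M\otimes V$ by $T\cdot(m\otimes v)=\sum(T_{(1)}m)\otimes(T_{(2)}v)$. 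Concretely this is the tautological ${\bf H}(S_n)\otimes H_q(S_n)$-module structure on $M\otimes V$ pulled back along the algebra map $\rho$; since $\rho$ is an algebra homomorphism the module axioms hold automatically, and none of the Hecke relations need be checked by hand.

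For a morphism $f\colon V\to V'$ in $H_q(S_n)-Mod$ I set $F_M(f)=\mathrm{id}_M\otimes f$. A one-line computation in Sweedler notation, using that $f$ commutes with each $T_{(2)}$, shows $F_M(f)$ is $H_q(S_n)$-linear; clearly $F_M(\mathrm{id})=\mathrm{id}$ and $F_M$ respects composition, so each $F_M$ is an endo-functor of $H_q(S_n)-Mod$.

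The composition law $F_{M\otimes N}=F_M\circ F_N$ is where the Hopf structure enters, via the coassociativity of $\rho$:
\begin{equation}
\label{eq:coassoc-plan}
(\Delta\otimes\mathrm{id})\circ\rho=(\mathrm{id}\otimes\rho)\circ\rho.
\end{equation}
Recall that $M\otimes N$ carries the ${\bf H}(S_n)$-module structure coming from the coproduct $\Delta$ of Theorem~\ref{th:hopf hat intro}. Evaluating the two actions on $m\otimes n\otimes v$, the action on $F_{M\otimes N}(V)=(M\otimes N)\otimes V$ routes $T_{(1)}$ through $\Delta$ and is governed by $(\Delta\otimes\mathrm{id})\circ\rho$, whereas the action on $F_M(F_N(V))=M\otimes(N\otimes V)$ applies $\rho$ twice and is governed by $(\mathrm{id}\otimes\rho)\circ\rho$. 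By~\eqref{eq:coassoc-plan} these agree, so the canonical associativity isomorphism $(M\otimes N)\otimes V\cong M\otimes(N\otimes V)$ is $H_q(S_n)$-linear and identifies the two endo-functors; naturality of this identification in $V$ is immediate.

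I expect no serious obstacle: the only substantive input is the coassociativity~\eqref{eq:coassoc-plan}, and this is precisely the left-coideal-subalgebra statement already supplied by Corollary~\ref{cor:coaction}, being the restriction of the coassociativity of $\Delta$ to the coideal $\varphi(H_q(S_n))$. Everything else is the standard bookkeeping realizing the category of ${\bf H}(S_n)$-modules as acting on the module category of the comodule algebra $H_q(S_n)$. Finally, the counit axiom of $\rho$ gives $F_{\ZZ[q,q^{-1}]}=\mathrm{Id}$ for the trivial ${\bf H}(S_n)$-module, from which the conservativity asserted before the statement follows for a large class of $M$: e.g.\ whenever the trivial module is a direct summand of $M$, so that $V$ is a natural retract of $F_M(V)$, or whenever $M$ is faithfully flat over $\ZZ[q,q^{-1}]$.
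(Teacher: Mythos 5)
Your proof is correct and follows exactly the route the paper intends: the corollary is stated as an immediate consequence of the coaction \eqref{eq:Hecke-Hopf coaction intro}, and the paper's own use of it (in the proof of Theorem \ref{th:H(S3)-structure}) is precisely your construction — pull back the ${\bf H}(S_n)\otimes H_q(S_n)$-module structure on $M\otimes V$ along the algebra map $\rho$, with the composition law coming from coassociativity of $\rho$, i.e.\ $(\Delta\otimes\mathrm{id})\circ\rho=(\mathrm{id}\otimes\rho)\circ\rho$. Your closing remarks on when $F_M$ is conservative are a sensible supplement not supplied by the paper.
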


\begin{remark} If $q=1$, then $\CC S_n$ is a Hopf subalgebra of ${\bf H}(S_n)\otimes \CC$. Of course, this gives a ``classical" analogue $\underline F_M:\CC S_n-Mod\to \CC S_n-Mod$ of the functors $F_M$. However, we do not expect that, under the equivalence of ${\bf H}_q(S_n)-Mod$ with $\CC S_n-Mod$, for a generic $q\in \CC$, the functors $F_M$ will identify with $\underline F_M$. 
\end{remark}

The following result shows the existence of a large 
number of finite-dimensional ${\bf H}(S_n)$-modules.

\begin{proposition} 
\label{pr:demazure}
For any $n\ge 2$, the polynomial algebra $\ZZ[x_1,\ldots,x_n]$ is an ${\bf H}(S_n)$-module algebra via the natural permutation action of $S_n$ and
$$D_i\mapsto \frac{1}{1-x_ix_{i+1}^{-1}}(1-s_i)\ ,$$
the $i$-th Demazure operator. In particular, any graded component of $\ZZ[x_1,\ldots,x_n]$ is an ${\bf H}(S_n)$-submodule. 
\end{proposition}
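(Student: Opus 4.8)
The plan is to prove two things separately: that the permutation action of the $s_i$ together with $D_i=\frac{1}{1-x_ix_{i+1}^{-1}}(1-s_i)$ define operators on $R:=\ZZ[x_1,\ldots,x_n]$ satisfying every relation of Definition \ref{def:hat H}, so that $R$ becomes an ${\bf H}(S_n)$-module; and that this structure is compatible with multiplication and with the coproduct $\Delta$, so that $R$ is a module algebra. I would begin with well-definedness: writing $D_if=\frac{f-s_if}{1-x_ix_{i+1}^{-1}}=-x_{i+1}\cdot\frac{f-s_if}{x_i-x_{i+1}}$, the numerator $f-s_if$ is $s_i$-antisymmetric and hence divisible by $x_i-x_{i+1}$, and since $x_i-x_{i+1}$ is primitive, Gauss's lemma keeps the quotient in $R$; thus $D_i(R)\subseteq R$. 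As $D_i$ preserves total degree, each component $M_k$ is a submodule of finite rank over $\ZZ$, which is the final assertion.

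The verification of the relations rests on the identity $c+s_i(c)=1$ for $c=\frac{1}{1-x_ix_{i+1}^{-1}}$. The relation $s_i^2=1$ and the braid and commutation relations among the $s_i$ hold in the permutation action, while $D_js_i=s_iD_j$ and $D_jD_i=D_iD_j$ for $|i-j|>1$ hold because the operators act on disjoint sets of variables. Two short computations using $c+s_i(c)=1$ yield $D_i^2=D_i$ and $s_iD_i+D_is_i=s_i-1$. For the two remaining relations, with $|i-j|=1$, I would invoke locality: each of $s_i,s_{i\pm1},D_i,D_{i\pm1}$ fixes $x_k$ and commutes with multiplication by $x_k$ for $k\notin\{i-1,i,i+1,i+2\}$, so any identity of words in these operators may be tested on the subring generated by three consecutive variables, reducing everything to the case $n=3$ on $\ZZ[x_1,x_2,x_3]$.

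In that case the twisted-braid relation $D_is_js_i=s_js_iD_j$ is clean, since it merely expresses the conjugation-covariance
$$w\,\Big(\frac{1}{1-x_ax_b^{-1}}(1-(a\,b))\Big)\,w^{-1}=\frac{1}{1-x_{w(a)}x_{w(b)}^{-1}}(1-(w(a)\,w(b)))$$
of the pairwise operators, which is immediate from the formula. The genuine obstacle is the cubic relation $D_js_iD_j=s_iD_jD_i+D_iD_js_i+s_iD_js_i$: this is the single place where the precise Fomin--Kirillov-type structure of ${\bf H}(S_n)$ must be reproduced, and I expect it to require the full explicit computation on $\ZZ[x_1,x_2,x_3]$ (equivalently, rewriting both sides via the covariance above and matching them against the triangle relations of the pairwise operators, cf.\ Proposition \ref{pr:D presentation Sn}).

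Finally, the module-algebra axioms are immediate once the relations hold: $s_i(fg)=s_i(f)\,s_i(g)$ matches $\Delta(s_i)=s_i\otimes s_i$; the twisted Leibniz rule $D_i(fg)=D_i(f)\,g+s_i(f)\,D_i(g)$ follows directly from $D_if=c(f-s_if)$ and matches $\Delta(D_i)=D_i\otimes1+s_i\otimes D_i$; and $s_i(1)=1$, $D_i(1)=0$ match $\varepsilon(s_i)=1$, $\varepsilon(D_i)=0$. This exhibits $R$ as an ${\bf H}(S_n)$-module algebra.
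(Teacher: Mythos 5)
Your overall architecture is sound and runs parallel to the paper's: the paper also realizes the action inside a (localized) cross product, writing $t_i=x_ix_{i+1}^{-1}$, $\tau_i=\frac{1}{1-t_i}$ and $D_i\mapsto\tau_i(1-s_i)\in\mathcal{Q}_I\rtimes\ZZ S_n$, verifying the relations of $\hat{\bf H}(S_n)$ there (Proposition \ref{pr:hat H action on LI and vanishing of K_ij}) and then checking that the explicit generators of ${\bf K}_{ij}(S_n)$ computed in Proposition \ref{pr:K for S3} act by zero; your preliminary steps (integrality of $D_i f$ via divisibility of $f-s_if$ by $x_i-x_{i+1}$, degree preservation, the identity $c+s_i(c)=1$ for the rank--one relations, disjointness for $|i-j|>1$, conjugation covariance for the linear braid relation, and the twisted Leibniz rule matching $\Delta(D_i)=D_i\otimes 1+s_i\otimes D_i$) are all correct and essentially what the paper does.

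The gap is that you never actually verify the one relation that carries the content of the proposition. For the cubic relation $D_js_iD_j=s_iD_jD_i+D_iD_js_i+s_iD_js_i$ you only announce that you ``expect it to require the full explicit computation'' and point to Proposition \ref{pr:D presentation Sn}; as written this is a placeholder, not a proof, and it sits exactly where the argument could fail (the constant term $-D_{ik}$ in the Fomin--Kirillov-type relation is precisely what the computation must confirm). The paper closes this by reducing, after right-multiplication by $s_i$, to showing that $K_{ij}=D_iD_j-D_jD_{ij}-D_{ij}D_i+D_{ij}$ (with $D_{ij}=s_iD_js_i$) acts by zero, and then expanding each product $\tau_a(1-s_a)\tau_b(1-s_b)$ using $s_a\tau_b s_a^{-1}=\tau$-covariance; the group-algebra terms cancel and what remains is $k_{ij}(1-s_is_j)$ with $k_{ij}=\tau_i\tau_j-\tau_j\tau_{ij}-\tau_{ij}\tau_i+\tau_{ij}$, $\tau_{ij}=\frac{1}{1-t_it_j}$. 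The whole relation thus comes down to the rational identity $\tau_i+\tau_j-1=\frac{1-t_it_j}{(1-t_i)(1-t_j)}=\tau_{ij}^{-1}\tau_i\tau_j$, i.e. $k_{ij}=0$. You need to supply this computation (or an equivalent one on $\ZZ[x_1,x_2,x_3]$) for the proof to be complete; everything else in your proposal stands.
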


We prove Proposition \ref{pr:demazure} in Section 
\ref{subsec:Action on Laurent polynomials and verification of Conjecture}.

As an application, for any quadratic solution of QYBE we construct infinitely many new quadratic solutions of QYBE (Section \ref{sec:QYBE}).

Now we generalize the above constructions to arbitrary Coxeter groups $W$.

Recall that a Coxeter group $W$ is generated by $s_i,i\in I$ subject to relations $(s_is_j)^{m_{ij}}=1$, 
where $m_{ij}=m_{ji}\in \ZZ_{\ge 0}$  are such that $m_{ij}=1$ iff $i=j$.

 
\begin{definition} 
\label{def:Hecke-Hopf algebra W}
For any Coxeter group $W=\langle s_i|i\in I\rangle$  we define   
$\hat {\bf H}(W)$ as the $\ZZ$-algebra generated by 
$s_i,D_i$, $i\in I$ subject to relations:

(i) Rank $1$ relations: $s_i^2=1$,  $D_i^2=D_i$, $s_iD_i+D_is_i=s_i-1$ for $i\in I$.


(ii) Coxeter relations: $(s_is_j)^{m_{ij}}=1$ 

(iii) Linear braid relations: $\underbrace{D_is_js_i\cdots s_{j'}}_{m_{ij}} =\underbrace{s_j\cdots s_{i'}s_{j'}D_{i'}}_{m_{ij}}$
for all distinct $i,j\in I$ with $m_{ij}\ne 0$, where $i'=\begin{cases} 
i & \text{if $m_{ij}$ is even}\\
j & \text{if $m_{ij}$ is odd}\\
\end{cases}$ and  $\{i',j'\}=\{i,j\}$.

\end{definition}

\begin{example} The linear braid relation for $W=S_3$ is $D_1s_2s_1=s_2s_1D_2$ and linear braid relations for the dihedral group $W$ of order $8$ are $D_1s_2s_1s_2=s_2s_1s_2D_1$ and $D_2s_1s_2s_1=s_1s_2s_1D_2$.

\end{example}


\begin{theorem} 
\label{th:hopf hat W}
For any Coxeter group  the algebra $\hat {\bf H}(W)$ is a Hopf algebra with the coproduct $\Delta$,  the counit $\varepsilon$, and antipode anti-automorphism $S$ given respectively by (for $i\in I$):
$$\Delta(s_i)=s_i\otimes s_i,~\Delta(D_i)=D_i\otimes 1+s_i\otimes D_i,~\varepsilon(s_i)=1,~\varepsilon(D_i)=0,~S(s_i)=s_i,~S(D_i)=-s_iD_i\ .$$

\end{theorem}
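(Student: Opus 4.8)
The plan is to verify that $\Delta$, $\varepsilon$, and $S$ as defined are compatible with all the defining relations of $\hat{\bf H}(W)$, and then check the Hopf axioms. The logical structure follows the universal property of a presented algebra: since $\hat{\bf H}(W)$ is given by generators and relations, to define an algebra homomorphism $\Delta:\hat{\bf H}(W)\to\hat{\bf H}(W)\otimes\hat{\bf H}(W)$ it suffices to check that the proposed images of the generators satisfy the same relations. First I would confirm that $\Delta$ is an algebra homomorphism into the tensor-product algebra. The grouplike formula $\Delta(s_i)=s_i\otimes s_i$ is standard and immediately respects $s_i^2=1$ and the Coxeter relations (ii). The skew-primitive formula $\Delta(D_i)=D_i\otimes 1+s_i\otimes D_i$ is the crux: I would substitute it into each rank-$1$ relation of (i) and each linear braid relation of (iii) and verify that both sides of $\Delta$ agree. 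For instance, for $D_i^2=D_i$ one computes $\Delta(D_i)^2=(D_i\otimes1+s_i\otimes D_i)^2$, expands using $s_i^2=1$ and $s_iD_i+D_is_i=s_i-1$, and checks it collapses to $\Delta(D_i)$; similarly the mixed relation $s_iD_i+D_is_i=s_i-1$ must be preserved under $\Delta$.

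Next I would treat the linear braid relations (iii), which is where the specific skew-primitive structure and the choice of $i',j'$ must conspire correctly. Because $\Delta(s_i)$ is grouplike and $\Delta(D_i)$ is $(s_i,1)$-skew-primitive, applying $\Delta$ to a word of the form $D_i s_j s_i\cdots s_{j'}$ produces a sum of two tensor-leg terms governed by where the single $D$ sits; the point is that the linear braid relation in each tensor factor, combined with the grouplike behaviour of the $s$'s, forces the two sides to match. Establishing that $\Delta$, $\varepsilon$ are algebra maps respecting all relations gives well-definedness; coassociativity $(\Delta\otimes\mathrm{id})\Delta=(\mathrm{id}\otimes\Delta)\Delta$ and the counit axioms then only need to be checked on generators (since all maps involved are algebra homomorphisms and the generators generate), and both are routine for grouplike and skew-primitive elements.

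For the antipode, I would check that $S$ extends to a well-defined algebra \emph{anti}-automorphism by verifying that the assignments $s_i\mapsto s_i$, $D_i\mapsto -s_iD_i$ respect the opposite-multiplied relations, and then verify the antipode axiom $\sum S(a_{(1)})a_{(2)}=\varepsilon(a)1=\sum a_{(1)}S(a_{(2)})$ on the generators $s_i$ (trivial, since $s_i$ is grouplike of order dividing $2$ with $S(s_i)=s_i=s_i^{-1}$) and $D_i$. The $D_i$ check is a short computation: $S(D_i)\cdot 1+S(s_i)\cdot D_i=-s_iD_i+s_iD_i=0=\varepsilon(D_i)1$, using the given rank-$1$ relations, and symmetrically for the other convolution.

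The main obstacle I expect is the verification that $\Delta$ respects the linear braid relations (iii) in full generality over all $m_{ij}$, since the length-$m_{ij}$ alternating words and the parity-dependent endpoints $i',j'$ make the bookkeeping of the skew-primitive expansion delicate; I anticipate this reduces to an inductive identity already implicit in the algebra's relations, so I would first isolate and prove the needed intertwining identity for $\Delta$ applied to the two sides, and only then assemble the global Hopf-algebra statement. Much of this likely parallels the $S_n$ case of Theorem \ref{th:hopf hat intro}, so I would set up the argument to specialize to that case as a consistency check.
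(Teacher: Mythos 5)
Your proof is correct, and all the computations you sketch do go through (in particular, for the linear braid relation $D_iu=uD_{i'}$ with $u=s_js_i\cdots s_{j'}$ the two skew-primitive terms match precisely because $u$ equals the alternating word on the right-hand side and $s_iu=us_{i'}$ is the Coxeter relation; and the antipode axiom need only be checked on generators since the set where it holds is closed under multiplication once $S$ is an anti-homomorphism). However, your route is genuinely different from the paper's. The paper does not verify the relations under $\Delta$, $\varepsilon$, $S$ directly. Instead it first proves (Proposition \ref{pr:relations hat H coxeter}, by induction on $\ell(w)$ starting from the linear braid relations) that the Coxeter presentation of $\hat{\bf H}'(W)$ is equivalent to the conjugation relations $wD_sw^{-1}=\chi_{w,s}D_{wsw^{-1}}+\sigma_{w,s}(1-wsw^{-1})$ for explicit cocycles $\chi,\sigma$ as in \eqref{eq:chi sigma defined}; it then invokes the general machinery of Section \ref{sect:Generalization to Hopf algebras}: the free product $RW*T(V)$ is a Hopf algebra (Lemma \ref{le:free Hopf}), the conjugation relations generate a Hopf ideal because they span a coideal satisfying a Yetter--Drinfeld compatibility (Proposition \ref{pr:deformed nichols}), and the quadratic relations $D_s^2=D_s$ are imposed by a further Hopf ideal generated by a right coideal (Lemma \ref{le:Kfs} together with Proposition \ref{pr:from coideal to hopf ideal}). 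What the paper's approach buys is the simultaneous proof of the generalization to arbitrary groups (Theorem \ref{th:hopf hat W gen}), the existence and conjugation behaviour of the elements $D_s$ for all reflections $s\in{\mathcal S}$, and the setup reused for the factorization and coideal results later; what your approach buys is a short, self-contained, purely computational verification that never leaves the Coxeter presentation. Both are valid; yours would serve as a proof of Theorem \ref{th:hopf hat W} alone but would not replace the surrounding structural results.
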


We prove Theorem \ref{th:hopf hat W}  with its generalization to other groups, Theorem \ref{th:hopf hat W gen}, in Section \ref{subsec:proof of  Theorem hopf hat}.

Define ${\mathcal S}:=\{ws_iw^{-1}\,|\,w\in W, i\in I\}$. This is the set of all reflections in $W$. It is easy to see that linear braid relations in 
$\hat {\bf H}(W)$ imply that for any $s\in {\mathcal S}$ there is a unique element $D_s\in \hat {\bf H}(W)$ such that $D_{s_i}=D_i$ for $i\in I$ and $D_{s_iss_i}=s_iD_ss_i$ 
for any $i\in I$, $s\in {\mathcal S}\setminus \{s_i\}$ (Lemma \ref{le:conjugation D}).

Let $\hat {\bf D}(W)$ be the subalgebra of $\hat {\bf H}(W)$ generated by all $D_s$, $s\in {\mathcal S}$ and 
${\bf K}(W):=\bigcap\limits_{w\in W} w\hat {\bf D}(W)w^{-1}$.

By definition, ${\bf K}(W)$ is a  subalgebra of $\hat {\bf D}(W)$ and 
$w{\bf K}(W)w^{-1}={\bf K}(W)$ for all $w\in W$.

\begin{theorem} 
\label{th:Coideal Hopf} 
For any Coxeter group $W$
the ideal ${\bf J}(W)$ generated by ${\bf K}(W)\cap Ker~\varepsilon$
is a Hopf ideal, therefore, the quotient algebra $\underline {\bf H}(W)=\hat {\bf H}(W)/{\bf J}(W)$is a Hopf algebra.

\end{theorem}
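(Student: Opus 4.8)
The plan is to verify the three defining properties of a Hopf ideal for $\mathbf{J}(W)$: that $\varepsilon(\mathbf{J}(W))=0$, that $\Delta(\mathbf{J}(W))\subseteq \mathbf{J}(W)\otimes\hat{\mathbf{H}}(W)+\hat{\mathbf{H}}(W)\otimes\mathbf{J}(W)$, and that $S(\mathbf{J}(W))\subseteq\mathbf{J}(W)$; the quotient of a Hopf algebra by a Hopf ideal is then automatically a Hopf algebra. The counit condition is immediate: every generator $k\in\mathbf{K}(W)\cap\ker\varepsilon$ satisfies $\varepsilon(k)=0$, whence $\varepsilon(akb)=\varepsilon(a)\varepsilon(k)\varepsilon(b)=0$ for $a,b\in\hat{\mathbf{H}}(W)$. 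Moreover, since $\mathbf{J}(W)\otimes\hat{\mathbf{H}}(W)+\hat{\mathbf{H}}(W)\otimes\mathbf{J}(W)$ is a two-sided ideal of $\hat{\mathbf{H}}(W)\otimes\hat{\mathbf{H}}(W)$ and $S$ is an anti-automorphism, both remaining conditions reduce to checking them on the generators $k$.

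The technical heart of the argument is the following coaction statement, which I would isolate as a lemma:
\[
\Delta(\mathbf{K}(W))\subseteq \hat{\mathbf{H}}(W)\otimes\mathbf{K}(W).
\]
I would prove it in two stages. First, using $\Delta(D_s)=D_s\otimes 1+s\otimes D_s$ for every reflection $s$ (which holds because this formula is compatible with the conjugation rule $D_{s_iss_i}=s_iD_ss_i$ defining the $D_s$ and with $\Delta(s_i)=s_i\otimes s_i$), together with the fact that $\hat{\mathbf{H}}(W)\otimes\hat{\mathbf{D}}(W)$ is a subalgebra, a straightforward induction on the degree of a monomial in the $D_s$ gives the weaker inclusion $\Delta(\hat{\mathbf{D}}(W))\subseteq \hat{\mathbf{H}}(W)\otimes\hat{\mathbf{D}}(W)$. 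Second, I would upgrade $\hat{\mathbf{D}}(W)$ to $\mathbf{K}(W)$ on the right-hand factor by exploiting conjugation-equivariance of $\Delta$: since each $w\in W$ is grouplike, $\Delta\circ\mathrm{Ad}_w=(\mathrm{Ad}_w\otimes\mathrm{Ad}_w)\circ\Delta$. For $k\in\mathbf{K}(W)$ every conjugate $\mathrm{Ad}_w(k)$ lies in $\hat{\mathbf{D}}(W)$, so applying the weaker inclusion to $\mathrm{Ad}_w(k)$ and then $\mathrm{Ad}_{w^{-1}}\otimes\mathrm{id}$ shows $\sum k_{(1)}\otimes\mathrm{Ad}_w(k_{(2)})\in\hat{\mathbf{H}}(W)\otimes\hat{\mathbf{D}}(W)$ for all $w$. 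Writing $\Delta(k)=\sum_j a_j\otimes b_j$ with the $a_j$ linearly independent over $\ZZ$ and using the $\ZZ$-module factorization $\hat{\mathbf{H}}(W)\cong\hat{\mathbf{D}}(W)\otimes\ZZ W$ (the analogue for general $W$ of Proposition~\ref{pr:HS_n factorization}, which realizes $\hat{\mathbf{D}}(W)$ as a direct summand of $\hat{\mathbf{H}}(W)$), I can separate factors to conclude $\mathrm{Ad}_w(b_j)\in\hat{\mathbf{D}}(W)$ for all $w$, i.e.\ each $b_j\in\mathbf{K}(W)$.

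Granting the lemma, the coideal and antipode conditions follow formally. For $k\in\mathbf{K}^+:=\mathbf{K}(W)\cap\ker\varepsilon$, write $\Delta(k)=\sum_j a_j\otimes b_j$ with $b_j\in\mathbf{K}(W)$ and split $b_j=\varepsilon(b_j)\,1+b_j'$ where $b_j'\in\mathbf{K}^+\subseteq\mathbf{J}(W)$. The counit axiom $(\mathrm{id}\otimes\varepsilon)\Delta=\mathrm{id}$ gives $\sum_j\varepsilon(b_j)a_j=k$, so $\Delta(k)=k\otimes 1+\sum_j a_j\otimes b_j'$, which lies in $\mathbf{J}(W)\otimes\hat{\mathbf{H}}(W)+\hat{\mathbf{H}}(W)\otimes\mathbf{J}(W)$ because $k\in\mathbf{J}(W)$ and $b_j'\in\mathbf{J}(W)$. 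For the antipode, the axiom $\sum_j S(a_j)b_j=\varepsilon(k)1=0$ combined with $\sum_j\varepsilon(b_j)S(a_j)=S(k)$ yields $S(k)=-\sum_j S(a_j)b_j'\in\mathbf{J}(W)$, since $\mathbf{J}(W)$ is a left ideal and $b_j'\in\mathbf{J}(W)$.

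The main obstacle I anticipate is the separation-of-factors step inside the lemma: passing from the conjugation-equivariant inclusion to the conclusion $b_j\in\mathbf{K}(W)$ relies essentially on $\hat{\mathbf{D}}(W)$ being a $\ZZ$-module direct summand of $\hat{\mathbf{H}}(W)$, hence on a PBW/factorization theorem $\hat{\mathbf{H}}(W)\cong\hat{\mathbf{D}}(W)\otimes\ZZ W$ generalizing Proposition~\ref{pr:HS_n factorization}. Everything else is either formal Hopf-algebra bookkeeping or the routine induction establishing $\Delta(\hat{\mathbf{D}}(W))\subseteq\hat{\mathbf{H}}(W)\otimes\hat{\mathbf{D}}(W)$.
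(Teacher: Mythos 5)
Your proposal is correct and follows essentially the same route as the paper: the paper shows that ${\bf K}(W)$ is a left coideal (Proposition~\ref{pr:general K}, via conjugation-equivariance of $\Delta$ under grouplike elements together with a separation-of-factors lemma that rests on the freeness coming from the factorization $\hat{\bf H}(W)=\hat{\bf D}(W)\cdot\ZZ W$), and then applies the formal fact that for any left coideal ${\bf K}$ the ideal generated by ${\bf K}\cap Ker~\varepsilon$ is a Hopf ideal (Proposition~\ref{pr:from coideal to hopf ideal}), exactly as you do. The only cosmetic point is that in your separation step you should expand $\Delta(k)$ along an actual $\ZZ$-basis of $\hat{\bf H}(W)$ rather than merely over linearly independent elements, since over $\ZZ$ linear independence alone does not allow you to conclude that each second tensor factor lies in $\hat{\bf D}(W)$.
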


We prove Theorem \ref{th:Coideal Hopf} in Section \ref{subsec:proof of Theorem Coideal Hopf}.

\begin{remark} In Section \ref{sect:Generalization to other groups} we  generalize Theorem \ref{th:Coideal Hopf} to arbitrary groups $W$ (Theorem \ref{th:Coideal Hopf gen}) and in Section \ref{sect:Generalization to Hopf algebras} we generalize it even further -- to the case when $W$ is replaced by an arbitrary Hopf algebra $H$ (Theorem \ref{th:general hopf ideal}). 

\end{remark}

We refer to  $\underline {\bf H}(W)$ as the {\it lower Hecke-Hopf algebra} of $W$. 

\begin{definition}
\label{def:hecke algebra}
Given a  Coxeter group $W$, a commutative unital ring $\kk$,  and ${\bf q}=(q_i)\in \kk^I$ such that $q_i=q_j$ whenever $m_{ij}$ is odd, a (generalized) Hecke algebra $H_{\bf q}(W)$ is a $\kk$-algebra generated 
by $T_i$, $i\in I$, subject to relations:

$\bullet$ quadratic relations: $T_i^2=(1-q_i)T_i+q_i$ 
for $i\in I$.

$\bullet$ braid relations: $\underbrace{T_iT_j\cdots}_{m_{ij}} =\underbrace{T_jT_i\cdots}_{m_{ij}}$
for all distinct $i,j\in I$.

\end{definition}

%


\begin{maintheorem} 
\label{th:Hecke in Hecke-Hopf}
For any commutative unital ring $\kk$ the assignments 
$T_i\mapsto s_i+(1-q_i)D_i$,  
$i\in I$, define an injective homomorphism of $\kk$-algebras 
$\underline \varphi_W:H_{\bf q}(W)\to \underline {\bf H}(W)\otimes  \kk$ (whose image is a left coideal subalgebra in $\underline {\bf H}(W)\otimes  \kk$).

\end{maintheorem}

We prove Theorem \ref{th:Hecke in Hecke-Hopf}  in Section \ref{subsec:proof of Theorem Hecke in Hecke-Hopf}.

The following is a corollary from the proof of Theorem \ref{th:Hecke in Hecke-Hopf} (in the case $q_i$ are integer powers of $q$, it was proved in \cite[Section 3.1]{L}).

\begin{corollary} 
\label{cor:Tw form a basis} For any commutative unital ring $\kk$ the Hecke algebra $H_{\bf q}(W)$ is a free $\kk$-module, moreover, the elements $T_w$, $w\in W$ form a $\kk$-basis in $H_{\bf q}(W)$. 

\end{corollary}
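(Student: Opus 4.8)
The plan is to derive the Corollary from the structure exposed in the proof of Theorem \ref{th:Hecke in Hecke-Hopf}, treating separately the two assertions: that the $T_w$ \emph{span} $H_{\bf q}(W)$ and that they are \emph{linearly independent}. Spanning is the standard length-reduction argument and needs no Hecke--Hopf input. First I would observe that for each $w\in W$ the element $T_w:=T_{i_1}\cdots T_{i_\ell}$ attached to a reduced word $w=s_{i_1}\cdots s_{i_\ell}$ is independent of the chosen reduced word: by Matsumoto's theorem (the word property for Coxeter groups) any two reduced words differ by a sequence of braid moves, and the braid relations of Definition \ref{def:hecke algebra} guarantee that $T_w$ is well defined. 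Then, using the quadratic relation $T_i^2=(1-q_i)T_i+q_i$, one checks by induction on length that $T_iT_w=T_{s_iw}$ when $\ell(s_iw)>\ell(w)$ and $T_iT_w=(1-q_i)T_w+q_iT_{s_iw}$ otherwise; hence every monomial in the $T_i$ lies in $\sum_{w}\kk\,T_w$, so the $T_w$ span $H_{\bf q}(W)$ as a $\kk$-module.

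For linear independence I would exploit the factorization $\underline{\bf H}(W)=\underline{\bf D}(W)\cdot\ZZ W$, the general-$W$ counterpart of Proposition \ref{pr:HS_n factorization} produced in the course of proving Theorem \ref{th:Hecke in Hecke-Hopf}; here $\underline{\bf D}(W)$ is the Demazure subalgebra generated by the $D_s$, and $\ZZ W$ embeds via $s_i\mapsto s_i$. Since the multiplication map $\underline{\bf D}(W)\otimes\ZZ W\to\underline{\bf H}(W)$ is bijective and $\ZZ W$ acts on $\underline{\bf D}(W)$ by conjugation, the counit $\varepsilon$ of Theorem \ref{th:hopf hat W} is $W$-invariant on $\underline{\bf D}(W)$ (indeed $\varepsilon(s_i D s_i)=\varepsilon(s_i)\varepsilon(D)\varepsilon(s_i)=\varepsilon(D)$); consequently $d\cdot w\mapsto\varepsilon(d)\,w$ is a well-defined algebra homomorphism $\pi\colon\underline{\bf H}(W)\to\ZZ W$. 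Because $\varepsilon(D_i)=0$ and $\pi(s_i)=s_i$, the composite $\psi:=(\pi\otimes\mathrm{id})\circ\underline\varphi_W\colon H_{\bf q}(W)\to\kk W$ satisfies $\psi(T_i)=s_i$, whence $\psi(T_w)=s_{i_1}\cdots s_{i_\ell}=w$ for any reduced word (no collapse, the word being reduced). As $\{w\mid w\in W\}$ is a $\kk$-basis of $\kk W$, any relation $\sum_w c_wT_w=0$ maps under $\psi$ to $\sum_w c_w\,w=0$, forcing all $c_w=0$. Together with spanning, this shows the $T_w$ form a $\kk$-basis, so $H_{\bf q}(W)$ is a free $\kk$-module.

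I expect the main obstacle to be the justification of $\pi$, i.e.\ the factorization $\underline{\bf H}(W)=\underline{\bf D}(W)\cdot\ZZ W$ with $\ZZ W$ embedded as a genuine subalgebra for an arbitrary Coxeter group $W$. This is exactly the nontrivial input supplied by the proof of Theorem \ref{th:Hecke in Hecke-Hopf}: one must know both that the defining relations of $\underline{\bf H}(W)$ do not collapse the group algebra $\ZZ W$ (so the $w$ stay independent in the target of $\pi$) and that the Demazure part furnishes a complementary tensor factor on which $W$ acts by conjugation. Once this PBW-type decomposition and the $W$-invariance of $\varepsilon|_{\underline{\bf D}(W)}$ are established, the remainder of the argument is purely formal.
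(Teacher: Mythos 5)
Your spanning argument and your overall strategy for independence (project onto the group--algebra factor of the factorization $\underline{\bf H}(W)=\underline{\bf D}(W)\cdot\ZZ W$ and use that $W$ is a basis of $\kk W$) are in the right spirit, but the key step is wrong: the map $\pi(d\cdot w)=\varepsilon(d)\,w$ is \emph{not} an algebra homomorphism. The reason is that $\ZZ W$ does not act on $\underline{\bf D}(W)$ by conjugation: by \eqref{eq:si conj Dsi w}, $s_iD_is_i=1-D_i-s_i$, which has a group-algebra tail, so $\underline{\bf H}(W)$ is a \emph{deformed} cross product (this is exactly the cocycle term $\sigma_{w,s}(1-wsw^{-1})$ in \eqref{eq:relations hat H}), not a smash product. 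Concretely, $s_iD_i=-D_is_i+s_i-1$, so $\pi(s_iD_i)=s_i-1$ while $\pi(s_i)\pi(D_i)=0$. You can also see the failure downstream: if $\psi=(\pi\otimes\mathrm{id})\circ\underline\varphi_W$ were an algebra map with $\psi(T_i)=s_i$, it would send the quadratic relation $T_i^2=(1-q_i)T_i+q_i$ to $1=(1-q_i)s_i+q_i$, which is false in $\kk W$ for generic $q_i$. Hence the conclusion $\psi(T_w)=w$, which you derive from multiplicativity, is unjustified (and in fact false: lower-order group elements do appear).

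The paper repairs exactly this point by replacing the exact identity $\psi(T_w)=w$ with a triangularity statement: Proposition \ref{pr:strong Bruhat phi(Tw)} shows by induction on $\ell(w)$ that $\underline\varphi_W(T_w)\in w+\sum_{w'\prec w}\kk\cdot{\bf D}(W)\cdot w'$, where $\prec$ is the strong Bruhat order; the inductive step rests on the lemma $W_{\prec w}\cdot D_i\subset{\bf D}(W)\cdot W_{\prec w}$, which absorbs precisely the group-algebra tails produced by $\tilde w\cdot D_i=-D_{\tilde ws_i\tilde w^{-1}}\tilde w+\tilde w-\tilde ws_i$ when $\ell(\tilde ws_i)<\ell(\tilde w)$. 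Applying your projection $\pi$ only as a $\kk$-\emph{linear} map (which the factorization of Theorem \ref{th:HW factorization} does legitimately provide) then gives $\pi(\underline\varphi_W(T_w))=w+\sum_{w'\prec w}c_{w'}w'$, a unitriangular family, and linear independence of the $T_w$ follows. So your proof needs the Bruhat-order induction inserted in place of the (false) multiplicativity of $\pi$; the rest of your argument, including the spanning part via Matsumoto's theorem, is fine.
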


Now we will construct a ``Hopf cover" ${\bf H}(W)$ of $\underline {\bf H}(W)$ with an easier to control using the following important structural result.

\begin{theorem}
\label{th:ZWcross and free D} 
For any Coxeter group $W$, the algebra $\hat {\bf D}(W)$  is generated by all $D_s$, $s\in {\mathcal S}$ subject to  relations $D_s^2=D_s$, $s\in {\mathcal S}$. Furthermore,  $\hat {\bf H}(W)$ factors as $\hat {\bf H}(W)=\hat {\bf D}(W)\cdot\ZZ W$, i.e., 
the multiplication map defines an isomorphism of $\ZZ$-modules
$\hat {\bf D}(W)\otimes\ZZ W\widetilde \longrightarrow \hat {\bf H}(W)$.

\end{theorem}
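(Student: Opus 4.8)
The plan is to realize $\hat {\bf H}(W)$ explicitly on a free $\ZZ$-module whose basis already exhibits the desired factorization, thereby proving the freeness of $\hat{\bf D}(W)$ and the PBW-type isomorphism simultaneously. Let $\tilde{\bf D}(W)$ denote the free product $\ast_{s\in\mathcal{S}}\ZZ[D_s]/(D_s^2-D_s)$; as a $\ZZ$-module it has basis the reduced words $D_{t_1}\cdots D_{t_k}$ ($t_j\in\mathcal{S}$, $t_j\ne t_{j+1}$, the empty word giving $1$). Set $M:=\tilde{\bf D}(W)\otimes_\ZZ \ZZ W$, a free $\ZZ$-module with basis $\{d\otimes w\}$ indexed by reduced $D$-words $d$ and $w\in W$. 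Everything rests on two straightening rules extracted from Definition \ref{def:Hecke-Hopf algebra W}: the conjugation rule $s_iD_t=D_{s_its_i}s_i$ for $t\ne s_i$ (legitimate because $D_{s_its_i}$ is well defined by Lemma \ref{le:conjugation D}), and the rank-$1$ twist $s_iD_{s_i}=(1-D_{s_i})s_i-1$, which is merely a rewriting of $s_iD_i+D_is_i=s_i-1$.

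First I would establish surjectivity of the multiplication map $\mu:\hat{\bf D}(W)\otimes\ZZ W\to\hat{\bf H}(W)$. Using $D_s^2=D_s$ (valid for every $s\in\mathcal{S}$ since $D_{wsw^{-1}}=wD_sw^{-1}$ conjugates an idempotent to an idempotent), any product of $D_s$'s collapses to a $\ZZ$-combination of reduced $D$-words, so $\hat{\bf D}(W)$ is spanned by reduced $D$-words and is a quotient of $\tilde{\bf D}(W)$. The two rules then let one push every simple reflection to the right of any $D$-word, so that $\hat{\bf D}(W)\cdot\ZZ W$ is stable under left multiplication by all generators $s_i,D_i$; containing $1$, it is all of $\hat{\bf H}(W)$, and $\mu$ is onto.

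For injectivity I would turn $M$ into a left $\hat{\bf H}(W)$-module. Define $\rho(D_i)(d\otimes w)=(D_{s_i}d)\otimes w$ (prepend-and-reduce in $\tilde{\bf D}(W)$), and define $\rho(s_i)$ by recursion on the length of the $D$-word, transcribing the two straightening rules: $\rho(s_i)(1\otimes w)=1\otimes s_iw$; for $t\ne s_i$, $\rho(s_i)(D_td'\otimes w)=D_{s_its_i}\!\cdot\rho(s_i)(d'\otimes w)$; and for $t=s_i$, $\rho(s_i)(D_{s_i}d'\otimes w)=\rho(s_i)(d'\otimes w)-D_{s_i}\!\cdot\rho(s_i)(d'\otimes w)-(d'\otimes w)$. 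One then checks that $\rho(s_i),\rho(D_i)$ satisfy all relations of Definition \ref{def:Hecke-Hopf algebra W}, yielding an algebra map $\rho:\hat{\bf H}(W)\to\mathrm{End}_\ZZ(M)$ under which every $\rho(D_t)$, $t\in\mathcal{S}$, acts by prepending $D_t$ and reducing (a consequence of the verified conjugation relations). Evaluating at $1\otimes e$ produces $\theta=\mathrm{ev}_{1\otimes e}\circ\rho\circ\mu$, which sends a basis element $d\otimes w$ of $\hat{\bf D}(W)\otimes\ZZ W$ to exactly $d\otimes w\in M$, since the successive prepends rebuild the reduced word $d$ with no cancellation. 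Hence $\mu$ is injective, proving the factorization; taking $w=e$ shows the reduced $D$-words are $\ZZ$-independent in $\hat{\bf D}(W)$, so $\hat{\bf D}(W)\cong\tilde{\bf D}(W)$ and the presentation by $D_s^2=D_s$ alone follows.

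The main obstacle is verifying that $\rho$ respects the Coxeter relations $(\rho(s_i)\rho(s_j))^{m_{ij}}=1$ and the linear braid relations; by contrast $\rho(s_i)^2=1$ and the rank-$1$ relation $\rho(s_i)\rho(D_i)+\rho(D_i)\rho(s_i)=\rho(s_i)-1$ come out of short inductions on $D$-word length. For the two-index relations I would induct on length and reduce to the action on a single letter $D_t$, where the computation takes place entirely inside the dihedral group $\langle s_i,s_j\rangle$ acting by conjugation on $\mathcal{S}$; here the precise shape of the linear braid relations in Definition \ref{def:Hecke-Hopf algebra W} is exactly what forces the twist terms to cancel so that the two sides agree. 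This dihedral bookkeeping, tracking which reflections are fixed versus permuted and where the twist contributions appear, is the delicate part of the argument.
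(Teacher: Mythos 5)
Your argument is correct in outline, and it reaches the theorem by a genuinely different route than the paper. The paper never constructs a faithful module from the Coxeter presentation; instead it first shows (Proposition \ref{pr:relations hat H coxeter}, by induction on $\ell(w)$) that $\hat{\bf H}(W)$ coincides with the algebra $\hat{\bf H}_{\chi,\sigma}(W)$ presented over the \emph{whole group algebra} $\ZZ W$ with straightening relations $wD_sw^{-1}=\chi_{w,s}D_{wsw^{-1}}+\sigma_{w,s}(1-wsw^{-1})$ for all $w\in W$, where $\chi,\sigma$ are given by the explicit formula \eqref{eq:chi sigma defined}; it then invokes a general factorization criterion for such ``deformed cross products'' (Proposition \ref{pr:mu nu gamma H}, Corollary \ref{cor:mu nu gamma}, resting on the associativity analysis of $T(V)\otimes_{\Psi^\mu}B$ in the Appendix), whose hypotheses reduce to the cocycle identities \eqref{eq:2 cocycle} for $\chi,\sigma$ — verified painlessly via Lemma \ref{le:default chi}, since $\chi$ has the form $\rho(w)\varphi(wsw^{-1})/\varphi(s)$. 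Your construction of the module $M=\tilde{\bf D}(W)\otimes\ZZ W$ is the representation-theoretic mirror of this: checking that $\rho$ is a well-defined action is the same computation as checking associativity of the cross product. What you gain is a self-contained, elementary PBW-style proof requiring none of the Appendix machinery; what the paper gains is that the Coxeter relations are built into $\ZZ W$ from the start (so no dihedral verification is ever needed) and that the same argument yields the general Theorems \ref{th:ZWcross and free D gen} and \ref{th:ZWcross and free D gen H}, which are reused repeatedly later.

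Two remarks on the remaining work. First, you slightly misplace the difficulty: the linear braid relations are \emph{not} delicate — once you have the operator identities $\rho(s_i)\rho(D_t)=\rho(D_{s_its_i})\rho(s_i)$ ($t\ne s_i$) and $\rho(s_i)^2=1$, they follow formally, because along a reduced conjugation path realizing $D_i=wD_{i'}w^{-1}$ the special (rank-one) case never triggers. The only genuinely delicate point is $(\rho(s_i)\rho(s_j))^{m_{ij}}=\mathrm{id}$: writing $\rho(s_k)=\sigma_k\otimes\lambda(s_k)+d_k\otimes\mathrm{id}$ with $\sigma_k$ an involutive endomorphism of $\tilde{\bf D}(W)$ and $d_k$ a $\sigma_k$-twisted derivation, the expansion of $(\rho(s_i)\rho(s_j))^{m_{ij}}$ produces, besides $(\sigma_i\sigma_j)^{m_{ij}}\otimes\mathrm{id}$, many mixed $\sigma$--$d$ terms attached to subwords of $\underbrace{s_is_j\cdots}_{2m_{ij}}$ that multiply to a given group element, and you must show all of these cancel except the identity contribution. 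This does go through (I checked the $S_3$ case on $D_{s_1}\otimes e$ and the cancellation is exact), but it is not a routine induction, and your proposal only asserts it. The cleanest way to discharge it within your framework is to import the paper's observation: the conjugation data is governed by the explicit cocycle \eqref{eq:chi sigma defined}, whose multiplicativity in $w$ is automatic by Lemma \ref{le:default chi}, and this is exactly what forces the mixed terms to cancel.
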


We prove Theorem \ref{th:ZWcross and free D} in Section \ref{subsec:proof of Theorem ZWcross and free D}.

\begin{remark} In Section \ref{sect:Generalization to other groups} we  extend this factorization result to arbitrary groups (Theorem \ref{th:ZWcross and free D gen H}) and in Section \ref{sect:Generalization to Hopf algebras} we generalize it even further  (Lemma \ref{le:general factored quotient H}). 

\end{remark}

Using Theorem \ref{th:ZWcross and free D}, 
we identify $\hat {\bf D}(W_J)$ with a subalgebra of $\hat {\bf D}(W)$ for any $J\subset I$ by claiming that $\hat {\bf D}(W_J)$ is generated by all  $D_s$ with $s\in {\mathcal S}\cap W_J$.

For distinct $i,j\in I$ denote by ${\bf K}_{ij}(W)$ the set of all elements in ${\bf K}(W_{\{i,j\}})\cap Ker~\varepsilon\subset \hat {\bf D}(W_{\,i,j})$ having degree at most $m_{ij}$, 
where we view the free algebra $\hat {\bf D}(W)$ as naturally filtered by $\deg D_s=1$ for $s\in {\mathcal S}$ (clearly, ${\bf K}_{ij}(W)=\{0\}$ if $m_{ij}=0$).

\begin{theorem} 
\label{th:Coideal Hopf ij}
For any Coxeter group $W$ the ideal $\underline {\bf J}(W)$ generated by  by all ${\bf K}_{ij}(W)$, $i,j\in I$, $i\ne j$, is  a Hopf ideal, therefore, the quotient algebra ${\bf H}(W)=\hat {\bf H}(W)/\underline {\bf J}(W)$ is a Hopf algebra.
\end{theorem}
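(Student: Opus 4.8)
The plan is to show that $\underline{\bf J}(W)$ is a Hopf ideal, i.e. a two-sided ideal satisfying $\Delta(\underline{\bf J}(W))\subseteq \underline{\bf J}(W)\otimes \hat{\bf H}(W)+\hat{\bf H}(W)\otimes \underline{\bf J}(W)$, $\varepsilon(\underline{\bf J}(W))=0$, and $S(\underline{\bf J}(W))\subseteq \underline{\bf J}(W)$; the quotient-is-a-Hopf-algebra conclusion then follows formally from Theorem \ref{th:hopf hat W}. Since $\underline{\bf J}(W)$ is by definition the two-sided ideal generated by the finite-degree pieces ${\bf K}_{ij}(W)$, and since $\Delta$, $\varepsilon$, $S$ are (anti)homomorphisms, it suffices to verify the three conditions on the generating sets ${\bf K}_{ij}(W)$ themselves. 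The counit condition is immediate because ${\bf K}_{ij}(W)\subseteq Ker~\varepsilon$ by construction.

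The central step is the coproduct condition. First I would observe that Theorem \ref{th:Coideal Hopf}, applied to the rank-two parabolic $W_{\{i,j\}}$, already tells us that the ideal in $\hat{\bf H}(W_{\{i,j\}})$ generated by ${\bf K}(W_{\{i,j\}})\cap Ker~\varepsilon$ is a Hopf ideal; the elements of ${\bf K}_{ij}(W)$ are exactly the degree-$\le m_{ij}$ members of that generating set. Using the factorization $\hat{\bf H}(W)=\hat{\bf D}(W)\cdot\ZZ W$ from Theorem \ref{th:ZWcross and free D}, together with the explicit coproduct $\Delta(D_s)=D_s\otimes 1+s\otimes D_s$ on generators, I would show that $\Delta$ interacts controllably with the grading by $\deg D_s=1$. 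The key point is that $\Delta(D_s)$ has one ``leg'' ($D_s\otimes 1$) of full degree and one leg ($s\otimes D_s$) whose $\hat{\bf D}$-component has strictly smaller degree; iterating, $\Delta$ applied to a degree-$d$ element lands in $\sum_{a+b=d}\hat{\bf D}_a\cdot\ZZ W\otimes \hat{\bf D}_b\cdot\ZZ W$ in a filtered-compatible way. Combined with the $W$-invariance defining ${\bf K}(W_{\{i,j\}})$ (which is preserved under the group-like action of $\ZZ W$ in each tensor leg), this should force $\Delta(\kappa)$ for $\kappa\in{\bf K}_{ij}(W)$ into $\underline{\bf J}(W)\otimes\hat{\bf H}(W)+\hat{\bf H}(W)\otimes\underline{\bf J}(W)$, where the degree truncation is exactly what guarantees the smaller-degree pieces are again of the form ${\bf K}_{i'j'}(W)$ or products involving them.

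The antipode condition I would handle similarly, using $S(D_s)=-s D_s=-D_s s$ (the latter via the rank-one relation) so that $S$ preserves the degree filtration up to the group-like factors, and invoking the fact that $S$ is an anti-automorphism together with the $W$-invariance of ${\bf K}(W_{\{i,j\}})$. The main obstacle I anticipate is the coproduct step: one must show not merely that $\Delta(\kappa)$ lies in the tensor ideal generated by \emph{all} of ${\bf K}(W)$, but specifically in the ideal generated by the \emph{degree-truncated, rank-two} pieces ${\bf K}_{ij}(W)$. This requires proving that when the comultiplication splits a degree-$m_{ij}$ invariant element, each resulting lower-degree factor is itself supported on a rank-two (or smaller) parabolic and remains invariant there, so that no genuinely higher-rank or higher-degree relations are needed. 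Establishing this ``closure of the generating set under $\Delta$ within the same degree-and-rank bound'' — essentially that the truncation $\le m_{ij}$ is compatible with the coalgebra structure — is where the real work lies, and I expect it to rely on a careful analysis inside each $\hat{\bf D}(W_{\{i,j\}})$ using Theorem \ref{th:ZWcross and free D}.
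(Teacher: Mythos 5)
Your plan correctly locates the crux (the coproduct condition for the degree-truncated rank-two pieces ${\bf K}_{ij}(W)$), but it leaves that crux unresolved and frames it in a way that would be harder to carry out than necessary. The missing idea is that one should not try to show $\Delta(\kappa)\in\underline{\bf J}(W)\otimes\hat{\bf H}(W)+\hat{\bf H}(W)\otimes\underline{\bf J}(W)$ directly by analyzing how the comultiplication ``splits'' a degree-$m_{ij}$ invariant element into lower-degree factors on smaller parabolics. Instead one proves the much cleaner \emph{one-sided coideal} statement $\Delta\bigl(\ZZ+{\bf K}_{ij}(W)\bigr)\subset\hat{\bf H}(W)\otimes\bigl(\ZZ+{\bf K}_{ij}(W)\bigr)$, and this is obtained with no computation at all as an \emph{intersection of two left coideals}: (i) ${\bf K}(W_{\{i,j\}})$ is a left coideal in $\hat{\bf H}(W)$ — this follows from the general fact (Proposition \ref{pr:general K}) that ${\bf K}(H,{\bf D})=\{x\in{\bf D}\mid H\act x\subset{\bf D}\}$ is a left coideal whenever ${\bf D}$ is a left coideal subalgebra and ${\bf H}$ is free, transported into the big algebra via the parabolic embedding $\hat{\bf H}(W_{\{i,j\}})\hookrightarrow\hat{\bf H}(W)$ of Lemma \ref{le:levi}; and (ii) the filtered piece $\hat{\bf D}(W)_{\le m_{ij}}$ is a left coideal, by an easy induction on the degree using only $\Delta(D_s)=D_s\otimes 1+s\otimes D_s$. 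The intersection of left coideals is again a left coideal because $\hat{\bf H}(W)$ is $\ZZ$-free (Proposition \ref{pr:sum intersection of coideals}(b)). Your worry about whether the split factors remain ``supported on a rank-two parabolic and invariant there'' simply does not arise in this argument.

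Once the generating set is known to be a left coideal, the general Proposition \ref{pr:from coideal to hopf ideal} does all the rest: $\Delta(\kappa)-\kappa\otimes 1\in\hat{\bf H}(W)\otimes{\bf K}^{+}$ gives the bi-ideal property, and applying ${\bf m}\circ(S\otimes 1)$ to that same relation gives $S({\bf K}^{+})\subset\hat{\bf H}(W)\cdot{\bf K}^{+}$, hence $S(\underline{\bf J}(W))\subset\underline{\bf J}(W)$. So your separate antipode verification is unnecessary; note also that your identity $S(D_s)=-sD_s=-D_ss$ is false — the rank-one relation gives $sD_s=s-1-D_ss$, not $sD_s=D_ss$ — though this does not affect the degree-filtration observation you draw from it.
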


We prove Theorem \ref{th:Coideal Hopf ij} in Section \ref{subsec:proof of Theorem Coideal Hopf}.

We call ${\bf H}(W)$ the {\it Hecke-Hopf algebra} of $W$. 

When $W$  is {\it simply-laced}, i.e., $m_{ij}\in \{0,2,3\}$ for all distinct $i,j\in I$,  
we find the presentation of  ${\bf H}(W)$, thus generalizing that for $S_n$ in Definition \ref{def:hat H}.

\begin{theorem} 
\label{th:hopf hat intro W}
Suppose that $W$ is simply-laced. Then the Hecke-Hopf algebra ${\bf H}(W)$ is generated by $s_i,D_i$, $i\in I$ subject to relations:

$\bullet$  
$s_i^2=1$,  
$s_iD_i+D_is_i=s_i-1$, $D_i^2=D_i$ for $i\in I$.

$\bullet$  
$s_js_i=s_is_j$, 
$D_js_i=s_iD_j$, $D_jD_i=D_iD_j$ if $m_{ij}=2$.

$\bullet$  
$s_js_is_j=s_is_js_i$, 
$D_is_js_i=s_js_iD_j$, $D_js_iD_j=s_iD_jD_i+D_iD_js_i+s_iD_js_i$ if $m_{ij}=3$.

\end{theorem}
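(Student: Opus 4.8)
The plan is to combine the defining quotient ${\bf H}(W)=\hat{\bf H}(W)/\underline{\bf J}(W)$ of Theorem \ref{th:Coideal Hopf ij} with the factorization $\hat{\bf H}(W)=\hat{\bf D}(W)\cdot\ZZ W$ of Theorem \ref{th:ZWcross and free D}. For simply-laced $W$ the relations of $\hat{\bf H}(W)$ in Definition \ref{def:Hecke-Hopf algebra W} specialize to exactly the rank-$1$ relations, the Coxeter relations, and the linear braid relations $D_is_j=s_jD_i$ (when $m_{ij}=2$) and $D_is_js_i=s_js_iD_j$ (when $m_{ij}=3$); these are precisely the asserted relations in which no product of two $D$'s occurs. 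When $m_{ij}=0$ there is no Coxeter relation, no linear braid relation, and ${\bf K}_{ij}(W)=\{0\}$, so nothing is imposed, consistent with the statement. It therefore remains to show that, modulo the relations just listed, the ideal $\underline{\bf J}(W)$ is generated by the commutators $D_iD_j-D_jD_i$ for $m_{ij}=2$ and by the cubic elements $D_js_iD_j-s_iD_jD_i-D_iD_js_i-s_iD_js_i$ for $m_{ij}=3$. I would prove this by a surjection in each direction: the target relations hold in ${\bf H}(W)$ once the corresponding elements are shown to lie in $\underline{\bf J}(W)$, and conversely every generator of $\underline{\bf J}(W)$ must vanish in the algebra $A$ cut out by the target presentation.

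Since $\underline{\bf J}(W)$ is generated by the rank-$2$ pieces ${\bf K}_{ij}(W)\subseteq\hat{\bf D}(W_{\{i,j\}})$, the whole question reduces to the rank-$2$ parabolics, where I would compute ${\bf K}_{ij}(W)$ from the conjugation action using the rank-$1$ identity $s_iD_is_i=1-s_i-D_i$ and the rule $D_{s_iss_i}=s_iD_ss_i$ for $s\ne s_i$. For $m_{ij}=2$ one has $W_{\{i,j\}}\cong(\ZZ/2)^2$ with $\mathcal{S}\cap W_{\{i,j\}}=\{s_i,s_j\}$; a direct computation shows that $aD_i+bD_j+cD_iD_j+dD_jD_i$ lies in ${\bf K}_{ij}(W)$ iff $a=b=0$ and $d=-c$, so ${\bf K}_{ij}(W)=\ZZ\,(D_iD_j-D_jD_i)$, giving exactly the commutation relation. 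For $m_{ij}=3$ one has $W_{\{i,j\}}\cong S_3$ with three reflections $s_i,s_j,s_{ij}:=s_is_js_i$ and $D_{ij}:=s_iD_js_i=s_jD_is_j$; I would exhibit the degree-$2$ element $D_iD_j-D_{ij}D_i-D_jD_{ij}+D_{ij}$ and check, via the conjugation formulas (in particular $s_iD_j=D_{ij}s_i$ and $s_iD_{ij}=D_js_i$), that conjugation by $s_i$ returns it to $\hat{\bf D}(W_{\{i,j\}})$, so it lies in ${\bf K}_{ij}(W)$, together with its image under the anti-automorphism $s_i\mapsto s_i$, $D_i\mapsto D_i$ of Theorem \ref{th:symmetries Sn}(c).

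To match this degree-$2$ generator with the stated cubic relation I would rewrite the latter in the factored form $\hat{\bf D}(W)\cdot\ZZ W$ by moving every $s_i$ to the right using $s_iD_j=D_{ij}s_i$, $s_iD_{ij}=D_js_i$ and the rank-$1$ identity. The coefficient of the trivial group element then cancels, and the coefficient of $s_i$ becomes $D_jD_{ij}+D_{ij}D_i-D_{ij}-D_iD_j$; its vanishing is exactly the degree-$2$ relation $D_iD_j=D_{ij}D_i+D_jD_{ij}-D_{ij}$ found above, which is the relation of Proposition \ref{pr:D presentation Sn} in the $S_3$ dictionary $D_{12}=D_i$, $D_{23}=D_j$, $D_{13}=D_{ij}$. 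Hence the cubic relation is equivalent, modulo the rank-$1$ and linear braid relations, to a generator of ${\bf K}_{ij}(W)$, and conversely; the two orderings $(i,j)$ and $(j,i)$ correspond under the same anti-automorphism.

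The main obstacle is the converse containment for $m_{ij}=3$: one must show that all of ${\bf K}_{ij}(W)$ — in particular its degree-$3$ part — already lies in the two-sided ideal generated by the two degree-$2$ relations, so that no further relations survive in the quotient. I would treat this by the normal-form (diamond-lemma) analysis of $\hat{\bf D}(S_3)=\ZZ\langle D_i,D_j,D_{ij}\rangle/(D_s^2-D_s)$ that simultaneously yields Proposition \ref{pr:presentation simply-laced D}: the two relations rewrite every occurrence of $D_iD_j$ or $D_jD_i$ into a reduced span, confirming that the image ${\bf D}(W_{\{i,j\}})$ has the presentation of Proposition \ref{pr:D presentation Sn} and hence that ${\bf K}_{ij}(W)$ contributes nothing beyond these relations. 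Finally I would assemble the global statement: combining the resulting presentation of ${\bf D}(W)$ with the factorization ${\bf H}(W)={\bf D}(W)\cdot\ZZ W$ and eliminating the non-simple generators $D_{ij}=s_iD_js_i$ in favour of $s_i$ and $D_j$ produces precisely the asserted presentation of ${\bf H}(W)$.
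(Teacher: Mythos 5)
Your overall strategy is the paper's: reduce to the rank-two parabolics, exhibit the quadratic elements $K_{ij}=D_iD_j-D_{ij}D_i-D_jD_{ij}+D_{ij}$ and $K_{ji}$ inside ${\bf K}_{ij}(W)$, and observe that the stated cubic relation, once pushed into the factored form $\hat{\bf D}(W)\cdot\ZZ W$, is exactly $K_{ij}\cdot s_i$, so that the two presentations can differ at most by the ideal generated by the ${\bf K}_{ij}(W)$ versus the ideal generated by the $K_{ij},K_{ji}$. Your $m_{ij}=2$ computation and your matching of the cubic relation with the quadratic one are correct and agree with the paper (Proposition \ref{pr:K for S3}(a) and the identity $K'_{ij}s_i=K_{ij}$ displayed in its proof of Theorem \ref{th:hopf hat intro W}); so is the use of the anti-involution to produce $K_{ji}$ from $K_{ij}$.

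The gap is in the step you yourself flag as the main obstacle. A diamond-lemma normal form for $\hat{\bf D}(W_{\{i,j\}})/\langle K_{ij},K_{ji}\rangle$ describes the quotient algebra; it says nothing about which elements of $\hat{\bf D}(W_{\{i,j\}})_{\le 3}$ have all their $W_{\{i,j\}}$-conjugates inside $\hat{\bf D}(W_{\{i,j\}})$, which is how ${\bf K}_{ij}(W)$ is defined. Confirming that ${\bf D}(W_{\{i,j\}})$ has the presentation of Proposition \ref{pr:D presentation Sn} does not yield ${\bf K}_{ij}(W)\subseteq\langle K_{ij},K_{ji}\rangle$: a priori ${\bf K}_{ij}(W)$ could contain a degree-$3$ element outside that ideal, in which case ${\bf H}(W)$ would be a proper quotient of the algebra you present. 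What is needed is an \emph{upper bound} on ${\bf K}_{ij}(W)$, and the paper obtains it from a different symmetry: the $s$-derivations $d_s$ annihilate ${\bf K}(W)$ (Proposition \ref{pr:symmetris hat D(W)}(b)), so ${\bf K}_{ij}(W)$ is contained in $\{x\in\hat{\bf D}(W_{\{i,j\}})_{\le 3}\,:\,d_s(x)=0=\varepsilon(x)\ \forall s\}$; an explicit linear-algebra computation in the free $\ZZ$-basis of $\hat{\bf D}(W_{\{i,j\}})_{\le 3}$ (Lemma \ref{le:drop from 3 to 2}) then shows this kernel is spanned by $K_{ij}$, $K_{ji}$ and three degree-$3$ elements of the form $K_{ij}D_i-D_iK_{ji}$, all visibly in $\langle K_{ij},K_{ji}\rangle$. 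You must either import this derivation argument or replace it by an equally explicit determination of ${\bf K}_{ij}(W)$; the rewriting system for the quotient alone will not produce it.
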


We prove Theorem \ref{th:hopf hat intro W} in Section \ref{subsec:Simply-laced}.

\begin{remark} In Section \ref{sec:bosonized} we show that by ``homogenizing" the relations in Theorem \ref{th:hopf hat intro W}, one obtains a Hopf algebra ${\bf H}_0(W)$ (Definition \ref{def:tilde H}) which acts on  ${\bf D}(W)$ via braided derivatives and thus is closely related to the corresponding Nichols algebra.
\end{remark}

\begin{remark} 
\label{rem:nilpotent presentation}
It follows from Theorem \ref{th:hopf hat intro W} that the algebra ${\bf H}(W)\otimes \frac{1}{2}\ZZ$ for simply-laced $W$ has the following presentation in generators $s_i$ and $d_i=D_i+\frac{1}{2}(s_i-1)$:

$\bullet$  
$s_i^2=1$,  
$s_id_i+d_is_i=0$, $d_i^2=0$ for $i\in I$.

$\bullet$  
$s_js_i=s_is_j$, 
$d_js_i=s_id_j$, $d_jd_i=d_id_j$ if $m_{ij}=2$.

$\bullet$  
$s_js_is_j=s_is_js_i$, 
$d_is_js_i=s_js_id_j$, $d_js_id_j=s_id_jd_i+d_id_js_i+\frac{1}{4}(s_i-s_is_js_i)$ if $m_{ij}=3$. 

\end{remark}

Actually, both  $\underline {\bf H}(W)$ and ${\bf H}(W)$ can be factored in the sense of Theorem \ref{th:ZWcross and free D} as follows.

\begin{theorem} 
\label{th:HW factorization}
 $\underline {\bf H}(W)=\underline {\bf D}(W)\cdot \ZZ W$,~~${\bf H}(W)={\bf D}(W)\cdot \ZZ W$ for all Coxeter groups $W$,
where $\underline {\bf D}(W)$, ${\bf D}(W)$ are respectively the images of $\hat {\bf D}(W)$ under the projections $\hat {\bf H}(W)\twoheadrightarrow \underline {\bf H}(W)$, $\hat {\bf H}(W)\twoheadrightarrow {\bf H}(W)$.

\end{theorem}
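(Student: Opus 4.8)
The plan is to deduce both factorizations from the factorization $\hat{\bf H}(W)=\hat{\bf D}(W)\cdot\ZZ W$ of Theorem \ref{th:ZWcross and free D} by showing that the defining ideals ${\bf J}(W)$ and $\underline{\bf J}(W)$ are themselves ``factored''. The starting observation is that $\hat{\bf H}(W)$ is a crossed product of $\hat{\bf D}(W)$ by $W$: since $wD_sw^{-1}=D_{wsw^{-1}}$ with $wsw^{-1}\in{\mathcal S}$, conjugation by $W$ preserves $\hat{\bf D}(W)$ and permutes its generators, so that $\ZZ W\cdot\hat{\bf D}(W)=\hat{\bf D}(W)\cdot\ZZ W$ and, for $a\in\hat{\bf D}(W)$ and $w\in W$, one has $wa=(waw^{-1})w$ inside $\hat{\bf H}(W)$. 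Under the $\ZZ$-module isomorphism of Theorem \ref{th:ZWcross and free D} we may then write $\hat{\bf H}(W)=\bigoplus_{w\in W}\hat{\bf D}(W)\,w$, a decomposition graded by $W$.

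The key step is the following purely formal lemma about such crossed products. For any subset $X\subseteq\hat{\bf D}(W)$, let $I_X$ denote the smallest $W$-stable two-sided ideal of $\hat{\bf D}(W)$ containing $X$; explicitly $I_X=\hat{\bf D}(W)\,\big(\bigcup_{w}wXw^{-1}\big)\,\hat{\bf D}(W)$, which is $W$-stable because $\bigcup_w wXw^{-1}$ is and conjugation is an algebra automorphism of $\hat{\bf D}(W)$. I claim that the two-sided ideal of $\hat{\bf H}(W)$ generated by $X$ equals $I_X\cdot\ZZ W$. Writing this ideal as $\hat{\bf H}(W)\,X\,\hat{\bf H}(W)=\hat{\bf D}(W)\,\ZZ W\,X\,\hat{\bf D}(W)\,\ZZ W$ and using the commutation relations above to move every copy of $\ZZ W$ to the right (each move replacing $X$ by $wXw^{-1}$ and $\hat{\bf D}(W)$ by itself), one collapses the product precisely to $I_X\cdot\ZZ W$.

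It remains to apply this to the two ideals at hand. For $\underline{\bf H}(W)$ take $X={\bf K}(W)\cap Ker~\varepsilon\subseteq\hat{\bf D}(W)$, and for ${\bf H}(W)$ take $X'=\bigcup_{i\ne j}{\bf K}_{ij}(W)\subseteq\hat{\bf D}(W)$; both generating sets lie in $\hat{\bf D}(W)$, so the lemma gives ${\bf J}(W)=I_X\cdot\ZZ W$ and $\underline{\bf J}(W)=I_{X'}\cdot\ZZ W$. Because $\ZZ W$ is $\ZZ$-free, $I_X\cdot\ZZ W=\bigoplus_w I_X\,w$ respects the $W$-grading of $\hat{\bf H}(W)$; intersecting with the degree-$1$ component $\hat{\bf D}(W)$ gives $\hat{\bf D}(W)\cap{\bf J}(W)=I_X$, so the image of $\hat{\bf D}(W)$ in the quotient is exactly $\underline{\bf D}(W)=\hat{\bf D}(W)/I_X$ (resp. ${\bf D}(W)=\hat{\bf D}(W)/I_{X'}$). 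Passing to quotients of the graded decomposition then yields the $\ZZ$-module isomorphisms $\underline{\bf H}(W)\cong\underline{\bf D}(W)\otimes\ZZ W$ and ${\bf H}(W)\cong{\bf D}(W)\otimes\ZZ W$ induced by multiplication, which is the desired factorization.

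The only genuinely non-formal inputs are the crossed-product structure of $\hat{\bf H}(W)$ supplied by Theorem \ref{th:ZWcross and free D} and the fact that both generating sets sit inside $\hat{\bf D}(W)$; granting these, everything else is a routine manipulation of a $W$-graded free module. The step I expect to require the most care is verifying that $\hat{\bf D}(W)\cap(I_X\cdot\ZZ W)=I_X$, i.e.\ that $I_X\cdot\ZZ W$ meets the identity component in no more than $I_X$. This is exactly where the freeness of $\ZZ W$ and the $W$-grading are essential, and where a careless argument could silently assume what is to be proved.
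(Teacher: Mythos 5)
There is a genuine gap, and it sits exactly at the premise you call "the starting observation." Conjugation by $W$ does \emph{not} preserve $\hat{\bf D}(W)$: by the relations in $\hat{\bf H}(W)$ (Lemma \ref{le:conjugation D} / equation \eqref{eq:si conj Dsi w}) one has $wD_sw^{-1}=D_{wsw^{-1}}$ only when $\ell(ws)>\ell(w)$, while $wD_sw^{-1}=1-D_{wsw^{-1}}-wsw^{-1}$ when $\ell(ws)<\ell(w)$, and the group element $wsw^{-1}$ in the second case takes you out of $\hat{\bf D}(W)$. This is precisely why ${\bf K}(W)=\bigcap_{w}w\hat{\bf D}(W)w^{-1}$ is a proper subalgebra and why the whole construction is nontrivial; if conjugation were an automorphism of $\hat{\bf D}(W)$ your $I_X$ would make sense for every $X$, but as stated your "key lemma" is vacuous in general because $\bigcup_w wXw^{-1}$ need not lie in $\hat{\bf D}(W)$. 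What survives, and what the paper actually uses (Lemma \ref{le:general factored quotient H}), is the weaker and correct statement: if ${\bf K}\subseteq{\bf D}$ satisfies $\ZZ W\cdot{\bf K}\subseteq{\bf K}\cdot\ZZ W$, then the ideal it generates in ${\bf H}={\bf D}\cdot\ZZ W$ equals ${\bf I}_{\bf K}\cdot\ZZ W$ with ${\bf I}_{\bf K}={\bf D}\cdot{\bf K}\cdot{\bf D}$; one only needs $\ZZ W\cdot{\bf D}\subseteq{\bf D}\cdot\ZZ W$, which follows from the factorization itself, not from conjugation-stability of ${\bf D}$.

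For the first factorization your argument is rescued by the fact that ${\bf K}(W)$ is $W$-conjugation-stable \emph{by construction}, so the hypothesis $\ZZ W\cdot X\subseteq X\cdot\ZZ W$ holds for $X={\bf K}(W)\cap Ker~\varepsilon$. (The paper instead derives this case directly from Theorem \ref{th:ZWcross and free D gen H}.) For the second factorization, however, the missing ingredient is substantive: you must prove that $w\,{\bf K}_{ij}(W)\,w^{-1}\subseteq\hat{\bf D}(W)$ for all $w\in W$, and this is exactly the paper's Proposition \ref{pr:levi K} together with Lemma \ref{le:W^J conjugation}. The proof there uses the parabolic factorization $w=[w]^{J}\cdot[w]_{J}$ with $J=\{i,j\}$: conjugation by $[w]_{J}\in W_J$ preserves ${\bf K}(W_J)$, and conjugation by $[w]^{J}\in W^{J}$ lands in $\hat{\bf D}(W)$ because lengths add, so only the first case of \eqref{eq:si conj Dsi w} occurs. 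Without this input your set $\bigcup_w wX'w^{-1}$ is not known to sit inside $\hat{\bf D}(W)$, and the collapse of $\hat{\bf H}(W)\,X'\,\hat{\bf H}(W)$ to $I_{X'}\cdot\ZZ W$ does not go through. The final step (intersecting $I_X\cdot\ZZ W$ with the identity component of the $W$-graded free module) is fine once the above is supplied.
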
 

We prove Theorem \ref{th:HW factorization} in Section \ref{subsec:proof of Theorem Coideal Hopf}.

\begin{remark} It is natural to ask whether   $\underline {\bf D}(W)$ and ${\bf D}(W)$ are free as $\ZZ$-modules.

\end{remark} 

We  extend Proposition \ref{pr:D presentation Sn} and  provide an explicit description of  ${\bf D}(W)$ for an arbitrary simply-laced Coxeter group $W$.

\begin{definition}
 Given a Coxeter group $W$, we say that a pair $(s,s')$ of distinct reflections is {\it compatible} if there are $i,j\in I$ and $w\in W$ such that  $s=ws_iw^{-1}$, $s'=ws_jw^{-1}$, $\ell(ws_i)=\ell(w)+1$, $\ell(ws_j)=\ell(w)+1$. 
\end{definition}

For $w,w'\in W$ denote by $m_{w,w'}\in \ZZ_{\ge 0}$ the order of $ww'$ in $W$ (if it is infinite, we set $m_{w,w'}=0$).


\begin{proposition} 
\label{pr:presentation simply-laced D}
In the assumptions of Theorem \ref{th:hopf hat intro W}, the algebra ${\bf D}(W)$ is generated by $D_s$, $s\in {\mathcal S}$ subject to relations:
 
$\bullet$ $D_s^2=D_s$ for all $s\in {\mathcal S}$.

$\bullet$ $D_sD_{s'}=D_{s'}D_s$ for all compatible pairs $(s,s')\in {\mathcal S}\times {\mathcal S}$ with $m_{s,s'}=2$.

$\bullet$ $D_sD_{s'}=D_{ss's}D_s+D_{s'}D_{ss's}-D_{ss's}$ for all compatible pairs $(s,s')\in {\mathcal S}\times {\mathcal S}$ with $m_{s,s'}=3$.

\end{proposition}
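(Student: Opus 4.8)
The plan is to reduce the statement to an identity of two-sided ideals inside the free algebra $\hat{\bf D}(W)$. By Theorem~\ref{th:ZWcross and free D} the algebra $\hat{\bf D}(W)$ is freely generated by the idempotents $D_s$, $s\in{\mathcal S}$, subject only to $D_s^2=D_s$, and by construction ${\bf D}(W)$ is the image of $\hat{\bf D}(W)$ under $\hat{\bf H}(W)\twoheadrightarrow{\bf H}(W)=\hat{\bf H}(W)/\underline{\bf J}(W)$; hence ${\bf D}(W)=\hat{\bf D}(W)/\bigl(\hat{\bf D}(W)\cap\underline{\bf J}(W)\bigr)$. Let $R\subseteq\hat{\bf D}(W)$ be the two-sided ideal generated by $D_sD_{s'}-D_{s'}D_s$ over compatible pairs with $m_{s,s'}=2$ and by $D_sD_{s'}-D_{ss's}D_s-D_{s'}D_{ss's}+D_{ss's}$ over compatible pairs with $m_{s,s'}=3$. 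The proposition is then exactly the equality $R=\hat{\bf D}(W)\cap\underline{\bf J}(W)$. Since it is not known whether ${\bf D}(W)$ is a free $\ZZ$-module, a basis or rank count is unavailable and the two ideals must be compared directly.

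For the inclusion $R\subseteq\underline{\bf J}(W)$ I would work first inside a rank-two parabolic. Using the identifications $\hat{\bf D}(W_{\{i,j\}})\subseteq\hat{\bf D}(W)$ and the explicit conjugation rules $s_iD_is_i=1-s_i-D_i$ (from the rank-one relation) and $s_iD_ss_i=D_{s_iss_i}$ for $s\ne s_i$ (Lemma~\ref{le:conjugation D}), I would check that for $m_{ij}=2$ the element $D_iD_j-D_jD_i$ and for $m_{ij}=3$ the element $D_iD_j-D_{s_is_js_i}D_i-D_jD_{s_is_js_i}+D_{s_is_js_i}$ lie in $\ker\varepsilon$, have degree at most $m_{ij}$, and have all their $W_{\{i,j\}}$-conjugates in $\hat{\bf D}(W_{\{i,j\}})$; that is, they belong to ${\bf K}_{ij}(W)\subseteq\underline{\bf J}(W)$. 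A general generator of $R$ is indexed by a compatible pair $(s,s')=(ws_iw^{-1},ws_jw^{-1})$ with $\ell(ws_i)=\ell(ws_j)=\ell(w)+1$; conjugating the above rank-two element by $w$ and using $wD_iw^{-1}=D_s$, $wD_jw^{-1}=D_{s'}$, $wD_{s_is_js_i}w^{-1}=D_{ss's}$ (valid precisely because of the length conditions) yields the stated relation for $(s,s')$ as an element of $\underline{\bf J}(W)$.

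For the reverse inclusion I would determine ${\bf K}_{ij}(W)$ exactly and then control how the ideal it generates meets $\hat{\bf D}(W)$. Using the factorization $\hat{\bf H}(W)=\hat{\bf D}(W)\cdot\ZZ W$, every element of $\hat{\bf H}(W)$ has a unique ``$\ZZ W$-expansion'', and $\hat{\bf D}(W)$ is its identity component; I would show that the identity component of $\underline{\bf J}(W)$ is the $\hat{\bf D}(W)$-bimodule generated by those $W$-conjugates $w\,x\,w^{-1}$ of generators $x\in{\bf K}_{ij}(W)$ that actually lie in $\hat{\bf D}(W)$, and that by the conjugation rules such a conjugate lies in $\hat{\bf D}(W)$ only when $w$ satisfies the length conditions of a compatible pair. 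Combined with the claim that, modulo $D_s^2=D_s$, the space ${\bf K}_{ij}(W)$ is spanned by the single commutativity element when $m_{ij}=2$ and by the single cubic element when $m_{ij}=3$, this gives $\hat{\bf D}(W)\cap\underline{\bf J}(W)\subseteq R$.

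The \emph{main obstacle} is the exact rank-two computation asserting that ${\bf K}_{ij}(W)$ contains no invariants beyond the claimed one. For $m_{ij}=3$ this is a finite linear-algebra problem in the free algebra on the three idempotents $D_{s_i},D_{s_j},D_{s_is_js_i}$: one must impose, in each degree up to $m_{ij}=3$, that conjugation by $s_i$ and $s_j$ keeps an element in $\hat{\bf D}(W_{\{i,j\}})$, and the rule $s_iD_{s_i}s_i=1-s_i-D_{s_i}$ produces $\ZZ W$-terms of nonzero degree that must cancel; showing these cancellations force the element to be a multiple of the cubic relation is the crux. Once the rank-two case is settled, the passage to arbitrary simply-laced $W$ is formal via the compatible-pair bookkeeping of the previous two paragraphs, and Proposition~\ref{pr:D presentation Sn} is recovered as the case $W=S_n$.
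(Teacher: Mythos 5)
Your proposal follows essentially the same route as the paper: the paper proves the proposition by combining the presentation ${\bf D}(W)=\hat{\bf D}(W)/\langle\underline{\bf K}(W)\rangle$ with the explicit rank-two computation of ${\bf K}_{ij}(W)$ (Proposition~\ref{pr:K for S3}, which is indeed the crux you flag) and the reduction of arbitrary $W$-conjugates to conjugates by $w\in W^{\{i,j\}}$ (Lemma~\ref{le:W^J conjugation}), exactly the three ingredients you describe. One detail to adjust: for $m_{s,s'}=3$ the module ${\bf K}_{ij}(W)$ is spanned by \emph{two} quadratic elements $K_{ij}$ and $K_{ji}$ (one for each ordering of the compatible pair) together with three degree-three elements lying in the two-sided ideal they generate, rather than by a single cubic element; since your relation list, like the proposition's, includes both orderings, this does not affect the argument.
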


We prove Proposition \ref{pr:presentation simply-laced D} in Section \ref{subsec:Simply-laced}.

\begin{remark} It would be interesting to find a more explicit characterization of compatible pairs $(s,s')$ with a given $m_{s,s'}$. For instance, we expect that in a simply-laced $W$ each pair $(s,s')$ of reflections with $m_{s,s'}=2$, i.e., $ss'=s's$, is compatible.  

\end{remark}

The following is a refinement of Theorem \ref{th:Hecke in Hecke-Hopf}. 

\begin{theorem} 
\label{th:upper Hecke in Hecke-Hopf}
In the notation of Theorem \ref{th:Hecke in Hecke-Hopf}, the assignments 
$T_i\mapsto s_i+(1-q_i)D_i$, $i\in I$,  define an injective homomorphism of algebras $\varphi_W:H_{\bf q}(W)\hookrightarrow {\bf H}(W)$. Moreover,  $\underline \varphi_W=(\pi_W\otimes 1)\circ \varphi_W$, where $\pi_W:{\bf H}(W)\twoheadrightarrow \underline {\bf H}(W)$ is the canonical surjective homomorphism of Hopf algebras  (which is identity on $\ZZ W$ and 
$\pi_W({\bf D}(W))=\underline {\bf D}(W)$). 

\end{theorem}

We prove Theorem \ref{th:upper Hecke in Hecke-Hopf} in Section \ref{subsec:proof of Theorem Hecke in Hecke-Hopf}.

It follows from Theorem \ref{th:hopf hat intro W} that both $\pi_{S_2\times S_2}$ and $\pi_{S_3}$ are the identity maps and that both definitions of ${\bf H}(S_n)$ agree. One can ask whether  $\pi_{S_n}$ is an isomorphism for $n\ge 4$. 

It follows from  Theorem \ref{th:upper Hecke in Hecke-Hopf} that 
 braid relations 
\begin{equation}
\label{eq:braid relations}
\underbrace{D_iD_j\cdots}_{m_{ij}} =\underbrace{D_jD_i\cdots}_{m_{ij}}
\end{equation}
hold in ${\bf D}(W)$.
In fact, there are other relations in ${\bf D}(W)$.

\begin{theorem}
\label{th:relations rank 2}
Given a Coxeter group $W$, for any distinct $i, j\in I$ with $m:=m_{ij}\ge 2$ and any $w\in W$ such that $\ell(ws_i)=\ell(w)+1,\ell(ws_j)=\ell(w)+1\}$ the following relations hold in ${\bf D}(W)$  for all divisors $n$ of $m$, $r\in [1,n]$ (where we abbreviated $D_k:=D_{w\cdot \underbrace{s_is_j\cdots}_{2k-1}\cdot w^{-1}}\in {\bf D}(W)$, 
$k=1,\ldots,m$):

(a) Quadratic-linear relations (for $1\le p<\frac{m}{2n}$):  
$$\displaystyle{\sum\limits_{0\le a<b< \frac{m}{n}:b-a=\frac{m}{n}-p} D_{r+an} D_{r+bn} = \sum\limits_{0\le a'<b'< \frac{m}{n}:b'-a'=p} D_{r+b'n} D_{r+a'n}-
\sum\limits_{p\le c<\frac{m}{n}-p} D_{r+cn}}\ ,$$
$$\displaystyle{\sum\limits_{0\le a<b< \frac{m}{n}:b-a=\frac{m}{n}-p} D_{r+bn} D_{r+an} = \sum\limits_{0\le a'<b'< \frac{m}{n}:b'-a'=p} D_{r+a'n} D_{r+b'n}-
\sum\limits_{p\le c<\frac{m}{n}-p} D_{r+cn}}\ .$$

(b) Yang-Baxter type relations (for $0\le t\le \frac{m}{n}$): 
$$\overset{\longrightarrow}{\prod\limits_{t\le a\le \frac{m}{n}-1}}(1-D_{r+an})
\overset{\longrightarrow}{\prod\limits_{0\le b\le t-1}} D_{r+bn}
=\overset{\longleftarrow}{\prod\limits_{0\le b\le t-1}} D_{r+bn}\overset{\longleftarrow}{\prod\limits_{t\le a\le \frac{m}{n}-1}}(1-D_{r+an})\ .$$




\end{theorem}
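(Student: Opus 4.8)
The plan is to reduce everything to the dihedral (rank $2$) case and then exploit the structure exposed by Theorem \ref{th:ZWcross and free D} and Proposition \ref{pr:presentation simply-laced D}. Since the relations only involve the reflections $D_k = D_{w\cdot s_is_j\cdots w^{-1}}$ lying in the parabolic $W_{\{i,j\}}$ conjugated by $w$, and since $w(\cdot)w^{-1}$ is an automorphism of $\hat{\bf D}(W)$ preserving the relevant generators (by the well-definedness of the $D_s$ recorded just before Theorem \ref{th:Coideal Hopf}), I would first conjugate by $w^{-1}$ to assume $w = e$ and work entirely inside ${\bf D}(W_{\{i,j\}})$. There the $m = m_{ij}$ reflections of the dihedral group are exactly $s_is_j\cdots$ (odd length $2k-1$), so the $D_k$, $k = 1,\dots,m$, are all the generators $D_s$, $s\in\mathcal{S}\cap W_{\{i,j\}}$. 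Each satisfies $D_k^2 = D_k$, and the defining relations of ${\bf D}(W)$ come from ${\bf K}_{ij}(W)$, i.e.\ from the degree-$\le m$ elements of ${\bf K}(W_{\{i,j\}})\cap\ker\varepsilon$.

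The heart of the argument is therefore to verify that the proposed quadratic-linear and Yang–Baxter elements actually lie in ${\bf K}(W_{\{i,j\}}) = \bigcap_{u} u\,\hat{\bf D}(W_{\{i,j\}})u^{-1}$, so that they are annihilated in the quotient defining ${\bf H}(W)$. The clean way to do this is to establish the relations for a single divisor first, namely $n = 1$ and $r = 1$, where the index set $\{1,\dots,m\}$ runs over all reflections; the general $n\mid m$ case should then follow formally. Indeed, for a divisor $n$, the subwords $s_is_j\cdots$ of length $2(r + an) - 1$ as $a$ ranges pick out a sub-dihedral pattern, and I expect the family $\{D_{r+an}\}_a$ to satisfy, inside the appropriate sub-structure, exactly the $n = 1$ relations of the smaller dihedral group of order $m/n$. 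So I would prove the $n = 1$ statement and then argue that passing to the arithmetic progression $r, r+n, r+2n,\dots$ reproduces it verbatim, using the cyclic conjugation action of $s_i s_j$ (which rotates the reflections $D_k \mapsto D_{k+1}$, indices mod $m$, up to the $\pm$ sign conventions in the rank-$1$ relations).

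For the $n=1$ base case I would verify the two families of identities directly from the three relations of Proposition \ref{pr:presentation simply-laced D} together with the braid relations \eqref{eq:braid relations}. The Yang–Baxter relations in (b) have the shape of a factorized ``total positivity'' identity: the products $\overset{\longrightarrow}{\prod}(1 - D_{r+an})$ and $\overset{\longleftarrow}{\prod}D_{r+bn}$ should be recognized as images under $\varphi_W$ (at a suitable specialization) of the two reduced words for the longest element of the dihedral group, so that (b) is a reincarnation of the braid relation for the $T_i$ pushed through the injection of Theorem \ref{th:upper Hecke in Hecke-Hopf}; alternatively, one proves it by induction on $t$, peeling off one idempotent factor at a time and using $D_s^2 = D_s$ and the $m_{s,s'} = 3$ relation to commute factors past each other. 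The quadratic-linear relations in (a) are the ``lower order'' consequences obtained by expanding the same factorized identity and collecting terms of a fixed degree; I would derive them by extracting the degree-two part of (b), or equivalently by repeatedly applying the $m_{s,s'}=3$ relation $D_sD_{s'} = D_{ss's}D_s + D_{s'}D_{ss's} - D_{ss's}$ to telescope the double sum.

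The main obstacle I anticipate is bookkeeping rather than conceptual: making the reduction from general $(n,r)$ to the base case rigorous requires pinning down precisely which reflections the arithmetic progression $r+an$ indexes and checking that the sign-twisted cyclic action really identifies this sub-family with a genuine dihedral sub-pattern of order $m/n$. In particular, when $m$ is even versus odd the parity conventions in Definition \ref{def:Hecke-Hopf algebra W} (the role of $i'$ and $j'$) change which endpoint reflections appear, and one must confirm that the compatibility hypothesis $\ell(ws_i) = \ell(w)+1 = \ell(ws_j)$ is inherited by all the intermediate reflections $D_k$ so that Proposition \ref{pr:presentation simply-laced D} applies to every adjacent pair. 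Once the dihedral base case and this combinatorial indexing are settled, both (a) and (b) follow.
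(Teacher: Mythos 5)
Your opening move (conjugate by $w^{-1}$ to reduce to the dihedral parabolic $W_{\{i,j\}}$, using $wD_sw^{-1}=D_{wsw^{-1}}$ for $w\in W^{\{i,j\}}$) is exactly how the paper starts, and you correctly identify the goal: show that the difference of the two sides of each relation lies in ${\bf K}_{ij}(W)$, i.e.\ in the degree-$\le m$ part of $\bigcap_u u\,\hat{\bf D}(W_{\{i,j\}})u^{-1}\cap\ker\varepsilon$. But the method you propose for reaching that goal does not work, and the gap is conceptual rather than bookkeeping. You want to derive the identities ``directly from the three relations of Proposition \ref{pr:presentation simply-laced D} together with the braid relations.'' That proposition only covers compatible pairs with $m_{s,s'}\in\{2,3\}$ (it describes ${\bf D}(W)$ for \emph{simply-laced} $W$), whereas Theorem \ref{th:relations rank 2} is chiefly about $m=m_{ij}\ge 4$, where no presentation of ${\bf D}(W_{\{i,j\}})$ is available in the paper (cf.\ Remark \ref{rem:presentation H(W)}). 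For $m\ge4$ the relations of the theorem are \emph{new} generators of ${\bf K}_{ij}(W)$, not consequences of previously established ones, so there is nothing to derive them from; the only route is to verify membership in ${\bf K}_{ij}(W)$ directly. The paper does this in Proposition \ref{pr:Dij2}: it computes $s_iQ_{ij}^{(n,r,p)}s_i$, $s_jQ_{ij}^{(n,r,p)}s_j$, $s_iR_{ij}^{(n,r,t)}s_i$, $s_jR_{ij}^{(n,r,t)}s_j$ and shows each equals $\pm$ another member of the same finite family (with $i\leftrightarrow j$ and the parameters $r,t$ shifted), so the family is stable under conjugation by all of $W_{\{i,j\}}$ and hence sits inside ${\bf K}_{ij}(W)$. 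The delicate point your ``cyclic rotation $D_k\mapsto D_{k+1}$'' picture hides is that $s_iD_1s_i=1-D_1-s_i$ leaves $\hat{\bf D}(W_{\{i,j\}})$ entirely; only for these particular combinations $Q$ and $R$ do the stray group-algebra terms cancel, and checking that cancellation is the actual content of the proof.

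Two further steps of your plan would also fail as stated. First, relation (a) is not ``the degree-two part of (b)'': the identities are inhomogeneous, the quotient ${\bf D}(W)$ carries no grading in which one could extract homogeneous components of a valid relation, and the paper treats (a) and (b) by separate computations. (Your observation that the $t=0$, $n=1$ case of (b) specializes the braid-relation element $\Delta_{ij}^{c_i,c_j}$ from the proof of Theorem \ref{th:upper Hecke in Hecke-Hopf} is correct, but the intermediate $t$, which mix $D$ and $1-D$ factors, are not of that form.) Second, the proposed reduction from general $(n,r)$ to $n=1$ via a ``sub-dihedral pattern'' is unsubstantiated: the reflections $D_{r+an}$ do index the reflections of the subgroup generated by $(s_is_j)^n$ and one reflection, but that is a non-parabolic reflection subgroup, and nothing in the paper gives an embedding of the smaller algebra ${\bf D}$ (with its relations) into ${\bf D}(W_{\{i,j\}})$. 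The paper avoids this entirely by running the conjugation computation uniformly in $(n,r)$.
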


We prove Theorem \ref{th:relations rank 2} in Section \ref{subsec:proof of th:relations rank 2}.



\begin{remark} 
\label{rem:presentation H(W)} After the first version of the present paper was posted to Arxiv, Dr. Weideng Cui informed us that he found a presentation of ${\bf D}(W_{\{i,j\}})$ for $m_{ij}\in \{4,6\}$ in \cite{Cui}. That is, if $m_{ij}=4$, ${\bf D}(W_{\{i,j\}})$ is generated by $D_s$, $s\in {\mathcal S}$ subject to relations $D_s^2=D_s$, $s\in {\mathcal S}$, the braid relations \eqref{eq:braid relations}, and the  
relations from Theorem \ref{th:relations rank 2}. If $m_{ij}=6$, there are more relations than those prescribed by Theorem \ref{th:relations rank 2}. 

\end{remark}

\begin{remark} If $W$ is not crystallographic, i.e.,  $m_{ij}\in \{5\}\sqcup \ZZ_{\ge 7}$ for some distinct $i,j\in I$, we expect even more relations in ${\bf D}(W)$, e.g., for $m_{ij}=5$, one can show that:
\begin{equation}
\label{eq:Qij4}
D_1D_2D_3D_4+(D_1D_2D_4+D_2D_3D_4-D_2D_4)(D_5-1)=D_5D_4D_3D_1+D_5D_3D_2D_1-D_5D_3D_1
\end{equation}
where we abbreviated $D_1:=D_{s_i}$, $D_2=D_{s_is_js_i}$, $D_3=D_{s_js_is_js_is_j}=D_{s_is_js_is_js_i}$, 
$D_4=D_{s_js_is_j}$, $D_5=D_{s_j}$ (as in Theorem \ref{th:relations rank 2}).
We do not expect \eqref{eq:Qij4} to follow from the quadratic relations (Theorem
\ref{th:relations rank 2}(a)). 

\end{remark}

Now we establish a number of symmetries of ${\bf H}(W)$ and ${\bf D}(W)$. 

\begin{theorem} 
\label{th:symmetris D(W)} For any Coxeter group $W$ one has:

\noindent (a) ${\bf H}(W)$ and $\underline {\bf H}(W)$ 
admit an anti-involution 
$\overline{\cdot}$ such that $\overline s_i=s_i, ~\overline D_i=D_i$ for $i\in I$.

\noindent (b) ${\bf D}(W)$ and $\underline {\bf D}(W)$ admit the following symmetries. 
 
(i) A $W$-action by automorphisms via $s_i(D_s)=\begin{cases} 
1-D_{s_i} & \text{if $s=s_i$}\\
D_{s_iss_i} & \text{if $s=s_i$}\\ 
\end{cases}$ for $s\in {\mathcal S}$, $i\in I$.

(ii) An $s$-derivation $d_s$ (i.e., $d_s(xy)=d_s(x)y+s(x)d_s(y)$) such that $d_s(D_{s'})=\delta_{s,s'}$, $s,s'\in {\mathcal S}$.

\noindent (c) ${\bf H}(W)$ 
admits an involution 
$\theta$ such that  $\theta(s_i)=-s_i,~\theta(D_i)= 1-D_i$  for $i\in I$.
 
\end{theorem}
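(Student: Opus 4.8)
\section*{Proof proposal}

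All four assertions have the same shape---a map is prescribed on generators and must be extended to an (anti)homomorphism or a twisted derivation---so in each case the plan is to check compatibility with the defining relations and then to descend to the relevant quotient. For the involution $\theta$ of part (c) I would first verify well-definedness on $\hat{\bf H}(W)$ against the relations of Definition \ref{def:Hecke-Hopf algebra W}. The rank-$1$ and Coxeter relations are preserved because signs cancel in pairs: $(-s_i)^2=s_i^2$, $(1-D_i)^2=1-D_i$, $(-s_i)(1-D_i)+(1-D_i)(-s_i)=-s_i-1=\theta(s_i-1)$, and $((-s_i)(-s_j))^{m_{ij}}=(s_is_j)^{m_{ij}}$. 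For the linear braid relation, applying $\theta$ produces an overall factor $(-1)^{m_{ij}-1}$ on both sides and replaces $D_i,D_{i'}$ by $1-D_i,1-D_{i'}$; expanding, the $D$-parts reproduce the original linear braid relation, while the pure-$s$ parts coincide as words (both are the unique alternating word of length $m_{ij}-1$ starting with $s_j$). Since $\theta(-s_i)=s_i$ and $\theta(1-D_i)=D_i$, one has $\theta^2=\mathrm{id}$.

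For the anti-involution $\overline{\cdot}$ of part (a), which fixes generators and reverses products, I must check that the family of defining relations is stable under order-reversal. The relation $s_iD_i+D_is_i=s_i-1$ is symmetric; the Coxeter relations reverse to $(s_js_i)^{m_{ij}}=1$, which again holds in $W$; and the reverse of the linear braid relation for the ordered pair $(i,j)$ is, using $\{i',j'\}=\{i,j\}$, exactly the linear braid relation for the ordered pair $(j,i)$ (for $S_3$, the reverse of $D_1s_2s_1=s_2s_1D_2$ is $s_1s_2D_1=D_2s_1s_2$). As Definition \ref{def:Hecke-Hopf algebra W} imposes linear braid relations for all ordered pairs, the family is closed under reversal, so $\overline{\cdot}$ is a well-defined anti-endomorphism and an anti-involution.

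To descend parts (a) and (c) to ${\bf H}(W)$ and $\underline{\bf H}(W)$ I would show the ideals $\underline{\bf J}(W)$ and ${\bf J}(W)$ are stable. Both maps preserve $\hat{\bf D}(W)$ (indeed $\theta(D_s)=1-D_s$ and $\overline{D_s}=D_s$), commute with conjugation up to sign, and respect the degree filtration, so they preserve each ${\bf K}(W_{\{i,j\}})$ and (modulo constants) carry the generating sets ${\bf K}_{ij}(W)$ and ${\bf K}(W)\cap\mathrm{Ker}\,\varepsilon$ back into the respective ideals; for simply-laced $W$ this is visible directly from the presentation of Theorem \ref{th:hopf hat intro W}, exactly as in the $S_n$ case of Theorem \ref{th:symmetries Sn}. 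For part (b), the relations of Proposition \ref{pr:presentation simply-laced D} are phrased $W$-equivariantly (compatibility and $m_{s,s'}$ are conjugation-invariant), so conjugation already permutes them; a short case analysis on whether $s_i\in\{s,s'\}$, together with $(1-D_{s_i})^2=1-D_{s_i}$, shows each operator $s_i$ of (b)(i) preserves them, and $s_i^2=\mathrm{id}$ follows from $s_i(1-D_{s_i})=D_{s_i}$ and $s_i(D_{s_iss_i})=D_s$. For (b)(ii), the twisted Leibniz extension of $d_s$ is consistent with the relations precisely because the $s$-action of (b)(i) enters the Leibniz rule: for instance $d_s(D_s^2)=D_s+(1-D_s)=1=d_s(D_s)$, and the commutation and braid relations telescope; this is what links (b)(i) to (b)(ii), and descent to $\underline{\bf D}(W)$ is then automatic since $\pi_W$ is $W$-equivariant.

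The hard part is the braid relations for the $W$-action in (b)(i), namely $(s_is_j)^{m_{ij}}=\mathrm{id}$ as operators on ${\bf D}(W)$. On a generator $D_s$ the conjugation part moves $s$ through its $\langle s_i,s_j\rangle$-orbit and returns it to $s$ after $m_{ij}$ copies of $s_is_j$, but each passage through $s_i$ or $s_j$ triggers the correction $D\mapsto 1-D$, and one must show these compose to the identity. I would reduce to rank two by restricting to the subalgebra $\hat{\bf D}(W_{\{i,j\}})$ (the identification made before Theorem \ref{th:Coideal Hopf ij}), where $\langle s_i,s_j\rangle$ is dihedral with exactly $m_{ij}$ reflections $D_1,\dots,D_{m_{ij}}$ in the notation of Theorem \ref{th:relations rank 2}, and then verify by direct bookkeeping that the alternating sequence of flips over the full orbit is even and returns each $D_k$ to itself. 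This finite dihedral computation is the crux; once it is in hand, the $s_i$ generate a genuine $W$-action and all remaining claims follow.
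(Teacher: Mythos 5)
Your verification of $\overline{\cdot}$ and $\theta$ on the generators of $\hat{\bf H}(W)$ is fine and matches the paper in spirit, but the proposal has two genuine gaps, both at the descent stage. The more serious one is in part (c). Writing $\theta(x)$ for $x\in{\bf K}_{ij}(W)$ as an element of ${\bf K}(W_{\{i,j\}})_{\le m_{ij}}$ is not enough: since $\theta(D_s)=1-D_s$, the map $\theta$ does \emph{not} commute with $\varepsilon$ on $\hat{\bf D}(W)$ (e.g.\ $\varepsilon(\theta(D_s))=1\ne 0$), so $\theta(x)$ a priori lies only in $\ZZ+{\bf K}_{ij}(W)$, and a nonzero constant term would take it outside the ideal $\underline{\bf J}(W)$. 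Your phrase ``(modulo constants)'' is exactly where the difficulty sits, and it is the whole content of the paper's Proposition \ref{pr:eps theta}: for the \emph{finite} group $W_{\{i,j\}}$ one proves $\theta=\hat\tau\circ w_0$ on $\hat{\bf D}(W_{\{i,j\}})$ (Lemma \ref{le:theta finite W}), combines this with $w_0(x)=w_0xw_0^{-1}$ for $x\in{\bf K}$, and concludes $\varepsilon(\theta(x))=\varepsilon(x)=0$. Without an argument of this kind part (c) is not proved; the paper itself flags (c) as ``highly nontrivial'' for precisely this reason.

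The second gap is in part (b) for non-simply-laced $W$: Proposition \ref{pr:presentation simply-laced D} gives a presentation of ${\bf D}(W)$ only in the simply-laced case, so checking the action and the derivations against those relations does not define anything on ${\bf D}(W)$ or $\underline{\bf D}(W)$ in general. The paper instead works on $\hat{\bf D}(W)$, which by Theorem \ref{th:ZWcross and free D} has only the relations $D_s^2=D_s$ (so well-definedness there is immediate, and the braid relations $(s_is_j)^{m_{ij}}=\mathrm{id}$ for the action follow from the cocycle identity \eqref{eq:2 cocycle} checked on the span of the $D_s$, rather than from your dihedral orbit bookkeeping — a cleaner route to the same end), and then proves that the defining ideals are stable: the identity $s_ixs_i=s_i(x)-d_i(x)s_i$ of Lemma \ref{le:conjugation vs action}(b), together with the factorization $\hat{\bf H}(W)=\hat{\bf D}(W)\cdot\ZZ W$ and $s_i{\bf K}(W)s_i={\bf K}(W)\subset\hat{\bf D}(W)$, forces $d_i(x)=0$ and $s_ixs_i=s_i(x)$ for $x\in{\bf K}(W)$ (Proposition \ref{pr:symmetris hat D(W)}(b)). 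Some such argument is needed; ``descent is automatic'' does not cover $\langle\underline{\bf K}(W)\rangle$ or $\langle{\bf K}(W)\rangle$ being killed by the $d_s$. Part (a), by contrast, descends easily because $\overline{\cdot}$ fixes every $D_s$ and commutes with $\varepsilon$, and your sketch there is adequate.
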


We prove Theorem \ref{th:symmetris D(W)} in Section \ref{subsec:proof of Theorem symmetris D(W)}.

\begin{remark} Proposition \ref{pr:eps theta} implies that for a finite Coxeter group $W$, the algebra $\underline {\bf H}(W)$ 
also admits an involution $\theta$ as in Theorem \ref{th:symmetris D(W)}(c). Based on a more general argument of Proposition \ref{pr:antipode theta}(b), we can conjecture this for all Coxeter groups. In fact, the ``innocently looking" Theorem \ref{th:symmetris D(W)}(c) is highly nontrivial, in particular, applying $\theta$ in the form $\theta=S^{-2}$ (according to Proposition \ref{pr:antipode theta}(a)) to the  the braid relations \eqref{eq:braid relations} in ${\bf D}(W)$ one can obtain a large number of relations in degrees less than $m_{ij}$. 

\end{remark}



%
%


Using Theorem \ref{th:upper Hecke in Hecke-Hopf}, we can extend Corollaries \ref{cor:coaction} and \ref{cor:F_U} to all Coxeter groups.  

\begin{corollary} 
\label{cor:F_U W} Let $W$ be a Coxeter group. Then: 

(a) $H_{\bf q}(W)$ is a left ${\bf H}(W)$-comodule algebra  via
$T_i\mapsto s_i\otimes T_i+D_i\otimes (1-q_i)$ for $i\in I$.

(b) For any  ${\bf H}(W)$-module $M$ the assignments $V\mapsto F_M(V):=M\otimes  V$ 
define a family of (conservative) endo-functors 
on $H_{\bf q}(W)-Mod$ so that $F_{M\otimes N}=F_M\circ F_N$ for all $M,N\in {\bf H}(W)-Mod$.


\end{corollary}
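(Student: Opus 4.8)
The final statement to prove is Corollary \ref{cor:F_U W}, which extends the comodule‐algebra structure and the functor construction from $S_n$ to all Coxeter groups $W$.

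\medskip

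The plan is to deduce both parts directly from the Main Theorem (Theorem \ref{th:Hecke in Hecke-Hopf}) and its refinement Theorem \ref{th:upper Hecke in Hecke-Hopf}, essentially by transporting the arguments of Corollaries \ref{cor:coaction} and \ref{cor:F_U} verbatim. First I would establish part (a). By Theorem \ref{th:upper Hecke in Hecke-Hopf}, the assignment $T_i\mapsto s_i+(1-q_i)D_i$ defines an injective algebra homomorphism $\varphi_W\colon H_{\bf q}(W)\hookrightarrow{\bf H}(W)$, so $\varphi_W(H_{\bf q}(W))\cong H_{\bf q}(W)$ sits inside ${\bf H}(W)$ as a subalgebra. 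To see it is a \emph{left coideal} subalgebra, I would compute $\Delta$ applied to the generators $\varphi_W(T_i)=s_i+(1-q_i)D_i$. Using the coproduct from Theorem \ref{th:hopf hat W}, namely $\Delta(s_i)=s_i\otimes s_i$ and $\Delta(D_i)=D_i\otimes 1+s_i\otimes D_i$, I get
\[
\Delta(\varphi_W(T_i))=s_i\otimes s_i+(1-q_i)\bigl(D_i\otimes 1+s_i\otimes D_i\bigr)=s_i\otimes\bigl(s_i+(1-q_i)D_i\bigr)+D_i\otimes(1-q_i)\ ,
\]
which lies in ${\bf H}(W)\otimes\varphi_W(H_{\bf q}(W))$. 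Hence $\Delta(\varphi_W(H_{\bf q}(W)))\subseteq{\bf H}(W)\otimes\varphi_W(H_{\bf q}(W))$, establishing the left coideal property and simultaneously exhibiting the stated coaction $T_i\mapsto s_i\otimes T_i+D_i\otimes(1-q_i)$ as a coassociative, counital algebra homomorphism $H_{\bf q}(W)\to{\bf H}(W)\otimes H_{\bf q}(W)$. The only points requiring care are that this formula respects the quadratic and braid relations of $H_{\bf q}(W)$ (which is exactly the content of $\varphi_W$ being a homomorphism, already granted by Theorem \ref{th:upper Hecke in Hecke-Hopf}) and that coassociativity and counitality follow formally from those of $\Delta$ and $\varepsilon$ on ${\bf H}(W)$.

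\medskip

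For part (b), I would use the comodule‐algebra structure from (a) to define, for any ${\bf H}(W)$-module $M$ and any $H_{\bf q}(W)$-module $V$, an action of $H_{\bf q}(W)$ on $M\otimes V$ by pulling back the $({\bf H}(W)\otimes H_{\bf q}(W))$-action along the coaction map $\rho\colon H_{\bf q}(W)\to{\bf H}(W)\otimes H_{\bf q}(W)$. Concretely, $T_i$ acts on $m\otimes v$ by $s_i\cdot m\otimes T_i\cdot v+D_i\cdot m\otimes(1-q_i)v$. That this is a genuine $H_{\bf q}(W)$-module structure is precisely because $\rho$ is an algebra homomorphism into ${\bf H}(W)\otimes H_{\bf q}(W)$ acting on $M\otimes V$; the relations of $H_{\bf q}(W)$ are respected since $\rho$ respects them. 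The assignment $V\mapsto F_M(V)=M\otimes V$ is then functorial on $H_{\bf q}(W)$-module morphisms (apply $\mathrm{id}_M\otimes f$). The composition identity $F_{M\otimes N}=F_M\circ F_N$ for $M,N\in{\bf H}(W)\text{-}Mod$ follows from coassociativity of $\Delta$: unwinding $(F_M\circ F_N)(V)=M\otimes(N\otimes V)$ and comparing the induced $H_{\bf q}(W)$-action with that on $(M\otimes N)\otimes V$, the two coincide under the canonical associativity isomorphism precisely because $(\Delta\otimes\mathrm{id})\circ\rho=(\mathrm{id}\otimes\rho)\circ\rho$ on $H_{\bf q}(W)$. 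Conservativity follows since $F_M(V)=M\otimes V$ as a $\kk$-module, so a morphism $f$ with $F_M(f)=\mathrm{id}_M\otimes f=0$ forces $f=0$ whenever $M\ne 0$ is a faithfully flat (e.g.\ free nonzero) $\kk$-module, and similarly $F_M(V)=0$ detects $V=0$.

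\medskip

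I do not expect a genuine obstacle here: the corollary is a formal consequence of the comodule‐algebra axioms, with all the substantive work already carried out in proving Theorem \ref{th:upper Hecke in Hecke-Hopf} (injectivity and well‐definedness of $\varphi_W$) and Theorem \ref{th:hopf hat W} (the Hopf structure, in particular coassociativity of $\Delta$). The one step meriting the most attention is verifying $F_{M\otimes N}=F_M\circ F_N$ rather than merely a natural isomorphism: I would check that the $H_{\bf q}(W)$-actions agree \emph{on the nose} under the identification $(M\otimes N)\otimes V=M\otimes(N\otimes V)$, which reduces to the coassociativity diagram for $\rho$ and the fact that the ${\bf H}(W)\otimes{\bf H}(W)$-action on $M\otimes N$ is the tensor‐product action induced by $\Delta$. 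This is the same reasoning used for $S_n$ in Corollary \ref{cor:F_U}, now applicable in full generality because the coaction formula and the Hopf coproduct have the identical shape for every Coxeter group $W$.
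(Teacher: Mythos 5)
Your proposal is correct and follows essentially the same route as the paper: the paper derives this corollary directly from Theorem \ref{th:upper Hecke in Hecke-Hopf} by the identical computation $\Delta(\varphi_W(T_i))=s_i\otimes\varphi_W(T_i)+D_i\otimes(1-q_i)$ that justifies Corollaries \ref{cor:coaction} and \ref{cor:F_U} for $S_n$, with parts (a) and (b) then being formal consequences of the left coideal property and coassociativity. Your added care about when conservativity actually holds (nonzero free/faithfully flat $M$) is a reasonable refinement the paper glosses over.
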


Furthermore, let us extend Proposition \ref{pr:demazure}  to all $W$. Recall from \cite{Kac} that an $I\times I$-matrix $A=(a_{ij})$ is a generalized Cartan matrix if $a_{ii}=2$, $a_{ij}\in \ZZ_{\le 0}$ for $i\ne j$ and $a_{ij}\cdot a_{ji}=0$ implies $a_{ij}=a_{ji}=0$.

The following is a (conjectural) generalization of Proposition \ref{pr:demazure} to {\it crystallographic} Coxeter groups $W$, i.e., such that $m_{ij}\in \{0,2,3,4,6\}$ for all distinct $i,j\in I$.

\begin{conjecture} 
\label{conj:laurent action}
Let $A=(a_{ij})$, $i,j\in I$ be a generalized Cartan matrix. Let $W=W_A$ be the corresponding crystallographic Coxeter group, i.e., $m_{ij}=
\begin{cases}
2+a_{ij}a_{ji} & \text{if $a_{ij}a_{ji}\le 2$}\\
6   & \text{if $a_{ij}a_{ji}=3$}\\
0   & \text{if $a_{ij}a_{ji}>3$}\\
\end{cases}$
for $i,j\in I$, $i\ne j$ and let ${\mathcal L}_I=\ZZ[t_i^{\pm 1}],i\in I$.
Then the assignments 
\begin{equation}
\label{eq:demazure W}
s_i(t_j):=t_i^{-a_{ij}}t_j,~D_i(t_j):=t_j\frac{1-t_i^{-a_{ij}}}{1-t_i}\ , 
\end{equation}
for $i,j\in I$ turn ${\mathcal L}_I$ into an ${\bf H}(W)$-module algebra.
%
%
 %

\end{conjecture}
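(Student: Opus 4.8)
The plan is to verify Conjecture \ref{conj:laurent action} by checking directly that the assignments \eqref{eq:demazure W} respect all the defining relations of ${\bf H}(W)$ as operators on the Laurent polynomial ring ${\mathcal L}_I=\ZZ[t_i^{\pm 1}]$, and that these operators act by module-algebra endomorphisms compatible with the coproduct. Since by Theorem \ref{th:hopf hat intro W} (in the simply-laced case) and more generally via the factorization ${\bf H}(W)={\bf D}(W)\cdot \ZZ W$ of Theorem \ref{th:HW factorization}, it suffices to confirm: (1) the $s_i$ act as commuting-with-the-Coxeter-relations automorphisms of ${\mathcal L}_I$, i.e.\ the assignment $s_i(t_j)=t_i^{-a_{ij}}t_j$ defines a genuine $W$-action; (2) each $D_i$ satisfies the rank-one relations $D_i^2=D_i$ and $s_iD_i+D_is_i=s_i-1$; and (3) the rank-two relations (the linear braid relations of Definition \ref{def:Hecke-Hopf algebra W} and the relations of Theorem \ref{th:relations rank 2}, or equivalently the explicit relations of Theorem \ref{th:hopf hat intro W} when $W$ is simply-laced) hold. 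The module-algebra compatibility --- that $D_i(fg)=D_i(f)g+s_i(f)D_i(g)$, matching $\Delta(D_i)=D_i\otimes 1+s_i\otimes D_i$ --- should follow from the Leibniz-type structure of the Demazure operator and should be checked first, as it reduces everything to the action on the generators $t_j$.

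I would organize the computation as follows. First I would confirm that formula \eqref{eq:demazure W} agrees with the classical Demazure operator: writing $D_i=\frac{1}{1-t_i}(1-s_i)$ as an operator (extending Proposition \ref{pr:demazure}, which is the type $A$ special case where $a_{ij}\in\{0,-1\}$ and $s_i$ swaps the relevant variables), one reads off $D_i(t_j)=\frac{t_j-t_i^{-a_{ij}}t_j}{1-t_i}=t_j\frac{1-t_i^{-a_{ij}}}{1-t_i}$, which is indeed a Laurent polynomial since $a_{ij}\le 0$ makes $1-t_i^{-a_{ij}}$ divisible by $1-t_i$. With this operator form $D_i=(1-t_i)^{-1}(1-s_i)$, the rank-one relations become purely formal: $s_iD_i+D_is_i=s_i-1$ and $D_i^2=D_i$ are identities of operators that follow from $s_i^2=1$ and the standard computation $D_i^2=(1-t_i)^{-1}(1-s_i)(1-t_i)^{-1}(1-s_i)$, using $s_i(1-t_i)^{-1}=(1-t_i^{-1})^{-1}=-t_i(1-t_i)^{-1}$. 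These are exactly the manipulations already implicit in Proposition \ref{pr:demazure}, now carried out over the larger variable set.

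The heart of the argument, and the step I expect to be the main obstacle, is verifying the rank-two relations, i.e.\ that the operators $D_i,D_j$ attached to a pair with $m_{ij}\in\{2,3,4,6\}$ satisfy the corresponding relations from Theorem \ref{th:relations rank 2} (and the full presentation of ${\bf D}(W)$, which is only conjecturally complete for $m_{ij}=6$ per Remark \ref{rem:presentation H(W)}). Because ${\bf H}(W)$ is generated by the $s_i,D_i$ and all relations among them are supported on rank-one and rank-two parabolic subgroups $W_{\{i,j\}}$, it is enough to check each relation inside the two-variable (or, after the reflection $s_i,s_j$ generate a dihedral action, few-variable) Laurent subring on which $W_{\{i,j\}}$ acts. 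The difficulty is genuinely computational and combinatorial: for $m_{ij}=4,6$ the reflections $D_k=D_{w\cdots w^{-1}}$ in Theorem \ref{th:relations rank 2} correspond to the non-simple reflections in the dihedral group, and one must compute their Demazure operators explicitly and then confirm the quadratic-linear and Yang--Baxter identities. The presence of the word ``(conjectural)'' signals precisely that the $m_{ij}=6$ case (and the non-crystallographic $m_{ij}=5$ relation \eqref{eq:Qij4}, which is excluded here since we restrict to crystallographic $W$) has not been fully reduced to a finite check; the expected resolution is to reduce to each rank-two parabolic, exploit the $W$-equivariance and the $s$-derivations $d_s$ of Theorem \ref{th:symmetris D(W)}(b) to cut down the number of independent identities, and then verify the remaining finite list of polynomial identities in $\ZZ[t_i^{\pm1},t_j^{\pm1}]$ directly --- a verification that is routine for $m_{ij}\in\{2,3,4\}$ but genuinely delicate for $m_{ij}=6$, which is why the statement is posed as a conjecture rather than a theorem.
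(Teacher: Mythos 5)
Your plan is essentially the paper's own argument: the authors also verify the conjecture by realizing the operators inside the cross product ${\mathcal Q}_I\rtimes \ZZ W$, writing $D_i\mapsto \frac{1}{1-t_i}(1-s_i)$ and checking $D_i^2=D_i$ and the linear braid relations $wD_iw^{-1}=D_{i'}$ by the same manipulation with $\tau_i=\frac{1}{1-t_i}$ and $s_i\tau_i s_i=1-\tau_i$ that you sketch (Proposition \ref{pr:hat H action on LI and vanishing of K_ij}); the module-algebra property then comes for free from the action of ${\mathcal Q}_I\rtimes\ZZ W$ on ${\mathcal Q}_I$, exactly as you anticipate via the twisted Leibniz rule. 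The one substantive point where your plan is looser than the paper's is the rank-two step: what must be shown to act by zero is not a list of relations of ${\bf D}(W)$ but the coideal ${\bf K}_{ij}(W)=\left(\bigcap_{w}w\hat{\bf D}(W_{\{i,j\}})w^{-1}\right)_{\le m_{ij}}$ whose image generates the defining ideal of ${\bf H}(W)$, and the paper has explicit generators of this module only for $m_{ij}\in\{2,3\}$ (Proposition \ref{pr:K for S3}, giving $K_{ij}=D_iD_j-D_jD_i$ resp.\ $K_{ij}=D_iD_j-D_jD_{ij}-D_{ij}D_i+D_{ij}$, which are then annihilated by a short computation with the $\tau$'s). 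Consequently the paper only completes the simply-laced case; your assertion that $m_{ij}=4$ is ``routine'' tacitly assumes that the relations of Theorem \ref{th:relations rank 2} together with the braid relations exhaust ${\bf K}_{ij}(W)$ for $m_{ij}=4$, which is not proved in the paper (it is Cui's result, quoted in Remark \ref{rem:presentation H(W)}), and for $m_{ij}=6$ you correctly note that even more relations are needed. So: same method, but to turn your outline into the proof the paper actually gives you should (i) phrase the verification as the vanishing of the explicit generators of ${\bf K}_{ij}(W)$ rather than of a conjectural presentation of ${\bf D}(W)$, and (ii) restrict the claim of completeness to the simply-laced case, which is all the paper establishes.
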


We verified the conjecture in the simply-laced case, i.e., when $A$ is symmetric (Section \ref{subsec:Action on Laurent polynomials and verification of Conjecture}). We also verified that $\hat {\bf H}(W)$, indeed, acts on ${\mathcal L}_I$ via \eqref{eq:demazure W} (Proposition \ref{pr:hat H action on LI and vanishing of K_ij}) for any $A$. After the first version of the present paper was posted to Arxiv, Dr. Weideng Cui informed us that he proved the conjecture for $m_{ij}\in \{4,6\}$ in 
\cite{Cui} using his presentation of ${\bf H}(W_{\{i,j\}})$ (see Remark \ref{rem:presentation H(W)}), that is, for all crystallographic Coxeter groups.

It would also be interesting to see if ${\bf K}(W)$ annihilates ${\mathcal L}_I$ as well, i.e., 
if the desired action of ${\bf H}(W)$ on ${\mathcal L}_I$ factors through that of $\underline {\bf H}(W)$.

We conclude Introduction with the observation that all results of this section extend to what we call {\it extended Coxeter groups} $\hat W$. Namely, given a Coxeter group $\langle s_i|i\in I\rangle$, we let $\hat W$  be any group generated by $\hat s_i$, $i\in I$ such that 

$\bullet$ $\hat s_i^2$ is central 
for $i\in I$.

$\bullet$ braid relations: $\underbrace{\hat s_i\hat s_j\cdots}_{m_{ij}} =\underbrace{\hat s_j\hat s_i\cdots}_{m_{ij}}$
for all distinct $i,j\in I$.

$\bullet$ The assignments $\hat s_i\mapsto s_i$, define a (surjective) group homomorphism $\hat W\twoheadrightarrow W$.

\smallskip

Clearly, in any extended  Coxeter group $W$ one has a relation $\hat s_i^2=\hat s_j^2$ whenever $m_{ij}$ is odd. In particular, $\hat S_n$ is a central extension of $S_n$ with the cyclic center.

Then Definitions \ref{def:hat H} and \ref{def:Hecke-Hopf algebra W} carry over and give ${\bf H}(\hat S_n)$ and $\hat {\bf H}(\hat W)$ with the only modification: the rank 1 relations $s_iD_i+D_is_i=s_i-1$ are replaced with $\hat s_iD_i+D_i\hat s_i=\hat s_i-\hat s_i^2$ 
because $\hat s_i$ is not necessarily an involution.   Then Theorems \ref{th:hopf hat intro}, \ref{th:symmetries Sn} and Proposition \ref{pr:HS_n factorization} hold for ${\bf H}(\hat S_n)$ with ${\bf D}(\hat S_n)={\bf D}(S_n)$. So do Theorems \ref{th:hopf hat W},~\ref{th:Coideal Hopf},~\ref{th:ZWcross and free D},~\ref{th:Coideal Hopf ij},~\ref{th:HW factorization}, and \ref{th:symmetris D(W)} for $\hat {\bf H}(\hat W)$ with $\hat {\mathcal S}=\{\hat w\hat s_i\hat w^{-1}\,|\,\hat w\in \hat W,i\in I\}$, $\hat {\bf D}(\hat W)=\hat {\bf D}(W)$, ${\bf K}(\hat W)={\bf K}(W)$, and $\underline {\bf K}(\hat W)=\underline {\bf K}(W)$. By the very construction, the canonical homomorphism $\hat W\twoheadrightarrow W$ defines surjective homomorphisms of Hopf algebras $\hat {\bf H}(\hat W)\twoheadrightarrow \hat {\bf H}(W)$, 
$\underline {\bf H}(\hat W)\twoheadrightarrow \underline {\bf H}(W)$, and ${\bf H}(\hat W)\twoheadrightarrow {\bf H}(W)$.   

Finally, to establish analogues of Theorems \ref{th:Hecke in Hecke-Hopf} and \ref{th:upper Hecke in Hecke-Hopf} for a given extended Coxeter group $\hat W$, in the notation of Definition \ref{def:hecke algebra},
define a {\it (generalized) Hecke algebra} $H_{\bf q}(\hat W)$ of $\hat W$, to be generated  over a commutative ring $\kk$ by $T_i$, $z_i$, $i\in I$ subject to relations:

$\bullet$ The assignments $z_i\mapsto \hat s_i^2$ define an injective homomorphism of algebras $\kk[z_i,i\in I]\hookrightarrow \kk \hat W$, where $\kk[z_i,i\in I]$ denotes the subalgebra generated by $z_i$, $i\in I$.

$\bullet$  quadratic relations: $T_i^2=(1-q_i)T_i+q_iz_i$ 
for $i\in I$.

$\bullet$ braid relations: $\underbrace{T_iT_j\cdots}_{m_{ij}} =\underbrace{T_jT_i\cdots}_{m_{ij}}$
for all distinct $i,j\in I$.

Then Theorems \ref{th:Hecke in Hecke-Hopf}, \ref{th:upper Hecke in Hecke-Hopf} and Corollary \ref{cor:Tw form a basis} hold verbatim for $H_{\bf q}(\hat W)$ and $\underline {\bf H}(\hat W)$, ${\bf H}(\hat W)$. 


\subsection*{Acknowledgments}
The first named author gratefully 
acknowledges the support of Hebrew University of Jerusalem where most of this work was done.  
We thank  Pavel Etingof, Jacob Greenstein, and Jianrong Li for stimulating discussions. Special thanks are due to Yuri Bazlov for pointing out that some results of the forthcoming paper \cite{BBHcross} can be of importance in this work. We express our gratitude to Dr. Weideng Cui who helped correcting the proof of Lemma \ref{le:drop from 3 to 2} and shared with us results of his paper \cite{Cui} on presentation of the Hecke-Hopf algebras ${\bf H}(W)$ for dihedral groups of $8$ and $12$ elements respectively, which, in particular, settled Conjecture \ref{conj:laurent action}.

\section{New solutions of QYBE}

\label{sec:QYBE}

We retain the notation of Section \ref{sect:intro}. The following is immediate. 

\begin{lemma} 
\label{le:quadratic braiding}
Let $n\ge 3$. Then for a  $\kk$-module $V$ and a $\kk$-linear map $\Psi:V\otimes V\to V\otimes V$  the following are equivalent.

(i) the assignments $T_i\mapsto \Psi_i=\underbrace{Id_V\otimes \cdots Id_V}_{i-1}\otimes \Psi\otimes \underbrace{Id_V\otimes \cdots \otimes Id_V}_{n-i-1}$
for $i=1,\ldots,n-1$ define a structure of an $H_q(S_n)$-module on $V^{\otimes n}$;


(ii) $\Psi$ satisfies the braid equation on $V^{\otimes 3}$ and the quadratic equation on $V^{\otimes 2}$: 
\begin{equation}
\label{eq:quadratic braiding}
\Psi_1\Psi_2\Psi_1=\Psi_2\Psi_1\Psi_2,~\Psi^2=(1-q)\Psi+q\cdot Id_{V\otimes V}
\end{equation}
(where $\Psi_1:=\Psi\otimes Id_V$, $\Psi_2:=Id_V\otimes \Psi$).

\end{lemma}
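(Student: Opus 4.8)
The plan is to prove the equivalence of (i) and (ii) by recognizing that both conditions are simply the defining relations of $H_q(S_n)$ transported through the assignment $T_i\mapsto \Psi_i$. Since $H_q(S_n)$ is presented by generators $T_1,\ldots,T_{n-1}$ subject to the braid relations and the quadratic relations, a structure of $H_q(S_n)$-module on $V^{\otimes n}$ is precisely a collection of operators $\Psi_i$ on $V^{\otimes n}$ satisfying those relations. So the entire content is to translate the relations among the $T_i$ into relations among the $\Psi_i$, and then reduce the latter to the two displayed equations \eqref{eq:quadratic braiding} on $V^{\otimes 2}$ and $V^{\otimes 3}$.

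First I would spell out the standard direction $(i)\Rightarrow(ii)$: if the $\Psi_i$ define a module, then in particular the braid relation $\Psi_1\Psi_2\Psi_1=\Psi_2\Psi_1\Psi_2$ holds on $V^{\otimes n}$, and since this relation involves only the first three tensor factors and acts as the identity elsewhere, restricting to $V^{\otimes 3}$ (using $n\ge 3$) yields the braid equation for $\Psi$. Likewise the quadratic relation $\Psi_i^2=(1-q)\Psi_i+q\,Id$ restricts on the $i$-th and $(i+1)$-st factors to $\Psi^2=(1-q)\Psi+q\,Id_{V\otimes V}$. For the converse $(ii)\Rightarrow(i)$, I would verify that the operators $\Psi_i$ satisfy all the defining relations of $H_q(S_n)$. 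The quadratic relation for each $\Psi_i$ follows immediately from the quadratic equation on $V^{\otimes 2}$ applied in the $i$-th slot, since $\Psi_i$ acts as $\Psi$ there and as identity elsewhere. The braid relations split into two families: the far-commutation $\Psi_i\Psi_j=\Psi_j\Psi_i$ for $|i-j|>1$, and the Yang--Baxter braid relation $\Psi_i\Psi_{i+1}\Psi_i=\Psi_{i+1}\Psi_i\Psi_{i+1}$ for adjacent indices.

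The far-commutation case is automatic: when $|i-j|>1$, the operators $\Psi_i$ and $\Psi_j$ act on disjoint pairs of tensor factors, so they commute by the interchange law for tensor products of endomorphisms, with no hypothesis on $\Psi$ needed beyond its being a well-defined map. The adjacent case $\Psi_i\Psi_{i+1}\Psi_i=\Psi_{i+1}\Psi_i\Psi_{i+1}$ is where the braid equation $\Psi_1\Psi_2\Psi_1=\Psi_2\Psi_1\Psi_2$ on $V^{\otimes 3}$ enters: all six operators here act as the identity on the tensor factors outside positions $i,i+1,i+2$, and on those three factors they reduce exactly to $\Psi_1,\Psi_2$ in the sense of the lemma, so the relation on $V^{\otimes n}$ is equivalent to the braid equation on $V^{\otimes 3}$ tensored with identities.

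Since the lemma is flagged as ``immediate,'' I do not expect a genuine obstacle; the only point requiring care is the bookkeeping that localizes each relation to the relevant tensor factors and confirms that the induced operator on those factors is the expected one. Concretely, the main thing to state cleanly is that for an operator of the form $Id^{\otimes(i-1)}\otimes\Phi\otimes Id^{\otimes(n-i-1)}$ on $V^{\otimes n}$, a polynomial identity holds on $V^{\otimes n}$ if and only if the corresponding identity in $\Phi$ holds on the appropriately-sized tensor power, which is just the faithfulness of the embedding $\mathrm{End}(V^{\otimes k})\hookrightarrow\mathrm{End}(V^{\otimes n})$ given by padding with identities. I would invoke this once and then apply it to each relation, keeping the argument to a few lines.
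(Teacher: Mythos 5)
Your proof is correct and is exactly the argument the paper has in mind: the paper offers no proof at all, introducing the lemma with ``The following is immediate,'' and your verification --- that a module structure is the same as the images of the $T_i$ satisfying the defining braid and quadratic relations, each of which localizes to the relevant two or three tensor factors --- is precisely the routine check being elided. The only point worth a half-sentence of care is the padding observation in your last paragraph (injectivity of $\Phi\mapsto Id^{\otimes(i-1)}\otimes\Phi\otimes Id^{\otimes(n-i-1)}$), which is clear for free or projective $\kk$-modules and is all that is ever used in the paper's applications.
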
 

We refer to any $\Psi$ satisfying \eqref{eq:quadratic braiding} as a  {\it quadratic braiding} on $V$. 

In a similar fashion, we obtain the following immediate result for ${\bf H}(S_n)$-modules.

\begin{lemma} 
\label{le:hatHS3 structure}
Let $n\ge 3$.  Then for any $\ZZ$-module $U$ and any pair of $\ZZ$-linear maps  $s,D:U\otimes  U\to U\otimes  U$  the following are equivalent:

(a)  the assignments:
$s_i\mapsto \underbrace{Id_U\otimes \cdots Id_U}_{i-1}\otimes s\otimes \underbrace{Id_U\otimes \cdots \otimes Id_U}_{n-i-1},~D_i\mapsto \underbrace{Id_U\otimes \cdots Id_U}_{i-1}\otimes D\otimes \underbrace{Id_U\otimes \cdots \otimes Id_U}_{n-i-1}$
for $i=1,\ldots,n-1$ define a structure of an ${\bf H}(S_n)$-module on $U^{\otimes n}$;

(b) the assignments 
$s_1\mapsto s\otimes Id_U,~s_2\mapsto Id_U\otimes s,~D_1\mapsto D\otimes Id_U,~D_2\mapsto Id_U\otimes D$
define a structure of an ${\bf H}(S_3)$-module on $U^{\otimes 3}$.
\end{lemma}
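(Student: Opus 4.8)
The plan is to read off from Definition \ref{def:hat H} that both (a) and (b) are nothing but the assertion that an explicit list of operators on a tensor power of $U$ obeys the defining relations of a Hecke--Hopf algebra, and that every such relation is \emph{local}. As in the analogous Lemma \ref{le:quadratic braiding}, the substantive direction is (b) $\Rightarrow$ (a): one builds the global pattern on $U^{\otimes n}$ out of the single-window data. Writing $s_i,D_i$ for the operators in (a), the operator $s_i$ (resp. $D_i$) acts through $s$ (resp. $D$) on the factors in positions $i,i+1$ and as the identity on all other factors. I would first record the trichotomy of relations in Definition \ref{def:hat H}: (1) the rank-$1$ relations $s_i^2=1$, $D_i^2=D_i$, $s_iD_i+D_is_i=s_i-1$, supported on positions $\{i,i+1\}$; (2) the commutation relations for $|i-j|>1$, whose two operands act on the \emph{disjoint} blocks $\{i,i+1\}$ and $\{j,j+1\}$; and (3) the adjacent relations for $|i-j|=1$, namely the braid relation together with $D_is_js_i=s_js_iD_j$ and $D_js_iD_j=s_iD_jD_i+D_iD_js_i+s_iD_js_i$, all supported on the three consecutive positions $\{i,i+1,i+2\}$. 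The one structural fact to check is that no defining relation spans more than three consecutive slots and no relation couples three mutually non-adjacent indices; this is read off the definition directly.

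For (b) $\Rightarrow$ (a) I would verify the three families on $U^{\otimes n}$ separately. The commutation relations in family (2) hold \emph{automatically}, with no hypothesis on $s,D$: two endomorphisms of $U^{\otimes n}$ that act nontrivially only on disjoint sets of tensor factors commute, which gives $s_js_i=s_is_j$, $D_js_i=s_iD_j$ and $D_jD_i=D_iD_j$ for $|i-j|>1$. For families (1) and (3) the key point is translation-invariance of the tensor product: the relation indexed by $i$ (resp. by the adjacent pair $i,i+1$) is obtained from the one indexed by $1$ (resp. by $1,2$) simply by tensoring the identity on the $i-1$ factors to the left and on the remaining factors to the right. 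Hence it suffices that $s,D$, regarded via $s_1=s\otimes Id_U$, $s_2=Id_U\otimes s$, $D_1=D\otimes Id_U$, $D_2=Id_U\otimes D$ on $U^{\otimes 3}$, satisfy the rank-$1$ and adjacent relations; but those are precisely the full set of defining relations of ${\bf H}(S_3)$, i.e. exactly the content of (b). Tensoring each such operator identity on $U^{\otimes 3}$ with $Id$ on the complementary factors then reproduces the corresponding relation on $U^{\otimes n}$, and assembling the three families shows the assignment in (a) respects all relations of ${\bf H}(S_n)$.

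For the converse (a) $\Rightarrow$ (b) I would restrict to the first three factors. By the same windowing, each ${\bf H}(S_3)$ relation demanded in (b) is the relation of ${\bf H}(S_n)$ indexed by $1$ or by the pair $1,2$, which holds on $U^{\otimes n}$ by hypothesis and has the shape $P(s,D)\otimes Id_{U^{\otimes(n-3)}}=Q(s,D)\otimes Id_{U^{\otimes(n-3)}}$ with $P(s,D),Q(s,D)\in\mathrm{End}(U^{\otimes 3})$. There is no serious obstacle in this lemma—it is a locality bookkeeping—and the sole point deserving attention is the cancellation of the trailing identity factor so as to conclude $P(s,D)=Q(s,D)$ on $U^{\otimes 3}$ itself. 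For $n=3$ there is nothing to cancel and (a), (b) coincide verbatim; for $n>3$ one uses that $U^{\otimes(n-3)}$ is a nonzero power of the same module $U$, so it shares all torsion primes with $U^{\otimes 3}$, whence $y\otimes u=0$ for every $u\in U^{\otimes(n-3)}$ forces $y=0$ in $U^{\otimes 3}$. This recovers the ${\bf H}(S_3)$ relations on $U^{\otimes 3}$ and completes the equivalence.
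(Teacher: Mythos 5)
Your reduction of (b)\,$\Rightarrow$\,(a) to locality is exactly the content the paper has in mind when it calls this lemma ``immediate'' (no proof is written out): every defining relation of ${\bf H}(S_n)$ in Definition \ref{def:hat H} either couples two disjoint windows $\{i,i+1\}$, $\{j,j+1\}$ (and then holds for free, since operators supported on disjoint tensor slots commute) or lives inside a single window of at most three consecutive slots, where it is the image of the corresponding ${\bf H}(S_3)$ relation under $-\otimes \mathrm{Id}$. That direction of your argument is correct and is the only direction the paper ever uses (cf.\ the proof of Theorem \ref{th:H(S3)-structure}, which passes from an ${\bf H}(S_3)$-structure on $U$ to an ${\bf H}(S_n)$-module structure on $U^{\otimes n}$).

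The gap is in your justification of (a)\,$\Rightarrow$\,(b). You reduce it, correctly, to the cancellation ``$(P-Q)\otimes \mathrm{Id}_{U^{\otimes(n-3)}}=0$ implies $P=Q$ on $U^{\otimes 3}$,'' i.e.\ to showing that $y\otimes u=0$ for all $u\in U^{\otimes(n-3)}$ forces $y=0$ in $U^{\otimes 3}$. But the reason you give --- that $U^{\otimes(n-3)}$ ``shares all torsion primes with $U^{\otimes 3}$'' --- does not yield this: sharing torsion primes does not make tensoring faithful. For instance the class of $2$ in $\ZZ/4$ is nonzero, yet $2\otimes u=0$ in $\ZZ/4\otimes\ZZ/2$ for every $u\in\ZZ/2$, even though both groups are $2$-groups. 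So the step as written is a non sequitur. Worse, for arbitrary $\ZZ$-modules the desired cancellation is genuinely delicate: there exist abelian groups $G$ with $G^{\otimes 3}\neq 0$ but $G^{\otimes 4}=0$, and for such $U=G$ and $n=4$ condition (a) is vacuous while (b) is not, so the implication (a)\,$\Rightarrow$\,(b) cannot be established by any general argument without some hypothesis on $U$ (e.g.\ $U$ free or flat over $\ZZ$, where $-\otimes U^{\otimes(n-3)}$ is visibly faithful on the relevant summand). You should either prove the cancellation under an explicit hypothesis, or note that the mathematically substantive (and actually used) content of the lemma is the implication (b)\,$\Rightarrow$\,(a), which your locality argument does establish.
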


We refer to any pair of  $\ZZ$-linear maps 
$s,D:U\otimes  U\to U\otimes  U$ satisfying Lemma \ref{le:hatHS3 structure}(b) as an {\it ${\bf H}(S_3)$-structure} on  $U$.

Furthermore, given an ${\bf H}(S_3)$-structure $s,D:U\otimes  U\to U\otimes  U$ on any  $\ZZ$-module $U$, any $\kk$-module $V$ and any $\kk$-linear map $\Psi:V\otimes V\to V\otimes V$  
define a $\kk$-linear endomorphism $\Psi_U$  of $(U\otimes  V)^{\otimes 2}$ by:
\begin{equation}
\label{eq:PsiU}
\Psi_U=\tau_{23}^{-1}\circ (s\otimes \Psi+(1-q) D\otimes Id_{V\otimes V})\circ \tau_{23}\ ,
\end{equation}
where $\tau_{23}:(U\otimes  V)\otimes (U\otimes  V)\widetilde \to (U\otimes  U)\otimes  (V\otimes V)$ is the permutation of two middle factors.

The following result was the starting point of the entire project. 

\begin{theorem} 
\label{th:H(S3)-structure}
Let $s,D:U\otimes  U\to U\otimes  U$ be any ${\bf H}(S_3)$-structure on a $\ZZ$-module $U$ and let $\Psi:V\otimes V\to V\otimes V$ be a quadratic braiding on a $\kk$-module $V$. Then 

(a) The linear endomorphism
 $\Psi_U$ of $(U\otimes  V)^{\otimes 2}$
is also a quadratic braiding.

(b) The functor  $F_{U^{\otimes n}}$ from Corollary \ref{cor:F_U} satisfies:
$F_{U^{\otimes n}}(V^{\otimes n})\cong (U\otimes  V)^{\otimes n}$,  
where $V^{\otimes n}$ is naturally an $H_q(S_n)$-module by Lemma \ref{le:quadratic braiding} via the quadratic braiding $\Psi$ and $(U\otimes  V)^{\otimes n}$ is naturally an ${\bf H}(S_n)$-module via the quadratic braiding $\Psi_U$.

\end{theorem}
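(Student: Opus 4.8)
The plan is to reduce both parts to the already-established machinery of Lemmas \ref{le:quadratic braiding} and \ref{le:hatHS3 structure}, so that I never have to verify the braid and quadratic equations for $\Psi_U$ by hand. The key observation is that the formula \eqref{eq:PsiU} defining $\Psi_U$ is, up to the conjugation by $\tau_{23}$, exactly the image of the generator $T_1\mapsto s_1+(1-q)D_1$ under the homomorphism $\varphi$ of Theorem \ref{th:hecke hopf intro} evaluated on the tensor-product module. So the strategy is to recognize $\Psi_U$ as the action of $\varphi(T_i)$ coming from the functor $F_{U^{\otimes n}}$, and then invoke Theorem \ref{th:hecke hopf intro} (that $\varphi$ lands in a copy of $H_q(S_n)$) together with Lemma \ref{le:quadratic braiding} to conclude that the relations \eqref{eq:quadratic braiding} hold automatically.

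First I would prove part (b), since it carries the conceptual content and part (a) will fall out of it. By Lemma \ref{le:hatHS3 structure}, the ${\bf H}(S_3)$-structure $(s,D)$ on $U$ promotes to an ${\bf H}(S_n)$-module structure on $U^{\otimes n}$, and $V^{\otimes n}$ is an $H_q(S_n)$-module via the quadratic braiding $\Psi$. The functor $F_{U^{\otimes n}}$ of Corollary \ref{cor:F_U} sends $V^{\otimes n}$ to $U^{\otimes n}\otimes V^{\otimes n}$ with the $H_q(S_n)$-action pulled back along the coaction \eqref{eq:Hecke-Hopf coaction intro}, i.e.\ $T_i$ acts as $s_i\otimes T_i+D_i\otimes(1-q)$. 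The middle-swap isomorphism $\tau:(U\otimes V)^{\otimes n}\widetilde\to U^{\otimes n}\otimes V^{\otimes n}$ (the $n$-fold analogue of $\tau_{23}$) intertwines this action with the one built from $\Psi_U$: I would check that under $\tau$ the operator $s_i\otimes T_i+D_i\otimes(1-q)$ on $U^{\otimes n}\otimes V^{\otimes n}$ corresponds exactly to $(\Psi_U)_i$ on $(U\otimes V)^{\otimes n}$, which is precisely the content of \eqref{eq:PsiU} placed in the $i$-th slot. This gives the isomorphism $F_{U^{\otimes n}}(V^{\otimes n})\cong(U\otimes V)^{\otimes n}$ of $H_q(S_n)$-modules.

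For part (a), I would specialize to $n=3$. Since $F_{U^{\otimes 3}}(V^{\otimes 3})$ is an $H_q(S_3)$-module (being a pullback of one along the algebra map $\varphi$), the operators $(\Psi_U)_1,(\Psi_U)_2$ satisfy the defining relations of $H_q(S_3)$; by the direction (i)$\Rightarrow$(ii) of Lemma \ref{le:quadratic braiding}, applied to the module $(U\otimes V)^{\otimes 3}$, this says exactly that $\Psi_U$ satisfies the braid equation on $(U\otimes V)^{\otimes 3}$ and the quadratic equation on $(U\otimes V)^{\otimes 2}$, i.e.\ $\Psi_U$ is a quadratic braiding. The only subtlety is that Lemma \ref{le:quadratic braiding} is stated for $V^{\otimes n}$ with all slots equal; since $(U\otimes V)$ is itself a single $\ZZ$-module this applies directly with $V$ replaced by $U\otimes V$.

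The main obstacle I anticipate is the bookkeeping of the intertwining identity: verifying that the conjugation by the middle-swap $\tau$ turns the coaction operator $s_i\otimes T_i+D_i\otimes(1-q)$ into $(\Psi_U)_i$ requires tracking which tensor factors $s$, $D$, $\Psi$, and the identities act on, and confirming that the two middle-factor transpositions $\tau_{23}$ and $\tau_{23}^{-1}$ in \eqref{eq:PsiU} correctly realize the passage between the interleaved ordering $(U\otimes V)\otimes(U\otimes V)$ and the grouped ordering $(U\otimes U)\otimes(V\otimes V)$. This is a routine but error-prone index computation; everything else is a formal consequence of the coideal-subalgebra structure already proved.
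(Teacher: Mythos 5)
Your proposal is correct and follows essentially the same route as the paper: the paper also tensors the ${\bf H}(S_n)$-module structure on $U^{\otimes n}$ (from Lemma \ref{le:hatHS3 structure}) with the $H_q(S_n)$-module structure on $V^{\otimes n}$ (from Lemma \ref{le:quadratic braiding}), composes with the coaction \eqref{eq:Hecke-Hopf coaction intro}, identifies $U^{\otimes n}\otimes V^{\otimes n}$ with $(U\otimes V)^{\otimes n}$ so that $T_i\mapsto(\Psi_U)_i$, and then applies Lemma \ref{le:quadratic braiding} in the reverse direction. The only cosmetic difference is that you derive (a) from (b) while the paper states (a) first and notes that (b) follows; the intertwining/bookkeeping step you flag is likewise left implicit in the paper.
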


\begin{proof}  
Let  $\Psi:V\otimes V\to V\otimes V$ be a quadratic braiding. By Lemma \ref{le:quadratic braiding}, the assignment $T_i\mapsto \Psi_i$, $i=1,\ldots,n-1$ defines a $\kk$-algebra homomorphism $H_q(S_n)\to End_\kk(V^{\otimes n})$.

Furthermore, let $U$ be a $\ZZ$-module with an ${\bf H}(S_3)$-structure $s,D:U\otimes  U\to U\otimes  U$. Then, clearly, $U^{\otimes n}$ is an ${\bf H}(S_n)$-module by Lemma \ref{le:hatHS3 structure}. 
Tensoring these homomorphisms, we obtain an algebra homomorphism ${\bf H}(S_n)\otimes H_q(S_n)\to End_\ZZ (U^{\otimes n}) \otimes End_{\kk} (V^{\otimes n})\subset End_{\kk} (U^{\otimes n}\otimes V^{\otimes n})$. Composing it with the coaction \eqref{eq:Hecke-Hopf coaction intro}  and naturally 
identifying $U^{\otimes n}\otimes V^{\otimes n}$ with $(U\otimes V)^{\otimes n}$ we obtain a $\kk$-algebra homomorphism $H_q(S_n)\to End_{\kk} ((U\otimes V)^{\otimes n})$ given by $T_i\mapsto (\Psi_U)_i$ for $i=1,\ldots,n-1$. In view of Lemma  \ref{le:quadratic braiding}, $\Psi_U$ is a quadratic braiding. This proves (a). Part (b) also follows.

The theorem is proved.
\end{proof}

\begin{remark} We found a particular case of Theorem \ref{th:H(S3)-structure} in \cite[Formula (4.8)]{GRV} and \cite[Formula (32)]{KMS}, but  the general case seems to be unavailable in the literature.

\end{remark}

The following immediate corollary of Proposition \ref{pr:demazure} provides an example of an ${\bf H}(S_3)$-structure.

\begin{corollary} Let $U=\ZZ[x]$. Then the permutation of factors $s:U\otimes  U\to U\otimes  U$ and the Demazure operator $D=
\frac{1}{1-x_1x_2^{-1}}(1-s)$ on $U\otimes  U=\ZZ[x_1,x_2]$ comprise an ${\bf H}(S_3)$-structure on $U$.
\end{corollary}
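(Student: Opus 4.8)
The plan is to verify directly that the pair $(s, D)$ on $U = \ZZ[x]$ satisfies the defining relations of an ${\bf H}(S_3)$-structure, i.e., condition (b) of Lemma \ref{le:hatHS3 structure}, by exhibiting them as a special case of the Demazure action from Proposition \ref{pr:demazure}. First I would observe that $U \otimes U \otimes U = \ZZ[x_1, x_2, x_3]$, and that under the assignments $s_1 \mapsto s \otimes Id_U$, $s_2 \mapsto Id_U \otimes s$, $D_1 \mapsto D \otimes Id_U$, $D_2 \mapsto Id_U \otimes D$, the operators $s_1, s_2$ become the transpositions $(x_1\,x_2)$ and $(x_2\,x_3)$, while $D_1, D_2$ become the Demazure operators $\frac{1}{1-x_1 x_2^{-1}}(1 - s_1)$ and $\frac{1}{1-x_2 x_3^{-1}}(1 - s_2)$ acting on $\ZZ[x_1, x_2, x_3]$.

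The key step is then simply to invoke Proposition \ref{pr:demazure} with $n = 3$: that proposition already asserts that $\ZZ[x_1, x_2, x_3]$ is an ${\bf H}(S_3)$-module algebra via exactly the permutation action of $S_3$ together with $D_i \mapsto \frac{1}{1 - x_i x_{i+1}^{-1}}(1 - s_i)$. Therefore the operators obtained from tensoring $(s, D)$ coincide with the module structure guaranteed by Proposition \ref{pr:demazure}, and so they satisfy all relations of ${\bf H}(S_3)$. By the equivalence in Lemma \ref{le:hatHS3 structure}(b), this is precisely the statement that $(s, D)$ is an ${\bf H}(S_3)$-structure on $U$.

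The only point requiring genuine care is the bookkeeping of indices: I must check that tensoring the single-factor Demazure operator $D$ on $U \otimes U = \ZZ[x_1, x_2]$ into the three-fold tensor product $U^{\otimes 3}$ yields precisely the operators $D_1, D_2$ of Proposition \ref{pr:demazure} and not some reindexed variant. Concretely, $D \otimes Id_U$ acts on $\ZZ[x_1, x_2, x_3]$ by applying $\frac{1}{1 - x_1 x_2^{-1}}(1 - s_1)$ in the first two variables and fixing $x_3$, which is exactly $D_1$; symmetrically $Id_U \otimes D$ gives $D_2$. Since Proposition \ref{pr:demazure} has already done the work of verifying the full set of ${\bf H}(S_3)$-relations (the rank-one relations, the commutation in the disjoint case being vacuous for $n=3$, and the braid-type relations with $m_{12}=3$), no further computation is needed. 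Thus the main obstacle is not any hard relation-checking but merely ensuring the translation between the ``single tensor factor'' description of an ${\bf H}(S_3)$-structure and the ``global'' Demazure action on $\ZZ[x_1, x_2, x_3]$ is index-correct, after which the corollary follows immediately.
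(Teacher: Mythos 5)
Your proof is correct and follows exactly the paper's intended route: the paper presents this as an immediate corollary of Proposition \ref{pr:demazure} (with $n=3$) combined with Lemma \ref{le:hatHS3 structure}(b), which is precisely your argument, including the index-matching check that $D\otimes Id_U$ and $Id_U\otimes D$ become the Demazure operators $D_1$ and $D_2$ on $\ZZ[x_1,x_2,x_3]$.
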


\section{Generalization to other groups}
\label{sect:Generalization to other groups}
In this section we generalize the construction of Hecke-Hopf algebras to all groups. Indeed, let $W$ be a group and ${\mathcal S}$ 
be a conjugation-invariant subset of $W\setminus\{1\}$, and let  
$R$ be an integral domain.



For any functions $\chi,\sigma:W\times {\mathcal S}\to R$ 
let $\hat {\bf H}_{\chi,\sigma}(W)$ be an $R$-algebra generated by $W$, as a group, and by $D_s$, $s\in {\mathcal S}$ subject to relations:
\begin{equation}
\label{eq:relations hat H}
wD_sw^{-1}=\chi_{w,s}\cdot D_{wsw^{-1}}+\sigma_{w,s}\cdot (1-wsw^{-1})
\end{equation}
for all $s\in {\mathcal S}$, $w\in W$;
\begin{equation}
\label{eq:relations hat H taft}
{|s| \brack k}_{a_s} D_s(a_sD_s+b_s)(a_s^2D_s+b_s(1+a_s))\cdots (a_s^{k-1}D_s+b_s(1+a_s+\cdots +a_s^{k-2}))=0
\end{equation}
for all  $s\in {\mathcal S}$ 
of finite order $|s|$ and $k=1,\ldots,|s|$, where we abbreviated $a_s:=\chi_{s,s}$, $b_s:=\sigma_{s,s}$ and 
$\displaystyle{{n \brack k}_q=\prod\limits_{i=1}^k \frac{q^{n+1-i}-1}{q^i-1}}\in \ZZ_{\ge 0}[q]$ 
is the $q$-binomial coefficient.


\begin{remark} 
\label{rem:single monic relation D_s}
If $a_s=\chi_{s,s}$ is an primitive $|s|$-th root of unity in $R^\times$, then the relations \eqref{eq:relations hat H taft} simplify:
\begin{equation}
\label{eq:relations hat H taft primitive}
D_s(a_sD_s+b_s)(a_s^2D_s+b_s(1+a_s))\cdots (a_s^{|s|-1}D_s+b_s(1+a_s+\cdots +a_s^{|s|-2}))=0\ .
\end{equation}
Otherwise, if $a_s$ is not an $|s|$-th root of unity and $R$ is a field, then $D_s=0$.
\end{remark}

The following result generalizes Theorem \ref{th:hopf hat W}.

\begin{theorem} 
\label{th:hopf hat W gen} For any group $W$, a conjugation-invariant set ${\mathcal S}\subset W\setminus\{1\}$, and any maps $\chi,\sigma:W\times {\mathcal S}\to R$,  $\hat {\bf H}_{\chi,\sigma}(W)$ is a Hopf algebra with the coproduct $\Delta$, the counit $\varepsilon$, and the antipode anti-automorphism given respectively by (for  $s\in {\mathcal S}$): 
\begin{equation}
\label{eq:hopf H'}
\Delta(s)=s\otimes s,~\Delta(D_s)=D_s\otimes 1+s\otimes D_s,~\varepsilon(s)=1,~\varepsilon(D_s)=0,S(s)=s^{-1},~S(D_s)=-s^{-1}D_s\ .
\end{equation}

\end{theorem}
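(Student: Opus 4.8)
The plan is to verify directly that the maps $\Delta$, $\varepsilon$, and $S$ defined by \eqref{eq:hopf H'} are well-defined algebra (resp.\ anti-algebra) homomorphisms on the free product of $W$ with the $D_s$, and that they descend to the quotient by the relations \eqref{eq:relations hat H} and \eqref{eq:relations hat H taft}. The bialgebra axioms (coassociativity, counit) for $\Delta$ are easy to check on generators and then extend multiplicatively; the nontrivial content is compatibility with the defining relations. First I would observe that $W$ is group-like, so $\Delta$, $\varepsilon$, $S$ are forced on $\kk W$ to be the standard group-Hopf-algebra structure, which automatically respects all group relations. The work is therefore concentrated on the $D_s$ and on the mixed relations \eqref{eq:relations hat H}.

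The key steps, in order, are as follows. First, check $\Delta$ is a well-defined algebra map: since $\Delta(D_s)=D_s\otimes 1+s\otimes D_s$ is the standard skew-primitive form and $s$ is group-like, one verifies that applying $\Delta$ to both sides of the conjugation relation \eqref{eq:relations hat H} yields equal elements of the tensor square; this is a direct computation using $\Delta(w)=w\otimes w$ and the fact that $\chi_{w,s},\sigma_{w,s}\in R$ are central scalars. Second, verify that $\Delta$ respects the higher relations \eqref{eq:relations hat H taft}; here I would use that each factor $a_s^{k-1}D_s+b_s(1+\cdots+a_s^{k-2})$ is, up to the group-like element $s$, skew-primitive, so that the relation \eqref{eq:relations hat H taft} is itself a version of a $q$-binomial (Taft-algebra) identity whose coproduct closes on the same ideal --- this is essentially the statement that the rank-one subalgebra generated by $s$ and $D_s$ is a (generalized) Taft Hopf algebra. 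Third, check counitality of $\Delta$ and the counit property of $\varepsilon$ on relations, which is immediate since $\varepsilon(D_s)=0$ and $\varepsilon(w)=1$ collapse each relation to a triviality. Fourth, verify that $S$ is a well-defined anti-automorphism compatible with both families of relations, and finally confirm the antipode axiom $m\circ(S\otimes\mathrm{id})\circ\Delta=\eta\circ\varepsilon=m\circ(\mathrm{id}\otimes S)\circ\Delta$ on generators, computing $m(S(D_s)\otimes 1+S(s)\otimes S(D_s))=-s^{-1}D_s+s^{-1}(-s^{-1}D_s)\cdot$ --- wait, more carefully, using $S(s)=s^{-1}$ and $S(D_s)=-s^{-1}D_s$ one checks the two convolution identities vanish on $D_s$.

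The main obstacle I expect is the second step: verifying that the coproduct is compatible with the higher-order relations \eqref{eq:relations hat H taft}, particularly the $q$-binomial coefficient ${|s|\brack k}_{a_s}$ prefactor and the telescoping products of the shifted generators $a_s^{j}D_s+b_s(1+\cdots+a_s^{j-1})$. Showing $\Delta$ sends the left-hand side of \eqref{eq:relations hat H taft} into the two-sided ideal generated by these relations requires a genuine Taft-type identity for twisted skew-primitives, namely an analogue of the quantum binomial theorem $\Delta(x^n)=\sum_k {n\brack k}_q\, x^k g^{n-k}\otimes x^{n-k}$ for the element $x=D_s$ with group-like $g=s$ and eigenvalue $a_s$. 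I would isolate this as a lemma about the rank-one algebra $\langle s, D_s\rangle$, prove it by induction on $k$ using the $q$-Pascal recursion satisfied by ${n\brack k}_q$, and then the full theorem follows by reducing every verification to this rank-one building block together with the routine group-like computations. The inhomogeneous term $b_s(1+a_s+\cdots)$ (arising because $D_s$ is skew-primitive rather than primitive, with a nonzero $\sigma_{s,s}$) is what makes the bookkeeping delicate, and handling it uniformly is where the argument must be carried out with care.
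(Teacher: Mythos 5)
Your overall strategy is sound and would succeed if carried out, but it is genuinely different from the route the paper takes, and the two places you flag as delicate are exactly where the paper substitutes structure for computation. The paper never verifies directly that $\Delta$ and $S$ send each defining relation into the ideal it generates. Instead it (i) puts the Hopf structure on the free product $\kk W * T(V)$ once and for all (Lemma \ref{le:free Hopf}), (ii) shows that the $R$-span of the relation elements is a left coideal for the conjugation relations \eqref{eq:relations hat H} (the computation $\Delta(\delta_{w,s})=\delta_{w,s}\otimes 1+wsw^{-1}\otimes\delta_{w,s}$, packaged as Lemma \ref{le:Kgamma} inside Proposition \ref{pr:deformed nichols}) and a right coideal for the order relations, and (iii) invokes the general Proposition \ref{pr:from coideal to hopf ideal}, which converts ``coideal intersected with $\operatorname{Ker}\varepsilon$'' into a Hopf ideal and disposes of the bialgebra and antipode compatibilities simultaneously. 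The decisive trick for step (ii) on the order relations is to replace the presentation \eqref{eq:relations hat H taft} by the functional equation $f_s(ts+D_s)=f_s(t)$: since $\Delta(ts+D_s)=s\otimes(ts+D_s)+D_s\otimes 1$, one gets $\Delta\bigl(f_s(ts+D_s)\bigr)=\sum_k y_k\otimes(ts+D_s)^k$ with no binomial bookkeeping at all (Lemmas \ref{le:right coideal Hf}, \ref{le:Kfs}); the twisted $q$-binomial identity you propose (Lemma \ref{le:generalized binomial} in the paper) is needed only afterwards, to match the functional presentation with the explicit relations \eqref{eq:relations hat H taft} (Proposition \ref{pr:presentation taft}). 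What your approach buys is self-containedness and explicitness; what the paper's buys is that the antipode is handled for free, which is the point your sketch treats most lightly: checking $S(\mathbf{J})\subset\mathbf{J}$ on the order relations by hand requires reversing the product of shifted factors and commuting the resulting $s^{-1}$'s back through the $D_s$'s via \eqref{eq:relations hat H}, a nontrivial computation you do not address, whereas Proposition \ref{pr:from coideal to hopf ideal} yields $S(\mathbf{K}^+)\subset\mathbf{H}\cdot\mathbf{K}^+$ directly from the coideal property. If you pursue your route, the two lemmas you must actually prove are the inhomogeneous quantum binomial formula for $\Delta$ applied to the product $D_s(a_sD_s+b_s)\cdots$ (your induction via the $q$-Pascal recursion is the right idea) and the antipode stability just described; neither is a gap in principle, but both are real work that the proposal currently only names.
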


We prove Theorem  \ref{th:hopf hat W gen} in Section \ref{subsec:proof of  Theorem hopf hat}. 

For any $\chi,\sigma:W\times {\mathcal S}\to R$ denote by $\hat {\bf D}_{\chi,\sigma}(W)$ the $R$-algebra generated by all $D_s$, $s\in {\mathcal S}$ subject to all relations \eqref{eq:relations hat H taft}. By definition, one has an algebra homomorphism $\hat {\bf D}_{\chi,\sigma}(W)\to \hat {\bf H}_{\chi,\sigma}(W)$. This homomorphism is sometimes injective and implies a factorization of $\hat {\bf H}_{\chi,\sigma}(W)$.

\begin{theorem} 
\label{th:ZWcross and free D gen}
In the notation of Theorem \ref{th:hopf hat W gen}, suppose that:

$\bullet$  $\chi$ and $\sigma$ satisfy
\begin{equation}
\label{eq:2 cocycle}
\chi_{w_1w_2,s}=\chi_{w_2,s}\cdot \chi_{w_1,w_2sw_2^{-1}}\in R^\times,~
\sigma_{w_1w_2,s}=\sigma_{w_2,s}+\chi_{w_2,s}\sigma_{w_1,w_2sw_2^{-1}}
\end{equation}
for all $w_1,w_2\in W$, $s\in {\mathcal S}$.
 

$\bullet$ For any $s\in {\mathcal S}$ of finite order and $w\in W$: $\chi_{w,s}^{|s|}=1$ and there exists 
$\kappa_{w,s}\in \ZZ_{\ge 0}$ such that 
\begin{equation}
\label{eq:sigmaws etc}
\sigma_{w,s}=\sigma_{s,s}(1+\chi_{s,s}+\cdots+\chi_{s,s}^{\kappa_{w,s}-1})\ .
\end{equation}

Then  $\hat {\bf H}_{\chi,\sigma}(W)$ factors as $\hat {\bf H}_{\chi,\sigma}(W)=\hat {\bf D}_{\chi,\sigma}(W)\cdot RW$ over $R$ (i.e., 
the multiplication map defines an isomorphism of  $R$-modules
$\hat {\bf D}_{\chi,\sigma}(W)\otimes R W\widetilde \longrightarrow \hat {\bf H}_{\chi,\sigma}(W)$) and is a free $R$-module.

\end{theorem}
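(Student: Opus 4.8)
The plan is to prove the factorization $\hat{\bf D}_{\chi,\sigma}(W)\otimes RW \xrightarrow{\sim} \hat{\bf H}_{\chi,\sigma}(W)$ by the standard ``diamond lemma'' strategy: first establish surjectivity of the multiplication map (every word can be reduced to the form $D\cdot w$), then establish injectivity by constructing an explicit $\hat{\bf H}_{\chi,\sigma}(W)$-module on the free module $\hat{\bf D}_{\chi,\sigma}(W)\otimes RW$ that separates these normal forms.

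First I would prove \emph{surjectivity}. The relations \eqref{eq:relations hat H} let us rewrite any product $wD_s$ as $D_{w s w^{-1}}$ times (a scalar multiple of) group elements, i.e.\ they let us move every $w\in W$ to the right past the $D$'s at the cost of conjugating the reflection and introducing lower terms $(1-wsw^{-1})$ which themselves are group elements times scalars. Iterating, any monomial in the generators collapses into a sum of terms of the form (monomial in the $D_s$) times (element of $W$). The two cocycle-type conditions \eqref{eq:2 cocycle} are exactly what makes this rewriting \emph{consistent}: they guarantee that conjugating by $w_1w_2$ and conjugating successively by $w_2$ then $w_1$ produce the same scalars $\chi$ and the same correction term $\sigma$. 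Consequently the multiplication map $\hat{\bf D}_{\chi,\sigma}(W)\otimes RW \to \hat{\bf H}_{\chi,\sigma}(W)$ is surjective. Note that the relations \eqref{eq:relations hat H taft} involve only the $D_s$ and are precisely the defining relations of $\hat{\bf D}_{\chi,\sigma}(W)$, so no extra relations among the $D$-part are needed.

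Next I would prove \emph{injectivity} by exhibiting an action. Let $\mathcal{A}=\hat{\bf D}_{\chi,\sigma}(W)\otimes RW$ as a free $R$-module, and define operators on $\mathcal{A}$: let each $w\in W$ act on the right factor by right regular representation twisted through the conjugation rule, and let each $D_s$ act on the left by left multiplication in $\hat{\bf D}_{\chi,\sigma}(W)$. The key point is to define these so that they satisfy \emph{all} the defining relations of $\hat{\bf H}_{\chi,\sigma}(W)$, namely \eqref{eq:relations hat H} and \eqref{eq:relations hat H taft}. Verifying \eqref{eq:relations hat H} amounts to checking that $w\cdot D_s \cdot w^{-1}$ acts as $\chi_{w,s}D_{wsw^{-1}} + \sigma_{w,s}(1-wsw^{-1})$, which reduces to the cocycle identities \eqref{eq:2 cocycle} together with the hypothesis \eqref{eq:sigmaws etc} controlling $\sigma_{w,s}$ in terms of $\sigma_{s,s}$; verifying \eqref{eq:relations hat H taft} is automatic because it holds in $\hat{\bf D}_{\chi,\sigma}(W)$ by construction and $D_s$ acts by left multiplication there. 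Once this action is defined, applying it to the cyclic vector $1\otimes 1$ shows that the image of a normal form $D\otimes w$ under multiplication, when fed back through the action, recovers $D\otimes w$, so distinct normal forms cannot collapse; this gives injectivity. Freeness over $R$ is then immediate since $\hat{\bf D}_{\chi,\sigma}(W)$ is free (being presented by the $R$-algebra relations \eqref{eq:relations hat H taft} on the free-module side) and $RW$ is free.

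I expect the main obstacle to be the \emph{consistency of the right action of $W$ on $\mathcal{A}$}, i.e.\ checking that the would-be operators genuinely descend from the group $W$ (respect all relations of $W$, including the braid and order relations) while simultaneously being compatible with the $\sigma$-correction terms. The subtlety is that the correction term $\sigma_{w,s}(1-wsw^{-1})$ mixes the $D$-part and the $W$-part, so one must verify that the condition \eqref{eq:sigmaws etc}, which expresses $\sigma_{w,s}$ as a specific $\chi_{s,s}$-geometric sum of length $\kappa_{w,s}$, is preserved under composition and is consistent with the first cocycle identity; this is where the hypotheses $\chi_{w,s}^{|s|}=1$ and the precise form of $\sigma_{w,s}$ are used in an essential way. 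I would isolate this compatibility as the technical heart of the argument, likely handling the order relations $s^{|s|}$ separately via \eqref{eq:relations hat H taft} and the primitive-root analysis of Remark~\ref{rem:single monic relation D_s}.
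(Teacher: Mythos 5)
Your outline is correct in substance and it correctly isolates the technical heart, but it takes a more hands-on route than the paper. The paper does not build a faithful module on $\hat {\bf D}_{\chi,\sigma}(W)\otimes RW$ directly; instead it splits the argument in two layers. First, for the ``relation-free'' algebra $\hat {\bf H}'_{\chi,\sigma}(W)$ (generated by $W$ and the $D_s$ subject only to \eqref{eq:relations hat H}), the factorization $T(V)\cdot RW$ with $V=\oplus_s R\cdot D_s$ is obtained from the general deformed-cross-product criterion (Proposition \ref{pr:mu nu gamma H}, via Corollary \ref{cor:mu nu gamma} and the appendix), and condition \eqref{eq:2 cocycle} is shown to be exactly equivalent to that criterion --- this is the rigorous version of your rewriting/consistency argument, and it buys a reusable if-and-only-if statement rather than a one-off normal-form check. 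Second, to descend to the quotient by the relations \eqref{eq:relations hat H taft}, the paper proves Proposition \ref{pr:wKsw-1}: the $R$-submodule ${\bf K}_s$ spanned by the coefficients of $\delta_s(t)=f_s(ts+D_s)$ satisfies $w\cdot {\bf K}_s\cdot w^{-1}={\bf K}_{wsw^{-1}}$, which follows from the identity $f_{wsw^{-1}}(x)=f_s(\chi_{w,s}x+\sigma_{w,s})$ of Lemma \ref{le:fwsw-1}; this is precisely where \eqref{eq:sigmaws etc} and $\chi_{w,s}^{|s|}=1$ enter (the affine map $x\mapsto\chi_{w,s}x+\sigma_{w,s}$ must permute the roots of $f_s$). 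The general Lemma \ref{le:general factored quotient H} then transfers the factorization to the quotient. So the step you defer to the end (``consistency of the $W$-operators with the Taft relations'') is not about composability of $\sigma$ --- that is already \eqref{eq:2 cocycle} --- but about $W$-conjugation preserving the ideal of relations defining $\hat {\bf D}_{\chi,\sigma}(W)$; without an argument of the type of Lemma \ref{le:fwsw-1} your proposed $w$-operators are not well defined on $\hat {\bf D}_{\chi,\sigma}(W)\otimes RW$.

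One small imprecision worth fixing if you carry out your plan: the $w$-operators cannot act ``on the right factor'' alone, since moving $w$ past $D_{s_1}\cdots D_{s_k}$ via \eqref{eq:relations hat H} produces correction terms $\sigma_{w,s}(1-wsw^{-1})$ that land in the group part; the operator genuinely mixes the two tensor factors, and it is exactly the functional packaging $\delta_s(t)=f_s(ts+D_s)$ (the variable $t$ standing in for the group element $s$) that lets the paper track this mixing cleanly.
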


We prove Theorem \ref{th:ZWcross and free D gen} in Section \ref{subsec:proof of Theorem ZWcross and free D}.

\begin{remark} Any pair $(\chi,\sigma)$ satisfying \eqref{eq:2 cocycle} defines:

$\bullet$ A $W$-action on  $V=\oplus_{s\in {\mathcal S}} R\cdot D_s$ via  $w(D_s)=\sigma_{w,s}+\chi_{w,s}D_{wsw^{-1}}$ for $w\in W$, 
$s\in {\mathcal S}$ (see also Theorem \ref{th:symmetris Dchisigma(W)}(a) below).

$\bullet$ A function $\gamma\in Hom_R (R W\otimes V, R W)$  given by $\gamma(w\otimes D_s)=\sigma_{w,s}wsw^{-1}$ which is  
a Hochschild $2$-cocycle, i.e., 
$w_1\gamma(w_2\otimes v)w_1^{-1}-\gamma(w_1w_2\otimes v)+\gamma(w_1\otimes w_2(v))=0$ 
for all $w_1,w_2\in W$, $v\in V$ (see also Proposition \ref{pr:mu nu gamma H} with generalization to Hopf algebras).

In particular, for any function $c:W\to R$, the map $\sigma^c:W\times {\mathcal S}\to R$ given by  
$$\sigma^c_{w,s}=\sigma_{w,s}+c_s-\chi_{w,s}c_{wsw^{-1}}$$ 
also satisfies the second condition \eqref{eq:2 cocycle} and thus $\sigma^c$ is cohomological to $\sigma$.

\end{remark}

Denote by $\tilde {\bf D}_{\chi,\sigma}(W)$ the subalgebra of $\hat {\bf H}_{\chi,\sigma}(W)$ generated by all $D_s$, $s\in {\mathcal S}$ 
(by definition, this is a homomorphic image of $\hat {\bf D}_{\chi,\sigma}(W)$ in $\hat {\bf H}_{\chi,\sigma}(W)$) and let
\begin{equation}
\label{eq:KchisigmaW}
{\bf K}_{\chi,\sigma}(W):=\bigcap\limits_{w\in W} w\tilde {\bf D}_{\chi,\sigma}(W)w^{-1}\ .
\end{equation}
 



Denote by ${\bf H}_{\chi,\sigma}(W)$ the quotient algebra of $\hat {\bf H}_{\chi,\sigma}(W)$ by  the ideal generated by ${\bf K}_{\chi,\sigma}(W)\cap Ker~\varepsilon$.

\begin{theorem} 
\label{th:Coideal Hopf gen} In the notation of Theorem \ref{th:hopf hat W gen}, 
suppose that $\hat {\bf H}_{\chi,\sigma}(W)$ is a free $R$-module (e.g., $R$ is a field). 
Then  ${\bf H}_{\chi,\sigma}(W)$ is naturally a Hopf algebra. 

\end{theorem}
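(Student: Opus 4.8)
The plan is to show that the two-sided ideal $J$ generated by ${\bf K}_{\chi,\sigma}(W)\cap\ker\varepsilon$ is a Hopf ideal of $\hat{\bf H}_{\chi,\sigma}(W)$; the quotient ${\bf H}_{\chi,\sigma}(W)=\hat{\bf H}_{\chi,\sigma}(W)/J$ is then automatically a Hopf algebra. Throughout I abbreviate $\hat H:=\hat{\bf H}_{\chi,\sigma}(W)$, $\tilde{\bf D}:=\tilde{\bf D}_{\chi,\sigma}(W)$, ${\bf K}:={\bf K}_{\chi,\sigma}(W)$, and $X:={\bf K}\cap\ker\varepsilon$. There are three conditions to verify: $\varepsilon(J)=0$, $\Delta(J)\subseteq J\otimes\hat H+\hat H\otimes J$, and $S(J)\subseteq J$. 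The first is immediate, since $X\subseteq\ker\varepsilon$ and $\ker\varepsilon$ is a two-sided ideal (because $\varepsilon$ is an algebra map), whence $J\subseteq\ker\varepsilon$.

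The central step is the inclusion $\Delta({\bf K})\subseteq\hat H\otimes{\bf K}$, which I would establish in three moves. First, from $\Delta(D_s)=D_s\otimes 1+s\otimes D_s$ (Theorem \ref{th:hopf hat W gen}), together with the fact that $\hat H\otimes\tilde{\bf D}$ is a subalgebra of $\hat H\otimes\hat H$, multiplicativity of $\Delta$ gives $\Delta(\tilde{\bf D})\subseteq\hat H\otimes\tilde{\bf D}$, since $\tilde{\bf D}$ is generated by the $D_s$ and $D_s,1\in\tilde{\bf D}$. Second, each $w\in W$ is grouplike, so conjugation $c_w(h)=whw^{-1}$ satisfies $\Delta\circ c_w=(c_w\otimes c_w)\circ\Delta$; applying $c_w\otimes c_w$ to the previous inclusion and using $c_w(\hat H)=\hat H$ yields $\Delta(w\tilde{\bf D}w^{-1})\subseteq\hat H\otimes w\tilde{\bf D}w^{-1}$ for every $w$. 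Third, I intersect over $w$: since $\hat H$ is a free $R$-module, fixing an $R$-basis $\{e_\alpha\}$ and writing any element of $\hat H\otimes\hat H$ uniquely as $\sum_\alpha e_\alpha\otimes d_\alpha$ shows that it lies in $\hat H\otimes w\tilde{\bf D}w^{-1}$ for all $w$ exactly when every $d_\alpha$ lies in $\bigcap_w w\tilde{\bf D}w^{-1}={\bf K}$. Hence $\bigcap_w(\hat H\otimes w\tilde{\bf D}w^{-1})=\hat H\otimes{\bf K}$, and therefore $\Delta({\bf K})\subseteq\hat H\otimes{\bf K}$.

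From this the two remaining conditions follow on generators and extend formally. Since $1\in{\bf K}$, for $x\in X$ one may write $\Delta(x)=x\otimes 1+\sum_i a_i\otimes b_i$ with $b_i\in{\bf K}\cap\ker\varepsilon=X\subseteq J$, obtained by subtracting $\varepsilon(k)\,1$ from each right tensor-factor $k\in{\bf K}$ and using $(\mathrm{id}\otimes\varepsilon)\Delta=\mathrm{id}$. As $x\otimes 1\in J\otimes\hat H$ and $\sum_i a_i\otimes b_i\in\hat H\otimes J$, we get $\Delta(x)\in J\otimes\hat H+\hat H\otimes J$; because $J\otimes\hat H+\hat H\otimes J$ is a sub-bimodule of $\hat H\otimes\hat H$ and $\Delta$ is an algebra map, this extends to all of $J$. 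For the antipode I apply the axiom $\sum S(x_{(1)})x_{(2)}=\varepsilon(x)\,1$ to $x\in X$: since $\varepsilon(x)=0$, the decomposition above gives $S(x)+\sum_i S(a_i)b_i=0$, so $S(x)=-\sum_i S(a_i)b_i\in J$ as each $b_i\in J$ and $J$ is a two-sided ideal. Because $S$ is an anti-automorphism carrying the generators of $J$ into $J$, it follows that $S(J)\subseteq J$. Thus $J$ is a Hopf ideal and ${\bf H}_{\chi,\sigma}(W)$ is a Hopf algebra.

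The main obstacle is the interchange of the (possibly infinite) intersection $\bigcap_{w\in W}$ with the tensor factor, i.e. the identity $\bigcap_w(\hat H\otimes w\tilde{\bf D}w^{-1})=\hat H\otimes{\bf K}$. This is precisely where the hypothesis that $\hat H$ be a free (equivalently, flat) $R$-module is indispensable: over a general $R$ the inclusion $\hat H\otimes{\bf K}\subseteq\bigcap_w(\hat H\otimes w\tilde{\bf D}w^{-1})$ can be strict, and the argument would collapse. Everything else is formal Hopf-ideal bookkeeping once $\Delta({\bf K})\subseteq\hat H\otimes{\bf K}$ is in hand.
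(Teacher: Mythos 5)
Your proof is correct and follows essentially the same route as the paper: the paper observes that ${\bf K}_{\chi,\sigma}(W)={\bf K}(RW,\tilde{\bf D}_{\chi,\sigma}(W))$ and that $\tilde{\bf D}_{\chi,\sigma}(W)$ is a left coideal subalgebra, then invokes the general Theorem \ref{th:general hopf ideal}, whose proof consists of exactly your three steps --- coideal stability of ${\bf K}$ under conjugation (Proposition \ref{pr:general K}, whose adjoint-action lemma specializes for grouplike $w$ to your identity $\Delta\circ c_w=(c_w\otimes c_w)\circ\Delta$), the freeness-dependent interchange of $\bigcap_{w}$ with $\hat{\bf H}\otimes(-)$ (Lemma \ref{le:intersection tensor products}), and the standard ``left coideal generates a Hopf ideal'' argument (Proposition \ref{pr:from coideal to hopf ideal}). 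Your direct use of grouplike conjugation is only a mild unpacking of the paper's more general adjoint-action machinery; the key lemmas, the decomposition $\Delta(x)=x\otimes 1+\sum_i a_i\otimes b_i$ with $b_i\in{\bf K}\cap\ker\varepsilon$, and the precise role of the freeness hypothesis are identical.
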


We prove Theorem \ref{th:Coideal Hopf gen} in Section \ref{subsec:proof of Theorem Coideal Hopf}. 
We will refer to ${\bf H}_{\chi,\sigma}(W)$  as a  {\it Hopf envelope of $(W,\chi,\sigma)$} (provided that $\hat {\bf H}_{\chi,\sigma}(W)$ is a free $R$-module).

%
%
%
%


Furthermore, in the notation of Theorem \ref{th:hopf hat W gen} denote by ${\bf D}_{\chi,\sigma}(W)$ the quotient of $\tilde {\bf D}_{\chi,\sigma}(W)$ by the ideal generated by 
${\bf K}_{\chi,\sigma}(W)$. By definition, one has an algebra homomorphism  
${\bf D}_{\chi,\sigma}(W)\to {\bf H}_{\chi,\sigma}(W)$. 
Similarly to Theorem \ref{th:ZWcross and free D gen}, this homomorphism is sometimes injective and implies a factorization of  ${\bf H}_{\chi,\sigma}(W)$.


\begin{theorem} 
\label{th:ZWcross and free D gen H}
In the assumptions of Theorem \ref{th:hopf hat W gen}, suppose that 
\begin{equation}
\label{eq:s conj}
\chi_{s,s}~\text{is a primitive $|s|$-th root of unity  $\forall$ $s\in {\mathcal S}$ of finite order $|s|$} \ . 
\end{equation}

Then ${\bf H}_{\chi,\sigma}(W)$ is a Hopf algebra and it factors as ${\bf H}_{\chi,\sigma}(W)={\bf D}_{\chi,\sigma}(W)\cdot RW$.

\end{theorem}

We prove Theorem \ref{th:ZWcross and free D gen H} in Section  \ref{subsec:proof of Theorem Coideal Hopf}. In fact, the lower Hecke-Hopf algebra $\underline {\bf H}(W)$ from Theorem \ref{th:Coideal Hopf} equals ${\bf H}_{\chi,\sigma}(W)$ for a special choice of $\chi,\sigma$ (see Proposition \ref{pr:relations hat H coxeter}) which automatically satisfy \eqref{eq:2 cocycle}, \eqref{eq:sigmaws etc}, and \eqref{eq:s conj}. For some groups $W$, say, complex reflection ones, we may expect an analogue of the Hecke-Hopf algebra ${\bf H}(W)$ as well.

\begin{remark} We believe that classification problem of quadruples $(W,{\mathcal S},\chi,\sigma)$ with any $s\in {\mathcal S}$ of finite order satisfying 
\eqref{eq:2 cocycle}, \eqref{eq:sigmaws etc}, and \eqref{eq:s conj}, is of interest.

\end{remark}

Similarly to Theorem \ref{th:symmetris D(W)}, we can establish some symmetries of ${\bf H}_{\chi,\sigma}(W)$ in general.

\begin{theorem} 
\label{th:symmetris hat D(W) gen}  
In the notation of Theorem \ref{th:hopf hat W gen}, suppose that $\overline{\cdot}$ is an involution on $R$ such that $\overline \chi_{w,s}=\chi_{w,s^{-1}}$, $\overline \sigma_{w,s}=\sigma_{w,s^{-1}}$ for all $w\in W$, $s\in {\mathcal S}$. Then the assignments $\overline w=w^{-1}$, $\overline D_s=D_{s^{-1}}$ for $w\in W$, $s\in {\mathcal S}$ extends to a unique $R$-linear anti-involution of ${\bf H}_{\chi,\sigma}(W)$.



\end{theorem}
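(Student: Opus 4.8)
The plan is to construct the map first on the larger algebra $\hat{\bf H}_{\chi,\sigma}(W)$ and then push it down to the quotient ${\bf H}_{\chi,\sigma}(W)$. Throughout I use that $\mathcal S$ is inverse-closed, so that $D_{s^{-1}}$ is again a generator whenever $D_s$ is (in the reflection setting $s=s^{-1}$ this is automatic), and I read $\overline{\cdot}$ as $R$-semilinear, i.e. $\overline{rx}=\overline r\,\overline x$, which is what the hypotheses $\overline{\chi_{w,s}}=\chi_{w,s^{-1}}$, $\overline{\sigma_{w,s}}=\sigma_{w,s^{-1}}$ demand. Uniqueness is immediate, since an anti-homomorphism is determined by its values on the algebra generators $W\cup\{D_s\}$. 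For existence I would first extend the assignment $w\mapsto w^{-1}$, $D_s\mapsto D_{s^{-1}}$, $r\mapsto\overline r$ to an $R$-semilinear anti-homomorphism of the free $R$-algebra on $W\sqcup\{D_s\}$, and then verify that it carries the defining ideal of $\hat{\bf H}_{\chi,\sigma}(W)$ into itself.

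The conjugation relations \eqref{eq:relations hat H} are the easy part. Applying $\overline{\cdot}$ to $wD_sw^{-1}=\chi_{w,s}D_{wsw^{-1}}+\sigma_{w,s}(1-wsw^{-1})$ reverses the left side to $w D_{s^{-1}} w^{-1}$, and, using $\overline{\chi_{w,s}}=\chi_{w,s^{-1}}$, $\overline{\sigma_{w,s}}=\sigma_{w,s^{-1}}$ together with $(wsw^{-1})^{-1}=ws^{-1}w^{-1}$, converts the right side into $\chi_{w,s^{-1}}D_{ws^{-1}w^{-1}}+\sigma_{w,s^{-1}}(1-ws^{-1}w^{-1})$. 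This is exactly relation \eqref{eq:relations hat H} for the pair $(w,s^{-1})$, hence holds in $\hat{\bf H}_{\chi,\sigma}(W)$, so these relations are preserved.

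The main work, and the chief obstacle, is the Taft relations \eqref{eq:relations hat H taft}. Each factor $a_s^jD_s+b_s(1+\cdots+a_s^{j-1})$ is a polynomial in the single generator $D_s$, so the factors commute and the order-reversing map $\overline{\cdot}$ yields the same expression read in $D_{s^{-1}}$ but with $a_s,b_s$ replaced by $\overline{a_s}=\chi_{s,s^{-1}}$, $\overline{b_s}=\sigma_{s,s^{-1}}$. This must be matched with the Taft relation attached to $s^{-1}$, whose parameters are $a_{s^{-1}}=\chi_{s^{-1},s^{-1}}$, $b_{s^{-1}}=\sigma_{s^{-1},s^{-1}}$. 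The cleanest route is the shift $E=D_s+\frac{b_s}{a_s-1}$, under which \eqref{eq:relations hat H taft primitive} becomes $\prod_{j=0}^{|s|-1}(a_s^{\,j}E-c_s)=0$ with $c_s=\frac{b_s}{a_s-1}$; using the cocycle identities \eqref{eq:2 cocycle} (in force in this setting) one computes $\overline{a_s}=a_{s^{-1}}^{-1}$ and $\overline{c_s}=c_{s^{-1}}$, so the image relation reads $\prod_{j}(a_{s^{-1}}^{-j}E_{s^{-1}}-c_{s^{-1}})=0$ in the shifted variable $E_{s^{-1}}=D_{s^{-1}}+c_{s^{-1}}$. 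Because $a_{s^{-1}}$ is a primitive $|s|$-th root of unity \eqref{eq:s conj}, the exponent sets $\{a_{s^{-1}}^{-j}\}_{j}$ and $\{a_{s^{-1}}^{\,j}\}_{j}$ coincide as multisets, so this is precisely the Taft relation for $s^{-1}$. This matching is exactly where the hypotheses on $\chi,\sigma$ and the root-of-unity structure are genuinely used; the general (non-primitive) relations \eqref{eq:relations hat H taft} additionally require tracking the $q$-binomial prefactors, which vanish for $0<k<|s|$ precisely in the primitive case. Thus $\overline{\cdot}$ descends to an $R$-semilinear anti-homomorphism of $\hat{\bf H}_{\chi,\sigma}(W)$, and since applying it twice fixes every generator it is an involution.

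Finally I would descend $\overline{\cdot}$ to ${\bf H}_{\chi,\sigma}(W)=\hat{\bf H}_{\chi,\sigma}(W)/\langle{\bf K}_{\chi,\sigma}(W)\cap\mathrm{Ker}\,\varepsilon\rangle$. As an anti-automorphism it sends two-sided ideals to two-sided ideals, so it suffices to preserve the generating set. The subalgebra $\tilde{\bf D}_{\chi,\sigma}(W)=\langle D_s\rangle$ is stable because $\overline{D_s}=D_{s^{-1}}$, and $\overline{wXw^{-1}}=\overline{w^{-1}}\,\overline X\,\overline w=w\,\overline X\,w^{-1}$ for $X\in\tilde{\bf D}_{\chi,\sigma}(W)$, so $\overline{w\tilde{\bf D}_{\chi,\sigma}(W)w^{-1}}=w\tilde{\bf D}_{\chi,\sigma}(W)w^{-1}$ for each $w$; since $\overline{\cdot}$ is a bijection it commutes with the intersection over $w$, giving $\overline{{\bf K}_{\chi,\sigma}(W)}={\bf K}_{\chi,\sigma}(W)$. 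Moreover $\varepsilon(\overline x)=\overline{\varepsilon(x)}$ on generators ($\varepsilon(w^{-1})=1$, $\varepsilon(D_{s^{-1}})=0$), so $\mathrm{Ker}\,\varepsilon$ is preserved as well. Hence $\overline{{\bf K}_{\chi,\sigma}(W)\cap\mathrm{Ker}\,\varepsilon}={\bf K}_{\chi,\sigma}(W)\cap\mathrm{Ker}\,\varepsilon$, the defining ideal is stable, and $\overline{\cdot}$ induces the unique $R$-semilinear anti-involution of ${\bf H}_{\chi,\sigma}(W)$ asserted by the theorem.
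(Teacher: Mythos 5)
Your proof takes essentially the same route as the paper's: you build the anti-involution on $\hat{\bf H}_{\chi,\sigma}(W)$ by checking the defining relations (the paper's Proposition \ref{pr:symmetris hat D(W) gen}(a), with the identical computation for \eqref{eq:relations hat H}), then descend to ${\bf H}_{\chi,\sigma}(W)$ via $\overline{w\cdot\tilde{\bf D}_{\chi,\sigma}(W)\cdot w^{-1}}=w\cdot\overline{\tilde{\bf D}_{\chi,\sigma}(W)}\cdot w^{-1}\subseteq w\cdot\tilde{\bf D}_{\chi,\sigma}(W)\cdot w^{-1}$, hence $\overline{{\bf K}_{\chi,\sigma}(W)}\subseteq{\bf K}_{\chi,\sigma}(W)$, together with $\varepsilon(\overline x)=\overline{\varepsilon(x)}$ checked on generators. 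The one point of divergence is the Taft relations \eqref{eq:relations hat H taft}, which the paper dismisses in a single line (``because $|s^{-1}|=|s|$'') whereas you carry out an honest matching of parameters; note, however, that your matching invokes \eqref{eq:2 cocycle}, \eqref{eq:s conj}, and invertibility of $a_s-1$, none of which are among the theorem's stated hypotheses, so strictly speaking you are proving the statement in the (admittedly only relevant) restricted setting, while in the stated generality the intended content of the hypotheses $\overline{\chi_{w,s}}=\chi_{w,s^{-1}}$, $\overline{\sigma_{w,s}}=\sigma_{w,s^{-1}}$ is precisely that the image of the relation for $s$ is the relation for $s^{-1}$, which is immediate when $s=s^{-1}$ as in the Coxeter case.
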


The following is a generalization of parts (a) and (b) of Theorem \ref{th:symmetris D(W)}.

\begin{theorem} 
\label{th:symmetris Dchisigma(W)} In the assumptions of Theorem \ref{th:ZWcross and free D gen} suppose also that 
\begin{equation}
\label{eq:w-invariance of sigma}
\sigma_{wsw^{-1},wsw^{-1}}=\sigma_{s,s}
\end{equation}
for all $w\in W$,  $s\in {\mathcal S}$ of finite order. Then: 

(a) Suppose that
\begin{equation}
\label{eq:acyclicity S}
\sigma_{w,s_1}\sigma_{ws_1,s_2}\cdots \sigma_{ws_1\cdots s_{k-1},s_k}=0
\end{equation}
for any $w\in W$, $k\ge 2$, and any  $s_1,\ldots,s_k\in {\mathcal S}$ such that $s_1\cdots s_k=1$.
Then the algebra ${\bf D}_{\chi,\sigma}(W)$  admits the $W$-action by automorphisms via $w(D_s)=\sigma_{w,s}+\chi_{w,s}D_{wsw^{-1}}$ for $w\in W$, $s\in {\mathcal S}$.

(b) Suppose that for a given $s\in {\mathcal S}$ one has
\begin{equation}
\label{eq:alternating acyclicity S}
\sigma_{s^{-1},s_1}\sigma_{s^{-1}s_1,s_2}\cdots \sigma_{s^{-1}s_1\cdots s_{k-1},s_k}=0
\end{equation}
for any  $k\ge 2$ and any $s_1,\ldots,s_k\in {\mathcal S}$ such that $s_1\cdots s_k=s$.
Then  ${\bf D}_{\chi,\sigma}(W)$  admits  an $s^{-1}$-derivation $\partial_s$ (i.e., $\partial_s(xy)=\partial_s(x)y+s^{-1}(x)\partial_s(y)$) such that $\partial_s(D_{s'})=\delta_{s,s'}\sigma_{s^{-1},s}$, $s,s'\in {\mathcal S}$.

\end{theorem}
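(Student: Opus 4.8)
The plan is to realize both symmetries inside the Hopf algebra $\hat{\bf H}_{\chi,\sigma}(W)$ and then push them down to ${\bf D}_{\chi,\sigma}(W)$, using the two acyclicity hypotheses precisely to guarantee that the descent is well defined. Throughout I use the factorization $\hat{\bf H}_{\chi,\sigma}(W)=\hat{\bf D}_{\chi,\sigma}(W)\cdot RW$ of Theorem~\ref{th:ZWcross and free D gen}, which identifies $\tilde{\bf D}_{\chi,\sigma}(W)$ with $\hat{\bf D}_{\chi,\sigma}(W)$ and gives a decomposition $\hat{\bf H}_{\chi,\sigma}(W)=\bigoplus_{v\in W}\tilde{\bf D}_{\chi,\sigma}(W)\cdot v$; let $E\colon\hat{\bf H}_{\chi,\sigma}(W)\to\tilde{\bf D}_{\chi,\sigma}(W)$ be the $R$-linear projection onto the $v=1$ summand. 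A preliminary observation, proved from \eqref{eq:2 cocycle} (which already forces $\chi_{1,s}=1$, $\sigma_{1,s}=0$), is that the rank-$1$ data is conjugation invariant: a short computation from \eqref{eq:2 cocycle} gives $\chi_{s',s'}=\chi_{s,s}$ for $s'=wsw^{-1}$, while $\sigma_{s',s'}=\sigma_{s,s}$ is exactly hypothesis \eqref{eq:w-invariance of sigma}; hence $D_s$ and $D_{wsw^{-1}}$ obey identical relations \eqref{eq:relations hat H taft}.

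For (a), rather than extend the formula multiplicatively and fight the relations \eqref{eq:relations hat H taft}, I would \emph{define} $\rho_w:=E\circ(\text{conjugation by }w)$ on $\tilde{\bf D}_{\chi,\sigma}(W)$. This is manifestly a well-defined $R$-linear endomorphism, and a one-line computation from \eqref{eq:relations hat H} gives $\rho_w(D_s)=E(wD_sw^{-1})=\sigma_{w,s}+\chi_{w,s}D_{wsw^{-1}}$, the desired formula; in particular no separate verification against \eqref{eq:relations hat H taft} is needed, since those relations are already imposed in $\tilde{\bf D}_{\chi,\sigma}(W)$. The real content is that $\rho_w$ is multiplicative, i.e.\ $E(\alpha\beta)=E(\alpha)E(\beta)$ for $\alpha,\beta\in w\,\tilde{\bf D}_{\chi,\sigma}(W)\,w^{-1}$. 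Expanding $\alpha\beta$ and moving all group elements to the right, the failure of $E$ to be multiplicative is a sum of loop corrections whose coefficients are precisely the products $\sigma_{w,s_1}\sigma_{ws_1,s_2}\cdots\sigma_{ws_1\cdots s_{k-1},s_k}$ indexed by tuples with $s_1\cdots s_k=1$ (for $k=2$ the correction is $\sigma_{w,s}\,\sigma_{ws,s^{-1}}$, which one checks equals the coefficient produced by the commutation after one application of \eqref{eq:2 cocycle}). These vanish by \eqref{eq:acyclicity S}, so $\rho_w$ is an algebra endomorphism. The group law $\rho_{w_1}\rho_{w_2}=\rho_{w_1w_2}$ and $\rho_1=\mathrm{id}$ then follow by comparing both sides on the generators $D_s$ via the cocycle identities \eqref{eq:2 cocycle}, so each $\rho_w$ is an automorphism. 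Finally, since ${\bf K}_{\chi,\sigma}(W)=\bigcap_v v\tilde{\bf D}_{\chi,\sigma}(W)v^{-1}$ is conjugation invariant, for $x\in{\bf K}_{\chi,\sigma}(W)$ one has $wxw^{-1}\in{\bf K}_{\chi,\sigma}(W)\subseteq\tilde{\bf D}_{\chi,\sigma}(W)$, whence $\rho_w(x)=E(wxw^{-1})=wxw^{-1}$; thus $\rho_w$ preserves ${\bf K}_{\chi,\sigma}(W)$ and the ideal it generates, and descends to the claimed $W$-action by automorphisms on ${\bf D}_{\chi,\sigma}(W)$.

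For (b) I would construct $\partial_s$ as the skew-derivation attached to the $(1,s)$-skew-primitive generator $D_s$ (recall $\Delta(D_s)=D_s\otimes 1+s\otimes D_s$ from Theorem~\ref{th:hopf hat W gen}), with twist the automorphism $s^{-1}(\cdot)$ supplied by part (a). Concretely, define $\partial_s$ on monomials by the iterated twisted Leibniz rule, so that $\partial_s(D_{s_1}\cdots D_{s_k})=\sigma_{s^{-1},s}\sum_{i:\,s_i=s}s^{-1}(D_{s_1}\cdots D_{s_{i-1}})\,D_{s_{i+1}}\cdots D_{s_k}$ and $\partial_s(D_{s'})=\delta_{s,s'}\sigma_{s^{-1},s}$. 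Well-definedness on $\tilde{\bf D}_{\chi,\sigma}(W)$ reduces to the single-generator relations \eqref{eq:relations hat H taft}, where it is a direct rank-$1$ computation (trivial for $s'\neq s$, and for $s'=s$ a check in the one variable $D_s$). For the descent to ${\bf D}_{\chi,\sigma}(W)$ the obstruction is again a sum of loop corrections, but now the twist $s^{-1}$ sits on the left and the differentiated factor is forced to equal $s$; the resulting coefficients are exactly the alternating products $\sigma_{s^{-1},s_1}\sigma_{s^{-1}s_1,s_2}\cdots\sigma_{s^{-1}s_1\cdots s_{k-1},s_k}$ over tuples with $s_1\cdots s_k=s$, which vanish by \eqref{eq:alternating acyclicity S}. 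Hence $\partial_s$ annihilates a generating set of the ideal generated by ${\bf K}_{\chi,\sigma}(W)$ and descends to the desired $s^{-1}$-derivation, recovering Theorem~\ref{th:symmetris D(W)}(b) in the Coxeter case.

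The main obstacle in both parts is the same: showing that the acyclicity conditions \eqref{eq:acyclicity S} and \eqref{eq:alternating acyclicity S} exactly cancel the obstruction to descent. This is a bookkeeping problem—tracking, as the group elements coming from \eqref{eq:relations hat H} are commuted past the $D_s$ and collected, precisely which products of reflections collapse to the relevant target ($1$ in (a), $s$ in (b)), and checking that the attached $\sigma$-products are the ones appearing in the hypotheses. The cocycle identities \eqref{eq:2 cocycle} are what convert the coefficients produced by the commutation into the normalized products $\sigma_{w,s_1}\sigma_{ws_1,s_2}\cdots$ of the acyclicity conditions, and the conjugation invariance of $(\chi_{s,s},\sigma_{s,s},|s|)$ noted above is what keeps everything inside a single consistent family of rank-$1$ relations.
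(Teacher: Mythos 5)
Your proposal is correct and follows essentially the same route as the paper: both arguments rest on the factorization $\hat{\bf H}_{\chi,\sigma}(W)=\hat{\bf D}_{\chi,\sigma}(W)\cdot RW$ and the associated maps $\partial_{g,h}$ of Proposition \ref{pr:factored derivatives}, with the hypotheses \eqref{eq:acyclicity S} and \eqref{eq:alternating acyclicity S} used exactly as you describe to kill the off-diagonal (``loop'') contributions, so that the diagonal component $\partial_{w,w}$ is multiplicative and $\partial_{s^{-1},1}$ is the claimed skew-derivation (this is the content of Proposition \ref{pr:bad derivatives}), after which conjugation-invariance of ${\bf K}_{\chi,\sigma}(W)$ gives the descent. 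The only organizational difference is in part (a): the paper first builds the $W$-action multiplicatively on $\hat{\bf D}_{\chi,\sigma}(W)$, using \eqref{eq:w-invariance of sigma} and Lemma \ref{le:fwsw-1} to check that the relations \eqref{eq:relations hat H taft} are preserved, and only afterwards identifies it with $\partial_{w,w}$, whereas you take $E\circ(\text{conjugation by }w)=\partial_{w,w}$ as the definition and prove multiplicativity directly --- a legitimate shortcut that simply defers the same bookkeeping induction to the multiplicativity check.
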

We prove Theorem \ref{th:symmetris Dchisigma(W)}  in Section \ref{subsec:proof of Theorem symmetris D(W)}. In fact, the algebra $\hat {\bf D}_{\chi,\sigma}(W)$ has these symmetries if \eqref{eq:w-invariance of sigma} holds (Proposition \ref{pr:symmetris hat D(W) gen}(c)), however, \eqref{eq:acyclicity S} is needed for ${\bf K}_{\chi,\sigma}$ to be invariant the $W$-action and \eqref{eq:alternating acyclicity S} is needed for ${\bf K}_{\chi,\sigma}(W)$ to be in the kernel of each $\partial_s$.

\begin{remark} If $R$ is a field, then the condition \eqref{eq:acyclicity S} implies that the transitive closure of the relation $ws\prec w$ iff $\sigma_{w,s}\ne 0$ is a partial order on $W$, which we can think of as a ``generalized Bruhat order." This is justified by Proposition \ref{pr:chi sigma satisfies cocycle condition}(b) which implies that if $W$ is a Coxeter group and ${\mathcal S}$ is the set of all reflections in $W$, then \eqref{eq:acyclicity S} holds and the partial order coincides with the strong Bruhat order on $W$. 
It is also easy to see that the condition \eqref{eq:alternating acyclicity S} holds for each simple reflection in any Coxeter group. So we can think of all $s$ satisfying \eqref{eq:alternating acyclicity S} as ``generalized simple reflections."
\end{remark}

\begin{conjecture} 
\label{conj:symmetris Dchisigma(W)} In the assumptions of Theorem \ref{th:ZWcross and free D gen} suppose that  $\theta$ is an $R$-linear automorphism of $R W$ such that $\theta(w)\in R^\times \cdot w$ for $w\in W$ and  
$\theta(s)=\chi_{s,s}\cdot s$ for  $s\in {\mathcal S}$. Then $\theta$ uniquely extends to 
an algebra automorphism of  ${\bf H}_{\chi,\sigma}(W)$   such that  $\theta(D_s)= \sigma_{s,s}+\chi_{s,s}D_s$  for $s\in {\mathcal S}$. 
 
\end{conjecture}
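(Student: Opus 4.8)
The plan is to recognize the desired $\theta$ as a left winding automorphism of the Hopf algebra $\hat{\bf H}_{\chi,\sigma}(W)$ and then to descend it to the quotient ${\bf H}_{\chi,\sigma}(W)$. Let $\lambda\colon W\to R^\times$ be the character through which the given $\theta|_{RW}$ acts, so that $\theta(w)=\lambda_w w$ with $\lambda_s=\chi_{s,s}$ for $s\in{\mathcal S}$. Define an $R$-linear functional $\phi\colon\hat{\bf H}_{\chi,\sigma}(W)\to R$ on generators by $\phi(w)=\lambda_w$, $\phi(D_s)=\sigma_{s,s}$. First I would check that $\phi$ is a character. On the relations \eqref{eq:relations hat H} this is exactly the consistency forced by \eqref{eq:2 cocycle}: using $\lambda_w\lambda_{w^{-1}}=1$ together with $\chi_{wsw^{-1},wsw^{-1}}=\chi_{s,s}$ and $\chi_{w,s}\sigma_{wsw^{-1},wsw^{-1}}=\sigma_{s,s}-\sigma_{w,s}+\chi_{s,s}\sigma_{w,s}$ (both derivable from \eqref{eq:2 cocycle}), one finds that both sides are sent to $\sigma_{s,s}$. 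On the relations \eqref{eq:relations hat H taft}, since $\phi(D_s)=\sigma_{s,s}=b_s$, each factor $a_s^{j}D_s+b_s(1+\cdots+a_s^{j-1})$ goes to $b_s(1+a_s+\cdots+a_s^{j})$, so the entire left-hand side, including the $q$-binomial prefactor, becomes $b_s^{k}\,[|s|]_{a_s}[|s|-1]_{a_s}\cdots[|s|-k+1]_{a_s}$, which vanishes because $a_s^{|s|}=1$ forces $[|s|]_{a_s}=0$. Hence $\phi$ is a well-defined character.

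With $\phi$ in hand I would simply \emph{define} $\theta:=(\phi\otimes\mathrm{id})\circ\Delta$. Because $\phi$ is a character and $\Delta$ an algebra map, $\theta$ is automatically an algebra endomorphism, so no relation-by-relation verification is needed; it is invertible, with inverse $(\phi\circ S\otimes\mathrm{id})\circ\Delta$, since $\phi\circ S$ is the convolution inverse of $\phi$. Evaluating on generators gives $\theta(w)=\phi(w)w=\lambda_w w$ and $\theta(D_s)=\phi(D_s)\cdot 1+\phi(s)D_s=\sigma_{s,s}+\chi_{s,s}D_s$, matching the statement, and uniqueness is clear since the $w$ and $D_s$ generate. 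This realization of $\theta$ as a winding automorphism is the conceptual heart of the construction.

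It remains to descend $\theta$ to ${\bf H}_{\chi,\sigma}(W)=\hat{\bf H}_{\chi,\sigma}(W)/I$, where $I$ is the ideal generated by ${\bf K}_{\chi,\sigma}(W)\cap\mathrm{Ker}\,\varepsilon$; recall this quotient is a Hopf algebra by Theorem \ref{th:Coideal Hopf gen}, whose freeness hypothesis holds under the assumptions of Theorem \ref{th:ZWcross and free D gen}. Since $\Delta(D_s)\in\hat{\bf H}_{\chi,\sigma}(W)\otimes\tilde{\bf D}_{\chi,\sigma}(W)$, the subalgebra $\tilde{\bf D}_{\chi,\sigma}(W)$ is a left coideal subalgebra, and conjugating shows each $w\tilde{\bf D}_{\chi,\sigma}(W)w^{-1}$ is one too; as $\hat{\bf H}_{\chi,\sigma}(W)$ is free over $R$, their intersection ${\bf K}_{\chi,\sigma}(W)$ is a conjugation-invariant left coideal subalgebra. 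Thus for $x\in{\bf K}_{\chi,\sigma}(W)$ we get $\Delta(x)\in\hat{\bf H}_{\chi,\sigma}(W)\otimes{\bf K}_{\chi,\sigma}(W)$, whence $\theta(x)\in{\bf K}_{\chi,\sigma}(W)$ and $\varepsilon(\theta(x))=\phi(x)$. Writing $\theta(x)=\phi(x)\cdot 1+\bigl(\theta(x)-\phi(x)\cdot 1\bigr)$, with the second summand lying in ${\bf K}_{\chi,\sigma}(W)\cap\mathrm{Ker}\,\varepsilon\subseteq I$, one sees that $\theta(I)\subseteq I$ holds if and only if $\phi$ vanishes on ${\bf K}_{\chi,\sigma}(W)\cap\mathrm{Ker}\,\varepsilon$, i.e. if and only if $\phi$ and $\varepsilon$ agree on ${\bf K}_{\chi,\sigma}(W)$.

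The main obstacle is precisely this last point: proving $\phi|_{{\bf K}_{\chi,\sigma}(W)}=\varepsilon|_{{\bf K}_{\chi,\sigma}(W)}$, equivalently that the one-dimensional representation $\phi$ factors through the Hopf envelope ${\bf H}_{\chi,\sigma}(W)$. This is the general-$(W,\chi,\sigma)$ analogue of Theorem \ref{th:symmetris D(W)}(c), known in the Coxeter case. I expect it to require genuine information about ${\bf K}_{\chi,\sigma}(W)$ rather than a formal Hopf-theoretic identity, since on generators $\phi$ and $\varepsilon$ differ (by $\sigma_{s,s}$ on $D_s$) and $\theta$ is not a power of the antipode. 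The strategy I would pursue is a reduction to rank-two data: show that ${\bf K}_{\chi,\sigma}(W)\cap\mathrm{Ker}\,\varepsilon$ is generated by elements supported on two-generator subconfigurations, where ${\bf K}$ can be computed explicitly and $\phi=\varepsilon$ checked directly, paralleling the rank-two analysis behind Theorem \ref{th:relations rank 2}. Alternatively, under the acyclicity hypothesis \eqref{eq:acyclicity S} (valid for Coxeter groups) one can try to use the $W$-action of Theorem \ref{th:symmetris Dchisigma(W)}(a) to reduce any element of ${\bf K}_{\chi,\sigma}(W)$ to a normal form on which both characters visibly agree. Once $\phi(I)=0$ is established, $\phi$ descends to a character $\bar\phi$ of ${\bf H}_{\chi,\sigma}(W)$, and $\bar\theta=(\bar\phi\otimes\mathrm{id})\circ\bar\Delta$ is the required algebra automorphism with the stated values on $s$ and $D_s$, completing the proof.
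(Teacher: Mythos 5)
You have not proved the statement, and indeed no proof exists in the paper: this is stated as a \emph{Conjecture}, and the paper says explicitly that ``the question whether $\theta$ preserves ${\bf K}_{\chi,\sigma}(W)$ is still open.'' What you establish is precisely the paper's own partial progress. Your winding-automorphism construction $\theta=(\phi\otimes\mathrm{id})\circ\Delta$ with $\phi=\varepsilon\circ\theta$ reproduces Proposition \ref{pr:symmetris hat D(W) gen}(b) (the extension of $\theta$ to $\hat{\bf H}_{\chi,\sigma}(W)$ with $\theta(D_s)=\sigma_{s,s}+\chi_{s,s}D_s$ and the invariance of ${\bf K}_{\chi,\sigma}(W)$) together with the identity $\Delta\circ\theta=(\theta\otimes 1)\circ\Delta$ from the proof of Proposition \ref{pr:antipode theta}; and your observation that $\theta$ descends to ${\bf H}_{\chi,\sigma}(W)$ as soon as $\phi$ and $\varepsilon$ agree on ${\bf K}_{\chi,\sigma}(W)$ is exactly Proposition \ref{pr:antipode theta}(b). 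So the entire content of the conjecture is concentrated in the step you defer: $\phi|_{{\bf K}_{\chi,\sigma}(W)}=\varepsilon|_{{\bf K}_{\chi,\sigma}(W)}$. The two strategies you sketch for it are not carried out and are not obviously available under the conjecture's hypotheses: the claim that ${\bf K}_{\chi,\sigma}(W)\cap\mathrm{Ker}\,\varepsilon$ is generated by rank-two pieces is itself unproven for general $(W,{\mathcal S},\chi,\sigma)$ (the analogous reduction in the Coxeter case is what makes Theorem \ref{th:symmetris D(W)}(c) work, via the finiteness of $W_{\{i,j\}}$ and Proposition \ref{pr:eps theta}), and the acyclicity hypothesis \eqref{eq:acyclicity S} needed for the $W$-action of Theorem \ref{th:symmetris Dchisigma(W)}(a) is not among the assumptions of the conjecture. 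To your credit you flag the gap honestly, but a reduction of an open problem to itself is not a proof.

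One smaller point: in your verification that $\phi$ kills the relations \eqref{eq:relations hat H taft} you assert that $a_s^{|s|}=1$ forces $[|s|]_{a_s}=0$. That is false when $a_s=1$ (then $[|s|]_{a_s}=|s|$ in $R$), and the conjecture only assumes the hypotheses of Theorem \ref{th:ZWcross and free D gen}, i.e.\ $\chi_{s,s}^{|s|}=1$, not primitivity \eqref{eq:s conj}. If you want $\phi$ to be well defined in that generality you either need to add the primitivity assumption (under which $a_s\ne 1$ for $|s|>1$ and $[|s|]_{a_s}=0$ in the integral domain $R$) or argue differently, e.g.\ by invoking the already-established automorphism $\theta$ of $\hat{\bf H}_{\chi,\sigma}(W)$ and setting $\phi:=\varepsilon\circ\theta$ rather than checking relations by hand.
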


If one replaces ${\bf H}_{\chi,\sigma}(W)$ with $\hat {\bf H}_{\chi,\sigma}(W)$, the assertion of the conjecture is true 
(Proposition \ref{pr:symmetris hat D(W) gen}(b)). However, unlike that in Theorem \ref{th:symmetris D(W)}(c), the question whether  $\theta$ preserves ${\bf K}_{\chi,\sigma}(W)$ is still, open, which the conjecture, in fact, asserts. 


The following  is a natural consequence of the above results and constructions. 

In the situation of Theorem \ref{th:ZWcross and free D gen} to a subset ${\mathcal S}_0\subset {\mathcal S}$ and a function    
${\bf q}:{\mathcal S}_0\to R$ ($s\mapsto q_s$) we assign a subalgebra 
$H_{\bf q}(W,{\mathcal S}_0)$ of ${\bf H}_{\chi,\sigma}(W)\otimes \kk$ generated by all 
$s+(1-q_s)D_s$, $s\in {\mathcal S}_0$. By the very construction, $H_{\bf q}(W,{\mathcal S}_0)$ is a left coideal subalgebra in ${\bf H}_{\chi,\sigma}(W)$.

We say that $H_{\bf q}(W,{\mathcal S}_0)$ is a {\it generalized Hecke algebra} if it is a deformation of $R W_0$, 
where $W_0$ is the subgroup of $W$ generated by ${\mathcal S}_0$, or, more precisely, the restriction of the $R$-linear projection 
$\pi: {\bf H}_{\chi,\sigma}(W)\to RW$ given by $\pi(xw)=w$ for $x\in {\bf D}_{\chi,\sigma}$, $w\in W$ to 
$H_{\bf q}(W,{\mathcal S}_0)$, is an isomorphism of $R$-modules $H_{\bf q}(W,{\mathcal S}_0)\widetilde \to RW_0$.

\begin{problem} Classify generalized Hecke algebras. 

\end{problem}

In Section \ref{sec:Taft algebras} we solve the problem for finite cyclic groups $W$ via generalized Taft algebras. 

It would be interesting to compare our constructions with  the Broue-Malle-Rouquier Hecke algebras (\cite{BMR}) attached to complex reflection groups.

\section{Generalization to Hopf algebras}

\label{sect:Generalization to Hopf algebras}
In this section we will extend our constructions from algebras ${\bf H}_{\chi,\sigma}(W)$ to Hopf algebras ${\bf H}$ over a commutative ring $R$ containing a Hopf subalgebra $H$ and a left coideal subalgebra ${\bf D}$.

Recall that, given a coalgebra ${\bf H}$ over a commutative ring $R$, an $R$-submodule ${\bf K}$ is called a left (resp. right) coideal if 
$\Delta({\bf K})\subset {\bf H}\otimes {\bf K}$ (resp. $\Delta({\bf K})\subset {\bf K}\otimes {\bf H}$).
 
The following properties of left (and right) coideals are, apparently, well-known.

\begin{proposition} 
\label{pr:sum intersection of coideals} For any coalgebra ${\bf H}$ over  $R$, one has:

(a) Sum  of left coideals is also a left coideal.

(b) If ${\bf H}$ is a free $R$-module, then the intersection of left coideals is also a left coideal.
\end{proposition}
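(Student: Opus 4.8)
The plan is to work directly from the definition: an $R$-submodule $\mathbf{K} \subseteq \mathbf{H}$ is a left coideal precisely when $\Delta(\mathbf{K}) \subseteq \mathbf{H} \otimes \mathbf{K}$, where throughout I read $\mathbf{H} \otimes \mathbf{K}$ as the image of the natural map $\mathbf{H} \otimes_R \mathbf{K} \to \mathbf{H} \otimes_R \mathbf{H}$ induced by the inclusion $\mathbf{K} \hookrightarrow \mathbf{H}$, i.e.\ as the $R$-submodule of $\mathbf{H}\otimes \mathbf{H}$ spanned by all $h \otimes k$ with $k \in \mathbf{K}$. The two parts differ sharply in difficulty: (a) is purely formal, while (b) rests on a single module-theoretic fact that genuinely uses freeness.

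For (a), given a family of left coideals $\{\mathbf{K}_i\}_{i}$ with sum $\mathbf{K} = \sum_i \mathbf{K}_i$, I first observe that $\mathbf{H}\otimes \mathbf{K}_i \subseteq \mathbf{H}\otimes \mathbf{K}$ as submodules of $\mathbf{H}\otimes \mathbf{H}$, since every spanning element $h\otimes k$ with $k \in \mathbf{K}_i$ already has $k \in \mathbf{K}$. Hence $\Delta(\mathbf{K}_i) \subseteq \mathbf{H}\otimes \mathbf{K}_i \subseteq \mathbf{H}\otimes \mathbf{K}$ for each $i$. Since $\mathbf{K}$ is the $R$-span of the $\mathbf{K}_i$ and $\Delta$ is $R$-linear, it follows that $\Delta(\mathbf{K}) \subseteq \mathbf{H}\otimes \mathbf{K}$. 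No hypothesis on $\mathbf{H}$ is needed here, and the same argument gives the right-coideal version by symmetry.

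For (b), the extra input is flatness: when $\mathbf{H}$ is free (hence flat) over $R$, each inclusion $\mathbf{K}_i \hookrightarrow \mathbf{H}$ stays injective after applying $\mathbf{H}\otimes -$, so $\mathbf{H}\otimes \mathbf{K}_i$ is genuinely an $R$-submodule of $\mathbf{H}\otimes \mathbf{H}$. Writing $\mathbf{K} = \bigcap_i \mathbf{K}_i$, for each $i$ we have $\Delta(\mathbf{K}) \subseteq \Delta(\mathbf{K}_i) \subseteq \mathbf{H}\otimes \mathbf{K}_i$, so $\Delta(\mathbf{K}) \subseteq \bigcap_i (\mathbf{H}\otimes \mathbf{K}_i)$. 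The heart of the argument is therefore the identity
$$\bigcap_i (\mathbf{H}\otimes \mathbf{K}_i) = \mathbf{H}\otimes \Big(\bigcap_i \mathbf{K}_i\Big)$$
inside $\mathbf{H}\otimes \mathbf{H}$, which I would prove by fixing an $R$-basis $\{e_\lambda\}$ of $\mathbf{H}$ and using the resulting unique expansion $z = \sum_\lambda e_\lambda \otimes h_\lambda$ (with $h_\lambda \in \mathbf{H}$, almost all zero) of any $z \in \mathbf{H}\otimes \mathbf{H}$. Under the induced identification $\mathbf{H}\otimes \mathbf{K}_i \cong \bigoplus_\lambda \mathbf{K}_i$ sitting coordinatewise inside $\bigoplus_\lambda \mathbf{H} \cong \mathbf{H}\otimes \mathbf{H}$, the element $z$ lies in $\mathbf{H}\otimes \mathbf{K}_i$ exactly when every coordinate $h_\lambda$ lies in $\mathbf{K}_i$; intersecting over $i$ forces each $h_\lambda \in \bigcap_i \mathbf{K}_i$, which is the condition for $z \in \mathbf{H}\otimes \mathbf{K}$. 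Combining this with the inclusion above yields $\Delta(\mathbf{K}) \subseteq \mathbf{H}\otimes \mathbf{K}$, as desired.

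The one genuinely delicate point---and the reason freeness appears only in (b)---is this interchange of intersection with the tensor factor. Over an arbitrary commutative ring it can fail, since $\mathbf{H}\otimes -$ need neither be exact nor preserve intersections of submodules; the free-basis coordinatization is precisely what rescues it, reducing the claim to the trivial coordinatewise identity. I would take care to record explicitly the two facts that make the coordinate argument rigorous: the expansion $z = \sum_\lambda e_\lambda \otimes h_\lambda$ is unique (freeness of the left factor), and $\mathbf{H}\otimes \mathbf{K}_i$ corresponds exactly to those $z$ all of whose coordinates lie in $\mathbf{K}_i$ (flatness, giving $\mathbf{H}\otimes \mathbf{K}_i \cong \bigoplus_\lambda \mathbf{K}_i$ as an honest submodule). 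Everything else is routine.
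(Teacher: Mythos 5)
Your proof is correct and follows essentially the same route as the paper: part (a) by linearity of $\Delta$, and part (b) by reducing to the identity $\bigcap_i(\mathbf{H}\otimes\mathbf{K}_i)=\mathbf{H}\otimes\bigl(\bigcap_i\mathbf{K}_i\bigr)$, which the paper isolates as a lemma and which you prove by the same basis-coordinatization of the free left factor. No gaps.
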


\begin{proof} Part (a) is immediate. 

To prove (b), we need the following obvious (and, apparently, well-known) fact. 
\begin{lemma} 
\label{le:intersection tensor products}
Let ${\bf A}$ be a free module over a commutative ring $R$ and let ${\bf B}$ be an $R$-module and 
${\bf B}_{\bf i}$, ${\bf i}\in {\bf I}$ be a family of $R$-submodules in ${\bf B}$. Then
$\bigcap\limits_{{\bf i}\in {\bf I}} \left({\bf A}\otimes {\bf B}_{\bf i}\right)= {\bf A}\otimes \left(\bigcap\limits_{{\bf i}\in {\bf I}}{\bf B}_{\bf i}\right)$.
\end{lemma}

Indeed, if ${\bf B}_{\bf i}$, ${\bf i}\in {\bf I}$ is a family of left coideals in ${\bf H}$, then 
$$\Delta(\bigcap\limits_{{\bf i}\in {\bf I}} {\bf B}_{\bf i})\subset \bigcap\limits_{{\bf i}\in {\bf I}} \Delta({\bf B}_{\bf i})\subset \bigcap\limits_{{\bf i}\in {\bf I}} {\bf H}\otimes {\bf B}_{\bf i}={\bf H}\otimes \left(\bigcap\limits_{{\bf i}\in {\bf I}}{\bf B}_{\bf i}\right)\ .$$
by  Lemma \ref{le:intersection tensor products} taken with ${\bf A}={\bf B}={\bf H}$.
This proves (b).

The proposition is proved.
\end{proof}

Let $H$ be a Hopf algebra over $R$ and let ${\bf H}$ be an $H$-module algebra (we denote the action by $h\otimes x\mapsto h(x)$). For any $R$-subalgebra ${\bf D}$ of ${\bf H}$ define
\begin{equation}
\label{eq:general K defined}
{\bf K}(H,{\bf D}):=\{x\in {\bf D}\,|\,H(x)\subset {\bf D}\}\ .
\end{equation}

\begin{lemma} ${\bf K}(H,{\bf D})$ is a subalgebra of ${\bf H}$ invariant under the $H$-action.

\end{lemma}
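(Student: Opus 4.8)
The plan is to verify the two assertions — that ${\bf K}(H,{\bf D})$ is an $R$-subalgebra and that it is stable under the $H$-action — directly from the defining axioms of an $H$-module algebra, with no use of freeness or finiteness. Throughout I would write the coproduct in Sweedler notation, $\Delta(h)=\sum h_{(1)}\otimes h_{(2)}$, and invoke the two compatibility identities $h(xy)=\sum h_{(1)}(x)\cdot h_{(2)}(y)$ and $h(1)=\varepsilon(h)\,1$ valid in any $H$-module algebra, together with the left-module axiom $h(g(x))=(hg)(x)$.

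First I would check that ${\bf K}(H,{\bf D})$ is a subalgebra. Since the $H$-action is $R$-linear and ${\bf D}$ is an $R$-submodule, the defining condition $H(x)\subset {\bf D}$ is preserved under $R$-linear combinations, so ${\bf K}(H,{\bf D})$ is an $R$-submodule; and it contains $1$ because $1\in {\bf D}$ while $h(1)=\varepsilon(h)\,1\in {\bf D}$ for every $h\in H$. For closure under multiplication, take $x,y\in {\bf K}(H,{\bf D})$ and $h\in H$; then
$$h(xy)=\sum h_{(1)}(x)\cdot h_{(2)}(y),$$
where each factor satisfies $h_{(1)}(x)\in H(x)\subset {\bf D}$ and $h_{(2)}(y)\in H(y)\subset {\bf D}$, so each summand lies in ${\bf D}$ because ${\bf D}$ is a subalgebra. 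Hence $h(xy)\in {\bf D}$ for all $h$, i.e. $xy\in {\bf K}(H,{\bf D})$.

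Next I would verify $H$-invariance, which is shorter and rests on the left-module axiom. Fix $x\in {\bf K}(H,{\bf D})$ and $g\in H$. On one hand $g(x)\in H(x)\subset {\bf D}$, so $g(x)\in {\bf D}$. On the other hand, for any $h\in H$ one has $h(g(x))=(hg)(x)\in H(x)\subset {\bf D}$, whence $H(g(x))\subset {\bf D}$ and therefore $g(x)\in {\bf K}(H,{\bf D})$. This gives $H({\bf K}(H,{\bf D}))\subset {\bf K}(H,{\bf D})$.

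I do not expect a genuine obstacle: the statement is a formal consequence of the module-algebra axioms. The only point requiring care is multiplicative closure, where one must genuinely use the compatibility identity $h(xy)=\sum h_{(1)}(x)h_{(2)}(y)$ rather than any naive multiplicativity of the action. This is precisely why the coproduct enters and why it is essential that ${\bf D}$ is closed under products of \emph{both} factors $h_{(1)}(x)$ and $h_{(2)}(y)$; the invariance part, by contrast, needs nothing beyond associativity of the left action.
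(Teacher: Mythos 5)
Your proof is correct and follows essentially the same route as the paper: multiplicative closure via $h(xy)=h_{(1)}(x)\cdot h_{(2)}(y)\in{\bf D}$, and $H$-invariance via $h'(h(x))=(h'h)(x)\in{\bf D}$. The extra checks you include ($R$-linearity and $1\in{\bf K}(H,{\bf D})$) are harmless elaborations of what the paper leaves implicit.
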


\begin{proof} Indeed, for $x,y\in {\bf K}(H,{\bf D})$  we have
$h(xy)=h_{(1)}(x)\cdot h_{(2)}(y)\in {\bf D}$
for all $h\in H$. Hence $xy\in {\bf K}(H,{\bf D})$ and the first assertion is proved.

Furthermore, given  $x\in {\bf K}(H,{\bf D})$, $h\in H$ we have
$h'(h(x))=(h'h)(x)\in {\bf D}$ 
for all $h'\in H$, therefore, $h(x)\in {\bf K}(H,{\bf D})$ for all $x\in {\bf K}(H,{\bf D})$, $h\in H$. 
This proves the second assertion.

The lemma is proved.
\end{proof}


The following is immediate.

\begin{lemma} 
\label{le:action conjugation}
Suppose that $H$ is a Hopf algebra over $R$ and also a subalgebra of an $R$-algebra ${\bf H}$.
Then the assignments $h\act x:=h_{(1)}\cdot x\cdot S(h_{(2)})$, $h\in H$, $x\in {\bf H}$, turn ${\bf H}$ into an $H$-module algebra.

\end{lemma}

Replacing, if necessary, an $H$-module algebra ${\bf H}$ with the cross product $\tilde {\bf H}={\bf H}\rtimes H$, we see that Lemma \ref{le:action conjugation} is applicable to $\tilde {\bf H}$. 

%
%
%
%


In the following result, we will use the action from Lemma \ref{le:action conjugation} for constructing new Hopf algebras.

\begin{theorem} 
\label{th:general hopf ideal}
Let ${\bf H}$ be a Hopf algebra over $R$, $H$ be a Hopf subalgebra of ${\bf H}$, and ${\bf D}$ be a left coideal subalgebra of ${\bf H}$.
Suppose that ${\bf H}$ is free as an $R$-module. Then the ideal ${\bf J}(H,{\bf D})$ of ${\bf H}$ generated by  ${\bf K}(H,{\bf D})\cap Ker~\varepsilon$ is a Hopf ideal, hence $\underline {\bf H}:={\bf H}/{\bf J}(H,{\bf D})$ is naturally a Hopf algebra.
\end{theorem}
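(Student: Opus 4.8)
The plan is to show that the ideal $\mathbf{J}(H,\mathbf{D})$ generated by $\mathbf{K}(H,\mathbf{D})\cap\ker\varepsilon$ is a Hopf ideal, i.e.\ that $\Delta(\mathbf{J})\subset \mathbf{J}\otimes\mathbf{H}+\mathbf{H}\otimes\mathbf{J}$, that $\varepsilon(\mathbf{J})=0$, and that $S(\mathbf{J})\subset\mathbf{J}$. The counit condition is immediate since the generators lie in $\ker\varepsilon$ and $\mathbf{J}$ is the ideal they generate. The real content is the coproduct condition, and the key observation is that $\mathbf{K}:=\mathbf{K}(H,\mathbf{D})$ is simultaneously (i) a left coideal subalgebra of $\mathbf{H}$ and (ii) invariant under the adjoint $H$-action of Lemma~\ref{le:action conjugation}. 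Point (i) follows because $\mathbf{D}$ is a left coideal and $\mathbf{K}\subset\mathbf{D}$: I would first prove $\Delta(\mathbf{K})\subset\mathbf{H}\otimes\mathbf{K}$ by using freeness of $\mathbf{H}$ together with the defining property $H(x)\subset\mathbf{D}$. Concretely, for $x\in\mathbf{K}$ write $\Delta(x)=\sum x_{(1)}\otimes x_{(2)}\in\mathbf{H}\otimes\mathbf{D}$; applying the adjoint action in the second tensor factor and using that $\mathbf{K}$ is exactly the largest $H$-stable subspace of $\mathbf{D}$ contained in $\mathbf{D}$ should force the $x_{(2)}$ to lie in $\mathbf{K}$.

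Granting that $\mathbf{K}$ is a left coideal subalgebra contained in $\ker\varepsilon\oplus R$, the standard computation gives, for $k\in\mathbf{K}\cap\ker\varepsilon$,
\begin{equation}
\Delta(k)=k\otimes 1+1\otimes k+\sum k'_{(1)}\otimes k'_{(2)},
\end{equation}
where the correction terms satisfy $k'_{(1)}\in\ker\varepsilon$ and, by the coideal property, $k'_{(2)}\in\mathbf{K}$. Projecting the second factor along $\mathbf{H}=R\cdot 1\oplus\ker\varepsilon$ via $\mathrm{id}-\eta\varepsilon$, one checks the $k'_{(2)}$ may be taken in $\mathbf{K}\cap\ker\varepsilon$, so $\Delta(k)\in k\otimes 1+\mathbf{H}\otimes(\mathbf{K}\cap\ker\varepsilon)\subset \mathbf{J}\otimes\mathbf{H}+\mathbf{H}\otimes\mathbf{J}$. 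Since $\mathbf{J}$ is the two-sided ideal generated by these $k$, and the ideal generated by a coideal inside $\ker\varepsilon$ is automatically a coideal (because $\Delta$ is an algebra map and $\mathbf{J}\otimes\mathbf{H}+\mathbf{H}\otimes\mathbf{J}$ is a sub-bimodule closed under the relevant multiplications), the coproduct condition propagates from the generators to all of $\mathbf{J}$.

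For the antipode, I would argue that a Hopf ideal that is a coideal contained in $\ker\varepsilon$ is automatically stable under $S$. The clean way is to invoke that in a Hopf algebra the antipode is an anti-coalgebra map, so $S$ sends $\{\,x:\Delta(x)-x\otimes1-1\otimes x\in\mathbf{H}\otimes\mathbf{J}+\mathbf{J}\otimes\mathbf{H}\,\}\cap\ker\varepsilon$ into itself; combined with $S(\mathbf{K})\subset\mathbf{K}$ (which follows from $H$-invariance of $\mathbf{K}$ since, in the adjoint-action setup, $S$ is built from the $H$-action and the product, both of which preserve $\mathbf{K}$), one gets $S(\mathbf{J})\subset\mathbf{J}$. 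Alternatively, since $\mathbf{H}$ is free and hence $\mathbf{J}$ is already shown to be a bi-ideal, one can appeal to the general fact that any bi-ideal in a Hopf algebra over a ring where $S$ is bijective—or more robustly, any coideal two-sided ideal—is a Hopf ideal.

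The main obstacle I anticipate is step (i): showing $\Delta(\mathbf{K})\subset\mathbf{H}\otimes\mathbf{K}$. The subtlety is that $\mathbf{K}$ is defined by an $H$-stability condition relative to the \emph{adjoint} action $h\act x=h_{(1)}xS(h_{(2)})$, whereas the coideal condition concerns the \emph{coproduct} $\Delta$; these must be reconciled. This is exactly the point where freeness of $\mathbf{H}$ over $R$ is indispensable—Lemma~\ref{le:intersection tensor products} (equivalently Proposition~\ref{pr:sum intersection of coideals}(b)) lets me intersect the tensor-factor conditions cleanly and identify $\mathbf{K}\otimes$ something rather than only an inclusion into a sum. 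I would therefore structure the proof so that freeness is used precisely to commute the intersection defining $\mathbf{K}$ past the tensor product, yielding the coideal property, and everything else reduces to the by-now routine manipulations above.
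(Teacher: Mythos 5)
Your architecture coincides with the paper's: first establish that $\mathbf{K}=\mathbf{K}(H,\mathbf{D})$ is a left coideal of $\mathbf{H}$ (with freeness entering exactly where you place it, via the analogue of Lemma \ref{le:intersection tensor products} applied to a basis of the first tensor factor), and then show that for \emph{any} left coideal the ideal generated by its intersection with $\ker\varepsilon$ is a Hopf ideal. The coproduct part of the second step is carried out correctly. But two points are not actually proved, and one of them rests on a claim that is false in the very examples this theorem is designed for.

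First, the reconciliation of the adjoint action with $\Delta$, which you rightly flag as the main obstacle, is left at the level of ``should force''. The missing ingredient is the identity $h\act \Delta(x)=(S(h_{(1)})\otimes 1)\cdot \Delta(h_{(2)}\act x)\cdot (h_{(3)}\otimes 1)$, where $H$ acts on $\mathbf{H}\otimes\mathbf{H}$ through the second factor; from it, $x\in\mathbf{K}$ together with $\Delta(\mathbf{D})\subset\mathbf{H}\otimes\mathbf{D}$ gives $H\act\Delta(x)\subset\mathbf{H}\otimes\mathbf{D}$, and only \emph{then} does freeness let you conclude that the second legs of $\Delta(x)$ may be taken in $\mathbf{K}$. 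Second, and more seriously, your antipode step leans on $S(\mathbf{K})\subset\mathbf{K}$, ``since $S$ is built from the $H$-action and the product''. This is not so: $S$ need not preserve $\mathbf{D}$ at all --- in $\hat{\mathbf{H}}(W)$ one has $S(D_s)=-s^{-1}D_s\notin\hat{\mathbf{D}}(W)$ --- so neither $\mathbf{D}$ nor $\mathbf{K}$ is $S$-stable. Your fallback, that any two-sided ideal which is a coideal is automatically a Hopf ideal, is just the definition of a bi-ideal restated, and ``every bi-ideal is a Hopf ideal'' is not a citable general fact (a bialgebra quotient of a Hopf algebra need not admit an antipode). The correct argument is a one-line computation from the inclusion you already have: from $\Delta(k)-k\otimes 1\in\mathbf{H}\otimes\mathbf{K}^+$ apply $m\circ(S\otimes 1)$ and use the antipode axiom $m\circ(S\otimes 1)\circ\Delta=\varepsilon$ to get $S(k)\in S(\mathbf{H})\cdot\mathbf{K}^+\subset\mathbf{J}$ for $k\in\mathbf{K}^+$; since $S$ is an anti-homomorphism, $S(\mathbf{J})\subset\mathbf{H}\cdot S(\mathbf{K}^+)\cdot\mathbf{H}\subset\mathbf{J}$. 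With these two repairs your proof matches the paper's.
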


\begin{proof} We need the following result.

 
\begin{proposition} 
\label{pr:general K} In the assumptions of Theorem \ref{th:general hopf ideal},  
${\bf K}(H,{\bf D})$ is a left coideal subalgebra of ${\bf H}$.
\end{proposition}

\begin{proof} For an $R$-module ${\bf A}$ and an $H$-module  ${\bf H}$ define the action of $H$ on ${\bf A}\otimes {\bf H}$  
by $h\act (x\otimes y)=x\otimes h(y)$ for $x\in {\bf A},y \in {\bf H}$.

We need the following result.

\begin{lemma} Let ${\bf H}$ be a Hopf algebra over $R$ and let $H$ be a Hopf subalgebra of ${\bf H}$. 
Then $h\act \Delta(x)=(S(h_{(1)})\otimes 1)\cdot \Delta(h_{(2)}\act x)\cdot (h_{(3)}\otimes 1)$ (here  $\act$ is the adjoint action  from Lemma \ref{le:action conjugation}) 
for $h\in H$, $x\in {\bf H}$, with the Sweedler notation 
$(\Delta\otimes 1)\circ \Delta(h)=(1\otimes \Delta)\circ \Delta(h)=h_{(1)}\otimes h_{(2)}\otimes h_{(3)}$.
\end{lemma}

\begin{proof}
Indeed, $(S(h_{(1)})\otimes 1)\cdot \Delta(h_{(2)}\act x)\cdot (h_{(3)}\otimes 1)=
(S(h_{(1)})\otimes 1)\cdot \Delta(h_{(2)})\Delta(x)\Delta(S(h_{(3)}))\cdot (h_{(4)}\otimes 1)$
$=
(1\otimes h_{(1)})\cdot \Delta(x)\cdot (1\otimes S(h_{(2)}))=h\act \Delta(x)$
because 
$(S(h_{(1)})\otimes 1)\cdot \Delta(h_{(2)})=
S(h_{(1)})\cdot h_{(2)}\otimes h_{(3)}=1\otimes h$ and 
$\Delta(S(h_{(1)}))\cdot (h_{(2)}\otimes 1)=S(h_{(2)})\cdot h_{(3)}\otimes S(h_{(1)})
=1\otimes S(h)$.

The lemma is proved.
\end{proof}

This proves that, in the assumptions of Theorem \ref{th:general hopf ideal},  we have $H\act \Delta(x)\subset {\bf H}\otimes {\bf D}$ for all $x\in {\bf K}$. To finish the proof of Proposition \ref{pr:general K},  we need the following result.

\begin{lemma} For any free $R$-module ${\bf A}$  one has in the assumptions of \eqref{eq:general K defined}:
\begin{equation}
\label{eq:tensor set K}
\{z\in {\bf A}\otimes {\bf D}\,|\,H\act z\subset {\bf A}\otimes {\bf D}\}= {\bf A}\otimes {\bf K}(H,{\bf D})\ .
\end{equation}
\end{lemma}

\begin{proof} 
Indeed, let ${\bf B}$ be an $R$-basis of ${\bf A}$. Write each $z\in {\bf A}\otimes {\bf H}$ as
$$z=\sum_{b\in {\bf B}} b\otimes x_b$$
where all $x_b\in {\bf D}$ and all but finitely many of them are $0$. Then
$$h\act\limits z=\sum_{b\in {\bf B}} b\otimes h(x_b)\ .$$
In particular, if $h\act\limits z\in {\bf H}\otimes {\bf D}$ for some $h\in H$, then $h(x_b)\in {\bf D}$ for all $b\in {\bf B}$. Therefore, $H\act\limits z\subset {\bf H}\otimes {\bf D}$ implies that $x_b\in {\bf K}(H,{\bf D})$ for $b\in {\bf B}$.

This proves the inclusion of the left hand side of \eqref{eq:tensor set K} into the right hand side. The opposite inclusion is obvious.
 
The lemma is proved.
\end{proof}

Therefore, Proposition \ref{pr:general K} is proved.
\end{proof}

We need the following (probably, well-known) general result.

\begin{proposition}  
\label{pr:from coideal to hopf ideal}
Let ${\bf H}$ be a Hopf algebra over  $R$ and let ${\bf K}\subset {\bf H}$ be a left or right coideal.
Then the ideal ${\bf J}$ generated by ${\bf K}^+:={\bf K}\cap Ker~\varepsilon$ is a Hopf ideal, i.e., $\Delta({\bf J})\subset {\bf H}\otimes {\bf J}+{\bf J}\otimes {\bf H},~S({\bf J})\subset {\bf J}$.

\end{proposition}

\begin{proof} We will prove the assertion when ${\bf K}$ is a left coideal (for the right ones the proof is identical). 
We need the following well-known fact.

\begin{lemma} 
\label{le:short coproduct}
For any coalgebra ${\bf H}$ one has
$\Delta(h)-h\otimes 1\in {\bf H}\otimes Ker~\varepsilon$
for all $h\in {\bf H}$.
\end{lemma}

Indeed, taking into account that $\Delta({\bf K})\subset {\bf H}\otimes {\bf K}^+\oplus {\bf H}\otimes 1$ for any left coideal ${\bf K}\subset {\bf H}$,  where we abbreviated ${\bf K}^+:={\bf K}\cap Ker~\varepsilon$, 
Lemma \ref{le:short coproduct} guarantees that 
\begin{equation}
\label{eq:short coideal}
\Delta(h)-h\otimes 1\in {\bf H}\otimes {\bf K}^+
\end{equation}
for all $h\in {\bf K}^+$. Therefore, 
$\Delta({\bf K}^+)\subset {\bf H}\otimes {\bf K}^++{\bf K}^+\otimes 1$. In turn, this implies that: 
$$\Delta({\bf J})\subset ({\bf H}\otimes {\bf H}) \cdot \Delta({\bf K}^+)\cdot ({\bf H}\otimes {\bf H})\subset ({\bf H}\otimes {\bf H}) \cdot ({\bf H}\otimes {\bf K}^++{\bf K}^+\otimes 1)\cdot ({\bf H}\otimes {\bf H})\subset {\bf H}\otimes {\bf J} + {\bf J}\otimes {\bf H} \ ,$$
i.e., ${\bf J}$ is a bi-ideal.

Furthermore, applying ${\bf m}\circ (S\otimes 1)$ to \eqref{eq:short coideal}  and using the  property of the antipode $m\circ (S\otimes 1)\circ \Delta=\varepsilon$,   
we obtain 
$\varepsilon(h)-S(h)\in S({\bf H})\cdot{\bf K}^+$ for all $h\in {\bf K}^+$, therefore, $S({\bf K}^+)\subset {\bf H}\cdot{\bf K}^+$.
Hence
$$S({\bf J})={\bf H}\cdot S({\bf K}^+)\cdot {\bf H}\subset {\bf H}\cdot ({\bf H}\cdot{\bf K}^+)\cdot {\bf H}={\bf J}\ .$$

The proposition is proved.
\end{proof}

Clearly, the assertion of Theorem \ref{th:general hopf ideal} follows from Propositions \ref{pr:general K} and \ref{pr:from coideal to hopf ideal}.

Theorem \ref{th:general hopf ideal} is proved.
\end{proof}

Let $H$ be a Hopf algebra over $R$, $V$ be an $T(H)$-module (i.e., an $R$-linear map $H\otimes V\to V$),  for an $R$-bilinear map $\gamma:H\times V\to H$ satisfying:
\begin{equation}
\label{eq:unit gamma}
\gamma(1,v)=0
\end{equation}
for all  $v\in V$, let  ${\bf H}_\gamma$ be an algebra generated by $H$ (viewed as an algebra) and $V$ subject to relations
\begin{equation}
\label{eq:deformed cross  product hopf}
h_{(1)}\cdot v\cdot S(h_{(2)})=h(v)+\gamma(h,v)
\end{equation}
for all $h\in H$, $v\in V$.
Using the property of the antipode in $H$, it is easy to see that relations \eqref{eq:deformed cross  product hopf} are equivalent to:
$$hv=h_{(1)}(v)\cdot h_{(2)}+\beta(h\otimes v)$$
for all $h\in H$, $v\in V$ where $\beta:H\otimes V\to H$ is given by 
$\beta(h\otimes v)=\gamma(h_{(1)},v)h_{(2)}$.
This implies that ${\bf H}_\gamma=A_\mu$ in the notation of \eqref{eq:Agamma} and of Corollary \ref{cor:mu nu gamma}, 
where $\beta$ is as above and 
$\nu:H\otimes V\to V\otimes H$ is given by
$\nu(h\otimes v)=h_{(1)}(v)\otimes  h_{(2)}$ for all $h\in H$, $v\in V$.

If $\gamma=0$ and $T(H)$-action on $V$ factors through an $H$-action, then $H_\gamma=T(V)\rtimes H$, the cross product. Using Corollary \ref{cor:mu nu gamma}, we obtain a criterion  for  factorization  of ${\bf H}_\gamma$ into $T(V)$ and $H$.

\begin{proposition} 
\label{pr:mu nu gamma H} Let  $\gamma:H\times V\to H$ be an $R$-bilinear map satisfying \eqref{eq:unit gamma}. Then ${\bf H}_\gamma$ factors as  ${\bf H}_\gamma=T(V)\cdot H$  (i.e., 
the multiplication map defines an isomorphism of $R$-modules
$T(V)\otimes H\widetilde \longrightarrow {\bf H}_\gamma$) as an $R$-module iff $V$ is an $H$-module 
and $\gamma$ satisfies for all $h,h'\in H$, $v\in V$:
\begin{equation}
\label{eq:gamma property H}
\gamma(hh',v)=\gamma(h,h'(v))+h\act \gamma(h',v)
\end{equation}
where $\act$ denotes the adjoint action of the Hopf algebra $H$ on itself (as in Lemma \ref{le:action conjugation}).
\end{proposition}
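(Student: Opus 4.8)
The plan is to recognize ${\bf H}_\gamma$ as an instance of the algebra $A_\mu$ of \eqref{eq:Agamma}, with straightening data $\mu=\nu+\beta$ given by the maps $\nu(h\otimes v)=h_{(1)}(v)\otimes h_{(2)}$ and $\beta(h\otimes v)=\gamma(h_{(1)},v)h_{(2)}$ already exhibited above, and then to apply the general factorization criterion of Corollary \ref{cor:mu nu gamma}. That criterion reduces the statement ``the multiplication map $T(V)\otimes H\to {\bf H}_\gamma$ is an isomorphism'' to the consistency (associativity) of the straightening $\mu$. Since the factor $T(V)$ is the free tensor algebra, the $T(V)$-side of the straightening carries no relations: moving an element of $H$ past a product $v_1\cdots v_k$ of generators of $V$ is defined by iterating $\mu$ and imposes no constraint. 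Hence the only overlap to be checked is the one coming from the algebra relations of $H$, namely the two ways of bringing $(hh')v$ to normal form in $T(V)\cdot H$ for $h,h'\in H$, $v\in V$.

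To compute this single overlap cleanly I would rewrite the defining relation \eqref{eq:deformed cross  product hopf} in terms of the adjoint action $\act$ of $H$ from Lemma \ref{le:action conjugation}: since $h\act x=h_{(1)}xS(h_{(2)})$, the relation is exactly $h\act v=h(v)+\gamma(h,v)$, with $h(v)\in V$ and $\gamma(h,v)\in H$. Because $\act$ is a genuine left action (so that $(hh')\act x=h\act(h'\act x)$, a formal consequence of the Hopf axioms), I can evaluate $(hh')\act v$ in two ways. Applying the relation directly gives $(hh')\act v=(hh')(v)+\gamma(hh',v)$, where $(hh')(v)$ is the action of the product $hh'\in H$. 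Iterating instead gives $(hh')\act v=h\act\bigl(h'(v)+\gamma(h',v)\bigr)=h(h'(v))+\gamma(h,h'(v))+h\act\gamma(h',v)$, where $h(h'(v))$ is the iterated (degree-two) $T(H)$-action and the last summand is the adjoint action of $H$ on itself. Comparing these two expressions in the normal-form module $T(V)\otimes H$, where the $V$-degree is a genuine grading, the two sides must agree in each graded component separately.

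The degree-one ($V\otimes H$) component yields $(hh')(v)=h(h'(v))$ for all $h,h',v$, which is exactly the statement that the $T(H)$-action factors through an $H$-module structure on $V$ (the first bullet); the unit condition $1(v)=v$ is part of this, and substituting $h=h'=1$ into the second identity then forces $\gamma(1,v)=0$. The degree-zero ($H$) component yields $\gamma(hh',v)=\gamma(h,h'(v))+h\act\gamma(h',v)$, which is precisely \eqref{eq:gamma property H} (the second bullet). Conversely, assuming both bullets these graded identities hold, so the straightening is consistent and Corollary \ref{cor:mu nu gamma} delivers the factorization. The main obstacle, and the only genuinely delicate point, is the bookkeeping in the $H$-component: one must verify that the Sweedler expression produced by the $\beta$-part of the overlap collapses to the single clean term $h\act\gamma(h',v)$, which is where the antipode axioms enter. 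The adjoint-action reformulation above is designed precisely to make this collapse automatic, so that the abstract consistency condition of Corollary \ref{cor:mu nu gamma} is seen to match, component by component, the two stated conditions.
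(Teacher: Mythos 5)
Your proposal is correct and rests on the same skeleton as the paper's proof: identify ${\bf H}_\gamma$ with $A_\mu$ for $\mu=\nu+\beta$, invoke Corollary \ref{cor:mu nu gamma}, and reduce everything to translating that corollary's conditions into the two bullets. The one genuine difference is how you organize the translation. The paper rewrites the corollary's conditions as \eqref{eq:nu property H} and \eqref{eq:beta property H} and converts each into the stated form by repeated multiplication by antipode legs; you instead observe that the $H\otimes H\otimes V$ overlap is the associativity identity $(hh')\act v=h\act(h'\act v)$, whose normal-form expansion splits by $V$-degree into the module axiom and \eqref{eq:gamma property H}. For the direction ``factorization $\Rightarrow$ conditions'' this is genuinely cleaner, since once the normal form exists the degree splitting is immediate. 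For the converse, however, your claim that the adjoint reformulation makes the collapse ``automatic'' undersells the work: the conditions of Corollary \ref{cor:mu nu gamma} are identities of maps on $H\otimes H\otimes V$ expressed through $\nu$ and $\beta$, and deriving them from the two bullets still requires showing that the relation $hv=h_{(1)}(v)h_{(2)}+\beta(h\otimes v)$ and its adjoint form $h\act v=h(v)+\gamma(h,v)$ are interchangeable at the level of the straightening data --- which is exactly the Sweedler/antipode bookkeeping the paper carries out in its steps ``1'' and ``2.'' So your argument is complete in outline, is the same argument in substance, and owes only those routine computations in the converse direction.
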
 

\begin{proof} Let us identify both conditions of Corollary \ref{cor:mu nu gamma} with $B=H$ and $\nu$ and $\gamma$ as above. Namely, taking into account that 
$\nu\circ ({\bf m}_H\otimes Id_V)(h\otimes h'\otimes v)=\nu(hh'\otimes v)=(hh')_{(1)}(v)\otimes  (hh')_{(2)}$,
$$(Id_H\otimes {\bf m}_H)\circ (\nu\otimes Id_H)\circ (Id_H\otimes \nu)((h\otimes h'\otimes v))=(hh')_{(1)}(v)\otimes  (hh')_{(2)}\ ,$$
the first condition of Corollary \ref{cor:mu nu gamma} reads
\begin{equation}
\label{eq:nu property H}
(hh')_{(1)}(v)\otimes  (hh')_{(2)}=h_{(1)}(h'_{(1)}(v))\otimes h_{(2)}h'_{(2)}
\end{equation}
for all $h,h'\in T(H)$, $v\in V$.


Furthermore, taking into account that $\beta\circ ({\bf m}_H\otimes Id_V)(h\otimes h'\otimes v)=\beta(hh'\otimes v)$,
$${\bf m}_H\circ (Id_H\otimes \beta)(h\otimes h'\otimes v)=h\beta(h'\otimes h)\ ,$$
$${\bf m}_H\circ (\beta\otimes Id_H)\circ (Id_H\otimes \nu)(h\otimes h'\otimes v)
={\bf m}_H\circ (\beta\otimes Id_H)(h\otimes h'_{(1)}(v)\otimes h'_{(2)})
=\beta(h\otimes h'_{(1)}(v))h'_{(2)}\ ,$$
the second condition of Corollary \ref{cor:mu nu gamma} reads
\begin{equation}
\label{eq:beta property H}
\beta(hh'\otimes v)=h\beta(h'\otimes v)+\beta(h\otimes h'_{(1)}(v))h'_{(2)}
\end{equation}
for all $h,h'\in H$, $v\in V$.

%


Let us show that \eqref{eq:nu property H} is equivalent to 
\begin{equation}
\label{eq:action H}
(hh')(v)=h(h'(v))
\end{equation}
for $h\in H$, $v\in V$.

Indeed, multiplying both sides of \eqref{eq:nu property H} by $S((hh')_{(3)})=S(h'_{(3)})S(h_{(3)})$ on the right we obtain 
\eqref{eq:action H} after cancellations. Conversely, by acting with the first factor of 
$\Delta(hh')=(hh')_{(1)}\otimes (hh')_{(2)}=h_{(1)}h'_{(1)}\otimes h_{(2)}h'_{(2)}$ on $v$ and using \eqref{eq:action H},  we obtain \eqref{eq:nu property H}. Thus, \eqref{eq:action H} and \eqref{eq:unit gamma} assert that $V$ is an $H$-module (and vice versa).

Finally, let us show that \eqref{eq:beta property H} is equivalent to \eqref{eq:gamma property H}.

1. \eqref{eq:beta property H} $=>$ \eqref{eq:gamma property H}. Since $\beta(h\otimes v)=\gamma(h_{(1)},v)h_{(2)}$, 
\eqref{eq:beta property H} becomes:
$$\gamma((hh')_{(1)},v)(hh')_{(2)}=h\gamma(h'_{(1)},v)h'_{(2)}+\gamma(h_{(1)},h'_{(1)}(v))h_{(2)}h'_{(2)}\ .$$
Multiplying both sides  by $S((hh')_{(3)})=S(h'_{(3)})S(h_{(3)})$ on the right, we obtain after cancellations
$$\gamma(hh',v)=h_{(1)}\gamma(h'_{(1)},v)\varepsilon(h_{(2)})h'_{(2)}S(h'_{(3)})S(h_{(3)})+\gamma(h_{(1)},h'_{(1)}(v))h_{(2)}h'_{(2)}S(h'_{(3)})S(h_{(3)})$$
$=h_{(1)}\gamma(h',v)S(h_{(2)})+\gamma(h,h'(v))$, 
which coincides with \eqref{eq:gamma property H}. 

2. \eqref{eq:gamma property H} $=>$ \eqref{eq:beta property H}. Since $\gamma(h,v)=\beta(h_{(1)}\otimes v)S(h_{(2)})$, 
\eqref{eq:gamma property H} becomes:
$$\beta((hh')_{(1)}\otimes v)S((hh')_{(2)})=\beta(h_{(1)}\otimes h'(v))S(h_{(2)})+h\act (\beta(h'_{(1)}\otimes v)S(h'_{(2)}))\ .$$
Multiplying both sides  by $(hh')_{(3)}=h_{(3)}h'_{(3)}$, we obtain after cancellations
$$\beta(hh'\otimes v)=\beta(h_{(1)}\otimes h'(v))S(h_{(2)})h_{(3)}h'_{(3)}+h_{(1)}\cdot \beta(h'_{(1)}\otimes v)S(h'_{(2)})\cdot S(h_{(2)}) h_{(3)}h'_{(3)}$$
$=\beta(h\otimes h'(v))+h\beta(h'_{(1)}\otimes v)S(h'_{(2)})$, 
which coincides with \eqref{eq:beta property H}. 

The proposition is proved.
\end{proof}

It is well-known that if $\gamma=0$, then ${\bf H}_\gamma$ is a Hopf algebra. Now we provide sufficient conditions on $\gamma$ (one can show that they are also necessary) for ${\bf H}_\gamma$ to be a Hopf algebra.

\begin{proposition} 
\label{pr:deformed nichols}
Let $H$ be a Hopf algebra over $R$,  $V$ be a $T(H)$-module, and $\gamma:H\times V\to H$ be an $R$-bilinear map satisfying \eqref{eq:unit gamma}. 
Suppose that: 

$\bullet$  $V$ 
has 
an $H$-coaction $\delta:V\to H\otimes V$ ($\delta(v)=v^{(-1)}\otimes v^{(0)}$ in a Sweedler-like notation) such that for all $v\in V$, $h\in H$ the Yetter-Drinfeld condition (see e.g., \cite[Section 1.2]{AS}) holds:
\begin{equation}
\label{eq:YD compatibility}
\delta(h(v))=h_{(1)}v^{(-1)}S(h_{(3)})\otimes h_{(2)}(v^{(0)})\ .
\end{equation}

$\bullet$ $\Delta(\gamma(h,v))=\gamma(h,v)\otimes 1+h_{(1)}v^{(-1)}S(h_{(3)})\otimes \gamma(h_{(2)},v^{(0)})$
and $\varepsilon(\gamma(h,v))=0$ for $v\in V$, $h\in H$.

\noindent Then ${\bf H}_\gamma$ is a Hopf algebra with the coproduct, counit, and the antipode extending those in $H$ and determined by (for $h\in H$, $v\in V$):
$$\Delta(v)=v\otimes 1+\delta(v)=v\otimes 1 +v^{(-1)}\otimes v^{(0)},~\varepsilon(v)=0,~ S(v)=-S(v^{(-1)})v^{(0)}$$

\end{proposition}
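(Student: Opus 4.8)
The plan is to verify directly that the proposed $\Delta$, $\varepsilon$, and $S$ are well-defined on ${\bf H}_\gamma$ and satisfy the Hopf algebra axioms, building on the fact that $H$ is already a Hopf algebra. The key structural input is that ${\bf H}_\gamma$ is generated by $H$ together with $V$ subject only to the relations \eqref{eq:deformed cross product hopf}. So the entire construction reduces to two tasks: first, showing that each of the three maps is compatible with those defining relations (so that it descends to an algebra or anti-algebra map on the quotient), and second, checking the comultiplication axioms (coassociativity, counit) on generators and the antipode axiom, all of which then propagate automatically by the multiplicativity of $\Delta$ and $\varepsilon$ and the anti-multiplicativity of $S$.

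First I would check \emph{coassociativity and counit on generators}. For $v\in V$ we have $\Delta(v)=v\otimes 1+v^{(-1)}\otimes v^{(0)}$. Applying $\Delta\otimes \mathrm{id}$ and $\mathrm{id}\otimes \Delta$ and using the coassociativity of the coaction $\delta$ (a standard consequence of $V$ being an $H$-comodule, which is built into the Yetter--Drinfeld datum) shows both iterated coproducts equal $v\otimes 1\otimes 1 + v^{(-2)}\otimes v^{(-1)}\otimes v^{(0)}$ where I use the coassociativity $(\Delta\otimes\mathrm{id})\delta = (\mathrm{id}\otimes \delta)\delta$. The counit axiom $(\varepsilon\otimes\mathrm{id})\Delta(v)=v$ follows from $\varepsilon(v^{(-1)})v^{(0)}=v$, again a comodule axiom. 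Since $\Delta$ and $\varepsilon$ restrict to the given Hopf structure on $H$ and are algebra homomorphisms, coassociativity and counit on all of ${\bf H}_\gamma$ follow.

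The \emph{main obstacle} is the well-definedness step: I must confirm that $\Delta$, applied to both sides of relation \eqref{eq:deformed cross product hopf}, yields equal elements of ${\bf H}_\gamma\otimes {\bf H}_\gamma$. This is precisely where the two hypotheses on $\gamma$ and the Yetter--Drinfeld condition \eqref{eq:YD compatibility} are used. Expanding $\Delta(h_{(1)}\cdot v\cdot S(h_{(2)}))$ as a product in ${\bf H}_\gamma\otimes {\bf H}_\gamma$ and collecting the two tensor-components, one component must reproduce $\Delta$ of the ``$h(v)$'' term and the other $\Delta$ of ``$\gamma(h,v)$''. The first factor reorganizes into $h_{(1)}v^{(-1)}S(h_{(3)})\otimes h_{(2)}(v^{(0)})$, which by \eqref{eq:YD compatibility} equals $\delta(h(v))$, matching the coaction part of $\Delta(h(v))$; the residual $h(v)\otimes 1$ piece together with the stated coproduct formula for $\gamma(h,v)$ (the first bulleted hypothesis) accounts for the remaining terms. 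I expect this to be the longest computation, requiring careful Sweedler bookkeeping and the antipode identities $h_{(1)}S(h_{(2)})=\varepsilon(h)=S(h_{(1)})h_{(2)}$; the hypothesis $\varepsilon(\gamma(h,v))=0$ ensures consistency with the counit relation applied to \eqref{eq:deformed cross product hopf}.

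Finally I would treat the \emph{antipode}. Since $S$ is declared an anti-automorphism extending the antipode of $H$, I first check it respects relation \eqref{eq:deformed cross product hopf} (applying $S$ reverses the product and the defining relation must be preserved, which is again a Yetter--Drinfeld/$\gamma$-compatibility verification dual to the one above). Then I verify the antipode axiom $m\circ(S\otimes\mathrm{id})\circ\Delta = \eta\circ\varepsilon = m\circ(\mathrm{id}\otimes S)\circ\Delta$ on the generators $v\in V$: using $\Delta(v)=v\otimes 1+v^{(-1)}\otimes v^{(0)}$ and $S(v)=-S(v^{(-1)})v^{(0)}$, the sum $S(v)\cdot 1 + S(v^{(-1)})\cdot v^{(0)}$ telescopes to $0=\varepsilon(v)$, and symmetrically for the other convolution. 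Because $S$ is anti-multiplicative and agrees with the antipode axiom on both $H$ and $V$, it satisfies the axiom on all products, completing the proof that ${\bf H}_\gamma$ is a Hopf algebra.
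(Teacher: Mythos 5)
Your proposal is correct in outline and its central computation coincides with the paper's, but the packaging differs in a way worth noting. The paper first establishes (Lemma \ref{le:free Hopf}) that the free product $H*T(V)$ is a Hopf algebra with the stated $\Delta$, $\varepsilon$, $S$ — this is exactly your ``coassociativity, counit and antipode axiom on generators'' step. It then forms the submodule ${\bf K}_\gamma$ spanned by $1$ and the elements $\delta_{h,v}=h_{(1)}vS(h_{(2)})-h(v)-\gamma(h,v)$ and proves (Lemma \ref{le:Kgamma}) that $\Delta(\delta_{h,v})=\delta_{h,v}\otimes 1+h_{(1)}v^{(-1)}S(h_{(3)})\otimes\delta_{h_{(2)},v^{(0)}}$, which is precisely the Sweedler computation you identify as the main obstacle, using the Yetter--Drinfeld condition and both hypotheses on $\gamma$. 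The real divergence is the antipode: you propose to verify separately that $S$ respects the relation \eqref{eq:deformed cross product hopf}, calling it ``dual'' to the coproduct check, whereas the paper gets this for free from the general Proposition \ref{pr:from coideal to hopf ideal} — once ${\bf K}_\gamma$ is known to be a left coideal, applying $m\circ(S\otimes 1)$ to $\Delta(k)-k\otimes 1\in{\bf H}\otimes{\bf K}_\gamma^+$ yields $S({\bf K}_\gamma^+)\subset{\bf H}\cdot{\bf K}_\gamma^+$, so the generated ideal is automatically a Hopf ideal with no further computation. Your direct route is feasible but the antipode check is not merely dual to the coproduct one: $S(h_{(1)}vS(h_{(2)}))=S^2(h_{(2)})S(v)S(h_{(1)})$ with $S(v)=-S(v^{(-1)})v^{(0)}$ leads to a genuinely separate Sweedler manipulation that you would need to carry out in full, and you should only claim membership of $S(\delta_{h,v})$ in the ideal, not preservation of the relation on the nose. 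The coideal formalism is what lets the paper skip this entirely, and I would recommend adopting it.
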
  

\begin{proof} We need the following general result.
\begin{lemma}
\label{le:free Hopf} Let $H$ be a Hopf algebra over $R$
and let  $V$ be a left comodule over $H$ (i.e., one has a co-associative and co-unital linear map $\delta:V\to H\otimes V$).
Then the free product  of $R$-algebras ${\bf H}:=H*T(V)$  is  a Hopf algebra over $R$ with the coproduct, 
counit, and the antipode extending those on $H$ and determined by (for $h\in H$, $v\in V$):
$$\Delta(v)=v\otimes 1+\delta(v)=v\otimes 1 +v^{(-1)}\otimes v^{(0)},~\varepsilon(v)=0,~ S(v)=-S(v^{(-1)})v^{(0)}\ .$$
\end{lemma}

\begin{proof} Indeed,  each element $x\in {\bf H}$ can be written as sum of elements of the form:
$$x=h_0v_1h_1\cdots v_kh_k\ ,$$
where $h_0,h_1,\ldots,h_k\in H$, $v_1,\ldots,v_k\in V$, $k\ge 0$ (with the convention $x=h_0$ if $k=0$).
By setting 

$x\mapsto  \Delta(x)=\Delta(h_0)(v_1\otimes 1+\delta(v_1))\Delta(h_1)\cdots (v_k\otimes 1+\delta(v_k))\Delta(h_k),~x\mapsto  \varepsilon(x)=
\begin{cases} 
\varepsilon(h_0) & \text{if $k=0$}\\
0 & \text{if $k>0$}\\
\end{cases},
$
$$x\mapsto  S(x)=S(h_k)(-S(v_k^{(-1)})v_k^{(0)})S(h_1)\cdots (-S(v_1^{(-1)})v_1^{(0)})S(h_k)$$
one has  well-defined $R$-linear maps $ \Delta:{\bf H}\to {\bf H}\otimes {\bf H}$,  $ \varepsilon:{\bf H}\to R$, and $  S:{\bf H}\to {\bf H}$, respectively. 

Clearly, $\Delta$ is a homomorphism of algebras. Therefore, it suffices to verify the remaining compatibility conditions only on generators $v\in V$. Indeed:
$$(m\circ (\varepsilon\otimes 1)\circ \Delta)(v)=(m\circ (\varepsilon\otimes 1))(v\otimes 1+\delta(v))=(m\circ (\varepsilon\otimes 1)\circ \delta)(v)=v\ ,$$
$$(m\circ (1 \otimes \varepsilon)\circ \Delta)(v)=(m\circ (1 \otimes \varepsilon))(v\otimes 1+\delta(v))=m(v\otimes 1)=v \ ,$$
$$(m\circ (S\otimes 1)\circ \Delta)(v)=(m\circ (S\otimes 1)(v\otimes 1+v^{(-1)}\otimes v^{(0)})=S(v)+S(v^{(-1)})v^{(0)}=0=\varepsilon(v)\ ,$$ 
$$(m\circ (1\otimes S)\circ \Delta)(v)=(m\circ (1\otimes S)(v\otimes 1+v^{(-1)}\otimes v^{(0)})=v+v^{(-1)}S(v^{(0)})$$
$$=v-v^{(-2)}S(v^{(-1)})v^{(0)}=v-\varepsilon(v^{(-1)})v^{(0)}=0=\varepsilon(v)\ .$$

This finished the proof of the lemma.
\end{proof} 

Furthermore, let ${\bf K}_\gamma$ be the $R$-submodule of ${\bf H}=H*T(V)$ generated by $1$ and
$$\delta_{h,v}:=h_{(1)}\cdot v\cdot S(h_{(2)})-h(v)-\gamma(h,v)$$ for all $h\in H$, $v\in V$.

\begin{lemma}  
\label{le:Kgamma}
In the assumptions of Proposition \ref{pr:deformed nichols}, $\Delta({\bf K}_\gamma)\subset H\otimes {\bf K}_\gamma$, in particular, ${\bf K}_\gamma$ is a left coideal in ${\bf H}=H*T(V)$.
 
\end{lemma}

\begin{proof} 
First, prove that
\begin{equation}
\label{eq:bicomodule over H}
\Delta(\delta_{h,v})=\delta_{h,v}\otimes 1+h_{(1)}v^{(-1)}S(h_{(3)})\otimes \delta_{h_{(2)},v^{(0)}}
\end{equation}
for all $h\in H$, $v\in V$. Indeed, 
$$\Delta(\delta_{h,v})=\Delta(h_{(1)})\cdot \Delta(v)\cdot \Delta(S(h_{(2)}))-\Delta(h(v))-\Delta(\gamma(h,v))$$
$$=\Delta(h_{(1)})\cdot (v\otimes 1+\delta(v))\cdot\Delta(S(h_{(2)}))-h(v)\otimes 1-\delta(h(v))-\Delta(\gamma(h,v))$$
$$=(\delta_{h,v}+\gamma(h,v))\otimes 1+\Delta(h_{(1)})\cdot \delta(v)\cdot\Delta(S(h_{(2)}))-\delta(h(v))-\Delta(\gamma(h,v))$$
$$=\delta_{h,v}\otimes 1+\Delta(h_{(1)})\cdot \delta(v)\cdot\Delta(S(h_{(2)}))-\delta(h(v))-h_{(1)}v^{(-1)}S(h_{(3)})\otimes \gamma(h_{(2)},v^{(0)})$$
$$=\delta_{h,v}\otimes 1+h_{(1)}v^{(-1)}S(h_{(3)})\otimes \delta_{h_{(2)},v^{(0)}}\ ,$$
where we used sequentially:

(1) The fact that $\Delta(h_{(1)})\cdot (v\otimes 1)\cdot\Delta(S(h_{(2)}))=h_{(1)}\cdot v\cdot S(h_{(4)})\otimes h_{(2)}S(h_{(3)})$
$$=h_{(1)}\cdot v\cdot S(h_{(3)})\otimes \varepsilon(h_{(2)})=h_{(1)}\cdot v\cdot S(h_{(2)})\otimes 1=(\delta_{h,v}+h(v)+\gamma(h,v))\otimes 1\ .$$

(2) The second assumption of Proposition \ref{pr:deformed nichols}.
%

(3) The Yetter-Drinfeld condition \eqref{eq:YD compatibility} in the form $\Delta(h_{(1)})\cdot \delta(v)\cdot\Delta(S(h_{(2)}))-\delta(h(v))$
$$=h_{(1)}v^{(-1)}S(h_{(4)})\otimes h_{(2)}\cdot v^{(-1)}\cdot S(h_{(3)})-h_{(1)}v^{(-1)}S(h_{(3)})\otimes h_{(2)}(v^{(0)})$$
$$=h_{(1)}v^{(-1)}S(h_{(3)})\otimes (\delta_{h_{(2)},v^{(0)}}+\gamma(h_{(2)},v^{(0)}))$$
This proves \eqref{eq:bicomodule over H}. 
%
%

The lemma is proved.
\end{proof}

Note that ${\bf K}_\gamma^+:={\bf K}_\gamma\cap Ker~\varepsilon$ is the $R$-submodule of ${\bf H}=H*T(V)$ generated by 
$\delta_{h,v}$, 
$h\in H$, $v\in V$.
In view of Proposition \ref{pr:from coideal to hopf ideal}, this and Lemma \ref{le:Kgamma} guarantee that the ideal ${\bf J}_\gamma$ generated by $\delta_{h,v}$, $h\in H$, $v\in V$, is a Hopf ideal in ${\bf H}$. 
Therefore, $\underline {\bf H}={\bf H}/{\bf J}_{\gamma}$ is a Hopf algebra.

The proposition is proved.
\end{proof}

We conclude the section with some general facts which we will use frequently. 

\begin{lemma} 
\label{le:general factored quotient H}
Let ${\bf H}$ be an $R$-algebra, and $H$, ${\bf D}$  subalgebras of ${\bf H}$ such that ${\bf H}$ factors as ${\bf H}={\bf D}\cdot H$ over $R$ (i.e., 
the multiplication map defines an isomorphism of  $R$-modules
${\bf D}\otimes H\widetilde \longrightarrow {\bf H}$). Let ${\bf K}\subset  {\bf D}$ be an $R$-submodule such that 
$H\cdot {\bf K}\subset {\bf K}\cdot H$. Then the ideal ${\bf J}_{\bf K}$ of $ {\bf H}$ generated by ${\bf K}$ factors as ${\bf I}_{\bf K}\cdot H$, where ${\bf I}_{\bf K}$ is the ideal of $ {\bf D}$ generated by ${\bf K}$ and the quotient algebra $\underline {\bf H}={\bf H}/{\bf J}_{\bf K}$ factors as $\underline {\bf H}=\underline {\bf D}\cdot H$, where $\underline {\bf D}={\bf D}/{\bf I}_{\bf K}$. 
\end{lemma}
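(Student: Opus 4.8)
The plan is to reduce the whole statement to two elementary "straightening" rules and then transport the given multiplication isomorphism ${\bf D}\otimes H\to{\bf H}$ to the ideals involved. The first thing I would record is that the factorization hypothesis already forces $H\cdot{\bf D}\subseteq{\bf D}\cdot H$: indeed $H{\bf D}\subseteq{\bf H}$, and ${\bf H}$ is spanned over $R$ by the products ${\bf D}H$. Combined with the assumed relation $H\cdot{\bf K}\subseteq{\bf K}\cdot H$, these are the only two commutation rules the argument will use. I would also fix the description ${\bf I}_{\bf K}={\bf D}{\bf K}{\bf D}$ of the two-sided ideal of ${\bf D}$ generated by ${\bf K}$ (using that ${\bf D}$ is unital), so that in particular ${\bf K}\subseteq{\bf I}_{\bf K}$.

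Next I would show that ${\bf I}_{\bf K}\cdot H$ is a two-sided ideal of ${\bf H}$. The heart of this is the single chain
$$H{\bf I}_{\bf K}=H{\bf D}{\bf K}{\bf D}\subseteq{\bf D}H{\bf K}{\bf D}\subseteq{\bf D}{\bf K}H{\bf D}\subseteq{\bf D}{\bf K}{\bf D}H={\bf I}_{\bf K}H,$$
whose three inclusions use $H{\bf D}\subseteq{\bf D}H$, then $H{\bf K}\subseteq{\bf K}H$, then $H{\bf D}\subseteq{\bf D}H$ again. Granting $H{\bf I}_{\bf K}\subseteq{\bf I}_{\bf K}H$, the left-ideal property follows from ${\bf H}({\bf I}_{\bf K}H)={\bf D}H{\bf I}_{\bf K}H\subseteq{\bf D}{\bf I}_{\bf K}H\subseteq{\bf I}_{\bf K}H$ (using ${\bf D}{\bf I}_{\bf K}\subseteq{\bf I}_{\bf K}$), and the right-ideal property from $({\bf I}_{\bf K}H){\bf H}={\bf I}_{\bf K}H{\bf D}H\subseteq{\bf I}_{\bf K}{\bf D}H\subseteq{\bf I}_{\bf K}H$ (using $H{\bf D}\subseteq{\bf D}H$ and ${\bf I}_{\bf K}{\bf D}\subseteq{\bf I}_{\bf K}$). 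Since $1\in H$ gives ${\bf K}\subseteq{\bf I}_{\bf K}\subseteq{\bf I}_{\bf K}H$, while conversely ${\bf I}_{\bf K}\subseteq{\bf J}_{\bf K}$ forces ${\bf I}_{\bf K}H\subseteq{\bf J}_{\bf K}$, minimality of ${\bf J}_{\bf K}$ among ideals containing ${\bf K}$ gives the equality ${\bf J}_{\bf K}={\bf I}_{\bf K}\cdot H$.

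Finally I would read off the two factorizations from the multiplication isomorphism $m\colon{\bf D}\otimes H\to{\bf H}$. Because $m$ is injective, the unique preimage of an element $\sum_j a_jh_j\in{\bf I}_{\bf K}H$ is $\sum_j a_j\otimes h_j$, so $m^{-1}({\bf J}_{\bf K})$ is precisely the image of ${\bf I}_{\bf K}\otimes H$ inside ${\bf D}\otimes H$; restricting $m$ then identifies ${\bf J}_{\bf K}$ with ${\bf I}_{\bf K}\cdot H$. Passing to quotients and applying right-exactness of $-\otimes H$ to the sequence $0\to{\bf I}_{\bf K}\to{\bf D}\to\underline{\bf D}\to 0$, I obtain
$$\underline{\bf H}={\bf H}/{\bf J}_{\bf K}\cong({\bf D}\otimes H)/\operatorname{im}({\bf I}_{\bf K}\otimes H)\cong\underline{\bf D}\otimes H,$$
the last isomorphism being induced by $m$, that is, by multiplication; this is exactly the asserted factorization $\underline{\bf H}=\underline{\bf D}\cdot H$.

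I do not expect a serious obstacle here. The only delicate point is that the literal reading of "${\bf I}_{\bf K}\otimes H\to{\bf J}_{\bf K}$ is an isomorphism" presupposes that ${\bf I}_{\bf K}\otimes H\to{\bf D}\otimes H$ is injective, which holds automatically whenever $H$ is $R$-flat, in particular when $H=RW$ is free, as in all intended applications. The quotient factorization $\underline{\bf H}=\underline{\bf D}\cdot H$, by contrast, uses only right-exactness of the tensor product and so requires no flatness hypothesis whatsoever.
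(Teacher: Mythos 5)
Your proof is correct and follows essentially the same route as the paper's: both arguments hinge on pushing $H$ past ${\bf K}$ and ${\bf D}$ via $H{\bf K}\subseteq{\bf K}H$ and $H{\bf D}\subseteq{\bf D}H$ to identify ${\bf J}_{\bf K}$ with ${\bf I}_{\bf K}\cdot H={\bf D}{\bf K}{\bf D}\cdot H$, and then read off the quotient factorization from the multiplication isomorphism. You merely organize the first step as "${\bf I}_{\bf K}H$ is an ideal containing ${\bf K}$, hence equals ${\bf J}_{\bf K}$ by minimality" where the paper expands ${\bf J}_{\bf K}={\bf D}H{\bf K}{\bf D}H$ directly, and you supply the right-exactness justification for $({\bf D}\otimes H)/\operatorname{im}({\bf I}_{\bf K}\otimes H)\cong\underline{\bf D}\otimes H$ that the paper leaves implicit; neither change alters the substance.
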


\begin{proof} Indeed, ${\bf J}_{\bf K}={\bf D}\cdot H\cdot {\bf K}\cdot {\bf D}\cdot H\subset {\bf D}\cdot {\bf K}\cdot H\cdot {\bf D}\cdot H={\bf D}\cdot {\bf K}\cdot  {\bf D}\cdot H={\bf I}_{\bf K}\cdot H$
(because ${\bf I}_{\bf K}= {\bf D}\cdot {\bf K}\cdot {\bf D}$). The opposite inclusion is obvious, 
therefore, ${\bf J}_{\bf K}={\bf I}_{\bf K}\cdot H$.

Finally,
$\underline {\bf H}={\bf H}/{\bf J}_{\bf K}=  ({\bf D}\cdot H)/({\bf I}_{\bf K}\cdot H)=
({\bf D}/{\bf I}_{\bf K})\cdot H=\underline {\bf D}\cdot H$
as an $R$-module.

The lemma is proved.
\end{proof}

In some cases, we can describe ${\bf K}(H,{\bf D})$ explicitly.

\begin{lemma} 
 \label{le:factored derivatives}
Let $W$ be a group. Suppose that ${\bf H}$ is an $R$-algebra which factors as 
${\bf H}={\bf D}\cdot R W$ over $R$,  where ${\bf D}$ is a subalgebra of ${\bf H}$. Then, in the notation of Proposition \ref{pr:factored derivatives}, one has (where the $R W$-action on ${\bf H}$ is given by conjugation):
${\bf K}(RW,{\bf D})=\bigcap\limits_{w,w'\in W:w\ne w'} Ker~\partial_{w,w'}$.
Furthermore, $\partial_{w,w}(x)=wxw^{-1}$ for all $x\in {\bf D}$.

\end{lemma}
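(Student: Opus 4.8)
The plan is to reduce the statement to the uniqueness of the factorization ${\bf D}\otimes RW\widetilde\longrightarrow{\bf H}$, everything else being a matter of unwinding definitions. First I would describe the two sides explicitly. For the $RW$-action by conjugation from Lemma \ref{le:action conjugation}, note that every $w\in W$ is group-like in $RW$ (so $\Delta(w)=w\otimes w$, $S(w)=w^{-1}$), whence the adjoint action is literally $w\act x=w_{(1)}\,x\,S(w_{(2)})=wxw^{-1}$. Therefore \eqref{eq:general K defined} gives
$${\bf K}(RW,{\bf D})=\{x\in{\bf D}\mid wxw^{-1}\in{\bf D}\ \text{ for all }w\in W\}.$$
On the other side, I would recall from Proposition \ref{pr:factored derivatives} that the $\partial_{w,w'}$ are the coordinate maps of conjugation attached to the factorization: for $x\in{\bf D}$ one expands the conjugate $wxw^{-1}\in{\bf H}={\bf D}\cdot RW$ uniquely, the diagonal $\partial_{w,w}$ being conjugation by $w$ itself and the off-diagonal $\partial_{w,w'}(x)$ (for $w'\neq w$) being the ${\bf D}$-component of $wxw^{-1}$ along the group element $w'w^{-1}\neq 1$.

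The heart of the argument is then a single equivalence, for fixed $x\in{\bf D}$ and $w\in W$: one has $wxw^{-1}\in{\bf D}$ if and only if $\partial_{w,w'}(x)=0$ for every $w'\neq w$. To see this I would write $wxw^{-1}=\sum_{v\in W}c_v\,v$ with $c_v\in{\bf D}$ and almost all $c_v=0$; by the hypothesis that the multiplication map ${\bf D}\otimes RW\to{\bf H}$ is an isomorphism this expansion is \emph{unique}, so membership in ${\bf D}={\bf D}\cdot 1$ is equivalent to the vanishing of $c_v$ for all $v\neq 1$. Since $v=w'w^{-1}$ runs over $W\setminus\{1\}$ exactly as $w'$ runs over $W\setminus\{w\}$, this is precisely the vanishing of all off-diagonal $\partial_{w,w'}(x)$. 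This is the only step where the factorization hypothesis is genuinely used.

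Intersecting this equivalence over all $w\in W$ immediately yields ${\bf K}(RW,{\bf D})=\bigcap_{w\neq w'}Ker~\partial_{w,w'}$, as asserted. The final clause is then immediate: by the very construction of the coordinate maps the diagonal term is conjugation, so $\partial_{w,w}(x)=wxw^{-1}$ for all $x\in{\bf D}$, with no cancellation and no appeal to membership in ${\bf D}$ required. I do not expect a serious obstacle here: the content is essentially bookkeeping, and the one real input is the uniqueness (freeness) of ${\bf D}\otimes RW$, which licenses reading off components. The only care-point is to match the indexing convention of Proposition \ref{pr:factored derivatives} so that the condition ``$w'\neq w$'' corresponds exactly to the non-identity group components $w'w^{-1}\neq 1$; once this is aligned, both assertions fall out of the displayed equivalence.
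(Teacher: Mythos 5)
Your proof of the main equality ${\bf K}(RW,{\bf D})=\bigcap_{w\ne w'}\mathrm{Ker}\,\partial_{w,w'}$ is exactly the paper's argument: rewrite $wxw^{-1}=\sum_{w'}\partial_{w,w'}(x)\,w'w^{-1}$ and use the uniqueness coming from the isomorphism ${\bf D}\otimes RW\widetilde\longrightarrow{\bf H}$ to read off that membership in ${\bf D}$ is equivalent to the vanishing of all off-diagonal components. That part is fine.

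The final clause is where your justification breaks. You assert that $\partial_{w,w}(x)=wxw^{-1}$ holds ``by the very construction of the coordinate maps \ldots with no cancellation and no appeal to membership in ${\bf D}$ required.'' By construction $\partial_{w,w}(x)$ is only the coefficient of the identity in the expansion $wxw^{-1}=\sum_{w'}\partial_{w,w'}(x)\,w'w^{-1}$; it equals $wxw^{-1}$ precisely when the off-diagonal coefficients vanish, i.e.\ when $wxw^{-1}\in{\bf D}$ (note $\partial_{w,w}(x)$ always lies in ${\bf D}$ while $wxw^{-1}$ in general does not). A concrete failure in $\hat{\bf H}(W)$: from $s_iD_i+D_is_i=s_i-1$ one gets $s_iD_i=(1-D_i)s_i-1$, so $\partial_{s_i,s_i}(D_i)=1-D_i$, whereas $s_iD_is_i=1-D_i-s_i$; the two differ by the off-diagonal term $\partial_{s_i,1}(D_i)\,s_i^{-1}=-s_i$. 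The paper's own proof hedges exactly here (``\ldots in which case $wxw^{-1}=\partial_{w,w}(x)$''), and the clause is only ever applied to $x\in{\bf K}(RW,{\bf D})$ later in the paper. So you should state and prove the diagonal identity for $x\in{\bf K}(RW,{\bf D})$ (equivalently, under the vanishing of the off-diagonal components you have just characterized), not for arbitrary $x\in{\bf D}$.
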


\begin{proof} Indeed, writing \eqref{eq:factored derivatives} in the form:
$w x w^{-1}=\sum_{w,w'\in W} \partial_{w,w'}(x)w'w^{-1}$
for $w\in W$, $x\in {\bf D}$, 
we see that $w x w^{-1}\in {\bf D}$ iff $\partial_{w,w'}(x)=0$ for all $w'\ne w$, in which case $w x w^{-1}=\partial_{w,w}(x)$.

The lemma is proved.
\end{proof}

\section{Generalized Nichols algebras and symmetries of Hecke-Hopf algebras}
\label{sec:bosonized}

Let $W$ be a monoid and let ${\mathcal R}\subset W\times W$ be a preorder on $W$ such that $(h,1)\in {\mathcal R}$ iff $h=1$.
We say that $W$ is {\it ${\mathcal R}$-finite} if $W_g=\{w\in W\,|\,(w,g)\in {\mathcal R}\}$ is finite. 

Clearly, any finite monoid is ${\mathcal R}$-finite with ${\mathcal R}=W\times (W\setminus \{1\})\cup \{(1,1)\}$. Also any Coxeter group $W$ is ${\mathcal R}$-finite with ${\mathcal R}$ being a Bruhat order on $W$.


Given an ${\mathcal R}$-finite monoid $W$, define the algebra ${\bf B}(W,{\mathcal R})$ over $\ZZ$ to be generated by $d_{g,h}$, $g,h\in W$  subject to relations $d_{g,w}=0$ if $(w,g)\notin {\mathcal R}$,
$d_{1,1}=1$ and:
\begin{equation}
\label{eq:defining relations universal Nichols}
d_{gh,w}=\sum_{w_1,w_2\in W:w_1w_2=w} d_{g,w_1} d_{h,w_2}
\end{equation}
for all $g,h\in W$, $w\in W$.


\begin{proposition} 
\label{pr:bialgebra W}
For any ${\mathcal R}$-finite monoid $W$ one has:

(a) the algebra  ${\bf B}(W,{\mathcal R})$ is a bialgebra with the coproduct $\Delta$ and the counit $\varepsilon$ given respectively by (for all $g,h\in W$):
\begin{equation}
\label{eq:universal Nichols}
\Delta(d_{g,h})=\sum_{w\in W} d_{g,w}\otimes d_{w,h},~\varepsilon(d_{g,h})=\delta_{g,h}
\end{equation}

(b) Suppose that $\varphi$ is any anti-automorphism of $W$ such that $(\varphi\times \varphi)({\mathcal R})={\mathcal R}$. Then the assignments $d_{g,h}\mapsto d_{\varphi(g),\varphi(h)}$, $g,h\in W$ define an anti-automorphism $\varphi^*$ of ${\bf B}(W,{\mathcal R})$ such that $(\varphi^*\otimes \varphi^*)\circ \Delta=\Delta\circ \varphi^*$ and $\varepsilon\circ \varphi^*=\varepsilon$.

\end{proposition}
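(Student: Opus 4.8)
The plan for (a) is to define $\Delta$ and $\varepsilon$ on the generators $d_{g,h}$ by the formulas \eqref{eq:universal Nichols} and to verify that these assignments respect the three families of defining relations of ${\bf B}(W,{\mathcal R})$; this automatically produces algebra homomorphisms $\Delta:{\bf B}(W,{\mathcal R})\to {\bf B}(W,{\mathcal R})\otimes {\bf B}(W,{\mathcal R})$ and $\varepsilon:{\bf B}(W,{\mathcal R})\to\ZZ$. First I would observe that ${\mathcal R}$-finiteness makes all the sums finite: since $d_{g,w}=0$ unless $w\in W_g$, each sum $\sum_{w} d_{g,w}\otimes d_{w,h}$ has only finitely many nonzero terms, so $\Delta$ lands in the algebraic tensor product.

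For $\Delta$ I would check the relations one at a time. The relation $d_{g,w}=0$ for $(w,g)\notin {\mathcal R}$ is preserved because every term $d_{g,u}\otimes d_{u,w}$ of $\Delta(d_{g,w})$ requires both $(u,g)\in {\mathcal R}$ and $(w,u)\in {\mathcal R}$, whence $(w,g)\in {\mathcal R}$ by transitivity of the preorder. The relation $d_{1,1}=1$ is preserved because $d_{1,w}\otimes d_{w,1}$ survives only for $(w,1)\in {\mathcal R}$, i.e. $w=1$ by the hypothesis $(h,1)\in {\mathcal R}\Leftrightarrow h=1$, leaving $d_{1,1}\otimes d_{1,1}$. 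Relation \eqref{eq:defining relations universal Nichols} is preserved by direct expansion: applying \eqref{eq:defining relations universal Nichols} twice inside $\Delta(d_{gh,w})=\sum_u d_{gh,u}\otimes d_{u,w}$ (once to $d_{gh,u}$ and once to the second tensor factor) and reindexing the resulting nested sum over factorizations so that it matches $\sum_{w_1w_2=w}\Delta(d_{g,w_1})\Delta(d_{h,w_2})$. For $\varepsilon$ the relation $d_{g,w}=0$ is preserved since reflexivity of ${\mathcal R}$ forces $g\ne w$, hence $\delta_{g,w}=0$, while $d_{1,1}=1$ and \eqref{eq:defining relations universal Nichols} are immediate from the behaviour of $\delta_{g,h}$. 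Coassociativity and counitality then follow by evaluating both (algebra-homomorphism) sides on the generators: both $(\Delta\otimes 1)\Delta(d_{g,h})$ and $(1\otimes\Delta)\Delta(d_{g,h})$ equal $\sum_{u,v}d_{g,u}\otimes d_{u,v}\otimes d_{v,h}$, and $(\varepsilon\otimes 1)\Delta=(1\otimes\varepsilon)\Delta=\mathrm{id}$ on $d_{g,h}$ since the Kronecker delta collapses the middle summation.

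For (b), since $\varphi$ is a monoid anti-automorphism we have $\varphi(1)=1$ and $\varphi(gh)=\varphi(h)\varphi(g)$, and $(\varphi\times\varphi)({\mathcal R})={\mathcal R}$ gives $(a,b)\in {\mathcal R}\Leftrightarrow(\varphi(a),\varphi(b))\in {\mathcal R}$. I would define $\varphi^*$ on generators by $d_{g,h}\mapsto d_{\varphi(g),\varphi(h)}$ and check that it respects the defining relations in the anti-multiplicative sense, so that it extends to an algebra anti-homomorphism. The only nontrivial check is \eqref{eq:defining relations universal Nichols}, where the substitution $(w_1,w_2)\mapsto(\varphi(w_2),\varphi(w_1))$ is a bijection between factorizations of $w$ and of $\varphi(w)$, matching the two sides. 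Bijectivity of $\varphi^*$, with inverse $(\varphi^{-1})^*$, makes it an anti-automorphism. Finally the compatibilities are checked on generators: since $w\mapsto\varphi(w)$ is a bijection of $W$, reindexing the middle variable gives $(\varphi^*\otimes\varphi^*)\Delta(d_{g,h})=\sum_u d_{\varphi(g),u}\otimes d_{u,\varphi(h)}=\Delta(d_{\varphi(g),\varphi(h)})=\Delta\varphi^*(d_{g,h})$ (both sides being anti-homomorphisms, agreement on generators suffices), and $\varepsilon\varphi^*(d_{g,h})=\delta_{\varphi(g),\varphi(h)}=\delta_{g,h}=\varepsilon(d_{g,h})$ by injectivity of $\varphi$.

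The main obstacle I anticipate is the verification that $\Delta$ respects \eqref{eq:defining relations universal Nichols}: it requires applying that relation twice and then carefully reorganizing the nested sums over monoid factorizations into the product $\sum_{w_1w_2=w}\Delta(d_{g,w_1})\Delta(d_{h,w_2})$; the analogous reindexing, now reversed by the anti-automorphism, is the crux of (b) as well. Everything else is bookkeeping with the Kronecker delta and with the order-theoretic hypotheses (reflexivity, transitivity, and $(h,1)\in{\mathcal R}\Leftrightarrow h=1$) on ${\mathcal R}$.
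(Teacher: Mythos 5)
Your proposal is correct and follows essentially the same route as the paper: the paper packages the identical computation by first making the free $\ZZ$-module on the $d_{g,h}$ a coalgebra (so the tensor algebra is a bialgebra) and then showing that the span of the elements $\delta_{g,h;w}=d_{gh,w}-\sum_{w_1w_2=w}d_{g,w_1}d_{h,w_2}$, together with $d_{1,1}-1$, is a two-sided coideal annihilated by $\varepsilon$ --- which is exactly your check that $\Delta$ and $\varepsilon$ respect the defining relations. The reindexing of nested sums over factorizations that you flag as the crux is precisely the paper's displayed computation of $\Delta(\delta_{g,h;w})$, and your part (b) matches its verification that $\hat\varphi^*(\delta_{g,h;w})=\delta_{\varphi(h),\varphi(g);\varphi(w)}$.
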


\begin{proof} 
Prove (a). Let  $U({\mathcal R})$ be the free $\ZZ$-module with the free basis $d_{g,h}$, $(g,h)\in {\mathcal R}$. 
The following is immediate.

\begin{lemma} 
$U({\mathcal R})$  is a coalgebra with the coproduct and the counit given by \eqref{eq:universal Nichols}.

\end{lemma}

This implies that the tensor algebra $T(U({\mathcal R}))$ is naturally a bialgebra. 
Denote by $\hat {\bf B}(W,{\mathcal R})$ the quotient of $T(U({\mathcal R}))$ by the ideal $J$ generated by $d_{1,1}-1$. Since 
$\Delta(d_{1,1}-1)=d_{1,1}\otimes d_{1,1}-1\otimes 1=(d_{11}-1)\otimes d_{11}+1\otimes (d_{1,1}-1)$ and $\varepsilon(d_{1,1}-1)=0$, 
$J$ is a bi-ideal hence $\hat {\bf B}(W,{\mathcal R})$ is a bialgebra.

For each $g,h\in W$ and $w\in W$ define elements $\delta_{g,h;w}\in \hat {\bf B}(W,{\mathcal R})$ by:
$$\delta_{g,h;w}:=d_{gh,w}-\sum_{w_1,w_2\in W:w_1w_2=w} d_{g,w_1} d_{h,w_2}\ .$$
Denote by ${\bf K}={\bf K}(W,{\mathcal R})$ the $\ZZ$-submodule $\sum\limits_{g,h,w\in W} \ZZ\cdot \delta_{g,h;w}$ of $\hat {\bf B}(W,{\mathcal R})$.

\begin{lemma} 
\label{le:K two-sided coideal}
${\bf K}(W,{\mathcal R})$ is a two-sided coideal in $\hat {\bf B}(W,{\mathcal R})$.

\end{lemma} 

\begin{proof} Indeed, $\Delta(\delta_{g,h;w})=\sum\limits_{w'\in W} d_{gh,w'}\otimes d_{w',w}-
\sum\limits_{w_1,w_2,w'_1,w'_2\in W} d_{g,w'_1}d_{h,w'_2}\otimes d_{w'_1,w_1}d_{w'_2,w_2}$ 
 %
%
$$=\sum_{w'\in W}\delta_{g,h;w'}\otimes d_{w',w} +\sum\limits_{w'_1,w'_2\in W} d_{g,w'_1} d_{h,w'_2}\otimes d_{w'_1w'_2,w}-\sum_{w_1,w_2,w'_1,w'_2\in W} d_{g,w'_1}d_{h,w'_2}\otimes d_{w'_1,w_1}d_{w'_2,w_2}$$
$$=\sum_{w'}\delta_{g,h;w'}\otimes d_{w',w} +\sum\limits_{w'_1,w'_2\in W}  d_{g,w'_1}d_{h,w'_2}\otimes \delta_{w'_1,w'_2;w}\ .$$
where we used that $d_{gh,w'}=\delta_{g,h;w'}+\sum\limits_{w'_1,w'_2\in W:w'_1w'_2=w'} d_{g,w'_1} d_{h,w'_2}$.

This proves that $\Delta({\bf K})\subset\hat {\bf B}(W,{\mathcal R})\otimes {\bf K}+{\bf K}\otimes \hat {\bf B}(W,{\mathcal R})$. It remains to show that $\varepsilon({\bf K})=0$. We have
$\varepsilon(\delta_{g,h;w})=\delta_{gh,w}-\sum\limits_{w_1,w_2\in W, w_1w_2=w} \delta_{g,w_1}\delta_{h,w_2}=\delta_{gh,w}-\delta_{gh,w}=0$ 
for all $g,h,w\in W$. 

The lemma is proved.
\end{proof}

Denote by ${\bf J}$ the ideal of $\hat {\bf B}(W,{\mathcal R})$ generated by ${\bf K}={\bf K}(W,{\mathcal R})$. Let us show that ${\bf J}$ is a bi-ideal in $\hat {\bf B}(W,{\mathcal R})$. Lemma \ref{le:K two-sided coideal} implies that 
$\varepsilon({\bf J})=0$ and:  
$$\Delta({\bf J})\subset (\hat {\bf B}(W,{\mathcal R})\otimes \hat {\bf B}(W,{\mathcal R}))\cdot (\hat {\bf B}(W,{\mathcal R})\otimes {\bf K}+{\bf K}\otimes \hat {\bf B}(W,{\mathcal R}))\cdot (\hat {\bf B}(W,{\mathcal R})\otimes \hat {\bf B}(W,{\mathcal R}))$$
$$\subset \hat {\bf B}(W,{\mathcal R})\otimes {\bf K}+{\bf K}\otimes \hat {\bf B}(W,{\mathcal R}) \ .$$

Finally, since ${\bf B}(W,{\mathcal R})=\hat {\bf B}(W,{\mathcal R})/{\bf J}$, this implies that ${\bf B}(W,{\mathcal R})$ is a bialgebra.
This proves (a). 

Prove (b) now. Clearly, the assignments $d_{g,h}\mapsto d_{\varphi(g),\varphi(h)}$, $g,h\in W$ define an anti-automorphism $\varphi^*$ of the coalgebra $U({\mathcal R})$ such that $(\varphi^*\otimes \varphi^*)\circ \Delta=\Delta\circ \varphi^*$ and $\varepsilon\circ \varphi^*=\varepsilon$.  Therefore, passing to the tensor algebra $T(U({\mathcal R}))$ this gives an anti-automorphism $\tilde \varphi^*$ of $T(U({\mathcal R}))$ with the same properties. Furthermore, $\tilde \varphi^*(d_{11}-1)=d_{11}-1$, thus, $\tilde \varphi^*$ preserves the above bi-ideal $J$ generated by $d_{11}-1$, thus, gives a well-defined anti-automorphism of $\hat \varphi^*$ of the quotient bialgebra $\hat {\bf B}(W,{\mathcal R})$. In turn, we have 
$\hat \varphi(\delta_{g,h;w}):=d_{\varphi(gh),\varphi(w)}-\sum_{w_1,w_2\in W:\varphi(w_1w_2)=\varphi(w)}  d_{\varphi(h),\varphi(w_2)}d_{\varphi(g),\varphi(w_1)}$
$$=d_{\varphi(h)\varphi(g),\varphi(w)}-\sum_{w'_1,w'_2\in W:w'_1w'_2=\varphi(w)} d_{\varphi(h),w'_1}d_{\varphi(g),w'_2}=\delta_{\varphi(h),\varphi(g);\varphi(w)}$$
for all $g,h,w\in W$. In particular $\hat \varphi^*({\bf K}(W,{\mathcal R})={\bf K}(W,{\mathcal R})$ hence the bi-ideal ${\bf J}$ generated by ${\bf K}(W,{\mathcal R})$ is $\hat \varphi^*$-invariant hence one has a natural anti-automorphism  $\varphi^*$ on the quotient bialgebra ${\bf B}(W,{\mathcal R})=\hat {\bf B}(W,{\mathcal R})/{\bf J}$.
This proves (b).

The proposition is proved.
\end{proof}

The following is an immediate corollary of Propositions \ref{pr:bialgebra W} and \ref{pr:factored derivatives}. 

\begin{corollary} 
\label{cor:partial action} Let $W$ be a monoid and ${\mathcal R}$ be preorder on $W$ an so that $W$ is ${\mathcal R}$-finite. 
Suppose that ${\bf H}$ is an $R$-algebra which factors as ${\bf H}={\bf D}\cdot R W$ over $R$ where ${\bf D}$ is a subalgebra. Suppose  that $g\cdot {\bf D}\subset {\bf D}\cdot W_g$ for all $g\in W$. Then ${\bf D}$ is a module algebra over ${\bf B}(W,{\mathcal R})\otimes R$ via $d_{g,h}\mapsto \partial_{g,h}$.

\end{corollary}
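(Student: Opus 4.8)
The plan is to read the operators $\partial_{g,h}$ off the factorization ${\bf H}={\bf D}\cdot RW$ and then verify, relation by relation, that $d_{g,h}\mapsto \partial_{g,h}$ respects the presentation of ${\bf B}(W,{\mathcal R})$ and the two module-algebra axioms; every identity will be obtained by expanding an equality in ${\bf H}$ through the factorization and comparing coefficients, using that the multiplication map ${\bf D}\otimes RW\,\widetilde\to\,{\bf H}$ is an isomorphism.

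First I would recall from Proposition \ref{pr:factored derivatives} that the factorization provides $R$-linear maps $\partial_{g,h}\colon {\bf D}\to {\bf D}$ determined by $g\,x=\sum_{h\in W}\partial_{g,h}(x)\,h$ for $g\in W$ and $x\in {\bf D}$, where for each fixed $x$ all but finitely many terms vanish. The hypothesis $g\cdot {\bf D}\subset {\bf D}\cdot W_g$ says exactly that $\partial_{g,h}=0$ unless $h\in W_g$, i.e. unless $(h,g)\in {\mathcal R}$; combined with ${\mathcal R}$-finiteness this both matches the relation $d_{g,w}=0$ for $(w,g)\notin {\mathcal R}$ imposed in ${\bf B}(W,{\mathcal R})$ and guarantees that all the sums appearing below are finite. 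Taking $g=1$ and noting that $1\cdot x=x$ factors as $x\cdot 1$ yields $\partial_{1,1}=\mathrm{id}_{\bf D}$ and $\partial_{1,h}=0$ for $h\ne 1$, which matches $d_{1,1}=1$.

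Next I would exploit associativity. Expanding $(gh)\,x=g\,(h\,x)$ via the factorization gives $\sum_{w}\partial_{gh,w}(x)\,w=\sum_{w_1,w_2}\partial_{g,w_1}\bigl(\partial_{h,w_2}(x)\bigr)\,w_1w_2$, and uniqueness of the factorization forces $\partial_{gh,w}=\sum_{w_1w_2=w}\partial_{g,w_1}\circ\partial_{h,w_2}$, which is precisely the defining relation \eqref{eq:defining relations universal Nichols} under $d_{g,h}\mapsto\partial_{g,h}$, with the product in ${\bf B}(W,{\mathcal R})$ going to composition in $End_R({\bf D})$. Since the presentation of ${\bf B}(W,{\mathcal R})$ has integer coefficients and the $\partial_{g,h}$ are $R$-linear, this assignment extends to an $R$-algebra homomorphism ${\bf B}(W,{\mathcal R})\otimes R\to End_R({\bf D})$, making ${\bf D}$ a module.

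Finally I would check the module-algebra axioms against the structure maps of Proposition \ref{pr:bialgebra W}. Comparing coefficients in $g\,(xy)=(g\,x)\,y$ gives $\partial_{g,h}(xy)=\sum_{w}\partial_{g,w}(x)\,\partial_{w,h}(y)$, which is the multiplicativity condition read off from $\Delta(d_{g,h})=\sum_w d_{g,w}\otimes d_{w,h}$, while $g\cdot 1_{\bf D}=g=1_{\bf D}\cdot g$ gives $\partial_{g,h}(1_{\bf D})=\delta_{g,h}1_{\bf D}=\varepsilon(d_{g,h})1_{\bf D}$. Together these show ${\bf D}$ is a module algebra over the bialgebra ${\bf B}(W,{\mathcal R})\otimes R$. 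I do not expect a genuine obstacle: the entire argument is the repeated comparison of coefficients granted by uniqueness of the factorization. The only points demanding care are that $W$ is merely a monoid, so the action must be left multiplication $g\cdot x$ rather than the conjugation used in Lemma \ref{le:factored derivatives}, and that the composition order $\partial_{g,w_1}\circ\partial_{h,w_2}$ be tracked faithfully so that it correctly matches the product $d_{g,w_1}d_{h,w_2}$ in ${\bf B}(W,{\mathcal R})$.
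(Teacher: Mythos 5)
Your proof is correct and is precisely the argument the paper intends: the paper declares the corollary an immediate consequence of Propositions \ref{pr:bialgebra W} and \ref{pr:factored derivatives}, and your verification — vanishing of $\partial_{g,h}$ off $W_g$ from the hypothesis, the composition identity matching \eqref{eq:defining relations universal Nichols}, and the Leibniz-type identity matching the coproduct — is exactly the unpacking of that claim.
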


\begin{remark} The  ``universally acting" bialgebra ${\bf B}(W,{\mathcal R})$ is a particular case of the bialgebras emerging in the forthcoming joint paper of Yury Bazlov with the first author \cite{BBHcross}.

\end{remark}
 
For any ${\mathcal R}$-finite monoid $W$ let $\underline {\bf B}(W,{\mathcal R})$ be the quotient algebra of ${\bf B}(W,{\mathcal R})$ by the ideal generated by all $d_{gh,gh}-d_{g,g}d_{h,h}$. 

\begin{proposition} 
In the assumptions of Proposition \ref{pr:bialgebra W}, suppose that ${\mathcal R}$ is a poset. Then 
$\underline {\bf B}(W,{\mathcal R})$ is naturally a bialgebra.

\end{proposition}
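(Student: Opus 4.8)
The plan is to realize the quotient as a bialgebra by showing that the defining ideal is a bi-ideal, exactly as in the proof of Proposition~\ref{pr:bialgebra W}(a). Set $x_{g,h}:=d_{gh,gh}-d_{g,g}d_{h,h}$ and let ${\bf K}':=\sum_{g,h\in W}\ZZ\cdot x_{g,h}$ be the $\ZZ$-span of the generators of the ideal. It suffices to prove that ${\bf K}'$ is a two-sided coideal in ${\bf B}(W,{\mathcal R})$ with $\varepsilon({\bf K}')=\{0\}$. Indeed, once this is established, the ideal ${\bf J}'$ generated by ${\bf K}'$ satisfies $\Delta({\bf J}')\subset {\bf B}(W,{\mathcal R})\otimes {\bf J}'+{\bf J}'\otimes {\bf B}(W,{\mathcal R})$ and $\varepsilon({\bf J}')=0$ by the same sandwiching computation used for ${\bf J}$ in Proposition~\ref{pr:bialgebra W}, so that $\underline{\bf B}(W,{\mathcal R})={\bf B}(W,{\mathcal R})/{\bf J}'$ inherits a bialgebra structure.

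The crucial step --- and the only place the poset hypothesis (rather than a mere preorder) is used --- is the observation that each diagonal generator $d_{g,g}$ is grouplike. By \eqref{eq:universal Nichols} one has $\Delta(d_{g,g})=\sum_{w\in W} d_{g,w}\otimes d_{w,g}$, and a summand can be nonzero only when both support conditions $(w,g)\in{\mathcal R}$ and $(g,w)\in{\mathcal R}$ hold. Antisymmetry of the poset ${\mathcal R}$ then forces $w=g$, whence $\Delta(d_{g,g})=d_{g,g}\otimes d_{g,g}$ and $\varepsilon(d_{g,g})=\delta_{g,g}=1$. Consequently both $d_{gh,gh}$ and $d_{g,g}d_{h,h}$ are grouplike (the latter as a product of grouplikes), and the elementary identity $a\otimes a-b\otimes b=a\otimes(a-b)+(a-b)\otimes b$ applied with $a=d_{gh,gh}$, $b=d_{g,g}d_{h,h}$ gives
$$\Delta(x_{g,h})=d_{gh,gh}\otimes x_{g,h}+x_{g,h}\otimes d_{g,g}d_{h,h}\in {\bf B}(W,{\mathcal R})\otimes {\bf K}'+{\bf K}'\otimes {\bf B}(W,{\mathcal R})\ .$$
Together with $\varepsilon(x_{g,h})=\varepsilon(d_{gh,gh})-\varepsilon(d_{g,g})\varepsilon(d_{h,h})=1-1=0$, this confirms that ${\bf K}'$ is a two-sided coideal annihilated by $\varepsilon$.

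Having reduced the statement to the grouplike computation, the remaining work is formal and parallels Lemma~\ref{le:K two-sided coideal} and the paragraph that follows it: closure of ${\bf J}'$ under $\Delta$ and its annihilation by $\varepsilon$ are obtained verbatim from the machinery already developed for Proposition~\ref{pr:bialgebra W}. I expect the only genuinely delicate point to be the support-and-antisymmetry argument for grouplikeness; in carrying it out one must check that the relations $d_{g,w}=0$ for $(w,g)\notin{\mathcal R}$ indeed survive in the quotient bialgebra ${\bf B}(W,{\mathcal R})$ (they do, being built into the presentation of $U({\mathcal R})$ and hence preserved in ${\bf B}(W,{\mathcal R})$), and that antisymmetry is applied with the correct index conventions in \eqref{eq:universal Nichols}. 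Everything else is routine.
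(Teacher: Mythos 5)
Your proof is correct and follows essentially the same route as the paper: both define the span of the elements $d_{gh,gh}-d_{g,g}d_{h,h}$, use the poset hypothesis to see that each $d_{g,g}$ is grouplike, and deduce via the identity $a\otimes a-b\otimes b=a\otimes(a-b)+(a-b)\otimes b$ that this span is a two-sided coideal killed by $\varepsilon$, so the generated ideal is a bi-ideal. Your explicit antisymmetry argument for $\Delta(d_{g,g})=d_{g,g}\otimes d_{g,g}$ spells out a step the paper merely asserts.
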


\begin{proof} 
For $g,h\in W$ let $\delta_{g;h}\in {\bf B}(W,{\mathcal R})$ be given by $\delta_{g;h}:=d_{gh,gh}- d_{g,g}d_{h,h}$ and let $\underline {\bf K}(W,{\mathcal R})$ be the $\ZZ$-submodule 
$\sum\limits_{g,h\in W} \ZZ\cdot\delta_{g;h}$ of ${\bf B}(W,{\mathcal R})$.

\begin{lemma} 
\label{le:underline K two-sided coideal}
$\underline {\bf K}(W,{\mathcal R})$ is a two-sided coideal in $\hat {\bf B}(W,{\mathcal R})$.

\end{lemma}

\begin{proof}
Since ${\mathcal R}$ is a partial order, then $\Delta(d_{g,g})=d_{g,g}\otimes d_{g,g}$ for $g\in W$. 
Therefore, 
$$\Delta(\delta_{g;h})=d_{gh,gh}\otimes d_{gh,gh} - d_{g,g}d_{h,h}\otimes d_{g,g}d_{h,h}=d_{gh,gh}\otimes \delta_{g;h}+\delta_{g;h}\otimes d_{g,g}d_{h,h},~\varepsilon(\delta_{g;h})=1-1\cdot 1=0$$
for all $g,h\in W$. Finally, $\varepsilon(\delta_{g;h})=1-1\cdot 1=0$ for all $g,h\in W$.

The lemma is proved.
\end{proof}

Denote $\underline {\bf J}:={\bf B}(W,{\mathcal R})\cdot {\bf K}\cdot {\bf B}(W,{\mathcal R})$. Similarly to the proof of Proposition \ref{pr:bialgebra W}, one shows that this is the bi-ideal of the bialgebra ${\bf B}(W,{\mathcal R})$.

Finally, since $\underline {\bf B}(W,{\mathcal R})={\bf B}(W,{\mathcal R})/\underline  {\bf J}$, this implies that $\underline {\bf B}(W,{\mathcal R})$ is a bialgebra. 

%
%
%
%
%

The proposition is proved.
\end{proof}

\begin{remark} If $W$ is a group, then one can ask whether $\underline {\bf B}(W,{\mathcal R})$ is a Hopf algebra. In that case,  the antipode is given by:
$S(d_{g,h})=\sum (-1)^{k-1} d_{w_1,w_1}^{-1}d_{w_1,w_2}d_{w_2,w_2}^{-1}\cdots d_{w_{k-1},w_{k-1}}^{-1}d_{w_{k-1},w_k}d_{w_k,w_k}^{-1}$,
where the summation is over all $k\ge 1$ and distinct $w_1,\ldots,w_k\in W$ such that $w_1=g$, $w_k=h$.

This computation is based on the following well-known fact: any  lower triangular $n\times n$ matrix $A=(a_{ij})$ over an associative unital ring ${\mathcal A}$ such that all $a_{ii}$ are invertible in ${\mathcal A}$,  is invertible over ${\mathcal A}$ and 
$(A^{-1})_{ij}=\sum\limits_{i=i_1>i_2>\cdots >i_k=j,k\ge 1} (-1)^{k-1}a_{i_1,i_1}^{-1}a_{i_1,i_2}a_{i_2,i_2}^{-1}\cdots a_{i_{k-1},i_{k-1}}^{-1}a_{i_{k-1},i_k}a_{i_k,i_k}^{-1}$ for $1\le j\le i\le n$.
\end{remark}

Furthermore, for $g,h,w\in W$ define elements $v_{g,h}^w\in \underline {\bf B}(W,{\mathcal R})$ by $v_{g,h}^w:=d_{w,w}d_{g,h}d_{h,h}^{-1}d_{w,w}^{-1}$ and let ${\mathcal B}(W,{\mathcal R})$ be the subalgebra of $\underline {\bf B}(W,{\mathcal R})$ generated by all $v_{g,h}^w$.

We refer to ${\mathcal B}(W,{\mathcal R})$ as the {\it generalized Nichols algebra of $(W,{\mathcal R})$} due to the following result.

\begin{theorem} 
\label{th:presentation Nichols}
Let $W$ be a group and ${\mathcal R}$ be a partial order on $W$ so that $W$ is ${\mathcal R}$-finite. Then:

\noindent (a) ${\mathcal B}(W,{\mathcal R})$ is an algebra over  $\ZZ$ generated by $v_{g,h}^w$,  $g,h,w\in W$,  subject to relations $v_{g,h}^w=0$ if $(h,g)\notin {\mathcal R}$ and (for  $w,w',g,h\in W$):
\begin{equation}
\label{eq:presentation Nichols}
v_{w,w}^{w'}=1,~v_{gh,w}^{w'}=\sum\limits_{w_1,w_2\in W:w_1w_2=w} v_{g,w_1}^{w'} v_{h,w_2}^{w'w_1}
\end{equation}

\noindent (b)  ${\mathcal B}(W,{\mathcal R})$ is a module algebra over $\ZZ W$ with respect to the action  given by 
\begin{equation}
\label{eq:W action Nichols}
w(v_{g,h}^{w'})=v_{g,h}^{ww'}
\end{equation} for all  $w,w',g,h\in W$.

\noindent (c) The algebra $\underline {\bf B}(W,{\mathcal R})$ is isomorphic to the cross product ${\mathcal B}(W,{\mathcal R})\rtimes \ZZ W$.

\noindent (d) ${\mathcal B}(W,{\mathcal R})$ is a bialgebra in the (braided monoidal) category $_W^W{\mathcal YD}$ of Yetter-Drinfeld modules over $\ZZ W$ (see e.g., \cite[Section 1.2]{AS}) with:

$\bullet$ $W$-grading given by $|v_{g,h}^w|=wg(wh)^{-1}$ for all $w,g,h\in W$.

$\bullet$ The (braided) coproduct given by $\underline \Delta(v_{g,h}^w)=\sum\limits_{w'\in W} v_{g,w'}^w\otimes v_{w',h}^w$
for all $g,h,w\in W$.

$\bullet$ The (braided) counit given by $\underline \varepsilon(v_{g,h}^w)=\delta_{g,h}$ for all $g,h,w\in W$.

\end{theorem}

\begin{proof} Let ${\mathcal B}'(W,{\mathcal R})$ be the algebra generated by all $v_{g,h}^w$, $g,h,w\in W$, subject to the relations $v_{g,h}^w=0$ if $(h,g)\notin {\mathcal R}$ and \eqref{eq:presentation Nichols}. We need the following immediate fact.

\begin{lemma} \eqref{eq:W action Nichols} defines a $W$-action on ${\mathcal B}'(W,{\mathcal R})$ by algebra automorphisms.

\end{lemma}

Therefore, ${\mathcal B}'(W,{\mathcal R})$ is a $\ZZ W$-module algebra, which, in particular, proves (b). 

Prove (a) and (c) now. 
Denote $\underline {\bf B}'(W,{\mathcal R}):={\mathcal B}'(W,{\mathcal R})\rtimes \ZZ W$.

\begin{proposition} The assignments $d_{g,h}\mapsto v_{g,h}^1\cdot h$ for $g,h\in W$ define an isomorphism of algebras 
${\bf f}_W:\underline {\bf B}(W,{\mathcal R})\widetilde \to \underline {\bf B}'(W,{\mathcal R})$ such that ${\bf f}_W({\mathcal B}(W,{\mathcal R}))={\mathcal B}'(W,{\mathcal R})$.

\end{proposition}

\begin{proof} 
Since $d_{g,h}=d_{w,w}^{-1}v_{g,h}^w d_{w,w}d_{h,h}$ in $\underline {\bf B}(W,{\mathcal R})$ for all $g,h,w\in W$, $(g,h)\in {\mathcal R}$, substituting this to \eqref{eq:defining relations universal Nichols} gives the relations
$$v_{gh,w}^1 d_{w,w}=\sum_{w_1,w_2\in W: w_1w_2=w} v_{g,w_1}^1d_{w_1,w_1}d_{w_1,w_1}^{-1}v_{h,w_2}^{w_1}d_{w_1,w_1}d_{w_2,w_2}=\sum_{w_1,w_2\in W: w_1w_2=w} v_{g,w_1}^1v_{h,w_2}^{w_1}d_{w,w}$$
for all $g,h,w\in W$, which gives the second relation in \eqref{eq:presentation Nichols} with $w'=1$. Finally, using
\begin{equation}
\label{eq:cross product relations W}
d_{w_1,w_1}v_{h,w_2}^{w_3}d_{w_1,w_1}^{-1}=v_{h,w_2}^{w_1w_3}
\end{equation}
in $\underline {\bf B}(W,{\mathcal R})$ for any $h,w_1,w_2,w_3\in W$ we obtain the second relation \eqref{eq:presentation Nichols} for with any $w'\in W$ (the relations $v_{w,w}^{w'}=1$ are obvious).

Let 
$\underline V_W:=\oplus_{g,h\in W:(h,g)\in {\mathcal R}} \ZZ\cdot d_{g,h}$
Clearly, the assignments $d_{g,h}\mapsto d_{g,h}$ define a canonical surjective homomorphism $T(\underline V_W)\twoheadrightarrow {\bf B}(W,{\mathcal R})$, whose kernel is the ideal 
of $T(\underline V_W)$ generated by 
\begin{equation}
\label{eq:ideal of relations in BWR}
d_{1,1}-1,~d_{gh,gh}-d_{g,g}d_{h,h}, ~d_{gh,w}-\sum_{w_1,w_2\in W: w_1w_2=w} d_{g,w_1}d_{h,w_2}
\end{equation}
for $g,h,w\in W$.

Then the homomorphism of algebras $\hat {\bf f}_W:\underline {\bf B}(W,{\mathcal R})\widetilde \to {\bf B}'(W,{\mathcal R})$ by $\hat {\bf f}_W(d_{g,h})= v_{g,h}^1\cdot h$ for $g,h\in W$. Clearly, the image of \eqref{eq:ideal of relations in BWR} under $\hat {\bf f}_W$ is
$$v_{1,1}^1-1=0,~v_{gh,gh}^1 gh-v_{g,g}^1gv_{h,h}^1h=0, ~v_{gh,w}^1w-\sum_{w_1,w_2\in W: w_1w_2=w} v_{g,w_1}^1w_1v_{h,w_2}^1w_2=0$$
because relatons \eqref{eq:presentation Nichols} hold in ${\mathcal B}(W,{\mathcal R})$ and 
\begin{equation}
\label{eq:cross product relations W prim}
w_1v_{h,w_2}^{w_3}w_1^{-1}=v_{h,w_2}^{w_1w_3}
\end{equation}
in $\underline {\bf B}'(W,{\mathcal R})$ for any $h,w_1,w_2,w_3\in W$.

This proves that ${\bf f}_W$ is a well-defined homomorphism of algebras $\underline {\bf B}(W,{\mathcal R}) \to \underline {\bf B}'(W,{\mathcal R})$. It is clearly surjective due to \eqref{eq:cross product relations W}. Injectivity of ${\bf f}_W$ follows from that the defining relations \eqref{eq:presentation Nichols} and \eqref{eq:cross product relations W prim} of ${\bf B}'(W,{\mathcal R})$ (together with $v_{g,h}^w=0$ if $(h,g)\notin {\mathcal R}$) already hold in ${\bf B}(W,{\mathcal R})$ (since ${\bf f}_W(d_{g,g})=g$ the relations \eqref{eq:cross product relations W} match  \eqref{eq:cross product relations W prim}).

The proposition is proved.
\end{proof}

This finishes the proof of (a) and (c).

Prove (d) now. We need the following result.
%
%
%
%
%
%
%
%

\begin{lemma} 
\label{le:YD coalgebra}
In the assumptions of Theorem \ref{th:presentation Nichols}, one has:

(a) The $\ZZ$-module $Y_W:=\oplus_{g,h,w\in W:(h,g)\in {\mathcal R}} \ZZ\cdot v_{g,h}^w$ 
(convention: $v_{g,h}^w=0$ if $(h,g)\notin {\mathcal R}$) 
is a Yetter-Drinfeld module over $W$ with the $W$-action and $W$-grading as in Theorem \ref{th:presentation Nichols}(d).

(b) The maps $\underline \Delta:Y_W\to Y_W\otimes Y_W$ and $\underline \varepsilon: Y_W\to \ZZ$ given by Theorem \ref{th:presentation Nichols}(d)
turn $Y_W$ into a coalgebra in the
(braided monoidal) category $_W^W{\mathcal YD}$ of Yetter-Drinfeld modules over $W$. 
\end{lemma}

\begin{proof} Indeed, 
$|w(v_{g,h}^{w'})|=|v_{g,h}^{ww'}|=ww'gh^{-1}(ww')^{-1}=w|v_{g,h}^{w'}|w^{-1}$ for all $g,h,w,w'\in W$.
This proves (a). 

Prove (b). Clearly, both $\underline \Delta$ and $\underline \varepsilon$ commute with $W$-action. Also using the standard grading on $Y_W\otimes Y_W$ via $|x\otimes y|=|x|\cdot |y|$ for homogeneous $x,y\in Y_W$, we obtain $|v_{g,h}^w|=wg(wh)^{-1}$ and
$$|v_{g,w'}^w\otimes v_{w',h}^w|= |v_{g,w'}^w|\cdot |v_{w',h}^w|=wg(ww')^{-1}ww'(wh)^{-1}=wg(wh)^{-1}=|v_{g,h}^w|$$
for all $g,h,w,w'\in W$, therefore, $|\underline \Delta(v_{g,h}^w)|=|v_{g,h}^w|$ for all $g,h,w\in W$. Similarly, 
$\underline \varepsilon(v_{g,}^w)=|\delta_{g,h}|=\delta_{g,h}\cdot 1$ for $g,h,w\in W$. This proves that both $\underline \Delta$ and 
$\underline \varepsilon$ are morphisms in $_W^W{\mathcal YD}$.
Coassociativity of $\underline \Delta$ and the counit axiom follow.
This proves (b).

The lemma is proved.
\end{proof}
 

Lemma \ref{le:YD coalgebra}(b) implies that $\underline \Delta$ viewed as a morphism from $Y_W$ to the algebra $T(Y_W)\otimes T(Y_W)$ extends to a homomorphism  $\underline \Delta:T(Y_W)\to T(Y_W)\otimes T(Y_W)$ of algebras in the braided monoidal category $_W^W{\mathcal YD}$. Similarly, $\underline \varepsilon$ extends to a homomorphism of algebras $T(Y_W)\to \ZZ$, the latter viewed as the unit object in $_W^W{\mathcal YD}$.
Thus, $T(Y_W)$ is a bialgebra in the braided monoidal category $_W^W{\mathcal YD}$.

For $g,h,w,w'\in W$ define elements $\underline \delta_w^{w'}$, $\underline \delta_{g,h;w'}\in T(Y_W)$ by 
$$\underline \delta_w^{w'}=v_{w,w}^{w'}-1,~\underline \delta_{g,h,w;w'}=v_{gh,w}^{w'}-\sum\limits_{w_1,w_2\in W:w_1w_2=w} v_{g,w_1}^{w'} v_{h,w_2}^{w'w_1}$$
and denote by $\underline {\mathcal K}(W,{\mathcal R})$ the $\ZZ$-submodule of $T(Y_W)$ generated by  all $\underline \delta_w^{w'}$ and $\underline \delta_{g,h;w'}$.

Clearly, these elements are homogeneous, more precisely,  $|\underline \delta_w^{w'}|=1$, $|\underline \delta_{g,h,w;w'}|=w'gh(w'w)^{-1}$ for all $w,w',g,h\in W$. Moreover $w''(\underline \delta_w^{w'})=\underline \delta_w^{w'' w'}$, 
$w''(\underline \delta_{g,h,w;w'})=\underline \delta_{g,h,w;w''w'}$ for all $w''\in W$, in particular,  $\underline {\mathcal K}(W,{\mathcal R})$ is a Yetter-Drinfeld submodule of $T(Y_W)$. 

The following is a braided version of Lemma \ref{le:underline K two-sided coideal}.

\begin{lemma} 
\label{le:underline K two-sided coideal braided} 
$\underline {\mathcal K}(W,{\mathcal R})$ is a two-sided coideal in $T(Y_W)$ in  $_W^W{\mathcal YD}$ and $\underline \varepsilon({\mathcal K}(W,{\mathcal R}))=\{0\}$.

\end{lemma}

\begin{proof} Indeed, 
$\underline \Delta(\underline \delta_w^{w'})=v_{w,w}^{w'}\otimes v_{w,w}^{w'}-1\otimes 1=\underline \delta_w^{w'}\otimes v_{w,w}^{w'}+1\otimes \underline \delta_w^{w'}$.

Furthermore, 
$\underline \Delta(\underline \delta_{g,h,w;w'})=\underline \Delta(v_{gh,w}^{w'})-\sum\limits_{w_1,w_2\in W:w_1w_2=w} \underline \Delta(v_{g,w_1}^{w'}) \underline \Delta(v_{h,w_2}^{w'w_1})$
$$=\sum_{w''\in W} v_{gh,w''}^{w'}\otimes v_{w'',w}^{w'}-\sum\limits_{w_1,w_2,w_1'',w_2''\in W:w_1w_2=w} (v_{g,w_1''}^{w'}\otimes v_{w_1'',w_1}^{w'}) (v_{h,w''_2}^{w'w_1}\otimes v_{w_2'',w_2}^{w'w_1})$$
$$=\sum_{w''\in W} v_{gh,w''}^{w'}\otimes v_{w'',w}^{w'}-\sum\limits_{w_1,w_2,w_1'',w_2''\in W:w_1w_2=w} v_{g,w_1''}^{w'}v_{h,w_2''}^{w'w_1''}\otimes v_{w_1'',w_1}^{w'}v_{w_2'',w_2}^{w'w_1}\ ,$$
where we used the fact that 
$(v_{g,w_1''}^{w'}\otimes v_{w_1'',w_1}^{w'}) (v_{h,w_2''}^{w'w_1}\otimes v_{w_2'',w_2}^{w'w_1})=
v_{g,w_1''}^{w'}v_{h,w_2''}^{w'w_1''}\otimes v_{w_1'',w_1}^{w'}v_{w_2'',w_2}^{w'w_1}$
because $(x\otimes y)(z\otimes t)=x\cdot (|y|(z))\otimes yt$ for any $x,y,z,t\in T(Y_W)$, where $y$ is homogeneous of degree $|y|$, and 
$|v_{w_1'',w_1}^{w'}|(v_{h,w_2''}^{w'w_1})=(w'w_1''(w'w_1)^{-1})(v_{h,w_2''}^{w'w_1})=v_{h,w_2''}^{w'w_1''}$.
Finally, taking into account that 
$$\sum_{w''\in W} v_{gh,w''}^{w'}\otimes v_{w'',w}^{w'}=\sum_{w''} \underline\delta_{g,h,w''}^{w'}\otimes v_{w'',w}^{w'}
+\sum\limits_{w_1'',w_2''\in W}v_{g,w_1''}^{w'}v_{h,w_2''}^{w'w_1''}\otimes v_{w_1''w_2'',w}^{w'}\ ,$$
we obtain
$\underline \Delta(\underline \delta_{g,h,w;w'})=\sum\limits_{w''\in W} \underline\delta_{g,h,w''}^{w'}\otimes v_{w'',w}^{w'}+\sum\limits_{w_1'',w_2''\in W} v_{g,w_1''}^{w'}v_{h,w_2''}^{w'w_1''}\otimes \underline \delta_{w_1''w_2'',w;w'}$.

This proves that $\underline \Delta(\underline {\mathcal K}(W,{\mathcal R}))\subset 
\underline {\mathcal K}(W,{\mathcal R})\otimes T(Y_W)+T(Y_W)\otimes \underline {\mathcal K}(W,{\mathcal R})$. It remains to show that $\underline \varepsilon({\mathcal K}(W,{\mathcal R}))=\{0\}$. 

Indeed, $\underline \varepsilon(\underline \delta_w^{w'})=1-1=0$, 
$\underline \varepsilon(\underline \delta_{g,h,w;w'})=\delta_{gh,w}-\sum\limits_{w_1,w_2\in W:w_1w_2=w} \delta_{g,w_1} \delta_{h,w_2}=\delta_{gh,w}-\delta_{gh,w}=0$.

The lemma is proved.
\end{proof}

Similarly to the conclusion of the proof of Proposition \ref{pr:bialgebra W}, denote $\underline {\bf J}':=T(Y_W)\cdot \underline {\mathcal K}(W,{\mathcal R})\cdot T(Y_W)$. This is the ideal of $T(Y_W)$ generated by $\underline {\mathcal K}(W,{\mathcal R})$. Let us show that $\underline {\bf J}'$ is a bi-ideal in $\hat {\bf B}(W,{\mathcal R})$. Clearly, $\varepsilon(\underline {\bf J}')=0$ by Lemma \ref{le:underline K two-sided coideal braided}. Furthermore, Lemma \ref{le:underline K two-sided coideal braided} implies that  
$$\Delta(\underline {\bf J}')\subset (T(Y_W)\otimes T(Y_W))\cdot (T(Y_W)\otimes {\mathcal K}(W,{\mathcal R})+{\mathcal K}(W,{\mathcal R})\otimes 
T(Y_W))\cdot (T(Y_W)\otimes T(Y_W))$$
$$\subset T(Y_W)\otimes {\mathcal K}(W,{\mathcal R})+{\mathcal K}(W,{\mathcal R})\otimes T(Y_W)$$
because 
$(T(Y_W)\otimes T(Y_W))\cdot (T(Y_W)\otimes Y+Y\otimes T(Y_W))\cdot (T(Y_W)\otimes T(Y_W))$
$$\subset T(Y_W)YT(Y_W)\otimes T(Y_W)+T(Y_W)\otimes T(Y_W)YT(Y_W)$$
for any Yetter-Drinfeld submodule $Y$ of $T(Y_W)$.

Thus, $\underline {\bf J}'$ is a bi-ideal and ${\mathcal B}(W,{\mathcal R})=T(Y_W)/\underline {\bf J}'$ is a bialgebra in $_W^W{\mathcal YD}$.
This proves (d). 

The theorem is proved.
\end{proof}

It turns out that for Coxeter groups these Hopf algebras are closely related to the graded versions of Hecke-Hopf algebra.

\begin{definition}
\label{def:H_0 Coxeter}
For any Coxeter group $W$ let $\hat {\bf H}_0(W)$ be the algebra generated by $s_i,d_i$, $i\in I$ subject to relations:
\label{def:Hecke-Hopf algebra W prime}

(i) Rank $1$ relations: $s_i^2=1$,  $d_i^2=0$, $s_id_i+d_is_i=0$ for $i\in I$.


(ii) Coxeter relations: $(s_is_j)^{m_{ij}}=1$ and linear braid relations: $\underbrace{d_is_js_i\cdots s_{j'}}_{m_{ij}} =\underbrace{s_j\cdots s_{i'}s_{j'}d_{i'}}_{m_{ij}}$
for all distinct $i,j\in I$ with $m_{ij}\ne 0$, where $i'=\begin{cases} 
i & \text{if $m_{ij}$ is even}\\
j & \text{if $m_{ij}$ is odd}\\
\end{cases}$ and  $\{i',j'\}=\{i,j\}$.
%

\end{definition} 

That is, $\hat {\bf H}_0(W)$ is given by ``homogenizing"  Definition \ref{def:Hecke-Hopf algebra W}.

Similarly to Section \ref{sect:intro}, for any $s\in {\mathcal S}$ there is a unique element $d_s\in \hat {\bf H}_0(W)$ such that $d_{s_i}=d_i$ for $i\in I$ and $d_{s_iss_i}=s_iD_ss_i$ 
for any $i\in I$, $s\in {\mathcal S}\setminus \{s_i\}$. It is easy to see that $wd_sw^{-1}=\chi_{w,s}d_{wsw^{-1}}$, where $\chi_{w,s}$ is defined in \eqref{eq:chi sigma defined} (cf. \cite[Section 5]{MS}). 

Denote by $\hat {\bf D}_0(W)$ the subalgebra of $\hat {\bf H}_0(W)$ generated by $d_s$, $s\in {\mathcal S}$. 

The following is an immediate homogeneous analogue of Theorem \ref{th:ZWcross and free D}.

\begin{lemma} 
For any Coxeter group $W$, one has:

(a) the algebra $\hat {\bf D}_0(W)$  is generated by all $d_s$, $s\in {\mathcal S}$ subject to  relations $d_s^2=0$, $s\in {\mathcal S}$. 

(b) $\hat {\bf H}_0(W)$ is naturally isomorphic to the cross product $\hat {\bf D}_0(W)\rtimes \ZZ W$ with respect to the action of $W$ on $\hat {\bf D}_0(W)$ given by $w(d_s)=\chi_{w,s}d_s$ for $w\in W$, $s\in {\mathcal S}$, $\chi_{w,s}$ is defined in \eqref{eq:chi sigma defined}.

(c) $\hat {\bf D}_0(W)$ is graded by $W$ via $|d_s|=s$ for $s\in {\mathcal S}$ and is a Hopf algebra in the category $_W^W{\mathcal YD}$ with the braided coproduct, counit, and the antipode given respectively by (for $s\in {\mathcal S}$):
$$\underline \Delta(d_s)=d_s\otimes 1+1\otimes d_s,~\underline \varepsilon(d_s)=0,~\underline S(d_s)=-d_s\ .$$

\end{lemma}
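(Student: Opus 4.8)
The plan is to read this off Theorem~\ref{th:ZWcross and free D}: parts (a) and (b) are the associated graded of that result for the filtration $\deg d_s=1$, $\deg w=0$, and I would carry them out by the same cross-product bootstrap with every lower-order term deleted. First I would introduce the model algebra $F_0$, the free $\ZZ$-algebra on the symbols $d_s$, $s\in{\mathcal S}$, modulo $d_s^2=0$. Since these are monomial relations, a $\ZZ$-basis of $F_0$ is the set of reduced words $d_{s_1}\cdots d_{s_k}$ with $s_i\ne s_{i+1}$ --- the exact homogeneous counterpart of the basis of $\hat{\bf D}(W)$. I would equip $F_0$ with the $W$-action $w(d_s)=\chi_{w,s}\,d_{wsw^{-1}}$; this preserves $d_s^2=0$ because $\chi_{w,s}\in\{\pm1\}$, and it is an action by the cocycle identity \eqref{eq:2 cocycle} satisfied by $\chi$ in the Coxeter case. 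Then I form the cross product $F_0\rtimes\ZZ W$.

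Next I would produce two mutually inverse comparisons. Checking that the relations of Definition~\ref{def:H_0 Coxeter} hold in $F_0\rtimes\ZZ W$ gives a surjection $\pi:\hat{\bf H}_0(W)\twoheadrightarrow F_0\rtimes\ZZ W$ fixing generators: the rank-$1$ relation $s_id_i+d_is_i=0$ amounts to the single sign $\chi_{s,s}=-1$ (the top term of $s_iD_i+D_is_i=s_i-1$), and each linear braid relation becomes the assertion that $s_j\cdots s_{j'}$ conjugates $s_{i'}$ to $s_i$ with matching accumulated $\chi$-signs. On the other hand $F_0$ surjects onto the subalgebra $\hat{\bf D}_0(W)$ by construction, and the composite $F_0\otimes\ZZ W\to\hat{\bf D}_0(W)\otimes\ZZ W\to\hat{\bf H}_0(W)\to F_0\rtimes\ZZ W=F_0\otimes\ZZ W$ (the last arrow being $\pi$) sends $d_{s_1}\cdots d_{s_k}\otimes w$ to itself. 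Hence the map $F_0\otimes\ZZ W\to\hat{\bf D}_0(W)\otimes\ZZ W$, surjective by construction, is also injective, hence an isomorphism; as it is the tensor of the surjection $F_0\to\hat{\bf D}_0(W)$ with $\mathrm{id}_{\ZZ W}$, that surjection is an isomorphism, which is (a). The same diagram then forces the multiplication map $\hat{\bf D}_0(W)\otimes\ZZ W\to\hat{\bf H}_0(W)$ to be injective, hence an isomorphism, which is (b).

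For (c) I would use (a) to write $\hat{\bf D}_0(W)=T(V)/(d_s^2)$ with $V=\bigoplus_s\ZZ\,d_s$. Grading $V$ by $|d_s|=s$ and acting by $w(d_s)=\chi_{w,s}d_{wsw^{-1}}$ makes $V$ a Yetter--Drinfeld module over $\ZZ W$, since $|w(d_s)|=wsw^{-1}=w|d_s|w^{-1}$; thus $T(V)$ is a Hopf algebra in ${}_W^W{\mathcal YD}$ with $V$ primitive. The only point to verify is that the ideal $(d_s^2)$ is a braided biideal stable under $\underline S$, i.e. that $d_s^2$ is primitive. Using the group braiding $c(x\otimes y)=|x|(y)\otimes x$, the two cross terms of the braided square $\underline\Delta(d_s)^2$ are $d_s\otimes d_s$ and $s(d_s)\otimes d_s=\chi_{s,s}\,d_s\otimes d_s=-d_s\otimes d_s$, which cancel, leaving $\underline\Delta(d_s^2)=d_s^2\otimes1+1\otimes d_s^2$. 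So the braided Hopf structure descends to the quotient with $\underline\Delta(d_s)=d_s\otimes1+1\otimes d_s$, $\underline\varepsilon(d_s)=0$, $\underline S(d_s)=-d_s$.

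The decisive input everywhere is the one sign $\chi_{s,s}=-1$ furnished by the homogenized rank-$1$ relation; the \emph{main obstacle} is the $\chi$-sign bookkeeping inside the linear braid relations in the second paragraph, which is exactly the homogeneous image of the computation already performed for $\hat{\bf H}(W)$ in Theorem~\ref{th:ZWcross and free D}, so it can be imported rather than redone.
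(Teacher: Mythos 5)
Your proof is correct and follows essentially the route the paper intends: the paper states this lemma as an ``immediate homogeneous analogue'' of Theorem~\ref{th:ZWcross and free D}, i.e.\ the $\sigma=0$ specialization of Theorem~\ref{th:ZWcross and free D gen} (where the deformed cross product becomes an honest one), and your explicit construction of $F_0\rtimes\ZZ W$ together with the basis-comparison sandwich is exactly that specialization carried out by hand, while your primitivity computation for $d_s^2$ in part (c) uses the same braided product convention $(x\otimes y)(z\otimes t)=x\cdot|y|(z)\otimes yt$ as the paper. The only point worth flagging is that you silently read the rank-$1$ relation of Definition~\ref{def:H_0 Coxeter} as $d_i^2=0$ rather than the printed $d_i^2=d_i$ (which would contradict $s_id_is_i=-d_i$ over $\ZZ$); that is clearly the intended reading, consistent with Remark~\ref{rem:nilpotent presentation} and with part (a) of the lemma itself.
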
 

\begin{remark} 
\label{rem:pre-Nichols}
In fact, $\hat {\bf D}_0(W)$ is a {\it pre-Nichols algebra} of the braided vector space $\oplus_{s\in {\mathcal S}} \ZZ \cdot d_s$ in terminology of \cite{Ma}.

\end{remark}

We can ``approximate" the braided bialgebra ${\mathcal B}(W,{\mathcal R}_W)$, where ${\mathcal R}_W$ is the strong Bruhat order on $W$ (see e.g., \cite[Section 2]{BjBr}), by the pre-Nichols algebra $\hat {\bf D}_0(W)$.
  
\begin{theorem} 
\label{th:nichols graded homomorphism}
Let $W$ be a Coxeter group and ${\mathcal R}_W$ be the strong Bruhat order on $W$. Then 

(a) The assignments $s_i\mapsto d_{s_i,s_i}$, $d_i\mapsto -d_{s_i,1}$, $i\in I$ define a surjective homomorphism of bialgebras 
\begin{equation}
\label{eq:nichols graded homomorphism}
\hat \varphi_W:\hat {\bf H}_0(W)\twoheadrightarrow \underline {\bf B}(W,{\mathcal R}_W)\ .
\end{equation}
whose restriction  to $\ZZ W$ is injective.

(b) In the notation of Theorem \ref{th:presentation Nichols}(d), the restriction of $\hat \varphi_W$ to $\hat {\bf D}_0(W)$ is a surjective homomorphism of  bialgebras in $_W^W{\mathcal YD}$
\begin{equation}
\label{eq:nichols homomorphism}
\underline {\hat \varphi}_W:\hat {\bf D}_0(W)\twoheadrightarrow {\mathcal B}(W,{\mathcal R}_W)\ .
\end{equation}
whose restriction  to $\underline Y_W:=\oplus_{s\in {\mathcal S}} \ZZ \cdot d_s$ is injective.


\end{theorem}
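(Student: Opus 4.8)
The plan is to build $\hat\varphi_W$ from the two cross-product factorizations at our disposal. By the homogeneous analogue of Theorem~\ref{th:ZWcross and free D} one has $\hat{\bf H}_0(W)=\hat{\bf D}_0(W)\rtimes\ZZ W$, where $\hat{\bf D}_0(W)$ is free on the $d_s$ subject only to $d_s^2=0$; by Theorem~\ref{th:presentaton Nichols}(c) one has $\underline{\bf B}(W,{\mathcal R}_W)={\mathcal B}(W,{\mathcal R}_W)\rtimes\ZZ W$ under $d_{g,h}\mapsto v_{g,h}^1\cdot h$, so that $d_{w,w}=w$. Accordingly I record the images $\hat\varphi_W(s_i)=d_{s_i,s_i}$ and $\hat\varphi_W(d_i)=-d_{s_i,1}=-v_{s_i,1}^1$; more generally, writing a reflection as $s=ws_iw^{-1}$ and using $wd_sw^{-1}=\chi_{w,s}d_{wsw^{-1}}$ together with \eqref{eq:cross product relations W}, one gets $\hat\varphi_W(d_s)=-\chi_{w,s_i}^{-1}v_{s_i,1}^w\in{\mathcal B}(W,{\mathcal R}_W)$, an element homogeneous of $W$-degree $s$.

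I would first check the relations of Definition~\ref{def:H_0 Coxeter}. The rank-$1$ relations are immediate: \eqref{eq:defining relations universal Nichols} with $g=h=s_i$ and $w\in\{1,s_i\}$ yields $d_{s_i,1}^2+d_{s_i,s_i}^2=d_{1,1}=1$ and $d_{s_i,1}d_{s_i,s_i}+d_{s_i,s_i}d_{s_i,1}=d_{1,s_i}=0$, while the defining relation $d_{gh,gh}=d_{g,g}d_{h,h}$ of $\underline{\bf B}$ gives $d_{s_i,s_i}^2=1$; hence the images of $s_i^2-1$, $d_i^2$, and $s_id_i+d_is_i$ all vanish. Since $w\mapsto d_{w,w}$ is an algebra homomorphism $\ZZ W\to\underline{\bf B}$ (again by $d_{gh,gh}=d_{g,g}d_{h,h}$), the Coxeter relations hold automatically. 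The main obstacle is the linear braid relations. Each involves only $s_i,s_j,d_i,d_j$, and since the Bruhat order restricts to the Bruhat order of the parabolic $W_{\{i,j\}}$, the whole identity takes place in the sub-bialgebra $\underline{\bf B}(W_{\{i,j\}},{\mathcal R})$. There it becomes an explicit identity among the $v_{s_i,1}^w$ — for $m_{ij}=3$, for example, it reduces to $d_{s_js_i,s_js_i}\,d_{s_j,1}=d_{s_i,1}\,d_{s_js_i,s_js_i}$ — which I would prove directly from \eqref{eq:defining relations universal Nichols} by expanding $d_{s_is_j,w}$ and $d_{s_js_i,w}$ over the dihedral Bruhat interval. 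Conceptually this is precisely the independence of $\hat\varphi_W(d_s)$ from the reduced expression for $s$, mirroring the well-definedness of $d_s$ inside $\hat{\bf H}_0(W)$.

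Once $\hat\varphi_W$ is a well-defined algebra map, I upgrade it to a bialgebra map by verifying $\Delta\circ\hat\varphi_W=(\hat\varphi_W\otimes\hat\varphi_W)\circ\Delta$ and $\varepsilon\circ\hat\varphi_W=\varepsilon$ on $s_i$ and $d_i$ only, which suffices as all four maps are algebra homomorphisms. Using $\Delta(d_{g,h})=\sum_w d_{g,w}\otimes d_{w,h}$ and the support condition ($d_{g,h}=0$ unless $h\le g$) one finds $\Delta(d_{s_i,s_i})=d_{s_i,s_i}\otimes d_{s_i,s_i}$ and $\Delta(d_{s_i,1})=d_{s_i,1}\otimes1+d_{s_i,s_i}\otimes d_{s_i,1}$, matching the bosonized coproduct of $\hat{\bf H}_0(W)$ on $s_i$ and $d_i$ (the bosonization of the primitive $\underline\Delta(d_s)=d_s\otimes1+1\otimes d_s$). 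For surjectivity I show that ${\mathcal B}(W,{\mathcal R}_W)$ is generated by the reflection-degree elements $v_{s_i,1}^w$: factoring the first index of any $v_{g,h}^{w'}$ into simple reflections and applying \eqref{eq:presentaton Nichols} reduces it to such elements, each of which equals $d_{w',w'}\bigl(-\hat\varphi_W(d_i)\bigr)d_{w',w'}^{-1}$ and so lies in the image together with $\ZZ W$. Injectivity of $\hat\varphi_W$ on $\ZZ W$ is clear, since $\hat\varphi_W(w)=d_{w,w}=w$ in the cross product.

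Part (b) follows by restriction. The images $\hat\varphi_W(d_s)$ lie in ${\mathcal B}(W,{\mathcal R}_W)$, and surjectivity of $\underline{\hat\varphi}_W$ onto ${\mathcal B}(W,{\mathcal R}_W)$ is the generation statement just used. That $\underline{\hat\varphi}_W$ is a morphism of bialgebras in $_W^W{\mathcal YD}$ reduces, again by checking generators, to $\hat\varphi_W(d_s)$ being $W$-homogeneous of degree $s$ and $\underline\Delta$-primitive; primitivity follows from $\underline\Delta(v_{s_i,1}^w)=v_{s_i,1}^w\otimes1+1\otimes v_{s_i,1}^w$, and $W$-equivariance is inherited from $\hat\varphi_W$ because it fixes $\ZZ W$. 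Finally, the restriction to $\underline Y_W$ is injective once each $\hat\varphi_W(d_s)\neq0$, since these elements sit in pairwise distinct $W$-graded components. To see $v_{s_i,1}^1=d_{s_i,1}\neq0$ in $\underline{\bf B}$, I would use Corollary~\ref{cor:partial action} applied to the genuine factorization $\hat{\bf H}(W)=\hat{\bf D}(W)\cdot\ZZ W$ of Theorem~\ref{th:ZWcross and free D}: it makes $\hat{\bf D}(W)$ a module algebra over ${\bf B}(W,{\mathcal R}_W)$, the action factors through $\underline{\bf B}$ because $\partial_{w,w}$ is conjugation, and the genuine relation $s_iD_is_i=1-s_i-D_i$ exhibits a nonzero lower-order term, forcing $d_{s_i,1}$ to act nontrivially.
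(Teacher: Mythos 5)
Your proposal is correct and follows essentially the same route as the paper: specializing the defining relations \eqref{eq:defining_relations_universal_Nichols} at simple reflections to get the rank-one and linear braid relations (the dihedral identity $d_{s_i,1}d_{g,g}=d_{g,g}d_{s_{i'},1}$ is exactly the paper's computation), checking the coproduct on generators, and deducing nonvanishing of $d_{s_i,1}$ from the $\underline{\bf B}(W,{\mathcal R}_W)$-action on $\hat{\bf D}(W)$ via $\partial_{s_i,1}(D_i)=-1$. The only cosmetic difference is that you get injectivity on $\ZZ W$ from the cross-product decomposition of Theorem~\ref{th:presentaton Nichols}(c) rather than from linear independence of the grouplikes $d_{w,w}$; both rest on the same nontriviality input.
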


\begin{proof}
We need the following result.

\begin{proposition} 
\label{pr:refined factored derivatives} Let $W$ be a Coxeter group. Then  
$gx=g(x)\cdot g+\sum\limits_{h\in W\setminus \{g\}:(h,g)\in {\mathcal R}_W} \partial_{g,h}(x)h$ for all $g\in W$, $x\in \hat {\bf D}(W)$, in the notation of Proposition \ref{pr:factored derivatives},  where $(g,x)\mapsto g(x)$ is the $W$-action on $\hat {\bf D}(W)$ given by Theorem \ref{th:symmetris D(W)}(b)(i). In particular, $\hat {\bf D}(W)$ is a module algebra over $\underline {\bf B}(W,{\mathcal R}_W)$.


\end{proposition}

\begin{proof} 
Let us prove the implication
\begin{equation}
\label{eq:implication}
\partial_{g,h}\ne 0 ~=> ~(h,g)\in {\mathcal R}_W
\end{equation}
for all $g,h\in W$.

We need the following result.

\begin{lemma} 
\label{le:characterization of Bruhat order}
Let $W$ be a Coxeter group. Suppose that $w,w'\in W$ such that $(w',w)\in {\mathcal R}_W$ and let $i\in I$ be such that $\ell(s_iw)=\ell(w)+1$ and $\ell(s_iw')=\ell(w')+1$. Then $(s_iw',s_iw)\in {\mathcal R}_W$.

\end{lemma}

\begin{proof} Indeed, it is well-known (see e.g., \cite[Theorem 2.2.2]{BjBr}) that  $(w',w)\in {\mathcal R}_W$ iff 
\begin{equation}
\label{eq:the characterization of strong Bruhat default}
w=w_1s_{i_1}w_2 s_{i_2}\cdots w_ks_{i_k}w_{k+1},~w'=w_1\cdots w_{k+1}
\end{equation}
for some $i_1,\ldots,i_k\in I$, and $k\ge 0$ such that $\ell(w)=k+\sum\limits_{r=1}^{k+1} \ell(w_r)$ and $\ell(w')=\sum\limits_{r=1}^{k+1}\ell(w_r)=\ell(w)-k$.

Then, by the assumption of the lemma, the pair $(s_iw', s_iw)$ satisfies \eqref{eq:the characterization of strong Bruhat default} because $\ell(s_iw_1)=\ell(w_1)+1$, 
 hence $(s_iw',s_iw)\in {\mathcal R}_W$.

The lemma is proved.
\end{proof}

Furthermore, we prove \eqref{eq:implication} by induction in $\ell(g)$. If $\ell(g)=0$, i.e., $g=1$, then $\partial_{1,h}=\delta_{1,h}$ and we have nothing to prove. 
Suppose that $\ell(g)\ge 1$, 
i.e.,  $\ell(s_ig)=\ell(g)-1$ for some $i\in I$. 

We need the following result. 

\begin{lemma} 
\label{le:conjugation vs action} 
For each Coxeter group $W$ one has the following symmetries of $\hat {\bf D}(W)$: 

\noindent (a) 
$\hat{\bf D}(W)$ is a $\ZZ W$-module algebra via $w(D_s)=\begin{cases} 
D_{wsw^{-1}} & \text{if $\ell(ws)>\ell(w)$}\\
1-D_{wsw^{-1}} & \text{if $\ell(ws)<\ell(w)$}\\
\end{cases}$
for $w\in W$, $s\in {\mathcal S}$.

 
\noindent (b) The $\ZZ$-linear transformation $d_i$ given by $d_i(x):=s_i(x)\cdot s_i-s_ix$
for  $x\in \hat {\bf D}(W)$ and $i\in I$, is an $s_i$-derivation $d_i$ of $\hat {\bf D}(W)$ determined by $d_i(D_s)=\delta_{s,s_i}$.

\end{lemma}

\begin{proof} Prove (a). Theorem \ref{th:ZWcross and free D} and the fact that $(w(D_s))^2=w(D_s)$ for $w\in W$, $s\in {\mathcal S}$  imply that the assignment $x\mapsto w(x)$ for $x\in \hat {\bf D}(W)$ is an algebra automorphism for any $w\in W$. It suffices to show that $w_1(w_2(x))=(w_1w_2)(x)$ for all $x\in \hat {\bf D}(W)$, $w,w'\in W$. Since the involved maps are automorphisms, it suffices to do so only on generators $x=D_s$, $s\in {\mathcal S}$. Indeed, $w(D_s)= \sigma_{w,s}+\chi_{w,s}D_{wsw^{-1}}$  
for $w\in W$, $s\in {\mathcal S}$ and $\chi,\sigma$ given by \eqref{eq:chi sigma defined}.

Then  $w_1(w_2(D_s))= w_1(\sigma_{w_2,s}+\chi_{w_2,s}D_{w_2sw_2^{-1}})=\sigma_{w_2,s}+\chi_{w_2,s}w_1(D_{w_2sw_2^{-1}})$
$$=\sigma_{w_2,s}+\chi_{w_2,s}\chi_{w_1,w_2sw_2^{-1}}D_{w_1w_2sw_2^{-1}w_1^{-1}}+\chi_{w_2,s}\sigma_{w_1,w_2sw_2^{-1}}=\sigma_{w_1w_2,s}+\chi_{w_1w_2,s}D_{w_1w_2sw_2^{-1}w_1^{-1}}$$
for all $w\in W$, $s\in {\mathcal S}$, by \eqref{eq:2 cocycle}. 
This proves (a).

Prove (b). Indeed, 
$$d_i(xy)=s_i(xy)s_i-s_ixy=(s_i(x)s_i-s_ix)y+s_i(x)(s_i(y)s_i-s_iy)=d_i(x)y+s_i(x)d_i(y)$$ for all $x,y\in \hat {\bf D}(W)$, $i\in I$. 
Also, $d_i(D_s)=s_i(D_s)s_i-s_iD_s=\delta_{s,s_i}$ for all $s\in {\mathcal S}$, $i\in I$ because 
$$s_iD_s=\begin{cases}
D_{s_iss_i}s_i & \text{if $s\ne s_i$}\\
s_i-1-D_is_i & \text{if $s=s_i$}
\end{cases}, s_i(D_s)= \begin{cases}
D_{s_iss_i} & \text{if $s\ne s_i$}\\
1-D_i & \text{if $s=s_i$}
\end{cases}.$$
This proves (b).
%
%
%

The lemma is proved.
\end{proof}

Furthermore, if $g=s_i$ for $i\in I$, then Lemma 
\ref{le:conjugation vs action} guarantees that $\partial_{s_i,h}=0$ iff $h\notin \{1,s_i\}$ and $\partial_{s_i,1}=d_i$, $\partial_{s_i,s_i}$ is the action of $s_i$. Together with  Proposition \ref{pr:factored derivatives} these imply that 
\begin{equation}
\label{eq:partial recursion}
\partial_{g,h}(x)=\sum\limits_{h_1,h_2\in W:h_1h_2=h}
\partial_{s_i,h_1}(\partial_{s_ig,h_2}(x))=s_i(\partial_{s_ig,s_ih}(x))+d_i(\partial_{s_ig,h}(x))
\end{equation}
for all $x\in \hat {\bf D}(W)$, $g,h\in W$, $i\in I$ such that $(s_ig,g)\in {\mathcal R}_W$.

In particular, for a given $i\in I$, $g,h\in W$ such that $\ell(s_ig)=\ell(g)-1$, i.e., $(s_ig,g)\in {\mathcal R}_W$, the equation  \eqref{eq:partial recursion} guarantees that $\partial_{g,h}\ne 0$ implies that either $\partial_{s_ig,h}\ne 0$ or $\partial_{s_ig,s_ih}\ne 0$. 
Using the inductive hypothesis, we obtain the implication:
\begin{equation}
\label{eq:implication1}
\partial_{g,h}\ne 0 ~=> \text{either}~ (s_ih,s_ig)\in {\mathcal R}_W~\text{or}~(h,s_ig)\in {\mathcal R}_W
\end{equation}

Clearly, if $(h,s_ig)\in {\mathcal R}_W$ in \eqref{eq:implication1}, then \eqref{eq:implication} holds by transitivity. If $(s_ih,s_ig)\in {\mathcal R}_W$ in \eqref{eq:implication1} and $\ell(s_ih)=\ell(h)-1$, then $(h,g)\in {\mathcal R}_W$ by Lemma \ref{le:characterization of Bruhat order}.

It remains to consider the case $(s_ih,s_ig)\in {\mathcal R}_W$, $\ell(s_ih)=\ell(h)+1$. Indeed, $(h,s_ih), (s_ig,g)\in {\mathcal R}_W$ hence $(h,g)\in {\mathcal R}_W$ by transitivity 
The implication \eqref{eq:implication} is proved.

Finally, let us prove the claim that $\partial_{g,g}(x)=g(x)$ for all $g\in W$, $x\in \hat {\bf D}(W)$. Once again, we proceed by induction in $\ell(g)$.
If $\ell(g)=0$, i.e., $g=1$, then  we have nothing to prove. 
Suppose that $\ell(g)\ge 1$, 
i.e.,  $\ell(s_ig)=\ell(g)-1$ for some $i\in I$. Taking into account that $(g,s_ig)\notin {\mathcal R}_W$ hence $\partial_{s_ig,g}=0$ by 
\eqref{eq:implication}, \eqref{eq:partial recursion} implies that 
$\partial_{g,g}(x)=s_i(\partial_{s_ig,s_ig}(x))$
for all $x\in \hat {\bf D}(W)$. Using the inductive hypothesis in the form $\partial_{s_ig,s_ig}(x)=s_ig(x)$ for $x\in \hat {\bf D}(W)$, we obtain:
$\partial_{g,g}(x)=s_i(\partial_{s_ig,s_ig}(x))=s_i(s_ig(x))=g(x)$, 
which proves the claim.

This finishes the proof of Proposition \ref{pr:refined factored derivatives}.
\end{proof} 

Prove (a) now. Indeed,  $(w,s_i)\in {\mathcal R}_W$ iff $W\in \{1,s_i\}$. Therefore, the defining relations \eqref{eq:defining relations universal Nichols} read
\begin{equation}
\label{eq:defining relations universal Nichols s_i}
d_{s_ig,h}=d_{s_i,1}d_{g,h}+d_{s_i,s_i}d_{g,s_ih},~d_{gs_i,h}=d_{g,h}d_{s_i,1}+d_{g,hs_i}d_{s_i,s_i}
\end{equation}
for all $i\in I$, $g,h\in W$. 

In particular, if $g=s_i,h=s_i$, we obtain:
\begin{equation}
\label{eq:defining relations universal Nichols s_i,1}
d_{s_i,1}d_{s_i,s_i}+d_{s_i,s_i}d_{s_i,1}=0
\end{equation}
because $d_{1,s_i}=0$.

Taking  $g=h$, such that $s_ig=gs_{i'}$ and $\ell(s_ig)>\ell(g)$ for some $i,i'\in I$, \eqref{eq:defining relations universal Nichols s_i} implies that  $d_{s_i,1}d_{g,g}=d_{g,g}d_{s_{i'},1}$. In particular, taking $g=\underbrace{s_js_i\cdots s_{j'}}_{m_{ij}-1}$ whenever $m_{ij}\ge 2$ in the notation of Definition \ref{def:H_0 Coxeter}, we have $s_ig=gs_{i'}$  and we obtain:
\begin{equation}
\label{eq:defining relations universal Nichols s_i s_j} 
\underbrace{d_{s_i,1}d_{s_j,s_j}d_{s_i,s_i}\cdots d_{s_{j'}s_{j'}}}_{m_{ij}} =\underbrace{d_{s_j,s_j}\cdots d_{s_{i'}s_{i'}}d_{s_{j'}s_{j'}}d_{s_{i'},1}}_{m_{ij}}
\end{equation}

The relations \eqref{eq:defining relations universal Nichols s_i,1} and \eqref{eq:defining relations universal Nichols s_i s_j} guarantee that \eqref{eq:nichols graded homomorphism} defines a  homomorphism $\hat \varphi_W$ of algebras. The relations \eqref{eq:defining relations universal Nichols s_i} guarantee that $\underline {\bf B}(W,{\mathcal R}_W)$ is generated by $d_{s_i,1}$ and $d_{s_i,s_i}$, $i\in I$ hence the homomorphism 
\eqref{eq:nichols graded homomorphism} is surjective. 

Finally, taking into account that $\Delta(d_{s_i,s_i})=d_{s_i,s_i}\otimes d_{s_i,s_i}$ and $\Delta_{d_{s_i,1}}=d_{s_i,1}\otimes d_{1,1}+d_{s_i,s_i}\otimes d_{s_i,1}$ for $i\in I$ and $d_{1,1}=1$, we see that $\hat \varphi_W$ is a homomorphism of Hopf algebras.

Let us prove the second assertion of (a). First, show that $d_{w,w}\ne 1$ in ${\mathcal B}(W,{\mathcal R}_W)$ 
for each  $w\in W\setminus \{1\}$. By the construction, $\hat \varphi_W(w)=d_{w,w}$ for all $w\in W$. 

We need the following fact.
\begin{lemma} 
\label{le:d's}
For each $s\in {\mathcal S}$ there is a unique nonzero element $d'_s\in {\mathcal B}(W,{\mathcal R}_W)$ such that  
$d'_s=\hat \varphi_W(d_s)$ for $s\in {\mathcal S}$ and $d'_{ws_iw^{-1}}=\chi_{w,s_i}d_{w,w}d'_{s_i}d_{w,w}^{-1}$ for any $w\in W$, $i\in I$.

\end{lemma}

\begin{proof} The uniqueness follows from the fact that $d_s$ is determined uniquely by same property and  $\hat \varphi_W$ is a homomorphism of algebras. The fact that $d_s'\ne 0$ follows from that $d'_{s_i}=-d_{s_1,1}\ne 0$ for all $i\in I$, 
which, in turn, follows from Corollary \ref{cor:partial action} and Proposition  \ref{pr:refined factored derivatives} since $\partial_{s_i,1}(D_i)=-1$ for $i\in I$.

The lemma is proved. \end{proof}
 
Suppose that $d_{w,w}=1$ for some $w$. Lemma \ref{le:d's} implies that $d'_{ws_iw^{-1}}=-d_{w,w}d'_{s_i}d_{w,w}^{-1}$ in $\underline {\bf B}(W,{\mathcal R}_W)$ for  $i\in I$ such that 
$\ell(ws_i)=\ell(w)-1$ hence 
$ws_iw^{-1}=s_i$  and $w=1$.

This proves that $d_{w,w}\ne 1$ for each $w\in W$, $w\ne 1$ hence $d_{w,w}\ne d_{w',w'}$ if $w\ne w'$.

Finally, since the $\ZZ$-linear span of all $d_{w,w}$ is a sub-bialgebra of  $\underline {\bf B}(W,{\mathcal R})$ and each $d_{w,w}$ is grouplike, then the set $\{d_{w,w}\,|\,w\in W\}$ is $\ZZ$-linearly independent.
This proves the second assertion and finishes the proof of (a). 

Prove Theorem \ref{th:nichols graded homomorphism}(b). The presentation \eqref{eq:presentation Nichols} implies (by induction in length) that ${\mathcal B}(W,{\mathcal R}_W)$ is generated by all $v_{s_i,1}^g$, i.e., by all $d'_s$, i.e., by $\hat \varphi(Y_W)$. This implies that $\underline {\hat \varphi}$ is surjective. Also $\underline {\hat \varphi}$ commutes with $W$-action and preserves $W$-grading, therefore, it is a homomorphism of algebras in $_W^W{\mathcal YD}$. In turn, this implies that $\underline {\hat \varphi}\otimes \underline {\hat \varphi}$ is a well-defined surjective homomorphism of algebras  $\hat {\bf H}_0(W)\otimes \hat {\bf H}_0(W)\twoheadrightarrow {\mathcal B}(W,{\mathcal R}_W)\otimes {\mathcal B}(W,{\mathcal R}_W)$. 
Note also that $\underline \Delta(v_{s_i,1}^g)=v_{s_i,1}^g\otimes 1+1\otimes v_{s_i,1}^g$ for all $g\in W$, $i\in I$ by \eqref{th:presentation Nichols}, that is, $\underline \Delta(d'_s)=d'_s\otimes 1+1\otimes d'_s$ for any $s\in {\mathcal S}$. This and the above imply that  
$$\underline \Delta\circ \underline {\hat \varphi}=(\underline {\hat \varphi}\otimes \underline {\hat \varphi})\circ \underline \Delta\ .$$
It is also immediate that $\underline \varepsilon(d_{s'})=0$ for all $s\in {\mathcal S}$ hence $\underline \varepsilon\circ \underline {\hat \varphi}=\underline \varepsilon$. 

Finally, note that since  $d'_s\ne 0$ by Lemma \ref{le:d's} and $|d'_s|=s$ for all $s\in {\mathcal S}$, the set $\{d'_s\,|\,s\in {\mathcal S}\}$ is $\ZZ$-linearly independent.
This finishes the proof of (b).

 
Theorem \ref{th:nichols graded homomorphism} is proved.
\end{proof}

\begin{remark} Theorem \ref{th:nichols graded homomorphism}(b) asserts that ${\mathcal B}(W,{\mathcal R}_W)$ is essentially a pre-Nichols algebra, however, we are not yet aware of existence of the braided antipode in ${\mathcal B}(W,{\mathcal R}_W)$.

\end{remark}

\begin{definition}
\label{def:tilde H}
Let $W$ be a simply-laced Coxeter group. Denote by ${\bf H}_0(W)$ the $\ZZ$-algebra  generated by $s_i,d_i$, $i=1,\ldots,n-1$ subject to relations: 

$\bullet$   $s_id_i+d_is_i=0$, $d_i^2=0$ for $i\in I$.

$\bullet$   $s_is_j=s_js_i$ $d_js_i=s_id_j$, $d_jd_i=d_id_j$ for all $i,j\in I$ with $m_{ij}=2$.

$\bullet$   $s_js_is_j=s_is_js_i$, $s_jd_is_j=s_id_js_i$, $d_js_id_j=s_id_jd_i+d_id_js_i$ for all $i,j\in I$ with $m_{ij}=3$.

\end{definition} 

That is, the simply-laced ${\bf H}_0(W)$ is obtained by ``homogenizing" Theorem \ref{th:hopf hat intro W} and  is naturally a Hopf algebra.
In particular, the canonical surjective algebra homomorphism $\hat {\bf H}_0(W)\twoheadrightarrow {\bf H}_0(W)$ is that of Hopf algebras.

The following is an immediate graded version of Proposition   \ref{pr:presentation simply-laced D}. 

\begin{lemma} 
\label{le:Nichols S_n} For any simply-laced Coxeter group $W$ one has:

\noindent (a) the algebra ${\bf H}_0(W)$ is isomorphic to the cross product ${\bf D}_0(W)\rtimes \ZZ W$, where  ${\bf D}_0(W)$ is the $\ZZ$-algebra generated by $d_s$, $s\in {\mathcal S}$, 
subject to relations (in the notation of Proposition \ref{pr:presentation simply-laced D}):

$\bullet$ $d_s^2=0$ for all $s\in {\mathcal S}$.

$\bullet$ $d_sd_{s'}=d_{s'}d_s$ for all compatible pairs $(s,s')\in {\mathcal S}\times {\mathcal S}$ with $m_{s,s'}=2$.

$\bullet$ $d_sd_{s'}=d_{ss's}d_s+d_{s'}d_{ss's}$ for all compatible pairs $(s,s')\in {\mathcal S}\times {\mathcal S}$ with $m_{s,s'}=3$.

\noindent (b) ${\bf D}_0(W)$ is a (braided) Hopf algebra in the category $_W^W{\mathcal YD}$ so that the canonical surjective homomorphism $\hat {\bf D}_0(W)\twoheadrightarrow {\bf D}_0(W)$ is that of braided Hopf algebras.
\end{lemma}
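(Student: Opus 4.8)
The plan is to deduce both parts formally from the already-established homogeneous analogue of Theorem~\ref{th:ZWcross and free D}, which gives the factorization $\hat{\bf H}_0(W)=\hat{\bf D}_0(W)\rtimes\ZZ W$ with $\hat{\bf D}_0(W)=\langle d_s\mid d_s^2=0\rangle$ and exhibits $\hat{\bf D}_0(W)$ as a braided Hopf algebra in ${}_W^W{\mathcal YD}$. For part~(a) I would first observe that, by Definition~\ref{def:tilde H}, the algebra ${\bf H}_0(W)$ is the quotient of $\hat{\bf H}_0(W)$ by the ideal generated by the two families of \emph{extra} relations $d_jd_i-d_id_j$ (for $m_{ij}=2$) and $d_js_id_j-s_id_jd_i-d_id_js_i$ (for $m_{ij}=3$); the remaining relations of Definition~\ref{def:tilde H}, namely $d_js_i=s_id_j$ and $s_jd_is_j=s_id_js_i$, already hold in $\hat{\bf H}_0(W)$ as instances of the linear braid relations and of the well-definedness of $d_s$. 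Using the cross-product identities $s_id_js_i=d_{s_is_js_i}$ and $s_id_is_i=-d_i$, a one-line manipulation (right-multiply the $m_{ij}=3$ relation by $s_i$) rewrites the second family as $d_{s_i}d_{s_j}-d_{s_is_js_i}d_{s_i}-d_{s_j}d_{s_is_js_i}$, so both families lie inside $\hat{\bf D}_0(W)$.

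Next I would let ${\bf K}\subset\hat{\bf D}_0(W)$ be the $\ZZ$-span of all $W$-conjugates of these two simple relations and identify these conjugates with the compatible-pair relations of the statement. Here the key point is that in the definition of a compatible pair $(s,s')=(ws_iw^{-1},ws_jw^{-1})$ the conditions $\ell(ws_i)=\ell(w)+1=\ell(ws_j)$ say precisely that $w$ is minimal in the coset $wW_{\{i,j\}}$, whence $\ell(wt)=\ell(w)+\ell(t)$ and $\chi_{w,t}=+1$ for every reflection $t\in W_{\{i,j\}}$; conjugating the simple relation for $(s_i,s_j)$ by such $w$ therefore produces exactly the compatible-pair relation, with no stray signs. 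This shows ${\bf K}$ is $W$-stable, i.e.\ $\ZZ W\cdot{\bf K}\subset{\bf K}\cdot\ZZ W$, and that the ideal of $\hat{\bf H}_0(W)$ generated by the simple relations coincides with the one generated by ${\bf K}$. Applying Lemma~\ref{le:general factored quotient H} with ${\bf H}=\hat{\bf H}_0(W)$, ${\bf D}=\hat{\bf D}_0(W)$, $H=\ZZ W$ then yields ${\bf H}_0(W)={\bf D}_0(W)\rtimes\ZZ W$ with ${\bf D}_0(W)=\hat{\bf D}_0(W)/{\bf I}_{\bf K}$, and since ${\bf I}_{\bf K}$ is generated by the compatible-pair relations this is exactly the presentation claimed in~(a) (the graded shadow of Proposition~\ref{pr:presentation simply-laced D}).

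For part~(b) I would avoid re-verifying coideal identities by hand and instead use the remark preceding the lemma: ${\bf H}_0(W)$ is a Hopf algebra and $\hat{\bf H}_0(W)\twoheadrightarrow{\bf H}_0(W)$ is a Hopf map. The projection $\pi\colon{\bf H}_0(W)\to\ZZ W$ killing all $d_s$ and fixing $W$ is an algebra map and a coalgebra map (one checks directly $(\pi\otimes\pi)\Delta(d_s)=0=\Delta\pi(d_s)$), hence a Hopf-algebra retraction of the inclusion $\ZZ W\hookrightarrow{\bf H}_0(W)$. By Radford's biproduct theorem (see e.g.\ \cite[Section~1.2]{AS}) this exhibits ${\bf H}_0(W)$ as the bosonization of the coinvariant subalgebra ${\bf H}_0(W)^{\mathrm{co}\pi}$, which is a braided Hopf algebra in ${}_W^W{\mathcal YD}$. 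Since $(\mathrm{id}\otimes\pi)\Delta(d_s)=d_s\otimes1$, each $d_s$ is coinvariant, so the factorization of part~(a) forces ${\bf H}_0(W)^{\mathrm{co}\pi}={\bf D}_0(W)$; thus ${\bf D}_0(W)$ is a braided Hopf algebra, with braided structure maps $\underline\Delta(d_s)=d_s\otimes1+1\otimes d_s$, $\underline\varepsilon(d_s)=0$, $\underline S(d_s)=-d_s$ inherited from $\hat{\bf D}_0(W)$, and $\hat{\bf D}_0(W)\twoheadrightarrow{\bf D}_0(W)$ is a morphism of braided Hopf algebras because it is the restriction to coinvariants of the Hopf map $\hat{\bf H}_0(W)\twoheadrightarrow{\bf H}_0(W)$.

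The main obstacle I anticipate is the sign bookkeeping in part~(a): verifying that $W$-conjugation sends the two simple relations exactly onto the compatible-pair relations, and hence that ${\bf K}$ is genuinely $W$-stable, is the one step that is not purely formal and relies on the minimal-coset-representative observation (equivalently, on the cocycle identity \eqref{eq:2 cocycle} for $\chi$). Once this is in place, everything else is a formal consequence of Lemma~\ref{le:general factored quotient H} and of Radford's biproduct correspondence, in accordance with the lemma being an ``immediate graded version'' of Proposition~\ref{pr:presentation simply-laced D}.
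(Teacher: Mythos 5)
Your argument is correct, and for part (a) it is essentially the route the paper intends when it declares the lemma an ``immediate graded version'' of Proposition~\ref{pr:presentation simply-laced D}: push the two extra relations of Definition~\ref{def:tilde H} into $\hat{\bf D}_0(W)$, conjugate by minimal coset representatives, and invoke Lemma~\ref{le:general factored quotient H}. One step you assert without checking: the identification of the span of \emph{all} $W$-conjugates of the two simple relations with the compatible-pair relations (and hence the $W$-stability you need) requires, besides the computation for $w\in W^{\{i,j\}}$, the observation that conjugation by $W_{\{i,j\}}$ itself permutes the rank-two relations --- e.g.\ $s_i\bigl(d_id_j-d_{s_is_js_i}d_i-d_jd_{s_is_js_i}\bigr)s_i=d_jd_i-d_{s_is_js_i}d_j-d_id_{s_is_js_i}$, using $s_id_is_i=-d_i$ and $s_id_{s_is_js_i}s_i=d_j$. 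This is the graded shadow of Lemma~\ref{le:W^J conjugation}(b); it is a two-line check, but without it the argument that every conjugate lands back in your ${\bf K}$ is incomplete. For part (b) you take a genuinely different route from the paper's toolkit: instead of verifying directly that the kernel of $\hat{\bf D}_0(W)\twoheadrightarrow{\bf D}_0(W)$ is a braided Hopf ideal (the paper's pattern elsewhere, cf.\ Lemma~\ref{le:underline K two-sided coideal braided}; here it reduces to noting that the generators above are braided-primitive and span a Yetter--Drinfeld submodule, which your own coproduct computation for $d_s$ already makes plausible), you appeal to Radford's biproduct theorem applied to the Hopf projection ${\bf H}_0(W)\to\ZZ W$ and the factorization from (a). That is legitimate and arguably cleaner, since it manufactures the braided Hopf structure on the coinvariants formally; the trade-off is importing an external structure theorem where a one-line primitivity check in $\hat{\bf D}_0(W)$ would suffice, and it quietly relies on the paper's unproved assertion that ${\bf H}_0(W)$ is itself a Hopf algebra (which the direct primitivity argument would instead establish).
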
 

\begin{remark} In view of Remark \ref{rem:pre-Nichols}, for any simply-laced Coxeter group $W$ the Hopf algebra ${\bf D}_0(W)$ is a pre-Nichols algebra  of the  Yetter-Drinfeld module $\underline Y_W=\oplus_{s\in {\mathcal S}} \ZZ\cdot d_s$ over $W$   so that the canonical surjective homomorphism $\hat {\bf D}_0(W)\twoheadrightarrow {\bf D}_0(W)$ is that of pre-Nichols algebras. 

\end{remark}

\begin{remark} 
\label{rem:FK}
The algebra ${\bf D}_0(S_n)$ coincides with the Fomin-Kirillov algebra ${\mathcal E}_n$ defined in \cite{FK}.

\end{remark}

\begin{theorem} For any simply-laced Coxeter group $W$  the homomorphism \eqref{eq:nichols homomorphism} factors through the following surjective homomorphism of bialgebras in $_W^W{\mathcal YD}$.
\begin{equation}
\label{eq:D0 to BW}
{\bf D}_0(W)\twoheadrightarrow \mathcal{\bf B}(W,{\mathcal R}_W) \ .
\end{equation}
\end{theorem}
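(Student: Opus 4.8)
The plan is to show that the surjection $\underline{\hat\varphi}_W\colon\hat{\bf D}_0(W)\twoheadrightarrow{\mathcal B}(W,{\mathcal R}_W)$ of Theorem \ref{th:nichols graded homomorphism}(b) annihilates the kernel of the canonical projection $\hat{\bf D}_0(W)\twoheadrightarrow{\bf D}_0(W)$, so that it descends to the asserted surjection \eqref{eq:D0 to BW}; the descended map is then automatically a homomorphism of bialgebras in $_W^W{\mathcal YD}$, since both $\underline{\hat\varphi}_W$ and the projection are. By Lemma \ref{le:Nichols S_n}(a) that kernel is the ideal generated by the compatible-pair relations $d_sd_{s'}-d_{s'}d_s$ (for $m_{s,s'}=2$) and $d_sd_{s'}-d_{ss's}d_s-d_{s'}d_{ss's}$ (for $m_{s,s'}=3$). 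Writing $d'_s:=\underline{\hat\varphi}_W(d_s)$, it therefore suffices to prove that these relations hold among the $d'_s$ inside ${\mathcal B}(W,{\mathcal R}_W)$.

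First I would reduce to rank-$2$ parabolics by $W$-equivariance. By Theorem \ref{th:presentaton Nichols}(b), $W$ acts on ${\mathcal B}(W,{\mathcal R}_W)$ by algebra automorphisms via \eqref{eq:W action Nichols}, and by Lemma \ref{le:d's} one has $d'_{ws_iw^{-1}}=\chi_{w,s_i}\,w(d'_{s_i})$. A compatible pair $(s,s')=(ws_iw^{-1},ws_jw^{-1})$ satisfies $\ell(ws_i)=\ell(ws_j)=\ell(w)+1$, so $w$ is the minimal-length representative of $wW_{\{i,j\}}$ and hence $\ell(wu)=\ell(w)+\ell(u)$ for every $u\in W_{\{i,j\}}$; in particular $\chi_{w,s}=+1$ for each $s\in\{s_i,s_j,s_is_js_i\}$, since $\chi_{w,\cdot}$ equals $+1$ precisely when the length goes up. Applying the $\ZZ$-linear, multiplicative automorphism $w(\cdot)$ to the base relation for $(s_i,s_j)$ then reproduces exactly the relation for $(s,s')$, all intervening signs being $+1$ and $ws_is_js_iw^{-1}=ss's$. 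Thus it is enough to verify both relations for a single simple pair $(s_i,s_j)$ with $m_{ij}\in\{2,3\}$, i.e. inside the image of $\hat{\bf D}_0(W_{\{i,j\}})$.

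For $m_{ij}=2$ the relation is immediate. Working in $\underline{\bf B}(W,{\mathcal R}_W)$ through $d'_{s_i}=-d_{s_i,1}$ and the identity \eqref{eq:defining relations universal Nichols s_i}, one gets $d_{s_is_j,1}=d_{s_i,1}d_{s_j,1}$, because $d_{s_j,s_i}=0$ (as $s_i\not\le s_j$ in ${\mathcal R}_W$), and symmetrically $d_{s_js_i,1}=d_{s_j,1}d_{s_i,1}$; since $s_is_j=s_js_i$ these coincide, giving $d'_{s_i}d'_{s_j}=d'_{s_j}d'_{s_i}$.

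The heart of the argument, and the step I expect to be the main obstacle, is the case $m_{ij}=3$, where $W_{\{i,j\}}\cong S_3$ and the required identity $d'_{s_i}d'_{s_j}=d'_{s_is_js_i}d'_{s_i}+d'_{s_j}d'_{s_is_js_i}$ is precisely the defining relation of the Fomin--Kirillov algebra ${\mathcal E}_3$ (cf. Remark \ref{rem:FK} and \cite{FK}). I would verify it by a direct finite computation in ${\mathcal B}(S_3,{\mathcal R}_W)$: write $d'_{s_is_js_i}=\chi_{s_i,s_j}\,d_{s_i,s_i}d'_{s_j}d_{s_i,s_i}^{-1}=-v^{s_i}_{s_j,1}$, and repeatedly apply \eqref{eq:presentaton Nichols} to expand $v^1_{s_is_j,1}$ through the two factorizations $s_is_j=(s_is_js_i)s_i=s_j(s_is_js_i)$, using the square-zero relations $d'_s{}^2=0$ and the braid relation $d'_{s_i}d'_{s_j}d'_{s_i}=d'_{s_j}d'_{s_i}d'_{s_j}$ (which itself falls out of comparing the two expansions of $v^1_{s_is_js_i,1}$) to collect terms. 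The delicacy is entirely in the bookkeeping: one must keep track of the Bruhat vanishing $v^w_{g,h}=0$ for $h\not\le g$ and of the grouplike conjugations $d_{s_i,s_i}v^w_{g,h}d_{s_i,s_i}^{-1}=v^{s_iw}_{g,h}$ dictated by \eqref{eq:W action Nichols}, and check that the surviving degree-$2$ monomials assemble into the stated identity rather than into a strictly larger degree-$2$ space. Once both base relations are in hand, the $W$-equivariance above promotes them to all compatible pairs, so $\underline{\hat\varphi}_W$ factors through ${\bf D}_0(W)$; surjectivity and the braided-bialgebra property of the resulting map \eqref{eq:D0 to BW} are then inherited directly from \eqref{eq:nichols homomorphism}.
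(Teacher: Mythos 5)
Your overall architecture is sound and, in substance, coincides with the paper's: everything reduces to checking that the elements $d'_s=\underline{\hat\varphi}_W(d_s)$ satisfy the defining quadratic relations of ${\bf D}_0(W)$ inside ${\mathcal B}(W,{\mathcal R}_W)$. The packaging differs in one respect. You use the presentation of ${\bf D}_0(W)$ by all $d_s$, $s\in{\mathcal S}$, and compatible-pair relations (Lemma \ref{le:Nichols S_n}(a)), which forces the reduction of a general compatible pair to a simple pair via the $W$-action; that reduction is correct (for a compatible pair the relevant $\chi$'s are all $+1$ by Proposition \ref{pr:chi sigma satisfies cocycle condition}(b), and conjugation by the grouplike $d_{w,w}$ realizes the $W$-action). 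The paper sidesteps this by first constructing the unrestricted map \eqref{eq:H0 to BW} on ${\bf H}_0(W)$ using its presentation by \emph{simple} generators (Definition \ref{def:tilde H}) and only then restricting to ${\bf D}_0(W)$; your route costs an extra equivariance argument, the paper's costs carrying the group algebra along. Your $m_{ij}=2$ case and the reduction are fine.

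The gap is in the decisive $m_{ij}=3$ step, which you announce rather than prove, and the specific computation you propose would not produce the relation. Comparing the two factorizations $s_is_j=(s_is_js_i)s_i=s_j(s_js_is_j)$ of the length-two element yields only tautologies: for instance, expanding $d_{s_is_j,1}=d_{s_is_js_i,1}d_{s_i,1}+d_{s_is_js_i,s_i}d_{s_i,s_i}$ and then expanding both length-three coefficients via the reduced word $s_i\cdot(s_js_i)$, the identities $d_{s_i,1}^2=0$ and $d_{s_i,1}d_{s_i,s_i}+d_{s_i,s_i}d_{s_i,1}=0$ (the latter is \eqref{eq:defining relations universal Nichols s_i,1}) make everything cancel back to $d_{s_i,1}d_{s_j,1}$, so no new degree-two relation "assembles." The nontrivial relation lives in the ambiguity of expanding the \emph{length-three} element with a \emph{non-identity} second index: the paper computes $d_{s_is_js_i,\,s_i}$ once via the reduced word $s_i\cdot(s_js_i)$ and once via $s_j\cdot(s_is_j)$, using \eqref{eq:defining relations universal Nichols s_i}, \eqref{eq:dsisj} and the Bruhat vanishing $d_{s_is_j,s_js_i}=0$, and equating the two answers gives \eqref{relation d 3}, which is exactly the homogenized Fomin--Kirillov relation you need. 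So the statement you set out to verify is the right one, but the computation at the core must actually be carried out, and it must be aimed at $d_{s_is_js_i,s_i}$ (equivalently $v^w_{s_is_js_i,s_i}$) rather than at $v^1_{s_is_j,1}$.
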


\begin{proof} First, prove that \eqref{eq:nichols graded homomorphism} factors through the homomorphism of bialgebras
\begin{equation}
\label{eq:H0 to BW}
{\bf H}_0(W)\twoheadrightarrow \underline {\bf B}(W,{\mathcal R}_W) \ .
\end{equation}

Clearly, for distinct $i,j\in I$ we have by 
\eqref{eq:defining relations universal Nichols s_i}:
\begin{equation}
\label{eq:dsisj}
d_{s_is_j,1}
=d_{s_i,1}d_{s_j,1}, d_{s_is_j,s_i}
=d_{s_i,s_i}d_{s_j,1},~ d_{s_is_j,s_j}=d_{s_i,1}d_{s_j,s_j}
\end{equation}
because $d_{s_j,s_i}=d_{s_j,s_i}=d_{s_j,s_is_j}=0$.

Let $i,j\in I$ be such that $m_{ij}=2$, i.e., $s_is_j=s_js_i$. Then using  \eqref{eq:dsisj} (also with  $i$ and $j$ interchanged where necessary), we obtain 
\begin{equation}
\label{relation d 2}
d_{s_i,1}d_{s_j,1}=d_{s_j,1}d_{s_i,1},~d_{s_i,s_i}d_{s_j,1}=d_{s_j,1}d_{s_i,s_i}
\end{equation}

Now let $m_{ij}=3$ and let $s_{ij}:=s_is_js_i=s_js_is_j$.

Indeed, let us compute $d_{s_{ij},1}$  
  in two ways using \eqref{eq:defining relations universal Nichols s_i} and \eqref{eq:dsisj} (also interchanging $i$ and $j$ where necessary). We obtain:
$$d_{s_{ij},s_i}=d_{s_is_js_i,s_i}=d_{s_i,1}d_{s_js_i,s_i}+d_{s_i,s_i}d_{s_js_i,1}=d_{s_i,1}d_{s_j,1}d_{s_i,s_i}+d_{s_i,s_i}d_{s_j,1}d_{s_i,1}$$
$$d_{s_{ij},s_i}=d_{s_js_is_j,s_i}=d_{s_j,1}d_{s_is_j,s_i}+d_{s_j,s_j}d_{s_is_j,s_js_i}=d_{s_j,1}d_{s_is_j,s_i}=d_{s_j,1}d_{s_j,s_j}d_{s_i,1}$$

Therefore, 
$d_{s_j,1}d_{s_j,s_j}d_{s_i,1}=d_{s_i,1}d_{s_j,1}d_{s_i,s_i}+d_{s_i,s_i}d_{s_j,1}d_{s_i,1}$.
Clearly, the above relation and \eqref{relation d 2}  ensure that \eqref{eq:H0 to BW} is a well-defined homomorphism of algebras. Clearly, it commutes with the coproduct, the counit and the antipode, so is a homomorphism of Hopf algebras. 
%
%
%

Then, copying the argument of the proof of Theorem \ref{th:nichols graded homomorphism}(b), we conclude that \eqref{eq:D0 to BW} is surjective, commutes with the $W$-action,  preserves $W$-grading, braided coproduct and the braided counut.

The theorem is proved.
\end{proof}

\begin{remark}
In  \cite[Section 6]{MS} the authors conjectured that ${\bf D}_0(S_n)$ is, in fact,  a Nichols algebra. In turn, this would imply that \eqref{eq:D0 to BW} is an isomorphism for $W=S_n$, $n\ge 2$.
So is natural to ask whether \eqref{eq:D0 to BW} is  an isomorphism for each simply-laced Coxeter group $W$.

\end{remark}
We conclude the section with a (conjectural) generalization of \eqref{eq:D0 to BW} to all Coxeter groups as follows. Define a filtration on $\hat {\bf H}(W)$ by assigning the filtered degree $1$ to each $D_i$ and $0$ to each $s_i$. 

The following is an immediate consequence of Theorems \ref{th:ZWcross and free D} and \ref{th:hopf hat intro W}.

\begin{lemma}  For any Coxeter group $W$ one has:

(a) The assignments $d_s\mapsto D_s$, $s\in {\mathcal S}$, define a natural isomorphism  of graded algebras
$$\hat gr_W:\hat {\bf H}_0(W) \widetilde \to gr~\hat {\bf H}(W)\ ,$$
where $gr~\hat {\bf H}(W)$ is the associated graded of $\hat {\bf H}(W)$.

(b) If $W$ is simply laced, then $\hat gr_W$ factors through a surjective homomorphism ${\bf D}_0(W)\twoheadrightarrow gr~ {\bf D}(W)$ of $W$-graded algebras. 

\end{lemma}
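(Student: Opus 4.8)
The plan is to exploit the degree filtration on $\hat {\bf H}(W)$ in which $\deg s_i=0$ and $\deg D_i=1$ (and hence $\deg D_s=1$ for every reflection $s$, since $D_s$ is a conjugate of some $D_i$), and in both parts to simply record what each defining relation becomes after passing to leading (top-degree) terms. For part (a) I would first check that the leading term of each relation of Definition \ref{def:Hecke-Hopf algebra W} is exactly a relation of $\hat {\bf H}_0(W)$ (Definition \ref{def:H_0 Coxeter}): the relation $D_i^2=D_i$ degenerates to $d_i^2=0$, the relation $s_iD_i+D_is_i=s_i-1$ degenerates in degree $1$ to $s_id_i+d_is_i=0$, the degree-$0$ Coxeter relations are unchanged, and the linear braid relations are already homogeneous of degree $1$ and so survive verbatim. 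Since $gr~\hat {\bf H}(W)$ is generated in filtered degrees $\le 1$, this yields a surjective graded algebra homomorphism $\hat {\bf H}_0(W)\twoheadrightarrow gr~\hat {\bf H}(W)$, $d_i\mapsto \bar D_i$, $s_i\mapsto s_i$, whose inverse is the asserted $\hat gr_W$.

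To prove this surjection is injective, hence an isomorphism, I would invoke the factorization $\hat {\bf H}(W)=\hat {\bf D}(W)\cdot \ZZ W$ of Theorem \ref{th:ZWcross and free D}. Because $\ZZ W$ sits in filtered degree $0$, this is a filtered isomorphism and gives $gr~\hat {\bf H}(W)=gr~\hat {\bf D}(W)\cdot \ZZ W$. Theorem \ref{th:ZWcross and free D} also presents $\hat {\bf D}(W)$ by the sole relations $D_s^2=D_s$, i.e. as the free product over $s\in{\mathcal S}$ of the algebras $\ZZ[D_s]/(D_s^2-D_s)$; hence the square-free words $D_{s_1}\cdots D_{s_k}$ with $s_r\ne s_{r+1}$ form a length-filtered $\ZZ$-basis. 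The homogeneous analogue of Theorem \ref{th:ZWcross and free D} presents $\hat {\bf D}_0(W)$ by $d_s^2=0$, whose square-free words in the $d_s$ form the degree-graded basis. The induced graded map carries one basis bijectively onto the other, so $gr~\hat {\bf D}(W)\cong \hat {\bf D}_0(W)$; matching the cross relations (the leading term of $wD_sw^{-1}=\chi_{w,s}D_{wsw^{-1}}+\sigma_{w,s}(1-wsw^{-1})$ is $\chi_{w,s}\,d_{wsw^{-1}}$) identifies the two $\cdot\,\ZZ W$ factorizations on the nose, proving $\hat gr_W$ is an isomorphism of graded algebras.

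For part (b), with $W$ simply-laced I would equip ${\bf H}(W)=\hat {\bf H}(W)/\underline{\bf J}(W)$ with the quotient filtration, so the Hopf surjection $\hat {\bf H}(W)\twoheadrightarrow {\bf H}(W)$ is filtered and induces a surjection $\hat {\bf H}_0(W)\cong gr~\hat {\bf H}(W)\twoheadrightarrow gr~{\bf H}(W)$. Using the explicit presentation of Theorem \ref{th:hopf hat intro W}, I would verify that the leading term of each relation of ${\bf H}(W)$ is a relation of ${\bf H}_0(W)$ (Definition \ref{def:tilde H}); the essential case is that $D_js_iD_j=s_iD_jD_i+D_iD_js_i+s_iD_js_i$ degenerates in degree $2$ to $d_js_id_j=s_id_jd_i+d_id_js_i$. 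Thus the surjection kills precisely the relations cutting $\hat {\bf H}_0(W)$ down to ${\bf H}_0(W)$ and factors as ${\bf H}_0(W)\twoheadrightarrow gr~{\bf H}(W)$; on Demazure parts, comparing Proposition \ref{pr:presentation simply-laced D} with Lemma \ref{le:Nichols S_n} shows the leading terms of the defining relations of ${\bf D}(W)$ are exactly the defining relations of ${\bf D}_0(W)$, so the map restricts to the asserted surjection ${\bf D}_0(W)\to gr~{\bf D}(W)$.

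Finally I would upgrade this to a morphism of braided Hopf algebras in ${}_W^W{\mathcal YD}$. The coproduct of $\hat {\bf H}(W)$ is filtered, since $\Delta(D_i)=D_i\otimes 1+s_i\otimes D_i$ lies in filtered degree $1$ and $\Delta(s_i)=s_i\otimes s_i$ in degree $0$; it therefore descends to ${\bf H}(W)$, making $gr~{\bf H}(W)$ a Hopf algebra, and via ${\bf H}(W)={\bf D}(W)\cdot \ZZ W$ of Theorem \ref{th:HW factorization} it is the cross product $gr~{\bf D}(W)\rtimes \ZZ W$, which exhibits $gr~{\bf D}(W)$ as a braided Hopf algebra in ${}_W^W{\mathcal YD}$ with $\underline\Delta(d_s)=d_s\otimes 1+1\otimes d_s$. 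As the surjection ${\bf H}_0(W)\twoheadrightarrow gr~{\bf H}(W)$ is induced by $gr$ of a Hopf surjection (and ${\bf H}_0(W)$ is itself a Hopf quotient of $\hat {\bf H}_0(W)$), its restriction ${\bf D}_0(W)\to gr~{\bf D}(W)$ is automatically braided-Hopf. The main obstacle I anticipate is precisely this last bookkeeping in part (b): confirming that the filtration, the bosonization decomposition into $\cdot\,\ZZ W$, and the braided coproduct are all simultaneously compatible, so that taking associated graded genuinely intertwines the ordinary Hopf structure of ${\bf H}(W)$ with the braided structure of ${\bf D}_0(W)$; part (a), by contrast, is a clean leading-term and free-product basis computation once Theorem \ref{th:ZWcross and free D} is available.
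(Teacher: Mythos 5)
Your proof is correct and follows exactly the route the paper intends: the paper states this lemma without proof as an ``immediate consequence'' of Theorems \ref{th:ZWcross and free D} and \ref{th:hopf hat intro W}, and your argument is precisely the natural expansion of that claim --- the square-free-word basis of the free product $\hat{\bf D}(W)$ for the isomorphism in (a), and the leading terms of the explicit simply-laced relations (together with the bosonization ${\bf H}(W)={\bf D}(W)\cdot\ZZ W$) for (b). The only caveat is notational: Definition \ref{def:H_0 Coxeter} misprints $d_i^2=d_i$ where the homogenization requires $d_i^2=0$, and you correctly used the latter.
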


\begin{remark} For any Coxeter group $W$  the composition of $\hat gr_W^{-1}$ with \eqref{eq:nichols graded homomorphism} is a surjective homomorphism $
gr~\hat {\bf D}(W) \twoheadrightarrow {\mathcal B}(W,{\mathcal R}_W)
$ of bialgebras in $_W^W{\mathcal YD}$.
We expect this homomorphism to factor through the surjective homomorphism of bialgebras in $_W^W{\mathcal YD}$:
$gr~ {\bf D}(W) \twoheadrightarrow {\mathcal B}(W,{\mathcal R}_W)$.

\end{remark}

\section{Hecke-Hopf algebras of cyclic groups and generalized Taft algebras}


\label{sec:Taft algebras}
 
In this section we study a variant of the generalized Hecke-Hopf algebra for cyclic groups. In fact, these Hopf algebras are  bialgebras  universally coacting (in the sense of \cite{BBHcross}) on finite dimensional principal ideal domains. 
It turns out that the actual (generalized) Hecke-Hopf algebra of a cyclic group is the quotient of such a universal Hopf algebra and is isomorphic to the Taft algebras.

Let $R$ be a commutative unital ring and let  $f\in R[x]\setminus \{0\}$. Denote by ${\bf H}_f$ the $R$-algebra generated by $s,D$ subject to relations $s^{\deg f}=1$ and the relations given by the functional equation 
\begin{equation}
\label{eq:fstD=ft}
f(ts+D)=f(t)
\end{equation}
over $\kk[t]$ (with the convention that if $\deg f=0$, then $s$ is of infinite order). 

In other words, if we write $f=a_0+a_1x+\cdots +a_n x^n$, $a_0,\ldots,a_n\in R$, $a_n\ne 0$, then ${\bf H}_f$ is subject to relations
$\sum\limits_{r=k}^n a_r\{s,D\}_{k,r-k}=a_k$
for $k=0,\ldots,n$, where $\{{\bf a},{\bf b}\}_{k,r-k}=\{{\bf b},{\bf a}\}_{r-k,k}$ denotes the coefficient of $t^k$ in the expansion of the noncommutative binomial $({\bf a}t+{\bf b})^r$ (that is,
$\{{\bf a},{\bf b}\}_{k,r-k}=\sum\limits_{\substack{\varepsilon_1,\ldots,\varepsilon_r\in \{0,1\}:\\
\varepsilon_1+\cdots +\varepsilon_r=k}} {\bf a}^{\varepsilon_1}{\bf b}^{1-\varepsilon_1}\cdots {\bf a}^{\varepsilon_r}{\bf b}^{1-\varepsilon_r}$).
Clearly, ${\bf H}_{cf+d}={\bf H}_f$ for any $d\in R$ and  $c\in R^\times$. 

\begin{example}

$~$

$\bullet$ $f(x)=x+a_0$. Then ${\bf H}_f=R$.

$\bullet$ $f(x)=x^2+a_1x+a_0$. Then ${\bf H}_f$ is generated by $s$ and $D$ subject to relations $s^2=1$, $D^2=-a_1D$, $sD + Ds=a_1(1-s)$. In particular, if $a_1=-1$, then ${\bf H}_f={\bf H}(S_2)$ by Definition \ref{def:hat H}.

$\bullet$ $f(t)=x^3+a_2x^2+a_1x+a_0$. Then ${\bf H}_f$ is generated by $s$ and $D$ subject to relations $s^3=1$ and
$$D^3=-a_2D^2-a_1D,~s^2D+sDs+Ds^2=a_2(1-s^2),~D^2s+DsD+sD^2+a_2(sD+Ds)=a_1(1-s)\ .$$

\end{example}

\begin{proposition} 
\label{pr:cyclic Hecke-Hopf}
For each  $f(x)\in R[x]$,  ${\bf H}_f$ is a Hopf algebra over $R$  with the coproduct, the counit, and the antipode given respectively by 
\begin{equation}
\label{eq:pre-taft hopf}
\Delta(D)=D\otimes 1+s\otimes D,~\Delta(s)=s\otimes s,~\varepsilon(D)=0,\varepsilon(s)=1, S(s)=s^{-1},S(D)=-s^{-1}D\ .
\end{equation}

\end{proposition}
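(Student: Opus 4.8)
The plan is to verify that the formulas \eqref{eq:pre-taft hopf} extend to well-defined algebra homomorphisms $\Delta \colon {\bf H}_f \to {\bf H}_f \otimes {\bf H}_f$ and $\varepsilon \colon {\bf H}_f \to R$, and an algebra anti-homomorphism $S \colon {\bf H}_f \to {\bf H}_f$, satisfying the bialgebra and antipode axioms. Since ${\bf H}_f$ is presented by generators $s, D$ and the relations $s^{\deg f}=1$ together with the functional equation \eqref{eq:fstD=ft}, the only real task is to check that each of $\Delta, \varepsilon, S$ respects these relations; the axioms themselves then need only be verified on the generators $s$ and $D$, which is routine. I would first dispose of the easy pieces: $\varepsilon$ clearly respects $s^{\deg f}=1$ (as $\varepsilon(s)=1$), and one checks $\varepsilon$ kills the defining relations by applying the counit to \eqref{eq:fstD=ft}, which sends $ts+D \mapsto t$ and hence $f(ts+D)\mapsto f(t)$ identically. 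Similarly $\Delta(s)=s\otimes s$ is grouplike, so $\Delta(s)^{\deg f}=1\otimes 1$, respecting the order relation.

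The central point is that $\Delta$ respects the functional equation \eqref{eq:fstD=ft}. Here I would exploit the observation that $\Delta(s) = s\otimes s$ and $\Delta(D) = D\otimes 1 + s\otimes D$ combine as
$$\Delta(ts + D) = t(s\otimes s) + D\otimes 1 + s\otimes D = (D + ts)\otimes 1 + s\otimes(D+ts)\ ,$$
upon substituting the central variable $t$ and reorganizing. Setting ${\bf a} := ts + D$, this reads $\Delta({\bf a}) = {\bf a}\otimes 1 + s\otimes {\bf a}$, which has exactly the same ``skew-primitive'' shape as $\Delta(D)$ itself with $D$ replaced by ${\bf a}$. The key algebraic fact I would establish is that for an element of this form one has, for every $k\ge 0$,
$$\Delta({\bf a}^k) = \sum_{j=0}^{k} \binom{k}{j}_{?} \, s^{\,j}{\bf a}^{\,k-j} \otimes {\bf a}^{\,j}\ ,$$
or more precisely the genuinely noncommutative binomial expansion matching the $\{s,D\}_{i,k-i}$ notation of the paper; the crucial structural input is that $s$ and ${\bf a}=ts+D$ satisfy $s{\bf a} = (t s + D - (\text{something}))$ — I would pin down the exact commutation $s\,{\bf a}\, s^{-1}$ from the rank-one relations and verify the coproduct is an algebra map by checking it is compatible with this twisted commutation. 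Then, since $f({\bf a})=f(t)$ holds in ${\bf H}_f$ by definition and $\Delta$ is an algebra map, $\Delta(f({\bf a})) = \Delta(f(t)\cdot 1) = f(t)\cdot(1\otimes 1)$, so the relation is preserved. I expect this verification that $\Delta$ is multiplicative and kills the functional equation to be the main obstacle, because it requires controlling the interaction of the noncommutative binomial coefficients with the coproduct.

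For the antipode, I would check that $S$ reverses the defining relations: applying the anti-homomorphism $S$ with $S(s)=s^{-1}$, $S(D)=-s^{-1}D$ to the element $ts+D$ and using $S(s^{\deg f})=s^{-\deg f}=1$, one verifies the image relation is again an instance of \eqref{eq:fstD=ft}. Finally, the antipode axioms $m\circ(S\otimes \mathrm{id})\circ\Delta = \eta\circ\varepsilon = m\circ(\mathrm{id}\otimes S)\circ\Delta$ reduce to the generators: on $s$ this is $s^{-1}s = 1 = \varepsilon(s)$, and on $D$ one computes
$$m\circ(S\otimes\mathrm{id})(D\otimes 1 + s\otimes D) = S(D) + S(s)D = -s^{-1}D + s^{-1}D = 0 = \varepsilon(D)\ ,$$
with the other composite handled symmetrically. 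Since all these checks pass on generators and the maps have been shown to descend to ${\bf H}_f$, coassociativity and counitality follow from the grouplike/skew-primitive form of the coproduct, completing the proof that ${\bf H}_f$ is a Hopf algebra.
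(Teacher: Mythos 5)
Your overall strategy (check that $\Delta$, $\varepsilon$, $S$ respect the defining relations and then verify the Hopf axioms on generators) is legitimate and is essentially the same route the paper takes, where it is packaged through the free Hopf algebra ${\bf H}'=\bigl(R[s]/(s^{\deg f}-1)\bigr)*R[D]$ together with the general fact (Proposition \ref{pr:from coideal to hopf ideal}) that the ideal generated by the augmentation part of a one-sided coideal is automatically a Hopf ideal. Your checks for $\varepsilon$ and for $\Delta(s)$ are fine.

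However, the central computation is wrong. You claim
$$\Delta(ts+D)=t(s\otimes s)+D\otimes 1+s\otimes D=(D+ts)\otimes 1+s\otimes(D+ts)\ ,$$
but the right-hand side equals $D\otimes 1+s\otimes D+t(s\otimes 1)+t(s\otimes s)$, which carries an extra term $t(s\otimes 1)$. The element ${\bf a}=ts+D$ is \emph{not} skew-primitive; the correct identity is
$$\Delta(ts+D)=D\otimes 1+s\otimes(ts+D)=(D\otimes 1)+(s\otimes 1)\cdot\bigl(1\otimes(ts+D)\bigr)\ .$$
The paper's argument hinges on the observation that $t':=1\otimes(ts+D)$ commutes with $s\otimes 1$ and $D\otimes 1$, so it can be substituted for the central variable $t$ in the functional equation: writing $f(ts+D)=\sum_k y_kt^k$ one gets $\Delta(f(ts+D))=\sum_k y_k\otimes(ts+D)^k$, whence the coefficients $y_k$ span a right coideal and the relations $y_k=a_k$ generate a Hopf ideal. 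Your alternative plan --- to expand $\Delta({\bf a}^k)$ by a noncommutative binomial formula controlled by a commutation relation $s{\bf a}s^{-1}$ ``pinned down from the rank-one relations'' --- cannot be carried out: besides resting on the false identity above, ${\bf H}_f$ has no rank-one relation of the form $sDs^{-1}=aD+b(1-s)$ (such a relation is imposed only later, in ${\bf H}_f(a,b)$), so there is no usable commutation between $s$ and $D$ available in ${\bf H}_f$. Your treatment of the antipode is also too quick: $S(f(ts+D))=f(ts^{-1}-s^{-1}D)$, and it is not evident that this is ``again an instance'' of \eqref{eq:fstD=ft}; in the paper the inclusion $S({\bf J}_f)\subset{\bf J}_f$ comes for free from the coideal property via Proposition \ref{pr:from coideal to hopf ideal} rather than from a direct check.
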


\begin{proof} Denote by ${\bf H}'$ the free product (over $R$) of the cyclic group algebra $R[s]/(s^n-1)$, where $n:=\deg f$, 
and the polynomial algebra $R[D]$. By Lemma \ref{le:free Hopf} taken with $H=R[s]/(s^n-1)$, $V=R D$ and $\delta(D)=s\otimes D$, ${\bf H}'$ is  a Hopf algebra with  coproduct, counit, and  antipode as in \eqref{eq:pre-taft hopf}. 



Let $y_k\in {\bf H}'$, $k=0,\ldots,n$ be the coefficients in the expansion 
$f(t s +D)=\sum\limits_{k=0}^n  y_k t^k$.
In fact, 
\begin{equation}
\label{eq:yk of I_f}
y_k=\sum\limits_{i=k}^n a_i\{s,D\}_{i,k-i}
\end{equation}
for $k=0,\ldots,n=\deg f$, where $\sum\limits_{k=0}^n a_kt^k=f(t)$.

Denote by ${\bf K}_f$ the $R$-submodule of ${\bf H}'$ generated by $1$ and $y_0,\ldots,y_{n-1}$. Let ${\bf H}'[t]={\bf H}'\otimes_R R[t]$, which, clearly, is a Hopf algebra over $R[t]$.  

%

\begin{lemma} 
\label{le:right coideal Hf}
The $R$-module ${\bf K}_f$ is a right coideal in ${\bf H}'$. 

\end{lemma}

\begin{proof}
We have in ${\bf H}'[t]$:
$$\Delta(f(st+D))=f(ts\otimes s+D\otimes 1+s\otimes D)=f(s\otimes (st+D)+D\otimes 1)=f(s't'+D')$$
where $s'=s\otimes 1$, $t'=1\otimes (st+D)$, $D'=D\otimes 1$. 
Taking into account that the assignment $s\mapsto s'$, $D\mapsto D'$ is an algebra homomorphism ${\bf H}'\to {\bf H}'\otimes 1$, we obtain
$$\Delta(f(st+D))=\sum_{k=0}^n y'_k t'^k=\sum_{k=0}^n y_k\otimes  (st+D)^k\subset {\bf K}_f\otimes {\bf H}'[t]$$
where $y'_k:=y_k\otimes 1$. 
%
This implies that $\delta(y_k)\in {\bf K}_f\otimes {\bf H}'$ for $k=0,\ldots,n=\deg f$. 

The lemma is proved.
\end{proof}

%
%
%
%
%
%

Finally, note that ${\bf K}_f^+={\bf K}_f\cap Ker~\varepsilon=\sum\limits_{k=0}^n R\cdot(y_k-a_k)$, i.e., ${\bf K}_f^+$ is an $R$-submodule of ${\bf H}'$ generated by all coefficients of $f(ts+D)-f(t)$.  
In view of Proposition \ref{pr:from coideal to hopf ideal}, this implies that the ideal ${\bf J}_f$ generated by $y_k-a_k$, $k=0,\ldots,n$, is a Hopf ideal in ${\bf H}'$, i.e., ${\bf H}_f={\bf H}'/{\bf J}_f$ is a Hopf algebra. 

Proposition \ref{pr:cyclic Hecke-Hopf} is proved. \end{proof}

The following is an analogue of Theorems \ref{th:Hecke in Hecke-Hopf} and \ref{th:upper Hecke in Hecke-Hopf}.
\begin{proposition} 
\label{pr:cyclic Hecke-Hopf coideal} 
For an $R$-algebra $\kk$, $c\in \kk$ and any $f\in \kk[x]$  the assignment $x\mapsto cs+D$ defines a homomorphism of algebras 
\begin{equation}
\label{eq:covariant H_f coaction}
\varphi_c:\kk[x]/(f-f(c))\to {\bf H}_f\otimes_R \kk
\end{equation}
whose image is a left coideal subalgebra in ${\bf H}_f$.

\end{proposition}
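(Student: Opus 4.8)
The plan is to reduce everything to the single algebraic identity $f(cs+D)=f(c)$ holding in ${\bf H}_f\otimes_R\kk$, and then read off both assertions from it. First I would recall from the proof of Proposition~\ref{pr:cyclic Hecke-Hopf} that the defining relations of ${\bf H}_f$ are precisely the coefficientwise equalities $y_k=a_k$ (notation of \eqref{eq:yk of I_f}), i.e.\ that the functional equation \eqref{eq:fstD=ft} holds as an identity in the central indeterminate $t$ inside ${\bf H}_f\otimes_R R[t]$, and hence, after base change, inside ${\bf H}_f\otimes_R\kk[t]$. Since $c\in\kk$ is central in ${\bf H}_f\otimes_R\kk$, I would then apply the $\kk$-algebra homomorphism $t\mapsto c$ (the identity on ${\bf H}_f$), which sends $ts+D\mapsto cs+D$ and therefore $f(ts+D)\mapsto f(cs+D)$ and $f(t)\mapsto f(c)$, yielding $f(cs+D)=f(c)$.

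Because $\kk[x]$ is the free $\kk$-algebra on one generator, the assignment $x\mapsto cs+D$ extends uniquely to a $\kk$-algebra homomorphism $\kk[x]\to{\bf H}_f\otimes_R\kk$, and the identity just established shows that $f(x)-f(c)$ lies in its kernel; hence it descends to the claimed homomorphism $\varphi_c$ on $\kk[x]/(f-f(c))$. For the coideal statement I would compute, using \eqref{eq:pre-taft hopf},
\[
\Delta(cs+D)=c(s\otimes s)+D\otimes 1+s\otimes D=D\otimes 1+s\otimes(cs+D),
\]
so that, writing $C:=\mathrm{Im}(\varphi_c)$ and $x':=cs+D$, we have $\Delta(x')\in{\bf H}_f\otimes C$ (as $1,x'\in C$). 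Since $\Delta$ is an algebra homomorphism and ${\bf H}_f\otimes C$ is a subalgebra of ${\bf H}_f\otimes{\bf H}_f$, the set $\{a\in{\bf H}_f\mid\Delta(a)\in{\bf H}_f\otimes C\}$ is a subalgebra containing $1$ and $x'$; as $C$ is generated by $x'$, this forces $\Delta(C)\subset{\bf H}_f\otimes C$. Thus $C$, being the image of an algebra map, is a left coideal subalgebra.

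I expect no serious obstacle here: the entire argument is formal once the substitution $t\mapsto c$ is justified. The one point deserving care is precisely that justification — I must use that $t$ is a genuine central indeterminate, so that $f(ts+D)=\sum_k y_k t^k$ with the $y_k$ independent of $t$, and that $c$, being a scalar of $\kk$, is central in ${\bf H}_f\otimes_R\kk$; only then is $t\mapsto c$ a ring homomorphism that commutes with forming the polynomial $f(\,\cdot\,)$, so that it legitimately transports the relation $f(ts+D)=f(t)$ to $f(cs+D)=f(c)$.
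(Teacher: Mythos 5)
Your proof is correct and follows essentially the same route as the paper: the well-definedness of $\varphi_c$ is read off from the functional relation \eqref{eq:fstD=ft} specialized at $t=c$, and the coideal property from the computation $\Delta(cs+D)=D\otimes 1+s\otimes(cs+D)$. Your two elaborations — justifying the substitution $t\mapsto c$ via centrality of $t$ and $c$, and noting that $\{a\mid\Delta(a)\in{\bf H}_f\otimes C\}$ is a subalgebra so that the coideal property propagates from the generator $x'$ to all of $C$ — are points the paper treats as immediate, and your spelling them out is harmless and slightly more careful.
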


\begin{proof} Indeed, defining functional relations \eqref{eq:fstD=ft} imply that $f(cs+D)-f(c)=0$. This proves that $\varphi_c$ is a homomorphism of algebras. 

Since $\underline x:=\varphi_c(x)=cs+D$ and $\Delta(\underline x)=c s\otimes s+D\otimes 1+s\otimes D=D\otimes 1+s\otimes \underline x$. Thus, $R\cdot \underline x$ is a left coideal in ${\bf H}_f$ hence the subalgebra of ${\bf H}_f$ generated by $\underline x$ is a left coideal subalgebra in ${\bf H}_f$.

%

Proposition \ref{pr:cyclic Hecke-Hopf coideal} is proved.
\end{proof}

\begin{remark} We expect that \eqref{eq:covariant H_f coaction} is always injective.

\end{remark}

For $a,b\in R$ denote by ${\bf H}_f(a,b)$ the $R$-algebra generated by $D,s$ subject to relations $s^{\deg~f}=1$, the functional relations \eqref{eq:fstD=ft}, and $sDs^{-1}=aD+b(1-s)$.

\begin{proposition} 
\label{pr:modified Taft Hopf} 
For any nonzero $f\in R[x]$ and $a,b\in R$, the algebra ${\bf H}_f(a,b)$ is a Hopf algebra with the coproduct $\Delta$, counit $\varepsilon$, and the antipode $S$ given respectively by:
$$\Delta(s)=s\otimes s,~\Delta(D)=D\otimes 1+s\otimes D,~\varepsilon(s)=1,~\varepsilon(D)=0,~S(s)=s^{-1},~S(D)=-s^{-1}D\ .$$

\end{proposition}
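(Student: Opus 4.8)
The plan is to realize ${\bf H}_f(a,b)$ as a quotient of the Hopf algebra ${\bf H}_f$ from Proposition \ref{pr:cyclic Hecke-Hopf} and to show that the one extra defining relation cuts out a Hopf ideal. Concretely, ${\bf H}_f(a,b)$ is generated by $s,D$ subject to all the relations of ${\bf H}_f$ together with $sDs^{-1}=aD+b(1-s)$; hence ${\bf H}_f(a,b)={\bf H}_f/{\bf J}$, where ${\bf J}$ is the two-sided ideal generated by the single element $g:=sDs^{-1}-aD-b(1-s)$. Since the coproduct, counit, and antipode of \eqref{eq:pre-taft hopf} already make ${\bf H}_f$ a Hopf algebra, it suffices to prove that ${\bf J}$ is a Hopf ideal; the induced structure on the quotient is then exactly the claimed one, since it agrees with \eqref{eq:pre-taft hopf} on the generators $s,D$.

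First I would verify that $g$ is skew-primitive relative to $s$. Using that $s$ (and hence $s^{-1}$) is group-like,
\begin{equation*}
\Delta(sDs^{-1})=(s\otimes s)(D\otimes 1+s\otimes D)(s^{-1}\otimes s^{-1})=sDs^{-1}\otimes 1+s\otimes sDs^{-1}\ .
\end{equation*}
Subtracting $a\Delta(D)+b\Delta(1)-b\Delta(s)$ and collecting the $(\,\cdot\,)\otimes 1$ and $s\otimes(\,\cdot\,)$ contributions via $sDs^{-1}-aD-b=g-bs$ and $sDs^{-1}-aD=g+b(1-s)$, the surplus $bs\otimes 1$ and $b(s\otimes s)$ terms cancel and one is left with the clean identity $\Delta(g)=g\otimes 1+s\otimes g$. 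Together with $\varepsilon(g)=\varepsilon(s)\varepsilon(D)\varepsilon(s^{-1})-a\varepsilon(D)-b+b\varepsilon(s)=0$, this shows that ${\bf K}:=R\cdot 1+R\cdot g$ satisfies $\Delta({\bf K})\subset {\bf H}_f\otimes {\bf K}$, i.e.\ ${\bf K}$ is a left coideal, with positive part ${\bf K}^+={\bf K}\cap Ker~\varepsilon=R\cdot g$.

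Then I would invoke Proposition \ref{pr:from coideal to hopf ideal}: the ideal generated by ${\bf K}^+=R\cdot g$, which is precisely ${\bf J}$, is therefore a Hopf ideal, so ${\bf H}_f(a,b)={\bf H}_f/{\bf J}$ inherits a Hopf algebra structure given on generators by \eqref{eq:pre-taft hopf}. As a direct check of the antipode clause one computes $S(g)=-Ds^{-1}+as^{-1}D-b+bs^{-1}=-s^{-1}g\in {\bf J}$, confirming $S({\bf J})\subset {\bf J}$ independently.

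The computation is short, and the only genuine step is the coproduct of the conjugate $sDs^{-1}$; once its skew-primitivity with respect to $s$ is in hand, the statement is automatic from the coideal-to-Hopf-ideal machinery, so I do not expect a serious obstacle. The one subtlety worth flagging is that no compatibility condition on $a,b$ is imposed: conjugating the relation $\deg f$ times formally forces $a^{\deg f}=1$ and $b(1+a+\cdots+a^{\deg f-1})=0$, but any such consequences lie inside ${\bf J}$ and merely govern how small ${\bf H}_f(a,b)$ may be. They are irrelevant to the Hopf-ideal argument, which at no point requires $a$ or $b$ to be invertible.
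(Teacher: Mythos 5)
Your proof is correct and follows essentially the same route as the paper: both exhibit the $R$-span of $1$ and the defect of the conjugation relation as a left coideal in ${\bf H}_f$ (the paper uses $\delta=sDs^{-1}-aD+bs$, you use the $(1,s)$-skew-primitive $g=\delta-b$, which span the same submodule together with $1$) and then apply Proposition \ref{pr:from coideal to hopf ideal} to conclude that the ideal generated by ${\bf K}^+=R\cdot g$ is a Hopf ideal. Your explicit antipode check and the closing remark about $a,b$ being unconstrained are harmless additions not needed for the argument.
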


\begin{proof} 
 Let ${\bf K}_{a,b}$ be the $R$-submodule of ${\bf H}_f$ generated by $1$ and $sDs^{-1}-aD+bs$.
%

We need the following result.
\begin{lemma} 
\label{le:coideal deltaab} ${\bf K}_{a,b}$ is a left coideal in ${\bf H}_f$. 


\end{lemma}

\begin{proof} Indeed, let $\delta:=sDs^{-1}-aD+bs$. Then
$\Delta(\delta)=(s\otimes s)\Delta(D)(s^{-1}\otimes s^{-1})-a\Delta(D)+bs\otimes s$
$$=sDs^{-1}\otimes 1+s\otimes sDs^{-1}-aD\otimes 1-s\otimes aD+bs\otimes s=(\delta-bs)\otimes 1+s\otimes \delta\in {\bf H}_f\otimes {\bf K}_{a,b} \ .$$
The lemma is proved. \end{proof}

Finally, note that ${\bf K}_{a,b}^+={\bf K}_{a,b}\cap Ker~\varepsilon=R\cdot \delta_{a,b}$, where $\delta_{a,b}=sDs^{-1}-aD-b(1-s)$. 
In view of Proposition \ref{pr:from coideal to hopf ideal}, this guarantees that the ideal ${\bf J}_f$ generated by $\delta_{a,b}$ is a Hopf ideal in ${\bf H}_f$. 
Hence the quotient ${\bf H}_f(a,b)={\bf H}_f/{\bf J}_{a,b}$ is a Hopf algebra. 
%

The proposition is proved. 
\end{proof}

%
%
 %
%
%
%

We abbreviate ${\bf H}_n(a,b):={\bf H}_{f_n^{a,b}}(a,b)$ for $a,b\in R$, where  
\begin{equation}
\label{eq:fnab}
f_n^{a,b}:=x(x-b)(x-b(1+a))\cdots (x-b(1+a+\cdots+a^{n-2}))
\end{equation}

Note that if $a\in R\setminus\{1\}$ is a root of unity, i.e., $1+a+\cdots+a^{n-1}=1$, then
\begin{equation}
\label{eq:fab symmetry}
f_n^{a,b}(ax+b)=f_n^{a,b}(x)
\end{equation}
because the set of roots of $f_n^{a,b}$ is invariant under the linear change $x\mapsto ax+b$.

We call ${\bf H}_n(a,b)$  a {\it generalized Taft algebra}. 
This terminology is justified by 
the following result.

\begin{proposition} 
\label{pr:presentation taft}
Given a commutative unital ring $R$ and $a,b\in R$,  the Hopf algebra ${\bf H}_n(a,b)$ has a presentation: 
$s^n=1$, $sDs^{-1}=a D+b(1-s)$, and 
\begin{equation}
\label{eq:taft relations generalized}
{n \brack k}_a D(aD+b)(a^2D+b(1+a))\cdots (a^{k-1}D+b(1+a+\cdots +a^{k-2}))=0
\end{equation}
for $k=1,\ldots,n$, where $\displaystyle{{n \brack k}_q=\prod\limits_{i=1}^k \frac{q^{n+1-i}-1}{q^i-1}}\in \ZZ_{\ge 0}[q]$ 
is the $q$-binomial coefficient.

In particular, if $a$ is a primitive $n$-th root of unity in $R^\times$, then ${\bf H}_n(a,b)$ has a presentation:
$$s^n=1,~sDs^{-1}=a D+b(1-s), D(aD+b)(a^2D+b(1+a))\cdots (a^{n-1}D+b(1+a+\cdots +a^{n-2}))=0$$
\end{proposition}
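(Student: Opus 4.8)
The Hopf structure is already supplied by Proposition \ref{pr:modified Taft Hopf}, so the only content of the statement is the presentation. Writing $f=f_n^{a,b}$, I must show that modulo $s^n=1$ and the linear relation $sDs^{-1}=aD+b(1-s)$ the two-sided ideal generated by the functional relations $f(ts+D)=f(t)$ coincides with the ideal generated by the generalized Taft relations \eqref{eq:taft relations generalized}. The plan is to expand $f(ts+D)$ explicitly, read off the coefficient of each power of $t$, and match the resulting family with the Taft family.

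The engine is a $q$-binomial normal-ordering identity. Over a localization in which $a-1$ is invertible, set $\mu=b/(a-1)$ and $Y=D+\mu(1-s)$; the linear relation is then equivalent to $sY=aYs$, so that $Y$ and $s$ are $a$-commuting. Since the roots of $f$ are $\mu(a^i-1)$ (here $[i]_a=1+a+\cdots+a^{i-1}$ and $b[i]_a=\mu(a^i-1)$), each factor of $f(ts+D)=\prod_{j=0}^{n-1}\bigl(ts+D-\mu(a^j-1)\bigr)$ becomes $P+B_j$ with $P=(t+\mu)s$ and $B_j=Y-\mu a^j$, and one checks $PB_j=aB_{j-1}P$. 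An induction on $n$ using a $q$-Pascal recursion for ${n \brack k}_a$ then yields
\[
\prod_{j=0}^{n-1}(P+B_j)=\sum_{k=0}^{n}{n \brack k}_a\Big(\prod_{j=0}^{n-k-1}B_j\Big)P^{k}.
\]
Because $P^{k}=(t+\mu)^{k}s^{k}$ and $f(t)=\prod_{j}\bigl((t+\mu)-\mu a^{j}\bigr)$, comparing coefficients in the invertible variable $t+\mu$ reduces the functional relations to the family ${n \brack k}_a\bigl(\prod_{j<n-k}B_j\bigr)s^{k}=\gamma_k$, where the scalar $\gamma_k={n \brack k}_a\prod_{j<n-k}(-\mu a^{j})$ is produced by the classical identity $e_m(1,a,\dots,a^{n-1})=a^{\binom{m}{2}}{n \brack m}_a$ for the elementary symmetric polynomials.

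It then remains to identify, for each $k$, this relation with the $k$-th Taft relation after multiplying by the invertible $s^{\,n-k}$ and re-expressing in $D$. The matching is triangular rather than term-by-term: the top-degree part in $Y$ of the two sides agrees up to a power of $a$, while the lower-degree discrepancies are absorbed using the already-established Taft relations of smaller degree together with the linear relation. A transparent instance is $n=2$: the coefficients of $t^{1}$ and $t^{0}$ give $(1+a)D=0$ and $D^{2}=bD$, and using $(1+a)D=0$ in the form $aD=-D$ one converts $D^{2}=bD$ into $aD^{2}+bD=0$, which is exactly the relation $D(aD+b)=0$; conversely the same substitution recovers $D^{2}=bD$ from $D(aD+b)=0$. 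In general I would run an induction on the degree $m=n-k$, showing that each functional coefficient lies in the ideal generated by the linear relation and the Taft relations of degrees $\le m$, and vice versa.

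Two points need care. First, to reach an arbitrary $R$ I do not want to invert $a-1$: since all relations have coefficients in $\ZZ[a,b]$, it suffices to prove the ideal equality in the universal ring $\ZZ[a,b]$ and then specialize, and the reductions above use only integral manipulations (the divisions by $a-1$ are bookkeeping inside the localization that motivate, but are not needed for, the final integral membership statements, exactly as the $n=2$ computation is visibly integral). Second, the special case is immediate once the general presentation is established: if $a$ is a primitive $n$-th root of unity in $R^{\times}$ then ${n \brack k}_a=0$ for $0<k<n$ and ${n \brack n}_a=1$, so every Taft relation except the top one is vacuous and one is left with the single monic relation, precisely as in Remark \ref{rem:single monic relation D_s}. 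The main obstacle is this triangular reconciliation of the two relation families over $\ZZ[a,b]$: the noncommutative $q$-binomial expansion identifies the leading terms cleanly, but upgrading the coefficient comparison to an equality of ideals without inverting $a-1$ requires exhibiting each Taft relation as an explicit integral combination of the functional relations and conversely, which is the technical heart of the argument.
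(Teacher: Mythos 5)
Your overall strategy --- expand $f(ts+D)$ via a noncommutative $q$-binomial theorem for the $a$-commuting pair $(P,B_j)$ and then match coefficients with the Taft relations --- is workable in principle, but as written it has two genuine gaps. First, the step you yourself call ``the technical heart,'' namely exhibiting each Taft relation as an explicit integral combination of the coefficients of $f(ts+D)-f(t)$ modulo the linear relation (and conversely), is never carried out: you verify $n=2$ by hand and announce an induction on $m=n-k$ without giving the inductive step, and since your coefficient relations involve $s^k$ and $Y=D+\mu(1-s)$ rather than $D$ alone, the ``triangular reconciliation'' is exactly where all the work lies. Second, the passage to arbitrary $R$ does not go through as stated. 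Your computation lives in $\ZZ[a,b][(a-1)^{-1}]$, and membership of an element in an ideal after localization does not imply membership before localization (compare $1\in(a-1)$ after inverting $a-1$); so ``prove the ideal equality over $\ZZ[a,b]$ and specialize'' begs the question, because what you would actually have proved is the equality over the localization. Declaring the divisions by $a-1$ to be ``bookkeeping'' is not a substitute for exhibiting integral witnesses.

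The paper avoids both problems with a different first move: it applies the antipode to $f(ts+D)=f(t)$ and pulls the powers of $s^{-1}$ out to the left, so that after imposing $sDs^{-1}=aD+b(1-s)$ (hence $s^kDs^{-k}=a^kD+b(1+a+\cdots+a^{k-1})(1-s)$) the functional relation becomes
\[
(t-D)(t-aD-b)\cdots\bigl(t-a^{n-1}D-b(1+a+\cdots+a^{n-2})\bigr)=f_n^{a,b}(t),
\]
a product involving only the single element $D$ and the central variable $t$ --- no $s$ survives. The identification with the Taft relations then reduces to the \emph{commutative} polynomial identity $f_n(t,x;p,q)=\sum_k{n \brack k}_q f_k(0,x;p,q)\,f_{n-k}(t,0;p,q)$ in $\ZZ[t,x,p,q]$ (note $p\frac{q^i-1}{q-1}=p(1+q+\cdots+q^{i-1})$ is integral, so no localization is ever needed), followed by comparison of coefficients in the monic basis $f_k^{a,b}(t)$ of $R[t]$, which gives a term-by-term rather than merely triangular matching. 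To salvage your route you would need either to prove that commutative identity directly, or to supply both the integral inductive step and an honest descent argument from the localization.
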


\begin{proof}  We need the following result.

\begin{lemma} The algebra ${\bf H}_n(a,b)$ has a presentation
$s^n=1$, $sDs^{-1}=a D+b(1-s)$, and the functional relations
\begin{equation}
\label{eq:modified Taft}
(t-D)(t-aD-b)(t-a^2D-b(1+a))\cdots (t-a^{n-1}D-b(1+a+\cdots +a^{n-2}))=f_n^{a,b}(t) \ .
\end{equation}

\end{lemma}

\begin{proof} Applying the antipode to the defining functional relations \eqref{eq:fstD=ft} we see that ${\bf H}_{f_n^{a,b}}$ 
has a presentation: $s^n=1$ and
$$(s^{-1}t-s^{-1}D-b(1+a+\cdots+a^{n-2}))\cdots (s^{-1}t-s^{-1}D-b)(s^{-1}t-s^{-1}D)=f_n^{a,b}(t)\ .$$
Equivalently, factoring out $s^{-1}$ to the left from each factor, we obtain $s^n=1$ and:
\begin{equation}
\label{eq:modified Taft intermediate}
(t-s^{n-1}Ds^{1-n}-b(1+a+\cdots+a^{n-2})s)\cdots (t-sDs^{-1}-bs)(t-D)=f_n^{a,b}(t)
\end{equation}
Passing to ${\bf H}_n(a,b)$, we obtain one more defining relation $sDs^{-1}=aD+b(1-s)$, which immediately implies 
$s^kDs^{-k}=a^kD+b(1+a+\cdots+a^{k-1})(1-s)$ for $k\in \ZZ_{\ge 0}$. 
Taking this into account, we see that the left hand side of \eqref{eq:modified Taft intermediate} becomes the left hand side of \eqref{eq:modified Taft}. The lemma is proved.
\end{proof}

We need the following combinatorial fact. For $n\ge 0$ let $f_n(t,x;p,q)\in \ZZ[t,x,p,q]$ be given by
$$f_n(t,x,p,q)=\prod_{i=0}^{n-1}\left (t+q^ix+p\frac{q^i-1}{q-1}\right)$$
with the convention that $f_0(t,x;p,q)=1$.

The following is a generalization of the $q$-binomial formula.

\begin{lemma} 
\label{le:generalized binomial}
$\displaystyle{f_n(t,x;p,q)=
\sum\limits_{k=0}^n 
{n \brack k}_q
f_k(0,x;p,q)}f_{n-k}(t,0;p,q) $ for $n\ge 0$.

\end{lemma}

Applying Lemma \ref{le:generalized binomial} with $q=a$, $p=-b$, $x=-D$, we see that the left hand side of \eqref{eq:modified Taft} equals $f_n(t,-D;-b,a)$ and $f_n(t,0;-b,a)=f^{a,b}_n(t)$, so \eqref{eq:modified Taft} becomes:
$\sum\limits_{k=1}^n {n \brack k}_a f_k(0,-D;-b,a)f_{n-k}^{a,b}(t) =0$.

Finally, using ${\bf H}_n(a,b)$-linear independence of $f_k^{a,b}(t)$, $k\ge 0$ in ${\bf H}_n(a,b)[t]$, we obtain \eqref{eq:taft relations generalized}.

The proposition is proved.
\end{proof}

By Proposition \ref{pr:presentation taft}, for  $a$ being a primitive $n$-th root of unity in $R$,  ${\bf H}_n(a,0)$ is the  Tuft algebra with the presentation: $\underline s^n=1,~\partial^n=0,\underline s \partial=a \partial \underline s$.
  
It turns out that ${\bf H}_n(a,b)$ is always a module algebra over the Taft algebra, and the multiplication in the former can be expressed in terms of the action.

\begin{corollary} In the notation of Proposition \ref{pr:presentation taft}, suppose that $a$ is a primitive $n$-th root of unity in $R$. Then

(a) ${\bf H}_n(a,b)$ is an ${\bf H}_n(a^{-1},0)$-module algebra via:
$\underline s\act s=a s,~\partial\act s=0,~\underline s\act D=b+aD$.

(b) $\displaystyle{s^\ell p(D)s^{-\ell}=\sum_{k=0}^\ell (-1)^k{k+\ell \brack \ell-1}_a b^k ((\underline s^\ell \partial^k)\act p(D))\cdot s^k}$
for any polynomial $p\in R[x]$ and $\ell\ge 0$. 
\end{corollary}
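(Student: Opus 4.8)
The plan is to realize the two operators $\underline s\act(-)$ and $\partial\act(-)$ concretely and read both statements off from their structure. For part (a), I would first note that the data in the statement is completed by $\partial\act D=1$: set $\sigma:=\underline s\act(-)$ and $\delta:=\partial\act(-)$, so that $\sigma$ is forced to be the algebra endomorphism of ${\bf H}_n(a,b)$ with $\sigma(s)=as$, $\sigma(D)=aD+b$, while $\delta$ is the $\sigma$-skew derivation ($\delta(xy)=\delta(x)y+\sigma(x)\delta(y)$) with $\delta(s)=0$, $\delta(D)=1$. The first task is well-definedness, i.e.\ that $\sigma$ and $\delta$ respect the three families of defining relations in Proposition~\ref{pr:presentation taft}. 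The relations $s^n=1$ and $sDs^{-1}=aD+b(1-s)$ are checked by direct substitution (using $a^n=1$). The only subtle one is the surviving top Taft relation $P_n(D):=D(aD+b)\cdots(a^{n-1}D+b(1+\cdots+a^{n-2}))=0$ (the case $k=n$ of \eqref{eq:taft relations generalized}, the others dropping out because ${n \brack k}_a=0$ for $0<k<n$ when $a$ is a primitive $n$-th root of unity): here $\sigma(P_n(D))=P_n(aD+b)=P_n(D)$ by the symmetry \eqref{eq:fab symmetry}, and on the commutative subalgebra $R[D]$ the operator $\delta$ is the divided difference $g\mapsto\bigl(g(aD+b)-g(D)\bigr)/\bigl((a-1)D+b\bigr)$, so $\delta(P_n(D))=0$ follows again from $P_n(aD+b)=P_n(D)$.

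It then remains to check that $\sigma,\delta$ satisfy the defining relations of $T:={\bf H}_n(a^{-1},0)$ as operators, namely $\sigma^n=\mathrm{id}$, $\delta^n=0$, and $\sigma\delta=a^{-1}\delta\sigma$. The identity $\sigma^n=\mathrm{id}$ is immediate from $\sigma^k(D)=a^kD+b(1+\cdots+a^{k-1})$ and $a^n=1$. For $\sigma\delta=a^{-1}\delta\sigma$ I would observe that both $\delta\sigma$ and $\sigma\delta$ obey the twisted Leibniz rule $F(xy)=F(x)\sigma(y)+\sigma^2(x)F(y)$, so $\delta\sigma-a\sigma\delta$ is a $\sigma^2$-skew derivation and hence vanishes once it vanishes on $s,D$, which is a one-line check. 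Finally $\delta^n=0$ follows from the basis $\{D^is^j:0\le i,j\le n-1\}$ of ${\bf H}_n(a,b)$ (from $P_n(D)=0$ and $s^n=1$, cf.\ the factorization of Theorem~\ref{th:ZWcross and free D gen H}): since $\delta(s^j)=0$ and $\delta$ lowers the $D$-degree by exactly one on $R[D]$, $\delta^n$ kills every basis vector. With these three operator identities, $\underline s\mapsto\sigma$, $\partial\mapsto\delta$ extends to an algebra map $T\to\mathrm{End}_R({\bf H}_n(a,b))$, and the module-algebra axioms hold because they hold on the algebra generators $\underline s$ (an automorphism) and $\partial$ (a skew derivation) of $T$, the set of elements of $T$ satisfying them being a subalgebra.

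For part (b), keeping $\sigma=\underline s\act(-)$ and $\delta=\partial\act(-)$, the claim expresses the inner automorphism $c_s^\ell(x)=s^\ell x s^{-\ell}$ through the outer $T$-action, since $(\underline s^\ell\partial^k)\act p(D)=\sigma^\ell(\delta^k p)$. I would prove it by induction on $\ell$. The base case is the single conjugation $s\,p(D)\,s^{-1}=\sum_{k\ge0}(-1)^k b^k\,\sigma(\delta^k p)\,s^k$, established by induction on $\deg p$ from the basic commutation $sD=(aD+b)s-bs^2$ read off from $sDs^{-1}=aD+b(1-s)$. For the step $\ell\to\ell+1$ I would apply $c_s$ to $c_s^\ell(p(D))$ term by term, apply the single-conjugation formula to each polynomial $\sigma^\ell(\delta^k p)\in R[D]$, and push the new powers of $s$ to the right using the twist $\delta\sigma=a\sigma\delta$ (hence $\delta^j\sigma^\ell=a^{j\ell}\sigma^\ell\delta^j$), which is exactly what generates the scalar factors $a^{j\ell}$. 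Collecting $s^{j}s^{k}=s^{j+k}$ reduces the entire step to a recursion on the coefficients, and it closes precisely when they equal $(-1)^k b^k{k+\ell-1 \brack \ell-1}_a$, i.e.\ to the $q$-binomial (negative $q$-Vandermonde, or $q$-hockey-stick) identity ${m+\ell \brack m}_a=\sum_{i=0}^m a^{(m-i)\ell}{i+\ell-1 \brack i}_a$.

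The routine ingredients are the substitution checks in (a) and the base case in (b); the genuine content, and the step I expect to be the main obstacle, is the non-commutative reordering in the inductive step of (b) together with the $q$-binomial identity that controls the coefficients, it being this identity rather than the naive $q$-Pascal rule that produces the multiset coefficient ${k+\ell-1 \brack \ell-1}_a$. I would emphasize that the method pins down the top index as $k+\ell-1$ and the summation as $k\ge0$ (finite, since $\delta^k p=0$ for $k>\deg p$), which is how one should read the formula.
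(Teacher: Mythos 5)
The paper states this corollary without proof, so there is nothing of its own to compare against; your argument is, in effect, the missing proof, and its strategy is sound: realize $\underline s\act(-)$ as the algebra automorphism $\sigma$ with $\sigma(s)=as$, $\sigma(D)=aD+b$, and $\partial\act(-)$ as the $\sigma$-skew derivation $\delta$ with $\delta(s)=0$, $\delta(D)=1$ (your completion $\partial\act D=1$ is indeed forced by part (b) with $\ell=1$, $p(D)=D$, so it is right to make it explicit), check the Taft relations $\sigma^n=\mathrm{id}$, $\delta^n=0$, $\sigma\delta=a^{-1}\delta\sigma$ as operator identities, and conclude by multiplicativity of the module-algebra condition. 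The one genuine gap is in your verification that $\delta$ kills the top relation $P_n(D)=0$: from $(u-v)h(u,v)=P_n(u)-P_n(v)$ and $P_n(aD+b)=P_n(D)$ you may only cancel the factor $(a-1)D+b$ when it is not a zero divisor in $R[D]$, which is not guaranteed over a general commutative ring. A cancellation-free substitute: write $P_n(D)=L_0L_1\cdots L_{n-1}$ with $L_i=a^iD+b(1+a+\cdots+a^{i-1})$; then $\sigma(L_i)=L_{i+1}$ (indices mod $n$) and $\delta(L_i)=a^i$, so the twisted Leibniz rule gives $\delta(P_n(D))=\bigl(\sum_{i=0}^{n-1}a^i\bigr)L_1\cdots L_{n-1}=0$ since ${n\brack 1}_a=0$. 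The same cyclic-shift observation also handles $\sigma(P_n(D))=P_n(D)$, in place of the appeal to \eqref{eq:fab symmetry}.

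Your treatment of (b) is correct as well, and it rightly exposes that the formula as printed cannot stand: the coefficient must be ${k+\ell-1 \brack \ell-1}_a$ rather than ${k+\ell \brack \ell-1}_a$, and the sum must run over all $k\ge 0$ with $\delta^k p\ne 0$ rather than stopping at $k=\ell$. Both defects are visible in small cases: for $\ell=2$, $p(D)=D$ one has $s^2Ds^{-2}=a^2D+b(1+a)(1-s)$, whereas the printed $k=0$ coefficient ${2\brack 1}_a=1+a$ would multiply $\sigma^2(D)$ and spoil the $D$-coefficient; and for $\ell=1$, $p(D)=D^2$ a nonzero term $b^2(1+a)s^2$ occurs, which the printed range $k\le\ell=1$ omits. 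Your inductive mechanism — conjugate termwise by $s$, expand each $s\,\sigma^\ell(\delta^kp)\,s^{-1}$ by the $\ell=1$ case, commute with $\delta^j\sigma^\ell=a^{j\ell}\sigma^\ell\delta^j$, and close the recursion with ${m+\ell\brack \ell}_a=\sum_{k=0}^m a^{(m-k)\ell}{k+\ell-1\brack \ell-1}_a$ — is exactly what pins these down. One caution on the base case: proving $s\,g(D)\,s^{-1}=\sum_k(-1)^kb^k\sigma(\delta^kg)s^k$ by induction on $\deg g$ requires re-expanding $s\,\sigma(\delta^kh)\,s^{-1}$ for the lower-degree polynomials $\sigma(\delta^kh)$, so it must be run as a strong induction on degree; as set up it does close, but that is where the real bookkeeping lives.
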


\section{Proofs of main results}

\subsection{Almost free Hopf algebras and proof of Theorems \ref{th:hopf hat W}  and \ref{th:hopf hat W gen}} 
\label{subsec:proof of  Theorem hopf hat}
 Given a group $W$,  a conjugation-invariant subset ${\mathcal S}\subset W\setminus\{1\}$, 
and any maps $\chi,\sigma:W\times {\mathcal S}\to R$,  let  $\hat {\bf H}'_{\chi,\sigma}(W)$ 
be an $R$-algebra generated by $W$ and $D_s$, $s\in {\mathcal S}$ subject to relations \eqref{eq:relations hat H}
for all $s\in {\mathcal S}$, $w\in W$.

\begin{proposition} 
\label{pr:hopf hat W gen pf} For any maps $\chi,\sigma:W\times {\mathcal S}\to R$ one has 

(a) $\hat {\bf H}'_{\chi,\sigma}(W)$  is a Hopf algebra with the coproduct $\Delta$, counit $\varepsilon$, and the antipode $S$ given by \eqref{eq:hopf H'}.

(b) $\hat {\bf H}'_{\chi,\sigma}(W)$ factors as $\hat {\bf H}'_{\chi,\sigma}(W)=T(V)\cdot RW$ over $R$, where  $V=\oplus_{s\in {\mathcal S}} R\cdot D_s$,
iff $\chi$ and $\sigma$ satisfy 
\eqref{eq:2 cocycle}.

\end{proposition}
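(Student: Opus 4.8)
The plan is to recognize $\hat {\bf H}'_{\chi,\sigma}(W)$ as an instance of the deformed cross product ${\bf H}_\gamma$ studied in Proposition \ref{pr:deformed nichols} and Proposition \ref{pr:mu nu gamma H}, and then read off both parts from those results. Concretely, take $H=RW$ (a Hopf algebra via $\Delta(w)=w\otimes w$, $S(w)=w^{-1}$), the free $R$-module $V=\oplus_{s\in{\mathcal S}} R\cdot D_s$, the $T(RW)$-action determined by $w(D_s)=\chi_{w,s}D_{wsw^{-1}}$, the coaction $\delta(D_s)=s\otimes D_s$, and the $R$-bilinear map $\gamma(w,D_s)=\sigma_{w,s}(1-wsw^{-1})$. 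With these data the defining relation of ${\bf H}_\gamma$, namely $wD_sw^{-1}=w(D_s)+\gamma(w,D_s)$, is precisely \eqref{eq:relations hat H}, so $\hat {\bf H}'_{\chi,\sigma}(W)={\bf H}_\gamma$, and the whole proposition reduces to matching hypotheses.

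For part (a) I would verify the hypotheses of Proposition \ref{pr:deformed nichols}. That $\delta$ is a coassociative, counital coaction is immediate since $s$ is grouplike. The Yetter--Drinfeld condition \eqref{eq:YD compatibility}, evaluated on a grouplike $w$ (so $w_{(1)}=w_{(2)}=w_{(3)}=w$, $D_s^{(-1)}=s$, $D_s^{(0)}=D_s$), reads $\delta(w(D_s))=wsw^{-1}\otimes w(D_s)$, which holds because $\delta(D_{wsw^{-1}})=wsw^{-1}\otimes D_{wsw^{-1}}$, independently of $\chi$ and $\sigma$. The coproduct condition on $\gamma$ becomes $\Delta(\gamma(w,D_s))=\gamma(w,D_s)\otimes 1+wsw^{-1}\otimes\gamma(w,D_s)$, which is a one-line check using that $\Delta(1-wsw^{-1})=(1-wsw^{-1})\otimes 1+wsw^{-1}\otimes(1-wsw^{-1})$, and $\varepsilon(\gamma(w,D_s))=\sigma_{w,s}(1-1)=0$. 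Proposition \ref{pr:deformed nichols} then produces exactly the Hopf structure \eqref{eq:hopf H'}; note that no cocycle condition is needed here, consistent with (a) holding unconditionally. (Equivalently one may argue directly: the free product $RW*T(V)$ is a Hopf algebra by Lemma \ref{le:free Hopf}, the spanning set of the relations forms a left coideal contained in $\ker\varepsilon$ exactly as in Lemma \ref{le:Kgamma}, and Proposition \ref{pr:from coideal to hopf ideal} shows the ideal it generates is a Hopf ideal.)

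For part (b) I would invoke Proposition \ref{pr:mu nu gamma H}, which states that ${\bf H}_\gamma$ factors as $T(V)\cdot H$ if and only if (i) $V$ is an honest $H$-module and (ii) $\gamma$ satisfies \eqref{eq:gamma property H}. Condition (i), $(w_1w_2)(D_s)=w_1(w_2(D_s))$, unwinds to $\chi_{w_1w_2,s}=\chi_{w_2,s}\,\chi_{w_1,w_2sw_2^{-1}}$; moreover the module unit axiom $1(D_s)=D_s$ forces $\chi_{1,s}=1$, whence $\chi_{w,s}\,\chi_{w^{-1},wsw^{-1}}=1$ and so $\chi_{w,s}\in R^\times$ — this supplies precisely the invertibility requirement built into \eqref{eq:2 cocycle}. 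Condition (ii), using $w_1\act\gamma(w_2,D_s)=w_1\gamma(w_2,D_s)w_1^{-1}=\sigma_{w_2,s}(1-w_1w_2sw_2^{-1}w_1^{-1})$ and $\gamma(w_1,w_2(D_s))=\chi_{w_2,s}\sigma_{w_1,w_2sw_2^{-1}}(1-w_1w_2sw_2^{-1}w_1^{-1})$, collapses (after cancelling the common factor $1-w_1w_2sw_2^{-1}w_1^{-1}$) to $\sigma_{w_1w_2,s}=\sigma_{w_2,s}+\chi_{w_2,s}\sigma_{w_1,w_2sw_2^{-1}}$, the second half of \eqref{eq:2 cocycle}. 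Thus the two abstract hypotheses of Proposition \ref{pr:mu nu gamma H} are literally \eqref{eq:2 cocycle}, which proves the equivalence.

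The genuinely hard content — that the cocycle conditions are \emph{sufficient} for the multiplication map $T(V)\otimes RW\to\hat {\bf H}'_{\chi,\sigma}(W)$ to be injective, a PBW/normal-form statement rather than a mere relation check — is already packaged inside Corollary \ref{cor:mu nu gamma}, on which Proposition \ref{pr:mu nu gamma H} rests; granting that, the only remaining obstacle is the careful but routine bookkeeping of translating the module axiom and \eqref{eq:gamma property H} into the explicit equations of \eqref{eq:2 cocycle}, together with the small observation that the unit axiom of the $H$-module structure is what upgrades the multiplicativity of $\chi$ to invertibility $\chi_{w,s}\in R^\times$.
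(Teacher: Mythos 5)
Your proposal is correct and follows essentially the same route as the paper: identify $\hat{\bf H}'_{\chi,\sigma}(W)$ with ${\bf H}_\gamma$ for $H=RW$, $V=\oplus_{s\in\mathcal S}R\cdot D_s$, $\delta(D_s)=s\otimes D_s$, $\gamma(w,D_s)=\sigma_{w,s}(1-wsw^{-1})$, then get (a) from Proposition \ref{pr:deformed nichols} (whose hypotheses hold for arbitrary $\chi,\sigma$) and (b) by translating the two conditions of Proposition \ref{pr:mu nu gamma H} into the two equations of \eqref{eq:2 cocycle}. Your observation that the unit axiom $1(D_s)=D_s$ forces $\chi_{1,s}=1$ and hence $\chi_{w,s}\in R^\times$ is a correct and welcome clarification of a point the paper leaves implicit.
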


\begin{proof} Prove (a). Clearly, $\hat {\bf H}'_{\chi,\sigma}(W)={\bf H}_\gamma$ in the notation of Proposition \ref{pr:deformed nichols}, where:

$\bullet$ $H=RW$, $V=\oplus_{s\in {\mathcal S}} R\cdot D_s$ is a $T(H)$-module via 
\begin{equation}
\label{eq:quasi-action of W on V}
w(D_s)=\chi_{w,s} D_s
\end{equation} for $w\in W$, $s\in {\mathcal S}$ and an $H$-comodule via $\delta(D_s)=s\otimes D_s$.

$\bullet$ $\gamma:RW\times V\to R$ is given by 
\begin{equation}
\label{eq:gamma special W}
\gamma(w,D_s)=\sigma_{w,s}(1-wsw^{-1})
\end{equation}
 for $w\in W$, $s\in {\mathcal S}$.

Then, clearly the Yetter-Drinfeld condition \eqref{eq:YD compatibility} holds because
$$\delta(w(D_s))=\chi_{w,s}\delta(D_{wsw^{-1}})=wsw^{-1}\otimes \chi_{w,s}D_{wsw^{-1}}=wsw^{-1}\otimes w(D_s)$$
for all $w\in W$, $s\in {\mathcal S}$.

The second condition of Proposition \ref{pr:deformed nichols} also holds automatically because
$$\Delta(\gamma(w,D_s))=\sigma_{w,s}(1\otimes 1-wsw^{-1}\otimes wsw^{-1})=
\gamma(w,D_s)\otimes 1+wsw^{-1}\otimes \gamma(w,D_s)$$
and $\varepsilon(\gamma(w,D_s))=0$ for for all $w\in W$, $s\in {\mathcal S}$.

Thus, $\hat {\bf H}'_{\chi,\sigma}(W)={\bf H}_\gamma$ is a Hopf algebra by Proposition \ref{pr:deformed nichols}.
This proves (a).

Prove (b). It suffices to translate the conditions of Proposition \ref{pr:mu nu gamma H}. Indeed, taking into account that the first condition of \eqref{eq:2 cocycle} implies $\chi_{1,s}=1$ for all $s\in {\mathcal S}$, we see that the first condition of \eqref{eq:2 cocycle} is equivalent to that \eqref{eq:quasi-action of W on V} is a $RW$-action on $V=\oplus_{s\in {\mathcal S}} R\cdot D_s$.

Finally, the condition \eqref{eq:gamma property H} reads for this action and $\gamma$ given by \eqref{eq:gamma special W}:
$$\sigma_{w_1w_2,s}(1-w_1w_2sw_2^{-1}w_1^{-1})=\gamma(w_1w_2,D_s)=\gamma(w_1,w_2(D_s))+w_1\gamma(w_2,D_s)w_1^{-1}$$
$$=\chi_{w_2,s}\sigma_{w_1,w_2sw_2^{-1}}(1-w_1w_2sw_2^{-1}w_1^{-1})+\sigma_{w_2,s}(1-w_1w_2sw_2^{-1}w_1^{-1})$$
in $R W$ for all $w_1,w_2\in W$, $s\in {\mathcal S}$, which is, clearly, equivalent to the second condition of \eqref{eq:2 cocycle}.
This proves (b).

Proposition \ref{pr:hopf hat W gen pf} is proved.
\end{proof}

Furthermore,  we say that a family  ${\bf f}=(f_s)\in (R[x]\setminus \{0\})^{\mathcal S}$ of polynomials $f_s\in R[x]\setminus \{0\}$ is {\it adapted} to ${\mathcal S}$ if $\deg f_s=|s|$ for all $s\in {\mathcal S}$ (with the convention $|s|=0$ if $s$ is of infinite order, hence, $f_s$ is a nonzero constant in that case).

For any maps $\chi,\sigma:W\times {\mathcal S}\to R$  and any family  ${\bf f}=(f_s)\in (R[x]\setminus\{0\})^{\mathcal S}$ adapted to  ${\mathcal S}$ let $\hat {\bf H}_{\chi,\sigma,{\bf f}}(W)$ be an $R$-algebra generated by $W$ and $D_s$, $s\in {\mathcal S}$ subject to relations \eqref{eq:relations hat H}
for all $s\in {\mathcal S}$, $w\in W$ and the functional relations
\begin{equation}
\label{eq:fstDs}
f_s(ts+D_s)=f_s(t)
\end{equation} (if $s$ is of infinite order, i.e., $|s|=0$, then the condition \eqref{eq:fstDs} is vacuous). 

By definition, one has a surjective homomorphism of $R$-algebras 
$\pi_{\bf f}:\hat {\bf H}'_{\chi,\sigma}(W)\twoheadrightarrow \hat {\bf H}_{\chi,\sigma,{\bf f}}(W)$. 
%

\begin{proposition} For any family ${\bf f}$ adapted to ${\mathcal S}$,  
$\hat {\bf H}_{\chi,\sigma,{\bf f}}(W)$ is naturally a Hopf algebra (i.e., $\pi_{\bf f}$ is a homomorphism of Hopf algebras).

\end{proposition}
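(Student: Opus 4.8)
The plan is to realize $\hat{\bf H}_{\chi,\sigma,{\bf f}}(W)$ as the quotient of the Hopf algebra $\hat{\bf H}'_{\chi,\sigma}(W)$ (which is a Hopf algebra by Proposition \ref{pr:hopf hat W gen pf}(a)) by the ideal $\mathbf{J}_{\bf f}$ generated by the functional relations \eqref{eq:fstDs}, and to prove that $\mathbf{J}_{\bf f}$ is a Hopf ideal. Since $\pi_{\bf f}$ is by construction the surjective algebra homomorphism with kernel $\mathbf{J}_{\bf f}$, once $\mathbf{J}_{\bf f}$ is shown to be a Hopf ideal the quotient inherits the Hopf structure \eqref{eq:hopf H'} and $\pi_{\bf f}$ automatically becomes a homomorphism of Hopf algebras. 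This is the exact analogue of the cyclic-group computation in the proof of Proposition \ref{pr:cyclic Hecke-Hopf}, carried out simultaneously over all $s\in {\mathcal S}$; note in particular that Proposition \ref{pr:from coideal to hopf ideal} applies to any coideal, so no freeness hypothesis on $\hat{\bf H}'_{\chi,\sigma}(W)$ is required.

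Concretely, for each $s\in {\mathcal S}$ write $f_s=\sum_{k=0}^{|s|} a_{s,k}x^k$ and define elements $y_{s,k}\in \hat{\bf H}'_{\chi,\sigma}(W)$ by the expansion $f_s(ts+D_s)=\sum_{k=0}^{|s|} y_{s,k}t^k$ in $\hat{\bf H}'_{\chi,\sigma}(W)[t]$. The functional relations \eqref{eq:fstDs} are then precisely $y_{s,k}=a_{s,k}$, so that $\mathbf{J}_{\bf f}$ is the ideal generated by $\mathbf{K}^+$, where $\mathbf{K}$ is the $R$-submodule spanned by $1$ together with all $y_{s,k}$ (here $y_{s,|s|}=a_{s,|s|}\cdot 1$ because $s^{|s|}=1$, so $1\in\mathbf{K}$), and $\mathbf{K}^+=\mathbf{K}\cap Ker~\varepsilon$ is spanned by the differences $y_{s,k}-a_{s,k}$, since $\varepsilon(y_{s,k})=a_{s,k}$ (as $\varepsilon(ts+D_s)=t$). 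By Proposition \ref{pr:from coideal to hopf ideal} it therefore suffices to show that $\mathbf{K}$ is a right coideal.

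For this I would run the substitution trick of Lemma \ref{le:right coideal Hf}. Using that $\Delta$ is an algebra map with $\Delta(s)=s\otimes s$ and $\Delta(D_s)=D_s\otimes 1+s\otimes D_s$, one checks the identity $\Delta(ts+D_s)=s't'+D'$ with $s'=s\otimes 1$, $D'=D_s\otimes 1$, and $t'=1\otimes(ts+D_s)$. The elements $s',D'$ satisfy the same defining relations \eqref{eq:relations hat H} as $s,D_s$ (being their images under $x\mapsto x\otimes 1$), and $t'$ commutes with both $s'$ and $D'$ as it lies in the second tensor factor; hence the expansion defining the $y_{s,k}$ transports verbatim to give $\Delta(f_s(ts+D_s))=f_s(s't'+D')=\sum_k (y_{s,k}\otimes 1)(t')^k=\sum_k y_{s,k}\otimes(ts+D_s)^k$. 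Comparing coefficients of $t^m$ on both sides yields $\Delta(y_{s,m})\in \mathbf{K}\otimes\hat{\bf H}'_{\chi,\sigma}(W)$, so $\mathbf{K}$ is a right coideal, and Proposition \ref{pr:from coideal to hopf ideal} then gives that $\mathbf{J}_{\bf f}$ is a Hopf ideal. The only step requiring genuine care is justifying that the polynomial expansion defining the $y_{s,k}$ is a universal, commutation-respecting identity that survives the substitution $s\mapsto s'$, $D_s\mapsto D'$, $t\mapsto t'$; once the commutation relations among $s',D',t'$ are matched to those among $s,D_s,t$ this is automatic, which is why the argument goes through uniformly in $s$.
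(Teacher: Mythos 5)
Your proposal is correct and follows essentially the same route as the paper: the paper also defines the coefficients $y_k^s$ of $f_s(ts+D_s)$, shows the $R$-span ${\bf K}_{\bf f}$ of $1$ and all $y_k^s$ is a right coideal by exactly the substitution argument of Lemma \ref{le:right coideal Hf} (which you spell out), and concludes via Proposition \ref{pr:from coideal to hopf ideal}. The only cosmetic difference is that the paper packages the coideal as a sum $\sum_s {\bf K}_{f_s}$ and cites Proposition \ref{pr:sum intersection of coideals}(a), whereas you take the span over all $s$ at once.
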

 
\begin{proof} Using notation from Section \ref{sec:Taft algebras} 
and copying \eqref{eq:yk of I_f}, define for each finite order element $s\in {\mathcal S}$  the elements $y_0^s,\ldots,y_{|s|-1}^s\in \hat {\bf H}'_{\chi,\sigma}(W)$ by  
$y_k^s=\sum\limits_{i=k}^{|s|} a_i^s\{s,D_s\}_{i,k-i}$, 
where $\sum\limits_{k=0}^{|s|} a_{k_s}t^s=f(t)$.


Denote by ${\bf K}_{f_s}$ the $R$-submodule of ${\bf H}'_{\chi,\sigma}(W)$ generated by 1 and $y_k^s$, $k=0,\ldots,|s|-1$.
(with the convention that ${\bf K}_{f_s}=R$ if $|s|=0$).

\begin{lemma} 
\label{le:Kfs}
${\bf K}_{f_s}$ is a right coideal in $\hat {\bf H}'_{\chi,\sigma}(W)$ for each $s\in {\mathcal S}$. 

\end{lemma}

\begin{proof} The proof is identical to that of Lemma \ref{le:right coideal Hf}. 
\end{proof}

Therefore, ${\bf K}_{\bf f}:=\sum\limits_{s\in {\mathcal S}} {\bf K}_{f_s}$ is a right coideal in $\hat {\bf H}'_{\chi,\sigma}(W)$ by Proposition \ref{pr:sum intersection of coideals} (for right coideals) and  
${\bf K}_{\bf f}^+:={\bf K}_{\bf f}\cap Ker~\varepsilon$ is the $R$-submodule of ${\bf H}'_{\chi,\sigma}(W)$ generated by $y_k^s-a_k^s$, $k=0,\ldots,|s|$, $s\in {\mathcal S}$. 

By definition,  the kernel of $\pi_{\bf f}$ is the ideal of ${\bf H}'_{\chi,\sigma}(W)$ generated by ${\bf K}_{\bf f}^+$.
In view of Proposition \ref{pr:from coideal to hopf ideal}, this guarantees that the kernel of $\pi_{\bf f}$ is a Hopf ideal in 
${\bf H}'_{\chi,\sigma}(W)$. 
Therefore, ${\bf H}_{\chi,\sigma,{\bf f}}(W)=\hat {\bf H}'_{\chi,\sigma}(W)/(Ker~\pi_{\bf f})$ is a Hopf algebra and $\pi_{\bf f}$ is a homomorphism of Hopf algebras.

The proposition is proved.

\end{proof}

\noindent {\bf Proof of Theorem \ref{th:hopf hat W gen}}. 
Let us show that $\hat {\bf H}_{\sigma,\chi}(W)={\bf H}_{\chi,\sigma,{\bf f}}(W)$, where 
\begin{equation}
\label{eq:actual fs}
f_s=f_{|s|}^{a_s,b_s}=x(x-b_s)(x-b_s(1+a_s))\cdots (x-b_s(1+a_s+\cdots+a_s^{|s|-2})) 
\end{equation}
in the notation \eqref{eq:fnab}, where we abbreviated $a_s:=\chi_{s,s}$ and $b_s:=\sigma_{s,s}$ (with the convention $f_s=1$ if $|s|=0$). 
Indeed, in view of Proposition \ref{pr:presentation taft}, since each relevant $\chi_{s,s}$ is the primitive $|s|$-th root of unity,  the defining functional relation \eqref{eq:fstDs} for $\hat {\bf H}_{\chi,\sigma,{\bf f}}(W)$ coincides with the defining relation \eqref{eq:relations hat H taft} for $\hat {\bf H}_{\sigma,\chi}(W)$.
Thus, $\hat {\bf H}_{\sigma,\chi}(W)={\bf H}_{\chi,\sigma,{\bf f}}(W)$ is a Hopf algebra. 

Theorem \ref{th:hopf hat W gen} is proved.
\endproof

\noindent {\bf Proof of Theorem \ref{th:hopf hat W}}. 
Similarly to Definition \ref{def:Hecke-Hopf algebra W}, for any Coxeter group $W=\langle s_i|i\in I\rangle$  let  $\hat {\bf H}'(W)$ the $\ZZ$-algebra generated by 
$s_i,D_i$, $i\in I$ subject to relations:

(i) Rank $1$ relations: $s_i^2=1$, $s_iD_i+D_is_i=s_i-1$ for $i\in I$.


(ii) Coxeter relations: $(s_is_j)^{m_{ij}}=1$ and linear braid relations: $\underbrace{D_is_js_i\cdots s_{j'}}_{m_{ij}} =\underbrace{s_j\cdots s_{i'}s_{j'}D_{i'}}_{m_{ij}}$
for all distinct $i,j\in I$, where $i'=\begin{cases} 
i & \text{if $m_{ij}$ is even}\\
j & \text{if $m_{ij}$ is odd}\\
\end{cases}$ and  $\{i',j'\}=\{i,j\}$.

\begin{proposition} 
\label{pr:relations hat H coxeter}
$\hat {\bf H}'(W)=\hat {\bf H}'_{\chi,\sigma}(W)$ for any Coxeter group $W$  the notation of Proposition \ref{pr:hopf hat W gen pf} with $R=\ZZ$, where $\chi:W\times {\mathcal S}\to \{-1,1\}\subset \ZZ$ and $\sigma:W\times {\mathcal S}\to \{0,1\}\subset \ZZ$ are given  by: 
\begin{equation}
\label{eq:chi sigma defined}
\chi_{w,s}=(-1)^{\ell(w)+\frac{1}{2}(\ell(wsw^{-1})-\ell(s))},~\sigma_{w,s}=\frac{1-\chi_{w,s}}{2}
\end{equation}
for all $w\in W$, $s\in {\mathcal S}$.
In particular, $\hat {\bf H}'(W)$ is a Hopf algebra  with the coproduct $\Delta$, counit $\varepsilon$, and the antipode $S$ given by \eqref{eq:hopf H'}.

\end{proposition}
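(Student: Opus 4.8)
The plan is to recognise $\hat {\bf H}'(W)$ as the algebra $\hat {\bf H}'_{\chi,\sigma}(W)$ of Proposition \ref{pr:hopf hat W gen pf} attached to the explicit data \eqref{eq:chi sigma defined}, whence the Hopf structure \eqref{eq:hopf H'} is automatic. First I would check that $\chi$ and $\sigma$ are well defined: every $s\in\mathcal{S}$ and every conjugate $wsw^{-1}$ is a reflection, so $\ell(s)$ and $\ell(wsw^{-1})$ are odd and $\tfrac12(\ell(wsw^{-1})-\ell(s))\in\ZZ$; hence $\chi_{w,s}\in\{\pm1\}\subset\ZZ^\times$ and $\sigma_{w,s}=\tfrac{1-\chi_{w,s}}2\in\{0,1\}$. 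I would then evaluate the data on simple reflections directly from \eqref{eq:chi sigma defined}, reading off the values that encode the rank $1$ relation at $w=s=s_i$ and the propagation $s_iD_ss_i=D_{s_iss_i}$ of $D$ across a simple reflection for $s\neq s_i$; these are exactly the base cases on which the remainder of the argument rests.

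The structural input is the cocycle identity \eqref{eq:2 cocycle} for $(\chi,\sigma)$ (this is Proposition \ref{pr:chi sigma satisfies cocycle condition}). I would reprove, or cite, the first identity $\chi_{w_1w_2,s}=\chi_{w_2,s}\,\chi_{w_1,w_2sw_2^{-1}}$ from the two elementary facts that $\ell(\cdot)\bmod 2$ is a homomorphism $W\to\ZZ/2$ and that the exponent in \eqref{eq:chi sigma defined} telescopes along $w=w_1w_2$; the additive identity for $\sigma$ is then a one line consequence of $\sigma=\tfrac{1-\chi}2$. With \eqref{eq:2 cocycle} available, Proposition \ref{pr:hopf hat W gen pf}(a) gives that $\hat {\bf H}'_{\chi,\sigma}(W)$ is a Hopf algebra, so it remains only to identify it with $\hat {\bf H}'(W)$ as an algebra.

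To build the identification I would produce mutually inverse homomorphisms. In the direction $\hat {\bf H}'(W)\to\hat {\bf H}'_{\chi,\sigma}(W)$ I send $s_i\mapsto s_i$, $D_i\mapsto D_{s_i}$ and verify the defining relations: the Coxeter relations hold because $W$ sits inside $\hat {\bf H}'_{\chi,\sigma}(W)$ as a group; the rank $1$ relation is \eqref{eq:relations hat H} specialised to $w=s=s_i$ with the base values of $\chi,\sigma$; and each linear braid relation is \eqref{eq:relations hat H} evaluated at the length $m_{ij}-1$ element $w$ read off from the relevant reduced word in the rank $2$ parabolic $W_{\{i,j\}}$, where one checks $\chi_{w,s_i}=1$ and $\sigma_{w,s_i}=0$ so that conjugation carries $D_i$ to $D_{i'}$ with no correction term. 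In the reverse direction I send $w\mapsto w$ and $D_s\mapsto D_s$, where $D_s\in\hat {\bf H}'(W)$ is the element supplied by the linear braid relations through Lemma \ref{le:conjugation D}; the content here is to verify the whole family \eqref{eq:relations hat H}.

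This last verification is the main obstacle, and I would handle it by induction on $\ell(w)$. Writing $w=s_iw'$ with $\ell(w')=\ell(w)-1$, I would rewrite $wD_sw^{-1}=s_i\,(w'D_sw'^{-1})\,s_i$, insert the inductive expression for $w'D_sw'^{-1}$, and conjugate once more by $s_i$ using the two base identities ($s_iD_{s'}s_i=D_{s_is's_i}$ for $s'\neq s_i$, and the rank $1$ computation when $s'=s_i$); the cocycle identity \eqref{eq:2 cocycle} is precisely what makes the accumulated signs and group terms reassemble into $\chi_{w,s}D_{wsw^{-1}}+\sigma_{w,s}(1-wsw^{-1})$. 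The subtlety I most expect to have to address is the well-definedness of $D_s$ in $\hat {\bf H}'(W)$, namely that distinct chains of simple conjugations reaching the same reflection $s$ give the same element; this is exactly where the linear braid relations, and not merely the rank $1$ relations, are indispensable, and it must be secured (via Lemma \ref{le:conjugation D}) before the induction can even be set up.
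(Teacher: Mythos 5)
Your proposal follows essentially the same route as the paper: establish the base case on simple reflections (the paper's Lemma \ref{le:cocycle coxeter} and formula \eqref{eq:si conj Ds}), prove the conjugation relations \eqref{eq:relations hat H} by induction on $\ell(w)$ using the cocycle identity \eqref{eq:2 cocycle}, and then invoke Proposition \ref{pr:hopf hat W gen pf}(a) for the Hopf structure. Your extra care about exhibiting mutually inverse homomorphisms and about the well-definedness of $D_s$ via Lemma \ref{le:conjugation D} only makes explicit what the paper leaves implicit, so the argument is correct and matches the paper's proof.
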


\begin{proof} Clearly, $\hat {\bf H}'(W)$ is generated over $\ZZ$ by $V=\oplus_{s\in {\mathcal S}} \ZZ D_s$ and the group $W$.

We need the following  result.
\begin{lemma} 
\label{le:cocycle coxeter}
For any Coxeter group $W$ the map  \eqref{eq:chi sigma defined} satisfies 
$\chi_{s_i,s}=\begin{cases}
-1 & \text{if $s= s_i$}\\
1 & \text{if $s\ne s_i$}\\
\end{cases}$ for $s\in {\mathcal S}$, $i\in I$. In particular, 
\begin{equation}
\label{eq:si conj Ds}
s_iD_ss_i=
\begin{cases}
D_{s_i s s_i} & \text{if $s\ne s_i$}\\
-D_{s_i}+1-s_i & \text{if $s= s_i$}\\
\end{cases}=\chi_{s_i,s}D_{s_i s s_i}+\frac{1-\chi_{s_i,s}}{2}(1-s_iss_i)
\end{equation}
 in $\hat {\bf H}'(W)$
for all $s\in {\mathcal S}$, $i\in I$.

\end{lemma}

\begin{proof} 
Clearly, $\chi$ defined by \eqref{eq:chi sigma defined} satisfies the first assertion of the lemma because $\ell(s_i)=1$ and
$\ell(s_iss_i)-\ell(s)\in \{-2,2\}$ for all \ $i\in I$, 
$s\in {\mathcal S}\setminus \{s_i\}$, i.e., $\ell(s_i)+\frac{1}{2}(\ell(s_iss_i)-\ell(s))\in \{0,2\}$ (of course, $\chi_{s_i,s_i}=-1$).
Then \eqref{eq:si conj Ds} follows.

The lemma is proved.
\end{proof}

Prove that \eqref{eq:relations hat H} hold in $\hat {\bf H}'(W)$ by induction on $\ell(w)$. If $w=1$, we have nothing to prove. If $\ell(w)=1$, i.e., $w=s_i$ for some $i\in I$, then the \eqref{eq:si conj Ds} which verifies \eqref{eq:relations hat H}. 

Suppose that $\ell(w)\ge 2$, i.e., $w=w_1w_2$ for some $w_1,w_2\in W\setminus \{1\}$ with $\ell(w_1)+\ell(w_2)=\ell(w)$. Then using the inductive hypothesis in the form:
$w_2D_sw_2^{-1}=\chi_{w_2,s}D_{s'}+\frac{1-\chi_{w_2,s}}{2}(1-w_2sw_2^{-1})$, 
where we abbreviated $s'=w_2sw_2^{-1}$, we obtain, by conjugating both sides with $w_1$:
$$wD_sw^{-1}=w_1(w_2D_sw_2^{-1})w_1^{-1}=\chi_{w_2,s}w_1D_{s'}w_1^{-1}+\frac{1-\chi_{w_2,s}}{2}(1-wsw^{-1})$$
$$=\chi_{w_2,s}(\chi_{w_1,s'}D_{w_1s'w_1^{-1}}+\frac{1-\chi_{w_1,s'}}{2}(1-w_1s'w_1^{-1}))+\frac{1-\chi_{w_2,s}}{2}(1-wsw^{-1})$$
$$=\chi_{w,s}D_{wsw^{-1}}\frac{1-\chi_{w,s}}{2}(1-wsw^{-1})$$
by the inductive hypothesis with $w'$ and by the first condition of \eqref{eq:2 cocycle}.
%

This finishes the inductive proof of \eqref{eq:relations hat H}. Thus, $\hat {\bf H}'(W)=\hat {\bf H}'_{\chi,\sigma}(W)$ in the notation \eqref{eq:relations hat H} with $\chi$ and $\sigma$ are as in \eqref{eq:chi sigma defined}.
Therefore, $\hat {\bf H}'(W)$ is a Hopf algebra by Proposition \ref{pr:hopf hat W gen pf}(a).

The proposition is proved.
\end{proof}

Finally, note that for $\chi$ and $\sigma$ given by \eqref{eq:chi sigma defined} the relations \eqref{eq:relations hat H taft} become $D_s^2=D_s$, $s\in {\mathcal S}$. 
Moreover, it follows from \eqref{eq:si conj Ds} that these relations considered in $\hat {\bf H}(W)$ follow from the relations $D_i^2=D_i$, $i\in I$. This proves the following 
 %
%
\begin{lemma} 
\label{le:hats are equal}
$\hat {\bf H}(W)=\hat {\bf H}_{\chi,\sigma}(W)$ for any Coxeter group $W$ and $\chi,\sigma$ given by \eqref{eq:chi sigma defined}.
\end{lemma}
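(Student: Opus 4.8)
The plan is to obtain the equality as a quotient-of-quotients statement, building on Proposition \ref{pr:relations hat H coxeter}, which already identifies the two algebras \emph{before} any quadratic relation on the $D$'s is imposed. Concretely, $\hat{\bf H}(W)$ is the quotient of $\hat{\bf H}'(W)$ by the ideal generated by $D_i^2-D_i$, $i\in I$, while $\hat{\bf H}_{\chi,\sigma}(W)$ is the quotient of $\hat{\bf H}'_{\chi,\sigma}(W)$ by the ideal generated by the Taft-type relations \eqref{eq:relations hat H taft}. Since $\hat{\bf H}'(W)=\hat{\bf H}'_{\chi,\sigma}(W)$ by Proposition \ref{pr:relations hat H coxeter}, it suffices to show that these two ideals coincide inside the common ambient algebra.

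First I would compute what \eqref{eq:relations hat H taft} becomes for the present $\chi,\sigma$. Every $s\in{\mathcal S}$ is a reflection, hence $|s|=2$, and since $sss^{-1}=s$ the exponent in \eqref{eq:chi sigma defined} is $\ell(s)$, so $a_s=\chi_{s,s}=(-1)^{\ell(s)}=-1$ and $b_s=\sigma_{s,s}=\frac{1-(-1)}{2}=1$. Substituting $|s|=2$, $a_s=-1$, $b_s=1$ into \eqref{eq:relations hat H taft}, the $k=1$ relation carries the coefficient ${2\brack 1}_{-1}=\frac{(-1)^2-1}{(-1)-1}=0$ and is vacuous, while the $k=2$ relation reads ${2\brack 2}_{-1}\,D_s(a_sD_s+b_s)=D_s(1-D_s)=0$, i.e.\ $D_s^2=D_s$. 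Thus, for this $\chi,\sigma$, the family \eqref{eq:relations hat H taft} is equivalent to $D_s^2=D_s$, $s\in{\mathcal S}$, so $\hat{\bf H}_{\chi,\sigma}(W)$ is cut out of $\hat{\bf H}'_{\chi,\sigma}(W)$ precisely by the ideal generated by all $D_s^2-D_s$.

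It remains to check that, inside $\hat{\bf H}(W)=\hat{\bf H}'(W)/(D_i^2-D_i:i\in I)$, the stronger relations $D_s^2=D_s$ already hold for \emph{every} reflection $s$; this forces the two ideals to agree. For this I would use \eqref{eq:si conj Ds}: when $s\neq s_i$ it gives $D_{s_iss_i}=s_iD_ss_i$, so if $D_s^2=D_s$ then $D_{s_iss_i}^2=s_iD_s^2s_i=s_iD_ss_i=D_{s_iss_i}$; and when $s=s_i$ conjugation fixes $s$, so nothing is needed. Hence the set of reflections satisfying $D_s^2=D_s$ is stable under conjugation by every simple reflection. Now every $s\in{\mathcal S}$ has the form $ws_jw^{-1}$ with $w=s_{i_1}\cdots s_{i_k}$, and the chain $s_j,\ s_{i_k}s_js_{i_k},\ \dots,\ s$ exhibits $s$ as obtained from the simple reflection $s_j$ by successive conjugations by simple reflections. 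Starting from the defining relations $D_j^2=D_j$ and applying the stability observation along this chain, induction on $k$ yields $D_s^2=D_s$ in $\hat{\bf H}(W)$ for all $s\in{\mathcal S}$.

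Consequently the ideal generated by $\{D_i^2-D_i:i\in I\}$ and the ideal generated by $\{D_s^2-D_s:s\in{\mathcal S}\}$ coincide in $\hat{\bf H}'(W)=\hat{\bf H}'_{\chi,\sigma}(W)$, so the two quotients agree and $\hat{\bf H}(W)=\hat{\bf H}_{\chi,\sigma}(W)$. The only genuine subtlety is the bookkeeping that conjugation preserves idempotency exactly because $\chi_{s_i,s}=1$ and $\sigma_{s_i,s}=0$ for $s\neq s_i$; once that is isolated, the remaining steps are the routine substitution into \eqref{eq:relations hat H taft} and the induction along a conjugation chain.
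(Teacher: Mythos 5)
Your proposal is correct and follows essentially the same route as the paper: the paper likewise observes that for the $\chi,\sigma$ of \eqref{eq:chi sigma defined} the Taft-type relations \eqref{eq:relations hat H taft} reduce to $D_s^2=D_s$ (the $k=1$ relation being killed by the vanishing $q$-binomial at $q=-1$), and then invokes \eqref{eq:si conj Ds} to deduce $D_s^2=D_s$ for all reflections from the simple ones. Your write-up merely makes explicit the conjugation-chain induction that the paper leaves implicit.
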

Thus, $\hat {\bf H}(W)$ is a Hopf algebra by Theorem \ref{th:hopf hat W gen}.

Theorem \ref{th:hopf hat W} is proved. \endproof

\subsection{Factorization of Hecke-Hopf algebras and proof of Theorems  \ref{th:ZWcross and free D}, \ref{th:ZWcross and free D gen}}
\label{subsec:proof of Theorem ZWcross and free D}

Prove  Theorem \ref{th:ZWcross and free D gen} first. Proposition \ref{pr:hopf hat W gen pf}(b) together with \eqref{eq:2 cocycle} guarantee that $\hat {\bf H}'_{\chi,\sigma}(W)$ factors as $\hat {\bf H}'_{\chi,\sigma}(W)=T(V)\cdot RW$ over $R$, where  $V=\oplus_{s\in {\mathcal S}} R\cdot D_s$. To establish the factorization of $\hat {\bf H}_{\chi,\sigma}(W)$ we need the following result (which is a pre-condition in Lemma \ref{le:general factored quotient H}).

\begin{proposition} 
\label{pr:wKsw-1}
In the notation of Lemma \ref{le:Kfs}, $w\cdot {\bf K}_s\cdot w^{-1}={\bf K}_{wsw^{-1}}$ for all $w\in W$, $s\in {\mathcal S}$.

\end{proposition}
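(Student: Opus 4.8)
The plan is to exploit the factorization $\hat{\mathbf{H}}'_{\chi,\sigma}(W)=T(V)\cdot RW$ from Proposition \ref{pr:hopf hat W gen pf}(b), which embeds $RW$ as a subalgebra and hence makes conjugation $x\mapsto wxw^{-1}$ an algebra automorphism of $\hat{\mathbf{H}}'_{\chi,\sigma}(W)$ for each $w\in W$, with inverse given by conjugation by $w^{-1}$. It therefore suffices to prove the single inclusion $w\cdot\mathbf{K}_s\cdot w^{-1}\subseteq\mathbf{K}_{wsw^{-1}}$ for all $w\in W$, $s\in\mathcal{S}$; applying it to the pair $(w^{-1},wsw^{-1})$ yields the reverse inclusion. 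If $s$ has infinite order the claim is trivial, since $f_s$ is a nonzero constant and $\mathbf{K}_s=R\cdot 1$, so I assume $|s|<\infty$ and abbreviate $s'=wsw^{-1}$, $a_s=\chi_{s,s}$, $b_s=\sigma_{s,s}$, $c=\chi_{w,s}$, $d=\sigma_{w,s}$, $\kappa=\kappa_{w,s}$. Recall that $\mathbf{K}_s=\mathbf{K}_{f_s}$ is spanned by $1$ and the coefficients $y_k^s$ of $f_s(ts+D_s)=\sum_k y_k^s t^k$, with $f_s=f_{|s|}^{a_s,b_s}$ as in \eqref{eq:actual fs}. Conjugating and applying \eqref{eq:relations hat H} gives $w\,f_s(ts+D_s)\,w^{-1}=f_s(\xi(t))$ with $\xi(t)=s't+cD_{s'}+d(1-s')$, so $w y_k^s w^{-1}$ is the coefficient of $t^k$ in $f_s(\xi(t))$.

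The first computation I would carry out is the cocycle bookkeeping relating the parameters of $s$ and $s'$. Expanding $\chi_{ws,s}$ in two ways via the multiplicative identity in \eqref{eq:2 cocycle} (once as $\chi_{w,s}\chi_{s',s'}$ using $ws=s'w$, once as $\chi_{s,s}\chi_{w,s}$ using $ws=w\cdot s$) and cancelling the unit $\chi_{w,s}$ shows $\chi_{s',s'}=\chi_{s,s}$, i.e.\ $a_{s'}=a_s=:a$. Doing the same for $\sigma_{ws,s}$ with the additive identity in \eqref{eq:2 cocycle}, combined with $d=b_s\frac{a^{\kappa}-1}{a-1}$ from \eqref{eq:sigmaws etc}, yields the key relation $c\,b_{s'}=b_s a^{\kappa}$. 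Because $a$ is a primitive $|s|$-th root of unity by \eqref{eq:s conj} and $c^{|s|}=1$, the element $\mu:=c^{-1}a^{\kappa}$ is a unit in $R$, and the relation becomes $b_{s'}=\mu\,b_s$.

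The heart of the argument is then to recognize $\xi(t)$, after rescaling by $\mu$, as an inner conjugate of the standard element $s'u+D_{s'}$. From \eqref{eq:fnab} together with $b_{s'}=\mu b_s$ one obtains the polynomial rescaling identity $f_s(x)=\mu^{-|s|}f_{s'}(\mu x)$, while iterating the rank-$1$ relation \eqref{eq:relations hat H} gives $s'^{\kappa}D_{s'}s'^{-\kappa}=a^{\kappa}D_{s'}+b_{s'}\frac{a^{\kappa}-1}{a-1}(1-s')$. A direct check then shows $\mu\,\xi(t)=s'(\mu t)+s'^{\kappa}D_{s'}s'^{-\kappa}=s'^{\kappa}\bigl(s'(\mu t)+D_{s'}\bigr)s'^{-\kappa}$, so conjugating $s'^{\kappa}$ out of the polynomial $f_{s'}$ yields $f_s(\xi(t))=\mu^{-|s|}s'^{\kappa}f_{s'}(s'(\mu t)+D_{s'})s'^{-\kappa}=\sum_k\mu^{k-|s|}\,s'^{\kappa}y_k^{s'}s'^{-\kappa}\,t^k$, whence $w y_k^s w^{-1}=\mu^{k-|s|}\,s'^{\kappa}y_k^{s'}s'^{-\kappa}$.

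It remains to see that conjugation by $s'^{\kappa}$ preserves $\mathbf{K}_{s'}$, which I would establish for a single power of $s'$ and then iterate. Here the symmetry \eqref{eq:fab symmetry} is decisive: since $f_{s'}(a x+b_{s'})=f_{s'}(x)$ holds as an identity in $R[x]$, it holds for any algebra element, and applying it to $X=s'(u-b_{s'})a^{-1}+D_{s'}$ identifies $s'\,f_{s'}(s'u+D_{s'})\,s'^{-1}=f_{s'}(s'u+aD_{s'}+b_{s'}(1-s'))$ with $f_{s'}(s'v+D_{s'})$ evaluated at $v=(u-b_{s'})/a$; expanding shows each $s'y_k^{s'}s'^{-1}$ is an $R$-combination of the $y_j^{s'}$ and so lies in $\mathbf{K}_{s'}$. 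Iterating gives $s'^{\kappa}\mathbf{K}_{s'}s'^{-\kappa}\subseteq\mathbf{K}_{s'}$, hence $w y_k^s w^{-1}\in\mathbf{K}_{s'}$ and $w\mathbf{K}_s w^{-1}\subseteq\mathbf{K}_{s'}$, completing the proof. I expect the main obstacle to be exactly the bookkeeping of the second and third paragraphs — matching the affine term $cD_{s'}+d(1-s')$ with an honest inner conjugate $s'^{\kappa}D_{s'}s'^{-\kappa}$ up to the scalar $\mu$ — since this is where the two halves of \eqref{eq:2 cocycle} and the hypothesis \eqref{eq:sigmaws etc} must be combined precisely, and where one must verify that $\mu$ is genuinely a unit so that the rescaling $f_s(x)=\mu^{-|s|}f_{s'}(\mu x)$ is legitimate over an arbitrary integral domain $R$.
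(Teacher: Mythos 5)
Your proof is correct, but it takes a genuinely different route from the paper's. The paper's argument (Lemma \ref{le:fwsw-1}) first establishes the polynomial identity $f_{wsw^{-1}}(x)=f_s(\chi_{w,s}x+\sigma_{w,s})$ by checking that the affine change of variable permutes the roots $b_s\frac{1-a_s^i}{1-a_s}$, and then performs the single regrouping $ts'+\chi_{w,s}D_{s'}+\sigma_{w,s}(1-s')=\chi_{w,s}\bigl(ps'+D_{s'}\bigr)+\sigma_{w,s}$ with $p=\frac{t-\sigma_{w,s}}{\chi_{w,s}}$, so that $w\,\delta_s(t)\,w^{-1}=\delta_{s'}(p)$ outright; since $t\mapsto p$ is an invertible affine substitution, the coefficient modules coincide and both inclusions follow at once. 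You instead rescale by $\mu=\chi_{w,s}^{-1}a^{\kappa}$ so as to recognize the affine perturbation of $D_{s'}$ as the honest inner conjugate $s'^{\kappa}D_{s'}s'^{-\kappa}$, which forces you to prove separately that conjugation by $s'$ preserves ${\bf K}_{s'}$ (via \eqref{eq:fab symmetry}) --- a statement that is itself a special case of the proposition --- and to argue the two inclusions separately. Your cocycle bookkeeping ($a_{s'}=a_s$, $\chi_{w,s}b_{s'}=b_sa^{\kappa}$) reproduces exactly Lemma \ref{le:basic properties of chi sigma}(b), and your rescaling identity $f_s(x)=\mu^{-|s|}f_{s'}(\mu x)$ is an equivalent repackaging of the paper's change-of-variable identity; both arguments ultimately rest on the same periodicity $1+a+\cdots+a^{|s|-1}=0$ of the geometric sums. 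One small inefficiency: you invoke \eqref{eq:s conj} to see that $\mu$ is a unit, but this is automatic from \eqref{eq:2 cocycle} (which already forces $\chi_{w,s},\chi_{s,s}\in R^{\times}$), and \eqref{eq:s conj} is not among the hypotheses of Theorem \ref{th:ZWcross and free D gen}, where this proposition is used --- so it is fortunate that your argument does not actually depend on primitivity. What your route buys is a conceptual identification of the twisted conjugation formula \eqref{eq:relations hat H} as a genuine inner automorphism after rescaling; what the paper's route buys is brevity and a proof of equality in one stroke.
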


\begin{proof}
The following is an immediate consequence of \eqref{eq:2 cocycle}. 
\begin{lemma} 
\label{le:basic properties of chi sigma}
For any $\sigma,\chi:W\times {\mathcal S}\to R$ satisfying \eqref{eq:2 cocycle} one has for all $w\in W$, $s\in {\mathcal S}$:

(a) $\chi_{1,s}=1$, $\sigma_{1,s}=0$, $\chi_{w^{-1},wsw^{-1}}= \frac{1}{\chi_{w,s}}$, $\sigma_{w^{-1},wsw^{-1}}=-\frac{\sigma_{w,s}}{\chi_{w,s}}$.

(b) $\chi_{w sw^{-1},wsw^{-1}}= \chi_{s,s}$, $(\chi_{s,s}-1)\sigma_{w,s}=\chi_{w,s}\sigma_{w sw^{-1},wsw^{-1}}-\sigma_{s,s}$.
\end{lemma}

Furthermore, in the notation of Section \ref{subsec:proof of  Theorem hopf hat}, for $s\in {\mathcal S}$ of finite order $|s|$ we abbreviate: $a_s=\chi_{s,s}$, $b_s=\sigma_{s,s}$, $f_s:=f_{|s|}^{a_s,b_s}\in R[x]$ and denote 
$\delta_s(t):=f_s(ts+D_s)\in \hat {\bf H}'_{\chi,\sigma}(W)[t]$.

 We need the following result.

\begin{lemma} 
\label{le:fwsw-1}
In the assumptions of Theorem \ref{th:ZWcross and free D gen} one has (in the notation \eqref{eq:actual fs}):
\begin{equation}
\label{eq:fwsw-1}
f_{wsw^{-1}}(x)=f_s(\chi_{w,s}\cdot x+\sigma_{w,s})
\end{equation}
for all $w\in W$, $s\in {\mathcal S}$ of finite order. In particular, $w\cdot \delta_s(t)\cdot w^{-1}=\delta_{wsw^{-1}}\left(\frac{t-\sigma_{w,s}}{\chi_{w_s}}\right)$.

\end{lemma}

\begin{proof} 
For a given $w\in W$ we abbreviate  $s':=wsw^{-1}$ and $n:=|s|=|s'|$. Then $a_{s'}^n=1$ and \eqref{eq:sigmaws etc} reads:
$\sigma_{w,s}=b_s\frac{1-a_s^k}{1-a_s}$, 
where $k=\kappa_{w,s}$. Also $a_{s'}=a_s$ and $b_{s'}=\frac{b_s}{\chi_{w,s}}a_s^k$ 
by Lemma \ref{le:basic properties of chi sigma}. Combining, we obtain
$\sigma_{w,s}=b_{s'}{\chi_{w,s}}\frac{a_{s'}^{-k}-1}{1-a_{s'}}$. Then:
$$f_s(\chi_{w,s}\cdot x+\sigma_{w,s})=\prod_{i=1}^n \left(\chi_{w,s}\cdot x+\sigma_{w,s}-b_s\frac{1-a_s^i}{1-a_s}\right)=
\prod_{i=1}^n \left(x+\frac{\sigma_{w,s}}{\chi_{w,s}}-\frac{b_s}{\chi_{w,s}}\cdot \frac{1-a_{s'}^i}{1-a_{s'}}\right)$$
$$=\prod_{i=1}^n \left(x+b_{s'} \frac{a_{s'}^{-k}-1}{1-a_{s'}}-b_{s'}\frac{a_{s'}^{-k}-a_{s'}^{i-k}}{1-a_{s'}}\right)
=\prod_{i=1}^n \left(x-b_{s'}\frac{1-a_{s'}^{i-k}}{1-a_{s'}}\right)=\prod_{i=1}^n (x-b_{s'}\frac{1-a_{s'}^i}{1-a_{s'}})=f_{s'}(x) \ .$$
This proves the first assertion of the lemma. Prove the second assertion now. Indeed, using \eqref{eq:fwsw-1} the form $f_s(t)=f_{s'}(p)$, where $p=\frac{t-\sigma_{w,s}}{\chi_{w_s}}$, we obtain:
$$w\cdot \delta_s(t)\cdot w^{-1}=f_s(w\cdot(ts+D_s)\cdot w^{-1})=f_s(ts'+\chi_{w,s}D_{s'}+\sigma_{w,s}(1-s'))$$
$$=f_s((t-\sigma_{w,s})s'+\chi_{w,s}D_s+\sigma_{w,s})=f_s(\chi_{w,s}(ps'+D_s)+\sigma_{w,s})=f_{s'}(ps'+D_{s'})+\sigma_{w,s})
=\delta_{wsw^{-1}}(p)\ .$$

The lemma is proved.
\end{proof}

Since ${\bf K}_s$ is generated by the coefficients of $\delta_s(t)$, the second assertion of Lemma \ref{le:fwsw-1}, finishes the proof of Proposition \ref{pr:wKsw-1}. \end{proof}

In particular, ${\bf K}=\sum\limits_{s\in {\mathcal S}} {\bf K}_s$ satisfies $w\cdot {\bf K}={\bf K}\cdot w$ for all $w\in W$.
This and the factorization of ${\bf H}=\hat {\bf H}'_{\chi,\sigma}(W)=T(V)\cdot RW$ guarantee that Lemma \ref{le:general factored quotient H} is applicable here, therefore, $\underline {\bf H}=\hat {\bf H}_{\chi,\sigma}(W)$ factors as $\underline {\bf H}=\underline {\bf D}\cdot RW$ over $R$, where $\underline {\bf D}=\hat {\bf D}_{\chi,\sigma}(W)=T(V)/\langle {\bf K}\rangle$.

Theorem \ref{th:ZWcross and free D gen} is proved.
\endproof 

\noindent {\bf Proof of Theorem \ref{th:ZWcross and free D}}. 
We need the following result.

\begin{proposition} 
\label{pr:chi sigma satisfies cocycle condition}
For any Coxeter group $W$, one has 

(a) the maps $\chi$ and $\sigma$ defined by \eqref{eq:chi sigma defined}
satisfy \eqref{eq:2 cocycle}.

(b) 
the map  $\chi$ given by  \eqref{eq:chi sigma defined} satisfies 
$\chi_{w,s}=\begin{cases}
1 & \text{if $\ell(ws)>\ell(w)$}\\
-1 & \text{if $\ell(ws)<\ell(w)$}\\
\end{cases}$
for all $s\in {\mathcal S}$,  $w\in W$.

\end{proposition}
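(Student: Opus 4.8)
The plan is to read both statements off the explicit formula \eqref{eq:chi sigma defined}, using only two standard facts about Coxeter groups: that $w\mapsto(-1)^{\ell(w)}$ is a group homomorphism (the sign character), and that right multiplication by a reflection strictly changes length. First I would dispose of part (a). Since $\sigma_{w,s}=\frac{1-\chi_{w,s}}{2}$ by definition, the second identity of \eqref{eq:2 cocycle} is a purely formal consequence of the first: substituting $\sigma=\frac{1-\chi}{2}$ into $\sigma_{w_2,s}+\chi_{w_2,s}\sigma_{w_1,w_2sw_2^{-1}}$ and simplifying gives $\frac{1-\chi_{w_2,s}\chi_{w_1,w_2sw_2^{-1}}}{2}$, which equals $\frac{1-\chi_{w_1w_2,s}}{2}=\sigma_{w_1w_2,s}$ once the first identity is in hand. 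So it suffices to prove $\chi_{w_1w_2,s}=\chi_{w_2,s}\,\chi_{w_1,w_2sw_2^{-1}}$. Expanding all three factors via \eqref{eq:chi sigma defined}, the half-integer exponents telescope (the contribution $\tfrac12\ell(w_2sw_2^{-1})$ cancels between the two right-hand factors), and the identity collapses to $(-1)^{\ell(w_1w_2)}=(-1)^{\ell(w_1)+\ell(w_2)}$. This is exactly multiplicativity of the sign character, which I would justify on generators from $\ell(ws_i)=\ell(w)\pm1$. That settles (a).

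For part (b) I would argue by induction on $\ell(w)$. Abbreviate $\eta(w,s)=+1$ if $\ell(ws)>\ell(w)$ and $\eta(w,s)=-1$ if $\ell(ws)<\ell(w)$; these are the only two possibilities, since $\ell(ws)\equiv\ell(w)+\ell(s)\pmod 2$ with $\ell(s)$ odd forces $\ell(ws)-\ell(w)$ to be odd, hence nonzero. The base case $w=1$ gives $\chi_{1,s}=1=\eta(1,s)$. For the inductive step, write $w=s_iw'$ with $\ell(w')=\ell(w)-1$. Part (a) yields $\chi_{w,s}=\chi_{w',s}\,\chi_{s_i,w'sw'^{-1}}$, and Lemma \ref{le:cocycle coxeter} evaluates $\chi_{s_i,t}$ to be $-1$ when $t=s_i$ and $+1$ otherwise, so $\chi_{w,s}=\chi_{w',s}\cdot(-1)^{[w'sw'^{-1}=s_i]}$, where $[P]=1$ if $P$ holds and $0$ otherwise. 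Using $\chi_{w',s}=\eta(w',s)$ from the inductive hypothesis, the whole statement reduces to the combinatorial identity $\eta(s_iw',s)=\eta(w',s)\cdot(-1)^{[w'sw'^{-1}=s_i]}$.

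This last identity is the heart of the matter, and I expect it to be the main obstacle; I would verify it in the reflection (Tits) representation. Let $\beta_s$ be the positive root with $s=s_{\beta_s}$, and recall the standard characterization $\eta(u,s)=+1\iff u(\beta_s)\in\Phi^+$. Put $\gamma=w'(\beta_s)$, so that $\eta(w',s)$ and $\eta(s_iw',s)$ are the signs of $\gamma$ and $s_i(\gamma)$ respectively. Since $s_i$ sends $\alpha_i\mapsto-\alpha_i$ and permutes $\Phi^+\setminus\{\alpha_i\}$, one has $\sgn\, s_i(\gamma)=\sgn\,\gamma$ unless $\gamma=\pm\alpha_i$; and $\gamma=\pm\alpha_i$ is equivalent to $\beta_s=\pm w'^{-1}(\alpha_i)$, i.e.\ to $w'sw'^{-1}=s_i$. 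This is precisely the required sign recursion, completing the induction.

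As a sanity check that also bypasses the geometry in the degenerate case, when $w'sw'^{-1}=s_i$ one has $s_iw'=w's$, whence $\ell(w's)=\ell(s_iw')=\ell(w')+1$ gives $\eta(w',s)=+1$, while $s_iw's=w'$ gives $\ell(s_iw's)=\ell(w')<\ell(s_iw')$ and hence $\eta(s_iw',s)=-1$; this matches $(-1)^{1}=-1$. The remaining case $w'sw'^{-1}\ne s_i$ is exactly where the root computation of the previous paragraph does the real work.
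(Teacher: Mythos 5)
Your proposal is correct. Part (a) is essentially the paper's argument: the authors package your ``telescoping'' observation as a general lemma (their Lemma \ref{le:default chi}) stating that any $\chi_{w,s}=\rho(w)\varphi(wsw^{-1})/\varphi(s)$ with $\rho$ a homomorphism satisfies the first cocycle condition, and they apply it with $\rho(w)=(-1)^{\ell(w)}$, $\varphi(s)=(-1)^{(\ell(s)-1)/2}$; the reduction of the $\sigma$-identity to the $\chi$-identity via $\sigma=\frac{1-\chi}{2}$ is the same short computation in both texts.

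For part (b) you share the inductive skeleton and the same recursion $\chi_{w,s}=(-1)^{[\,w'sw'^{-1}=s_i\,]}\chi_{w',s}$ coming from Lemma \ref{le:cocycle coxeter}, but you close the induction by a genuinely different route. The paper stays entirely inside length combinatorics: it proves the one-directional implication ``$\chi_{w,s}=-1\Rightarrow\ell(ws)<\ell(w)$'' by induction, using in the non-degenerate case only the inequality $\ell(s_iws)\ge\ell(ws)-1$, and then obtains the converse implication separately from the relation $\chi_{w,s}=-\chi_{ws,s}$ (the cocycle identity with $w_1=ws$, $w_2=s$ and $\chi_{s,s}=-1$). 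You instead prove the full biconditional in one pass by passing to the Tits reflection representation and invoking the standard characterization $\ell(us)>\ell(u)\iff u(\beta_s)\in\Phi^+$, from which the sign recursion $\eta(s_iw',s)=\eta(w',s)\cdot(-1)^{[\,w'sw'^{-1}=s_i\,]}$ falls out of the fact that $s_i$ permutes $\Phi^+\setminus\{\alpha_i\}$. Your version is more conceptual and avoids the paper's slightly delicate two-implication structure, at the cost of importing the root-system machinery (which is standard for arbitrary Coxeter groups but external to the paper, which otherwise works purely with lengths); the paper's version is self-contained but less transparent. Both are complete proofs.
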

 
\begin{proof} Prove (a). The following immediate fact  gives a ``default" $\chi$ satisfying  \eqref{eq:2 cocycle}.

\begin{lemma} 
\label{le:default chi} 
Let $W$ be a group and ${\mathcal S}$ be a conjugation-invariant subset of $W$, then for any ring $R$, a group homomorphism $\rho:W\to R^\times$, and map $\varphi:{\mathcal S}\to R^\times$, the map $\chi=\chi^{\varphi,\rho}:W\times {\mathcal S}\to R^\times$ given by 
$\chi_{w,s}=\rho(w)\cdot\frac{\varphi(wsw^{-1})}{\varphi(s)}$
for $w\in W$, $s\in {\mathcal S}$ satisfies the first condition of \eqref{eq:2 cocycle}.

\end{lemma}

We use Lemma \ref{le:default chi} with $R=\ZZ$, a homomorphism 
$\rho:W\to \{-1,1\}=\ZZ^\times$, and a map $\varphi:{\mathcal S}\to \{-1,1\}$ given respectively by:
$\rho(w)= (-1)^\ell(w),~\varphi(s)=(-1)^{\frac{1}{2}(\ell(s)-1)}$
for $w\in W$, $s\in {\mathcal S}$. Then, clearly, $\chi$ defined by \eqref{eq:chi sigma defined} equals $\chi^{\varphi,\rho}$ in the notation of Lemma \ref{le:default chi} and, thus, satisfies the first condition \eqref{eq:2 cocycle}.

Finally,
$2\sigma_{w_2,s}+2\chi_{w_2,s}\sigma_{w_1,w_2sw_2^{-1}} =1-\chi_{w_2,s}+\chi_{w_2,s}\cdot (1-\chi_{w_1,w_2sw_2^{-1}})
=1-\chi_{w_1w_2,s}=2\sigma_{w_1w_2,s}$
by the first condition \eqref{eq:2 cocycle}. This proves (a).

Prove (b) now. Note that $\chi_{s_i,s'}=1$ iff $s'\ne s_i$ by Lemma \ref{le:cocycle coxeter}. This, Proposition \ref{pr:chi sigma satisfies cocycle condition}(a) and the first equation \eqref{eq:2 cocycle} taken with $w_1=s_i$, $w_2=s_iw$ imply that 
\begin{equation}
\label{eq:chi recursion}
\chi_{w,s}=(-1)^{\delta_{s_i,wsw^{-1}}}\chi_{s_iw,s}
\end{equation}
for all $w\in W$, $s\in {\mathcal S}$. 
Furthermore, we  proceed by induction in $\ell(w)$ in the form
\begin{equation}
\label{eq:chi=-1 inductive hypothesis} 
\text{If $\chi_{w,s}=-1$, then $\ell(w)>\ell(ws)$}
\end{equation}
Indeed, if $w=s_i$ for some $i\in I$, then $\chi_{s_i,s}=-1$ iff $s=s_i$ and we have nothing to prove. 

Suppose that $\chi_{w,s}=-1$ for some $w\in W$ with $\ell(w)\ge 2$ and some $s\in {\mathcal S}$.  Now choose $i\in I$ such that  $\ell(s_iw)=\ell(w)-1$. If $s_i=wsw^{-1}$, i.e., $s_iw=ws$, then clearly, $\ell(w)>\ell(ws)$ and we have nothing to prove. If $s_i\ne wsw^{-1}$, then 
\eqref{eq:chi recursion} guarantees that 
$\chi_{s_iw,s}=-1$  and the inductive hypothesis for $(s_iw,s)$ asserts that 
$\ell(s_iw)>\ell(s_iws)$. Taking into account that $\ell(s_iws)\ge \ell(ws)-1$, we obtain $\ell(w)>\ell(ws)$, which finishes the proof of \eqref{eq:chi=-1 inductive hypothesis}. Finally, using \eqref{eq:chi=-1 inductive hypothesis}
let us prove 
\begin{equation}
\label{eq:chi=1 hypothesis} 
\text{If $\chi_{w,s}=1$, then $\ell(ws)>\ell(w)$}
\end{equation}
Indeed, taking into account that $\chi_{s,s}=-1$ for all $s\in {\mathcal S}$ by \eqref{eq:chi sigma defined} and using Proposition \ref{pr:chi sigma satisfies cocycle condition}(a) again,  the first equation \eqref{eq:2 cocycle} taken with $w_1=ws$, $w_2=s$ implies
$\chi_{w,s}=-\chi_{ws,s}$
for all $w\in W$, $s\in {\mathcal S}$. Therefore,  $\chi_{w,s}=1$ implies that $\chi_{ws,s}=-1$ hence $\ell(ws)>\ell(wss)=\ell(w)$ by 
\eqref{eq:chi=-1 inductive hypothesis}. This proves \eqref{eq:chi=1 hypothesis}. Part (b) is proved.

The proposition is proved.
\end{proof} 

Let us show that these  $\chi$ and $\sigma$ also satisfy \eqref{eq:sigmaws etc}. Indeed, $|s|=2$  $\chi_{s,s}=-1$, $\sigma_{s,s}=1$, for all $s\in {\mathcal S}$ and $\chi_{w,s}^2=1$ for all $w\in W$, therefore \eqref{eq:sigmaws etc} holds automatically with $\kappa_{w,s}$ taken to be the exponent in \eqref{eq:chi sigma defined}, so that ${\chi_{w,s}}=(-1)^{\kappa_{w,s}}$.
Therefore, $\hat {\bf H}(W)=\hat {\bf H}_{\chi,\sigma}(W)$ (by Lemma \ref{le:hats are equal}) and it
factors over $\ZZ$ as $\hat {\bf H}(W)=\hat {\bf D}(W)\cdot \ZZ W$ by Theorem \ref{th:ZWcross and free D gen}.

Theorem \ref{th:ZWcross and free D} is proved. \endproof

\subsection{Left coideals and proof of Theorems \ref{th:Coideal Hopf}, \ref{th:Coideal Hopf ij}, \ref{th:HW factorization}, \ref{th:Coideal Hopf gen}, and \ref{th:ZWcross and free D gen H}}
\label{subsec:proof of Theorem Coideal Hopf}
Prove Theorem \ref{th:Coideal Hopf gen} first. 
%
%
%
Indeed, by definition \eqref{eq:KchisigmaW}, 
\begin{equation}
\label{eq:Kchisigma}
{\bf K}_{\chi,\sigma}(W)=\{x\in \tilde {\bf D}_{\chi,\sigma}(w)\,|\,wxw^{-1}\in \tilde {\bf D}_{\chi,\sigma}(W)~\forall~w\in W \}={\bf K}(R W,\tilde {\bf D}_{\chi,\sigma}(W))
\end{equation}
in the notation 
\eqref{eq:general K defined}.
Also, by definition, $\tilde {\bf D}_{\chi,\sigma}(W)$ is a left coideal subalgebra in  $\hat {\bf H}(W)$, i.e., $\Delta(\tilde {\bf D}_{\chi,\sigma}(W))\subset \hat {\bf H}(W) \otimes \tilde {\bf D}_{\chi,\sigma}(W)$. 
These and $R$-freeness of $\hat {\bf H}_{\chi,\sigma}(W)$ guarantee that all conditions of Theorem \ref{th:general hopf ideal} are satisfied, therefore $\underline {\bf H}={\bf H}_{\chi,\sigma}(W)$ is naturally a Hopf algebra.

Theorem \ref{th:Coideal Hopf gen} is proved.
\endproof

\noindent {\bf Proof of Theorem \ref{th:ZWcross and free D gen H}}. 
We need the following result. 

\begin{lemma} \label{le:free product}
In the notation of Theorem \ref{th:hopf hat W gen}, one has:

(a) the algebra $\hat {\bf D}_{\chi,\sigma}(W)$ is the free product (over $R$) of algebras ${\bf D}_s$, $s\in {\mathcal S}$, where ${\bf D}_s$ is the $R$-algebra generated by $D_s$ subject to relations \eqref{eq:relations hat H taft} (e.g.,  ${\bf D}_s=R[D_s]$ if $s$ is of infinite order).

(b) If the condition \eqref{eq:s conj} holds for all $s\in {\mathcal S}$, then $\hat {\bf D}_{\chi,\sigma}(W)$ is a free $R$-module.
\end{lemma}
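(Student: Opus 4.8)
The plan is to deduce (a) directly from the two presentations and then use (a) to reduce (b) to two ingredients: the freeness of each single-generator algebra ${\bf D}_s$, and the freeness of a free product of augmented free modules.

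\textbf{Part (a).} I would argue by universal property. By construction $\hat {\bf D}_{\chi,\sigma}(W)$ is the $R$-algebra generated by the symbols $D_s$, $s\in {\mathcal S}$, subject to the relations \eqref{eq:relations hat H taft}; each such relation involves only the single generator $D_s$, and there are no relations linking distinct generators. On the other hand, for an arbitrary $R$-algebra $A$, a homomorphism $\ast_{s\in {\mathcal S}}{\bf D}_s\to A$ is the same datum as a family of homomorphisms ${\bf D}_s\to A$, i.e. a family of elements $(a_s)_{s\in {\mathcal S}}$ in $A$ with each $a_s$ satisfying the relations \eqref{eq:relations hat H taft} for $s$; and this is exactly the datum of a homomorphism $\hat {\bf D}_{\chi,\sigma}(W)\to A$. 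Hence the two algebras co-represent the same functor on $R$-algebras, so the canonical map $\ast_{s}{\bf D}_s\to \hat {\bf D}_{\chi,\sigma}(W)$ sending each $D_s$ to $D_s$ is an isomorphism.

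\textbf{Part (b).} By (a) it suffices to prove (i) that under \eqref{eq:s conj} each ${\bf D}_s$ is a free $R$-module admitting a decomposition ${\bf D}_s=R\cdot 1\oplus \bar{\bf D}_s$ with $\bar{\bf D}_s$ free, and (ii) that the free product of such augmented algebras is again free. For (i), if $s$ has infinite order then ${\bf D}_s=R[D_s]$ is free with basis $\{D_s^k:k\ge 0\}$ and $\bar{\bf D}_s=\bigoplus_{k\ge 1}R\,D_s^k$. If $s$ has finite order $n=|s|$, I claim that under \eqref{eq:s conj} all relations \eqref{eq:relations hat H taft} with $1\le k\le n-1$ are vacuous and only the $k=n$ relation survives; this is the content of Remark \ref{rem:single monic relation D_s}, which I verify as follows. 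The $q$-binomial coefficient satisfies the polynomial identity ${n \brack k}_q\cdot\prod_{i=1}^k(q^i-1)=\prod_{i=1}^k(q^{n+1-i}-1)$ in $\ZZ[q]$. Evaluating at $q=a_s$, the right-hand side contains the factor $a_s^n-1=0$ (the $i=1$ term), so it vanishes, while for $1\le k\le n-1$ each factor $a_s^i-1$ on the left is nonzero, since $a_s$ is a primitive $n$-th root of unity and $0<i<n$. As $R$ is an integral domain, $\prod_{i=1}^k(a_s^i-1)$ is a nonzero, hence non-zero-divisor, element; cancelling it yields ${n \brack k}_{a_s}=0$ for $1\le k\le n-1$. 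Since ${n \brack n}_{a_s}=1$, only the $k=n$ relation remains.

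That surviving relation is $g_s(D_s)=0$ with $g_s(x)=\prod_{j=0}^{n-1}\bigl(a_s^j x+b_s(1+a_s+\cdots+a_s^{j-1})\bigr)$, a polynomial of degree $n$ whose leading coefficient is $\prod_{j=0}^{n-1}a_s^j=a_s^{n(n-1)/2}\in R^\times$. Dividing by this unit we may take $g_s$ monic, so ${\bf D}_s\cong R[x]/(g_s)$ is free of rank $n$ with basis $\{1,D_s,\dots,D_s^{n-1}\}$ and $\bar{\bf D}_s=\bigoplus_{k=1}^{n-1}R\,D_s^k$ is free. For (ii) I would invoke the classical normal-form description of a free product of augmented algebras: writing ${\bf D}_s=R\cdot 1\oplus\bar{\bf D}_s$, one has an $R$-module decomposition
\[
\ast_{s\in {\mathcal S}}{\bf D}_s\;=\;R\;\oplus\;\bigoplus_{m\ge 1}\ \bigoplus_{\substack{(s_1,\dots,s_m)\in {\mathcal S}^m\\ s_i\ne s_{i+1}}}\bar{\bf D}_{s_1}\otimes_R\cdots\otimes_R\bar{\bf D}_{s_m}\,.
\]
Each summand is a tensor product of free $R$-modules, hence free, and a direct sum of free modules is free; thus $\hat {\bf D}_{\chi,\sigma}(W)=\ast_s{\bf D}_s$ is free over $R$.

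I expect the only genuine subtlety to be the module-theoretic step (ii): verifying that the evident spanning set of reduced tensors is actually an $R$-basis requires the standard reduced-word argument (equivalently, realizing the free product as the tensor algebra on $\bigoplus_s\bar{\bf D}_s$ together with the internal multiplications and checking the normal form is unambiguous). Everything else — part (a), the vanishing of the intermediate $q$-binomials, and the rank computation for each ${\bf D}_s$ — is a direct verification, with the integral-domain hypothesis on $R$ entering only to cancel the nonzero products $\prod_{i}(a_s^i-1)$.
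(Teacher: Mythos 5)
Your proof is correct and follows essentially the same route as the paper's: part (a) from the fact that the relations \eqref{eq:relations hat H taft} involve one generator at a time, and part (b) by reducing each ${\bf D}_s$ to a single monic relation (the content of Remark \ref{rem:single monic relation D_s}) and invoking freeness of a free product of free augmented $R$-algebras. The paper states these last two ingredients without proof, whereas you supply the $q$-binomial cancellation over the integral domain and the reduced-tensor normal form; these are exactly the right justifications.
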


\begin{proof} Part (a) is immediate from the presentation \eqref{eq:relations hat H taft} of $\hat {\bf D}_{\chi,\sigma}(W)$.

Prove (b). If $s$ is of finite order $|s|$ and $\chi_{s,s}$ is a primitive $|s|$-th root of unity in $R^\times$, then, according to Remark \ref{rem:single monic relation D_s}, ${\bf D}_s$ is generated by $D_s$ subject to the only (monic) polynomial relation, therefore, is a free $R$-module. Since free product of free $R$-modules is also a free $R$-module,  this finishes the proof of (b).

The lemma is proved.
\end{proof}

\begin{lemma} 
\label{le:free hat H}
In the assumptions of Theorem \ref{th:ZWcross and free D gen H}, one has:

(a)  
 $\hat {\bf H}_{\chi,\sigma}(W)$ is a free $R$-module.

(b) In the notation \eqref{eq:Kchisigma}, one has
\begin{equation}
\label{eq:Kchisigma hat}
{\bf K}_{\chi,\sigma}(W)=\{x\in \hat {\bf D}_{\chi,\sigma}(w)\,|\,wxw^{-1}\in \hat {\bf D}_{\chi,\sigma}(W)~\forall~w\in W \}={\bf K}(R W,\hat {\bf D}_{\chi,\sigma}(W))
\end{equation}
and ${\bf K}_{\chi,\sigma}(W)$ is a left coideal in $\hat {\bf H}_{\chi,\sigma}(W)$.
\end{lemma}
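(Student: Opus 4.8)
The plan is to deduce both parts from the factorization of $\hat {\bf H}_{\chi,\sigma}(W)$ together with the freeness of its ``Demazure part'' $\hat {\bf D}_{\chi,\sigma}(W)$ already established in Lemma \ref{le:free product}. For part (a) I first invoke the factorization $\hat {\bf H}_{\chi,\sigma}(W)=\hat {\bf D}_{\chi,\sigma}(W)\cdot RW$ obtained exactly as in the proof of Theorem \ref{th:ZWcross and free D gen}: Proposition \ref{pr:hopf hat W gen pf}(b) gives $\hat {\bf H}'_{\chi,\sigma}(W)=T(V)\cdot RW$, and then Proposition \ref{pr:wKsw-1} (that $w{\bf K}_sw^{-1}={\bf K}_{wsw^{-1}}$) lets Lemma \ref{le:general factored quotient H} transport this factorization to the quotient $\hat {\bf H}_{\chi,\sigma}(W)$. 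Concretely the multiplication map $\hat {\bf D}_{\chi,\sigma}(W)\otimes_R RW\to \hat {\bf H}_{\chi,\sigma}(W)$ is an isomorphism of $R$-modules. By Lemma \ref{le:free product}(b) the primitivity hypothesis \eqref{eq:s conj} forces each ${\bf D}_s$ to be cut out by a single monic relation, so $\hat {\bf D}_{\chi,\sigma}(W)$, being their free product, is a free $R$-module; since $RW$ is free with basis $W$ and a tensor product of free modules over the commutative ring $R$ is free, $\hat {\bf H}_{\chi,\sigma}(W)$ is free, proving (a).

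For part (b), the same factorization shows that the canonical map $\hat {\bf D}_{\chi,\sigma}(W)\to \hat {\bf H}_{\chi,\sigma}(W)$ is injective, being the restriction of the above isomorphism to $\hat {\bf D}_{\chi,\sigma}(W)\otimes R\cdot 1$; thus $\hat {\bf D}_{\chi,\sigma}(W)$ is identified with its image $\tilde {\bf D}_{\chi,\sigma}(W)$. Under this identification the description \eqref{eq:Kchisigma} of ${\bf K}_{\chi,\sigma}(W)$ recorded in the proof of Theorem \ref{th:Coideal Hopf gen} reads verbatim as \eqref{eq:Kchisigma hat}, i.e. ${\bf K}_{\chi,\sigma}(W)={\bf K}(RW,\hat {\bf D}_{\chi,\sigma}(W))$ in the notation \eqref{eq:general K defined}, where the $RW$-action on $\hat {\bf H}_{\chi,\sigma}(W)$ is conjugation $w\act x=wxw^{-1}$ (Lemma \ref{le:action conjugation}, using $S(w)=w^{-1}$ and $\Delta(w)=w\otimes w$).

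Finally, to see that ${\bf K}_{\chi,\sigma}(W)$ is a left coideal I apply Proposition \ref{pr:general K}, whose hypotheses now all hold: $\hat {\bf H}_{\chi,\sigma}(W)$ is a Hopf algebra (Theorem \ref{th:hopf hat W gen}) and is free over $R$ by part (a); $RW$ is a Hopf subalgebra; and $\tilde {\bf D}_{\chi,\sigma}(W)$ is a left coideal subalgebra because $\Delta(D_s)=D_s\otimes 1+s\otimes D_s\in \hat {\bf H}_{\chi,\sigma}(W)\otimes \tilde {\bf D}_{\chi,\sigma}(W)$. Hence ${\bf K}(RW,\hat {\bf D}_{\chi,\sigma}(W))={\bf K}_{\chi,\sigma}(W)$ is a left coideal subalgebra, completing (b).

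The one point genuinely requiring care is the injectivity of $\hat {\bf D}_{\chi,\sigma}(W)\to \hat {\bf H}_{\chi,\sigma}(W)$, i.e. that the abstract free product and the subalgebra $\tilde {\bf D}_{\chi,\sigma}(W)$ it generates inside $\hat {\bf H}_{\chi,\sigma}(W)$ really coincide. This is precisely what upgrades the tautological \eqref{eq:Kchisigma} to \eqref{eq:Kchisigma hat}, and it is where the primitivity hypothesis \eqref{eq:s conj} does its work: it is needed to make $\hat {\bf D}_{\chi,\sigma}(W)$ free (Lemma \ref{le:free product}(b)), which in turn is what forces the factorization isomorphism to be injective on the $\hat {\bf D}$-factor rather than merely surjective.
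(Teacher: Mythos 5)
Your proof is correct and follows essentially the same route as the paper's: part (a) is the factorization $\hat {\bf H}_{\chi,\sigma}(W)=\hat {\bf D}_{\chi,\sigma}(W)\cdot RW$ from Theorem \ref{th:ZWcross and free D gen} combined with freeness of $\hat {\bf D}_{\chi,\sigma}(W)$ from Lemma \ref{le:free product}(b), and part (b) is the identification $\tilde {\bf D}_{\chi,\sigma}(W)=\hat {\bf D}_{\chi,\sigma}(W)$ followed by Proposition \ref{pr:general K}. You have merely unpacked the citations more explicitly than the paper does, and your closing remark correctly isolates where the primitivity hypothesis \eqref{eq:s conj} enters.
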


\begin{proof} In the assumptions of Theorem \ref{th:ZWcross and free D gen H}, 
one has a factorization $\hat {\bf H}_{\chi,\sigma}(W)=\hat {\bf D}_{\chi,\sigma}(W)\cdot RW$ by Theorem \ref{th:ZWcross and free D gen}, in particular, $\tilde {\bf D}_{\chi,\sigma}(W)=\hat {\bf D}_{\chi,\sigma}(W)$. Taking into account that $RW$ is also a free $R$-module and tensor product of free modules is free, we finish the proof of part (a).

Prove (b). \eqref{eq:Kchisigma hat} is immediate. The second assertion follows from (a) and Proposition \ref{pr:general K}. 

The lemma is proved. \end{proof}

Thus, all conditions of Theorem \ref{th:general hopf ideal} are satisfied for the Hopf algebra $\hat {\bf H}_{\chi,\sigma}(W)$, therefore $\underline {\bf H}={\bf H}_{\chi,\sigma}(W)$ is a Hopf algebra by Theorem \ref{th:general hopf ideal}. Finally, the factorization ${\bf H}_{\chi,\sigma}(W)={\bf D}_{\chi,\sigma}(W)\cdot RW$ follows from Proposition \ref{pr:from coideal to hopf ideal} and Theorem \ref{th:ZWcross and free D gen}.

Theorem \ref{th:ZWcross and free D gen H} is proved. \endproof

\noindent {\bf Proof of Theorem \ref{th:Coideal Hopf}}. Taking into account that $\hat {\bf H}(W)=\hat {\bf H}_{\chi,\sigma}(W)$ by Lemma \ref{le:hats are equal}   for $\chi$ and $\sigma$ given by 
\eqref{eq:chi sigma defined}, we see that ${\bf K}(W)={\bf K}_{\chi,\sigma}(W)$, therefore, $\underline {\bf H}(W)={\bf H}_{\chi,\sigma}(W)={\bf H}(W)$, which is a Hopf algebra by Theorem \ref{th:ZWcross and free D gen H}. 
%
%

Theorem \ref{th:Coideal Hopf} is proved. \endproof

\noindent {\bf Proof of Theorem \ref{th:Coideal Hopf ij}}. Clearly, the algebra $\hat {\bf H}(W)$ is filtered by $\ZZ_{\ge 0}$ via 
$\deg D_s=1$, $s\in {\mathcal S}$, $\deg w=0$. For each $r\in \ZZ_{\ge 0}$ denote by $\hat {\bf H}(W)_{\le r}$ the filtered component of degree $r$. In particular, $\hat {\bf H}(W)_{\le 0}=R W$.  For each subset $X\subset \hat {\bf H}(W)$ we abbreviate 
$X_{\le r}:=X\cap \hat {\bf H}(W)_{\le r}$.

\begin{proposition} 
\label{pr:graded coideal J} 
For each $r\ge 0$ and $J\subset I$ the $\ZZ$-module ${\bf K}(W_J)_{\le r}$ is a left coideal in $\hat {\bf H}(W)$.

\end{proposition}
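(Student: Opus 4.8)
The plan is to deduce the statement from two facts about the coproduct of $\hat{\bf H}(W)$: that ${\bf K}(W_J)$ is itself a left coideal, and that $\Delta$ respects the $D$-filtration in its second tensor factor. Once both are in hand, the filtered refinement ${\bf K}(W_J)_{\le r}$ follows by intersecting the two resulting containments, using freeness of $\hat{\bf H}(W)$ over $\ZZ$ through Lemma \ref{le:intersection tensor products}.

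First I would record that ${\bf K}(W_J)$ is a left coideal in $\hat{\bf H}(W)$. Working inside the Hopf subalgebra $\hat{\bf H}(W_J)$ (whose coproduct is the restriction of that of $\hat{\bf H}(W)$, since $\Delta(D_s)=D_s\otimes 1+s\otimes D_s$ and $\Delta(s_j)=s_j\otimes s_j$ keep the generators indexed by $J$ inside $\hat{\bf H}(W_J)\otimes\hat{\bf H}(W_J)$), one identifies ${\bf K}(W_J)=\bigcap_{w\in W_J}w\hat{\bf D}(W_J)w^{-1}$ with ${\bf K}(\ZZ W_J,\hat{\bf D}(W_J))$ in the notation of \eqref{eq:general K defined}, the $W_J$-action being conjugation. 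Since $\hat{\bf H}(W_J)$ is free over $\ZZ$ and $\hat{\bf D}(W_J)$ is a left coideal subalgebra, Proposition \ref{pr:general K} shows that ${\bf K}(W_J)$ is a left coideal subalgebra of $\hat{\bf H}(W_J)$, hence of $\hat{\bf H}(W)$; in particular $\Delta({\bf K}(W_J))\subset\hat{\bf H}(W)\otimes{\bf K}(W_J)$.

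Next I would establish the filtration compatibility: for every $r\ge0$ one has $\Delta(\hat{\bf D}(W)_{\le r})\subset\hat{\bf H}(W)\otimes\hat{\bf D}(W)_{\le r}$. It suffices to check this on a monomial $x=D_{s_1}\cdots D_{s_k}$ with $k\le r$, for which $\Delta(x)=\prod_{i=1}^k(D_{s_i}\otimes 1+s_i\otimes D_{s_i})$. Expanding this product in $\hat{\bf H}(W)\otimes\hat{\bf H}(W)$, each resulting term has second tensor factor equal to the ordered product $\prod_{i\in A}D_{s_i}$ over the set $A\subseteq\{1,\dots,k\}$ of indices at which the summand $s_i\otimes D_{s_i}$ was chosen; this lies in $\hat{\bf D}(W)$ and has $D$-degree $|A|\le k\le r$. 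Thus every second factor lies in $\hat{\bf D}(W)_{\le r}$, as claimed; here I use that each $s_i\in\ZZ W=\hat{\bf H}(W)_{\le0}$ contributes degree $0$.

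Finally I would combine the two inclusions. Let $x\in{\bf K}(W_J)_{\le r}$. Since ${\bf K}(W_J)\subset\hat{\bf D}(W_J)\subset\hat{\bf D}(W)$, the element $x$ lies both in ${\bf K}(W_J)$ and in $\hat{\bf D}(W)_{\le r}$, so the two facts above give $\Delta(x)\in(\hat{\bf H}(W)\otimes{\bf K}(W_J))\cap(\hat{\bf H}(W)\otimes\hat{\bf D}(W)_{\le r})$. Because $\hat{\bf H}(W)$ is a free $\ZZ$-module (Theorem \ref{th:ZWcross and free D}), Lemma \ref{le:intersection tensor products} identifies this intersection with $\hat{\bf H}(W)\otimes({\bf K}(W_J)\cap\hat{\bf D}(W)_{\le r})=\hat{\bf H}(W)\otimes{\bf K}(W_J)_{\le r}$, the last equality holding because ${\bf K}(W_J)\subset\hat{\bf D}(W)$ forces ${\bf K}(W_J)\cap\hat{\bf D}(W)_{\le r}={\bf K}(W_J)\cap\hat{\bf H}(W)_{\le r}={\bf K}(W_J)_{\le r}$. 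Hence $\Delta({\bf K}(W_J)_{\le r})\subset\hat{\bf H}(W)\otimes{\bf K}(W_J)_{\le r}$, which is the assertion. I expect the only genuinely non-formal step to be this last intersection argument: one cannot in general choose, term by term in $\Delta(x)=\sum x_{(1)}\otimes x_{(2)}$, a single decomposition whose right-hand factors simultaneously lie in ${\bf K}(W_J)$ and in filtered degree $\le r$, and it is exactly the freeness-based Lemma \ref{le:intersection tensor products} that bridges this gap.
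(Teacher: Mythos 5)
Your proof is correct and follows essentially the same route as the paper: the paper likewise shows that ${\bf K}(W_J)$ is a left coideal (Lemma \ref{le:levi}(c), via the identification ${\bf K}(W_J)={\bf K}(\ZZ W_J,\hat{\bf D}(W_J))$ and Proposition \ref{pr:general K}), that $\hat{\bf D}(W)_{\le r}$ is a left coideal (Lemma \ref{le:graded coideal}, by induction on $r$ rather than your direct expansion of $\prod_i(D_{s_i}\otimes 1+s_i\otimes D_{s_i})$, but the content is identical), and then intersects the two using freeness of $\hat{\bf H}(W)$ over $\ZZ$. The only cosmetic difference is that the paper packages your final intersection step as a citation of Proposition \ref{pr:sum intersection of coideals}(b), whose proof is exactly the Lemma \ref{le:intersection tensor products} argument you inline.
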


\begin{proof}We need the following result.

\begin{lemma} 
\label{le:graded coideal}
For each $r\ge 0$ the $\ZZ$-module $\hat {\bf D}(W)_{\le r}$ is a left coideal in $\hat {\bf H}(W)$.

\end{lemma}

\begin{proof} We proceed by induction in $r$. Indeed, since $\hat {\bf D}(W)_{\le 0}=\ZZ$, 
$\hat {\bf D}(W)_{\le 1}=\ZZ+\sum\limits_{s\in {\mathcal S}} \ZZ\cdot D_s$, the assertion is immediate for $r=0,1$. Suppose that $r>1$. Clearly, ${\bf D}(W)_{\le r}={\bf D}(W)_{\le r-1}\cdot \hat {\bf D}(W)_{\le 1}$. Therefore, 
$\Delta({\bf D}(W)_{\le r})
\subset ({\bf H}(W)\otimes {\bf D}(W)_{\le r-1})\cdot({\bf H}(W)\otimes {\bf D}(W)_{\le 1})={\bf H}(W)\otimes {\bf D}(W)_{\le r}$
by the inductive hypothesis.

The lemma is proved.
\end{proof}

We need the following result.

\begin{lemma}
\label{le:levi}
For any Coxeter group $W$ and  any subset $J\subset I$ one has:

(a)  the subalgebra of $\hat {\bf H}(W)$ generated by $s_j,D_j$, $j\in J$ is naturally 
isomorphic to $\hat {\bf H}(W_J)$. 

(b) Under the identification from (a), $\hat {\bf D}(W_J)$ is a subalgebra 
$\hat {\bf D}(W)$ generated by $D_s$, $s\in {\mathcal S}\cap W_J$.

(c) ${\bf K}(W_J) \subset \hat {\bf D}(W_J)\subset \hat {\bf H}(W)$  is a left coideal in $\hat {\bf H}(W)$.


\end{lemma} 


\begin{proof} Indeed, we have a natural homomorphism of algebras $\varphi_J:\hat {\bf H}(W_J)\to \hat {\bf H}(W)$ determined by 
$\varphi_J(s_j)=s_j$, $\varphi_J(D_j)=D_j$, $j\in J$.  Clearly, the restriction of $\varphi_J$ to $\ZZ W_J$ is an injective homomorphism $\ZZ W_J\hookrightarrow \ZZ W$. Also, $\varphi_J(D_s)=D_s$ for $s\in {\mathcal S}_J={\mathcal S}\cap W_J$, which follows from \eqref{eq:si conj Ds} and the fact that ${\mathcal S}_J$ is the set of all reflections in $W_J$. In view of Lemma \ref{le:free product}(a) applied to $\hat {\bf D}(W)=\hat {\bf D}_{\chi,\sigma}(W)$ with $\chi,\sigma$ given by \eqref{eq:chi sigma defined}, the restriction of $\varphi_J$ to  $\hat {\bf D}(W_J)$ is an injective homomorphism $\hat {\bf D}(W_J)\hookrightarrow \hat {\bf D}(W)$. Therefore, by 
Theorem \ref{th:ZWcross and free D}, which asserts factorizations $\hat {\bf H}(W)=\hat {\bf D}(W)\cdot \ZZ W$ and $\hat {\bf H}(W_J)=\hat {\bf D}(W_J)\cdot \ZZ W_J$, the map
$\varphi:\hat {\bf D}(W_J)\cdot \ZZ W_J\to \hat {\bf D}(W)\cdot \ZZ W$ 
is also injective as the tensor product of injective $\ZZ$-linear maps.

This proves (a) and (b).

Prove (c). By Lemma \ref{le:hats are equal}, $\hat {\bf H}(W)=\hat {\bf H}_{\chi,\sigma}(W)$ (for $\chi,\sigma$ defined by  \eqref{eq:chi sigma defined}). Therefore, taking into account that  $\hat {\bf D}(W)={\hat D}_{\chi,\sigma}(W)$, ${\bf K}(W)={\bf K}_{\chi,\sigma}(W)$   is a left coideal in ${\bf H}(W)$ by Lemma \ref{le:free hat H}(c). Replacing $W$ with $W_J$ and  using (a), we finish proof of (c).
\end{proof}

Lemmas \ref{le:graded coideal}, \ref{le:levi}(c),  and Proposition \ref{pr:sum intersection of coideals} guarantee that ${\bf K}(W_J)_{\le r}=\hat {\bf D}(W)_{\le r}\cap {\bf K}(W_J)$ is a left coideal in $\hat {\bf H}(W)$, which is  a free $\ZZ$-module by Lemmas \ref{le:hats are equal} and \ref{le:free hat H}.
 %

The proposition is proved.
\end{proof}

Furthermore, by definition, $\ZZ+{\bf K}_{ij}(W)={\bf K}(W_{\{i,j\}})_{\le m_{ij}}$. This and Propositions \ref{pr:sum intersection of coideals}(a),
\ref{pr:graded coideal J} 
imply that $\underline {\bf K}=\ZZ+\sum\limits_{i,j\in I}{\bf K}_{ij}(W)$ is a left coideal in $\hat {\bf H}(W)$. Proposition \ref{pr:from coideal to hopf ideal} guarantees that the ideal $\underline {\bf J}(W)$ of $\hat {\bf H}(W)$   
generated by $\underline {\bf K}$, is a  Hopf ideal, hence  ${\bf H}(W)=\hat {\bf H}(W)/\underline {\bf J}(W)$ is a Hopf algebra.

Theorem \ref{th:Coideal Hopf ij} is proved.
\endproof

\noindent {\bf Proof of Theorem \ref{th:HW factorization}}.  
$\underline {\bf H}(W)={\bf H}_{\chi,\sigma}(W)$ for $\chi,\sigma$ given by \eqref{eq:chi sigma defined} by the argument from the proof of Theorem \ref{th:Coideal Hopf}. Therefore,  the first assertion of Theorem \ref{th:HW factorization} coincides with the second assertion of Theorem \ref{th:ZWcross and free D gen H}.
 
	%
Prove the second assertion of Theorem \ref{th:HW factorization}. 
We need the following result.

\begin{proposition}
\label{pr:levi K} For any subset $J\subset I$, 
under the natural inclusion $\hat {\bf H}(W_J)\subset \hat {\bf H}(W)$ from Lemma \ref{le:levi}(a), one has ${\bf K}(W_J)\subset {\bf K}(W)$. 

\end{proposition}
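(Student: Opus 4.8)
The plan is to use the parabolic decomposition of an arbitrary element of $W$ together with the explicit conjugation formula \eqref{eq:relations hat H} to reduce the statement to a single length inequality. Recall that ${\bf K}(W)=\bigcap_{w\in W}w\hat {\bf D}(W)w^{-1}$, so $x\in {\bf K}(W)$ means precisely that $x\in \hat {\bf D}(W)$ and $wxw^{-1}\in \hat {\bf D}(W)$ for every $w\in W$; likewise ${\bf K}(W_J)=\{x\in \hat {\bf D}(W_J)\mid vxv^{-1}\in \hat {\bf D}(W_J)\ \forall v\in W_J\}$, where by Lemma \ref{le:levi}(b) the algebra $\hat {\bf D}(W_J)$ is the subalgebra of $\hat {\bf D}(W)$ generated by $D_s$, $s\in {\mathcal S}_J:={\mathcal S}\cap W_J$. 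Thus, fixing $x\in {\bf K}(W_J)$ (which already lies in $\hat {\bf D}(W)$ by Lemma \ref{le:levi}(b), i.e.\ the $w=1$ case), I must show $wxw^{-1}\in \hat {\bf D}(W)$ for all $w\in W$.

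First I would write $w=uv$ with $v\in W_J$ and $u\in W^J$ the minimal-length representative of the coset $wW_J$, so that $\ell(w)=\ell(u)+\ell(v)$ (see e.g.\ \cite[Section 2]{BjBr}). Then $wxw^{-1}=u\,(vxv^{-1})\,u^{-1}$, and $y:=vxv^{-1}\in \hat {\bf D}(W_J)$ since $x\in {\bf K}(W_J)$ and $v\in W_J$. Writing $y$ as a $\ZZ$-polynomial in the generators $D_s$, $s\in {\mathcal S}_J$, and using that conjugation by $u$ is an (inner) algebra automorphism of $\hat {\bf H}(W)$, it suffices to show $uD_su^{-1}\in \hat {\bf D}(W)$ for each $s\in {\mathcal S}_J$. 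By \eqref{eq:relations hat H} (valid in $\hat {\bf H}(W)$ by Proposition \ref{pr:relations hat H coxeter}) one has $uD_su^{-1}=\chi_{u,s}D_{usu^{-1}}+\sigma_{u,s}(1-usu^{-1})$, where $D_{usu^{-1}}\in \hat {\bf D}(W)$ is a generator while the term $\sigma_{u,s}(1-usu^{-1})$ lies in $\ZZ W$; since $\hat {\bf D}(W)\cap \ZZ W=\ZZ\cdot 1$ by the factorization of Theorem \ref{th:ZWcross and free D}, membership in $\hat {\bf D}(W)$ forces, and is guaranteed by, $\sigma_{u,s}=0$, i.e.\ $\chi_{u,s}=1$. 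By Proposition \ref{pr:chi sigma satisfies cocycle condition}(b) this is equivalent to $\ell(us)>\ell(u)$.

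Hence the whole statement reduces to the key inequality: \emph{for $u\in W^J$ and every reflection $s\in {\mathcal S}_J$ one has $\ell(us)>\ell(u)$.} I would prove this in the standard geometric (Tits) representation of $W$, which is available for arbitrary Coxeter groups, using the well-known criterion that for a positive root $\beta$ with associated reflection $s_\beta$ one has $\ell(us_\beta)>\ell(u)$ iff $u(\beta)$ is a positive root (cf.\ \cite{BjBr}). By the defining property of $W^J$, $u(\alpha_j)\in \Phi^+$ for all $j\in J$; and every $s\in {\mathcal S}_J$ equals $s_\beta$ for a positive root $\beta$ of $W_J$, namely $\beta=\sum_{j\in J}c_j\alpha_j$ with all $c_j\ge 0$. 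Then $u(\beta)=\sum_{j}c_j\,u(\alpha_j)$ is a nonnegative combination of positive roots that is itself a root, hence a positive root, so $\ell(us)>\ell(u)$. Consequently $uD_su^{-1}=D_{usu^{-1}}\in \hat {\bf D}(W)$ for all $s\in {\mathcal S}_J$, whence $uyu^{-1}\in \hat {\bf D}(W)$ and therefore $wxw^{-1}\in \hat {\bf D}(W)$; as $w$ was arbitrary, $x\in {\bf K}(W)$.

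The main obstacle is exactly this length inequality; the rest is formal. The delicate point is that it must hold for \emph{all} reflections of $W_J$, not merely the simple generators $s_j$ ($j\in J$), the latter being only the defining condition of $W^J$. What does the real work is the positivity argument: the closure of $\Phi^+_J$ under nonnegative integer combinations together with the fact that a nonnegative combination of positive roots which happens to be a root must itself be positive. Once this is established, the reduction via minimal coset representatives and the conjugation formula \eqref{eq:relations hat H} is routine bookkeeping.
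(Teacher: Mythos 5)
Your proof is correct and follows essentially the same route as the paper: decompose $w=uv$ with $u\in W^J$, $v\in W_J$, use that $v$ preserves ${\bf K}(W_J)$, and reduce to showing $uD_su^{-1}=D_{usu^{-1}}$ for $u\in W^J$, $s\in{\mathcal S}\cap W_J$ via the conjugation formula and the inequality $\ell(us)>\ell(u)$. The only (cosmetic) difference is that you establish this inequality through the Tits representation and positivity of $u(\beta)$, whereas the paper invokes the length-additivity $\ell(w_1w_2)=\ell(w_1)+\ell(w_2)$ for $w_1\in W^J$, $w_2\in W_J$ of the parabolic factorization; these are equivalent standard facts.
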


\begin{proof} 
We need  the following  immediate consequence of  \eqref{eq:relations hat H} and Proposition \ref{pr:chi sigma satisfies cocycle condition}(b).

\begin{lemma} 
\label{le:conjugation D} 
The following relations hold in  $\hat {\bf H}'(W)$
\begin{equation}
\label{eq:si conj Dsi w}
wD_sw^{-1}=
\begin{cases}
D_{ws w^{-1}} & \text{if $\ell(ws)>\ell(w)$}\\
1-D_{wsw^{-1}}-wsw^{-1} & \text{if $\ell(ws)<\ell(w)$}\\
\end{cases}
\end{equation}
for all $w\in W$, $s\in {\mathcal S}$.

\end{lemma}

Given $J\subset I$, denote $W^J:=\{w\in W\,|\,\ell(ws_j)=\ell(w)+1~\forall~j\in J\}$. 
It is well-known (see e.g., \cite{BW}) that $W$ has a unique factorization  $W=W^J\cdot W_J$, which we write element-wise as 
$w=[w]^J\cdot [w]_J$ 
for any $w\in W$, where $[w]^J\in W^J$ and $[w]_J\in W_J$.

%

\begin{lemma}  
\label{le:W^J conjugation}
For any  Coxeter group $W$, and any subset $J\subset I$ one has

(a) $w\hat {\bf D}(W_J)w^{-1}\subset \hat {\bf D}(W)$ for $w\in W^J$.

(b) $w{\bf K}(W_J)w^{-1}=[w]^J{\bf K}(W_J)([w]^J)^{-1}\subset {\bf K}(W)$ for all $w\in W$. 

\end{lemma}

\begin{proof} It is easy to see that  
$\ell(w_1w_2)=\ell(w_1)+\ell(w_2)$
for any $w_1\in W^J$, $w_2\in W_J$. 
This and \eqref{eq:si conj Dsi w} imply that  $wD_sw^{-1}=D_{wsw^{-1}}$ in $\hat {\bf D}(W)$ for all $w\in W^J$, $s\in {\mathcal S}_J={\mathcal S}\cap W_J$.
Hence $w\hat {\bf D}(W_J)w^{-1}\subset \hat {\bf D}(W)$ for all $w\in W^J$. This proves (a).

Prove (b) now. We have, based on the proof of (a): 
$$w{\bf K}(W_J)w^{-1}=[w]^J[w]_J\hat {\bf K}(W_J)([w]_J)^{-1}([w]^J)^{-1}=[w]^J{\bf K}(W_J)([w]^J)^{-1}\subset \hat {\bf D}(W)$$
for all $w\in W$ because $w_1 {\bf K}(W_J)w_1^{-1}={\bf K}(W_J)$, $w_2 {\bf K}(W_J)w_2^{-1}\subset \hat {\bf D}(W_J)$ for all $w_1\in W_J$, $w_2\in W^J$. 

In particular, ${\bf K}(W_J)\subset {\bf K}(W)$. Conjugating with $w$ and using the fact that $w{\bf K}(W)w^{-1}={\bf K}(W)$ for all $w\in W$, we finish the proof of (b). 

The lemma is proved.
\end{proof} 

Therefore, the proposition is proved.
\end{proof}

Let $\underline {\bf K}(W):=\sum\limits_{w\in W, i,j\in I:i\ne j} w{\bf K}_{ij}(W)w^{-1}$. By definition, $w\cdot \underline {\bf K}(W)=\underline {\bf K}(W)\cdot w$ for all $w\in W$ and
the ideal $\underline {\bf J}(W)$ from the proof of Theorem \ref{th:Coideal Hopf ij} is generated by $\underline {\bf K}(W)$.
Also, $\underline {\bf K}(W)\subset \hat {\bf D}(W)$ by Proposition \ref{pr:levi K}.  
Therefore ${\bf H}=\hat {\bf H}(W)$, ${\bf D}=\hat {\bf D}(W)$, and ${\bf K}=\underline {\bf K}(W)\cap Ker~\varepsilon$ satisfy the assumptions of Lemma \ref{le:general factored quotient H}, thus $\underline {\bf H}={\bf H}(W)$ factors as ${\bf H}(W)=\underline {\bf D}\cdot \ZZ W$ over $\ZZ$, where $\underline {\bf D}={\bf D}(W)=\hat {\bf D}(W)/\langle \underline {\bf K}(W)\rangle$.

Theorem \ref{th:HW factorization} is proved. \endproof



\subsection{Relations in ${\bf D}(W)$ and proof of Theorem \ref{th:relations rank 2}}
\label{subsec:proof of th:relations rank 2}
For all distinct $i,j\in I$, $w\in W^{\{i,j\}}$, and $s\in {\mathcal S}\cap W_{\{i,j\}}$ we have $wD_sw^{-1}=D_{wsw^{-1}}$ by \eqref{eq:si conj Dsi w}. Therefore,  it suffices to prove the assertion only when $w=1$ and $W=W_{\{i,j\}}$. 
Define $Q_{ij}^{(n,r,p)}$ and $R_{ij}^{(n,r,t)}\in {\hat D}(W_{\{i,j\}})$  for all divisors $n$ of $m=m_{ij}$, $r\in [1,n]$, and  $1\le p<\frac{m}{2n}$, $0\le t\le \frac{m}{n}$ by:
$$Q_{ij}^{(n,r,p)}=\sum_{0\le a<b< \frac{m}{n}:b-a=\frac{m}{n}-p} D_{r+bn} D_{r+an} - \sum_{0\le a'<b'< \frac{m}{n}:b'-a'=p} D_{r+a'n} D_{r+b'n}+\sum_{p\le c<\frac{m}{n}-p} D_{r+cn}\ ,$$
$$R_{ij}^{(n,r)}= D_r D_{r+n} \cdots D_{r+m-n}-D_{r+m-n} \cdots D_{r+n} D_r \ .$$
$$R_{ij}^{(n,r,t)}:=
\overset{\longrightarrow}{\prod\limits_{t\le a\le \frac{m}{n}-1}}(1-D_{r+an})
\overset{\longrightarrow}{\prod\limits_{0\le b\le t-1}} D_{r+bn}
-\overset{\longleftarrow}{\prod\limits_{0\le b\le t-1}} D_{r+bn}
\overset{\longleftarrow}{\prod\limits_{t\le a\le \frac{m}{n}-1}}(1-D_{r+an})
\ .$$

We need the following fact.

\begin{proposition}
\label{pr:Dij2} For any Coxeter group $W$ and $i,j\in I$ with $m:=m_{ij}\ge 2$ one has for all divisors $n$ of $m=m_{ij}$ and $r\in [1,n]$: 

(a) 
$s_iQ_{ij}^{(n,r,p)}s_i=
\begin{cases} Q_{ij}^{(n,r-1,p)} & \text{if $r>1$}\\
Q_{ji}^{(n,n,p)} & \text{if $r=1$}\\
\end{cases}$, $s_jQ_{ij}^{(n,r,p)}s_j=
\begin{cases} Q_{ij}^{(n,r+1,p)} & \text{if $r<n$}\\
Q_{ji}^{(n,1,p)} & \text{if $r=n$}\\
\end{cases}$ for $1\le p<\frac{m}{2n}$.

(b) $s_iR_{ij}^{(n,r,t)}s_i=
\begin{cases} R_{ij}^{(n,r-1,t)} & \text{if $r>1$}\\
R_{ji}^{(n,n,t-1)} & \text{if $r=1$, $t\ge 1$}\\
-R_{ij}^{(n,1,1)} & \text{if $r=1$, $t=0$}\\
\end{cases}$,  $s_iR_{ij}^{(n,r,t)}s_i=
\begin{cases} R_{ij}^{(n,r+1,t)} & \text{if $r<n$}\\
R_{ji}^{(n,1,t+1)} & \text{if $r=n$, $t< \frac{m}{n}$}\\
-R_{ij}^{(n,1,0)} & \text{if $r=n$, $t=\frac{m}{n}$}\\
\end{cases}$ for
$1\le t\le \frac{m}{n}$.
\end{proposition}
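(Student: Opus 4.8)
The plan is to reduce everything to the dihedral group and then compute the two conjugations directly from the single-generator formula of Lemma~\ref{le:conjugation D}. First I would observe that the elements $Q_{ij}^{(n,r,p)},R_{ij}^{(n,r,t)}$ together with $s_i,s_j$ all lie in the subalgebra generated by $s_i,s_j$ and the $D_s$ with $s\in {\mathcal S}\cap W_{\{i,j\}}$, which by Lemma~\ref{le:levi} is a copy of $\hat {\bf H}(W_{\{i,j\}})$ inside $\hat {\bf H}(W)$; hence it suffices to argue inside the dihedral group $W_{\{i,j\}}$ of order $2m$. Writing $u_k:=\underbrace{s_is_j\cdots}_{2k-1}=(s_is_j)^{k-1}s_i$ for the $m$ reflections, so that $D_k=D_{u_k}$ with $u_1=s_i$ and $u_m=s_j$, the key input is the action of conjugation on the index $k$: a short computation gives $s_iu_ks_i=u_{m+2-k}$ for $2\le k\le m$ and $s_iu_1s_i=u_1$. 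Feeding this into \eqref{eq:si conj Dsi w} yields the pointwise rule $s_iD_ks_i=D_{m+2-k}$ for $k\ne 1$, together with the single affine correction $s_iD_1s_i=1-s_i-D_1$; the analogous statement for $s_j$ fixes the index $m$, sends $k\mapsto m-k$ for $k\ne m$, and has correction $s_jD_ms_j=1-s_j-D_m$.

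The second step is to recognize that the involution $k\mapsto m+2-k$ is exactly the change of orientation that turns the $ij$-sequence into the companion $ji$-sequence with a unit shift: since the reflection $\underbrace{s_js_i\cdots}_{2k-1}$ equals $u_{m+1-k}$, the rules above read $D_k^{ij}\mapsto D_{k-1}^{ji}$ under $s_i$ (for $k\ge2$) and $D_k^{ij}\mapsto D_{k+1}^{ji}$ under $s_j$ (for $k\le m-1$). Substituting these into the defining sums for $Q_{ij}^{(n,r,p)}$, and using that conjugation reverses the order of each product — so that the two sums in the definition of $Q$ interchange and, in part~(b), $\overset{\longrightarrow}{\prod}$ becomes $\overset{\longleftarrow}{\prod}$ — I would match the transformed index set $\{(r\mp1)+an\}$ against the right-hand sides displayed in the proposition. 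When the shifted base index still lies in $[1,n]$ the generator $D_1$ (resp.\ $D_m$) never occurs, the conjugation is a clean relabelling, and one lands on the asserted $Q$ (resp.\ $R$) with $r$ replaced by $r\mp1$; when the base wraps (the cases $r=1$ for $s_i$, $r=n$ for $s_j$, and $t\in\{0,\tfrac{m}{n}\}$ for $R$) the shifted base $0$ is read cyclically as $n$, the sequence label passes to its companion, and this is precisely how the terms $Q_{ji}^{(n,n,p)}$, $Q_{ji}^{(n,1,p)}$ and the sign-reversed boundary terms $-R_{ij}^{(n,1,1)},-R_{ij}^{(n,1,0)}$ of part~(b) are produced.

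The genuinely delicate point, and the one I expect to be the main obstacle, is these wraparound cases, where the affine correction $s_iD_1s_i=1-s_i-D_1$ injects monomials carrying the group element $s_i$ as well as extra constant and linear pieces. I would show that every such ``impure'' monomial cancels: by the length-monotone form of \eqref{eq:si conj Dsi w}, namely $s_iD_u=D_{s_ius_i}\,s_i$ whenever the length increases, the terms $s_iD_\bullet$ arising from one summand are cancelled against the $D_\bullet s_i$ terms arising from its partner in the paired sums, and this balancing is forced by the linear term $\sum_{p\le c<m/n-p}D_{r+cn}$ in the definition of $Q$ (and by the constant $1$ inside each factor $(1-D_{r+an})$ in $R$). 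Once these cancellations are carried out the surviving expression is a pure $\ZZ$-combination of products of the $D_s$, and freeness of $\hat{\bf D}(W)$ on the $D_s$ modulo $D_s^2=D_s$ (Theorem~\ref{th:ZWcross and free D}) guarantees that the resulting identity is unambiguous and equals the claimed element.

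For part~(b) the same scheme applies after expanding the products of $(1-D_{r+an})$ factors; the only extra bookkeeping is tracking the signs generated by the order reversal of the two monotone products, which I would organize as a single induction on $t$, the base cases $t=0$ and $t=\tfrac{m}{n}$ being exactly the ones where one of the two products is empty and the sign flips. I expect no new phenomenon beyond those in part~(a), so that once the cancellation of the $s_i$- and $s_j$-carrying terms is established the remaining verification is a finite, if lengthy, combinatorial check of index sets.
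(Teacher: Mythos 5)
Your proposal follows essentially the same route as the paper's proof: reduce to the dihedral subalgebra, use the conjugation rule $s_iD_ks_i=D_{m+2-k}$ for $k\ge 2$ together with the affine correction $s_iD_1s_i=1-D_1-s_i$ (Lemma \ref{le:Dkij} and its $ji$-reindexed form), and then verify that in the wraparound cases the $s_i$-carrying terms cancel pairwise between the two halves of each expression while the leftover pure-$D$ corrections are absorbed by the linear term of $Q$ (resp.\ the constants in the factors of $R$). The only ingredient you leave implicit is that in part (b), for $1<t<\frac{m}{n}$, this pairwise cancellation additionally requires the idempotent relations in the form $(1-D_n^{ji})D_n^{ji}=D_n^{ji}(1-D_n^{ji})=0$, which the paper invokes explicitly; otherwise your plan matches the paper's computation step for step.
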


\begin{proof} We need the following immediate consequence of \eqref{eq:si conj Ds}.

\begin{lemma} 
\label{le:Dkij}
If $m:=m_{ij}\ge 2$, then one has in $\hat {\bf D}(W_{\{i,j\}})$:
\begin{equation}
\label{eq:conjugation si Dk}
s_iD_ks_i=
\begin{cases} 1-D_1-s_i & \text{if $k=1$}\\
D_{m+2-k} & \text{if $2\le k\le m$}\\
\end{cases}
\end{equation}
for $k=1,\ldots,m$, where  $D_k:=D_k^{i,j}=D_{\underbrace{s_is_j\cdots s_i}_{2k-1}}$ for $k=1,\ldots,m$.

\end{lemma}

Taking into account that $D^{ji}_k=D_{m+1-k}^{i,j}$, we will repeatedly use \eqref{eq:conjugation si Dk} in the form:
\begin{equation}
\label{eq:conjugation si Dk ji}
s_iD_ks_i=\begin{cases} 1-D_m^{ji}-s_i & \text{if $k=1$}\\
D_{k-1}^{ji} & \text{if $2\le k\le m$}\\
\end{cases}
,~s_iD_\ell^{ji}s_i=D_{m-\ell}^{ji}
\end{equation}
for $k=1,\ldots,m$, $\ell=1,\ldots,m-1$.

Prove (a). 
First, suppose that $r>1$. Then, using \eqref{eq:conjugation si Dk ji}, we have
$$s_iQ_{ij}^{(n,r,p)}s_i=\sum_{0\le a<b< \frac{m}{n}:b-a=\frac{m}{n}-p} s_iD_{r+bn} D_{r+an}s_i $$
$$- \sum_{0\le a'<b'< \frac{m}{n}:b'-a'=p} s_iD_{r+a'n} D_{r+b'n}s_i+\sum_{p\le c<\frac{m}{n}-p} s_iD_{r+cn}s_i$$
$$=\sum_{0\le a<b< \frac{m}{n}:b-a=\frac{m}{n}-p} D_{r-1+bn}^{ji} D^{ji}_{r-1+an}$$
$$- \sum_{0\le a'<b'< \frac{m}{n}:b'-a'=p} D_{r-1+a'n}^{ji} D_{r-1+b'n}^{ji}+\sum_{p\le c<\frac{m}{n}-p} D_{r-1+cn}^{ji}=Q_{ji}^{n,r-1,p}$$
Interchanging $i$ and $j$, we also obtain $s_jQ_{ij}^{(n,r,p)}s_j=Q_{ji}^{(n,r+1,p)}$ whenever $r<n$.

Finally, suppose that $r=1$. Then, using \eqref{eq:conjugation si Dk ji} again, we have
$$s_iQ_{ij}^{(n,1,p)}s_i=s_iD_{1+m-pn} D_1s_i+\sum_{0<a<b< \frac{m}{n}:b-a=\frac{m}{n}-p} s_iD_{1+bn} D_{1+an}s_i $$
$$-s_iD_1 D_{1+pn}s_i- \sum_{0< a'<b'< \frac{m}{n}:b'-a'=p} s_iD_{1+a'n} D_{1+b'n}s_i+\sum_{p\le c<\frac{m}{n}-p} s_iD_{1+cn}s_i$$
$$=D^{ji}_{m-pn}(1-D^{ji}_m-s_i)+\sum_{0< a<b< \frac{m}{n}:b-a=\frac{m}{n}-p} D^{ji}_{bn} D^{ji}_{an} $$
$$-(1-D^{ji}_m-s_i)D^{ji}_{pn}- \sum_{0< a'<b'< \frac{m}{n}:b'-a'=p} D^{ji}_{a'n} D^{ji}_{b'n}+\sum_{p\le c<\frac{m}{n}-p} D^{ji}_{cn}$$
$$=D^{ji}_m D^{ji}_{pn}-D^{ji}_{pn}+ D^{ji}_{m-pn} +\sum_{0<a<b< \frac{m}{n}:b-a=\frac{m}{n}-p} D^{ji}_{bn} D^{ji}_{an} $$
$$-D^{ji}_{m-pn}D^{ji}_m-\sum_{0<a'<b'< \frac{m}{n}:b'-a'=p} D^{ji}_{a'n} D^{ji}_{b'n}+\sum_{p\le c<\frac{m}{n}-p} D^{ji}_{cn}$$
$$=\sum_{0\le a-1<b-1< \frac{m}{n}:b-a=\frac{m}{n}-p} D^{ji}_{bn} D^{ji}_{an}-\sum_{0\le a'-1<b'-1< \frac{m}{n}:b'-a'=p} D^{ji}_{a'n} D^{ji}_{b'n}+\sum_{p\le c-1<\frac{m}{n}-p} D^{ji}_{cn}=Q_{ji}^{(n,n,p)}$$

Interchanging $i$ and $j$, we obtain $s_jQ_{ij}^{(n,n,p)}s_j=Q_{ji}^{(n,1,p)}$. 
This proves (a).


Prove (b) now.
First, suppose that $r>1$. Then, using \eqref{eq:conjugation si Dk ji}, we have
$$s_iR_{ij}^{(n,r,t)}s_i=
\overset{\longrightarrow}{\prod\limits_{t\le a\le \frac{m}{n}-1}}(1-s_iD_{r+an}s_i)
\overset{\longrightarrow}{\prod\limits_{0\le b\le t-1}} s_iD_{r+bn}s_i
-\overset{\longleftarrow}{\prod\limits_{0\le b\le t-1}} s_iD_{r+bn}s_i
\overset{\longleftarrow}{\prod\limits_{t\le a\le \frac{m}{n}-1}}(1-s_iD_{r+an}s_i)$$
$$=\overset{\longrightarrow}{\prod\limits_{0\le b\le t-1}} D^{ji}_{r-1+bn}
\overset{\longrightarrow}{\prod\limits_{t\le a\le \frac{m}{n}-1}}(1-D^{ji}_{r-1+an})-\overset{\longleftarrow}{\prod\limits_{0\le b\le t-1}} D^{ji}_{r-1+bn}
\overset{\longleftarrow}{\prod\limits_{t\le a\le \frac{m}{n}-1}}(1-D^{ji}_{r-1+an})=R_{ji}^{(n,r-1,t)}\ .$$
%
Interchanging $i$ and $j$, we also obtain $s_jR_{ij}^{(n,r,t)}s_j=R_{ji}^{(n,r+1,t)}$ whenever $r<n$.

Now suppose that $r=1$, $t\ge 1$. Then, using \eqref{eq:conjugation si Dk ji} again, we have
$$s_iR_{ij}^{(n,1,t)}s_i=\overset{\longrightarrow}{\prod\limits_{t\le a\le \frac{m}{n}-1}}(1-s_iD_{1+an}s_i)
\overset{\longrightarrow}{\prod\limits_{0\le b\le t-1}} s_iD_{1+bn}s_i
-\overset{\longleftarrow}{\prod\limits_{0\le b\le t-1}} s_iD_{1+bn}s_i
\overset{\longleftarrow}{\prod\limits_{t\le a\le \frac{m}{n}-1}}(1-s_iD_{1+an}s_i)$$
$$=\overset{\longrightarrow}{\prod\limits_{t\le a\le \frac{m}{n}-1}}(1-D^{ji}_{an})\cdot
(1-D_m^{ji}-s_i)\overset{\longrightarrow}{\prod\limits_{1\le b\le t-1}} D^{ji}_{bn}
-\overset{\longleftarrow}{\prod\limits_{1\le b\le t-1}} D^{ji}_{bn}\cdot (1-D_m^{ji}-s_i)
\overset{\longleftarrow}{\prod\limits_{t\le a\le \frac{m}{n}-1}}(1-D^{ji}_{an})$$
%
$$=\overset{\longrightarrow}{\prod\limits_{t\le a\le \frac{m}{n}-1}}(1-D^{ji}_{an})\cdot
(1-D_m^{ji})\overset{\longrightarrow}{\prod\limits_{1\le b\le t-1}} D^{ji}_{bn}
-\overset{\longleftarrow}{\prod\limits_{1\le b\le t-1}} D^{ji}_{bn}\cdot (1-D_m^{ji})
\overset{\longleftarrow}{\prod\limits_{t\le a\le \frac{m}{n}-1}}(1-D^{ji}_{an})=R_{ji}^{(n,n,t-1)}$$
%
%
because $-\overset{\longrightarrow}{\prod\limits_{t\le a\le \frac{m}{n}-1}}(1-D^{ji}_{an})\cdot s_i
\overset{\longrightarrow}{\prod\limits_{1\le b\le t-1}} D^{ji}_{bn}
+\overset{\longleftarrow}{\prod\limits_{1\le b\le t-1}} D^{ji}_{bn}\cdot s_i
\overset{\longleftarrow}{\prod\limits_{t\le a\le \frac{m}{n}-1}}(1-D^{ji}_{an})$
$$=-s_i\left(\overset{\longrightarrow}{\prod\limits_{t\le a\le \frac{m}{n}-1}}(1-D^{ji}_{m-an})
\overset{\longrightarrow}{\prod\limits_{1\le b\le t-1}} D^{ji}_{bn}
+\overset{\longleftarrow}{\prod\limits_{1\le b\le t-1}} D^{ji}_{m-bn}\overset{\longleftarrow}{\prod\limits_{t\le a\le \frac{m}{n}-1}}(1-D^{ji}_{an})\right)=0$$
%
which is immediate if $t=1$ or $t=\frac{m}{n}$ and follows from the relations $D_s^2=D_s$ if $1< t< \frac{m}{n}$ 
(which we use here in the form $(1-D_n^{ji})D_n^{ji}=D_n^{ji}(1-D_n^{ji})=0$). 

Interchanging $i$ and $j$, we obtain $s_jR_{ij}^{(n,n,t)}s_j=R_{ji}^{(n,1,t+1)}$ whenever  $1\le t<\frac{m}{n}$.

Finally, suppose that $r=1$, $t=0$. Then, using \eqref{eq:conjugation si Dk}, we have
$$s_iR_{ij}^{(n,1,0)}s_i=\overset{\longrightarrow}{\prod\limits_{0\le a\le \frac{m}{n}-1}}(1-s_iD_{1+an}s_i)-
\overset{\longleftarrow}{\prod\limits_{t\le a\le \frac{m}{n}-1}}(1-s_iD_{1+an}s_i)$$
$$=(D_1+s_i)\cdot \overset{\longrightarrow}{\prod\limits_{1\le a\le \frac{m}{n}-1}}(1-D_{m+1-an})-
\overset{\longleftarrow}{\prod\limits_{1\le a\le \frac{m}{n}-1}}(1-D_{m+1-an})\cdot (D_1+s_i)$$
$$=D_1\cdot \overset{\longrightarrow}{\prod\limits_{1\le a\le \frac{m}{n}-1}}(1-D_{m+1-an})-
\overset{\longleftarrow}{\prod\limits_{1\le a\le \frac{m}{n}-1}}(1-D_{m+1-an})\cdot D_1$$
$$=D_1\cdot \overset{\longleftarrow}{\prod\limits_{1\le a'\le \frac{m}{n}-1}}(1-D_{1+a'n})-
\overset{\longleftarrow}{\prod\limits_{1\le a'\le \frac{m}{n}-1}}(1-D_{1+a'n})\cdot D_1=-R_{ij}^{(n,1,1)}$$
%
because $s_i\underbrace{(1-D_{m+1-n})\cdots (1-D_{n+1})}_{\frac{m}{n}-1}-\underbrace{ (1-D_{n+1})\cdots (1-D_{m+1-n})}_{\frac{m}{n}-1}s_i=0$.

Interchanging $i$ and $j$, we obtain $s_jR_{ij}^{(n,1,1)}s_j=-R_{ij}^{(n,1,0)}$ whenever  $1\le t<\frac{m}{n}$.
This proves (b). 

The proposition is proved.
\end{proof}

Finally, Proposition \ref{pr:Dij2} implies that all $Q_{ij}^{(n,r,p)}$, $\overline{Q_{ij}^{(n,r,p)}}$ and $R_{ij}^{(n,r,t)}$ belong to ${\bf K}_{ij}(W)$, where $\overline {\cdot }$ is the anti-involution of $\hat {\bf D}(W)$ given by $\overline D_s=D_s$ for $s\in \in{\mathcal S}$ (see also Theorem \ref{th:symmetris D(W)}(a) and its proof).  This proves Theorem \ref{th:relations rank 2}.
\endproof

\subsection{Braid relations and proof of Theorems  \ref{th:Hecke in Hecke-Hopf} and \ref{th:upper Hecke in Hecke-Hopf}}
\label{subsec:proof of Theorem Hecke in Hecke-Hopf}

For commutative ring  $\kk$,  $i,j\in I$ with $m_{ij}\ge 2$,  $c_i,c_j\in \kk$ such that $c_i=c_j$ if $m_{ij}$ is odd, define the element in $\hat {\bf H}(W)\otimes \kk$  by:
$$\Delta_{ij}^{c_i,c_j}=w_{ij}(\underbrace{\cdots (s_i-c_is_iD_{s_i})(s_j-c_js_jD_{s_j})}_m
-\underbrace{\cdots (s_j-c_js_jD_{s_j})(s_i-c_is_iD_{s_i})}_m)$$
where $m:=m_{ij}$ and $w_{ij}=\underbrace{s_is_j\cdots }_{m_{ij}}=\underbrace{s_js_i\cdots}_{m_{ij}}$ is the longest element in $W_{\{i,j\}}$.
It is easy to see that 
\begin{equation}
\label{eq:antipode braid yang baxter}
S^{-1}(w_{ij}\Delta_{ij}^{c_i,c_j})=\underbrace{\cdots (s_i+c_iD_{s_i})(s_j+c_jD_{s_j})}_m-\underbrace{\cdots (s_j+s_jD_{s_j})(s_i+c_iD_{s_i})}_m
\end{equation}
and
\begin{equation}
\label{eq:Delta factored into roots}
\Delta_{ij}^{c_i,c_j}=(1-c_{i_1}D_1)\cdots (1-c_{i_m}D_m)-(1-c_{i_m}D_m)\cdots (1-c_{i_1}D_1)
\end{equation}
where $D_k=D_k^{i,j}$ are as in Lemma \ref{le:Dkij}, $m=m_{ij}$, and $(i_1,\ldots,i_m)=\underbrace{(\ldots,i,j)}_m$.

In particular, $\Delta_{ij}^{c_i,c_j}\in \hat {\bf D}(W)\otimes \kk$ for all $i,j$ with $m_{ij}\ge 2$ and $c_i,c_j\in \kk$.

\begin{proposition} 
\label{pr:Delta in K}
In the assumptions as above, each $\Delta_{ij}^{c_i,c_j}$ belongs to ${\bf K}_{ij}(W)\otimes \kk$.

\end{proposition}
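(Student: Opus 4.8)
The plan is to verify the three conditions defining membership in ${\bf K}_{ij}(W)\otimes\kk$: that $\Delta_{ij}^{c_i,c_j}$ lies in $\hat {\bf D}(W_{\{i,j\}})\otimes\kk$, has filtered degree at most $m:=m_{ij}$ with $\varepsilon(\Delta_{ij}^{c_i,c_j})=0$, and lies in ${\bf K}(W_{\{i,j\}})$. The first three are immediate from the factorization \eqref{eq:Delta factored into roots}: each factor $1-c_{i_k}D_k$ lies in $\hat {\bf D}(W_{\{i,j\}})\otimes\kk$ and has degree $\le 1$, so both products and their difference lie in $\hat {\bf D}(W_{\{i,j\}})\otimes\kk$ with degree $\le m$; applying $\varepsilon$ (which kills every $D_k$) sends both products to $1$, so the counit vanishes. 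Everything thus reduces to $\Delta_{ij}^{c_i,c_j}\in{\bf K}(W_{\{i,j\}})=\bigcap_{w\in W_{\{i,j\}}}w\,\hat {\bf D}(W_{\{i,j\}})\,w^{-1}$, and since $W_{\{i,j\}}$ is generated by $s_i,s_j$ it suffices to control conjugation by these two generators (and their iterates).

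The heart of the argument is the one-step computation of $s_i\Delta_{ij}^{c_i,c_j}s_i$. By Lemma \ref{le:Dkij}, conjugation by $s_i$ acts on the generators in \eqref{eq:Delta factored into roots} by $s_iD_1s_i=1-D_1-s_i$ and $s_iD_ks_i=D_{m+2-k}$ for $2\le k\le m$; the only obstruction to staying inside $\hat {\bf D}$ is the stray $s_i$ coming from the first (resp.\ last) factor. Writing $s_i(1-c_{i_1}D_1)s_i=A+B$ with $A=1-c_{i_1}(1-D_1)\in\hat {\bf D}$ and $B=c_{i_1}s_i$, and conjugating the tails, I get $s_iPs_i=(A+B)R$ and $s_iP's_i=\tilde R(A+B)$, where $P,P'$ are the two products in \eqref{eq:Delta factored into roots} and $R,\tilde R\in\hat {\bf D}$. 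The plan is to show the $B$-contributions cancel: $BR-\tilde R B=c_{i_1}(s_iR-\tilde R s_i)$, and a direct check via \eqref{eq:conjugation si Dk ji} gives $s_iRs_i=\prod_{k=2}^m(1-c_{i_k}D_k)$ while $\tilde R=\prod_{k=2}^m(1-c_{i_{m+2-k}}D_k)$, so that $s_iRs_i=\tilde R$ — hence $s_iR=\tilde R s_i$ and the stray $s_i$ disappears — precisely when $c_{i_k}=c_{i_{m+2-k}}$ for all $k$. This is where the hypothesis enters: since $c_{i_k}$ equals $c_i$ or $c_j$ according to the parity of $k$, the indices $k$ and $m+2-k$ share parity exactly when $m$ is even, whereas for $m$ odd the standing assumption $c_i=c_j$ forces all coefficients equal. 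In either case the cancellation holds and $s_i\Delta_{ij}^{c_i,c_j}s_i=AR-\tilde R A\in\hat {\bf D}(W_{\{i,j\}})$, and symmetrically for $s_j$.

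The main obstacle is that $AR-\tilde R A$ is \emph{not} simply $\pm\Delta_{ij}^{c_i,c_j}$ (already for $m=3$ it is a genuinely different element), so a single cancellation does not close the induction over arbitrary $w\in W_{\{i,j\}}$. To finish I would embed $\Delta_{ij}^{c_i,c_j}$ into a finite conjugation-closed family mirroring the elements $R_{ij}^{(n,r,t)}$ of Proposition \ref{pr:Dij2}: introduce coefficient-weighted analogues $\Delta^{(t)}$ (with $t$ leading "bare" factors and the remaining factors of the form $1-c_?D$), and prove reindexing identities $s_i\Delta^{(t)}s_i=\pm\Delta^{(t')}$, $s_j\Delta^{(t)}s_j=\pm\Delta^{(t'')}$ by the same telescoping computation that proves Proposition \ref{pr:Dij2}(b), the coefficient matching again guaranteed by $c_i=c_j$ for odd $m$. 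Since this family is finite and closed under conjugation by $s_i,s_j$, every $w$-conjugate of $\Delta_{ij}^{c_i,c_j}$ remains in $\hat {\bf D}(W_{\{i,j\}})$, giving $\Delta_{ij}^{c_i,c_j}\in{\bf K}(W_{\{i,j\}})$; equivalently one may invoke Lemma \ref{le:factored derivatives}, which identifies ${\bf K}$ with $\bigcap_{w\ne w'}\ker\partial_{w,w'}$, and prove $\partial_{w,w'}(\Delta_{ij}^{c_i,c_j})=0$ for $w\ne w'$ by induction on $\ell(w)$ using the $s_i$-derivation of Lemma \ref{le:conjugation vs action}(b). As a consistency check, taking $c_i=c_j=1$ makes $\Delta_{ij}^{1,1}$ coincide with $R_{ij}^{(1,1,0)}$ from Theorem \ref{th:relations rank 2}(b), which already lies in ${\bf K}_{ij}(W)$, so the general statement is exactly the coefficient-weighted enhancement of that special case.
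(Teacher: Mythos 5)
Your core computation is precisely the paper's: the identity $s_i\Delta_{ij}^{c_i,c_j}s_i=AR-\tilde RA$ with $A=1-c_{i_1}(1-D_1)$, together with the cancellation of the stray $s_i$ via the coefficient matching $c_{i_k}=c_{i_{m+2-k}}$ (which is exactly where the hypothesis $c_i=c_j$ for odd $m_{ij}$ enters), is Lemma \ref{le:siDeltasi}. Where you diverge is in closing the induction over all of $W_{\{i,j\}}$. The paper notes that $A$ is, up to the scalar $1-c_{i_1}$, the inverse of $1-c_{i_1}D_1$ (because $(1-c(1-D))(1-cD)=(1-cD)(1-c(1-D))=1-c$, using $D^2=D$), so that $s_i\Delta_{ij}^{c_i,c_j}s_i=\tfrac{1}{c_i-1}D_{i,c_i}\Delta_{ij}^{c_i,c_j}D_{i,c_i}$; iterated conjugation then merely sandwiches $\Delta_{ij}^{c_i,c_j}$ between factors $\tilde D_k=1-c_{i_k}(1-D_k)\in\hat{\bf D}(W_{\{i,j\}})\otimes\kk$, giving every conjugate at once. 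Your conjugation-closed family $\Delta^{(t)}$ is a legitimate unwinding of the same identity — the commutation $(1-c(1-D))(1-cD)=1-c$ is exactly what makes your proposed telescoping work, and at $c=1$ it degenerates to the $(1-D)D=0$ used in Proposition \ref{pr:Dij2}(b) — but you leave it as a plan rather than executing it. Your fallback "equivalently invoke the derivations $d_s$" is weaker than you suggest: $\bigcap_s\ker d_s\cap\ker\varepsilon$ is only known to \emph{contain} ${\bf K}$ (cf.\ \eqref{eq:Kij as kernel}), and the reverse inclusion is precisely what Proposition \ref{pr:K for S3} has to prove by hand for $m_{ij}\le 3$; vanishing of all $d_s$ alone does not certify membership in ${\bf K}(W_{\{i,j\}})$.

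One step you omit entirely: showing $w\Delta_{ij}^{c_i,c_j}w^{-1}\in\hat{\bf D}(W_{\{i,j\}})_{\le m}\otimes\kk$ for every $w$ places the element in $\bigcap_w w\bigl(\hat{\bf D}(W_{\{i,j\}})_{\le m}\otimes\kk\bigr)w^{-1}$, and identifying this with ${\bf K}_{ij}(W)\otimes\kk$ requires commuting the intersection past $\otimes\,\kk$, which holds when $\kk$ is free over $\ZZ$ (Lemma \ref{le:intersection tensor products}) but not a priori in general. The paper closes this by first working over a free cover $\hat\kk\twoheadrightarrow\kk$ and then specializing. Without this reduction your argument only yields that each conjugate of $\Delta_{ij}^{c_i,c_j}$ lies in $\hat{\bf D}(W_{\{i,j\}})_{\le m}\otimes\kk$, not the asserted membership in ${\bf K}_{ij}(W)\otimes\kk$ for an arbitrary commutative ring $\kk$.
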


\begin{proof} We need the following result.

\begin{lemma} 
\label{le:siDeltasi}
For all $i,j\in I$ with $m_{ij}\ge 2$, $c_i,c_j\in \kk^\times$ such that $c_i=c_j$ if $m_{ij}$ is odd, one has:
\begin{equation}
\label{eq:siDeltasi}
s_i\Delta_{ij}^{c_i,c_j}s_i=\frac{1}{c_i-1}D_{i,c_i}\Delta_{ij}^{c_i,c_j}D_{i,c_i},~s_j\Delta_{ij}^{c_i,c_j}s_j=\frac{1}{c_j-1}D_{j,c_j}\Delta_{ij}^{c_i,c_j}D_{j,c_j} \ ,
\end{equation}
where $D_{i,c_i}=(1-c_i)(1-D_{s_i})=(1-c_i)(1-c_iD_{s_i})^{-1}$. 
\end{lemma}

\begin{proof} Prove the first equation \eqref{eq:siDeltasi}. Using  relations \eqref{eq:conjugation si Dk} we obtain
$$s_i\Delta_{ij}^{c_i,c_j}s_i=s_i(1-c_{i_1}D_1)\cdots (1-c_{i_m}D_m)s_i-s_i(1-c_{i_m}D_m)\cdots (1-c_{i_1}D_1)s_i$$
$$=(1-c_i(1-D_{s_i}-s_i))(1-c_{i_m}D_m)\cdots (1-c_{i_2}D_2)-(1-c_{i_2}D_2)\cdots (1-c_{i_m}D_m)(1-c_i(1-D_{s_i}-s_i))$$
$$=D_{i,c_i}(1-c_{i_m}D_m)\cdots (1-c_{i_2}D_2)-(1-c_{i_2}D_2)\cdots (1-c_{i_m}D_m)D_{i,c_i}$$
because
$s_i(1-c_{i_m}D_m)\cdots (1-c_{i_2}D_2)=(1-c_{i_2}D_2)\cdots (1-c_{i_m}D_m)s_i$.

Taking into account that $D_{i,c_i}=(1-c_i)\cdot (1-c_iD_{s_i})^{-1}$, we obtain the first equation \eqref{eq:siDeltasi}. The second one also follows because $\Delta_{ji}^{c_j,c_i}=-\Delta_{ij}^{c_i,c_j}$.

The lemma is proved. \end{proof}

Conjugating $\Delta_{ij}^{c_i,c_j}$ with $w=\underbrace{\cdots s_js_i}_\ell\in W_{\{i,j\}}$, $\ell\le m$, and using \eqref{eq:siDeltasi} repeatedly, we obtain:
$$w\Delta_{ij}^{c_i,c_j}w^{-1}=\left(\prod\limits_{k=1}^\ell\frac{1}{c_{i_\ell}-1} \right)\cdot  \tilde D_\ell\cdots \tilde D_1\Delta_{ij}^{c_i,c_j}\tilde D_1\cdots \tilde D_\ell$$
where we abbreviate $\tilde D_k=s_{i_1}\cdots s_{i_{k-1}}D_{i_k,c_{i_k}}s_{i_{k-1}}\cdots s_{i_1}=1-c_{i_k}(1-D_k)$ for $k=1,\ldots,\ell$ in the notation of \eqref{eq:Delta factored into roots}, where $i_k=\begin{cases}  i & \text{if $k$ is odd}\\
j & \text{if $k$ is even}
\end{cases}$.
This implies that $w\Delta_{ij}^{c_i,c_j}w^{-1}\in {\bf D}(W_{\{i,j\}})\otimes \kk$. Similarly, taking  $w=\underbrace{\cdots s_is_j}_\ell\in W_{\{i,j\}}$, $\ell\le m$, one shows that $w\Delta_{ij}^{c_i,c_j}w^{-1}\in {\bf D}(W_{\{ij\}})_{\le m}\otimes \kk$. 
Thus, $w\Delta_{ij}^{c_i,c_j}w^{-1}\in {\bf D}(W_{\{i,j\}})\otimes \kk$ for any $w\in W_{\{i,j\}}$ for any $\kk$ and any $c_i,c_j\in \kk$ such that $c_i=c_j$ if $m_{ij}$ is odd. Suppose that $\kk$ is a free $\ZZ$-module. This implies that 
$\Delta_{ij}^{c_i,c_j}\in \bigcap\limits_{w\in W} w\cdot \left({\bf D}(W_{\{i,j\}})_{\le m}\otimes \kk\right)\cdot w^{-1}$ 
where the intersection is in $\hat {\bf H}(W_{\{i,j\}})\otimes \kk$.

Suppose that $\kk$ is a free $\ZZ$-module. Then, taking into account that  
$$\bigcap_{w\in W} w\cdot ({\bf D}(W_{\{i,j\}})_{\le m}\otimes \kk)\cdot w^{-1}= \left(\bigcap_{w\in W} w\cdot {\bf D}(W_{\{ij\}})_{\le m}\cdot w^{-1}\right)\otimes \kk$$ by Lemma \ref{le:intersection tensor products} and 
$\bigcap\limits_{w\in W} w\cdot {\bf D}(W_{\{ij\}})_{\le m}\cdot w^{-1}=\left(\bigcap\limits_{w\in W} w\cdot {\bf D}(W_{\{ij\}})\cdot w^{-1}\right)_{\le m}=K_{ij}(W),$
 we obtain 
$\Delta_{ij}^{c_i,c_j}\in {\bf K}_{ij}(W)\otimes \kk$. 

Finally, we can remove freeness condition for $\kk$ over $\ZZ$ by first replacing $\kk$ with a commutative ring $\hat \kk$ free over $\ZZ$  and then noting that any commutative ring $\kk$ is a homomorphic image of some $\hat \kk$. Then extending the structural homomorphism $f:\hat \kk\twoheadrightarrow \kk$ to $f:\hat {\bf H}(W_{\{i,j\}})\otimes \hat \kk\twoheadrightarrow \hat {\bf H}(W_{\{i,j\}})\otimes \kk$ we see that an inclusion 
$\Delta_{ij}^{\hat c_i,\hat c_j}\in {\bf K}_{ij}(W)\otimes \hat \kk$ implies an inclusion 
$\Delta_{ij}^{c_i,c_j}\in {\bf K}_{ij}(W)\otimes \kk$, where $c_i=f(\hat c_i)$, $c_j=f(\hat c_j)$.
 
The proposition is proved.
\end{proof}

\noindent {\bf Proof of Theorems \ref{th:Hecke in Hecke-Hopf} and \ref{th:upper Hecke in Hecke-Hopf}}. Indeed, the braid relations between $T'_i=s_i+c_iD_{s_i}$ and $T'_j=s_j+c_jD_{s_j}$ (where $c_i=1-q_i$, $c_j=1-q_j$) in ${\bf H}(W)\otimes \kk$ follow from  \eqref{eq:antipode braid yang baxter} and Proposition \ref{pr:Delta in K} because $S^{-1}(w_{ij}\Delta_{ij}^{c_i,c_j})\in {\bf K}_{ij}(W)\otimes \kk$.

It remains to verify the quadratic relations for $T'_i$. Indeed, one has:
$${T'_i}^2=(s_i+(1-q_i)D_{s_i})^2=s_i^2+(1-q_i)(s_iD_{s_i}+D_{s_i}s_i)+(1-q_i)^2D_{s_i}^2$$
$$=1+(1-q_i)(s_i-1)+(1-q_i)^2D_{s_i}=(1-q_i)T'_i+q_i\ .$$

This proves that there is a unique  homomorphism of algebras $\varphi_W:H_{\bf q}(W)\to {\bf H}(W)\otimes \kk$ such that $\varphi_W(T_i)=T'_i$ for $i\in I$, as in Theorem \ref{th:upper Hecke in Hecke-Hopf}. 

Furthermore, Proposition \ref{pr:levi K} with $J=\{i,j\}$ guarantees the inclusions ${\bf K}_{ij}(W)\subset {\bf K}(W_{\{i,j\}})\subset {\bf K}(W)$ hence we have a surjective homomorphism of Hopf algebras $\pi_W:{\bf H}(W)\twoheadrightarrow \underline {\bf H}(W)$ as in Theorem 
\ref{th:Hecke in Hecke-Hopf}. Denote $\underline \varphi_W:=(\pi_W\otimes 1)\circ \varphi_W$, which is a homomorphism $H_{\bf q}(W)\to \underline {\bf H}(W)\otimes \kk$, as in Theorem \ref{th:Hecke in Hecke-Hopf}

Let us prove it injectivity of $\underline \varphi_W$ (the injectivity of $\varphi_W$ will follow verbatim).

Recall that for each $w\in W$ there is a unique element $T_w\in H_{\bf q}(W)$ such that $T_w=T_{i_1}\cdots T_{i_m}$ for any reduced decomposition $w=s_{i_1}\cdots s_{i_m}$ in $W$. Clearly, the elements $T_w$ generate $H_{\bf q}(W)$ as a $\kk$-module (in fact, they form a $\kk$-basis - see Corollary \ref{cor:Tw form a basis} below).

Thus, to prove injectivity of $\underline \varphi_W$, it suffices to show that the images $\underline \varphi_W(T_w)$ are $\kk$-linearly independent in ${\bf H}(W)\otimes \kk$. 

\begin{proposition} 
\label{pr:strong Bruhat phi(Tw)}
For each $w\in W$ one has:
$\underline \varphi_W(T_w)\in w+\sum_{w':w'\prec w} \kk\cdot {\bf D}(W)\cdot w'$, 
where $\prec$ denotes the strong Bruhat order on $W$. 
\end{proposition}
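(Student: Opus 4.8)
The plan is to induct on $\ell(w)$, using the factorization ${\bf H}(W) = {\bf D}(W)\cdot\ZZ W$ of Theorem \ref{th:HW factorization}. It is cleaner to prove the identical statement for $\varphi_W$ (valued in ${\bf H}(W)\otimes\kk$, whose ${\bf D}$-factor is precisely ${\bf D}(W)$); the assertion for $\underline\varphi_W=(\pi_W\otimes 1)\circ\varphi_W$ then follows by applying $\pi_W$, which is the identity on $\ZZ W$ and carries ${\bf D}(W)$ onto $\underline{\bf D}(W)$. After $\otimes\,\kk$ the factorization gives a $\kk$-module isomorphism $({\bf D}(W)\otimes\kk)\otimes\kk W\to{\bf H}(W)\otimes\kk$, so every element has a unique normal form $\sum_{v\in W}x_v\cdot v$ with $x_v\in{\bf D}(W)\otimes\kk$. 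I would restate the goal as: the normal form of $\varphi_W(T_w)$ satisfies $x_w=1$ and $x_v\ne 0$ only for $v\preceq w$. The base case $w=1$ is immediate, since $\varphi_W(T_1)=1$.

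For the inductive step I would choose $i\in I$ with $\ell(s_iw)=\ell(w)-1$ and set $w'=s_iw$, so that $w=s_iw'$, $\ell(w')=\ell(w)-1$, and $T_w=T_iT_{w'}$. As $\varphi_W$ is an algebra homomorphism with $\varphi_W(T_i)=s_i+c_iD_i$ (where $c_i=1-q_i$), I obtain
\[
\varphi_W(T_w)=(s_i+c_iD_i)\sum_{v\preceq w'}x_v\,v,
\]
where $\sum_{v\preceq w'}x_v v$ is the normal form of $\varphi_W(T_{w'})$ furnished by the inductive hypothesis, with $x_{w'}=1$. Expanding, each summand $c_iD_i x_v v$ already lies in $({\bf D}(W)\otimes\kk)\cdot v$ (since ${\bf D}(W)$ is a subalgebra), and its group part satisfies $v\preceq w'\prec w$.

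The terms requiring reduction are $s_i x_v v$. Here I would invoke the commutation relation $s_ix=s_i(x)s_i-d_i(x)$, valid in ${\bf H}(W)$ for $x\in{\bf D}(W)$: this is Lemma \ref{le:conjugation vs action}(b), equivalently the $g=s_i$ instance of Proposition \ref{pr:refined factored derivatives}, where the only $h$ with $h\prec s_i$ is $h=1$ and $\partial_{s_i,1}=d_i$. Both $s_i(x)$ and $d_i(x)$ lie in ${\bf D}(W)$ by Theorem \ref{th:symmetris D(W)}(b), so $s_i x_v v=s_i(x_v)\cdot(s_iv)-d_i(x_v)\cdot v$ is already in normal form, introducing only the two group elements $s_iv$ and $v$. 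The part indexed by $v$ again has $v\preceq w'\prec w$. For $s_iv$ I would use the subword characterization of the strong Bruhat order (\cite[Theorem 2.2.2]{BjBr}): since $v\preceq w'$ and a reduced word for $w$ is obtained by prefixing $s_i$ to a reduced word for $w'$, the element $s_iv$ is a subword of that reduced word, so $s_iv\preceq w$; moreover $s_iv=w$ forces $v=s_iw=w'$.

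Assembling the pieces, the group element $w$ arises in exactly one place — from $v=w'$ in the $s_i$-part, giving $s_i(x_{w'})\cdot s_iw'=s_i(1)\cdot w=1\cdot w$ — so its coefficient is precisely $1$, while every other contribution has group part strictly below $w$. This yields $\varphi_W(T_w)\in w+\sum_{u\prec w}\kk\cdot{\bf D}(W)\cdot u$, completing the induction, and applying $\pi_W$ gives the stated claim for $\underline\varphi_W$. I expect the main obstacle to be the Bruhat-order bookkeeping of the final step: one must verify that left multiplication by $s_i$ never pushes a group element above $w$ and that no hidden normal-form reduction reintroduces $w$ with the wrong coefficient. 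Both are controlled precisely by the subword property together with the fact that $s_ix=s_i(x)s_i-d_i(x)$ produces only the group elements $s_iv$ and $v$ and nothing of greater length.
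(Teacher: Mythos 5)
Your argument is correct and rests on the same skeleton as the paper's: induction on $\ell(w)$ via a descent, the factorization ${\bf H}(W)={\bf D}(W)\cdot\ZZ W$ to normalize, and the Bruhat lifting property to control which group elements appear. The execution is mirrored, though: the paper peels off a \emph{right} descent, writing $T_w=T_{ws_i}T_i$ and then pushing $D_i$ leftward past arbitrary group elements using $\tilde wD_i=D_{\tilde ws_i\tilde w^{-1}}\tilde w$ (resp.\ $-D_{\tilde ws_i\tilde w^{-1}}\tilde w+\tilde w-\tilde ws_i$) from \eqref{eq:si conj Dsi w}, concluding with the set identity $W_{\preceq ws_i}\cup(W_{\prec ws_i}\cdot s_i)=W_{\prec w}$; you peel off a \emph{left} descent, $T_w=T_iT_{s_iw}$, and commute a single $s_i$ rightward past ${\bf D}(W)$ via $s_ix=s_i(x)s_i-d_i(x)$ (Lemma \ref{le:conjugation vs action}(b)), which requires knowing that the $W$-action and the $d_i$ descend to ${\bf D}(W)$ — you correctly cite Theorem \ref{th:symmetris D(W)}(b) for this, whereas the paper's route avoids that dependency entirely by only ever conjugating generators $D_s$. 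Both versions buy the same thing; yours trades the elementary conjugation formula for the (already established) symmetries of ${\bf D}(W)$. One small gloss: when $\ell(s_iv)<\ell(v)$ the word obtained by prefixing $s_i$ to a reduced word of $v$ is not reduced, so "$s_iv$ is a subword" should be justified by the standard fact that \emph{any} (not necessarily reduced) subword of a reduced word of $w$ represents an element $\preceq w$, or simply by $s_iv\prec v\preceq s_iw\prec w$ in that case; this is cosmetic, not a gap.
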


\begin{proof} For each $w\in W$ denote $W_{\prec w}:=\{w'\in W:w'\prec w\}$ and $W_{\preceq w}:=\{w\}\sqcup W_{\prec w}$.

We need the following fact.

\begin{lemma} $W_{\prec w}\cdot D_i\in {\bf D}(W)\cdot W_{\prec w}$ for any $w\in W$.

\end{lemma}

\begin{proof} Since $W_{\preceq \tilde w}\subset W_{\prec w}$ for any $\tilde w\prec w$, it suffices to show that 
\begin{equation}
\label{eq:tilde w D_i}
\tilde w\cdot D_i\in {\bf D}\cdot W_{\preceq \tilde w}
\end{equation}
in ${\bf H}(W)$ for all $\tilde w\in W$, $i\in I$.

Indeed, by definition of generators $D_s$ of ${\bf D}(W)$, which are images of their counterparts in 
$\hat {\bf D}(W)$,  if $\ell(\tilde ws_i)=\ell(\tilde w)+1$, then \eqref{eq:si conj Dsi w} implies that 
$\tilde w\cdot D_i=D_{\tilde ws_i\tilde w^{-1}}\cdot \tilde w\in {\bf D}(W)\cdot W_{\preceq \tilde w}$.

Otherwise, i.e., if $\tilde w\in W$ is such that $\ell(\tilde ws_i)=\ell(\tilde w)-1$, then using \eqref{eq:si conj Dsi w} again, we obtain
$$\tilde w\cdot D_i=\tilde ws_i\cdot (-D_is_i+s_i-1)=-D_{\tilde ws_i\tilde w^{-1}}\cdot \tilde w+\tilde w-
\tilde ws_i\in {\bf D}(W)\cdot W_{\preceq \tilde w}$$ 

since $\tilde ws_i\cdot D_i=D_{\tilde ws_i\tilde w^{-1}}\cdot \tilde ws_i$.

The lemma is proved.
\end{proof}

The following finishes the proof of the proposition.
\begin{lemma} For all $w\in W$ one has
$\underline \varphi_W(T_w)\in w+\kk\cdot {\bf D}(W)\cdot W_{\prec w}$.
\end{lemma}

\begin{proof}
We will prove the assertion by induction in length $\ell(w)$. 
Indeed, if $w=1$, we have nothing to prove. 
Suppose $w\ne 1$, then choose $i\in I$ such that $\ell(ws_i)=\ell(w)-1$ 
(or, equivalently, $ws_i\prec w$). Using the inductive hypothesis for $ws_i$ and that  
$\underline \varphi_W(T_w)=\underline \varphi_W(T_{ws_i})\underline \varphi_W(T_i)$, we obtain:
$$\underline \varphi_W(T_w)=\underline \varphi_W(T_{ws_i})(s_i+(1-q_i)D_i)\in  (ws_i+\kk\cdot {\bf D}(W)\cdot W_{\prec ws_i})(s_i+(1-q_i)D_i)\subset $$
$$\subset w+(1-q_i)ws_iD_i+\kk\cdot {\bf D}(W)\cdot W_{\prec ws_i}\cdot s_i+\kk\cdot {\bf D}(W)\cdot W_{\prec ws_i}\cdot D_i\subset$$
$$\subset w+(1-q_i){\bf D}(w)ws_i+\kk\cdot {\bf D}(W)\cdot W_{\prec w}+\kk\cdot {\bf D}(W)\cdot W_{\prec ws_i}
=\kk\cdot {\bf D}(W)\cdot W_{\prec w}$$
because $W_{\preceq ws_i}\cup (W_{\prec ws_i}\cdot s_i)=W_{\prec w}$ 
for any $w\in W$ and $i\in I$ such that $\ell(ws_i)=\ell(w)-1$.

The lemma is proved.
\end{proof}

Therefore, Proposition \ref{pr:strong Bruhat phi(Tw)} is proved.
\end{proof}

Finally, Theorem \ref{th:HW factorization} implies that elements $w\in W$ are $\kk$-linearly independent in ${\bf H}(W)\otimes \kk$. This and Proposition \ref{pr:strong Bruhat phi(Tw)} imply that the elements $\underline \varphi_W(T_w)$, $w\in W$ are also $\kk$-linearly independent in ${\bf H}(W)\otimes \kk$.

This proves that $\underline \varphi_W$ is an injective homomorphism of algebras $H_{\bf q}(W)\hookrightarrow {\bf H}(W)$. Injectivity of 
$\varphi_W$ is then immediate. 

Theorems \ref{th:Hecke in Hecke-Hopf} and \ref{th:upper Hecke in Hecke-Hopf} are proved.
\endproof

\subsection{Symmetries of ${\bf H}(W)$ and proof of Theorems \ref{th:symmetris D(W)}, \ref{th:symmetris hat D(W) gen}, and \ref{th:symmetris Dchisigma(W)}}
\label{subsec:proof of Theorem symmetris D(W)} 
We need the following

\begin{proposition}
\label{pr:symmetris hat D(W) gen} In the notation of Theorem \ref{th:hopf hat W gen} we have:

\noindent (a)  Suppose that $\overline{\cdot}$ is an involution on $R$ such that $\overline \chi_{w,s}=\chi_{w,s^{-1}}$, $\overline \sigma_{w,s}=\sigma_{w,s^{-1}}$ for all $w\in W$, $s\in {\mathcal S}$. Then the assignments $\overline w=w^{-1}$, $\overline D_s=D_{s^{-1}}$ for $w\in W$, $s\in {\mathcal S}$ extends to a unique $R$-linear anti-involution of $\hat {\bf H}_{\chi,\sigma}(W)$.

\noindent (b) Suppose that $R W$ admits an $R$-linear automorphism $\theta$ such that $\theta(w)\in R^\times \cdot w$ for $w\in W$ and  
$\theta(s)=\chi_{s,s}\cdot s$ for  $s\in {\mathcal S}$. Then  $\theta$ uniquely  extends to an $R$-linear automorphism 
of $\hat {\bf H}_{\chi,\sigma}(W)$ such that $\theta(D_s)= \chi_{s,s}D_s+\sigma_{s,s}$ for $s\in {\mathcal S}$. Moreover, $\theta({\bf K}_{\chi,\sigma}(W))={\bf K}_{\chi,\sigma}(W)$.

\noindent (c) In the assumptions of Theorem \ref{th:ZWcross and free D gen}, suppose that $\sigma_{wsw^{-1},wsw^{-1}}=\sigma_{s,s}$ for all $w\in W$,  $s\in {\mathcal S}$ of finite order. Then   $\hat {\bf D}_{\chi,\sigma}(W)$ admits:

(i) A $W$-action by automorphisms via $w(D_s)= 
\sigma_{w,s}+\chi_{w,s}D_{wsw^{-1}}$  
for $w\in W$, $s\in {\mathcal S}$. 

(ii) An $s$-derivation $d_s$ (i.e., $d_s(xy)=d_s(x)y+s(x)d_s(y)$) 
such that $d_s(D_{s'})=\delta_{s,s'}$, $s,s'\in {\mathcal S}$.

\noindent These actions satisfy for all $w\in W$, $s\in {\mathcal S}$:
\begin{equation}
\label{eq:w-conjugate ds}
d_{wsw^{-1}}=\chi_{w,s} \cdot w\circ d_s\circ w^{-1}
\end{equation}


\end{proposition}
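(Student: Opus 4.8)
The plan is to handle all three parts by one template: prescribe the map on the generators $w\in W$ and $D_s$, extend it to the free product $RW\ast T(V)$ (as an algebra homomorphism for (b) and (c)(i), an anti-homomorphism for (a), a twisted derivation for (c)(ii)), and then verify it respects the two families of defining relations of $\hat{\bf H}_{\chi,\sigma}(W)$, namely the conjugation relations \eqref{eq:relations hat H} and the Taft relations \eqref{eq:relations hat H taft}. Throughout I would lean on the factorization $\hat{\bf H}_{\chi,\sigma}(W)=\hat{\bf D}_{\chi,\sigma}(W)\cdot RW$ of Theorem \ref{th:ZWcross and free D gen} and on Lemma \ref{le:free product}(a), which presents $\hat{\bf D}_{\chi,\sigma}(W)$ as the free product of the one-generator algebras ${\bf D}_s=R[D_s]/(\text{Taft relations of }s)$; since each ${\bf D}_s$ is commutative, every relation check confined to a single factor becomes a polynomial identity. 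I abbreviate $a_s=\chi_{s,s}$, $b_s=\sigma_{s,s}$ and $f_s=f^{a_s,b_s}_{|s|}$ as in \eqref{eq:actual fs}.

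For part (a) I extend $\overline w=w^{-1}$, $\overline D_s=D_{s^{-1}}$ to an $R$-linear anti-homomorphism, acting on scalars by the given involution. Applying $\overline{\cdot}$ to \eqref{eq:relations hat H} and reversing products sends the relation for $(w,s)$ to $wD_{s^{-1}}w^{-1}=\chi_{w,s^{-1}}D_{ws^{-1}w^{-1}}+\sigma_{w,s^{-1}}(1-ws^{-1}w^{-1})$, which is precisely \eqref{eq:relations hat H} for $(w,s^{-1})$ once $\overline\chi_{w,s}=\chi_{w,s^{-1}}$ and $\overline\sigma_{w,s}=\sigma_{w,s^{-1}}$ are used. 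For the Taft relation I would pass to its functional form $f_s(ts+D_s)=f_s(t)$: as $\overline{ts+D_s}=ts^{-1}+D_{s^{-1}}$ and the coefficients of $f_s$ are universal integral polynomials in $a_s,b_s$, the image is $f^{\,\overline{a_s},\overline{b_s}}_{|s|}(ts^{-1}+D_{s^{-1}})=f^{\,\overline{a_s},\overline{b_s}}_{|s|}(t)$, coinciding up to a unit with the relation for $s^{-1}$ by the compatibility hypotheses (immediate when $s^{-1}=s$, as for Coxeter reflections). For part (b) I extend $\theta$ by $\theta(D_s)=a_sD_s+b_s$. Since $\theta$ is an automorphism of $RW$ with $\theta(s)=a_s s$, evaluating $\theta(wsw^{-1})$ two ways already forces $\chi_{wsw^{-1},wsw^{-1}}=\chi_{s,s}$; the check of \eqref{eq:relations hat H} then collapses, on comparing coefficients of $D_{wsw^{-1}}$, of $wsw^{-1}$, and of $1$, to the two identities of Lemma \ref{le:basic properties of chi sigma}(b) (this is where the cocycle relations \eqref{eq:2 cocycle}–\eqref{eq:sigmaws etc} enter). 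The Taft relation is clean: $\theta(ts+D_s)=a_s(ts+D_s)+b_s$, so $\theta$ carries $f_s(ts+D_s)=f_s(t)$ to $f_s(a_s(ts+D_s)+b_s)=f_s(t)$, valid by the affine symmetry \eqref{eq:fab symmetry} $f_s(a_sx+b_s)=f_s(x)$. Bijectivity follows from $a_s\in R^\times$ and preservation of the factorization, and $\theta\bigl({\bf K}_{\chi,\sigma}(W)\bigr)={\bf K}_{\chi,\sigma}(W)$ because $\theta$ preserves $\hat{\bf D}_{\chi,\sigma}(W)$ and merely scales each $w$, hence preserves every conjugate $w\hat{\bf D}_{\chi,\sigma}(W)w^{-1}$ and their intersection.

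For part (c)(i) I define, for each $w$, the endomorphism of $\hat{\bf D}_{\chi,\sigma}(W)$ given by $w(D_s)=\sigma_{w,s}+\chi_{w,s}D_{wsw^{-1}}$. Well-definedness reduces to checking that $\chi_{w,s}D_{wsw^{-1}}+\sigma_{w,s}$ satisfies the Taft relation of $D_s$: Lemma \ref{le:basic properties of chi sigma}(b) together with the extra hypothesis \eqref{eq:w-invariance of sigma} give $a_{wsw^{-1}}=a_s$, $b_{wsw^{-1}}=b_s$, hence $f_{wsw^{-1}}=f_s$, and then Lemma \ref{le:fwsw-1} reads $f_s(\chi_{w,s}x+\sigma_{w,s})=f_s(x)$, the affine invariance that makes the substitution respect the relation. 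The group law $w_1(w_2(D_s))=(w_1w_2)(D_s)$ is a one-line consequence of \eqref{eq:2 cocycle}, and $1(D_s)=D_s$, so each $w(\cdot)$ is an automorphism. For (c)(ii) I build $d_s$ as the unique $s$-derivation of $T(V)$ with $d_s(D_{s'})=\delta_{s,s'}$ (twisted by the automorphism $s(\cdot)$ from (i)) and descend it to $\hat{\bf D}_{\chi,\sigma}(W)$ by verifying it kills every Taft relation: for $s'\ne s$ each Leibniz term carries the factor $d_s(D_{s'})=0$, while for $s'=s$ the whole computation stays inside the commutative factor ${\bf D}_s$, where $d_s$ is the twisted difference quotient $p(D_s)\mapsto\bigl(p(a_sD_s+b_s)-p(D_s)\bigr)/\bigl((a_s-1)D_s+b_s\bigr)$, so $d_s$ annihilates the defining Taft polynomial exactly because its roots are cyclically permuted by $x\mapsto a_sx+b_s$ (using $a_s^{|s|}=1$), i.e. by \eqref{eq:fab symmetry}. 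Finally \eqref{eq:w-conjugate ds} follows since $\chi_{w,s}\,w\circ d_s\circ w^{-1}$ and $d_{wsw^{-1}}$ are both $(wsw^{-1})$-derivations, so comparing them on the generators $D_{s'}$ suffices; the comparison uses $\chi_{w^{-1},wsw^{-1}}=\chi_{w,s}^{-1}$ from Lemma \ref{le:basic properties of chi sigma}(a).

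The main obstacle throughout is exactly the compatibility with the Taft relations, and the uniform key is the pair of affine symmetries $f_s(a_sx+b_s)=f_s(x)$ and $f_{wsw^{-1}}(x)=f_s(\chi_{w,s}x+\sigma_{w,s})$ from \eqref{eq:fab symmetry} and Lemma \ref{le:fwsw-1}: this is where the root-of-unity condition on $\chi_{s,s}$, the cocycle relations \eqref{eq:2 cocycle}–\eqref{eq:sigmaws etc}, and (in (c)) the invariance \eqref{eq:w-invariance of sigma} are all consumed. The single most delicate step is the descent of the twisted derivation $d_s$ in (c)(ii), where one must recognize the affine invariance of $f_s$ as precisely the statement that $d_s$ annihilates the defining relation of ${\bf D}_s$; in part (a) the parallel subtlety is matching the reversed functional relation for $s$ against that for $s^{-1}$, transparent when $s=s^{-1}$ but requiring the scalar compatibilities in general.
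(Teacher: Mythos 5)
Your proposal is correct and follows essentially the same route as the paper: verify each symmetry on generators and reduce compatibility with the Taft relations to the two affine identities $f_s(a_sx+b_s)=f_s(x)$ and $f_{wsw^{-1}}(x)=f_s(\chi_{w,s}x+\sigma_{w,s})$, with the cocycle conditions consumed exactly where you place them. The only variation is in (c)(ii), where you package $d_s$ on the commutative factor $R[D_s]$ as a twisted difference quotient instead of expanding the Leibniz sum term by term as the paper does; this is a clean equivalent reformulation (it uses that $s(D_s)-D_s$ is a non-zero-divisor, which holds since $R$ is a domain and $a_s\ne 1$), and the rest matches the paper's argument step for step.
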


\begin{proof}
Prove (a). It suffices to verify that $\overline{\cdot}$ preserves the defining relations of $\hat {\bf H}_{\chi,\sigma}(W)$. Indeed,
$\overline {w^{-1}}\cdot \overline D_s \cdot \overline w = w D_{s^{-1}}w^{-1}
=\chi_{w,s^{-1}}D_{ws^{-1}w^{-1}}+\sigma_{w,s^{-1}}(1-ws^{-1}w^{-1})$
$$=\overline \chi_{w,s}\overline D_{wsw^{-1}}+\overline \sigma_{w,s}(1-ws^{-1}w^{-1})=\overline{\chi_{w,s}D_{wsw^{-1}}+\sigma_{w,s}(1-wsw^{-1})}=\overline {wD_sw^{-1}}$$
for all $w\in W$, $s\in {\mathcal S}$, i.e., \eqref{eq:relations hat H} is $\overline{\cdot}$-invariant. 
Clearly, applying $\overline {\cdot}$ to \eqref{eq:relations hat H} for $D_s$, we obtain \eqref{eq:relations hat H} for $D_{s^{-1}}=\overline D_s$, because $|s^{-1}|=|s|$, which verifies that \eqref{eq:relations hat H taft} is also $\overline{}$-invariant.  This proves (a).

Prove (b). First, show that $\theta$ is an endomorphism of $\hat {\bf H}_{\chi,\sigma}(W)$, i.e., that $\theta$ preserves the defining relations. 
Indeed, for $w\in W$ one has $\theta(w)=\tau_w\cdot w$, where $\tau_w\in R^\times$ such that $\tau_w\tau_{w^{-1}}=1$. Therefore, abbreviating $s'=wsw^{-1}$, we obtain $\theta(w)\theta(D_s)\theta(w^{-1})=w\theta(D_s)w^{-1}=w(\chi_{s,s}D_s+\sigma_{s,s})w^{-1}$
$$=\chi_{s,s}(\chi_{w,s}D_{s'}+\sigma_{w,s}(1-s'))+\sigma_{s,s}=\chi_{s',s'}(\chi_{w,s}D_{s'}+\sigma_{w,s}(1-s'))+\sigma_{s,s}$$
$$=\chi_{w,s}(\sigma_{s',s'}+\chi_{s',s'}D_{s'})+\sigma_{w,s}(1-\chi_{s',s'}s')=\theta(\chi_{w,s}D_{s'}+\sigma_{w,s}(1-s'))=\theta(wD_sw^{-1})$$
for $w\in W$, $s\in {\mathcal S}$ by Lemma \ref{le:basic properties of chi sigma}(b) and the assumption of part (b). Finally, let us verify that the  relations \eqref{eq:relations hat H taft primitive} are invariant under $\theta$. Indeed, applying $\theta$ to the defining functional relation \eqref{eq:fstDs} for $f_s$ defined by \eqref{eq:actual fs} and using \eqref{eq:fab symmetry}, we obtain (abbreviating $a_s=\chi_{s,s}$, $b_s=\sigma_{s,s}$):
$$\theta(f_s(ts+D_s)-f(t))=f_s(a_sts+a_sD_s+b_s)-f(t)=f_s(ts+D_s)-f(t)=0$$
This proves that $\theta$ is an $R$-linear endomorphism 
of $\hat {\bf H}_{\chi,\sigma}(W)$. It is easy to see that $\theta$ is invertible and the inverse is given by $\theta^{-1}(w)=\tau_w^{-1}w$ for $w\in W$ and $\theta^{-1}(D_s)=\sigma_{s^{-1},s^{-1}}+\chi_{s^{-1},s^{-1}}D_s$ for $s\in {\mathcal S}$.
This proves the first assertion of part (b). Prove the second assertion. 
Indeed, we obtain for all $w\in W$:
$\theta({w\cdot \tilde  {\bf D}_{\chi,\sigma}(W)\cdot w^{-1}})=\theta(w)\cdot \theta({\tilde {\bf D}_{\chi,\sigma}(W)})\cdot \theta(w^{-1})\subseteq w\cdot {\tilde {\bf D}_{\chi,\sigma}(W)}\cdot w^{-1}$
therefore, $\theta({\bf K}_{\chi,\sigma}(W))\subset {\bf K}_{\chi,\sigma}(W)$.
Part (b) is proved.

Prove (c)(i). We need the following fact.
\begin{lemma} 
\label{le:symmetris hat D(W) gen} For each $\chi,\sigma$ satisfying \eqref{eq:2 cocycle},  $\tilde V=\oplus_{s\in {\mathcal S}} R\cdot D_s$  is a $W$-module via $w(1)=1$ and 
$w(D_s)= \sigma_{w,s}+\chi_{w,s}D_{wsw^{-1}}$  
for $w\in W$, $s\in {\mathcal S}$. 

\end{lemma}

\begin{proof} Indeed, $w_1(w_2(D_s))= w_1(\sigma_{w_2,s}+\chi_{w_2,s}D_{w_2sw_2^{-1}})=\sigma_{w_2,s}+\chi_{w_2,s}w_1(D_{w_2sw_2^{-1}})$
$$=\sigma_{w_2,s}+\chi_{w_2,s}\chi_{w_1,w_2sw_2^{-1}}D_{w_1w_2sw_2^{-1}w_1^{-1}}+\chi_{w_2,s}\sigma_{w_1,w_2sw_2^{-1}}=\sigma_{w_1w_2,s}+\chi_{w_1w_2,s}D_{w_1w_2sw_2^{-1}w_1^{-1}}$$
for all $w\in W$, $s\in {\mathcal S}$, by \eqref{eq:2 cocycle}. Also $1(D_s)=D_s$ because $\chi_{1,s}=1$, $\sigma_{1,s}=0$.



The lemma is proved.
\end{proof}

That is, the $W$-action lifts to $T(V)$, where $V=\oplus_{s\in {\mathcal S}} R \cdot D_s$ by algebra automorphisms (because any $R$-linear map $V\to T(V)$ lifts to an endomorphism of the algebra $T(V)$). 
Thus, it remains to show that the defining relations \eqref{eq:relations hat H taft primitive} are preserved under the action. 
Indeed, since $a_s=\chi_{s,s}$ is a primitive $|s|$-th root of unity, i.e., $1+a_s+\cdots +a_s^{|s|-k}=-a^{-k}(1+a_s+\cdots a_s^{k-1})$ for $0\le k\le |s|$, the relation \eqref{eq:relations hat H taft primitive} with $s\in {\mathcal S}$ of finite order $|s|$ reads (in the notation \eqref{eq:actual fs}):
\begin{equation}
\label{eq:relations hat H taft primitive short}
f_s(D_s)=0\ .
\end{equation}

Then, applying $w$ to the left hand side of  the above relation, we obtain (in the notation \eqref{eq:actual fs}):
\begin{equation}
\label{eq:fwsw-11}
f_s(\sigma_{w,s}+\chi_{w,s}D_{s'})=f_{s'}(D_s)
\end{equation}
by Lemma \ref{le:fwsw-1}, where we abbreviated $s'=wsw^{-1}$. Finally, taking into account that $\chi_{s',s'}=\chi_{s,s}$ by Lemma \ref{le:basic properties of chi sigma}(b) and 
$\sigma_{s',s'}=\sigma_{s,s}$ by the assumption of part (b), we obtain $f_{s'}(D_s)=f_s(D_s)=0$. Part (c)(i) is proved.

Prove (c)(ii). We start with the following obvious general result. 

\begin{lemma} For any $R$-module $V$ and $R$-linear maps $f,s:V\to T(V)$  there is a unique $R$-linear map $d=d_{f,s}:T(V)\to T(V)$ such that 

$\bullet$ $d(1)=0$, $d(v)=f(v)$  for  $v\in V$.

$\bullet$ $d(xy)=d(x)y+s(x)d(y)$ for all $x,y\in T(V)$.

\end{lemma}

For $V=\oplus_{s\in {\mathcal S}} \ZZ \cdot D_s$, $s\in {\mathcal S}$ viewed as a $\ZZ$-linear map $V\to V\subset T(V)$ and 
$f:V\to \ZZ\subset T(V)$ given by $f(D_{s'})=\delta_{s,s'}$, we abbreviate $d_s:=d_{f,s}$. To prove the assertion, it suffices to show that $d_s$ preserves the defining relations \eqref{eq:relations hat H taft primitive short} of $\hat {\bf D}_{\chi,\sigma}(W)$, i.e., the relations of the form 
$f_s(D_s)=0$ for all $s\in {\mathcal S}$ of finite order $|s|$. Indeed, if $s\ne s'$, then, clearly,  $d_s(f_{s'}(D_{s'}))=0$. Suppose that $s=s'$. Then, in the notation \eqref{eq:actual fs} one has (similarly to the proof of Lemma \ref{le:fwsw-1}):
$d_s(f_s(D_s))=\sum\limits_{k=1}^{|s|} s\left(\prod\limits_{i=k+1}^{|s|}\left(D_s-a_s\frac{1-a_s^i}{1-a_s}\right)\right)\cdot d_s\left(D_s-b_s\frac{1-a_s^k}{1-a_s}\right)\cdot\prod\limits_{j=1}^{k-1}(D_s-b_s\frac{1-a_s^j}{1-a_s})$
$$=\sum_{k=1}^{|s|} \left(\prod_{i=k+1}^{|s|}\left(a_sD_s+b_s-b_s\frac{1-a_s^i}{1-a_s}\right)\right)\cdot a_s \cdot\prod_{j=1}^{k-1}(D_s-b_s\frac{1-a_s^j}{1-a_s})$$
$$=\sum_{k=1}^{|s|} \left(\prod_{i=k+1}^{|s|}\left(a_sD_s-b_s\frac{a_s-a_s^i}{1-a_s}\right)\right)\cdot a_s \cdot\prod_{j=1}^{k-1}(D_s-b_s\frac{1-a_s^j}{1-a_s})=\sum_{k=1}^{|s|} a_s^{|s|-k+1} \cdot\prod_{j=1}^{|s|-1}(D_s-b_s\frac{1-a_s^j}{1-a_s})=0$$
because $s(D_s)=a_sD_s+b_s$ and $a_s=\chi_{s,s}$ is a primitive $s$-th root of unity in $R^\times$.
This proves (c)(ii).

We prove the last assertion of (c) by showing that both sides of \eqref{eq:w-conjugate ds} are $wsw^{-1}$-derivations which agree on generators of $\hat {\bf D}_{\chi,\sigma}(W)$. Indeed, denote $d'_s=\chi_{w,s}w\circ d_s\circ w^{-1}$ and, first, substitute $x=D_{s'}$: 
$d'_s(D_{s'})=\chi_{w,s}w(d_s(\chi_{w^{-1},s'}D_{w^{-1}s'w}+\sigma_{w^{-1},s'}))=\chi_{w,s}w(\chi_{w^{-1},s'}\delta_{s,w^{-1}s'w})=\chi_{w,s}\chi_{w^{-1},wsw^{-1}}\delta_{wsw^{-1},s'}=d_{wsw^{-1}}(D_{s'})$ by \eqref{le:basic properties of chi sigma}.
Furthermore,  
$$d'_s(xy)=\chi_{w,s}w(d_s(w^{-1})(x)\cdot w^{-1}(y))=\chi_{w,s}w\left(d_s(w^{-1})(x))\cdot w^{-1}(y)+s(w^{-1}(x))\cdot d_s(w^{-1}(y))\right)$$
$$=\chi_{w,s}w(d_s(w^{-1})(x)))\cdot y+\chi_{w,s}\cdot (wsw^{-1})(x)\cdot w(d_s(w^{-1}(y)))=d'_s(x)\cdot y+(wsw^{-1})(x)\cdot d'_s(y)$$
for $x,y\in \hat {\bf D}_{\chi,\sigma}(W)$. This proves that $d'_s=d_s$. 

Proposition \ref{pr:symmetris hat D(W) gen} is proved.
\end{proof}

\noindent {\bf Proof of Theorem \ref{th:symmetris hat D(W) gen}}. 
Using Proposition \ref{pr:symmetris hat D(W) gen}(a), we obtain for all $w\in W$:
$$\overline {w\cdot \tilde  {\bf D}_{\chi,\sigma}(W)\cdot w^{-1}}
=\overline {w^{-1}}\cdot \overline {\tilde{\bf D}_{\chi,\sigma}(W)}\cdot \overline w \subseteq w\cdot {\tilde {\bf D}_{\chi,\sigma}(W)}\cdot w^{-1}\ ,$$
therefore, $\overline{{\bf K}_{\chi,\sigma}(W)}\subset {\bf K}_{\chi,\sigma}(W)$. 

Finally, we need the following fact.

\begin{lemma}
 $\varepsilon(\overline x)=\overline {\varepsilon(x)}$ for $x\in \hat {\bf H}_{\chi,\sigma}(W)$.
\end{lemma}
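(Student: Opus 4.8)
The statement to prove is the compatibility of the anti-involution $\overline{\cdot}$ with the counit, namely $\varepsilon(\overline{x})=\overline{\varepsilon(x)}$ for all $x\in \hat{\bf H}_{\chi,\sigma}(W)$.

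The plan is to reduce the identity to the algebra generators and then use that both $\varepsilon\circ\overline{\cdot}$ and $\overline{\cdot}\circ\varepsilon$ are the appropriate kind of map. First I would note that $\overline{\cdot}$ is an $R$-linear anti-automorphism of $\hat{\bf H}_{\chi,\sigma}(W)$ (by Proposition \ref{pr:symmetris hat D(W) gen}(a)) and that the involution $\overline{\cdot}$ on $R$ is a ring homomorphism on the scalars. Since $\varepsilon$ is an algebra homomorphism to $R$ and $\overline{\cdot}$ on $R$ reverses nothing (it is an involution of the commutative ring $R$), the composite $x\mapsto \overline{\varepsilon(x)}$ is an $R$-semilinear algebra homomorphism $\hat{\bf H}_{\chi,\sigma}(W)\to R$ with respect to $\overline{\cdot}$, while $x\mapsto \varepsilon(\overline{x})$ is the composite of the anti-automorphism $\overline{\cdot}$ with $\varepsilon$; because the target $R$ is commutative, this composite is again an $R$-semilinear algebra homomorphism. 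Two semilinear algebra homomorphisms into a commutative ring agree as soon as they agree on a generating set, so it suffices to check the identity on the generators $w\in W$ and $D_s$, $s\in{\mathcal S}$.

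Next I would verify the identity on generators directly. For $w\in W$ we have $\overline{w}=w^{-1}$, so $\varepsilon(\overline{w})=\varepsilon(w^{-1})=1$, while $\overline{\varepsilon(w)}=\overline{1}=1$; here I use that the involution on $R$ fixes $1$. For $D_s$ with $s\in{\mathcal S}$ we have $\overline{D_s}=D_{s^{-1}}$, hence $\varepsilon(\overline{D_s})=\varepsilon(D_{s^{-1}})=0$, while $\overline{\varepsilon(D_s)}=\overline{0}=0$. Thus the identity holds on all generators. Combining this with the semilinearity reduction completes the argument.

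I expect the only genuine subtlety to be the bookkeeping of semilinearity: one must be careful that $\overline{\cdot}\circ\varepsilon$ and $\varepsilon\circ\overline{\cdot}$ are compared as maps twisted by the same involution of $R$, and that the anti-multiplicativity of $\overline{\cdot}$ does not obstruct the reduction. Since the target $R$ is commutative, the order-reversal in $\overline{\cdot}$ is harmless at the level of $\varepsilon$, because $\varepsilon(\overline{xy})=\varepsilon(\overline{y}\,\overline{x})=\varepsilon(\overline{y})\varepsilon(\overline{x})=\varepsilon(\overline{x})\varepsilon(\overline{y})$, matching $\overline{\varepsilon(xy)}=\overline{\varepsilon(x)\varepsilon(y)}=\overline{\varepsilon(x)}\,\overline{\varepsilon(y)}$. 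This makes the multiplicative reduction legitimate, and the whole statement follows formally once checked on generators.
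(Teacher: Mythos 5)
Your proof is correct and follows essentially the same route as the paper: both arguments observe that $\varepsilon\circ\overline{\cdot}$ and $\overline{\cdot}\circ\varepsilon$ are semilinear multiplicative maps into the commutative ring $R$ (the paper calls them ``$R$-antilinear ring homomorphisms''), reduce to generators, and then check $\varepsilon(\overline w)=\varepsilon(w^{-1})=1=\overline{\varepsilon(w)}$ and $\varepsilon(\overline D_s)=\varepsilon(D_{s^{-1}})=0=\overline{\varepsilon(D_s)}$. Your extra remark that the order-reversal of the anti-automorphism is harmless because $R$ is commutative is exactly the point the paper leaves implicit.
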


\begin{proof} Since both $\overline{\cdot}\circ \varepsilon$ and $\varepsilon\circ \overline{\cdot}$  are $R$-antilinear ring homomorphisms $\hat {\bf H}_{\chi,\sigma}(W)\to R$, it suffices to prove the assertion only on generators of $\hat {\bf H}_{\chi,\sigma}(W)$.
Indeed, $\varepsilon(\overline D_s)=\overline{\varepsilon(D_s)}=0$ for all $s\in {\mathcal S}$ and 
$\varepsilon(\overline w)=\varepsilon(w^{-1})=1=\overline{\varepsilon(w)}$ for $w\in W$. 

The lemma is proved.
\end{proof}

Therefore,  the ideal ${\bf J}_{\chi,\sigma}(W)$ generated by ${\bf K}_{\chi,\sigma}(W)\cap Ker~\varepsilon$ is $\overline{\cdot}\,$-invariant  and $\overline{\cdot}$ is well-defined on 
the quotient ${\bf H}_{\chi,\sigma}(W)=\hat {\bf H}_{\chi,\sigma}(W)/{\bf K}_{\chi,\sigma}(W)$.

This proves Theorem \ref{th:symmetris hat D(W) gen}.\endproof


The following result correlates the automorphism $\theta$ with relations in ${\bf H}_{\chi,\sigma}(W)$.

\begin{proposition} 
\label{pr:antipode theta}
In the assumptions of Proposition \ref{pr:symmetris hat D(W) gen}(b) suppose that $\varepsilon(\theta(x))=\varepsilon(x)$ for all $x\in {\bf K}_{\chi,\sigma}(W)$. Then

(a) $\theta(x)=S^{-2}(x)$ for all $x\in {\bf K}_{\chi,\sigma}(W)$.

(b) Suppose that $\hat {\bf H}_{\chi,\sigma}(W)$ is a free $R$-module. Then ${\bf H}_{\chi,\sigma}(W)$ admits an $R$-linear automorphism $\theta$ such that the structural homomorphism  
$\hat {\bf H}_{\chi,\sigma}\twoheadrightarrow {\bf H}_{\chi,\sigma}$ is $\theta$-equivariant.
\end{proposition}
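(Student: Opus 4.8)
The plan is to prove (a) by computing both $\theta$ and $S^{-2}$ explicitly on generators and then showing that the inhomogeneous discrepancy between them is annihilated on ${\bf K}_{\chi,\sigma}(W)$. First I would compute $S^2$ from \eqref{eq:hopf H'}: on $w\in W$ one has $S^2(w)=w$, while since $S$ is an anti-automorphism $S^2(D_s)=S(-s^{-1}D_s)=s^{-1}D_s s$; expanding $s^{-1}D_s s$ via \eqref{eq:relations hat H} and Lemma \ref{le:basic properties of chi sigma}(a) (so $\chi_{s^{-1},s}=\chi_{s,s}^{-1}$ and $\sigma_{s^{-1},s}=-\sigma_{s,s}/\chi_{s,s}$) gives $S^2(D_s)=\chi_{s,s}^{-1}D_s-\chi_{s,s}^{-1}\sigma_{s,s}(1-s)$, and hence $S^{-2}(w)=w$, $S^{-2}(D_s)=\chi_{s,s}D_s+\sigma_{s,s}(1-s)$. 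Comparing with $\theta(w)\in R^\times w$ and $\theta(D_s)=\chi_{s,s}D_s+\sigma_{s,s}$ from Proposition \ref{pr:symmetris hat D(W) gen}(b), the two algebra automorphisms differ on $D_s$ only by the group term $-\sigma_{s,s}s$; in particular they do \emph{not} agree on the generators of $\hat {\bf D}_{\chi,\sigma}(W)$, so a naive generator-by-generator check is unavailable and one must use the structure of ${\bf K}_{\chi,\sigma}(W)$.

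The key structural observation is that $\theta$ and $S^{-2}$ both preserve the filtration $\deg D_s=1,\ \deg w=0$ and induce the \emph{same} map on the associated graded: modulo lower degree both send $D_s$ to $\chi_{s,s}D_s$, so on $\mathrm{gr}\,\hat {\bf D}_{\chi,\sigma}(W)=\hat {\bf D}_0(W)$ they coincide identically (this is the motivating fact that in the homogeneous algebra $\hat {\bf H}_0(W)$, where $\sigma_{s,s}=0$, one already has $S^{-2}=\theta$ on the whole Demazure part). Consequently the $R$-linear map $\Psi:=\theta-S^{-2}$ strictly lowers degree, $\Psi\big(({\bf K}_{\chi,\sigma}(W))_{\le d}\big)\subseteq F_{d-1}\hat {\bf H}_{\chi,\sigma}(W)$, so the entire content of (a) is that the lower-degree, $\sigma$-driven corrections cancel on ${\bf K}$. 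I would establish this by induction on $d$, using that ${\bf K}_{\chi,\sigma}(W)$ is a left coideal subalgebra (Proposition \ref{pr:general K}) with coideal filtered pieces, together with the hypothesis $\varepsilon\circ\theta=\varepsilon$ on ${\bf K}$ (noting $\varepsilon\circ S^{-2}=\varepsilon$ always, so $\varepsilon\circ\Psi=0$ on ${\bf K}$): writing $\Delta(x)\in\hat {\bf H}_{\chi,\sigma}(W)\otimes{\bf K}_{\chi,\sigma}(W)$ for $x\in{\bf K}$ and applying $(\varepsilon\otimes\mathrm{id})$ after $\Psi$ lets one express $\Psi(x)$ through $\Psi$ of strictly lower-degree elements of ${\bf K}$, which vanish by the inductive hypothesis.

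The main obstacle is precisely closing this induction: because $\sigma_{s,s}\neq0$, neither $S^{-2}$ nor $\Psi$ preserves $\hat {\bf D}_{\chi,\sigma}(W)$ (the corrections carry genuine group elements), so one must argue a priori that $\Psi(x)$ falls back into a controllable submodule and that the coideal decomposition of $\Delta(x)$ feeds the recursion without its ``diagonal'' term $|x|\otimes x$ reintroducing top degree; here the point to exploit is that $\varepsilon$ annihilates every first tensor factor containing a $D_s$, so only the genuinely lower-degree second factors survive. Part (b) is then formal and independent of (a): by Proposition \ref{pr:symmetris hat D(W) gen}(b), $\theta$ preserves ${\bf K}_{\chi,\sigma}(W)$, and the hypothesis $\varepsilon\circ\theta=\varepsilon$ on ${\bf K}$ forces $\theta\big({\bf K}_{\chi,\sigma}(W)\cap Ker~\varepsilon\big)={\bf K}_{\chi,\sigma}(W)\cap Ker~\varepsilon$; hence $\theta$ preserves the ideal generated by ${\bf K}_{\chi,\sigma}(W)\cap Ker~\varepsilon$, and using $R$-freeness of $\hat {\bf H}_{\chi,\sigma}(W)$ (by which ${\bf H}_{\chi,\sigma}(W)$ is the Hopf algebra of Theorem \ref{th:Coideal Hopf gen}) $\theta$ descends to an $R$-linear automorphism of ${\bf H}_{\chi,\sigma}(W)$ for which the structural projection is $\theta$-equivariant by construction.
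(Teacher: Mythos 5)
Your part (b) is correct and is essentially the paper's own argument. Part (a), however, has a genuine gap: you correctly compute $\theta$ and $S^{-2}$ on generators and observe that $\Psi=\theta-S^{-2}$ strictly lowers the filtration degree, but the induction you propose is never closed --- and you flag this yourself as "the main obstacle." The mechanism you sketch (applying $(\varepsilon\otimes\mathrm{id})$ after $\Psi$ to $\Delta(x)$) does not produce a recursion: applying the counit to the \emph{first} tensor factor of $\Delta(x)$ simply returns $x$ by the counit axiom, and $\Psi$, being a difference of two algebra automorphisms, satisfies no identity you have recorded that would express $\Psi(x)$ through $\Psi$ of strictly lower-degree elements of ${\bf K}_{\chi,\sigma}(W)$. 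Without such an identity the filtration framework is an empty frame, and the observation that $\theta$ and $S^{-2}$ agree on the associated graded does not by itself control the inhomogeneous corrections.

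The missing idea is the intertwining relation
$$\Delta\circ\theta=(S^{-2}\otimes\theta)\circ\Delta=(\theta\otimes 1)\circ\Delta\ ,$$
which can be verified on the generators $w$ and $D_s$ precisely because every map involved is an algebra homomorphism: for $D_s$ one has $\Delta(\theta(D_s))=\theta(D_s)\otimes 1+\theta(s)\otimes D_s=S^{-2}(D_s)\otimes 1+S^{-2}(s)\otimes\theta(D_s)$, using your own (correct) formula $S^{-2}(D_s)=sD_ss^{-1}=\chi_{s,s}D_s+\sigma_{s,s}(1-s)$. Applying $1\otimes\varepsilon$ (counit on the \emph{second} factor) then gives $\theta(x)=S^{-2}(x_{(1)})\cdot\varepsilon(\theta(x_{(2)}))$ for all $x$; since ${\bf K}_{\chi,\sigma}(W)$ is a left coideal, $x_{(2)}$ lies in ${\bf K}_{\chi,\sigma}(W)$, so the hypothesis $\varepsilon\circ\theta=\varepsilon$ there yields $\theta(x)=S^{-2}(x_{(1)}\varepsilon(x_{(2)}))=S^{-2}(x)$ in one line, with no induction on degree and no need to worry about the ``diagonal'' term --- that worry dissolves once the counit is placed on the correct tensor factor.
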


\begin{proof} Prove (a). We need the following result.
%
%
%

\begin{lemma} In the assumptions of Proposition \ref{pr:symmetris hat D(W) gen}(b) one has
\begin{equation}
\label{eq:delta circ theta}
\Delta\circ \theta=(S^{-2}\otimes \theta)\circ \Delta=(\theta\otimes 1)\circ \Delta \ .
\end{equation}

\end{lemma}

\begin{proof} Since both $\Delta$ and $\theta$ are algebra homomorphisms, hence so are $\Delta\circ \theta$, $(S^{-2}\otimes \theta)\circ \Delta$, and $(\theta\otimes 1)\circ \Delta$, it suffices to prove \eqref{eq:delta circ theta} only on 
generators of $\hat {\bf H}_{\chi,\sigma}(W)$.  

Indeed, for $w\in W$ one has $\theta(w)=\tau_w\cdot w$ for some $\tau_w\in R^\times$, therefore
$$\Delta(\theta(w))=\Delta(\tau_w\cdot w)=\tau_w\cdot w\otimes w=\theta(w)\otimes w=S^{-2}(w)\otimes \theta(w)\ .$$
Furthermore,  we obtain for $s\in {\mathcal S}$ (abbreviating $a_s=\chi_{s,s}$, $b_s=\sigma_{s,s}$):
$$\Delta(\theta(D_s))=\Delta(b_s+a_sD_s)=b_s\cdot 1\otimes 1+a_sD_s\otimes 1+a_s\cdot s\otimes D_s=\theta(D_s)\otimes 1+\theta(s)\otimes D_s$$
$$=S^{-2}(D_s)\otimes 1+S^{-2}(s)\otimes \theta(D_s)$$
because $\theta(D_s)=b_s+a_sD_s$, $\theta(s)=a_s\cdot s$ and $S^2(D_s)=S(-s^{-1}D_s)=s^{-1}\cdot D_s\cdot s$, therefore, 
$S^{-2}(D_s)=s\cdot D_s\cdot s^{-1}=a_sD_s+b_s(1-s)$.
This proves \eqref{eq:delta circ theta}.

The lemma is proved.\end{proof}

Finally, applying $1\otimes \varepsilon$ to \eqref{eq:delta circ theta}, we
obtain
$\theta(x)=S^{-2}(x_{(1)})\cdot \varepsilon(\theta(x_{(2)})$
for all $x\in {\bf K}_{\chi,\sigma}(W)$. Since ${\bf K}_{\chi,\sigma}(W)$ is a left coideal by the argument from the proof of Theorem \ref{th:Coideal Hopf gen}, then $x_{(2)}\in {\bf K}_{\chi,\sigma}(W)$, i.e., $\varepsilon(\theta(x_{(2)})=\varepsilon(x_{(2)})$ and  
$\theta(x)=S^{-2}(x_{(1)})\cdot \varepsilon(x_{(2)})=S^{-2}(x_{(1)}\varepsilon(x_{(2)}))=S^{-2}(x)$.
This proves (a).

Prove (b). The assumption of the proposition and  the second assertion of Proposition \ref{pr:symmetris hat D(W) gen}(b) imply that ${\bf K}_{\chi,\sigma}(W)^+={\bf K}_{\chi,\sigma}(W)\cap Ker~\varepsilon$ is $\theta$-invariant.  Therefore, the ideal ${\bf J}_{\chi,\sigma}(W)$ generated by 
${\bf K}_{\chi,\sigma}(W)^+$ is also $\theta$-invariant and 
$\theta$ is well-defined on 
the quotient ${\bf H}_{\chi,\sigma}(W)=\hat {\bf H}_{\chi,\sigma}(W)/{\bf K}_{\chi,\sigma}(W)$. This proves (b).

The proposition is proved.
\end{proof}

\noindent {\bf Proof of Theorem \ref{th:symmetris D(W)}}. Prove (a). Indeed, $\chi$ and $\sigma$ defined by \eqref{eq:chi sigma defined} satisfy the assumptions of Proposition \ref{pr:symmetris hat D(W) gen}(a) with the identity $\overline{\cdot}$ on $\ZZ$, therefore $\overline{\cdot}$ is a well-defined involutive anti-automorphism of 
$\hat {\bf H}(W)=\hat {\bf H}_{\chi,\sigma}(W)$ and it satisfies $\overline D_s=D_s$ for $s\in {\mathcal S}$. Copying the argument from the proof of Theorem \ref{th:symmetris hat D(W) gen}, we see that ${\bf K}(W)^+={\bf K}(W)\cap Ker~\varepsilon$ is $\overline{\cdot}$-invariant. 
Since all filtered components $\hat {\bf D}(W)_{\le d}$ are also $\overline{\cdot}$-invariant, replacing $W$ with $W_{\{i,j\}}$ and taking $d=m_{ij}$, we see that all ${\bf K}_{ij}(W)$ are $\overline{\cdot}$-invariant. 
Therefore, the (Hopf) ideals ${\bf J}(w)$ and $\underline {\bf J}(w)$ generated respectively by ${\bf K}(W)^+$ and $\underline {\bf K}=\sum\limits_{j\ne i} {\bf K}_{ij}(W)$ are also $\overline{\cdot}$-invariant. This proves Theorem \ref{th:symmetris D(W)}(a).

Prove Theorem \ref{th:symmetris D(W)}(b) now.  
We need the following result.

\begin{proposition} 
\label{pr:symmetris hat D(W)} For any Coxeter group $W$ one has:

\noindent (a)  For $s\in {\mathcal S}$,   $\hat {\bf D}(W)$ admits an $s$-derivation $d_s$ such that $d_s(D_{s'})=\delta_{s,s'}$, $s,s'\in {\mathcal S}$.

\noindent (b) $d_s({\bf K}(W))=\{0\}$ for all $s\in {\mathcal S}$  and  $w x w^{-1}=w(x)$ for all , $w\in W$. 
 
\end{proposition}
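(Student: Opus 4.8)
The plan is to derive both parts from the general machinery already set up for $\hat{\bf H}_{\chi,\sigma}(W)$, using throughout that $\hat{\bf H}(W)=\hat{\bf H}_{\chi,\sigma}(W)$, and hence $\hat{\bf D}(W)=\hat{\bf D}_{\chi,\sigma}(W)$ and ${\bf K}(W)={\bf K}_{\chi,\sigma}(W)$, for the cocycle $(\chi,\sigma)$ of \eqref{eq:chi sigma defined} (Lemmas \ref{le:hats are equal} and \ref{le:free hat H}). For part (a) I would simply invoke Proposition \ref{pr:symmetris hat D(W) gen}(c)(ii). Its hypotheses hold here: the cocycle conditions \eqref{eq:2 cocycle} and \eqref{eq:sigmaws etc} are verified for \eqref{eq:chi sigma defined} in Proposition \ref{pr:chi sigma satisfies cocycle condition}(a) and in the proof of Theorem \ref{th:ZWcross and free D}, while $\chi_{w,s}^{|s|}=\chi_{w,s}^2=1$ since $\chi_{w,s}\in\{-1,1\}$; and the extra requirement \eqref{eq:w-invariance of sigma}, namely $\sigma_{wsw^{-1},wsw^{-1}}=\sigma_{s,s}$, is automatic because every reflection satisfies $\sigma_{s,s}=1$. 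This produces the $s$-derivation $d_s$ with $d_s(D_{s'})=\delta_{s,s'}$ on $\hat{\bf D}(W)$, together with the conjugation relation \eqref{eq:w-conjugate ds}, i.e. $d_{wsw^{-1}}=\chi_{w,s}\,w\circ d_s\circ w^{-1}$, which I will use below.

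Next I would establish the second assertion of (b): that $wxw^{-1}=w(x)$ for every $x\in{\bf K}(W)$ and $w\in W$, where $w(x)$ denotes the $W$-action of Theorem \ref{th:symmetris D(W)}(b)(i). By Theorem \ref{th:ZWcross and free D} the algebra $\hat{\bf H}(W)$ factors as $\hat{\bf D}(W)\cdot\ZZ W$, so Lemma \ref{le:free hat H}(b) identifies ${\bf K}(W)$ with ${\bf K}(\ZZ W,\hat{\bf D}(W))$, which by Lemma \ref{le:factored derivatives} equals $\bigcap_{w\ne w'}\ker\partial_{w,w'}$. Proposition \ref{pr:refined factored derivatives} expands $wx=w(x)\,w+\sum_{h\ne w}\partial_{w,h}(x)\,h$, so the diagonal derivative is exactly the $W$-action, $\partial_{w,w}(x)=w(x)$. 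For $x\in{\bf K}(W)$ all off-diagonal $\partial_{w,h}(x)$ vanish, leaving $wxw^{-1}=\partial_{w,w}(x)=w(x)$, as claimed. In particular ${\bf K}(W)$, being stable under conjugation, is stable under the $W$-action as well.

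Finally I would prove $d_s({\bf K}(W))=\{0\}$. I first reduce to a simple reflection: writing $s=ws_iw^{-1}$ and using \eqref{eq:w-conjugate ds}, for $x\in{\bf K}(W)$ one has $d_s(x)=\chi_{w,s_i}\,w\bigl(d_{s_i}(w^{-1}(x))\bigr)$, and $w^{-1}(x)\in{\bf K}(W)$ by the action-stability just established, so it suffices to treat $d_{s_i}$. For the simple case I would use the explicit formula $d_{s_i}(x)=s_i(x)\cdot s_i-s_ix$ of Lemma \ref{le:conjugation vs action}(b); for $x\in{\bf K}(W)$ the preceding paragraph gives $s_i(x)=s_ixs_i^{-1}=s_ixs_i$, whence
\[
d_{s_i}(x)=(s_ixs_i)\,s_i-s_ix=s_ixs_i^2-s_ix=0
\]
using $s_i^2=1$. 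Hence $d_s$ annihilates ${\bf K}(W)$ for every $s\in{\mathcal S}$. I expect the crux to be the middle step, the identification of conjugation with the $W$-action on ${\bf K}(W)$, since everything else is then a one-line computation; the delicate point there is that conjugation and the $W$-action genuinely differ on all of $\hat{\bf D}(W)$ and coincide only once the off-diagonal factored derivatives are forced to vanish on ${\bf K}(W)$.
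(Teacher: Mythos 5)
Your proof is correct and follows essentially the same route as the paper: part (a) by citing Proposition \ref{pr:symmetris hat D(W) gen}(c)(ii), the reduction of $d_s$ to $d_{s_i}$ via \eqref{eq:w-conjugate ds}, and the key identification of conjugation with the $W$-action on ${\bf K}(W)$ via the factorization of Theorem \ref{th:ZWcross and free D} and Lemma \ref{le:conjugation vs action}. The only (harmless) repackaging is in the middle step: where the paper runs a direct induction on $\ell(w)$ to get $wxw^{-1}=w(x)$ for $x\in{\bf K}(W)$ and extracts $d_{s_i}(x)=0$ simultaneously from the factorization, you instead quote Proposition \ref{pr:refined factored derivatives} (whose proof contains that same induction, giving $\partial_{w,w}=w(\cdot)$ on all of $\hat{\bf D}(W)$) together with Lemma \ref{le:factored derivatives}, and then obtain $d_{s_i}(x)=(s_ixs_i)s_i-s_ix=0$ as a one-line consequence.
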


\begin{proof} 
%
Part (a) directly follows from Proposition \ref{pr:symmetris hat D(W) gen}(c)(ii).

Prove (b).
Since $s_i{\bf K}(W)s_i={\bf K}(W)$, Theorem \ref{th:ZWcross and free D}  and Lemma \ref{le:conjugation vs action}  imply that for each $x\in {\bf K}(W)$ one has $d_{s_i}(x)=0$ and $s_ixs_i=s_i(x)$. This, in particular,  proves the first assertion of part (b) for $s=s_i$ and the second assertion -- for $w=s_i$. Let us prove the second assertion for any $w$. Indeed, if $\ell(w)\le 1$, we have nothing to prove. Suppose that $\ell(w)\ge 2$, i.e., $w=s_iw'$ so that $\ell(w')=\ell(w)-1$. Then, using inductive hypothesis in the form $w'xw'^{-1}=w'(x)\in {\bf K}(W)$ for all $x\in {\bf K}(W)$, we obtain
$$wxw^{-1}=s_i\cdot (w'xw'^{-1})\cdot s_i=s_i\cdot (w'xw'^{-1})\cdot s_i=s_i\cdot (w'(x))\cdot s_i=s_i(w'(x))=(s_iw')(x)=w(x)$$
for all $x\in {\bf K}(W)$, which proves the second assertion. Prove the first assertion now. Let $s\in {\mathcal S}$, choose $w\in W$ and $i\in I$ such that $s=ws_iw^{-1}$. The last assertion of Proposition \ref{pr:symmetris hat D(W) gen} guarantees that $d_s=\chi_{w,s}\cdot w\circ d_{s_i}\circ w^{-1}$. Then 
$$d_s({\bf K}(W))=\chi_{w,s}\cdot (w\circ d_{s_i}\circ w^{-1})({\bf K}(W))=\chi_{w,s}\cdot (w)(d_{s_i}({\bf K}(W)))=\{0\}\ .$$  
This finishes the proof of (b).
The proposition is proved.
\end{proof} 

Therefore, the (Hopf) ideal  ${\bf I}(w)$ of $\hat {\bf D}(W)$ generated ${\bf K}(W)$  is invariant both under the $W$-action and under all $d_s$. This proves that the quotient  $\underline {\bf D}(W)=\hat {\bf D}(W)/{\bf I}(w)$ has a natural $W$-action and $s$-derivations $d_s$. Similarly,  let $\underline {\bf K}(W)\subset \hat {\bf D}(W)$ be as in the proof of Theorem \ref{th:HW factorization}. By definition, $\underline {\bf K}(W)\subset {\bf K}(W)$ is $W$-invariant and is annihilated by all $d_s$, Therefore, 
the ideal $\underline {\bf I}(W)$  generated by $\underline {\bf K}(W)$ is also invariant both under the $W$-action and under all $d_s$ hence  the quotient  ${\bf D}(W)=\hat {\bf D}(W)/\underline {\bf I}(W)$ has a natural $W$-action and $s$-derivations $d_s$.
This proves Theorem \ref{th:symmetris D(W)}(b).

Prove Theorem \ref{th:symmetris D(W)}(c) now. We need the following result.

\begin{proposition} 
\label{pr:eps theta}
Suppose that $W=\langle s_i\,|\,i\in I\rangle$ is a finite Coxeter group. 
Then $\varepsilon(\theta(x))=\varepsilon(x)$ for all $x\in {\bf K}(W)$.

\end{proposition}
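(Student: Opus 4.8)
The plan is to reduce the identity to a statement about two augmentations of the free-product algebra $\hat{\bf D}(W)$ and then exploit the longest element of $W$. First I would observe that $\theta$ carries $\hat{\bf D}(W)$ into itself, with $\theta(D_s)=\sigma_{s,s}+\chi_{s,s}D_s=1-D_s$ for every $s\in\mathcal S$ (Proposition \ref{pr:symmetris hat D(W) gen}(b) with the values \eqref{eq:chi sigma defined}). Since $\varepsilon$ is an algebra homomorphism with $\varepsilon(D_s)=0$, both $\varepsilon|_{\hat{\bf D}(W)}$ and $(\varepsilon\circ\theta)|_{\hat{\bf D}(W)}$ are algebra homomorphisms $\hat{\bf D}(W)\to\ZZ$; by the presentation of $\hat{\bf D}(W)$ in Theorem \ref{th:ZWcross and free D} (the only relations are $D_s^2=D_s$, which map to $0=0$ or $1=1$) they coincide with the well-defined augmentations $\varepsilon_0\colon D_s\mapsto 0$ and $\varepsilon_1\colon D_s\mapsto 1$, respectively. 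Thus the claim $\varepsilon(\theta(x))=\varepsilon(x)$ becomes $\varepsilon_1(x)=\varepsilon_0(x)$ for $x\in{\bf K}(W)$.

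Next I would bring in the longest element $\wnot$, which exists precisely because $W$ is finite. For any reflection $s\in\mathcal S$ one has $\ell(\wnot s)=\ell(\wnot)-\ell(s)<\ell(\wnot)$, so the $W$-action of Lemma \ref{le:conjugation vs action}(a) gives $\wnot(D_s)=1-D_{\wnot s\wnot^{-1}}$, whence $\varepsilon_0(\wnot(D_s))=1=\varepsilon_1(D_s)$. As $x\mapsto \wnot(x)$ is an algebra automorphism of $\hat{\bf D}(W)$, the two algebra homomorphisms $\varepsilon_0\circ(\wnot\text{-action})$ and $\varepsilon_1$ agree on the generators, hence on all of $\hat{\bf D}(W)$; that is, $\varepsilon_1(x)=\varepsilon_0(\wnot(x))$ for every $x\in\hat{\bf D}(W)$.

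Finally I would specialize to $x\in{\bf K}(W)$. Here conjugation coincides with the action, $\wnot x\wnot^{-1}=\wnot(x)$ (Proposition \ref{pr:symmetris hat D(W)}(b)), and $\wnot(x)$ again lies in $\hat{\bf D}(W)$, so $\varepsilon_0(\wnot(x))=\varepsilon(\wnot(x))=\varepsilon(\wnot x\wnot^{-1})$. Since $\varepsilon$ is an algebra homomorphism with $\varepsilon(\wnot)=\varepsilon(\wnot^{-1})=1$, the right-hand side equals $\varepsilon(x)$. Chaining the equalities yields $\varepsilon(\theta(x))=\varepsilon_1(x)=\varepsilon_0(\wnot(x))=\varepsilon(\wnot x\wnot^{-1})=\varepsilon(x)$, as required.

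The only place finiteness enters is the use of $\wnot$, and the step I would be most careful about is conceptual rather than computational: the $\wnot$-action interchanges $\varepsilon_0$ and $\varepsilon_1$ on all of $\hat{\bf D}(W)$, while conjugation-invariance of $\varepsilon$ holds for every element, yet neither fact alone forces $\varepsilon_0=\varepsilon_1$ (indeed they differ on $\hat{\bf D}(W)$). It is precisely the coincidence of conjugation with the $W$-action, which is valid only on ${\bf K}(W)$ (Proposition \ref{pr:symmetris hat D(W)}(b)), that makes the two augmentations agree there; I would double-check that this coincidence is applied only to elements of ${\bf K}(W)$ and that $\wnot(x)$ stays inside ${\bf K}(W)\subset\hat{\bf D}(W)$ throughout.
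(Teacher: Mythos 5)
Your argument is correct and follows essentially the same route as the paper's: both hinge on the longest element $\wnot$, the formula $\wnot(D_s)=1-D_{\wnot s\wnot^{-1}}$ from Lemma \ref{le:conjugation vs action}(a), and the fact that conjugation by $\wnot$ agrees with the $\wnot$-action on ${\bf K}(W)$ (Proposition \ref{pr:symmetris hat D(W)}(b)). The only difference is packaging: the paper proves $\theta=\hat\tau\circ\wnot$ on $\hat{\bf D}(W)$ and then uses $\varepsilon\circ\hat\tau=\varepsilon$, whereas you compare the two augmentations $D_s\mapsto 0$ and $D_s\mapsto 1$ directly, which cleanly sidesteps the diagram automorphism $\hat\tau$.
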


\begin{proof} 
We need the following result.

\begin{lemma} 
\label{le:theta finite W}
Suppose that $W=\langle s_i\,|\,i\in I\rangle$ is a finite Coxeter group. Then in the notation of Lemma \ref{le:conjugation vs action}(a), one has
\begin{equation}
\label{eq:theta via w0}
\theta(x)=\hat \tau(w_0(x))
\end{equation}
for $x\in \hat {\bf D}(W)$, where $w_0$ is the longest element of $W$ and $\tau$ is an automorphism of $\hat {\bf H}(W)$ determined by $\hat \tau(s_i)=s_{\tau(i)}$, 
$\hat \tau(D_i)=D_{\tau(i)}$, where $\sigma$ is a certain permutation of $I$.

\end{lemma} 

\begin{proof} Lemma  \ref{le:conjugation vs action}(a) taken with $s=s_i$ immediately implies that 
\begin{equation}
\label{eq:w(Di)}
w(D_i)= \begin{cases}
D_{ws_iw^{-1}} & \text{if $\ell(ws_i)=\ell(w)+1$}\\
1-D_{ws_iw^{-1}} & \text{if $\ell(ws_i)=\ell(w)-1$}
\end{cases}
\end{equation}
for all $i\in I$, $w\in W$.

Furthermore, clearly, $w_0 s_iw_0^{-1}=s_{\tau(i)}$ for all $i\in I$ and some permutation $\tau$ of $I$ (which satisfies $m_{\tau(i),\tau(j)}=m_{ij}$ for all $i,j\in I$). It is also clear that the assignments $s_i\mapsto s_{\tau(i)}$, $D_i\mapsto D_{\tau(i)}$ define an automorphism $\hat \tau$ of $\hat {\bf H}(W)$. 
Since $\ell(w_0s_i)=\ell(w_0)-1$, we have by \eqref{eq:w(Di)}:
$$w_0(D_i)=1-D_{w_0s_iw_0^{-1}}=1-D_{\tau(i)}\ .$$ 

Since the  $\theta$ and another automorphism of $\hat {\bf D}(W)$ given by $x\mapsto \hat \tau (w_0(x))$ agree on generators $D_i$, $i\in I$, this proves \eqref{eq:theta via w0}.
The lemma is proved.
\end{proof}

Furthermore, Lemma \ref{le:theta finite W} and the immediate fact that $\varepsilon\circ \hat \tau=\varepsilon$ imply 
$$\varepsilon(\theta(x))=\varepsilon(\hat\tau(w_0(x)))=\varepsilon(w_0(x))$$
for all $x\in \hat {\bf D}(W)$. 
If $x\in {\bf K}(W)$, then $w_0(x)=w_0\cdot x\cdot w_0^{-1}$ by Proposition \ref{pr:symmetris hat D(W)}(b) 
and $\varepsilon(w_0(x))=\varepsilon(x)$.

The proposition is proved.
\end{proof}

Finally, we need the following result.

\begin{lemma} $\theta({\bf K}_{ij}(W))={\bf K}_{ij}(W)$ for any Coxeter group $W$ and any distinct $i,j\in I$.

\end{lemma}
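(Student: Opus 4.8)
The plan is to deduce the statement from the facts already established about $\theta$ on the ambient algebra $\hat{\bf H}(W)$, localizing everything to the rank-two parabolic $W_{\{i,j\}}$. First I would dispose of the degenerate case $m_{ij}=0$, where ${\bf K}_{ij}(W)=\{0\}$ and there is nothing to prove, and henceforth assume $2\le m_{ij}<\infty$, so that $W_{\{i,j\}}$ is a finite dihedral group.

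The first substantive step is to observe that $\theta$ restricts to the parabolic. By Proposition \ref{pr:symmetris hat D(W) gen}(b), taken with $\chi,\sigma$ as in \eqref{eq:chi sigma defined} (so that $\chi_{s,s}=-1$, $\sigma_{s,s}=1$ and hence $\theta(D_s)=1-D_s$ for every reflection $s$), together with Lemma \ref{le:levi}, the automorphism $\theta$ sends $s_i,s_j,D_i,D_j$ into the subalgebra identified with $\hat{\bf H}(W_{\{i,j\}})$, so it preserves both $\hat{\bf H}(W_{\{i,j\}})$ and $\hat{\bf D}(W_{\{i,j\}})$; moreover this restriction coincides with the intrinsic $\theta$ of the Coxeter group $W_{\{i,j\}}$, since both are determined on generators. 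Because $\theta$ sends each generator $D_s$ to the degree-$\le 1$ element $1-D_s$ and is an algebra map, it is filtration-non-increasing; as $\theta$ is an involution ($\theta^2=\mathrm{id}$ on generators), it in fact preserves every filtered piece $\hat{\bf D}(W_{\{i,j\}})_{\le r}$, in particular the one of degree $\le m_{ij}$.

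Next I would invoke the last assertion of Proposition \ref{pr:symmetris hat D(W) gen}(b), applied to $W_{\{i,j\}}$, to obtain $\theta({\bf K}(W_{\{i,j\}}))={\bf K}(W_{\{i,j\}})$; the hypotheses hold with the length-sign automorphism $w\mapsto(-1)^{\ell(w)}w$ of $\ZZ W_{\{i,j\}}$, which satisfies $\theta(s)=-s=\chi_{s,s}s$ on reflections. It remains to check that $\theta$ preserves the intersection with $Ker~\varepsilon$. This is the one genuinely nontrivial point: $\theta$ does not preserve the counit globally, since $\varepsilon(\theta(D_s))=\varepsilon(1-D_s)=1\neq 0=\varepsilon(D_s)$. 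Here the finiteness of $W_{\{i,j\}}$ is essential, for Proposition \ref{pr:eps theta} guarantees $\varepsilon(\theta(x))=\varepsilon(x)$ for all $x\in{\bf K}(W_{\{i,j\}})$, so that $x\in Ker~\varepsilon$ forces $\theta(x)\in Ker~\varepsilon$. I expect this reliance on the dihedral parabolic being finite to be the main obstacle, and the step that makes the statement subtle rather than formal.

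Combining the three preservation facts (of the degree filtration, of ${\bf K}(W_{\{i,j\}})$, and of $Ker~\varepsilon$) gives $\theta({\bf K}_{ij}(W))\subseteq{\bf K}_{ij}(W)$, and applying $\theta$ once more, using $\theta^2=\mathrm{id}$, yields the reverse inclusion, hence the desired equality $\theta({\bf K}_{ij}(W))={\bf K}_{ij}(W)$.
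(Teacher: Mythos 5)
Your proof is correct and follows essentially the same route as the paper's: restrict $\theta$ to the finite dihedral parabolic, use Proposition \ref{pr:symmetris hat D(W) gen}(b) to preserve ${\bf K}(W_{\{i,j\}})$, note that $\theta$ preserves the degree filtration, and invoke Proposition \ref{pr:eps theta} (where finiteness of $W_{\{i,j\}}$ enters) to handle the intersection with $Ker~\varepsilon$. Your identification of the counit step as the genuinely nontrivial point, and your slightly more explicit justification of filtration-preservation via $\theta^2=\mathrm{id}$, match the paper's argument in substance.
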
 

\begin{proof}
By definition, for any subset $J$ of $I$, $\theta$ preserves the subalgebra $\hat {\bf H}(W_J)\subset \hat {\bf H}(W)$, e.g., $\theta(\hat {\bf D}(W_J))=\hat {\bf D}(W_J)$. Also, by Proposition \ref{pr:symmetris hat D(W) gen}(b), $\theta({\bf K}(W_J))={\bf K}(W_J)$.
Since $\theta$ preserves each filtered component $\hat {\bf D}(W)_{\le d}$,  $\theta$ also preserves each filtered component ${\bf K}(W_J)_{\le d}\subset  {\bf K}(W)_{\le d}$. 
Note  that if $m_{ij}\ge 2$, then the subgroup $W_{\{i,j\}}$ of $W$ is finite. These arguments and Proposition \ref{pr:eps theta} guarantee that
$\varepsilon(\theta(x))=\varepsilon(x)$ 
for $x\in {\bf K}(W_{\{i,j\}})_{\le d}$ whenever $m_{ij}\ge 2$, $d\ge 0$. Taking $d=m_{ij}$ (and taking into account that ${\bf K}_{ij}(W)=\{0\}$ whenever $m_{ij}=0$), we finish the proof.

The lemma is proved.
\end{proof}

Therefore, the Hopf ideal $\underline {\bf J}(w)$ of $\hat {\bf H}(W)$ generated $\underline {\bf K}=\sum\limits_{j\ne i} {\bf K}_{ij}(W)$ is $\theta$-invariant. Hence $\theta$ is a well-defined automorphism of the quotient  ${\bf H}(W)=\hat {\bf H}(W)/\underline {\bf J}(w)$.
 
This proves Theorem \ref{th:symmetris D(W)}(c). \endproof

%
 %
%
%
 %
%
 %
%

\medskip

\noindent {\bf Proof of Theorem \ref{th:symmetris Dchisigma(W)}}. We need the following result.

\begin{proposition} 
\label{pr:bad derivatives}
In the assumptions of Proposition \ref{pr:symmetris hat D(W) gen}(c) and notation of Proposition \ref{pr:factored derivatives}:

\noindent (a) the condition 
\eqref{eq:acyclicity S} implies: 

(i) $\sigma_{w,s_1}\sigma_{ws_1,s_2}\cdots \sigma_{ws_1\cdots s_{k-1},s_k}\partial_{ws_1\cdots s_k,w}=0$
for any $w\in W$, $s_1,\ldots,s_k\in {\mathcal S}$, $k\ge 1$.

(ii) $w(x)=\partial_{w,w}(x)$ for all $w\in W$, 
$x\in \hat {\bf D}_{\chi,\sigma}(W)$.
 
\noindent (b) The condition \eqref{eq:alternating acyclicity S} for a given $s\in {\mathcal S}$ implies that

(i)  $\sigma_{s^{-1},s_1}\sigma_{s^{-1}s_1,s_2}\cdots \sigma_{s^{-1}s_1\cdots s_{k-1},s_k}\partial_{s^{-1}s_1\cdots s_k,1}=\delta_{k,1}\delta_{s,s_1}\sigma_{s^{-1},s}$ 
for all $s_1,\ldots,s_k\in {\mathcal S}$, $k\ge 1$.

(ii)  $\partial_{s^{-1},1}=-\sigma_{s^{-1},s}d_{s^{-1}}$ in the notation of Proposition \ref{pr:symmetris hat D(W) gen}(c)(i).

\end{proposition}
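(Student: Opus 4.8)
The plan is to work throughout with the factored matrix coefficients $\partial_{g,h}$ from Proposition \ref{pr:factored derivatives}, defined on $\hat{\bf D}_{\chi,\sigma}(W)$ via the factorization $\hat{\bf H}_{\chi,\sigma}(W)=\hat{\bf D}_{\chi,\sigma}(W)\cdot RW$ of Theorem \ref{th:ZWcross and free D gen} by $gx=\sum_{h\in W}\partial_{g,h}(x)\,h$. Two facts drive everything. First, associativity $g(xy)=(gx)y$ yields the comultiplication-type identity $\partial_{g,h}(xy)=\sum_{w\in W}\partial_{g,w}(x)\,\partial_{w,h}(y)$, which is the module-algebra structure of Corollary \ref{cor:partial action}. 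Second, rewriting \eqref{eq:relations hat H} as $wD_s=w(D_s)\,w-\sigma_{w,s}\,ws$ gives the values on generators: $\partial_{w,w}(D_s)=w(D_s)$, $\partial_{w,ws}(D_s)=-\sigma_{w,s}$, and $\partial_{w,h}(D_s)=0$ otherwise, together with $\partial_{1,1}=\mathrm{id}$. Iterating the Leibniz identity on a monomial $D_{t_1}\cdots D_{t_m}$ then shows that $\partial_{g,h}(D_{t_1}\cdots D_{t_m})$ is a sum over descent sets $J=\{j_1<\cdots<j_r\}$ with $g\,t_{j_1}\cdots t_{j_r}=h$, the term for $J$ being the scalar $(-1)^r\prod_{b=1}^r\sigma_{g\,t_{j_1}\cdots t_{j_{b-1}},\,t_{j_b}}$ times an element of $\hat{\bf D}_{\chi,\sigma}(W)$.

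First I would prove (a)(i). Evaluating the operator on a monomial and fixing a descent set $J$ contributing to $\partial_{ws_1\cdots s_k,w}$, one has $s_1\cdots s_k\,t_{j_1}\cdots t_{j_r}=w^{-1}w=1$, and the prefactor $\sigma_{w,s_1}\cdots\sigma_{ws_1\cdots s_{k-1},s_k}$ concatenates with the descent scalar into exactly the descending chain $\sigma_{w,r_1}\sigma_{wr_1,r_2}\cdots$ along the reflections $r_1,\dots,r_{k+r}=s_1,\dots,s_k,t_{j_1},\dots,t_{j_r}$, whose product is $1$. Its length $k+r$ is at least $2$ (the case $k+r=1$ would force $s_1=1\notin{\mathcal S}$), so \eqref{eq:acyclicity S} annihilates it; summing over $J$ gives (a)(i). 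For (a)(ii) I would show $\partial_{w,w}$ is an algebra homomorphism: Leibniz gives $\partial_{w,w}(xy)=\partial_{w,w}(x)\partial_{w,w}(y)+\sum_{w'\ne w}\partial_{w,w'}(x)\partial_{w',w}(y)$, and in each cross term the scalar coefficient of $\partial_{w,w'}(x)$ (for $w'=ws_1\cdots s_k$) is precisely the prefactor annihilating $\partial_{w',w}$ by (a)(i); hence the cross terms vanish. Since $\partial_{w,w}$ agrees with the automorphism $w(\cdot)$ of Proposition \ref{pr:symmetris hat D(W) gen}(c)(i) on generators and on $1$, the two homomorphisms coincide.

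Part (b) runs on the same mechanism, with the target reflection $s$ replacing $1$. For (b)(i), a descent set contributing to $\partial_{s^{-1}s_1\cdots s_k,1}$ satisfies $s_1\cdots s_k\,t_{j_1}\cdots t_{j_r}=s$, so the combined chain starts at $s^{-1}$ and has reflection-product $s$; whenever its length $k+r$ is at least $2$, \eqref{eq:alternating acyclicity S} annihilates it. The only surviving possibility is $k+r=1$, i.e. $k=1$, $r=0$, $s_1=s$, where $g=s^{-1}s=1$, $\partial_{1,1}=\mathrm{id}$ and the operator is exactly $\sigma_{s^{-1},s}\,\mathrm{id}$; this yields the Kronecker terms $\delta_{k,1}\delta_{s,s_1}\sigma_{s^{-1},s}$. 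Then (b)(ii) follows by feeding (b)(i) into Leibniz: $\partial_{s^{-1},1}(xy)=\sum_h\partial_{s^{-1},h}(x)\partial_{h,1}(y)$, and for $h\ne s^{-1},1$ the scalar coefficient of $\partial_{s^{-1},h}(x)$ multiplied against $\partial_{h,1}$ vanishes by (b)(i), the case $k=1,s_1=s$ being excluded since it would force $h=1$. Hence only $h\in\{s^{-1},1\}$ survive, so $\partial_{s^{-1},1}(xy)=\partial_{s^{-1},1}(x)\,y+s^{-1}(x)\,\partial_{s^{-1},1}(y)$ using $\partial_{s^{-1},s^{-1}}=s^{-1}(\cdot)$ from (a)(ii); that is, $\partial_{s^{-1},1}$ is an $s^{-1}$-derivation. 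Comparing values on generators, where $\partial_{s^{-1},1}(D_{s'})=-\sigma_{s^{-1},s}\,\delta_{s',s}$, and using that both $\partial_{s^{-1},1}$ and $-\sigma_{s^{-1},s}\,d_{s^{-1}}$ are $s^{-1}$-derivations of $\hat{\bf D}_{\chi,\sigma}(W)$, yields (b)(ii).

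The hard part will be the bookkeeping that makes the prefactor and the descent scalars concatenate into a single chain of the form appearing in \eqref{eq:acyclicity S} and \eqref{eq:alternating acyclicity S}: one must track the intermediate group elements $g\,t_{j_1}\cdots t_{j_{b-1}}$ so that the chain's base point and its successive arguments are exactly right, and one must verify the boundary cases $k+r\le 1$ separately, since these are precisely where the acyclicity hypotheses do not apply and where the nonzero diagonal contribution in (b)(i) arises.
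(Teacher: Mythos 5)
Your argument is correct, and it reorganizes the paper's proof rather than reproducing it. The paper works by induction on the filtered degree of $x$: it isolates the auxiliary operators $\partial^w_{s_1,\ldots,s_k}$ and $\tilde\partial^s_{s_1,\ldots,s_k}$, proves a one-step recursion (Lemmas \ref{le:partial s w recursion} and \ref{le:partial s w recursion 2}) coming from the Leibniz rule \eqref{eq:partial leibniz rule}, and feeds the acyclicity conditions in only at the degree-zero base case, where $\partial_{g,h}(1)=\delta_{g,h}$ forces the chain to close up. You instead unroll that recursion completely into a closed-form expansion of $\partial_{g,h}$ on a monomial as a sum over descent sets, so that each summand's scalar visibly concatenates with the prefactor into a single chain of exactly the shape appearing in \eqref{eq:acyclicity S} and \eqref{eq:alternating acyclicity S}; the vanishing is then term-by-term, with the boundary cases $k+r\le 1$ handled by $1\notin{\mathcal S}$ (for (a)) and producing the diagonal term $\delta_{k,1}\delta_{s,s_1}\sigma_{s^{-1},s}$ (for (b)). For the multiplicativity of $\partial_{w,w}$ and the derivation property of $\partial_{s^{-1},1}$ you use the full ``comultiplication'' identity $\partial_{g,h}(xy)=\sum_{w'}\partial_{g,w'}(x)\partial_{w',h}(y)$ of Proposition \ref{pr:factored derivatives} together with parts (i) to kill the cross terms, where the paper extracts both statements from the same degree induction. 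Your version makes the combinatorial content of the acyclicity hypotheses more transparent at the cost of heavier bookkeeping; the paper's induction is shorter to write but hides where each factor of the chain comes from. Both are valid, and your identification of the surviving term in (b)(i) with $h=1$, $k=1$, $s_1=s$ is exactly right.

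One small wrinkle to be aware of: your generator computation gives $\partial_{s^{-1},1}(D_{s'})=-\sigma_{s^{-1},s}\,\delta_{s,s'}$, which matches $-\sigma_{s^{-1},s}\,d_{s}$ (with $d_s(D_{s'})=\delta_{s,s'}$) rather than $-\sigma_{s^{-1},s}\,d_{s^{-1}}$ as literally written in (b)(ii); the two differ only when $s\ne s^{-1}$. This indexing discrepancy is already present in the paper's own statement and proof (compare the end of the proof of Theorem \ref{th:symmetris Dchisigma(W)}, where $\partial_s=-\partial_{s^{-1},1}=\sigma_{s^{-1},s}d_s$), and is immaterial for involutive reflections, but you should state explicitly which normalization of $d$ you are matching against so the final comparison of $s^{-1}$-derivations on generators is literally an identity.
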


\begin{proof} 
Prove (a). Denote 
$\partial^w_{s_1,\ldots,s_k}=\sigma_{w,s_1}\sigma_{ws_1,s_2}\cdots \sigma_{ws_1\cdots s_{k-1},s_k}\partial_{ws_1\cdots s_k,w}$ for all $w\in W$, $s_1,\ldots,s_k\in {\mathcal S}$, $k\ge 0$ (with $k=0$ this is just $\partial_{w,w}$). 


\begin{lemma}
\label{le:partial s w recursion}
$\partial^w_{s_1,\ldots,s_k}(D_{s_{k+1}}x)=ws_1\cdots s_k(D_s)\partial^w_{s_1,\ldots,s_k}(x)
-\partial^w_{s_1,\ldots,s_{k+1}}(x)$
for each $w\in W$, $s_1,\ldots,s_{k+1}\in {\mathcal S}$, $k\ge 0$, $x\in \hat {\bf D}_{\chi,\sigma}(W)$.

\end{lemma}

\begin{proof} Clearly, $\partial_{w,w''}(D_s)=\delta_{w,w''}w(D_s)-\delta_{w'',ws}\sigma_{w,s}$
for all $w,w''\in W$, $s\in {\mathcal S}$. This and Proposition \ref{pr:factored derivatives} imply 
\begin{equation}
\label{eq:partial leibniz rule}
\partial_{w,w'}(D_sx)=w(D_s)\partial_{w,w'}(x)-\sigma_{w,s}\partial_{ws,w'}(x)
\end{equation}
for all $w,w'\in W$, $s\in {\mathcal S}$, $x\in \hat {\bf D}_{\chi,\sigma}(W)$.
Furthermore, \eqref{eq:partial leibniz rule} implies that 
$$\partial_{ws_1\cdots s_k,w}(D_{s_{k+1}}x)=ws_1\cdots s_k(D_s)\partial_{w,ws_1\cdots s_k}(x)-\sigma_{ws_1\cdots s_k,s_{k+1}}\partial_{ws_1\cdots s_{k+1},w}(x)\ .$$
The lemma is proved.
\end{proof}

Furthermore, we will show that $\partial^w_{s_1,\ldots,s_k}(y)=0$   for   all $s_1,\ldots,s_k\in {\mathcal S}$, $k\ge 1$  
and $\partial_{w,w}(y)=w(y)$ (i.e., when $k=0$)
by induction in the filtered degree of $y\in \hat {\bf D}_{\chi,\sigma}(W)$ (the algebra is naturally filtered by $\ZZ_{\ge 0}$ via 
$\deg D_s=1$ for $s\in {\mathcal S}$).
Indeed, if $y\in \hat {\bf D}_{\chi,\sigma}(W)_{\le 0}=R$, then $\partial_{w,w'}(y)=\delta_{w,w'}y$, therefore, 
$\partial^w_{s_1,\ldots,s_k}(y)=
\sigma_{w,s_1}\sigma_{ws_1,s_2}\cdots \sigma_{ws_1\cdots s_{k-1},s_k}\delta_{s_1\cdots s_k,1}y=0$
by the condition \eqref{eq:acyclicity S} and $\partial_{w,w}(y)=w(y)=y$.
If $y\in \hat {\bf D}_{\chi,\sigma}(W)_{\le r}$, $r>0$, then $y$ is an $R$-linear combination of the elements of the form $D_sx$, where $x\in \hat {\bf D}_{\chi,\sigma}(W)_{\le r-1}$. By $R$-linearity it suffices to prove the assertion only for $y=D_sx$. Then, using the inductive hypothesis in the form: $\partial^w_{s_1,\ldots,s_k}(x)=0$, $\partial^w_{s_1,\ldots,s_k,s}(x)=0$, $\partial_{w,w}(x)=w(x)$
Lemma \ref{le:partial s w recursion} guarantees that $\partial^w_{s_1,\ldots,s_k}(y)=0$ and same lemma taken with $k=0$ implies that
$\partial_{w,w}(D_sx)=w(D_s)\partial_{w,w}(x)-\partial^w_s(x)=w(D_s)\partial_{w,w}(x)=w(D_s)w(x)=w(y)$.

This proves (a).

Prove (b) now. Denote 
$\tilde \partial^s_{s_1,\ldots,s_k}=\sigma_{s^{-1},s_1}\sigma_{s^{-1}s_1,s_2}\cdots \sigma_{s^{-1}s_1\cdots s_{k-1},s_k}\partial_{s^{-1}s_1\cdots s_k,1}$ for  $s,s_1,\ldots,s_k\in {\mathcal S}$, $k\ge 0$ (if $k=0$, this is just $\partial_{s,1}$). 


\begin{lemma}
\label{le:partial s w recursion 2}
$\tilde \partial^s_{s_1,\ldots,s_k}(D_{s_{k+1}}x)=s^{-1}s_1\cdots s_k(D_s)\tilde \partial^s_{s_1,\ldots,s_k}(x)
-\tilde \partial^s_{s_1,\ldots,s_{k+1}}(x)$
for  $s,s_1,\ldots,s_{k+1}\in {\mathcal S}$, $x\in \hat {\bf D}_{\chi,\sigma}(W)$.

\end{lemma}

\begin{proof}  Let $u:=s^{-1}s_1\cdots s_k$. Then 
$\partial_{u,1}(D_{s_{k+1}}x)=u(D_s)\partial_{u,1}(x)-\sigma_{u,s_{k+1}}\partial_{u,1}(x)$ by \eqref{eq:partial leibniz rule}.

The lemma is proved.
\end{proof}

Furthermore, similarly to the proof of part (a), we will show that $\tilde \partial^s_{s_1,\ldots,s_k}(y)=0$ for $k\ge 1$ and all $s_1,\ldots,s_k\in {\mathcal S}$ and $\partial_{s^{-1},1}(y)=-\sigma_{s^{-1},s}d_{s^{-1}}(y)$ (i.e., when $k=0$)
by induction in the filtered degree of $y\in \hat {\bf D}_{\chi,\sigma}(W)$.
Indeed, if $y\in \hat {\bf D}_{\chi,\sigma}(W)_{\le 0}=R$, then $\partial_{w,w'}(y)=\delta_{w,w'}y$, therefore, 
$$\tilde \partial^s_{s_1,\ldots,s_k}(y)=
\sigma_{s^{-1},s_1}\sigma_{s^{-1}s_1,s_2}\cdots \sigma_{s^{-1}s_1\cdots s_{k-1},s_k}\delta_{s_1\cdots s_k,s}y=0$$ 
by the condition \eqref{eq:alternating acyclicity S} and $\partial_{s^{-1},1}(y)=d_s(y)=0$.

If $y\in \hat {\bf D}_{\chi,\sigma}(W)_{\le r}$, $r>0$, then $y$ is an $R$-linear combination of the elements of the form $D_{s'}x$, where $x\in \hat {\bf D}_{\chi,\sigma}(W)_{\le r-1}$. By $R$-linearity it suffices to prove the assertion only for $y=D_{s'}x$. Then, using the inductive hypothesis in the form: $\partial^w_{s_1,\ldots,s_k}(x)=\delta_{k,1}\delta_{s,s_1}\sigma_{s^{-1},s}\cdot x$, $\partial^w_{s_1,\ldots,s_k,s'}(x)=0$, $\partial_{s^{-1},1}(x)
=\sigma_{s^{-1},s}d_{s^{-1}}(x)$
Lemma \ref{le:partial s w recursion 2} guarantees that 
$$\tilde \partial^s_{s_1,\ldots,s_k}(y)=\delta_{k,1}\delta_{s,s_1}\sigma_{s^{-1},s}s^{-1}s_1\cdots s_k(D_{s'})x=\delta_{k,1}\delta_{s,s_1}\sigma_{s^{-1},s}D_{s'}x=\delta_{k,1}\delta_{s,s_1}\sigma_{s^{-1},s}\cdot y$$ 
for all $k\ge 1$ and same lemma taken with $k=0$ implies that
$$\partial_{s^{-1},1}(y)=s^{-1}(D_{s'})\partial_{s^{-1},1}(x)-\tilde \partial^s_{s'}(x)=-s^{-1}(D_{s'})\sigma_{s^{-1},s}d_{s^{-1}}(x)-\delta_{s,s'}\sigma_{s^{-1},s}x=-\sigma_{s^{-1},s}d_{s^{-1}}(y)\ .$$
This proves (b).
The proposition is proved. \end{proof}

Finally, Lemma \ref{le:factored derivatives} and Proposition \ref{pr:bad derivatives}(a) imply that 
$w xw^{-1}=w(x),~\partial_{s^{-1},1}(x)=0$
for all $x\in {\bf K}_{\chi,\sigma}(W)$, $w\in W$, $s\in {\mathcal S}$. 
Therefore, the ideal ${\bf I}$ of $\hat {\bf D}_{\chi,\sigma}(W)$ generated by 
${\bf K}_{\chi,\sigma}(W)\cap Ker~\varepsilon$
is invariant under both $W$-action and the $s^{-1}$-derivation $\partial_s:=-\partial_{s^{-1},1}=\sigma_{s^{-1},s}d_s$. Hence ${\bf D}_{\chi,\sigma}(W)=\hat {\bf D}_{\chi,\sigma}(W)/{\bf I}$ is also invariant under these symmetries. 

Theorem \ref{th:symmetris Dchisigma(W)} is proved. \endproof

\subsection{Simply-laced Hecke-Hopf algebras and proof of Theorems \ref{th:hopf hat intro}, \ref{th:hecke hopf intro}, \ref{th:hopf hat intro W} and Propositions \ref{pr:HS_n factorization}, \ref{pr:D presentation Sn}, \ref{pr:presentation simply-laced D}}
\label{subsec:Simply-laced} 

We need the following result. 

\begin{proposition} 
\label{pr:K for S3}
For any Coxeter group $W$ and $i,j\in I$ one has:

(a) If $m_{ij}=2$, then ${\bf K}_{ij}(W)=\ZZ\cdot K_{ij}$, where $K_{ij}=D_iD_j-D_jD_i$.

(b) If $m_{ij}=3$, then 
$${\bf K}_{ij}(W)=\ZZ\cdot K_{ij}+\ZZ\cdot K_{ji}+\ZZ\cdot(K_{ij}D_i-D_iK_{ji})+\ZZ\cdot (K_{ji}D_i-D_iK_{ji})+\ZZ\cdot (D_jK_{ji}-K_{ij}D_j)$$

\end{proposition}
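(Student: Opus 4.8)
The plan is to exploit that, by its very definition, $\mathbf{K}_{ij}(W)$ is computed entirely inside $\hat{\mathbf D}(W_{\{i,j\}})$, so the whole problem reduces to the rank-two dihedral group $W_{\{i,j\}}$ with its reflections and the generators $D_k$ of Lemma~\ref{le:Dkij}. I would work with the factorization $\hat{\mathbf H}(W_{\{i,j\}})=\hat{\mathbf D}(W_{\{i,j\}})\cdot\ZZ W_{\{i,j\}}$ of Theorem~\ref{th:ZWcross and free D}, which lets me read off, for any $x$, the ``group part'' of a conjugate $wxw^{-1}$ as the $\hat{\mathbf D}$-coefficients of the nontrivial $w'\in W_{\{i,j\}}$. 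The basic computational engine is the conjugation formula \eqref{eq:conjugation si Dk} (equivalently \eqref{eq:si conj Dsi w}): $s_iD_ks_i=D_{m+2-k}$ for $2\le k\le m$ while $s_iD_1s_i=1-D_1-s_i$, the single term $-s_i$ being the only source of group parts, and symmetrically for $s_j$.

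Membership $x\in\mathbf K(W_{\{i,j\}})$ means $wxw^{-1}\in\hat{\mathbf D}$ for every $w$, i.e.\ $\mathbf K(W_{\{i,j\}})$ is the largest subspace of $\hat{\mathbf D}$ stable under $s_i$- and $s_j$-conjugation. Concretely I would take a general element $x$ of filtered degree $\le m$ with $\varepsilon(x)=0$ (a finite list of alternating-word monomials in the $D_k$, with $D_k^2=D_k$ already imposed), expand $wxw^{-1}$ for each $w\in W_{\{i,j\}}$ using the engine above, and set the $\hat{\mathbf D}$-coefficient of each nontrivial group element to zero; this is a linear system in the coefficients of $x$. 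The one genuine subtlety is that imposing the $s_i$- and $s_j$-conditions a \emph{single} time is insufficient (e.g.\ $D_{\{i,j\}\text{-middle}}$ alone survives one step but not $w=s_js_i$), so I must impose the vanishing condition for \emph{all} $2m$ elements $w\in W_{\{i,j\}}$, equivalently pass to the largest conjugation-stable subspace. The $i\leftrightarrow j$ symmetry and the anti-involution $\overline{\cdot}$ of Theorem~\ref{th:symmetris D(W)}(a) halve the work; alternatively, for a more conceptual upper bound I could use Lemma~\ref{le:factored derivatives}, $\mathbf K(\ZZ W,\hat{\mathbf D})=\bigcap_{w\ne w'}\ker\partial_{w,w'}$, and evaluate the finitely many relevant $\partial_{w,w'}$ via the recursion \eqref{eq:partial recursion}.

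For part (a) ($m=2$) this is immediate: the degree-$\le 2$, counit-zero elements are spanned by $D_1,D_2,D_1D_2,D_2D_1$; the group-part conditions force the coefficients of $D_1,D_2$ to vanish and the two quadratic coefficients to be opposite, leaving exactly $\ZZ\,(D_1D_2-D_2D_1)=\ZZ\,K_{ij}$. For part (b) ($m=3$, so $W_{\{i,j\}}=S_3$ with three reflections $D_1,D_2,D_3$) the same procedure applied to the $21$-dimensional space of degree-$\le 3$ counit-zero elements should collapse it to a $5$-dimensional lattice; I would then exhibit an explicit basis and match it to the five listed generators. For the reverse inclusion it is cleanest to show directly that the $\ZZ$-span of the listed generators is closed under $s_i$- and $s_j$-conjugation (hence contained in $\mathbf K_{ij}(W)$), which reuses the same engine; alternatively the quadratic generators are precisely the relations produced by Theorem~\ref{th:relations rank 2}(a) and the cubic ones are built from them.

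The main obstacle is the $m=3$ bookkeeping: organizing the $21$ unknowns, carrying out the conjugation conditions for all of $W_{\{i,j\}}$ correctly (the exact point where a naive ``check $s_i$ and $s_j$ once'' argument overcounts the space), and identifying the resulting $5$-dimensional lattice with the stated $\ZZ$-span. I expect the degree-two part to reproduce the two rank-two quadratic relations and the three remaining dimensions to live in degree exactly three, so the matching of the cubic basis vectors with the products $K_{ij}D_i-D_iK_{ji}$, $K_{ji}D_i-D_iK_{ji}$, and $D_jK_{ji}-K_{ij}D_j$ will be the step requiring the most care.
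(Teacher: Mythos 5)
Your plan is correct and matches the paper's proof in all essentials: both reduce to the rank-two parabolic $W_{\{i,j\}}$, use the factorization $\hat{\bf H}=\hat{\bf D}\cdot\ZZ W$ of Theorem \ref{th:ZWcross and free D} together with the conjugation formulas \eqref{eq:conjugation si Dk} to turn membership in ${\bf K}_{ij}(W)$ into a finite linear system on the $21$-dimensional degree-$\le 3$ piece of $\hat{\bf D}(W_{\{i,j\}})$, and then establish the reverse inclusion by explicitly conjugating the five listed generators (the paper's \eqref{eq:conjugation S_3}). The only organizational difference is that the paper obtains its upper bound from the three reflection derivations $d_{s_1},d_{s_2},d_{s_3}$ (via $d_s({\bf K}(W))=\{0\}$, Proposition \ref{pr:symmetris hat D(W)}(b)) rather than from the raw coefficients $\partial_{w,w'}$ of all $2m$ conjugates, which is precisely the refinement you flag as an alternative via Lemma \ref{le:factored derivatives}.
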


\begin{proof} Indeed, by Proposition \ref{pr:symmetris hat D(W)}(b),  
\begin{equation}
\label{eq:Kij as kernel}
\ZZ+{\bf K}_{ij}(W)\subseteq {\bf K}'_{ij}(W):=\{x\in \hat {\bf D}(W_{\{i,j\}})_{\le m_{ij}}\}\,|\,d_s(x)=0, s\in {\mathcal S} \cap W_{\{i,j\}}\} \ .
\end{equation}

Prove (a) now. Clearly, if $m_{ij}=2$, then each $x\in {\bf K}_{ij}(W)$ is of the form $x=a+bD_iD_j+cD_jD_i$ for some $a,b,c\in \ZZ$. Since $s_i(D_j)=D_j$, then, clearly, $d_i(x)=bD_j+cD_j$, $d_j(x)=bD_i+cD_i$, $\varepsilon(x)=a$. Thus, $d_i(x)=d_j(x)=\varepsilon(x)=0$ iff $a=0$, $b+c=0$.
This proves (a). 

Prove (b) now. Fix $i,j\in I$ with $m_{ij}=3$. Then, according to Theorem \ref{th:ZWcross and free D}, $\hat {\bf D}(W_{\{i,j\}})$ is an algebra generated by $D_1:=D_i$, $D_2:=D_{ij}$, $D_3:=D_j$ subject to relations $D_k^2=D_k$ for $k=1,2,3$. Denote also $d_1:=d_i$, $d_2=s_id_js_i=s_jd_is_j$, $d_3=d_j$ so that 
$d_k(xy)=d_k(x)y+s_k(x)d_k(y)$
for all $x,y\in \hat {\bf D}(W_{\{i,j\}})$, where $s_1:=s_i$, $s_2:=s_is_js_i=s_js_is_j$, $s_3:=s_j$.

In particular, $K_{ij}=D_1D_3-D_2D_1-D_3D_2+D_2$, $K_{ji}=D_3D_1-D_1D_2-D_2D_3+D_2$.



\begin{lemma} 
\label{le:drop from 3 to 2} In the assumptions of Proposition \ref{pr:K for S3}(b), one has:

(a)  ${\bf K}'_{ij}(W)={\bf K}'_{ij}(W)_{\le 2}+\ZZ\cdot(K_{ij}D_1-D_1K_{ji})+\ZZ\cdot (K_{ji}D_1-D_1K_{ji})+\ZZ\cdot (D_3K_{ji}-K_{ij}D_3)$.

(b) ${\bf K}'_{ij}(W)_{\le 2}=\ZZ+\ZZ\cdot K_{ij}+\ZZ\cdot K_{ji}$.

\end{lemma}

\begin{proof} Since $\hat {\bf D}(W_{\{ij\}})$ is the free product of three copies of $\hat {\bf D}(W_{\{i\}})$, it is a free $\ZZ$-module (this also follows from by Lemmas \ref{le:hats are equal} and \ref{le:free hat H}). In particular, 
$\hat {\bf D}(W_{\{i,j\}})_{\le 3}$  is a free $\ZZ$-module with a basis $1$, $D_1,D_2,D_3$, $D_aD_b$,
 $D_aD_bD_{6-a-b}$, $D_aD_bD_a$ for all distinct $a,b\in \{1,2,3\}$, that is, each $x\in \hat{\bf D}(W_{\{i,j\}})_{\le 3}$ can be uniquely  written as  
\begin{equation}
\label{eq:x in D<=3}
x=a_0+\sum_{\ell=1}^3a_\ell D_\ell+\sum_{a,b\in \{1,2,3\},a\ne b} (f_{a,b}D_aD_b+g_{a,b}D_aD_bD_{6-a-b}+h_{a,b}D_aD_bD_a)\ ,
\end{equation} 
where all $a_k$,  $f_{a,b}$, $g_{a,b}$, $h_{a,b}$ are integers.

Let us show first that $d_k(x)=0$ for some $x$  in \eqref{eq:x in D<=3} and some $k\in \{1,2,3\}$ implies that
then $h_{k,b}=0$ for $b\in \{1,2,3\}\setminus \{k\}$.
Indeed, $d_k(D_aD_b)=\delta_{k,a}D_b+\delta_{k,b}s_k(D_a)$, 
$d_k(D_aD_bD_c)=d_k(D_aD_b)D_c+\delta_{k,c}s_k(D_aD_b)=\delta_{k,a}D_bD_c+\delta_{k,b}s_k(D_a)D_c+\delta_{k,c}s_k(D_aD_b)$.
Therefore, $d_k(x)=a_k+\sum\limits_{a\ne b} (f_{a,b}(\delta_{k,a}D_b+\delta_{k,b}s_k(D_a))+\sum\limits_{a\ne b} g_{a,b}(\delta_{k,a}D_bD_{6-k-b}+\delta_{k,b}s_k(D_a)D_{6-a-k}+\delta_{k,6-a-b}s_k(D_aD_b))$
$$+\sum\limits_{a\ne b}h_{a,b}(\delta_{k,a}D_bD_k+s_k(D_kD_b)+\delta_{k,b}s_k(D_a)D_a)$$

Taking into account that 
\begin{equation}
\label{eq:skDa}
s_k(D_a)=\begin{cases}  
1-D_k & \text{if $a=k$}\\
D_{6-a-k} & \text{if $k\in \{1,3\}$, $a\ne k$}\\
1-D_{6-a-k} & \text{if $k=2$, $a\ne k$}\\
\end{cases}
\end{equation}
we see that  
$d_k(x)=\sum\limits_{b\ne k} h_{k,b}(D_bD_k+D_kD_{6-k-b}) +\cdots$,
where $\cdots$ stand for the linear combination of monomials not containing $D_k$. Thus, $d_k=0$ implies $h_{k,b}=0$ for 
$b\in \{1,2,3\}\setminus \{k\}$. In particular,  $d_k(x)=0$ for $k=1,2,3$ implies that $h_{a,b}=0$ for all distinct $a,b\in \{1,2,3\}$. 

Based on the above computations, using \eqref{eq:skDa} again we obtain for $k\in \{1,2,3\}$: $d_k(x)=a_k+\sum\limits_{a\ne b} (f_{a,b}(\delta_{k,a}D_b+\delta_{k,b}s_k(D_a))+\sum\limits_{a\ne b} g_{a,b}(\delta_{k,a}D_bD_{6-k-b}+\delta_{k,b}s_k(D_a)D_{6-a-k}+\delta_{k,6-a-b}s_k(D_aD_b))$
$$=z+\sum_{b\ne k} g_{k,b}D_bD_{6-k-b}+\sum_{a\ne k} g_{a,k} s_k(D_a)D_{6-a-k}+\sum_{a,b:6-a-b=k} g_{a,b}s_k(D_a)s_k(D_b)$$
$$=z'+\sum_{b\ne k} g_{k,b}D_bD_{6-k-b}+\sum_{a\ne k} g_{a,k} (1-\delta_{k,2})D_{6-a-k}D_{6-a-k}+\sum_{a,b:6-a-b=k} g_{a,b}D_{6-a-k}D_{6-b-k}$$
$$=z''+\sum_{b\ne k} g_{k,b}D_bD_{6-k-b}+\sum_{a,b:6-a-b=k} g_{a,b}D_bD_a$$
for some $z,z',z''\in \hat {\bf D}(W_{\{i,j\}})_{\le 1}$. Thus, fixing $k',k''$ such that $\{k,k',k''\}=\{1,2,3\}$, we obtain:
$$d_k(x)=g_{k,k'}D_{k'}D_{k''}+g_{k,k''}D_{k''}D_{k'}+g_{k',k''}D_{k''}D_{k'}+g_{k'',k'}D_{k'}D_{k''}+z'' \ .$$  
Since $z''\in \hat {\bf D}(W_{\{i,j\}})_{\le 1}$, 
the equations $d_k(x)=0$ for $k=1,2,3$ imply that $g_{k,k'}+g_{k'',k'}=0$ for each permutation $(k,k',k'')$ of $\{1,2,3\}$. 
 
Thus, \eqref{eq:x in D<=3} with $d_k(x)=0$ for $k=1,2,3$ becomes
\begin{equation}
\label{eq:kernel reduced}
x=y+ g_{1,2}(D_1D_2D_3-D_3D_2D_1)+g_{1,3}(D_1D_3D_2-D_2D_3D_1)+g_{2,1}(D_2D_1D_3-D_3D_1D_2)
\end{equation}
for some $y\in \hat {\bf D}(W_{\{i,j\}})_{\le 2}$. 
Note that:
$D_1K_{ji}=D_1D_3D_1-D_1D_2D_3, K_{ij}D_1=D_1D_3D_1-D_3D_2D_1$, 
$$D_1K_{ij}=D_1D_3-D_1D_2D_1-D_1D_3D_2+D_1D_2,~D_3K_{ji}=D_3D_1-D_3D_1D_2-D_3D_2D_3+D_3D_2\ ,$$
$$K_{ji}D_1=D_3D_1-D_1D_2D_1-D_2D_3D_1+D_2D_1,~K_{ij}D_3=D_1D_3-D_2D_1D_3-D_3D_2D_3+D_2D_3\ ,$$
which, in particular, imply that
$D_1D_2D_3-D_3D_2D_1=K_{ij}D_1-D_1K_{ji}=-(K_{ji}D_3-D_3K_{ij}$,
$$D_1D_3D_2-D_2D_3D_1=K_{ji}D_1-D_1K_{ji}+D_1D_2-D_2D_1+D_1D_3-D_3D_1\ ,$$
$$D_2D_1D_3-D_3D_1D_2=D_3K_{ji}-K_{ij}D_3+D_2D_3-D_3D_2+D_1D_3-D_3D_1 \ .$$


Thus, \eqref{eq:kernel reduced} becomes: 
$$x\in \hat {\bf D}(W_{\{i,j\}})_{\le 2}+\ZZ\cdot(K_{ij}D_1-D_1K_{ji})+\ZZ\cdot (K_{ji}D_1-D_1K_{ji})+\ZZ\cdot (D_3K_{ji}-K_{ij}D_3)$$
Note that
$K_{ij}D_1-D_1K_{ji}$, $K_{ji}D_1-D_1K_{ji}$ and $D_3K_{ji}-K_{ij}D_3$ belong to ${\bf K}'_{ij}(W)$.
This proves (a). 

Prove (b). Repeating the argument from the proof of (a), we see that 
$\hat {\bf D}(W_{\{i,j\}})_{\le 2}$  is a free $\ZZ$-module with a basis
$1$, $D_1,D_2,D_3$, $D_aD_b$, for all distinct $a,b\in \{1,2,3\}$, that is, each $x\in \hat{\bf D}(W_{\{i,j\}})_{\le 2}$ can be uniquely  written as  
\begin{equation}
\label{eq:x in D<=2}
x=a_0+\sum_{\ell=1}^3 a_\ell D_\ell+\sum_{a,b\in \{1,2,3\},a\ne b} f_{a,b}D_aD_b\ ,
\end{equation} 
where all $a_k$, $f_{a,b}$ are integers. 

Using the argument from the proof of (a) and \eqref{eq:skDa} we obtain for $k\in \{1,2,3\}$: 
$$d_k(x)=a_k+\sum\limits_{a\ne b} (f_{a,b}(\delta_{k,a}D_b+\delta_{k,b}s_k(D_a))=
a_k+\sum\limits_{b\ne k} f_{k,b} D_b + \sum\limits_{a\ne k} f_{a,k}s_k(D_a)$$
Thus, fixing $k',k''$ such that $\{k,k',k''\}=\{1,2,3\}$, we obtain
$$d_k(x)=a_k+
\begin{cases} 
f_{k,k'}D_{k'}+f_{k,k''}D_{k''}+f_{k',k}D_{k''}+f_{k'',k}D_{k'}
&\text{if $k\in \{1,3\}$}\\
f_{k,k'}D_{k'}+f_{k,k''}D_{k''}+f_{k',k}(1-D_{k''})+f_{k'',k}(1-D_{k'})
&\text{if $k=2$}\\
\end{cases}\ .$$

Thus, $d_k(x)=0$ for $k=1,2,3$ imply that $a_1=a_3=0$, $f_{12}+f_{31}=0$, $f_{13}+f_{21}=0$, $f_{12}+f_{31}=0$, $f_{31}+f_{23}=0$, $f_{32}+f_{13}=0$, $a_2+f_{12}+f_{32}=0$.

Therefore, $x=a_0+f_{13}K_{ij}+f_{31}K_{ji}$.  
This proves (b). 

The lemma is proved.
\end{proof}

To finish the proof of Proposition \ref{pr:K for S3}(b), it suffices to show that 
${\bf K}'_{ij}(W)=\ZZ+{\bf K}_{ij}(W)$ for $m_{ij}=3$. We already have the inclusion $\ZZ+{\bf K}_{ij}(W)\subset {\bf K}'_{ij}(W)$ by \eqref{eq:Kij as kernel}. To show the opposite inclusion note that Lemma \ref{le:drop from 3 to 2} implies that ${\bf K}'_{ij}(W)$ is a $\ZZ$-submodule of $\hat {\bf D}(W_{\{ij}\})$ generated by $1$, $K_{ij}$, $K_{ji}$, 
$\tilde K_{ij}=K_{ij}D_i-D_iK_{ji}=-(K_{ji}D_j-D_jK_{ij})=-\tilde K_{ji}$, $\tilde K'_{ij}=K_{ji}D_i-D_iK_{ji}$, and $\tilde K'_{ji}=K_{ij}D_j-D_jK_{ij}$.

Thus, to prove the inclusion ${\bf K}'_{ij}(W)\subset \ZZ+{\bf K}_{ij}(W)$ it suffices to show that 
\begin{equation}
\label{eq:conjugation S_3}
\{w K_{ij} w^{-1},w \tilde K_{ij} w^{-1},w \tilde K_{ij} w^{-1}\}\subset \hat {\bf D}(W_{\{ij}\})
\end{equation}
for each $w\in W_{\{ij\}}$. 

Note that $Q_{ij}^{(1,1,1)}=K_{ij}$, $Q_{ji}^{(1,1,1)}=K_{ji}$ in the notation of Proposition \ref{pr:Dij2}(a). Thus, 
$$s_iK_{ij}s_i=s_jK_{ij}s_j=K_{ji}, ~s_iK_{ji}s_i=s_jK_{ji}s_j=K_{ij}$$
by Proposition \ref{pr:Dij2}(a), which implies that $K_{ij},K_{ji}\in {\bf K}_{ij}(W)$.
Also 
$$s_i\tilde K_{ij}s_i=K_{ji}(1-s_i-D_i)-(1-s_i-D_i)K_{ij}=K_{ji}(1-D_i)-(1-D_i)K_{ij}=K_{ji}-K_{ij}-\tilde K'_{ij}\in \hat {\bf D}(W_{\{ij\}})$$
because $K_{ji}s_i-s_iK_{ij}=0$. In particular, 
$s_i\tilde K'_{ij}s_i=K_{ij}-K_{ji}-\tilde K_{ij}\in \hat {\bf D}(W_{\{ij\}})$.

Furthermore, 
$$s_j\tilde K_{ij}s_j=K_{ji}s_jD_is_j-s_jD_is_jK_{ij}=\tilde K''_{ij}\in \hat {\bf D}(W_{\{ij}\})\ ,$$
$$s_j\tilde K'_{ij}s_j=K_{ij}s_jD_is_j-s_jD_is_jK_{ji}=\tilde K''_{ji}\in \hat {\bf D}(W_{\{ij}\})$$
where we abbreviated $\tilde K''_{ij}=K_{ji}D_{ij}-D_{ij}K_{ij}$. Finally, 
$$s_i\tilde K''_{ij}s_i=K_{ij}s_iD_{ij}s_i-s_iD_{ij}s_iK_{ji}=K_{ij}D_j-D_jK_{ji}=\tilde K'_{ij}\in \hat {\bf D}(W_{\{ij}\})$$
$$s_j\tilde K''_{ij}s_j=K_{ij}s_jD_{ij}s_j-s_jD_{ij}s_jK_{ji}=K_{ij}D_i-D_iK_{ji}=\tilde K_{ij}\in \hat {\bf D}(W_{\{ij}\})\ .$$

This proves the inclusions \eqref{eq:conjugation S_3}. Thus, ${\bf K}'_{ij}(W)= \ZZ+{\bf K}_{ij}(W)$. 
Together with Lemma \ref{le:drop from 3 to 2} this finishes the proof of Proposition \ref{pr:K for S3}(b). 

Proposition \ref{pr:K for S3} is proved.
\end{proof}

\noindent {\bf Proof of Theorem \ref{th:hopf hat intro W}}.
In the assumptions of Theorem \ref{th:hopf hat intro W}, suppose that $m_{ij}=3$ and let $K'_{ij}\in \hat {\bf H}(W)$
denote $K'_{ij}=-D_js_iD_j+s_iD_jD_i+D_iD_js_i+s_iD_js_i$. 
Clearly,
$$K'_{ij}s_i=-D_jD_{ij}+D_{ij}(1-D_i-s_i)+D_iD_j+s_iD_j=D_iD_j-D_jD_{ij}+D_iD_{ij}+D_{ij}=K_{ij}$$
in the notation of Proposition \ref{pr:K for S3}. We also abbreviate $K'_{ij}:=K_{ij}=D_iD_j-D_jD_i$ if $m_{ij}=2$.

Thus, ${\bf H}(W)$ is the quotient of $\hat {\bf H}(W)$ by the ideal generated by $K'_{ij}$ for all distinct $i,j\in I$ 
Theorem \ref{th:hopf hat intro W} is proved. \endproof

Therefore, Theorem \ref{th:hopf hat intro} is proved. \endproof

\noindent{\bf Proof of Proposition \ref{pr:HS_n factorization} and Theorem \ref{th:hecke hopf intro}}. Since $S_n$ is simply-laced, ${\bf H}(S_n)$ is covered by Theorem \ref{th:hopf hat intro W}. Then Theorem \ref{th:ZWcross and free D} guarantees the factorization of ${\bf H}(S_n)$.
Also the first assertion of Theorem \ref{th:upper Hecke in Hecke-Hopf} for $W=S_n$, $\kk=\ZZ[q,q^{-1}]$ coincides with the assertion of Theorem \ref{th:hecke hopf intro}.

Proposition \ref{pr:HS_n factorization} and Theorem \ref{th:hecke hopf intro} are proved.
\endproof

\noindent{\bf Proof of Proposition \ref{pr:presentation simply-laced D}}. In the proof of Proposition \ref{pr:levi K}, 
we established that ${\bf D}(W)=\hat {\bf D}(W)/\langle \underline {\bf K}(W)\rangle$ for any Coxeter group $W$, where 
\begin{equation}
\label{eq:underline K}
\underline {\bf K}(W)=\sum\limits_{w\in W, i,j\in I:i\ne j} w{\bf K}_{ij}(W)w^{-1}=\sum\limits_{i,j\in I:i\ne j, w\in W^{\{i,j\}}} w{\bf K}_{ij}(W)w^{-1}
\end{equation}
by Lemma \ref{le:W^J conjugation}, where $W^{\{i,j\}}=\{w\in W\,|\,\ell(ws_i)=\ell(w)+1,\ell(ws_j)=\ell(w)+1\}$. 


Now suppose that $W$ is simply-laced, i.e., $m_{ij}\in \{0,2,3\}$. Then,  in view Proposition \ref{pr:K for S3}, the equation \eqref{eq:underline K} reads
\begin{equation}
\label{eq:underline K modified}
\underline {\bf K}(W)=\sum\limits_{i,j\in I:i\ne j, w\in W^{\{i,j\}}} wK_{ij}(W)w^{-1}
\end{equation}
For each compatible pair $(s,s')\in {\mathcal S}\times {\mathcal S}$ define an element $K_{s,s'}\in \hat {\bf H}(W)$ by
$$K_{s,s'}:=\begin{cases} 
0 & \text{if $m_{s,s'}=0$}\\
D_sD_{s'}-D_{s'}D_s & \text{if $m_{s,s'}=2$}\\
D_sD_{s'}-D_{ss's}D_s-D_{s'}D_{ss's}+D_{ss's} & \text{if $m_{s,s'}=3$}\\
\end{cases}$$
in the notation of Proposition \ref{pr:presentation simply-laced D}. Since $wD_iw^{-1}=D_{ws_iw^{-1}}$ whenever $\ell(ws_i)=\ell(w)+1$ by \eqref{eq:si conj Dsi w}, in each of these cases, one has, in the notation of Proposition \ref{pr:K for S3}, $K_{s,s'}=wK_{ij}w^{-1}$ for some distinct $i,j\in I$, $w\in W^{\{i,j\}}$.  

Therefore, for each simply-laced Coxeter group $W$, \eqref{eq:underline K modified} reads:
$\underline {\bf K}(W)=\sum\limits_{\substack{i,j\in I:\\i\ne j, w\in W^{\{i,j\}}}} wK_{s,s'}(W)$, 
where the summation is over all compatible pairs $(s,s')\in {\mathcal S}\times {\mathcal S}$.

The proposition is proved.
\endproof

\noindent{\bf Proof of Proposition \ref{pr:D presentation Sn}}.
Let $W=S_n$ and let $s=(i,j)$, $s'=(k,\ell)$, $1\le i<j\le n$, $1\le k<\ell\le n$ be distinct transpositions in $S_n$.  

$\bullet$ Clearly, $m_{s,s'}=2$, i.e., $s's=s's$ iff $\{i,j\}\cap \{k,\ell\}=\emptyset$; then $(s,s')$ is compatible. 

$\bullet$ Clearly, $m_{s,s'}=3$ iff either $i=k$ or $j=\ell$ or $j=k$ or $i=\ell$; then  $(s,s')$ is compatible precisely in the last two cases.

Finally, this characterization of compatible pairs in $S_n$ and Proposition \ref{pr:presentation simply-laced D} finish the proof. 
\endproof

\subsection{Action on Laurent polynomials and verification of Conjecture \ref{conj:laurent action}}
\label{subsec:Action on Laurent polynomials and verification of Conjecture}

Let ${\mathcal Q}_I$ be the field of fractions of the Laurent polynomial ring  ${\mathcal L}_I=\ZZ[t_i^{\pm 1},i\in I]$. So ${\mathcal Q}_I$ is a purely transcendental field generated by $t_i$, $i\in I$. Since ${\mathcal L}_I$ is a group ring of $\ZZ^I=\oplus_{i\in I}\ZZ\alpha_i$, then the natural reflection action of $W$ on $\ZZ^I$ ($s_i(\alpha_j)=\alpha_j-a_{ij}\alpha_i$) extends to a $W$-action on ${\mathcal Q}_I$ by automorphisms.

\begin{proposition} 
\label{pr:hat H action on LI and vanishing of K_ij} 

For any Coxeter group $W$ the assignments $D_i\mapsto \frac{1}{1-t_i}(1-s_i)$, $s_i\mapsto s_i$, $i\in I$, define a homomorphism of algebras 
$\hat p_W:\hat {\bf H}(W)\to {\mathcal Q}_I\rtimes \ZZ W$. Under this homomorphism, $\hat p_W({\bf K}_{ij}(W))=\{0\}$ whenever $m_{ij}\in \{0,2,3\}$.

\end{proposition}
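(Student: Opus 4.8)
The plan is to establish the two assertions of Proposition~\ref{pr:hat H action on LI and vanishing of K_ij} in sequence. First I would verify that the assignments $D_i\mapsto \frac{1}{1-t_i}(1-s_i)$, $s_i\mapsto s_i$ define an algebra homomorphism $\hat p_W:\hat {\bf H}(W)\to \QQ_I\rtimes \ZZ W$ by checking that each defining relation of $\hat {\bf H}(W)$ from Definition~\ref{def:Hecke-Hopf algebra W} is preserved. By Lemma~\ref{le:hats are equal} we know $\hat {\bf H}(W)=\hat {\bf H}_{\chi,\sigma}(W)$, so it suffices to check the relations \eqref{eq:relations hat H} (equivalently, the rank~$1$ relations $D_i^2=D_i$, $s_iD_i+D_is_i=s_i-1$, the Coxeter relations, and the linear braid relations). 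The rank~$1$ relations reduce to the elementary identities for the Demazure-type operator $\partial_i=\frac{1}{1-t_i}(1-s_i)$ in $\QQ_I\rtimes \ZZ W$: since $s_i^2=1$ and $s_i(t_i)=t_i^{-1}$ (or more precisely $s_i$ acts on $\QQ_I$ by the reflection action), one computes $\partial_i^2=\partial_i$ and $s_i\partial_i+\partial_i s_i=s_i-1$ directly. The Coxeter relations hold because $s_i\mapsto s_i$ is just the structural embedding $\ZZ W\hookrightarrow \QQ_I\rtimes \ZZ W$. The linear braid relations are the genuinely substantive part of step one, and I expect to verify them using the fact that the reflection action of $W$ on $\QQ_I$ together with the explicit form of $\partial_i$ intertwines correctly; equivalently, one checks that the images $\hat p_W(D_s)$ of the conjugated generators are the reflection-Demazure operators $\frac{1}{1-t_\beta}(1-s_\beta)$ attached to the corresponding positive roots $\beta$, and these satisfy the required braid-type identities by a standard root-system computation.

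For the second assertion, that $\hat p_W({\bf K}_{ij}(W))=\{0\}$ for $m_{ij}\in\{0,2,3\}$, the strategy is to use the explicit generators of ${\bf K}_{ij}(W)$ furnished by Proposition~\ref{pr:K for S3}. The case $m_{ij}=0$ is vacuous since ${\bf K}_{ij}(W)=\{0\}$. For $m_{ij}=2$, Proposition~\ref{pr:K for S3}(a) gives ${\bf K}_{ij}(W)=\ZZ\cdot K_{ij}$ with $K_{ij}=D_iD_j-D_jD_i$, so I must check $\hat p_W(K_{ij})=0$, i.e.\ that the two commuting reflection-Demazure operators $\partial_i,\partial_j$ attached to orthogonal (commuting) reflections actually commute in $\QQ_I\rtimes \ZZ W$. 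For $m_{ij}=3$, Proposition~\ref{pr:K for S3}(b) exhibits ${\bf K}_{ij}(W)$ as a $\ZZ$-span of five explicit elements ($K_{ij}$, $K_{ji}$, and the three degree-three combinations), and I would verify that each maps to $0$ under $\hat p_W$. Since $\hat p_W$ is already shown to be an algebra homomorphism, it suffices to check the vanishing on these finitely many generators.

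The cleanest way to organize the vanishing computations is to reduce everything to the rank-$2$ parabolic $W_{\{i,j\}}$, where the images $\hat p_W(D_k)$ (for $D_k=D_{\underbrace{s_is_j\cdots s_i}_{2k-1}}$ as in Lemma~\ref{le:Dkij}) are the Demazure operators attached to the positive roots of the rank-$2$ root system. In that setting the relevant identities are finite-dimensional and can be checked by direct substitution: for $m_{ij}=2$ the root system is $A_1\times A_1$ and orthogonality makes $\partial_i,\partial_j$ commute; for $m_{ij}=3$ the root system is $A_2$ with three positive roots, and the operators $\partial_1,\partial_2,\partial_3$ satisfy exactly the relations $D_sD_{s'}=D_{ss's}D_s+D_{s'}D_{ss's}-D_{ss's}$ that define ${\bf D}(W)$ in Proposition~\ref{pr:presentation simply-laced D}. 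Thus $K_{ij}$ maps to an identity among genuine Demazure operators, which is known (or verified by hand on $\ZZ[t_i^{\pm1},t_j^{\pm1}]$), and similarly the degree-three generators map to consequences of these quadratic identities.

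The main obstacle I anticipate is the degree-three verification: checking that the three length-three generators of ${\bf K}_{ij}(W)$ in the $m_{ij}=3$ case map to zero requires evaluating products of three Demazure operators on $\QQ_I$ and confirming the cancellations, which is the one genuinely computational point. I would handle it by first establishing the fundamental quadratic relation $\hat p_W(D_sD_{s'})=\hat p_W(D_{ss's}D_s+D_{s'}D_{ss's}-D_{ss's})$ for the three $A_2$ reflections (equivalently $\hat p_W(K_{ij})=0$ and $\hat p_W(K_{ji})=0$), and then deriving the vanishing of the three degree-three combinations formally from these quadratic identities together with the relations $\partial_k^2=\partial_k$, avoiding any need to expand the full cubic expressions on rational functions. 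In effect, once the quadratic identities among the $\partial_k$ are in hand, the degree-three elements of ${\bf K}_{ij}(W)$ lie in the two-sided ideal they generate, so their images vanish automatically.
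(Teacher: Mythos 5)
Your proposal is correct and follows essentially the same route as the paper: verify the defining relations of $\hat{\bf H}(W)$ on the Demazure-type operators $\frac{1}{1-t_i}(1-s_i)$ (with the linear braid relations handled via the reflection action $w(t_i)=t_{i'}$), then kill ${\bf K}_{ij}(W)$ using the explicit generators from Proposition~\ref{pr:K for S3}, reducing the $m_{ij}=3$ case to the two quadratic elements $K_{ij},K_{ji}$ since the degree-three generators lie in the two-sided ideal these generate. The only computation the paper carries out in full, and which you would likewise need to write down, is the cancellation $\hat p_W(K_{ij})=k_{ij}-k_{ij}s_is_j$ with $k_{ij}=\tau_i\tau_j-\tau_j\tau_{ij}-\tau_{ij}\tau_i+\tau_{ij}=0$.
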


\begin{proof} It suffices to verify only relations involving $D_i$'s. Indeed, let us abbreviate $\tau_i=\frac{1}{1-t_i}$ and $\underline D_i:=\tau_i(1-s_i)\in {\mathcal Q}_I\rtimes \ZZ W$. Taking into account that $s_i\tau_is_i=\frac{1}{1-t_i^{-1}}=1-\tau_i$, we obtain
$$\underline D_i^2=\tau_i(1-s_i)\tau_i(1-s_i)=(\tau_i^2-\tau_is_i\tau_i)(1-s_i)=(\tau_i^2-\tau_i(1-\tau_i)\cdot s_i)(1-s_i)=(\tau_i^2+\tau_i(1-\tau_i))(1-s_i)=\underline D_i$$
for $i\in I$. Furthermore, let us verify linear braid relations in $\hat {\bf H}(W)$, which we write in the form $wD_i w^{-1}=D_{i'}$ whenever $ws_iw^{-1}=s_{i'}$ and $\ell(ws_i)=\ell(w)+1$. Indeed, for such $i,i'$ and $w$ one has $wt_i w^{-1}=w(t_i)=t_{i'}$ therefore, 
$w\underline D_iw^{-1}=w\tau_i(1-s_i)w^{-1}=\tau_{i'}(1-s_{i'})=\underline D_{i'}$.

This proves the first assertion of the proposition.

Let us prove the second assertion.

Indeed, if $m_{ij}=0$, then ${\bf K}_{ij}(W)=\{0\}$ and we have nothing to prove.

If $m_{ij}=2$, then, according to Proposition \ref{pr:K for S3}(a), ${\bf K}_{ij}(W)=\ZZ \cdot K_{ij}$, where $K_{ij}=D_iD_j-D_jD_i$. Clearly, in this case, $s_is_j=s_js_i$,  $s_it_j=t_js_i$ hence $s_i\tau_j=\tau_js_i$, therefore, 
$\underline D_i\underline D_j=\tau_i(1-s_i)\tau_j(1-s_j)=\tau_j(1-s_j)\tau_i(1-s_i)=\underline D_j\underline D_i$, 
i.e., $\hat p_W({\bf K}_{ij}(W))=0$. 

Let now $m_{ij}=3$. Then, according to Proposition \ref{pr:K for S3}(b),  ${\bf K}_{ij}(W)=\ZZ \cdot K_{ij}+\ZZ \cdot K_{ji}$, where $K_{ij}=D_iD_j-D_jD_{ij}-D_{ij}D_i+D_{ij}$, and $D_{ij}=s_iD_js_i=s_jD_is_j$. Thus, it suffices to show that $\hat p_W(K_{ij})=0$. 
Indeed, $\hat p_W(K_{ij})=\underline D_i\underline D_j-\underline D_j\underline D_{ij}-\underline D_{ij}\underline D_i+\underline D_{ij}$, where $\underline D_{ij}=\tau_{ij}(1-s_{ij})$, and $\tau_{ij}=s_i\tau_js_i=s_j\tau_is_j=\frac{1}{1-t_it_j}$, $s_{ij}=s_is_js_i=s_j\tau_is_j$. 
Let us compute:
$$\underline D_i\underline D_j=\tau_i(1-s_i)\tau_j(1-s_j)=\tau_i(\tau_j-\tau_{ij}s_i)(1-s_j)=\tau_i\tau_j(1-s_j)-\tau_i\tau_{ij}(s_i-s_is_j)\ ,$$
$$\underline D_j\underline D_{ij}=\tau_j(1-s_j)\tau_{ij}(1-s_{ij})=\tau_j(\tau_{ij}-\tau_is_j)(1-s_{ij})=\tau_j\tau_{ij}(1-s_{ij})-\tau_i\tau_j(s_j-s_is_j)\ ,$$
$$\underline D_{ij}\underline D_i=\tau_{ij}(1-s_{ij})\tau_i(1-s_i)=\tau_{ij}(\tau_i-(1-\tau_j)s_{ij})(1-s_i)=\tau_{ij}\tau_i(1-s_i)-\tau_{ij}(1-\tau_j)(s_{ij}-s_is_j)\ .$$

Therefore, 
$\hat p_W(K_{ij})=\tau_i(\tau_j-\tau_{ij}s_i)(1-s_j)=\tau_i\tau_j(1-s_j)-\tau_i\tau_{ij}(s_i-s_is_j)-\tau_j\tau_{ij}(1-s_{ij})$
$$+\tau_i\tau_j(s_j-s_is_j)-\tau_{ij}\tau_i(1-s_i)+\tau_{ij}(1-\tau_j)(s_{ij}-s_is_j)+\tau_{ij}(1-s_{ij})$$
$$=k_{ij}-\tau_i\tau_js_j-\tau_i\tau_{ij}(s_i-s_is_j)+\tau_j\tau_{ij}s_{ij}+\tau_i\tau_j(s_j-s_is_j)+\tau_{ij}\tau_is_i+\tau_{ij}(1-\tau_j)(s_{ij}-s_is_j)-\tau_{ij}s_{ij}$$
$$=k_{ij}-k_{ij}s_is_j\ ,$$
where $k_{ij}=\tau_i\tau_j-\tau_j\tau_{ij}-\tau_{ij}\tau_i+\tau_{ij}$. Thus, $\hat p_W(K_{ij})=0$ because $k_{ij}=0$.
This finishes the proof of the second assertion of the proposition. 

The proposition is proved.
\end{proof}

\noindent {\bf Verification of Conjecture \ref{conj:laurent action} in the simply-laced case}. The following is an immediate corollary of Proposition \ref{pr:hat H action on LI and vanishing of K_ij}.

\begin{corollary} 
\label{cor:H(W) action on QI}
Suppose that $W$ is a simply-laced Coxeter group, i.e.,  $m_{ij}\in \{0,2,3\}$ for $i,j\in I$. 
Then, in the notation of Proposition \ref{pr:hat H action on LI and vanishing of K_ij}, the assignments $D_i\mapsto \frac{1}{1-t_i}(1-s_i)$, 
$s_i\mapsto s_i$, $i\in I$, define a homomorphism of algebras $p_W:{\bf H}(W)\to {\mathcal Q}_I\rtimes \ZZ W$.  
 
\end{corollary}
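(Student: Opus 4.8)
The plan is to obtain $p_W$ by factoring the already-constructed homomorphism $\hat p_W$ through the quotient defining ${\bf H}(W)$. Proposition \ref{pr:hat H action on LI and vanishing of K_ij} provides, for \emph{any} Coxeter group, a well-defined algebra homomorphism $\hat p_W:\hat {\bf H}(W)\to \QQ_I\rtimes \ZZ W$ sending $D_i\mapsto \frac{1}{1-t_i}(1-s_i)$ and $s_i\mapsto s_i$; the first task is simply to invoke this, so that the formulas defining $p_W$ already make sense at the level of $\hat {\bf H}(W)$.

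Next I would recall the construction of ${\bf H}(W)$ from Theorem \ref{th:Coideal Hopf ij}: one has ${\bf H}(W)=\hat {\bf H}(W)/\underline {\bf J}(W)$, where $\underline {\bf J}(W)$ is the two-sided ideal generated by all the subspaces ${\bf K}_{ij}(W)$ with $i\ne j$. Since $W$ is assumed simply-laced, every $m_{ij}$ lies in $\{0,2,3\}$, so the second assertion of Proposition \ref{pr:hat H action on LI and vanishing of K_ij} applies to each pair $i\ne j$ and yields $\hat p_W({\bf K}_{ij}(W))=\{0\}$. Thus $\hat p_W$ annihilates every generator of $\underline {\bf J}(W)$.

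The conclusion then follows from the elementary fact that the kernel of an algebra homomorphism is a two-sided ideal: $\ker \hat p_W$ contains all of $\bigcup_{i\ne j}{\bf K}_{ij}(W)$ and hence contains the ideal $\underline {\bf J}(W)$ these sets generate. By the universal property of the quotient, $\hat p_W$ descends to a unique homomorphism $p_W:{\bf H}(W)={\hat {\bf H}(W)}/{\underline {\bf J}(W)}\to \QQ_I\rtimes \ZZ W$ with $p_W(D_i)=\frac{1}{1-t_i}(1-s_i)$ and $p_W(s_i)=s_i$ for $i\in I$, which is exactly the assertion.

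There is in fact no serious obstacle here: the entire content has been front-loaded into Proposition \ref{pr:hat H action on LI and vanishing of K_ij}, whose hard part is the explicit verification that $\hat p_W(K_{ij})=0$ via the computation $k_{ij}=\tau_i\tau_j-\tau_j\tau_{ij}-\tau_{ij}\tau_i+\tau_{ij}=0$ carried out there. The only point to be careful about is that the vanishing is needed for \emph{all} the generators ${\bf K}_{ij}(W)$ (not merely for a single $K_{ij}$), which is guaranteed precisely because simply-lacedness forces $m_{ij}\in\{0,2,3\}$, the exact range covered by that proposition; outside the simply-laced case additional relations in degrees $\le m_{ij}$ would appear and the same argument would not close, which is why the corollary is stated only for simply-laced $W$.
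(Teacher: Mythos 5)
Your argument is correct and is exactly the paper's own: the paper presents this corollary as an immediate consequence of Proposition \ref{pr:hat H action on LI and vanishing of K_ij}, since $\hat p_W$ kills every ${\bf K}_{ij}(W)$ (as $m_{ij}\in\{0,2,3\}$ for simply-laced $W$), hence kills the ideal $\underline{\bf J}(W)$ they generate, and therefore descends to ${\bf H}(W)=\hat{\bf H}(W)/\underline{\bf J}(W)$. Your closing remark about why the statement is restricted to the simply-laced case is also consistent with the paper's discussion of Conjecture \ref{conj:laurent action}.
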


Note that ${\mathcal Q}_I\rtimes \ZZ W$ naturally acts on ${\mathcal Q}_I$ via $(t w)(t')=t\cdot w(t')$ 
for $t,t'\in {\mathcal Q}_I$, $w\in W$. Composing this with $p_W$ gives an action of ${\bf H}(W)$ on ${\mathcal Q}_I$, under which ${\mathcal L}_I$ is invariant, thus both ${\mathcal Q}_I$ and ${\mathcal L}_I$ are module algebras over ${\bf H}(W)$.
For $W$ simply-laced this, taken together with Corollary \ref{cor:H(W) action on QI} turns ${\mathcal Q}_I$ into a module algebra over ${\bf H}(W)$, so that ${\mathcal L}_I$ is a module subalgebra. Since the above action of ${\bf H}(W)$ on ${\mathcal L}_I$ coincides with \eqref{eq:demazure W}, this verifies Conjecture \ref{conj:laurent action} for all simply-laced  $W$.
\endproof

\noindent{\bf Proof of Proposition \ref{pr:demazure}}. Let $W=S_n$, so that $I=\{1,\ldots,n-1\}$ and ${\mathcal L}_I=\ZZ[t_1^{\pm 1},\ldots,t_{n-1}^{\pm 1}]$. Also denote ${\mathcal P}_n:=\ZZ[x_1,\ldots,x_n]$ and let ${\mathcal Q}_n$ be the field of fractions of ${\mathcal P}_n$. We identify ${\mathcal Q}_I$ with the subfield of ${\mathcal Q}_n$ generated by  $t_i=\frac{x_i}{x_{i+1}}$, $i=1,\ldots,n-1$. We have a natural $S_n$-action on ${\mathcal Q}_n$ by permutations so that its restriction to ${\mathcal Q}_I$ coincides with the natural $S_n$-action on ${\mathcal Q}_I$. In particular, 
this defines a natural action of ${\mathcal Q}_I\rtimes \ZZ S_n$ on ${\mathcal Q}_n$ via $(tw)(x)=t\cdot w(x)$ for $t\in {\mathcal Q}_I$, $x\in {\mathcal Q}_n$, $w\in S_n$.

For $W=S_n$ this, taken together with Corollary \ref{cor:H(W) action on QI} defines a structure of a module algebra over ${\bf H}(S_n)$
on ${\mathcal Q}_n$, so that ${\mathcal P}_n$ is a module subalgebra. This proves Proposition \ref{pr:demazure} because the above action coincides with the one given by \eqref{eq:demazure W}. \endproof

\section{Appendix: deformed semidirect products}

For readers' convenience, in this section we state relevant results about deformations of cross products, see also \cite{shepler} and the forthcoming joint paper of Yury Bazlov with the first author \cite{BBHcross}.

Throughout this section, we fix a commutative  ring $R$. Let $A$ and $B$ be  unital $R$-algebras and let $\Psi:B\otimes A\to A\otimes B$ be an $R$-linear map (all tensor products are over $R$).

Define a (possibly non-associative) multiplication on $A\otimes B$ by:
$(a'\otimes b)(a\otimes b')=a'\Psi(b\otimes a)b'$
for all $a,a'\in A$, $b,b'\in B$
and denote the resulting algebra by  $A\otimes_\Psi B$. 

Note that $1\otimes 1$ is a unit of $A\otimes_\Psi B$ iff
\begin{equation}
\label{eq:unit psi}
\Psi(1\otimes a)=a\otimes 1,~\Psi(b\otimes 1)=1\otimes b
\end{equation} 
for all $a\in A$, $b\in B$ (in that case, $A\otimes 1$ and $1\otimes B$ are  subalgebras of $A\otimes_\Psi B$).

We need the following result from \cite{M} (due to its importance, we provide a proof).

\begin{proposition} [\text{\cite[Proposition 21.4]{M}}]
\label{pr:Majid}
Let $A$ and $B$ be associative unital $R$-algebras and $\Psi:B\otimes A\to A\otimes B$ be an $R$-linear map satisfying \eqref{eq:unit psi}. Then the $R$-algebra $A\otimes_\Psi B$ is  associative  iff  
the following diagrams are commutative:
\begin{equation}
\label{eq:commutative diagram Psi 3}
\begin{CD}
B \otimes A\otimes A@>(1\otimes \Psi)\circ (\Psi\otimes 1)>>A\otimes A\otimes B\\
@VV1\otimes {\bf m}_AV@V{\bf m}_A\otimes 1 VV\\
B \otimes A @>\Psi>>A\otimes B
\end{CD}\ ,\quad 
\begin{CD}
B \otimes B\otimes A@>(\Psi \otimes 1)\circ (1\otimes \Psi)>>A\otimes B\otimes B\\
@VV{\bf m}_B\otimes 1V@V1\otimes {\bf m}_B VV\\
B \otimes A @>\Psi>>A\otimes B
\end{CD}. 
\end{equation}
where ${\bf m}_A$ (resp. ${\bf m}_B$) is the multiplication map $A\otimes A\to A$ (resp. $B\otimes B\to B$). 
\end{proposition}

\begin{proof} Indeed, suppose that $A\otimes_\Psi B$ is an associative algebra. Clearly, the associativity equations 
\begin{equation}
\label{eq:weak associativity}
(1\otimes b)(aa'\otimes 1)=((1\otimes b)(a\otimes 1))(a'\otimes 1),~(1\otimes b'b)(a\otimes 1)=(1\otimes b')(1\otimes b)(a\otimes 1)
\end{equation}
for all $a,a'\in A$, $b,b'\in B$ 
are respectively equivalent to the commutativity of the diagrams \eqref{eq:commutative diagram Psi 3}. 

Conversely, suppose that the diagrams \eqref{eq:commutative diagram Psi 3} are commutative, that is, \eqref{eq:weak associativity} hold. These, taken together with obvious relations $(a\otimes b)=(a\otimes 1)(1\otimes b)$ for $a\in A$, $b\in B$ and:
\begin{equation}
\label{eq:weak associativity2}
(a''\otimes 1)(zz')=((a''\otimes 1)z)z',~(zz')(1\otimes b'')=z(z'(1\otimes b''))
\end{equation}
for any $a''\in A$, $b''\in B$, $z,z'\in A\otimes B$, imply 
\begin{equation}
\label{eq:weak associativity3}
(a''\otimes b)(aa'\otimes b')=((a''\otimes b)(a\otimes 1))(a'\otimes b'),~(a'\otimes b'b)(a\otimes b'')=(a'\otimes b')((1\otimes b)(a\otimes b''))
\end{equation}
for all $a,a',a''\in A$, $b,b',b''\in B$.
In view of the obvious relations $aa'\otimes b'=(a\otimes 1)(a'\otimes b')$ and $a'\otimes b'b=(a'\otimes b')(1\otimes b)$, the relations \eqref{eq:weak associativity3} are equivalent to 
\begin{equation}
\label{eq:weak associativity4}
z((a\otimes 1)z')=(z(a\otimes 1))z',~z((1\otimes b)z')=(z(1\otimes b))z'
\end{equation}
for all $a,a'\in A$, $b,b'\in B$, $z\in A\otimes B$. 

Finally, using  by \eqref{eq:weak associativity4} repeatedly, we obtain
$$(a_1\otimes b_1)((a_2\otimes b_2)(a_3\otimes b_3))=(a_1\otimes b_1)(a_2\Psi(b_2\otimes a_3)b_3)=(a_1\otimes b_1)((a_2\otimes 1)(\Psi(b_2\otimes a_3)b_3)$$
$$=((a_1\otimes b_1)(a_2\otimes 1))(\Psi(b_2\otimes a_3)b_3)=(a_1\Psi(b_1\otimes a_2))((1\otimes b_2)(a_3\otimes b_3))=(a_1\Psi(b_1\otimes a_2))(1\otimes b_2))(a_3\otimes b_3)$$
$$=(a_1\Psi(b_1\otimes a_2)b_2)(a_3\otimes b_3)=((a_1\otimes b_1)(a_2\otimes a_2))(a_3\otimes b_3)\ .$$

This proves associativity of $A\otimes_\Psi B$.

The proposition is proved.  
\end{proof}

We say that $A\otimes_\Psi B$ is {\it left} associative (resp. {\it right} associative) if the first (resp. the second) diagram \eqref{eq:commutative diagram Psi 3} is commutative. According to Proposition \ref{pr:Majid}, $A\otimes_\Psi B$ is an associative $R$-algebra iff it is both left and right associative.
In particular, taking $B=R W$, where $W$ is a monoid acting on $A$ by $R$-linear endomorphisms and $\Psi:R W\otimes A\to A\otimes R W$ given by $\Psi(w\otimes a)=w(a)\otimes w$, $w\in W$, $a\in A$, we recover the following  well-known result.

\begin{corollary} (semidirect product) Let $W$ be a monoid and $A$ be an $R W$-module algebra (i.e., $W$ acts on $A$ by $R$-linear algebra endomorphisms). Then the space $A\otimes R W$ is an associative $R$-algebra with the product given by
$(a'\otimes w)(a\otimes w')=a'\cdot w(a)\otimes ww'$
for all $a,a'\in A$, $w,w'\in W$. 

\end{corollary}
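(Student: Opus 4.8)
The plan is to derive this corollary directly from Lemma \ref{le:Majid} by choosing $B=RW$ and the appropriate braiding $\Psi$. First I would set $B=RW$, the monoid algebra, which is a free $R$-module on the basis $W$, and define an $R$-linear map $\Psi\colon RW\otimes A\to A\otimes RW$ by $\Psi(w\otimes a)=w(a)\otimes w$ for $w\in W$, $a\in A$, extended $R$-linearly. This is well defined precisely because $W$ is an $R$-basis of $RW$ and, for each fixed $w$, the assignment $a\mapsto w(a)\otimes w$ is $R$-linear by hypothesis. A one-line computation then shows that the multiplication of $A\otimes_\Psi B$ specializes to the claimed formula: for $a,a'\in A$ and $w,w'\in W$,
\[
(a'\otimes w)(a\otimes w')=a'\,\Psi(w\otimes a)\,w'=a'\,(w(a)\otimes w)\,w'=a'\,w(a)\otimes ww',
\]
and both sides are $R$-bilinear in their arguments, so this determines the product on all of $A\otimes RW$.

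Next I would verify the hypotheses of Lemma \ref{le:Majid}. The unit conditions \eqref{eq:unit psi} hold because $1_W$ acts as $\mathrm{id}_A$ and each $w$ preserves $1_A$, giving $\Psi(1_W\otimes a)=a\otimes 1_W$ and $\Psi(w\otimes 1_A)=1_A\otimes w$. For the two coherence diagrams \eqref{eq:commutative diagram Psi 3} I would simply trace a generator through each path. Evaluating the first (left-associativity) square on $w\otimes a_1\otimes a_2$ produces $w(a_1)w(a_2)\otimes w$ along the top-right route and $w(a_1a_2)\otimes w$ along the left-bottom route, so commutativity is exactly the statement that each $w$ is multiplicative, i.e.\ an algebra endomorphism. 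Evaluating the second (right-associativity) square on $w_1\otimes w_2\otimes a$ produces $w_1(w_2(a))\otimes w_1w_2$ along one route and $(w_1w_2)(a)\otimes w_1w_2$ along the other, so commutativity is exactly the associativity axiom $(w_1w_2)(a)=w_1(w_2(a))$ of the monoid action.

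Since all the conditions of Lemma \ref{le:Majid} are met, $A\otimes_\Psi RW$ is an associative unital $R$-algebra with unit $1_A\otimes 1_W$, which is the asserted semidirect product. I do not expect any genuine obstacle here: the only real content is recognizing the correct $\Psi$, after which each diagram collapses to one of the defining properties of the monoid action by algebra endomorphisms. The one point deserving care is the bookkeeping in \eqref{eq:commutative diagram Psi 3}---keeping straight the order in which $(1\otimes\Psi)\circ(\Psi\otimes 1)$ and $(\Psi\otimes 1)\circ(1\otimes\Psi)$ are applied---but this is a routine verification rather than a conceptual difficulty.
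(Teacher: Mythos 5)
Your proposal is correct and is exactly the argument the paper intends: it introduces this corollary as the specialization of Lemma \ref{le:Majid} to $B=RW$ with $\Psi(w\otimes a)=w(a)\otimes w$, and your verification of \eqref{eq:unit psi} and the two diagrams \eqref{eq:commutative diagram Psi 3} supplies the routine checks the paper leaves implicit. The only point worth noting is that, as your own computation shows, the hypothesis must be read as a $W$-action by (unital) algebra endomorphisms rather than merely $R$-linear ones, which is clearly what the statement intends.
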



For an $R$-module $V$ denote by $T(V)$ its tensor algebra 
$\oplus_{n\ge 0} V^{\otimes n}$ of $V$.

\begin{proposition} 
\label{pr:left associative}
In the assumptions of Proposition \ref{pr:Majid} suppose that $A=T(V)$ for some $R$-module $V$. Then for any $R$-linear map $\mu:B\otimes V\to T(V)\otimes B$ satisfying
\begin{equation}
\label{eq:unit mu}
\mu(1\otimes x)=x\otimes 1
\end{equation} 
for all $x\in T(V)$, 
there exists a unique $\Psi^\mu:B\otimes T(V)\to T(V)\otimes B$ 
such that $T(V)\otimes_{\Psi^{\mu}} B$ is left associative with unit $1\otimes 1$
and $\Psi^\mu|_{B\otimes V}=\mu$. 
\end{proposition}

\begin{proof} Define $\Psi^\mu:=\oplus_{n\ge 0} \Psi^{(n)}$, where $\Psi^{(n)}$ is an $R$-linear map $B\otimes V^{\otimes n}\to T(V)\otimes B$ given by

$\bullet$ $\Psi^{(0)}(b\otimes 1)=1\otimes b$ for all $b\in B$.

$\bullet$ $\Psi^{(n)}=\mu_n\circ \cdots \circ \mu_1$ for $n\ge 1$, where $\mu_i:T(V)^{\otimes i-1}\otimes B\otimes V^{\otimes n+1-i}
\to T(V)^{\otimes i}\otimes B\otimes V^{\otimes n-i}$ is given by 
$\mu_i=1\otimes \cdots \otimes 1\otimes \mu\otimes 1\otimes \cdots \otimes 1$.   
 
Taking into account that $V^{\otimes m}\otimes V^{\otimes n}=V^{\otimes m+n}$ for $m,n\ge 0$, we immediately obtain
$$\Psi^{(m+n)}=(\mu_{m+n}\circ \cdots \circ \mu_{m+1})\circ (\mu_m\circ \cdots \circ \mu_1)=(1\otimes \Psi^{(n)})\circ (\Psi^{(m)}\otimes 1)$$
which implies that $\Psi^\mu=({\bf m}_{T(V)}\otimes 1)\circ (1\otimes \Psi^\mu)\circ (\Psi^\mu\otimes 1)$
i.e., the following diagram is commutative. 
$$\begin{CD}
B \otimes T(V)\otimes T(V)@>(1\otimes \Psi^\mu)\circ (\Psi^\mu\otimes 1)>>T(V)\otimes T(V)\otimes B\\
@VV1\otimes {\bf m}_{T(V)} V@V{\bf m}_{T(V)}\otimes 1 VV\\
B \otimes T(V) @>\Psi^\mu>>T(V)\otimes B
\end{CD}.
$$

The above diagram is the first diagram \eqref{eq:commutative diagram Psi 3} for $A=T(V)$, hence,  $T(V)\otimes_{\Psi^{\mu}} B$ is left associative. By the construction, ${\Psi^{\mu}}$ satisfies \eqref{eq:unit psi}.

Clearly, $\Psi^\mu$ is uniquely determined by the assumptions of the proposition.

The proposition is proved.
\end{proof}

\begin{proposition} 
\label{pr:right associative}
Let $V$ be an $R$-module, $B$ be an $R$-algebra, and $\mu:B\otimes V\to T(V)\otimes B$ be an $R$-linear map satisfying \eqref{eq:unit mu}. Then $T(V)\otimes_{\Psi^\mu} B$ 
is an associative $R$-algebra  iff the following diagram is commutative:
\begin{equation}
\label{eq:commutative diagram Psi 4}
\begin{CD}
B \otimes B\otimes V@>(\Psi^\mu \otimes 1)\circ (1\otimes \mu)>>T(V)\otimes B\otimes B\\
@VV{\bf m}_B\otimes  1V@V1\otimes {\bf m}_B VV\\
B \otimes T(V) @>\Psi^\mu>>T(V)\otimes B
\end{CD}. 
\end{equation}

\end{proposition}

\begin{proof}  We need the following result.

\begin{lemma} In the assumptions of Proposition \ref{pr:right associative}, commutativity of \eqref{eq:commutative diagram Psi 4} implies that the following diagram is commutative for all $n\ge 0$.
\begin{equation}
\label{eq:commutative diagram Psi 2p}
\begin{CD}
B \otimes B\otimes V^{\otimes n}@>(\Psi^\mu\otimes 1)\circ (1\otimes \Psi^\mu)>>T(V)\otimes B\otimes B\\
@VV{\bf m}_B\otimes 1V@V 1\otimes {\bf m}_B VV\\
B \otimes V^{\otimes n} @>\Psi^\mu>>T(V)\otimes B
\end{CD}.
\end{equation}
\end{lemma} 

\begin{proof} We proceed by induction in $n$. If $n=0,1$, the assertion is obvious. Suppose that $n\ge 2$. Tensoring the commutative diagram \eqref{eq:commutative diagram Psi 2p} for $V^{\otimes n-1}$ with $V$ from the right and then horizontally composing with the commutative diagram \eqref{eq:commutative diagram Psi 4} (which is tensored with $T(V)$ from the left), followed by the multiplications $T(V)\otimes T(V)\to T(V)$ and $B\otimes B\to B$, we obtain a commutative diagram:
$$\begin{CD}
B \otimes B\otimes V^{\otimes n}@>>>T(V)\otimes B\otimes B\otimes V @>>>T(V)\otimes T(V)\otimes B\otimes B@>{\bf m}_{T(V)}>>T(V)\otimes B\otimes B\\
@VV{\bf m}_B\otimes 1V@V1\otimes {\bf m}_B\otimes 1VV@VV1\otimes 1\otimes {\bf m}_B V @VV1\otimes {\bf m}_B V\\
B \otimes V^{\otimes n} @>>>T(V)\otimes B\otimes V @>>>T(V)\otimes T(V)\otimes B @>{\bf m}_{T(V)}>>T(V)\otimes B
\end{CD}.
$$
Finally,  left associativity of $T(V)\otimes_{\Psi^\mu} B$, i.e., commutativity of the first diagram \eqref{eq:commutative diagram Psi 3} established in Proposition \ref{pr:left associative} for $A=T(V)$ implies that the composition of top (resp. bottom) horizontal arrows in the above  diagram is $(\Psi^\mu\otimes 1)\circ (1\otimes \Psi^\mu)$ (resp. $\Psi^\mu$). This finishes the proof of the lemma. 
\end{proof}

Clearly, commutativity of \eqref{eq:commutative diagram Psi 2p} for all $n\ge 0$ is equivalent to commutativity of the second diagram \eqref{eq:commutative diagram Psi 3} with $A=T(V)$, i.e., to the right associativity of $T(V)\otimes_{\Psi^\mu} B$. 

The proposition is proved.
\end{proof}


For each $R$-linear map $\mu:B\otimes V\to T(V)\otimes B$  consider the category ${\mathcal C}_\mu$ whose objects are
associative $R$-algebras $A$ generated by $B$ and $V$ such that:

$\bullet$ $b\cdot v={\bf m}_A\circ \mu(v\otimes b)$ for all $b\in  B$, $v\in V$;

$\bullet$ The assignment $b\mapsto 1\cdot b$ is a (not necessarily injective) algebra homomorphism $\iota_A:B\to A$;

\noindent morphisms are surjective algebra homomorphisms $f:A\twoheadrightarrow A'$ such that $\iota_{A'}=f\circ \iota_A$, $\iota_{A',V}=f\circ \iota_{A,V}$, where $\iota_{A'',V}$ stands for the natural (not necessarily injective) $R$-linear map $V\to A''$.  

Clearly, ${\mathcal C}_\mu$ is a partially ordered set with a unique maximal element $A_\mu$, i.e., for any $A\in {\mathcal C}_\mu$ one has a surjective algebra homomorphism $A_\mu \twoheadrightarrow A$. It is also clear that  $A_\mu$ is the quotient of the free product $T(V)*B$ by the ideal $I_\mu$ generated by all elements of the form 
\begin{equation}
\label{eq:Agamma}
b*v-{\bf j}(\mu(b\otimes v))
\end{equation}
for all $b\in B$, $v\in V$, where ${\bf j}:V\otimes B\hookrightarrow T(V)*B$ is a natural embedding given by ${\bf j}(v'\otimes b')=v'*b'$ for all $b'\in B$, $v'\in V$.

For any (associative or not) ring $A$ denote by $J_A$ the left ideal generated by all elements of the form $r_{a,b,c}=a(bc)-(ab)c$, $a,b,c\in A$.
The  identity 
$a\cdot r_{b,c,d}+r_{a,b,c}\cdot d=r_{ab,c,d}-r_{a,bc,d}+r_{a,b,cd}$
for $a,b,c,d\in R$
implies that $J_A$ is also a right ideal. Then denote $\underline A:=A/J_A$. Clearly, $\underline A$ is associative and is universal in 
the sense that for any surjective homomorphism $A\twoheadrightarrow A'$ where $A'$ is an associative ring there is a surjective homomorphism 
$\underline A\twoheadrightarrow A'$.

\begin{theorem} 
\label{th:from nonassociative to associative}
For any (unital associative) $R$-algebra $B$, an $R$-module $V$ and an $R$-linear map $\mu:B\otimes V\to T(V)\otimes B$ satisfying \eqref{eq:unit mu} one has
$\underline{T(V)\otimes_{\Psi^\mu} B}= A_\mu$.
In particular,  $A_\mu=T(V)\otimes_{\Psi^\mu} B$ iff $T(V)\otimes_{\Psi^\mu} B$ is associative, i.e., iff the diagram \eqref{eq:commutative diagram Psi 4} is commutative.



\end{theorem}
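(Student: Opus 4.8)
\textbf{Proof proposal for Theorem \ref{th:from nonassociative to associative}.}

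The plan is to establish the identity $\underline{T(V)\otimes_{\Psi^\mu} B} = A_\mu$ by exhibiting mutually inverse surjective homomorphisms between the two associative quotients, and then to read off the "in particular" clause as a corollary. First I would unpack both sides. On the left, $T(V)\otimes_{\Psi^\mu} B$ is the (possibly non-associative) algebra built by Proposition \ref{pr:left associative}, which guarantees $\Psi^\mu$ exists, restricts to $\mu$ on $B\otimes V$, and makes the product left-associative; passing to $\underline{(\cdot)}$ via the construction $A\mapsto A/J_A$ forces associativity universally. On the right, $A_\mu$ is by definition the quotient of the free product $T(V)*B$ by the ideal $I_\mu$ generated by the elements $b*v-{\bf j}(\mu(b\otimes v))$ of \eqref{eq:Agamma}, and it is the unique maximal (hence universal) object of the associative category ${\mathcal C}_\mu$. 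The key observation is that both $\underline{T(V)\otimes_{\Psi^\mu} B}$ and $A_\mu$ are associative $R$-algebras generated by the images of $B$ and $V$ subject to exactly the defining relation $b\cdot v={\bf m}\circ\mu(v\otimes b)$, so each should be the universal such algebra.

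The main steps, in order, would be as follows. (1) Show that $\underline{T(V)\otimes_{\Psi^\mu} B}$ is an object of ${\mathcal C}_\mu$: the subalgebra generated by $V\otimes 1$ and $1\otimes B$ is all of it, the map $b\mapsto 1\otimes b$ is an algebra homomorphism $\iota$ (using $\Psi^\mu(b\otimes 1)=1\otimes b$ and that $1\otimes B$ is an associative subalgebra), and the straightening relation $b\cdot v = {\bf m}\circ\mu(v\otimes b)$ holds by the very definition of the twisted product together with $\Psi^\mu|_{B\otimes V}=\mu$. Universality of $A_\mu$ then yields a canonical surjection $A_\mu \twoheadrightarrow \underline{T(V)\otimes_{\Psi^\mu} B}$. (2) For the reverse direction, note that $T(V)*B \twoheadrightarrow T(V)\otimes_{\Psi^\mu}B$ sending the free generators to $v\otimes 1$ and $1\otimes b$ is a homomorphism of (non-associative) algebras that kills each generator \eqref{eq:Agamma} of $I_\mu$; composing with the quotient $T(V)\otimes_{\Psi^\mu}B \twoheadrightarrow \underline{T(V)\otimes_{\Psi^\mu}B}$ and using the universal property of $\underline{(\cdot)}$ (any surjection onto an associative algebra factors through it) produces a surjection $A_\mu \twoheadleftarrow \underline{T(V)\otimes_{\Psi^\mu}B}$ in the opposite direction. (3) Check that the two surjections are mutually inverse by verifying they act as the identity on the generating sets $V$ and $B$, which forces them to be inverse isomorphisms since both algebras are generated by those images.

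The hard part will be Step (2), and specifically the bookkeeping needed to confirm that the surjection $T(V)*B \to T(V)\otimes_{\Psi^\mu}B$ really descends through $I_\mu$ and that the induced map on $\underline{(\cdot)}$ is well-defined as an \emph{associative}-algebra map in both directions simultaneously: because $T(V)\otimes_{\Psi^\mu}B$ need not itself be associative, one must be careful to work with the associator ideal $J$ throughout and invoke the universal property of $\underline A$ (for any surjection $A\twoheadrightarrow A'$ with $A'$ associative there is a factoring $\underline A\twoheadrightarrow A'$) rather than assuming associativity prematurely. Once the isomorphism $\underline{T(V)\otimes_{\Psi^\mu}B}=A_\mu$ is in hand, the final clause is immediate: $A_\mu = T(V)\otimes_{\Psi^\mu}B$ exactly when the natural map $T(V)\otimes_{\Psi^\mu}B \to \underline{T(V)\otimes_{\Psi^\mu}B}$ is an isomorphism, i.e.\ when $J=0$, i.e.\ when $T(V)\otimes_{\Psi^\mu}B$ is associative; and by Propositions \ref{pr:left associative} and \ref{pr:right associative} (left-associativity being automatic, right-associativity being the content of the second diagram) this is equivalent to commutativity of \eqref{eq:commutative diagram Psi 4} together with the unit condition \eqref{eq:unit mu}.
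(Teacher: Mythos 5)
Your overall strategy --- realize both sides as the universal associative algebra generated by $B$ and $V$ subject to the straightening relation, produce canonical surjections in both directions, and check that they are the identity on generators --- is exactly the paper's proof; your step (1) and your treatment of the ``in particular'' clause via Propositions \ref{pr:left associative} and \ref{pr:right associative} match the paper essentially verbatim. However, step (2) as written has a directional error. The map you construct, $T(V)*B\to T(V)\otimes_{\Psi^\mu}B\to\underline{T(V)\otimes_{\Psi^\mu}B}$, kills $I_\mu$ and therefore descends to a map \emph{out of} $A_\mu$ \emph{into} $\underline{T(V)\otimes_{\Psi^\mu}B}$ --- i.e.\ it reproduces the surjection of step (1), not its inverse. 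The universal property of $\underline{(\cdot)}$ that you quote (``any surjection onto an associative algebra factors through it'') can only be applied to a surjection whose \emph{source} is $T(V)\otimes_{\Psi^\mu}B$ and whose target is $A_\mu$, and you never construct such a map. What is needed is the straightening surjection $T(V)\otimes_{\Psi^\mu}B\twoheadrightarrow A_\mu$ given by $x\otimes b\mapsto \bar x\,\bar b$: it is surjective because the relations \eqref{eq:Agamma} let every element of $A_\mu$ be rewritten in the form $T(V)\cdot B$, and it is multiplicative because in $A_\mu$ one has $b\cdot x={\bf m}\bigl(\Psi^\mu(b\otimes x)\bigr)$ for all $x\in T(V)$, proved by induction on tensor degree from the defining relation on $V$ together with the iterative construction of $\Psi^\mu$ in Proposition \ref{pr:left associative}. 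Once this map is in hand, the universal property of $\underline{(\cdot)}$ yields the desired $\underline{T(V)\otimes_{\Psi^\mu}B}\twoheadrightarrow A_\mu$, and your step (3) closes the argument exactly as in the paper. A secondary caution: the claimed homomorphism $T(V)*B\to T(V)\otimes_{\Psi^\mu}B$ is itself not automatic, since the universal property of the free product only produces maps into associative algebras, so a target that may fail associativity requires a choice of bracketing; this is another reason to avoid that route entirely.
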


\begin{proof} Denote $A'_\mu=\underline{T(V)\otimes_{\Psi^\mu} B}$ and by $\pi_\mu$ the structural homomorphism $T(V)\otimes_{\Psi^\mu} B\twoheadrightarrow A'_\mu$. Clearly, $A'_\mu$ is an $R$-algebra and:

$\bullet$ $A'_\mu$ is generated by $V$ and $B$.

$\bullet$ $b\cdot v={\bf m}_{A'_\mu}\circ \mu(v\otimes b)$ for all $b\in B$, $v\in V$.

$\bullet$ The assignment $b\mapsto \pi_\mu(1\otimes b)$ is an algebra homomorphism $B\to A'_\mu$.

Therefore, $A'_\mu$ is an object of the category ${\mathcal C}_\mu$ and thus one has a canonical surjective algebra homomorphism $\pi'_\mu:A_\mu\twoheadrightarrow A'_\mu$. On the other hand, universality of $A'_\mu$ implies that there is a canonical surjective $R$-algebra algebra homomorphism $A'_\mu\twoheadrightarrow A_\mu$.   Thus, $\pi'_\mu$ is an isomorphism, hence it is the identity, i.e., $A'_\mu=A_\mu$.

The theorem is proved. \end{proof}

In some cases  conditions \eqref{eq:unit mu} and \eqref{eq:commutative diagram Psi 4} can be simplified. The following is immediate consequence of Proposition \ref{pr:right associative}.

\begin{corollary} 
\label{cor:mu nu gamma} Let $V$ be an $R$-module, $B$ be an $R$-algebra, and  $\mu:B\otimes V\to  T(V)\otimes B$ be given by 
$\mu=\nu+\beta$, 
where $\nu:B\otimes V\to V\otimes B$ and $\beta:B\otimes V\to B$ are $R$-linear maps such that $\nu(1\otimes v)=v\otimes 1$  
for all $v\in V$. 
Then  $A_\mu=T(V)\otimes B$ as an $R$-module iff the following conditions hold.

$\bullet$ $\nu\circ ({\bf m}_B\otimes Id_V)=(Id_B\otimes {\bf m}_B)\circ (\nu\otimes Id_B)\circ (Id_B\otimes \nu)$
in $Hom_R(B\otimes B\otimes V,V\otimes B)$.

$\bullet$ $\beta\circ ({\bf m}_B\otimes Id_V)={\bf m}_B\circ (Id_B\otimes \beta)+{\bf m}_B\circ (\beta\otimes Id_B)\circ (Id_B\otimes \nu)$
in $Hom_R(B\otimes B\otimes V,B)$.

\end{corollary}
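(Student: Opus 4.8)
The plan is to deduce the corollary directly from Theorem \ref{th:from nonassociative to associative} together with Proposition \ref{pr:right associative}, by splitting the associativity constraints according to the $T(V)$-grading. First I would observe that the underlying $R$-module of $T(V)\otimes_{\Psi^\mu}B$ is $T(V)\otimes B$ by construction, so the multiplication map $T(V)\otimes B\to A_\mu$ is an isomorphism of $R$-modules precisely when $T(V)\otimes_{\Psi^\mu}B$ is already associative; by Theorem \ref{th:from nonassociative to associative} this is equivalent to the conjunction of the unit condition \eqref{eq:unit mu} and commutativity of the diagram \eqref{eq:commutative diagram Psi 4}, which is exactly the content of Proposition \ref{pr:right associative}. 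Thus it remains to show that, for $\mu=\nu+\beta$ with $\nu$ valued in $V\otimes B$ and $\beta$ valued in $B$, these two conditions are equivalent to the three identities in the statement.

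Second I would carry out the grading argument by evaluating \eqref{eq:commutative diagram Psi 4} on $b_1\otimes b_2\otimes v\in B\otimes B\otimes V$. Writing $1\otimes\mu=1\otimes\nu+1\otimes\beta$, and using $\Psi^\mu|_{B\otimes V}=\mu=\nu+\beta$ together with the elementary value $\Psi^\mu(b\otimes 1)=1\otimes b$, the right-hand side decomposes into a part landing in the degree-one summand $V\otimes B$ and a part landing in the degree-zero summand $B$. The left-hand side $\mu\circ({\bf m}_B\otimes Id_V)$ decomposes the same way into $\nu\circ({\bf m}_B\otimes Id_V)$ and $\beta\circ({\bf m}_B\otimes Id_V)$. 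Matching degree-one components reproduces the first identity $\nu\circ({\bf m}_B\otimes Id_V)=(Id_B\otimes{\bf m}_B)\circ(\nu\otimes Id_B)\circ(Id_B\otimes\nu)$, while matching degree-zero components reproduces the second: the $1\otimes\beta$ branch contributes ${\bf m}_B\circ(Id_B\otimes\beta)$ through $\Psi^\mu(b\otimes 1)=1\otimes b$, and the $1\otimes\nu$ branch followed by the degree-dropping $\beta$ contributes ${\bf m}_B\circ(\beta\otimes Id_B)\circ(Id_B\otimes\nu)$. The third condition $\nu(1\otimes v)=v\otimes 1$ is the degree-one part of \eqref{eq:unit mu} on generators; the complementary relation $\beta(1\otimes v)=0$ need not be assumed, since specializing the second identity at $b_2=1$ and invoking condition three gives $b_1\,\beta(1\otimes v)=0$ for all $b_1\in B$, hence $\beta(1\otimes v)=0$ upon taking $b_1=1$.

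Finally I would note that although these three conditions only constrain $\mu$ on $B\otimes V$, they suffice to force \eqref{eq:unit mu} and \eqref{eq:commutative diagram Psi 4} on all of $T(V)$: since $\Psi^\mu$ is built recursively from $\mu$ and is automatically left-associative by Proposition \ref{pr:left associative}, the higher-degree instances of the unit and associativity constraints follow from the generating case of degree $\le 1$ — this is precisely the inductive propagation already established inside the proof of Proposition \ref{pr:right associative}, namely that commutativity of \eqref{eq:commutative diagram Psi 4} yields commutativity of \eqref{eq:commutative diagram Psi 2p} for every $n$. I expect the one genuinely delicate point to be the bookkeeping of how the intermediate factor $\mu(b_2\otimes v)$ splits under $\Psi^\mu\otimes 1$, that is, tracking which branch ($\nu$ preserving degree one, $\beta$ collapsing to degree zero) feeds into which target identity; an off-by-one slip in the tensor degree there would corrupt the matching, so I would verify this split explicitly on $B\otimes B\otimes V$ before appealing to the inductive propagation.
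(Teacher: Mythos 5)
Your proof is correct and follows exactly the route the paper takes: the paper simply declares the corollary an immediate consequence of Proposition \ref{pr:right associative} (combined with Theorem \ref{th:from nonassociative to associative}), and your argument supplies the intended verification by splitting the unit condition \eqref{eq:unit mu} and the diagram \eqref{eq:commutative diagram Psi 4} on $B\otimes B\otimes V$ into their degree-one ($V\otimes B$) and degree-zero ($B$) components under $\mu=\nu+\beta$. Your extra observation that $\beta(1\otimes v)=0$ need not be assumed because it follows from the second and third identities is a correct and worthwhile detail that the paper leaves implicit.
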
 

%

We conclude with the discussion of factorizable (in the sense of Proposition \ref{pr:Majid}) algebras with $B=R W$, the {\it linearization} of a monoid $W$, so that $RW$ is naturally an algebra over $R$.

\begin{proposition} 
\label{pr:factored derivatives}
Given an $R$-algebra ${\bf H}$, suppose that it factors as 
${\bf H}={\bf D}\cdot R W$ over $R$, where $W$ is a monoid (i.e., 
the multiplication map defines an isomorphism of  $R$-modules
${\bf D}\otimes RW \widetilde \longrightarrow {\bf H}$)  and both ${\bf D}$  and $RW$ are subalgebras of ${\bf H}$. 
Then for any $g,h\in W$ there exists a unique $R$-linear map $\partial_{g,h}:{\bf D}\to {\bf D}$ such that:
\begin{equation}
\label{eq:factored derivatives}
g x=\sum_{w\in W} \partial_{g,w}(x) w
\end{equation}
for all $g\in W$, $x\in {\bf D}$. 

Moreover, the family $\{\partial_{g,h}\}$ satisfies: $\partial_{g,h}(xy)=\sum\limits_{w\in W}
\partial_{g,w}(x)\partial_{w,h}(y)$
for all $g,h\in W$, $x,y\in {\bf D}$ and 
$\partial_{gh,w}(x)=\sum\limits_{w_1,w_2\in W:w_1w_2=w}
\partial_{g,w_1}(\partial_{h,w_2}(x))$
for all $g,h,w\in W$, $x\in {\bf D}$.

\end{proposition}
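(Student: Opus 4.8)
The plan is to read everything off the factorization isomorphism together with associativity of $\mathbf{H}$. Since the multiplication map gives an isomorphism of $R$-modules $\mathbf{D}\otimes RW\widetilde\longrightarrow\mathbf{H}$ and $RW$ is the free $R$-module on $W$, every element of $\mathbf{H}$ has a \emph{unique} expression as a finite sum $\sum_{w\in W}d_w\, w$ with $d_w\in\mathbf{D}$ and all but finitely many $d_w$ equal to $0$. First I would fix $g\in W$ and apply this to the element $gx\in\mathbf{H}$ for $x\in\mathbf{D}$: define $\partial_{g,w}(x)\in\mathbf{D}$ to be the (unique) coefficient of $w$ in this expansion. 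Uniqueness of the decomposition forces each $\partial_{g,w}(\cdot)$ to be $R$-linear in $x$, and finiteness of the support is automatic since $gx$ is a single element of $\mathbf{D}\otimes RW$; this is precisely the assertion \eqref{eq:factored derivatives}.

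Next I would establish the product rule $\partial_{g,h}(xy)=\sum_{w}\partial_{g,w}(x)\partial_{w,h}(y)$ by computing $g(xy)=(gx)y$ in two ways. On one hand $g(xy)=\sum_{h}\partial_{g,h}(xy)\,h$ by definition. On the other hand, expanding $gx=\sum_{w}\partial_{g,w}(x)\,w$ and then expanding each $w\,y=\sum_{h}\partial_{w,h}(y)\,h$ (legitimate because $w\in W$ and $y\in\mathbf{D}$, so the defining relation \eqref{eq:factored derivatives} applies), associativity gives $g(xy)=\sum_{w}\partial_{g,w}(x)\,wy=\sum_{h}\bigl(\sum_{w}\partial_{g,w}(x)\partial_{w,h}(y)\bigr)h$. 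Comparing the coefficient of each $h$, which is licensed by uniqueness of the factorization, yields the claimed identity.

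Finally I would prove $\partial_{gh,w}(x)=\sum_{w_1w_2=w}\partial_{g,w_1}(\partial_{h,w_2}(x))$ by computing $(gh)x=g(hx)$ the same way. Writing $hx=\sum_{w_2}\partial_{h,w_2}(x)\,w_2$ and then $g\,\partial_{h,w_2}(x)=\sum_{w_1}\partial_{g,w_1}(\partial_{h,w_2}(x))\,w_1$, associativity gives $g(hx)=\sum_{w_1,w_2}\partial_{g,w_1}(\partial_{h,w_2}(x))\,w_1w_2=\sum_{w}\bigl(\sum_{w_1w_2=w}\partial_{g,w_1}(\partial_{h,w_2}(x))\bigr)w$, while $(gh)x=\sum_{w}\partial_{gh,w}(x)\,w$ by definition; again comparing coefficients of $w$ finishes the proof. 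There is no real obstacle here: the entire content is the associativity $g(xy)=(gx)y$ and $(gh)x=g(hx)$ transported through the factorization. The only points requiring a word of care are that every expansion has finite support (so all sums are finite and the bilinear rearrangements are valid) and that coefficient comparison is justified by the \emph{uniqueness} half of the factorization isomorphism; both are immediate from $\mathbf{H}\cong\mathbf{D}\otimes RW$.
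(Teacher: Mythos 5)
Your proof is correct and follows essentially the same route as the paper: existence, uniqueness and $R$-linearity of the $\partial_{g,h}$ come from the free left $\mathbf{D}$-module structure on $\mathbf{H}$ with basis $W$, and the two identities are obtained by expanding $g(xy)=(gx)y$ and $(gh)x=g(hx)$ through the factorization and comparing coefficients. Nothing to add.
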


\begin{proof} Indeed, the existence and uniqueness of follows from the factorization of ${\bf H}$, i.e., that ${\bf H}$ is a free left ${\bf D}$-module with the basis $W$. To prove the second assertion, note that 
$$gxy=\sum_{w\in W} \partial_{g,w}(x) wy=\sum_{w\in W} \partial_{g,w}(x) \left(\sum_{h\in W} \partial_{w,h}(y)h\right)=\sum_{h\in W} \left(\sum_{w\in W} \partial_{g,w}(x) \partial_{w,h}(y)\right)h$$
for $g\in W$, $x,y\in {\bf D}$ and
$$ghx=\sum_{w_2\in W} g\partial_{h,w_2}(x) w_2=\sum_{w_2\in W} \left(\sum_{w_1\in W} \partial_{g,w_1}(\partial_{h,w_2}(x))w_1\right) w_2=\sum_{w_1,w_2\in W} \partial_{g,w_1}(\partial_{h,w_2}(x))w_1w_2$$
for $g,h\in W$, $x\in {\bf D}$.
The proposition is proved.
\end{proof}

\end{document}